\newtheorem{theorem}{Theorem}[chapter]
\newtheorem{corollary}[theorem]{Corollary}
\newtheorem{definition}[theorem]{Definition}
\newtheorem{example}{Example}
\newtheorem{lemma}[theorem]{Lemma}
\begin{document}

\abovedisplayskip=8mm
\abovedisplayshortskip=8mm
\belowdisplayskip=8mm
\belowdisplayshortskip=8mm



\pagenumbering{gobble}

\title{Limiting behaviour of Branching Processes and Online Social Networks}
\author{Khushboo Agarwal}
\date{2023}

\rollnum{17I190007} 

\iitbdegree{Doctor of Philosophy}

\reporttype{}

\department{Industrial Engineering and Operations Research}

\setguide{Prof. Veeraruna Kavitha}

\maketitle


\begin{dedication}
\large{\textit{Dedicated to my beloved parents and brother.}}
\end{dedication}


\chapter*{}
\thispagestyle{empty}
\vspace{-1in}
\begin{center}
{\Large  {\bf Thesis Approval}}
\end{center}
\vspace*{0.1in} \noindent This thesis entitled {\large \bf Limiting behaviour of Branching Processes and Online Social Networks} by {\bf \large
Khushboo Agarwal} is approved for the degree of  {\large \bf Doctor of Philosophy}.\\\\
\hspace*{4 in} Examiners:\\\\
\hspace*{3.5 in} \ldots\ldots \ldots \ldots \ldots \ldots\ldots
\ldots \ldots \ldots\ldots\\\\
\hspace*{3.5 in} \ldots\ldots \ldots \ldots \ldots \ldots\ldots
\ldots \ldots \ldots\ldots\\\\
\hspace*{3.5 in} \ldots\ldots \ldots \ldots \ldots \ldots\ldots
\ldots \ldots \ldots\ldots\\\\
  \hspace*{3.5in} \ldots\ldots \ldots \ldots \ldots \ldots\ldots
\ldots \ldots \ldots\ldots\\\\
 Supervisor:\hspace*{3.5 in}  Chairperson:\\\\\\
\ldots\ldots \ldots \ldots \ldots \ldots\ldots
\ldots \ldots \ldots\ldots
\hspace*{1.0 in} \ldots\ldots \ldots \ldots \ldots \ldots\ldots
\ldots \ldots \ldots\ldots\\\\\\

\noindent
Date: \ldots\ldots \ldots \ldots\\\\
Place: \ldots\ldots \ldots \ldots


\clearpage
\thispagestyle{empty}

\begin{center}
\Large  {\bf Declaration }
\end{center}
\vspace{-6in}
I declare that this written submission represents my ideas in my own words and where others ideas or words have been included, I have adequately cited and referenced the original sources. I also declare that I have adhered to all principles of academic honesty and integrity and have not misrepresented or fabricated or falsified any idea/data/fact/source in my submission. I understand that any violation of the above will be cause for disciplinary action by the Institute and can also evoke penal action from the sources which have thus not been properly cited or from whom proper permission has not been taken when needed.
\vspace{0.5in}


%
 \begin{table}[h]
 \begin{flushleft}

\vspace{-3.2in} 
 \begin{tabular}{ccccc}
 \rule[5ex]{0pt}{-10ex}&& Date: && \\ 
 \end{tabular}
\end{flushleft}

\vspace{-0.5in} 
\begin{flushright}
 \begin{tabular}{ccccc}
 
 \hline 	\rule[5ex]{0pt}{-10ex}&& Khushboo Agarwal&& \\ 
 \rule[5ex]{0pt}{-10ex}&& Roll No. 17I190007&& \\ \\
 \end{tabular}
\end{flushright}
\end{table}

\pagebreak

\joiningdate{07/07/2017}



\clearpage
\pagenumbering{roman}
\begin{abstract}
  \renewcommand{\thepage}{\roman{page}} \setcounter{page}{1}

The literature considers multi-type Markov branching processes (BPs), where the offspring distribution depends only on the living (current) population. In the thesis, we analyse the total-current population-dependent BPs where the offspring distribution can also depend on the total (dead and living) population. Such a generalization is inspired by the need to accurately model content propagation over online social networks (OSNs). The key question investigated is the time-asymptotic proportion of the populations, which translates to the proportional visibility of the posts on the OSN. We provide the answer using a stochastic approximation technique, which has not been used in the existing BP literature. The analysis is derived using a non-trivial autonomous measurable ODE. Interestingly, we prove the possibility of a new limiting behaviour for the stochastic trajectory, named as hovering around. Such a result is not just new to the theory of BPs but also to the stochastic approximation based literature. 

After analysing the general setup, we explore three new variants of BPs. In the first variant, any living individual of a population can attack and acquire the living individuals of the other population, in addition to producing its offspring. Secondly, the individuals can die due to abnormal circumstances, and not just at the completion of their lifetimes. In another BP, the expected number of offspring decreases as the total-population increases, leading to the saturation of the total-population. 

Such variants aid in analysing unexplored aspects of content propagation over OSNs. We study the competition in advertisement posts for similar products via BP with attack and acquisition. The control of fake-post propagation, while not affecting the sharing of real-post, is carried out using BP with different death types. Further, it is observed that the sharing of any post eventually stops, a phenomenon which we attribute to re-forwarding the post and capture using the saturated BP. 

Lastly, we also designed a participation (mean-field) game where the OSN lures the users with a reward-based scheme to provide their opinion about the actuality of the post (fake or real). The users can be adversarial or exhibit different levels of interest in providing their opinions. We propose an algorithm for the OSN that leads to the desired level of correct identification of posts by the users at Nash equilibrium.

\noindent \textbf{Key words:} Population-dependent Branching Process, Total Population, Proportion, Attack, Stochastic Approximation, Online Social Network, Viral Competing Markets, Re-forwarding, Fake-post detection, Crowd signals

\end{abstract}

\tableofcontents
\listoftables
\listoffigures



%
%



\setlength{\parskip}{2.5mm}
\titlespacing{\chapter}{0cm}{55mm}{10mm}
\titleformat{\chapter}[display]
  {\normalfont\huge\bfseries\centering}
  {\chaptertitlename\ \thechapter}{20pt}{\Huge}
  
  \titlespacing*{\section}
  {0pt}{8mm}{8mm}
  \titlespacing*{\subsection}
  {0pt}{8mm}{8mm}
\pagebreak
\pagenumbering{arabic}

\makeatletter
\def\cleardoublepage{\clearpage\if@twoside \ifodd\c@page\else
	\hbox{}
	\vspace*{\fill}
	\begin{center}
		This page was intentionally left blank.
	\end{center}
	\vspace{\fill}
	\thispagestyle{empty}
	\newpage
	\if@twocolumn\hbox{}\newpage\fi\fi\fi}
\makeatother

\newcommand{\rev}[1]{{\color{blue}#1}}

\newcommand{\revg}[1]{{\color{black}#1}}
\newcommand{\reva}[1]{{\color{black}#1}}
\newcommand{\revr}[1]{{\color{black}#1}}

\newcommand{\ifnonauto}[2]{#1}

\newcommand{\DetailK}[1]{} 

\newcommand{\tcprocess}{total-current population-dependent BP }
\newcommand{\tcprocessnospace}{total-current population-dependent BP}
\newcommand{\hide}[1]{}

\newcommand{\psiL}{\psi^c_{\mbox{\tiny o}}}
\newcommand{\psiaL}{\psi^a_{\mbox{\tiny o}}}

\newcommand{\bL}{\beta^c_{\mbox{\tiny o}}}
\newcommand{\baL}{\beta^a_{\mbox{\tiny o}}}

\newcommand{\tL}{\theta^c_{\mbox{\tiny o}}}
\newcommand{\taL}{\theta^a_{\mbox{\tiny o}}}

\newcommand{\sa}{z}
\newcommand{\Sa}{Z}

\newcommand{\betana}{\beta^o_{\mbox{{\footnotesize na}}}}

\newcommand{\N}{\mathcal{N}}
\newcommand{\I}{I_{\theta/\psi}}
\newcommand{\eop}{\hfill{$\square$}}
\newcommand{\nto}{\nrightarrow}
\newcommand{\Om}{\Phi}
\newcommand{\om}{\phi}
 \newcommand{\ups}{{\mbox{\small ${\Upsilon}$}}}
 \newcommand{\Ups}{{\mathbf \Upsilon}}
\newcommand{\Bin}{{\cal B}}
\newcommand{\tp }{\tau^+}
\newcommand{\tm}{\tau^-}
\newcommand{\up}{\uparrow}
\newcommand{\offs}{\Gamma}
\newcommand{\down}{\hspace{.15mm}\downarrow}
\newcommand{\beq}{\begin{eqnarray*}}
\newcommand{\eeq}{\end{eqnarray*}}
\newcommand{\Cx}{C^x}
\newcommand{\Cy}{C^y}
\newcommand{\cx}{c^x}
\newcommand{\cy}{c^y}
\newcommand{\cM}{{\cal E}}
\newcommand{\bcx}{{\overline c^x}}
\newcommand{\bcy}{{\overline c^y}}
\newcommand{\bax}{{\overline a^x}}
\newcommand{\Ax}{A^x}
\newcommand{\Ay}{A^y}
\newcommand{\ax}{a^x}
\newcommand{\ay}{a^y}
\newcommand{\wm}{\widetilde{e}}
\newcommand{\cA}{{\mathbf A}}
\newcommand{\cR}{{\mathbf S}}
\newcommand{\cRo}{{\mathbf R}_e}
\newcommand{\tc}{\theta^c}
\newcommand{\ta}{\theta^a}
\newcommand{\pc}{\psi^c}
\newcommand{\pa}{\psi^a}
\newcommand{\Tc}{\Theta^c}
\newcommand{\Ta}{\Theta^a}
\newcommand{\Pc}{\Psi^c}
\newcommand{\Pa}{\Psi^a}
\newcommand{\bc}{\beta^c}
\newcommand{\ba}{\beta^a}
\newcommand{\Bc}{\mathrm{B}^c}
\newcommand{\Ba}{\mathrm{B}^a}
\newcommand{\Beta}{\mathrm{B}}
\newcommand{\dist}{ d_{st}}
\newcommand{\st}{{\cal T}}
\newcommand{\bstar}{\beta^*}
\newcommand{\minf}{m^\infty}
\newcommand{\ueps}{\overline{\varepsilon} }
\newcommand{\leps}{\underline{\varepsilon}}
\newcommand{\polya}{\mbox{P\'{o}lya} }
\newcommand{\q}{\mathbf{q}}
\newcommand{\ga}{\mathbf{g}}
\newcommand{\gna}{\bm{\varrho}}

\newcommand{\newbc}{f_{\bc}^\infty(\bc)}
\newcommand{\cS}{\mathbf{D}_b}
\newcommand{\cD}{\mathbf{D}}
\newcommand{\cB}{\mathbf{B}}

\newcommand{\bpam}{e}
\newcommand{\bpaum}{\overline{e}}
\newcommand{\bpalm}{\underline{e}}
\newcommand{\propum}{\overline{m}}
\newcommand{\proplm}{\underline{m}}
\newcommand{\propwm}{m^\infty}
\newcommand{\propm}{m}
\renewcommand\thefootnote{\arabic{footnote}}

\newcommand\overlinebelow[1]{\stackunder[1.2pt]{$#1$}{\rule{1.2ex}{.075ex}}}
\def\lc{\left\lceil}   
\def\rc{\right\rceil}

\newcommand*\widefbox[1]{\fbox{\hspace{0em}#1\hspace{0em}}}
\newcommand{\floor}[1]{\lfloor #1 \rfloor}

\newcommand{\TR}[2]{#2}  

\newcommand{\old}[1]{}
\newcommand{\G}{{\cal G}}
\newcommand{\gzero}{{\mathcal G}_{0}}
\newcommand{\gone}{{\mathcal G}_{1}}
\newcommand{\gtwo}{{\mathcal G}_{2}}
\newcommand{\muzero}{\mu_{0}}
\newcommand{\muone}{\mu_{1}}
\newcommand{\mutwo}{\mu_{2}}
\newcommand{\mua}{\mu_{a}}
\newcommand{\ce}{C_e}
\newcommand{\cp}{Q_p}
\newcommand{\cnp}{Q_{np}}
\newcommand{\bmu}{\bm{\mu}}
\newcommand{\td}{(\theta, \delta)\mbox{-success}}
\renewcommand{\S}{{\cal S}}
\newcommand{\udelta}{\underline{\Delta}}
\newcommand{\na}{\mbox{non-adversarial}}
\newcommand{\w}{\max\left\{\frac{1}{1-\mua}, \frac{1}{\Delta^a\theta_a} \right\}}
\newcommand{\fone}{\frac{\delta_a - \eta^*_{\widetilde{\theta}}\alpha_R}{\delta_a(1-\mua-\eta^*_{\widetilde{\theta}})}}
\newcommand{\xdoubleF}{\frac{1}{1-\alpha_F}\left(1-\mua-\frac{1}{cw\alpha_R \Delta^a}\right)}
\newcommand{\RAI}{{\mathcal R}_{AI}}

\newcommand{\cd}{{\cal D}}

\newpage
\pagebreak
\cleardoublepage
\chapter{Introduction}
Branching processes (BPs) are stochastic processes that model populations' evolution. Since their introduction to study the surname extinction problem, many variants of BPs have been analyzed to understand various exciting problems in multiple domains. This thesis analyses a broad class of BPs that aid in investigating various unexplored aspects of online social networks (OSNs). Therefore, the contribution of this thesis is not limited to the theory of BPs; it also provides interesting insights into content propagation over OSNs. Towards the end, inspired by our findings on the BP-based study of OSNs, we also design a mean-field game among users of the OSN, induced by a reward-based scheme, to nudge users towards correctly identifying the actuality of posts (fake posts as fake and real posts as real) in the presence of adversarial users and other user behaviours.

In the space of BPs, we consider single or two-type BPs, where the dynamics progress in continuous-time and are Markovian. The literature generally assumes the distribution of the number of offspring depends on the current (living) population (for example, \cite{klebaner1989geometric, jagers1997coupling}). We consider that the distribution of the number of offspring depends on the current population and/or total (dead and alive) population. Such an extension is motivated by the need to appropriately capture crucial aspect (namely, re-forwarding) of content propagation over OSNs. Further, in the literature, the current-population dependent mean matrix is assumed to converge to a deterministic mean matrix, leading to a unique limit point, see \cite{klebaner1989geometric, jagers1997coupling}. In our case, the limit of the mean matrix is proportion-dependent and thus can depend on the underlying sample path, possibly leading to multiple limit matrices. Such an assumption requires a different treatment and significantly generalizes the existing models.

Furthermore, in classical literature, the BPs are analyzed in 
super-critical, critical, or sub-critical regimes (see \cite{athreya2004branching}). Limited literature considers BPs transitioning from the super-critical to the sub-critical regime, as population size grows. The authors in \cite{jagers2011population} analyze the BP where the dynamics fluctuate between the two regimes as the current population size fluctuates. While, in \cite{hautphenne2022fluid}, simple total population-dependent birth-death based dynamics are analyzed where the process transitions from super-to-sub critical regime. In this thesis, the two-type variants are analyzed in 
(appropriately defined) super-critical regime. Further, we study a generalized single-type BP that transitions from the super-to-sub critical regime and where the dynamics are not just birth-death type, however, each parent can produce random total population-dependent offspring before dying.

Traditionally, the literature adopts the martingale-based approach to analyze the BPs. We use the ordinary differential equation (ODE) based stochastic-approximation (SA) technique for the following two primary objectives:

\noindent $\bullet$ to derive a deterministic trajectory that approximates the random dynamics over any finite time window - this translates to deriving the approximate deterministic curves for the contents propagating over OSNs.
    
      $\rightarrow$ towards this, we derive an appropriate multi-dimensional first-order autonomous  ODE with a measurable right-hand side. We show that certain normalized trajectories of the embedded chain almost surely converge to the ODE solution uniformly over any finite time window as time progresses.

\noindent $\bullet$ if two population-types are considered, then to derive the time-asymptotic proportion of the populations (which we briefly refer to as `proportion') - for example, this represents the time-asymptotic visibility of the two (competitive or cooperative) contents on the OSNs.

    $\rightarrow$ under finite second-moment conditions, we show that with a certain probability, the limit proportion either converges to the equilibrium points (attractor and saddle points) or infinitely often enters every neighbourhood and exits some neighbourhood of a saddle point of the derived ODE.

In the above, the possible emergence of the latter limiting behaviour, which we named as \textit{hovering around}, is new to both SA and BP-based literature. We do not show that hovering around occurs with positive probability; nonetheless, the possibility of such a new behaviour is exciting and worthy of investigation in future.

We also prescribe and illustrate a procedure to derive the attractor and saddle sets of the derived ODE using a one-dimensional autonomous proportion-dependent ODE with 
(possibly) measurable right-hand side.

Before proceeding further,  we briefly describe how BPs can be used to capture the basic features of content propagation on OSNs. In general, OSNs are usually flooded with a variety of content, which is shared (again) by the recipients and thus may get viral (i.e., the number of copies of the post grows significantly with time). Further, after reading the post, the user most likely loses interest in it forever. Thus, reading the post is analogous to death, while a new share by a user is analogous to offspring. Furthermore, unread and total (read $+$ unread) copies are analogous to the current and total population, respectively. 

\section{Contributions}
We now discuss the significant contributions of this thesis, which are three new total-current population-dependent BPs and their applications in OSNs, and the participation game for fake-post detection on OSNs. We briefly introduce the BPs and highlight their key distinguishing features. We also specify how they contribute to analyzing different aspects of OSNs and their respective important results. Further, motivated by the BP-based insights, we consider the participation game.

\noindent \textbf{1. Branching process with attack (BPA):} 
Unlike prey-predator BPs (\cite{coffey1991galton}), in BPA, \textit{any individual of any population-type can attack the other population type, acquire the attacked individuals, and also produce offspring of its type}. After deriving a thorough analysis of BPA, we analyze \textbf{viral competing markets} on OSNs using that analysis. Generally, there are multiple (commercial) posts on the OSN, many of which might compete with each other. Such competing contents are always at risk of losing their chances. When a user prefers one post over the other, the liked post snatches away (attacks and acquires) the opportunities of the other post depending upon the popularity and/or the freshness of the two contents. One of the exciting results in this direction is that \textit{the post of a less influential content provider can gain more visibility (in the limit) than the post from the more influential content provider if the content of the former appeals more to the users}.

\noindent \textbf{2. BP with unnatural deaths:} The literature majorly considers BPs where individuals die naturally after completing their lifetime. However, due to unfavourable circumstances, their reproductive capacities might be affected, and in fact, they can die in extreme situations. Limited literature models unnatural deaths due to competition and cooperation (see \cite{BPwithinteraction, etheridge2013conditioning, ojeda2020branching}). However, we study a generalized BP, which captures natural death and a variety of unnatural deaths. For such BPs, the above two results are proved.

Using the results of the above-mentioned BP, we design a \textbf{robust control for fake-post propagation over OSNs against adversaries}, while negligibly affecting the authentic/real post propagation --- we model the post propagation process with robust control using a BP with unnatural deaths. Towards this, a warning mechanism based on crowd-signals was proposed in \cite{kapsikar2020controlling}, where all users actively declare the post as real or fake. Here, we consider a more realistic framework where \textit{users exhibit different adversarial or non-cooperative behaviour}: (i) they can independently decide whether to provide their response, (ii) they can choose not to consider the warning signal while providing the response, and (iii) they can be \textit{real-coloring adversaries who deliberately declare any post as real}. In general, adversaries can be smart in declaring the posts opposite to their actuality. However, real-coloring adversaries outnumber the smart ones, as the former are the ones who are not well-informed about the actuality of the posts but still intend to harm the system. At first, we compare and show that the existing warning mechanism significantly under-performs in the presence of adversaries. Then, we design new mechanisms that remarkably perform better than the existing mechanism by cleverly eliminating the influence of the responses of the adversaries.

\noindent \textbf{3. Saturated total-population dependent BP (STP-BP):}
Unlike so far discussed super-critical BPs, we also consider a single-type total population-dependent BP, which \textit{permanently shifts from super-to-sub critical regime as time progresses}. Here, we show that the total population converges and saturates to a limit as time progresses. Further, contrary to the known exponential growth in other existing BP models, the current population grows exponentially initially and then declines to $0$. 

Using STP-BP, we analyze \textbf{saturated viral markets} on the OSN. Note that when the users continually forward an interesting post, it leads to an increase in the \textit{re-forwarding of the post} to some of the previous recipients. Consequently, the effective forwards (after deleting the re-forwards) reduce, eventually leading to the saturation of the total number of copies. Notably, \textit{we obtain deterministic approximate trajectories for the unread and total copies, which depend only on four parameters related to the network characteristics}. Further, we provide expressions for the peak unread copies, maximum outreach and the life span of the post.

\noindent \textbf{4. Single out fake-posts via participation game:} In the robust fake-post detection via BP-based approach, we show that the crowd-based approach can successfully distinguish between real and fake posts despite adversaries in the system if a sufficient fraction of users provide their responses. However, motivating the users to provide their responses is challenging, even more so in the presence of adversarial users (\cite{freire2021fake}). Thus, towards the end of the thesis, we design a \textit{(mean-field) game} where users of the OSN are lured by a \textit{reward-based scheme} to provide the binary (real/fake) signals such that the OSN achieves \textit{$(\theta, \delta)$-level of actuality identification (AI)} - not more than $\delta$ fraction of non-adversarial users incorrectly judge the real post, and at least $\theta$ fraction of non-adversarial users identify the fake-post as fake. An appropriate warning mechanism is proposed to nudge the users such that the \textit{resultant game has at least one Nash Equilibrium (NE) achieving AI}. \textit{We also identify the conditions under which all NEs achieve AI}.

Thus, the thesis contributes towards three different areas - branching processes (BPs), stochastic approximation (SA) and online social networks (OSNs), which we summarize below:

\subsection{Towards BP-literature} We analyze population-dependent BPs whose key differentiating features are total and current population-size dependent offspring, negative offspring (to model attack), proportion dependent functions for the expected number of offspring, even at the limit, unnatural deaths of individuals, and the transition from super-to-sub critical regime. The main focus is to derive the time-asymptotic proportion of the populations and deterministic trajectories that can track the stochastic BP trajectories.

\subsection{Towards SA-literature} The analysis of BPs is derived using the stochastic approximation technique. While deriving their limits, the possibility of a new limiting behaviour, which we name `hovering around', is observed. Such behaviour is new to both BP and SA literature. We also dealt with some discontinuous dynamics.

\subsection{Towards OSNs} Using the derived analysis of BPs, we gain new insights about content propagation over OSNs. We study the effect of competition and re-sharing over the success of post-propagation. Further, we devise new robust mechanisms to control fake-post propagation in the presence of adversaries, using users' responses. To overcome the difficulty of obtaining users' responses, we design an appropriate participation (mean-field) game.

\section{Thesis outline} 
The subject matter of the thesis is presented in the following chapters:
\begin{enumerate}[label = (\roman*)]
    \item In Chapter \ref{ch:basics}, we provide the preliminaries and, importantly, the generalized results for possibly discontinuous ODEs and SA-based schemes that typically arise while dealing with BPs. We also provide the new results that we derive towards the respective domains.
    \item In Chapter \ref{ch:journal1}, we describe and analyze the two-type total-current population dependent BP in continuous-time and Markovian framework. We also study the BP with attack, which facilitates the analysis of viral competing markets over OSNs.
    \item In Chapter \ref{ch:journal2}, we design robust mechanisms (guided by users' responses) to control fake-post propagation over OSNs against real-coloring adversaries. The analysis uses a new BP with both natural and unnatural deaths.
    \item In Chapter \ref{ch:STPBP}, saturated viral markets are analyzed using the newly proposed saturated total population-dependent BP.
    \item At last, in Chapter \ref{ch:MFG}, we design a participation mean-field game to motivate sufficiently many users to provide their responses to ultimately single out fake posts at Nash equilibrium.
    \item In Chapter \ref{ch:summary}, the conclusions are provided and future possibilities of our work are also enlisted. The chapter-wise proofs are provided in Appendices \ref{appendix_journal1}-\ref{appendix_MFG}  at the end of the thesis. 
\end{enumerate}

\chapter{Preliminaries and New results}\label{ch:basics}
The results of this thesis have two flavours: first, contributions towards applications like branching processes (BPs) and online social networks (OSNs), and second, contributions towards commonly used tools like ordinary differential equation (ODE) and stochastic approximation (SA) techniques. In this chapter, we first provide brief understanding of the existing tools and results, and then discuss the new results for ODE and SA in the general forms. The new results pertaining to these techniques are stated in this chapter in such a way that one can readily analyze future applications; the new sophisticated tools can facilitate the analysis of future problems for which existing tools may be insufficient. Along the way, we also prescribe the procedure to analyze the new variants of BPs that we explore in the coming chapters. 

 \section{Branching processes (BPs)}\label{sec_BP}
The branching process (BP) is a stochastic process which was introduced initially by Galton and Watson in $1874$ to study the surnames extinction problem (see \cite{athreya2004branching, harris1963theory}). Since then, the BPs have been used to analyze several phenomena in cell proliferation, genetics, epidemiology, molecular biology, finance, information spreading over online social networks and many more. The basic idea behind the concept is that there is a family tree where each individual has the same probability distribution for the number of offspring. 

To be more precise, consider a population type, and denote it by $x$. Let initially there be one individual in the population. Then, let $\Cx_n$ be the number of living (current) individuals at $n$-th generation, for $n \geq 1$. Each individual lives for one generation. At the end of $n$-th generation, all the individuals die together, and just before dying, assume that $i$-th individual from $n$-th generation produces $\offs_{n, i}$ number of offspring. Thus, the population-size at $(n+1)$-th generation evolves as follows (see \cite{athreya2004branching}):
\begin{align}\label{eqn_pop_evolve_discreteBP}
\Cx_{n+1} = \sum_{i = 1}^{\Cx_n} \offs_{n, i}.
\end{align}
The offspring generated are assumed to be independent and identically distributed (i.i.d.) across generations and individuals. This property leads to the self-similarity of the process starting from any individual.

Let $p_k$ denote the probability that a parent in $n$-th generation produces $k$ offspring in $(n+1)$-th generation such that $\sum_{k = 0}^\infty p_k = 1$. The transition probabilities for the Markov chain are then defined as follows:
\begin{align}\label{eqn_prop_discreteBP}
P(\Cx_{n+1} = k|\Cx_n = i) = 
    \begin{cases} 
      p_k^{*i} &\mbox{ if } i \geq 1, k \geq 0, \\
      \delta_{0i} &\mbox{ if } i = 0, k \geq 0,
   \end{cases}
\end{align}
where $\{p_k^{*i}, k = 0, 1, \dots\}$ is the i-fold convolution of $\{p_k\}_{k \geq 0}$ and $\delta_{ik}$ is Kronecker delta. Since all the transitions occur after one generation and not between two consecutive generations, the underlying process is a discrete-time BP.

\subsection{Classification of BPs}
The dynamics in \eqref{eqn_pop_evolve_discreteBP}-\eqref{eqn_prop_discreteBP} are the simplest. Since then, many more complicated variants of BPs have been studied, which can be majorly classified as follows depending upon: 

\noindent \textbf{Time}: the dynamics can progress in \textit{discrete or continuous-time}. The continuous-time BPs are discussed in sub-section \ref{sub_sec_two_typeBP}.
 
\noindent \textbf{Number of population-types}: the interactions can involve \textit{single or multiple types}.
 
\noindent \textbf{Offspring distribution}: a BP is said to be  \textit{(current) population-dependent BP} if the offspring distribution depends on the number of living individuals (current population-size). Otherwise, the process is called \textit{population-independent BP}. 
 
\noindent \textbf{Lifetime distribution}: if the lifetime of any individual is exponentially distributed, then the BP is called a \textit{Markovian BP}. However, if the probability that any individual dies depends on age, then the resulting BP is an \textit{age-dependent BP}.
 
\noindent \textbf{Mean number of offspring}: BPs are also categorized depending upon the mean number of offspring produced (assumed to be finite) for different types of BPs. Here, we discuss the population-independent BP, while the discussion for the population-dependent variants is deferred to sub-section \ref{sub_sec_two_typeBP}.

Consider a population-independent BP with a single population-type. Say each parent produces $\offs$ number of random offspring (in discrete or continuous framework), as in \eqref{eqn_pop_evolve_discreteBP}. Let $m := E[\offs]$ be the expected number of offspring. Then, the process is in \textit{sub-critical regime} if $m < 1$, \textit{critical regime} if $m = 1$, or in \textit{super-critical regime} if $m > 1$ (see \cite[Chapter I, III]{athreya2004branching} for discrete-time and continuous-time BPs respectively). Our interest lies only in super or sub-critical BPs. 

Consider now the multi-type population-independent discrete-time BP with $d$-types of populations; we shall discuss multi-type population-independent (and dependent) continuous time BPs in sub-section \ref{sub_sec_two_typeBP}. Say $i$-th population type has $z_0^i$ number of individuals at $n = 0$. Let $Z_n^i$ be the population-size of the $i$-th type population at the $n$-th generation. At $(n+1)$-th generation, the population evolves as follows (see \cite{klebaner1989geometric}):
\begin{align}
    Z_{n+1}^j = \sum_{i=1}^d \sum_{\nu = 1}^{Z_n^i} \offs_{ij\nu}^{(n)}, \mbox{ where}
\end{align}$\offs_{ij\nu}^{(n)}$ is the number of $j$-type offspring produced by $\nu$-th parent of $i$-type, where $\nu \in \{1, \dots, Z_n^i\}$. Here, $\offs_{ij\nu}^{(n)}$ are i.i.d. as $\offs_{ij1}^{(n)}$, and $(\offs_{ij1}^{(1)})_{j \in \{1, \dots, d\}}$ are i.i.d. for any $i = 1, \dots, d$. Now, define the mean matrix, $M := [E(\offs_{ij1}^{(1)})]_{i, j \in \{1, \dots, d\}}$.  Then, according to \cite[Chapter V]{athreya2004branching}, the underlying process is said to be a \textit{super(sub)-critical BP} if the largest eigenvalue (in modulus), say $\rho$, of $M$ is strictly larger (smaller) than $1$.

Now, \textit{two of the crucial and commonly asked questions in BPs are about the extinction probability and the growth rate of the populations}. The answer to these questions depends on the criticality parameter. In the single-type population-independent BP, the population gets extinct, i.e., the population size converges to $0$ as time progresses, with probability (w.p.) $1$ in the sub-critical regime. While in the super-critical regime, the population exhibits \underline{\textit{dichotomy}}: the population-size either grows significantly large with non-zero probability or gets extinct (see \cite{athreya2004branching}). The former event is said to be \underline{explosion of the population}. For discrete-time BP, the rate of explosion is $m^n$. 

For the multi-type population-independent BP, there is a further division into:
(i)\textit{ irreducible} and (ii) \textit{decomposable} BP. The process is irreducible if a parent of each type generates offspring of all types. Otherwise, if a parent of $i$-type generates offspring of only $j$-types for $j \geq i$, then the resultant is a decomposable BP. 

Formally, if the mean matrix $M$ is positive regular (irreducible BP) and non-singular, then the process gets extinct w.p. $1$ in the sub-critical regime. Otherwise, the population again exhibits dichotomy: it either explodes at rate $\rho^n$ with non-zero probability or becomes extinct (see \cite{athreya2004branching}). One can refer to \cite{kesten1967limit} for the decomposable BP.

So far, we discussed the population-independent BPs. However, this thesis focuses on the new variants of multi-type population-dependent continuous-time Markov BPs. Next, we briefly explain a known classical variant as a first step towards this.

\subsection{Current population-dependent BP} \label{sub_sec_two_typeBP}
In this sub-section, we shall discuss two-type current population-dependent continuous-time Markovian BP. The dynamics can be easily reduced for the single-type variant and generalised for multi-type BP as well.

Consider two types of populations, denoted by $x$ and $y$. Let $\cx_0, \cy_0$ be their respective initial sizes. Let $\Cx(t)$ and $\Cy(t)$ be the \textit{current population} sizes, i.e., the number of living individuals of $x$ and $y$-type populations respectively at time $t$. Define $\Om(t) := (\Cx(t), \Cy(t))$ as the tuple of population sizes.

The lifetime of any individual of any type is exponentially distributed with parameter $0 < \lambda < \infty$, i.e., \textit{we consider Markovian BPs}, and the process $\Phi$ is a continuous-time jump process. The time instance at which an individual completes its lifetime is referred to as its `death' time.  Consider any $n \geq 1$.  
Let $\tau_n$ be the death time of the $n$-th individual (of any type) dying among the living population; let $\tau_0 := 0$. Let $\Cx_n := \lim_{t \uparrow {\tau}_n} \Cx(t)$ be the current-population size of $x$-type population, just before ${\tau}_n$. Similarly, define $\Cy_n$ and let $S^c_n :=  \Cx_n + \Cy_n$ be the sum current population just before $\tau_n$.

Once the population gets extinct, no births are possible, therefore, any state $\om := (\cx, \cy)$ with $\cx + \cy = 0$ is an absorbing state. Then, $\nu_e := \inf \{n : \Cx_n = 0\}$ represents the epoch at which the extinction occurs, with the usual convention that $\nu_e = \infty$, when $S_n^c > 0$ for all $n$. As is usually done, we extend the embedded process after extinction: define $\Om_n := \Om_{\nu_e}$ and $\tau_{n} :=\tau_{\nu_e}$, for all $n \geq \nu_e$, when $\nu_e < \infty$ (see \cite{athreya2004branching}). Observe here that no two individuals can die at the same time, as for each $n$, $P(\tau_{n+1} - \tau_n > 0) = 1$, since $(\tau_{n+1} - \tau_n)$ is exponentially distributed.

Any offspring is produced only at the death time by an individual (\cite{athreya2004branching, klebaner1989geometric}). No offspring will be produced in between two consecutive death times. Let $\Gamma_{ij}(\om)$ denote the (random) number of $j$-type offspring produced by $i$-type individual at its death time when the population state is given by $\phi$ for $i, j \in \{x, y\}$. 
If an $x$-type parent dies at $\tau_n$, the system for any $t \in [\tau_n, \tau_{n+1})$ (in case $\tau_{n} = \tau_{\nu_e}$, then for all $t \ge  \tau_n$),   can be described as:
\begin{equation}
\begin{aligned}
C^x(t) = C^x_n  + \offs_{xx}(\Om_n) - 1, \mbox{ and }
C^y(t) = C^y_n + \offs_{xy}(\Om_n).
\end{aligned}
\end{equation}
Similar evolution happens when a $y$-type parent dies.
Basically, the sizes of $i$ and $j$-type populations change by $\offs_{ii}(\Om_n)$ and $\offs_{ij}(\Om_n)$ respectively\footnote{For each $i, j$, the distribution of $\offs_{ij}(\Om_n)$ depends on the population sizes $(\Om_n)$, and not on the value of the epoch, $\tau_n$.}, and  the current size (not the total size) of $i$-type reduces by $1$ due to death. Thus, the underlying process is a continuous-time jump process. Also, observe\footnote{Define $\chi_k^x$ be the lifetime of the $k$-th $x$-type individual for $0 \leq k \leq \Cx_n$; similarly define $\chi_k^y$ for $0 \leq k \leq \Cy_n$. Then, $\chi_k^x$ and $\chi_k^y$ are exponentially distributed with parameter $\lambda$. This implies that (recall $\chi_k^x$ are independent):
$
P\left(\chi_n^x = \min\left\{\chi_1^x, \dots, \chi_{\Cx_n}^x, \chi_1^y, \dots, \chi_{\Cy_n}^y\right\}\right) = \frac{\Cx_n}{\Cx_n + \Cy_n} \mbox{ for any } 0 \leq k \leq \Cx_n.
$} that the probability of an $x$-type parent dying at time $\tau_n$, conditioned on $\sigma\{\Cx_n, \Cy_n\}$ is $\frac{\Cx_n}{\Cx_n + \Cy_n} =: B_n^c$. Thus, the probability that a $y$-type parent dies at time $\tau_n$ is $1-\Bc_n$. Observe \textit{$\Bc_n$ is the proportion of $x$-type population among the current population}.

Now, similar to discrete-time variants of BPs, we will discuss the asymptotic behaviour of BPs depending on the mean number of offspring produced. A single-type population-independent BP is said to be in super(sub)-critical regime if $m > 1$ (or $<1$). Here, again analogous to discrete-time variant, the BP exhibits dichotomy, and the population-size explodes exponentially at rate $\lambda(m-1)$ (see \cite[Chapeter III, Section 7, Theorems 1, 2]{athreya2004branching}). 

Consider a $2$-type population-dependent BP with $M(\om) := [E(\offs_{ij}(\om))]_{i, j \in \{x,y\}}$ as the corresponding population-dependent mean matrix. Let $M^\infty$ be the mean matrix of an appropriate super-critical multi-type population-independent BP, i.e., the one which has the largest eigenvalue strictly larger than $1$. Then, the author in \cite{jagers1997coupling} defines that such a BP is in super-critical regime if (recall, $\om = (\cx, \cy)$ is a realisation vector of population-sizes and $s^c = \cx+\cy$):
\begin{align}\label{eqn_super_critical_pop_dep_BP}
    ||M(\om) - M^\infty|| \to 0, \mbox{ as } s^c \to \infty,
\end{align}where the convergence is under the usual topology of matrices.
Similar notion is considered in \cite{klebaner1989geometric} for discrete-time population-dependent BPs, and named as \textit{near-super criticality}.
In any case, it is shown that the limiting behaviour is similar to the limiting population-independent BP.

\subsection{New Variants of BPs}\label{sub_sec_newBP}
Here, we highlight the key features of the new variants of (single or two-type) population-dependent continuous-time Markov BPs considered in this thesis:
\begin{enumerate}
    \item The offspring distribution can depend only on the number of living (current) individuals, or both on current population and the total individuals born so far (total-population); see Chapters \ref{ch:journal1}-\ref{ch:STPBP}.
    \item Each parent of any type produces a non-negative number of offspring of its type. Interestingly, a parent can produce non-negative or negative offspring of other type, depending upon the population-size; see Chapters \ref{ch:journal1}-\ref{ch:STPBP}.
    \item Any individual in a population can die unnaturally, see Chapter \ref{ch:journal2}. Such deaths can be due to competition or cooperation, as considered in \cite{etheridge2013conditioning, ojeda2020branching}, or due to changes in the physical environment (for example, temperature change, natural calamities, invasion of a new virus, etc.). 
    \item Classical BPs assume that the population-dependent mean matrix converges to constant mean matrix, as population-size grows (see \eqref{eqn_super_critical_pop_dep_BP}). We consider proportion dependent limit mean matrix (see \ref{a2} in Chapter \ref{ch:journal1} and \ref{a2_prop} in Chapter \ref{ch:journal2}).
    \item In Chapter \ref{ch:STPBP}, a single-type total population-dependent continuous-time Markov BP is explored where the process transitions from super-critical regime to sub-critical regime as the total population-size grows. Such a transition leads to the saturation of the total population-size.
\end{enumerate}

For all above BPs, our interest lies in deriving the approximate deterministic trajectories of the stochastic BP trajectory and the limits of the BPs. Towards this, unlike the well-known martingale approach (see \cite{athreya2004branching, harris1963theory}), we use the ordinary differential equation (ODE) based stochastic approximation (SA) technique. Next, we discuss existing and new results about the ODEs, which will lay the foundation for the coming discussion on SA based-method in Section \ref{sec_SA}. 

\section{Ordinary differential equations (ODEs)}\label{sec_ODE}
 In this section, we will discuss different types of ordinary differential equations (ODEs). We will study when the solution exists, under what conditions the solution is unique and also the time-asymptotic limits of the solutions of such ODEs. The initial discussion is inspired from \cite{piccinini2012ordinary, filippov2013differential, perko2013differential}. 
 
An \underline{ordinary differential equation} (ODE) is a relation among independent variable $t$, an unknown function $x(t)$ of that variable, and its derivatives. The general form of a $n$-dimensional ODE of first-order is given by:
 \begin{align}
     \dot{x_i} = \frac{d x_i}{dt} = f_i\left(t, x \right) \mbox{ for each } 1 \leq i \leq n,
\end{align}
where each $f_i$ is a real valued function of $(t, x) \in \mathbb{R}^+ \times \mathbb{R}^n$. The function $(x(t))_{t \geq 0}$ which satisfies the above equation is called the \underline{solution of the ODE}. If the function $f$ explicitly depends on time $t$, then the ODE is known as \underline{non-autonomous} ODE; else, the ODE is \underline{autonomous}. Now, at first, we will discuss existing types of ODEs and then present a new result for a special type of ODE that interests us.

\subsection{Existing ODEs}\label{subsec_existingODE}
\noindent \textbf{1. ODE with Lipschitz continuous Right hand sides:}
Consider the autonomous initial value problem (IVP):
\begin{align}\label{eqn_IVP}
    \dot{x} = f(x) \mbox{ and } x(t_0) = x_0, \mbox{ for } (t_0, x_0) \in \mathbb{R}\times \mathbb{R}^n,
\end{align}where the function $f(x) \in C^1(\mathbb{R}^n)$ and is Lipschitz continuous in $\mathbb{R}^n$. 

Then, by \cite[Theorem 3, Chapter 3]{perko2013differential}, a \underline{unique differentiable solution $x(t)$ exists} for all $t \geq 0$. Since the solution exists for all $t \ge 0$, the solution is a global solution. Further, the solution satisfies the following \underline{integral equation}:
\begin{align}\label{eqn_sol_ODE_integralform}
    x(t) = x_0 + \int_{s = t_0}^t f(x(s)) ds.
\end{align}
The reverse statement is also true, i.e., the solution of the above integral equation satisfies the ODE \eqref{eqn_IVP}. By \cite{piccinini2012ordinary}, the solution to \eqref{eqn_sol_ODE_integralform} is obtained by \underline{successive approxima}-\\ \underline{tion} as follows: (i) consider the constant function $x^{(0)}(t) \equiv x_0$, and (ii) define the function $x^{(m)}$ successively as follows:
    $$
         x^{(m)}(t) = x_0 + \int_{s= t_0}^t f\left( x^{(m-1)}(s)\right) ds, \mbox{ for } m \geq 1.
    $$
We will make use of the above approximation to numerically evaluate the ODE solution. 


\noindent \textbf{2. ODE with Continuous Right hand sides}:
Let us now consider the IVP:
\begin{align}\label{eqn_cont_ode}
    \dot{x} = f(t, x)  \mbox{ and } x(t_0) = x_0 , \mbox{ for }  (t_0, x_0) \in [a, b] \times  \mathbb{R}^n ,
\end{align}where $f$ is continuous, and not Lipschitz continuous as above. For such ODEs, the existence of the solution is guaranteed in local sense (see \cite[Chapter 3, Section 1.2]{piccinini2012ordinary}), and in global sense (for all $t \in [a,b]$, see \cite[Chapter 3, Section 1.3]{piccinini2012ordinary}) but it need not be unique. For example, consider the ODE $\dot{x} = \sqrt{|x|}$ with $x(t_0) = 0$; then, clearly $x \equiv 0$ and $x(t) = \frac{(t-t_0)^2}{4} \left(1_{t \geq t_0} - 1_{t \leq t_0}\right)$ are both  solutions for the said ODE, for all $t \geq 0$. 

In case, there are many solutions for the ODE, then each solution  can be bounded as in the  following (see \cite[Chapter 3, Corollary 2]{piccinini2012ordinary}). 
\begin{theorem}[Comparison Result] \label{thrm_comparison}
    Let $f:[a, b] \times \mathbb{R} \to \mathbb{R}$ be a continuous function and let $\alpha, \beta: [a, b] \times \mathbb{R} \to \mathbb{R}$ be two continuous and differentiable functions such that $\alpha < \beta$ and $\dot{\alpha} \leq f(t, \alpha(t)),  \dot{\beta} \geq f(t, \beta(t))$.     Then, every solution $x(t)$ of IVP $\dot{x} = f(t, x),$ $x(a) = x_0$ with $\alpha(a) < x_0 < \beta(a)$ satisfies $\alpha \leq x \leq \beta$ and exists in $[a, b]$. \eop
\end{theorem}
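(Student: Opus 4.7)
The plan is to establish the upper bound $x(t) \leq \beta(t)$ on $[a,b]$; the lower bound $\alpha(t) \leq x(t)$ will follow by a symmetric argument (applied to $-x$ playing the role of solution, with $-\beta$, $-\alpha$ swapping roles). Since the hypotheses supply only the weak differential inequalities $\dot\alpha \leq f(t,\alpha)$ and $\dot\beta \geq f(t,\beta)$, a naive first-contact argument fails: at a putative touching point $t^*$ where $x(t^*) = \beta(t^*)$ after having stayed below, the derivative test gives only $\dot x(t^*) \leq \dot\beta(t^*)$, which does not immediately exclude $x$ crossing $\beta$. The standard remedy is a strictification: perturb $\beta$ upward to a \emph{strict} super-solution $\beta_\epsilon$, prove the strict barrier inequality for $\beta_\epsilon$, and then send $\epsilon \downarrow 0$.

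Concretely, for small $\epsilon > 0$ I set $\beta_\epsilon(t) := \beta(t) + \epsilon(1+t-a)$, so that $\beta_\epsilon(a) = \beta(a) + \epsilon > x_0$ and $\dot\beta_\epsilon(t) = \dot\beta(t) + \epsilon \geq f(t,\beta(t)) + \epsilon$. Fixing any $\epsilon_0 > 0$, the function $f$ is uniformly continuous on the compact strip $\{(t,y): t \in [a,b],\ \beta(t) \leq y \leq \beta(t) + \epsilon_0(1+b-a)\}$, so there exists $\epsilon_1 \in (0, \epsilon_0]$ such that for all $\epsilon \in (0, \epsilon_1]$ and all $t \in [a,b]$,
\begin{equation*}
|f(t,\beta_\epsilon(t)) - f(t,\beta(t))| < \epsilon/2.
\end{equation*}
Combining the last two displays yields the strict super-solution inequality $\dot\beta_\epsilon(t) > f(t,\beta_\epsilon(t))$ throughout $[a,b]$.

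Now let $[a, T)$ be the maximal interval on which $x$ is defined, and suppose for contradiction that there exists $t \in [a,T)$ with $x(t) \geq \beta_\epsilon(t)$. Set $t^* := \inf\{t \in [a,T) : x(t) \geq \beta_\epsilon(t)\}$; by continuity and $x(a) = x_0 < \beta_\epsilon(a)$ one has $t^* > a$, $x(t^*) = \beta_\epsilon(t^*)$, and $x < \beta_\epsilon$ on $[a,t^*)$. Since $\beta_\epsilon - x$ is nonnegative on $[a,t^*]$ and vanishes at $t^*$, necessarily $(\beta_\epsilon - x)'(t^*) \leq 0$, i.e.\ $\dot x(t^*) \geq \dot\beta_\epsilon(t^*)$. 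But $\dot x(t^*) = f(t^*, x(t^*)) = f(t^*, \beta_\epsilon(t^*)) < \dot\beta_\epsilon(t^*)$ by the strict super-solution inequality, a contradiction. Hence $x < \beta_\epsilon$ on $[a,T)$; letting $\epsilon \downarrow 0$ gives $x \leq \beta$ on $[a,T)$, and symmetrically $\alpha \leq x$ on $[a,T)$. Finally, because $x$ then stays in the bounded strip between $\alpha$ and $\beta$ on its maximal existence interval, the standard continuation theorem for ODEs with continuous right-hand side (recalled just before the statement) rules out finite-time escape and forces $T \geq b$, so $x$ exists on all of $[a,b]$. The one delicate step, and where I would spend the most care, is the perturbation and uniform-continuity bookkeeping that upgrades the super-solution inequality from weak to strict; everything else is an essentially mechanical first-contact argument.
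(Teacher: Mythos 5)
The proof breaks at the strictification step. Uniform continuity of $f$ on the compact strip only gives $|f(t,\beta_\epsilon(t))-f(t,\beta(t))|\le \omega\bigl(\epsilon(1+b-a)\bigr)$ for some modulus of continuity $\omega$, and nothing forces $\omega\bigl(\epsilon(1+b-a)\bigr)<\epsilon/2$ for small $\epsilon$: you are asking the modulus of a merely continuous $f$ to be $o(\epsilon)$ at scale $\epsilon$, which is a Lipschitz-type requirement, not a consequence of continuity. Concretely, take $\beta\equiv 0$ and $f(t,y)=3|y|^{2/3}$, so $\dot\beta=0\ge f(t,\beta)$; then $f(t,\beta_\epsilon(t))-f(t,\beta(t))=3\bigl(\epsilon(1+t-a)\bigr)^{2/3}\ge 3\epsilon^{2/3}>\epsilon=\dot\beta_\epsilon-\dot\beta$ for every small $\epsilon$, so your $\beta_\epsilon$ is never a strict super-solution and the first-contact contradiction never gets off the ground. (Even for Lipschitz $f$ with constant $L$ the linear perturbation $\epsilon(1+t-a)$ only works when $L(1+b-a)<1$; the standard fix in that setting is $\beta_\epsilon=\beta+\epsilon e^{K(t-a)}$ with $K>L$.)

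Moreover, the gap is not a repairable technicality under the stated hypotheses: with weak differential inequalities and non-Lipschitz $f$, the conclusion for \emph{every} solution genuinely fails. With the same $f(t,y)=3|y|^{2/3}$, $\beta\equiv 0$, $\alpha\equiv -1$ on $[0,1]$ and $x_0=-1/8$, the function $x(t)=(t-\tfrac12)^3$ solves $\dot x=f(t,x)$, $x(0)=x_0$, yet $x(t)>\beta(t)$ for $t>\tfrac12$. What is true in this generality is the comparison against minimal/maximal solutions, or the statement with strict inequalities $\dot\alpha<f(t,\alpha)$, $\dot\beta>f(t,\beta)$ (for which your first-contact argument works verbatim, with no $\epsilon$ needed); this is how \cite[Chapter 3, Corollary 2]{piccinini2012ordinary} should be read, and note the paper itself offers no proof here — it quotes that source — while in the places the result is actually invoked (Lemmas \ref{lemma_psi_c_general} and \ref{lemma_existence}) strictness or Lipschitz continuity of the right-hand side is available. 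The remaining pieces of your argument — the first-contact computation given a strict super-solution, the symmetric lower bound, and the continuation argument confining $x$ to the strip to get existence on $[a,b]$ — are fine; the failure is exactly at the weak-to-strict upgrade.
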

We shall use this comparison result later in Chapters 3, and 5 (see Lemma \ref{lemma_psi_c_general} and Lemma \ref{lemma_existence} respectively). In fact, it is possible to bound all the solutions of the ODE within two functions. It is proved in \cite[Chapter 3, Sub-section 2.2]{piccinini2012ordinary} that there exists two integrals $x = G(t)$ and $x = g(t)$ of the underlying IVP such that any solution $x(t)$ of the IVP can be bounded as:
$$
    g(t) \le x(t) \le G(t).
$$
The solutions $g$ and $G$ are called the \underline{minimal and maximal solutions}. This result of bounding all solutions of IVP under consideration leads to the \underline{Peano Phenomenon} (see \cite[Chapter 3, Sub-section 2.2]{piccinini2012ordinary}). The uniqueness of solutions is guaranteed for such ODEs under restricted conditions, see \cite[Chapter III, Section 3]{piccinini2012ordinary}.

Henceforth, we will consider the following autonomous ODE and derive its stability analysis:
\begin{align}\label{eqn_cont_auto_ODE}
    \dot{x} = f(x) \mbox{ and } x(t_0) = x_0 , \mbox{ for }  (t_0, x_0) \in \mathbb{R} ^+ \times  \mathbb{R}^n,
\end{align}where $f$ is a continuous function.  In particular, we will discuss the time-asymptotic analysis of ODE \eqref{eqn_our_ODE}, and provide several definitions in this regard. We will also re-write the notions in \cite[Chapter 5, Section 2]{piccinini2012ordinary} and \cite[Section 8.4]{hirsch2012differential} in our words.  When $f$ is continuous, one can have many solutions as said above, and textbooks discuss the stability of each of the solution. 
\begin{definition}\label{defn_equi_pt_prelim}
     A set ${\mathbf E} := \{x^*: f( x^*) = 0\}$ is called the set of \underline{equilibrium points} for the ODE \eqref{eqn_cont_auto_ODE}.
\end{definition}
By above, it is clear that if the initial condition $x(0) = x^* \in {\mathbf{E}}$, then the ODE solution remains fixed, i.e., $x(t) = x^*$ for all $t \geq 0$. Next, we see how the ODE solution behaves in a neighborhood of the equilibrium points. Define open ball, $N_\epsilon(\mathbf{A}) := \{x: d(x, \mathbf{A}) < \epsilon\}$ for some finite set $\mathbf{A}$.
\begin{definition}\label{defn_stable_prelim}
A subset ${\mathbf{A}}$ of ${\mathbf E}$ is said to be a \underline{(locally) stable set} for ODE \eqref{eqn_cont_auto_ODE}
if for any $\epsilon > 0$, there exists a $\delta > 0$ such that every solution 
 of the ODE $x(t) \in N_\epsilon(\mathbf{A})$ for every $t > 0$, if initial condition $x(0) \in N_\delta(\mathbf{A})$.
 \end{definition}
\begin{definition}\label{defn_attractors_prelim}
A locally stable set $\mathbf{A}$ is called an \underline{attractor} or \underline{asymptotically stable set} and $\mathbf{D}_\mathbf{A}$ is the \underline{domain of attraction} for ODE \eqref{eqn_cont_auto_ODE}
if every solution $x(t) \to {\mathbf{A}}$ as $t \to \infty$ when $x(0) \in \mathbf{D}_\mathbf{A}$.
\end{definition}
\begin{figure}[http]
    \centering
    \includegraphics[trim = {0cm 0cm 0cm 0cm}, clip, scale = 0.4]{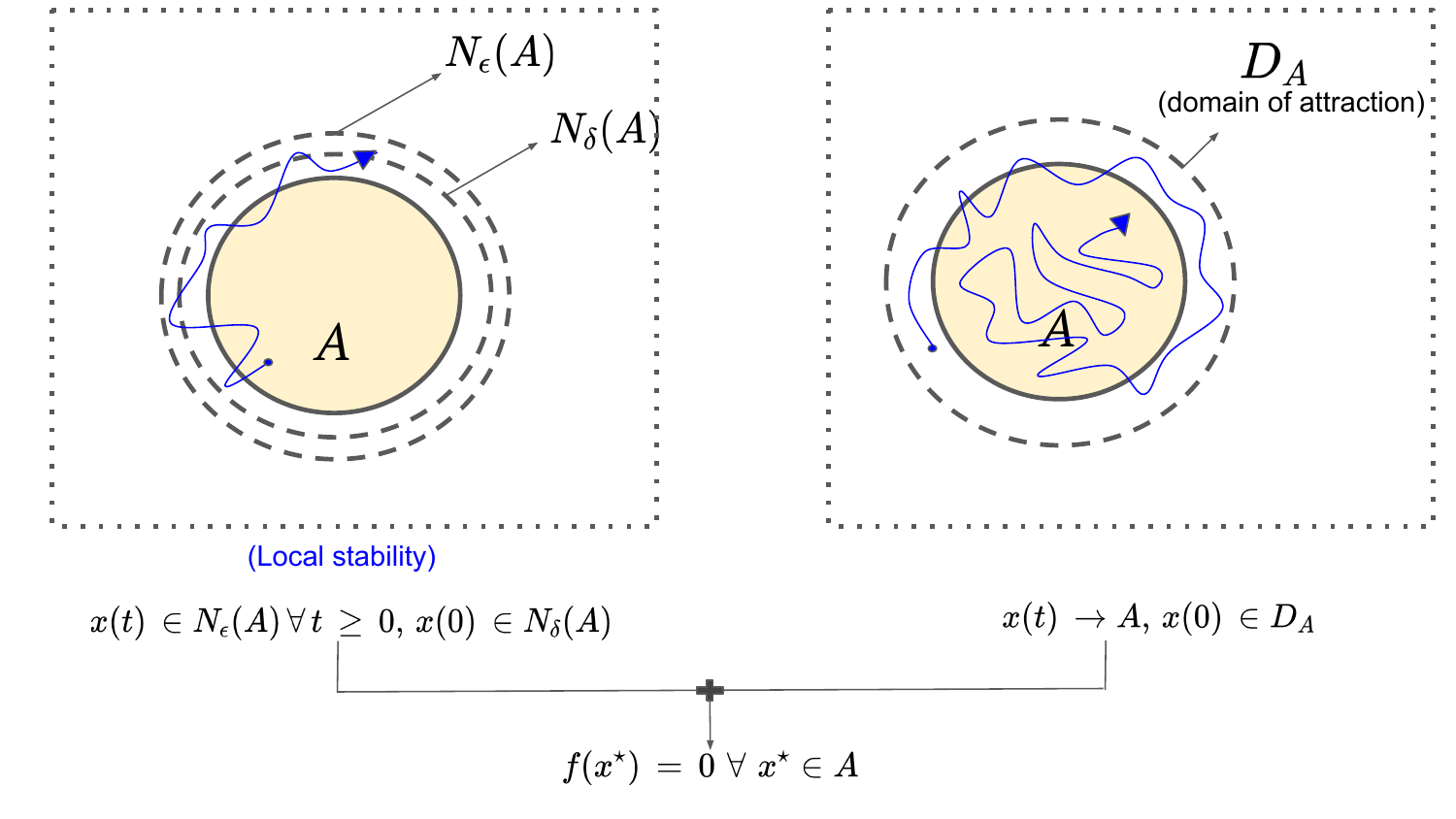}
    \caption{Attractor set}
    \label{fig:attractor}
\end{figure}
Similar definitions hold for the non-autonomous ODEs when $f$ is continuous (see \cite[Chapter 5, Section 2]{piccinini2012ordinary}) or even when $f$ is more general (for example, when $f$ satisfies the Carath\'{e}odory conditions, which we discuss below, see \cite[Section 5]{lasalle1976stability}). Let $\mathbf{A}^\complement$ be the complement of $\mathbf{A}$ and let us define the following:
\begin{definition}\label{defn_saddle_pt_prelim}
A set ${\mathbf{S}} \subset \mathbf{A}^\complement \cap {\mathbf E}$ is \underline{saddle set} for ODE \eqref{eqn_cont_auto_ODE} if there exists ${\mathbf D}_\mathbf{S}$ such that $x(t) \stackrel{t \to \infty}{\longrightarrow} \mathbf{A} $   for some $x(0) \in \mathbf{S}^\complement \cap {\mathbf D}_\mathbf{S}$ and   $x(t) \stackrel{t \to \infty}{\longrightarrow} \mathbf{S} $  for some other $x(0) \in \mathbf{S}^\complement \cap {\mathbf D}_\mathbf{S}$. 
\end{definition}
\begin{figure}[http]
\centering
\begin{minipage}{.45\textwidth}
  \centering
  \includegraphics[trim = {0cm 1cm 1cm 0cm}, clip, scale = 0.38]{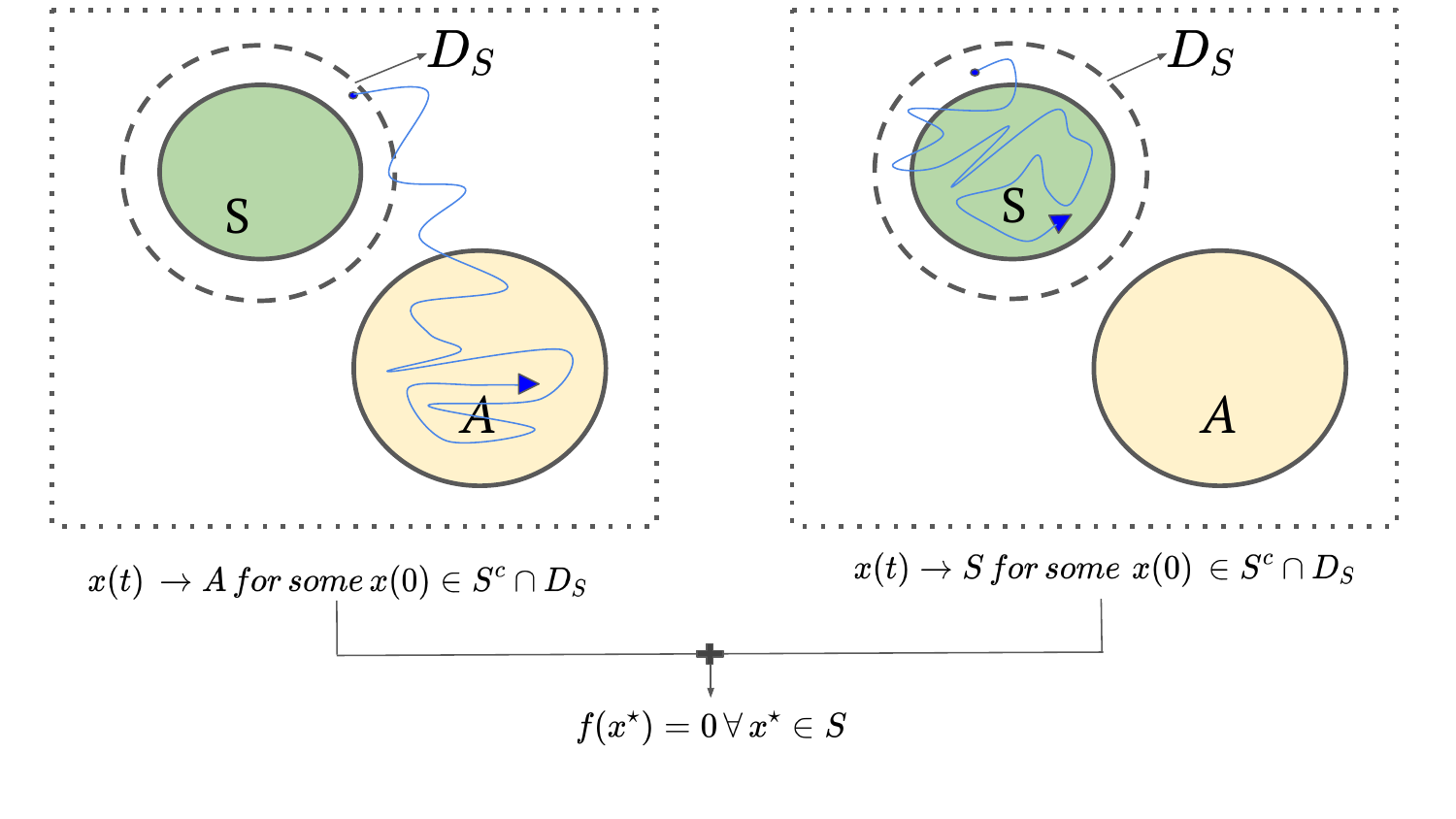}
  \captionof{figure}{Saddle set}
    \label{fig:saddle}
\end{minipage}%
\begin{minipage}{.1\textwidth}
    \hspace{2mm}
\end{minipage}%
\begin{minipage}{.45\textwidth}
  \centering
  \includegraphics[trim = {7cm 1cm 7cm 0cm}, clip, scale = 0.38]{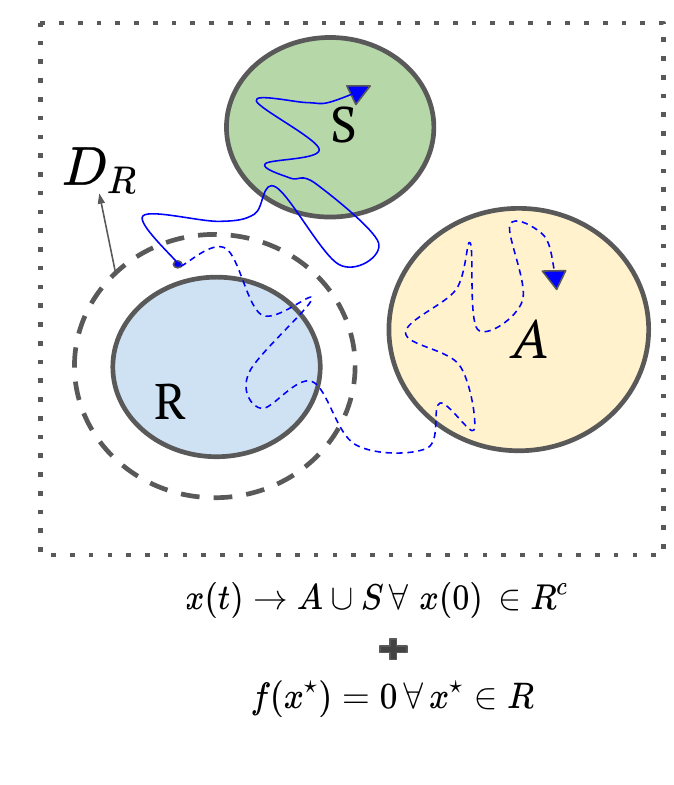}
  \captionof{figure}{Repeller set}
    \label{fig:repeller}
\end{minipage}
\end{figure}

Observe here that when we consider domain of attraction ${\mathbf D}_\mathbf{S}$ for the saddle set ${\mathbf{S}}$, the ODE solution converges to the attractor set for some initial conditions in $\mathbf{S}^\complement \cap {\mathbf D}_\mathbf{S}$, and it converges to the saddle set for the other initial conditions. This notion of domain of attraction is different than that for the attractor set. 
 \begin{definition}\label{defn_repeller_prelim}
A set $\mathbf{R} \subset (\mathbf{A} \cup \mathbf{S})^\complement \cap {\mathbf E}$ is a \underline{repeller set} for the ODE \eqref{eqn_cont_auto_ODE}  if $x(t) \stackrel{t \to \infty}{\longrightarrow} \mathbf{A} \cup \mathbf{S}$  for all $x(0) \in \mathbf{R}^\complement$. 
\end{definition}

In Theorem \ref{thrm_one_dim_ODE}, we will derive the attractor, saddle and repeller sets  for one-dimensional autonomous ODE, in fact, even for the case when right hand side of the ODE is not continuous. Further, in Theorem \ref{thrm_our_ODE}, we will derive these sets for a special $n$-dimensional ODE, for $n \geq 1$.

Next, we move on to discuss ODEs with discontinuous right hand sides. The discontinuity can be in terms of $x$ and or $t$ leading to different solution concepts and limiting properties.

\noindent \textbf{3: Carath\'{e}odory ODE}:
Consider the function $f$ that satisfies the following three properties in the domain $D := \{t \in [t_0, t_0+ a], x\in [x_0-b, x_0+b]\}$ of $(t, x)$-space:
\begin{enumerate}[label = (\roman*)]
    \item the function $f(t, x)$ is defined and continuous in $x$ for almost all $t$,
    \item the function $f(t, x)$ is measurable in $t$ for each $x$, and
    \item $|f(t, x)| \leq m(t)$, where the function $m(t)$ is summable\footnote{the integral $\int_t |m(t)|$ exists and is finite.} (on each finite interval if $t$ is not bounded in the domain $D$).
\end{enumerate}
Then, the \underline{Carath\'{e}odory ODE} is given by:
\begin{align}\label{eqn_cara_ODE}
    \dot{x} = f(t, x), \mbox{ with } x(t_0) = x_0, \mbox{ for } (t_0, x_0) \in D \mbox{ and } f(\cdot, \cdot) \mbox{ satisfy (i)-(iii) as above.}
\end{align}
Next, we define the solution of the above ODE (see \cite[Chapter 1]{filippov2013differential}):
\begin{definition}\label{defn_caraODE_solution}
A function $(x(t))_{t \geq 0}$ defined on an open/closed interval $I$ is said to be a \underline{solution of the Carath\'{e}odory equation} if it is absolutely continuous on each closed interval $[\alpha,\beta] \subset I$, and satisfies the equation \eqref{eqn_cara_ODE} for almost all $t \geq 0$. Or equivalently, if it satisfies the integral equation \eqref{eqn_sol_ODE_integralform} for some $t_0 \in I$.
\end{definition}Then, there exists a (local) solution of the Carath\'{e}odory ODE $\dot{x} = f(t, x)$ with $x(t_0) = x_0$ on a closed interval $[t_0, t_0 + d]$, where $d > 0$ (see \cite[Chapter 1, Theorem 1]{filippov2013differential}). Further, if there exists a summable function $l(t)$ such that for any points $(t, x)$ and $(t, y)$ of the domain~$D$:
$$
    |f(t, x) - f(t, y)| \leq l(t) |x-y|,
$$
then, the \underline{solution is unique} in $D$ (see \cite[Chapter 1, Theorem 2]{filippov2013differential}).

Next, we would like to briefly bring the reader's attention to two more interesting ODEs, which we do not touch upon in the thesis.

\noindent \textbf{4. Discontinuous systems}:  Consider a function $f(t, x)$, where $(t, x) \in \mathbb{R} \times \mathbb{R}^n$. Say that the function $f$ is piecewise continuous in a finite domain $G \subset \mathbb{R}^{n+1}$ if the domain $G$ has (i) a finite number of sub-domains, $G_i$, in each of which the function $f$ is continuous upto the boundary and (ii) a set $M$ of measure zero which consists of boundary points of these sub-domains. 
Then, \cite[Chapter 2]{filippov2013differential} analyses the following ODE:
\begin{align}\label{eqn_filippov_ODE}
    \dot{x} = f(t, x).
\end{align} One simple example of such non-autonomous ODE is (with $M = \{0\} \times \mathbb{R}^n)$:
\[
\dot{x} = 
\begin{cases}
3, &\mbox{if } x < 0\\
1, &\mbox{ if } x = 0,\\
-1, &\mbox{if } x > 0.
\end{cases}
\]
Then, if $x(0) > 0$, the solution is well-defined till $x(t) > 0$, while, if $x(0) < 0$ the solution is again well-defined till $x(t) < 0$. However, in any case as $t$ increases, the solution proceeds towards $x(t) = 0$, but that does not satisfy the above ODE. Therefore, such ODEs are more difficult than Carath\'{e}odory ODE, and hence regular definitions of solution can not be applied. \textit{Basically, one does not know how the solution can be continued} (for example, as $x(t)$ approaches $0$ in the above example).

For such  ODEs, the solution is given via the solution of an appropriate differential inclusion. If at point $(t, x)$ the function $f$ is continuous, then define the set $F(t, x) = \{f(t, x)\}$; else, the set $F(t, x)$ can be defined in different ways to cater to different physical systems. We do not get into the details of how the differential inclusion is defined, but the interested reader can refer to \cite[Chapter 2]{filippov2013differential} for a detailed discussion. Nevertheless, the \underline{solution of \eqref{eqn_filippov_ODE}} \textit{is given by the solution of the following differential inclusion}:
$$
    \dot{x} \in F(t, x),
$$that is, the solution is an absolutely continuous function $x(t)$ defined on an interval or on a segment $I$ for which $x(t)$ satisfies above differential inclusion almost everywhere on $I$. The stability analysis for ODE \eqref{eqn_filippov_ODE} is provided in \cite[Chapter 3]{filippov2013differential}.

\noindent \textbf{5. Asymptotically autonomous ODE}: So far, we studied autonomous and non-autonomous ODEs separately. Interestingly, there are non-autonomous ODEs which become autonomous with time. To be precise, consider the ODE \eqref{eqn_cara_ODE}. Further assume that there exists a continuous function $f^* :\mathbb{R}^n \to \mathbb{R}^n$ such that for all compact $C \subset \mathbb{R}^n$ and all $\epsilon > 0$, there exists $T \geq 0$ which satisfies (see \cite[Assumption (AA)]{logemann2003non}):
$$
\mbox{ess sup}_{t \geq T} |f(t, x) - f^*(x)| < \epsilon, \mbox{ for all } x \in C.
$$
Thus, the function $f(t, x)$ essentially approaches $f^*(x)$ locally uniformly in $x$, as $t\to \infty$.
For such ODEs, the asymptotic (stability) analysis can be derived via the limiting ODE $\dot{x} = f^*(x)$, see \cite[Corollary 4.1]{logemann2003non}. We have presented here a simple case where the limiting system is an ODE, but \cite{logemann2003non} considers the differential inclusion in the limit. 

After providing the required background for ODEs, we will discuss the ODE of our interest and present the main result for the same.

\subsection{Our result}\label{subsec_ourODE}
In this thesis, we shall encounter a specific form of an autonomous ODE with non-linear and (possibly) discontinuous right hand side; we will derive its analysis. Consider the following system of ODE on $\mathbb{R}^n$, for some $n < \infty$:
\begin{align}\label{eqn_our_ODE}
\begin{aligned}
    \dot{x} &= h\big(z\big(x\big)\big) - x \mbox{ with } x(0) = x_0,
\end{aligned}
\end{align}where the functions $h, z$ satisfy the following:
\begin{enumerate}[label=\textbf{A.\arabic*}, ref=\textbf{A.\arabic*}]
    \item Let $z : \mathbb{R}^n \mapsto \mathbb{R}$ be a one-dimensional function of $x \in \mathbb{R}^n$. Further, let $h: \mathbb{R} \mapsto \mathbb{R}^n$ be a measurable function. \label{a1_prelim}
\end{enumerate}
For the above structure of ODE, under certain conditions, one can derive the stability analysis, i.e., the description of attractor ($\mathbf{A}$), saddle ($\mathbf{S}$) and repeller ($\mathbf{R}$) sets and their respective domains of attraction; the definition for these sets are as in Definition \ref{defn_attractors_prelim}, \ref{defn_saddle_pt_prelim}, \ref{defn_repeller_prelim} respectively. We do precisely this in the present subsection. Towards this, we first need to define the solution of the following form (similar to Definition \ref{defn_caraODE_solution}):
\begin{definition}\label{defn_solution_prelim}
A function $(x(t))_{t \geq 0}$ is said to be an \underline{extended solution of ODE \eqref{eqn_our_ODE}} if it is absolutely continuous, and satisfies the equation \eqref{eqn_our_ODE} for almost all $t \geq 0$.
\end{definition}
\textit{Assume that there exists a unique solution $x(\cdot)$ for ODE \eqref{eqn_our_ODE} in the extended sense  over any bounded interval.} Now, we proceed to derive the sets $\mathbf{A}$, $\mathbf{S}$ and $\mathbf{R}$ for the ODE \eqref{eqn_our_ODE}. The main idea is to exploit the structure of ODE \eqref{eqn_our_ODE}, derive the ODE for $z$, and use its asymptotic limits to derive that of the original ODE. In particular, suppose that the ODE for $z(\cdot)$ has the following separable form:
\begin{align}\label{eqn_z_ODE}
    \dot{z} = g_1(x) g_2(z),
\end{align}where the functions $g_1, g_2$ satisfy the following: 
\begin{enumerate}[label=\textbf{A.\arabic*}, ref=\textbf{A.\arabic*}]
\setcounter{enumi}{1}
    \item The functions $g_1: \mathbb{R}^n \mapsto \mathbb{R}$ and $g_2 : \mathbb{R} \mapsto \mathbb{R}$ are measurable. Further assume that the ODE \eqref{eqn_our_ODE} has the following structure: if the initial condition $x_0$ is such that $g_1(x_0) > 0$, then, the corresponding ODE solution satisfies $g_1(x(t)) > g_1(x_0) - \delta > 0$ for all $t\geq 0$ and for any $\delta \in (0, g_1(x_0))$. \label{a2_prelim}
\end{enumerate}
Observe that if $g_1(x) > 0$ for all $x$, then the above condition related to $g_1$ is readily satisfied. Further, if $g_1(x) = 0$ for all $x$, then the ODE \eqref{eqn_our_ODE} is trivially given by $\dot{x} = h(z(x_0)) - x$ whose analysis is straightforward (and the analysis of $z$-ODE is not required); in fact, for the above, all we need is $g_1(x_0) = 0$, i.e., only at the initial condition.

In our case, i.e., for the ODEs approximating the BPs, we do not encounter the first condition. While the second condition is satisfied at the equilibrium point which represents extinction for the BPs. However, the ODEs of this thesis for the rest of the initial conditions indeed satisfy the above assumption \ref{a2_prelim}. 

Further, if the function $g_1(x(t)) > 0$ for all $t \geq 0$ and for some initial condition $x_0$ as in assumption \ref{a2_prelim}, then one may anticipate that $g_1$ does not affect the asymptotic analysis of ODE \eqref{eqn_z_ODE}. We will indeed show that this is true, i.e., for such initial condition $x_0$, the asymptotic analysis of the ODE \eqref{eqn_z_ODE} can be derived by analysing the following one-dimensional ODE:
\begin{align}\label{eqn_one_dim_ODE}
    \dot{z} = g_2(z).
\end{align}
We begin by presenting the asymptotic limits of the above ODE under the following assumption:
\begin{enumerate}[label=\textbf{A.\arabic*}, ref=\textbf{A.\arabic*}]
\setcounter{enumi}{2}
    \item Consider any non-empty interval $[a,b] \subset \mathbb{R}$ such that $g_2(a) \geq 0$ and $g_2(b) \leq 0$. Let ${\cal I}$ be the set of equilibrium points for the ODE \eqref{eqn_one_dim_ODE} in $[a,b]$ and say ${\cal I}= \{z_i^*: 1 \leq i \leq n\}$, for some $1 \leq n < \infty$.  For each $i$, let there exist an open/closed/half-open non-empty interval around $z_i^* \in {\cal I}$, say ${\cal N}_i^*$, such that $\cup_{1\leq i\leq n} {\cal N}_i^* = [a,b]$ and ${\cal N}_i^* \cap {\cal N}_j^* = \emptyset$ for $i\neq j$. Define ${\cal N}_i^- := {\cal N}_i^*\cap [a, z_i^*)$ and 
${\cal N}_i^+ := {\cal N}_i^*\cap (z_i^*, b]$. Let $g_2(x)$ be Lipschitz continuous on ${\cal N}_i^-$ and $ {\cal N}_i^+$ for each $i$. \label{asmp_ode}
\end{enumerate}
\begin{theorem}\label{thrm_one_dim_ODE}
Assume \ref{a1_prelim}-\ref{asmp_ode}. Then, the solution of the ODE \eqref{eqn_one_dim_ODE} exists in extended sense and further, the following are true for the ODE \eqref{eqn_one_dim_ODE}: 
\begin{enumerate}[label=(\roman*)]
    \item if $g_2(z) > 0$ for all $z \in {\cal N}_i^-$, $g_2(z) < 0$ for all $z \in {\cal N}_i^+$, then, $z_i^*$ is an attractor with the domain of attraction of $z_i^*$ as ${\cal N}_i^*$;
    \item   if $g_2(z) < 0$ for all $z \in {\cal N}_i^-$ and $g_2(z) > 0$ for all $z \in {\cal N}_i^+$, then, $z_i^*$ is a repeller;
    \item else if $g_2(z) > 0$ (or $g_2(z) < 0$) for all $z \in {\cal N}_i^-$ and $g_2(z) > 0$ (or $g_2(z) < 0$ respectively) for all $z \in {\cal N}_i^+$, then, $z_i^*$ is a saddle point with the domain of attraction of $z_i^*$ as ${\cal N}_i^*$. 
    \item thus, $\mathbf{A} := \{z_i^*: z_i^* \mbox{ is an attractor}\}$ is the attractor set with $\mathbf{D}_\mathbf{A} = \cup_{\{i : z_i^* \in \mathbf{A}\}} {\cal N}_i^*$, $\mathbf{S} := \{z_i^*: z_i^* \mbox{ is a saddle point}\}$ is the saddle set with $\mathbf{D}_\mathbf{S} = \cup_{\{i : z_i^* \in \mathbf{S}\}} {\cal N}_i^*$ and $\mathbf{R} := \{z_i^*: z_i^* \mbox{ is a repeller}\}$ is the repeller set. \eop
\end{enumerate}
\end{theorem}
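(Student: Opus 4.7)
The plan is to reduce the analysis to monotonicity of $z(t)$ on each one-sided interval $\mathcal{N}_i^\pm$, where by assumption \ref{asmp_ode} $g_2$ is Lipschitz with constant sign. First I would establish existence of an extended solution by applying Picard--Lindel\"of on each $\mathcal{N}_i^\pm$ and splicing: a trajectory that reaches an equilibrium $z_i^*$ is continued by $z(\cdot)\equiv z_i^*$, while one that crosses a non-equilibrium boundary between adjacent $\mathcal{N}_i^*$ and $\mathcal{N}_{i+1}^*$ (necessarily in finite time, as $|g_2|$ is bounded away from $0$ near such interfaces by continuity and the absence of equilibria there) is glued to the next Picard piece. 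The resulting function is absolutely continuous and satisfies \eqref{eqn_one_dim_ODE} almost everywhere; invariance of $[a,b]$ under the dynamics follows from $g_2(a)\ge 0$ and $g_2(b)\le 0$, giving a global extended solution.

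For the three sign cases, the strict monotonicity of $z(t)$ within each $\mathcal{N}_i^\pm$ drives the classification. In case (i), $g_2>0$ on $\mathcal{N}_i^-$ makes $z(t)$ strictly increasing and bounded above by $z_i^*$, so $z(t)\to z_i^*$ either in the limit (if $g_2(z)\to 0$ as $z\uparrow z_i^*$) or in finite time, after which the sign on $\mathcal{N}_i^+$ blocks any onward evolution and $z(\cdot)\equiv z_i^*$ is the only continuation; the symmetric argument from $\mathcal{N}_i^+$ completes the attractor claim with domain $\mathcal{N}_i^*$. In case (ii), the reverse sign pattern makes $z(t)$ monotonically leave $z_i^*$, and a uniform lower bound on $|g_2|$ on compact sub-intervals of $\mathcal{N}_i^\pm$ separated from $z_i^*$ forces the orbit to exit $\mathcal{N}_i^*$ in finite time into an adjacent $\mathcal{N}_{i\pm 1}^*$; by sign-consistency at interior interfaces (else continuity of $g_2$ there would produce an additional equilibrium), the neighbouring central equilibrium cannot itself be a repeller, so iterating across the finite chain yields convergence to $\mathbf{A}\cup\mathbf{S}$ and confirms that $z_i^*$ is a repeller. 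Case (iii) hybridises the two: one side attracts as in (i), the other exits as in (ii), giving a saddle with domain $\mathcal{N}_i^*$. Item (iv) is an immediate reassembly of (i)--(iii).

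The main difficulty I foresee is the gluing step: verifying that the spliced function meets the extended-solution definition across non-equilibrium interfaces where the one-sided Lipschitz constants of $g_2$ need not agree, and making the iterated-exit argument in case (ii) precise by induction on the finite chain of adjacent neighbourhoods. A subtler point is that the Lipschitz assumption at an equilibrium is only one-sided, so a trajectory can reach $z_i^*$ in finite time rather than only in the limit; in cases (i) and (iii) the sign conditions block further motion, forcing the constant continuation, while in case (ii) branching could a priori occur at $z_i^*$ but this is irrelevant for the repeller definition since only orbits starting off $\mathbf{R}$ need be tracked.
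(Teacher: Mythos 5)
Your proposal is correct and follows essentially the same route as the paper: the paper proves this result through Theorems \ref{thrm_attractors_beta} and \ref{thrm_beta_ODE_prop}, i.e.\ a phase-line argument with Picard--Lindel\"of on the one-sided Lipschitz pieces (via a Lipschitz extension near a discontinuous equilibrium), the monotonicity/finite-exit-time facts of Lemma \ref{lemma_general_ode_BPA} and Lemma \ref{lemma_tau_finite}, constant continuation once an equilibrium is reached, and, in the repeller/saddle cases, convergence of the exiting trajectory to the adjacent equilibrium. Your splicing of Picard pieces at non-equilibrium interfaces and the sign-consistency remark make explicit what the paper uses implicitly when it asserts $\beta(t)\to y^*$ for the repeller case, so there is no substantive difference in approach.
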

The proof of above Theorem follows as in Theorem \ref{thrm_beta_ODE_prop}. Observe that the condition $g_2(a) \geq 0$ and $g_2(b) \leq 0$ in \ref{a3_prelim} ensures that the interval $[a, b]$ is positive invariant for the ODE \eqref{eqn_one_dim_ODE}. Further, this implies that $\mathbf{D}_\mathbf{A} \cup \mathbf{D}_\mathbf{S} = [a,b]$.
\begin{figure}[http]
    \centering
    \includegraphics[trim = {0cm 2cm 2cm 1cm}, clip, scale = 0.4]{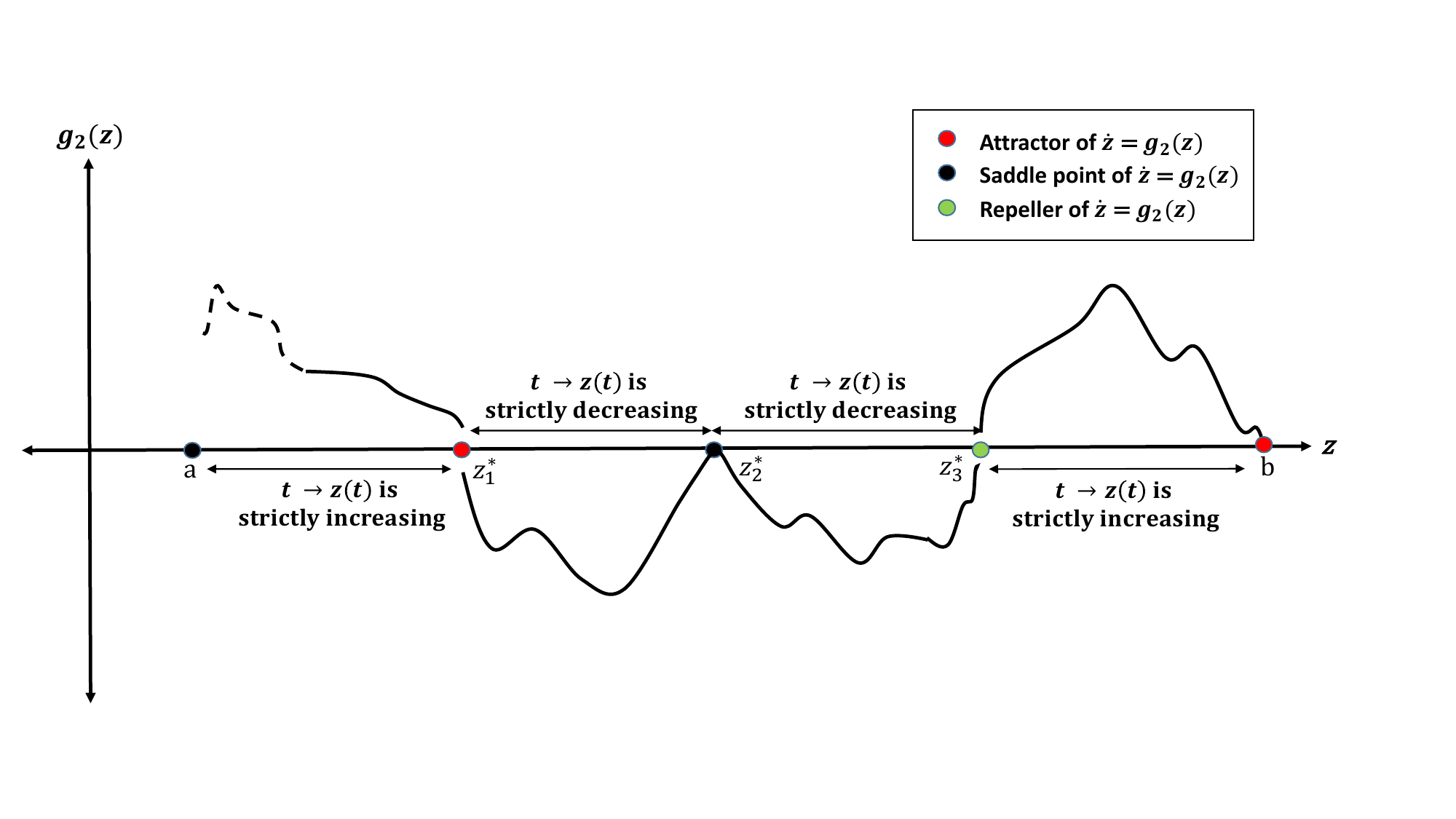}
    \caption{Asymptotic limits for one-dimensional ODE }
    \label{fig:notions_stability}
\end{figure}
An instance of Theorem  \ref{thrm_our_ODE} is presented in Figure \ref{fig:notions_stability} as: say $z(0)$ is in left of $z_1^*$, then since $g_2(z) > 0$, therefore, we show in the proof of above Theorem that $t \mapsto z(t)$ is an increasing function such that $z(t) \to z_1^*$ as $t \to \infty$; while if $z(0) \in (z_1^*, z_2^*)$, then again $z(t) \to z_1^*$, and $z(t)$ moves away from $z_2^*$. This when continued for $z(0)$ in other intervals, it leads to the conclusion that $z_1^*,b$ are attractors, $a, z_2^*$ are the saddle points and $z_3^*$ is the repeller for the ODE \eqref{eqn_one_dim_ODE} (see Definitions \ref{defn_attractors_prelim}-\ref{defn_repeller_prelim}).

In general, observe that since the function $g_2$ is Lipschitz continuous on neighborhoods ${\cal N}_i^-$ and $ {\cal N}_i^+$ for each $i$, therefore, the solution $z(\cdot)$ of the ODE \eqref{eqn_one_dim_ODE} exists in the respective neighborhoods. Further, observe that the function $g_2$ can be continuous, or even dis-continuous at the equilibrium points $\{z_i^* : 1\leq i \leq n\}$. In Theorem \ref{thrm_attractors_beta} of Chapter \ref{ch:journal1}, we consider $g_2$ to be dis-continuous, while in Theorem \ref{thrm_beta_ODE_prop} of Chapter \ref{ch:journal2}, the ODE with $g_2$ as a continuous function is dealt. Here, we are generalising the two results to the case where $g_2$ can be continuous/discontinuous at the equilibrium points and is Lipschitz continuous elsewhere.  

\subsubsection{Asymptotic behaviour of ODE \eqref{eqn_our_ODE}}
Now, by leveraging upon the asymptotic limits of the ODE \eqref{eqn_one_dim_ODE} derived in the above Theorem, we next derive the attractor and saddle sets of the ODE \eqref{eqn_our_ODE} which is the ODE of main interest. However, before proceeding towards the main result of this section, we define a special type of saddle point that facilitates in representing the result.
\begin{definition}\label{defn_q_attractor}
Any $x^*  \in \mathbf{S}$ with $ g_1(x^*) > 0$ is said to be (quasi) \underline{q-attractor} if 
\begin{enumerate}[label=(\roman*)]
    \item for any  $x(0) \in {\mathbb S}(x^*) := \{x: z(x) = z(x^*)\} $, $x(t) \stackrel{t \to \infty}{\longrightarrow} x^*$ exponentially, 
    \item $x(t) \stackrel{t \to \infty}{\longrightarrow} \mathbf{A}$ for other initial conditions.
\end{enumerate}Any $x^*  \in \mathbf{S}$ with $ g_1(x^*) = 0$ is a  q-attractor if the above holds with ${\mathbb S}(x^*) := \{x: g_1(x) = 0\}$.
\end{definition}
Thus, any q-attractor ($x^*$) is a special type of saddle point which exhibits exponential convergence to $\mathbf{S}$ starting from a sub-region (${\mathbb S}(x^*)$) in its neighborhood (see Figure \ref{fig:q_attractor}, with $z(x^*)$ denoted as $z^*$). 
\begin{figure}[http]
    \centering
    \includegraphics[scale = 0.4]{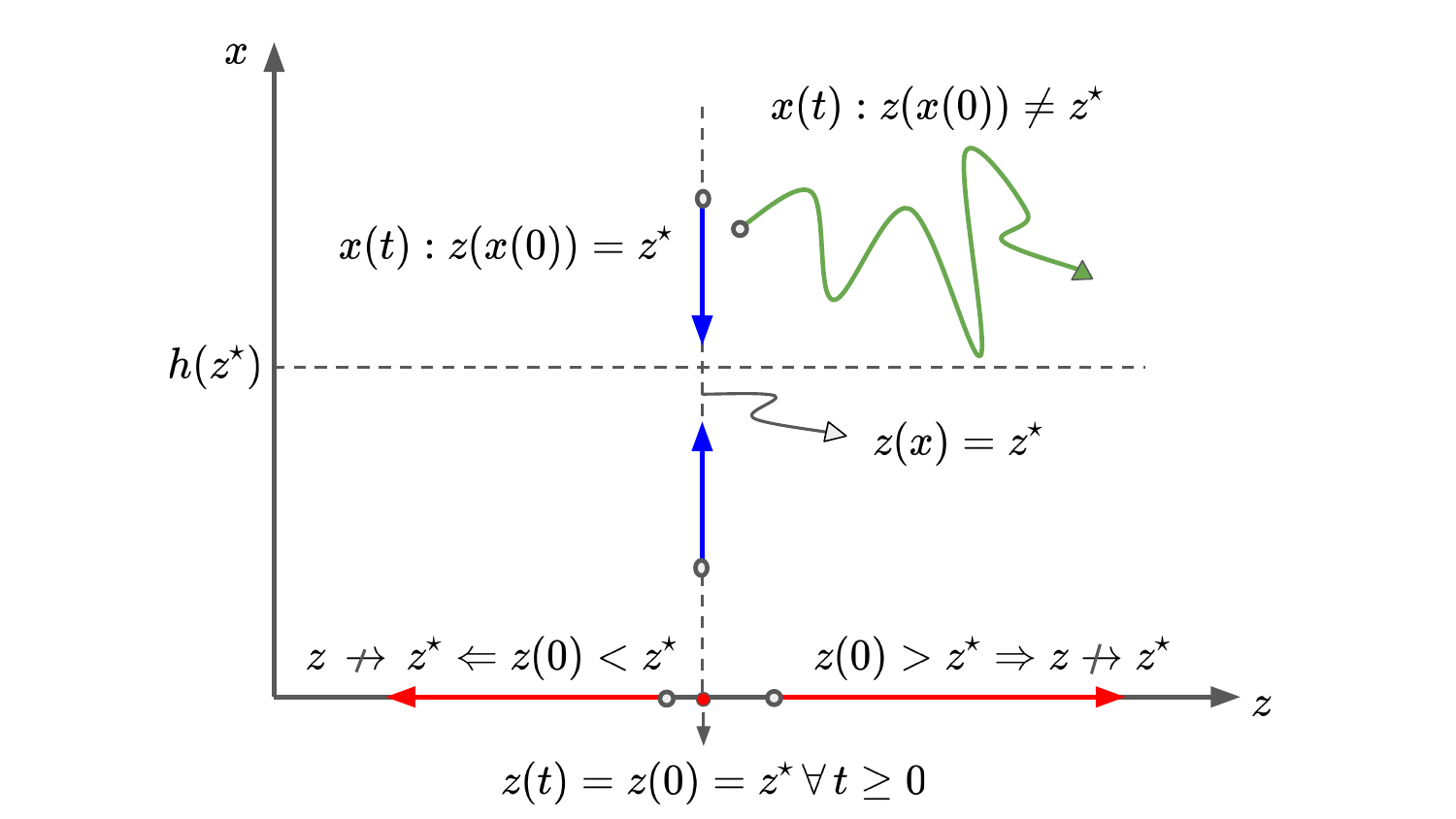}
    \caption{Saddle or repeller point of \eqref{eqn_one_dim_ODE} leads to saddle point of \eqref{eqn_our_ODE}; here $g_1(x^*) > 0$}
    \label{fig:q_attractor}
\end{figure}

Now, one can determine the attractors and q-attractors of the ODE \eqref{eqn_our_ODE} by virtue of the following theorem, proof of which follows as in Theorem \ref{thrm_beta_ODE_prop}:
\begin{theorem}\label{thrm_our_ODE}
Assume \ref{a1_prelim}-\ref{asmp_ode}. Then, there exists a unique extended solution for ODE \eqref{eqn_our_ODE} over any bounded interval. Further, the attractor and the saddle (q-attractor) sets for the ODE \eqref{eqn_our_ODE} are respectively given by:
\begin{align}
\begin{aligned}
\mathbf{A} &:= \{h(z^*): z^* \mbox{ is an attractor for the ODE \eqref{eqn_one_dim_ODE}}\} \mbox{ and}\\
\mathbf{S} &:= \{h(z^*): z^* \mbox{ is a repeller or saddle point for the ODE \eqref{eqn_one_dim_ODE}}\}.
\end{aligned}
\end{align}Furthermore, $\{x \in \mathbb{R}^n : z(x) \in [a,b]\} \subset \mathbf{D}_{\mathbf{A}} \cup \mathbf{D}_{\mathbf{S}}$ is the combined domain of attraction of $\mathbf{A}$ and $\mathbf{S}$ for ODE \eqref{eqn_our_ODE}. \eop
\end{theorem}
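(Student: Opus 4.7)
The plan is to leverage the separable structure $\dot{z} = g_1(x) g_2(z)$ in \eqref{eqn_z_ODE} to transfer the one-dimensional asymptotic analysis of Theorem \ref{thrm_one_dim_ODE} to the $n$-dimensional ODE \eqref{eqn_our_ODE}. Existence and uniqueness of the extended solution is already standing as a hypothesis in Subsection \ref{subsec_ourODE}, but it may be verified by writing \eqref{eqn_our_ODE} in mild form,
\[
    x(t) = e^{-t} x_0 + \int_0^t e^{-(t-s)} h\bigl(z(x(s))\bigr)\, ds,
\]
and applying a Carath\'{e}odory iteration in the spirit of Definition \ref{defn_caraODE_solution}, using measurability of $h$ and local boundedness of $h \circ z$ along the trajectory.

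The central step is a time change applied to trajectories starting from $x_0$ with $g_1(x_0) > 0$. Fixing any $\delta \in (0, g_1(x_0))$, assumption \ref{a2_prelim} gives $g_1(x(t)) > g_1(x_0) - \delta > 0$ for all $t \geq 0$, so
\[
    \tau(t) := \int_0^t g_1(x(s))\, ds
\]
is absolutely continuous, strictly increasing, and satisfies $\tau(t) \geq (g_1(x_0) - \delta)\, t \to \infty$. Setting $\widetilde{z}(\tau) := z(x(t(\tau)))$, the chain rule together with \eqref{eqn_z_ODE} yields $d\widetilde{z}/d\tau = g_2(\widetilde{z})$, which is precisely the scalar ODE \eqref{eqn_one_dim_ODE}. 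Consequently, whenever $z(x_0) \in [a,b]$, Theorem \ref{thrm_one_dim_ODE} produces an equilibrium $z^* \in {\cal I}$ with $\widetilde{z}(\tau) \to z^*$, and composing with $\tau(t) \to \infty$ delivers $z(x(t)) \to z^*$ as $t \to \infty$.

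With $z(x(t)) \to z^*$ in hand, the limit of $x(t)$ is extracted from the mild form: using continuity of $h$ at the equilibrium together with
\[
    \bigl| x(t) - h(z^*) \bigr| \leq e^{-t} |x_0 - h(z^*)| + \int_0^t e^{-(t-s)} \bigl| h(z(x(s))) - h(z^*) \bigr|\, ds,
\]
dominated convergence gives $x(t) \to h(z^*)$. When $z^*$ is an attractor of \eqref{eqn_one_dim_ODE}, this identifies $h(z^*)$ as an attractor of \eqref{eqn_our_ODE} and shows that the preimage of ${\cal N}_i^*$ under $z$ contributes to $\mathbf{D}_\mathbf{A}$; since every $x_0$ with $z(x_0) \in [a,b]$ falls into some ${\cal N}_i^*$, the inclusion $\{x : z(x) \in [a,b]\} \subset \mathbf{D}_\mathbf{A} \cup \mathbf{D}_\mathbf{S}$ follows at once.

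The main obstacle is the q-attractor case: $x^* = h(z^*)$ with $z^*$ a saddle or repeller of \eqref{eqn_one_dim_ODE}. For $x_0 \in {\mathbb S}(x^*) = \{x : z(x) = z^*\}$, the identity $g_2(z^*) = 0$ forces $z(x(t)) \equiv z^*$, so \eqref{eqn_our_ODE} collapses to $\dot{x} = x^* - x$ with solution $x(t) = x^* + e^{-t}(x_0 - x^*)$, yielding exponential convergence to $x^*$. For $x_0$ just off ${\mathbb S}(x^*)$, Theorem \ref{thrm_one_dim_ODE} pushes $\widetilde{z}(\tau)$ out of every neighborhood of $z^*$ and into an adjacent attractor $z^*_{\text{att}}$, so $x(t) \to h(z^*_{\text{att}}) \in \mathbf{A}$, completing the saddle-point characterization. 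The subtle sub-case $g_1(x^*) = 0$ is precisely where the time change breaks down; here the modified set ${\mathbb S}(x^*) = \{x : g_1(x) = 0\}$ in Definition \ref{defn_q_attractor} isolates the invariant subspace on which $\dot{z} \equiv 0$, reducing the dynamics once more to $\dot{x} = x^* - x$ and recovering the same exponential convergence to $x^*$.
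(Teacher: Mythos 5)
Your time-change reduction $\tau(t)=\int_0^t g_1(x(s))\,ds$ is a legitimate (and different) way to transfer Theorem \ref{thrm_one_dim_ODE} to the $z$-dynamics -- the paper instead argues directly on $z(t)$ via monotonicity and comparison bounds, as in Theorems \ref{thrm_attractors_beta} and \ref{thrm_beta_ODE_prop} -- but there is a genuine gap in the step where you pass from $z(x(t))\to z^*$ to $x(t)\to h(z^*)$. You invoke ``continuity of $h$ at the equilibrium'' together with dominated convergence in the mild form. Continuity of $h$ is nowhere assumed: \ref{a1_prelim} only requires $h$ measurable, and in the very applications this theorem is built for, $h$ jumps exactly at the equilibrium points (for BP with attack, $\mathbf{h}$ contains the indicator $1_{\{\beta^c<1\}}$ and is discontinuous at the attractor $\beta^c=1$; the paper stresses that $g_2$ may be discontinuous at each $z_i^*$). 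If the hitting time of $z^*$ were infinite, your argument would only give convergence of $x(t)$ to the one-sided limit of $h$ at $z^*$, which need not equal $h(z^*)$, so the conclusion $\mathbf{A}=\{h(z^*)\}$ would fail. The paper's proof resolves this by a case split you do not make: when $g_2$ is discontinuous at $z^*$ its one-sided limit on the approaching neighborhood is bounded away from zero, so (as in Lemma \ref{lemma_tau_finite}) $z(t)$ reaches $z^*$ in \emph{finite} time, after which \eqref{eqn_our_ODE} is exactly the linear ODE $\dot{x}=h(z^*)-x$ and converges to $h(z^*)$ with no continuity needed; when $g_2$ is continuous at $z^*$ (infinite hitting time possible), the paper sandwiches $\dot{x}$ between $\inf$ and $\sup$ of $h$ over shrinking neighborhoods ${\cal N}_\delta(z^*)$ and lets $\delta\to 0$, as in the proof of Theorem \ref{thrm_beta_ODE_prop}. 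Your single dominated-convergence argument covers neither the discontinuous case nor supplies the finite-time-hitting fact that makes it unnecessary.

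A secondary weakness: existence and uniqueness of the extended solution is a \emph{conclusion} of the theorem, and your sketch via a Carath\'{e}odory/Picard iteration on the mild form does not go through, because $x\mapsto h(z(x))$ is in general discontinuous (Carath\'{e}odory theory needs continuity in the state variable for a.e.\ $t$, and uniqueness can fail for merely measurable right-hand sides). The paper obtains both constructively, exactly as in the proof of Theorem \ref{thrm_attractors_beta}: before the hitting time the trajectory coincides with the solution of a Lipschitz extension of the right-hand side (unique by standard theory), and after the hitting time the dynamics are the explicit linear ODE on the invariant set $\{z(x)=z^*\}$; concatenating these pieces gives the unique extended solution. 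Your q-attractor analysis on ${\mathbb S}(x^*)$ is fine, but the off-${\mathbb S}(x^*)$ part again funnels through the flawed limit step above.
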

This implies that the attractors of ODE \eqref{eqn_one_dim_ODE} provide the attractor set for the ODE \eqref{eqn_our_ODE}, while the repeller and saddle points of the former ODE collectively contribute to the q-attractor set for the latter ODE. This concludes our discussion on ODEs. We will next discuss the SA based result. We would like to mention here that Theorem \ref{thrm_our_ODE} will be instrumental in applying this SA-based result to certain applications like BPs of this thesis.

\section{Stochastic approximation}\label{sec_SA}
The stochastic approximation (SA) based algorithms are recursive stochastic algorithms which were originally introduced by Robins and Monro to find the zero of a real-valued function $\ups \mapsto g(\ups)$, when the function $g(\cdot)$ is not known but noisy observations of $g(\ups)$ are accessible. For a detailed discussion on how it all started, refer to \cite[Chapter 1]{kushner2003stochastic}; for several examples on SA-algorithms in a variety of domains, refer to \cite[Chapters 1-3]{kushner2003stochastic}; for a concise and easy-to-read study on SA-algorithms, refer to \cite{borkar2009stochastic}.

In general, the SA algorithm of our interest takes the following form, where $\Ups_n \in \mathbb{R}^r$ and evolves as follows:
\begin{align}\label{eqn_SA_scheme}
    \Ups_{n+1} = \Ups_n + \epsilon_{n+1} L(\xi_{n+1}, \Ups_n), \mbox{ where} 
\end{align}$L(\xi_{n+1}, \Ups_n)$ denotes the $\mathbb{R}^r$-valued noisy observations at $n$-th iteration, and depends on random variables $\xi_{n+1}$ and the previous iterate $\Ups_n$; further, the step-size sequence satisfies the following assumption:
\begin{enumerate}[label=\textbf{A.\arabic*}, ref=\textbf{A.\arabic*}]
\setcounter{enumi}{3}
    \item $\epsilon_n = 0 \mbox{ for all } n < 0, \epsilon_n \geq 0 \mbox{ for all } n \geq 0, \sum_{n=0}^\infty \epsilon_n = \infty \mbox{ and } \sum_{n=0}^\infty \epsilon_n^2 < \infty$. \label{a3_prelim}
\end{enumerate}
One example of such $\epsilon_n$-sequence is $\epsilon_n = \frac{1}{n}$. Further, assume the following on \eqref{eqn_SA_scheme}:
\begin{enumerate}[label=\textbf{A.\arabic*}, ref=\textbf{A.\arabic*}]
\setcounter{enumi}{4}
    \item $\sup_n E|L(\xi_{n+1}, \Ups_n)|^2 < \infty$. \label{a4_prelim}
    \item There exists a measurable function $g(\cdot)$ of $\ups$ such that:\label{a5_prelim}
    $$
        E[L(\xi_{n+1}, \Ups_n)|\Ups_0, L(\xi_{i+1}, \Ups_i) \mbox{ for } 1 \leq i < n, \Ups_n] = g(\Ups_n).
    $$
\end{enumerate}
We would now like to explain the intuition behind SA-based results. Towards this, for simplicity in explanation consider $\epsilon_{n+i} \approx \epsilon_n$, for all $i \leq N$ for some fixed $N$, and let $\epsilon_n$ be sufficiently small. Then, the iterate $\Ups_{n + N}$ can be written and approximated as follows (see \eqref{eqn_SA_scheme}):
\begin{align*}
    \Ups_{n + N} &= \Ups_n +  \sum_{i=0}^{N-1} \epsilon_{n+i+1} L(\xi_{n+i+1}, \Ups_{n+i}) \\
    &\approx \Ups_n + \epsilon_n \sum_{i=0}^{N-1} L(\xi_{n+i+1}, \Ups_{n}) \\
    &= \Ups_n + (N\epsilon_n) \left(\frac{1}{N} \sum_{i=0}^{N-1} L(\xi_{n+i+1}, \Ups_{n})\right) \\
    &\approx \Ups_n + (N\epsilon_n) g(\Ups_n).
\end{align*}
In the above, the first approximation holds as due to small step-size, $\Ups_n$ does not change much in $N$ steps. Further, as $N$ increases, by strong law of large numbers, under \ref{a5_prelim}, we get the second approximation. Observe that the resultant can be approximated by the solution of the ODE:
\begin{align}\label{eqn_SA_ODE}
    \dot{\ups} = g(\ups), \mbox{ with }  \ups(0) = \Ups_n, 
\end{align}
as
$$
\frac{\Ups_{n+N} - \Ups_n}{N\epsilon_n} \approx g(\Ups_n),
$$
converges to a solution of the ODE \eqref{eqn_SA_ODE} when $N \to \infty$ (and $N\epsilon_n$ decreases to~$0$).

We will show in the following that the above ODE is indeed appropriate to approximate the SA-based scheme in a certain way formalized in the next result.

\subsection{Approximation result over finite-time}
The first result for the SA-based algorithm \eqref{eqn_SA_scheme} provides the approximation over finite-time intervals (proof follows as in Theorem \ref{thrm1}(i)):
\begin{theorem}\label{thrm_finite_time}
  Assume \ref{a3_prelim}-\ref{a5_prelim} for the scheme \eqref{eqn_SA_scheme}. Then, for every $T>0$, almost surely there exists a sub-sequence $(n_l)$ such that:
            $$
            \sup_{k: t_k \in [t_{n_l}, t_{n_l} + T]} d(\Ups_k, \ups(t_k - t_{n_l})) \to 0  \mbox{ as } l \to \infty, \mbox{ where } t_n := \sum_{k=1}^n \epsilon_k \mbox{ and}
            $$
        $\ups(\cdot)$ is the extended solution of ODE \eqref{eqn_SA_ODE} which starts at $\ups(0) =
        \lim_{n_l \to \infty} \Ups_{n_l}$.
\end{theorem}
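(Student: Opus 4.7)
The plan is to follow the classical noise-drift decomposition for stochastic approximation, to show that the noise contribution is negligible along the trajectory via martingale arguments, and then to extract a convergent subsequence by a compactness argument, identifying the limit with the unique extended solution of the ODE \eqref{eqn_SA_ODE}.

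First, I would set $M_{n+1} := L(\xi_{n+1},\Ups_n) - g(\Ups_n)$, so that by assumption \ref{a5_prelim} the sequence $\{M_n\}$ is a martingale-difference sequence with respect to the natural filtration ${\cal F}_n := \sigma(\Ups_0, L(\xi_{i+1},\Ups_i) : i<n)$, and the recursion rewrites as
\begin{align*}
    \Ups_{n+1} = \Ups_n + \epsilon_{n+1} g(\Ups_n) + \epsilon_{n+1} M_{n+1}.
\end{align*}
By \ref{a4_prelim}, $\sup_n E[|M_{n+1}|^2] < \infty$, and combined with $\sum_n \epsilon_n^2 < \infty$ from \ref{a3_prelim}, the martingale $\zeta_n := \sum_{k\le n} \epsilon_k M_k$ is bounded in $L^2$ and therefore converges almost surely by Doob's martingale convergence theorem. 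From the Cauchy property of $\{\zeta_n\}$, I obtain that almost surely, for every fixed $T>0$,
\begin{align*}
    \sup_{k:\,t_k\in[t_n, t_n+T]} \Bigl| \sum_{j=n+1}^{k} \epsilon_j M_j \Bigr| \;\longrightarrow\; 0 \quad \text{as } n\to\infty.
\end{align*}

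Second, telescoping the recursion between $n_l$ and $k$ with $t_k\in[t_{n_l},t_{n_l}+T]$ yields
\begin{align*}
    \Ups_k = \Ups_{n_l} + \sum_{j=n_l+1}^{k} \epsilon_j g(\Ups_{j-1}) + \sum_{j=n_l+1}^{k} \epsilon_j M_j,
\end{align*}
where the second term vanishes uniformly by the previous step. For the first term, introduce the piecewise-constant interpolation $\bar\Ups(t) := \Ups_n$ on $[t_n, t_{n+1})$ and the shift $\bar\Ups^{n_l}(s) := \bar\Ups(t_{n_l}+s)$ on $[0,T]$; the drift sum is then exactly $\int_0^{t_k-t_{n_l}} g(\bar\Ups^{n_l}(s-))\,ds$. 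Working on a sample path where the iterates are bounded (which is the regime where this result is to be applied in the thesis, e.g., proportions living in a compact set for the BPs), I can extract a subsequence $(n_l)$ along which $\Ups_{n_l} \to \ups_\infty$ for some $\ups_\infty$; this is the subsequence claimed in the statement, with $\ups(0) = \ups_\infty$.

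Third, I would use the uniform boundedness of $\{g(\bar\Ups^{n_l})\}$ (from boundedness of iterates and \ref{a5_prelim}), together with $\max_n\epsilon_n \to 0$, to obtain uniform equicontinuity of $\{\bar\Ups^{n_l}\}$ on $[0,T]$. By Arzel\`{a}--Ascoli, along a further subsequence $\bar\Ups^{n_l}$ converges uniformly on $[0,T]$ to a continuous limit $\tilde\ups$ with $\tilde\ups(0) = \ups_\infty$. The main obstacle is identifying $\tilde\ups$ as a solution: since $g$ is only measurable (\ref{a1_prelim}, \ref{a5_prelim}), I cannot pass to the limit pointwise inside $g$. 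Instead, I would argue via an averaging/ dominated-convergence route on the integrated form, and then invoke the uniqueness of the extended solution assumed in Section~\ref{subsec_ourODE} to conclude $\tilde\ups \equiv \ups$ on $[0,T]$. Because every further convergent subsequence yields the same limit $\ups$, the entire shifted family $\bar\Ups^{n_l}$ converges uniformly to $\ups$ on $[0,T]$, giving the $\sup_k d(\Ups_k, \ups(t_k - t_{n_l})) \to 0$ claim and completing the proof.
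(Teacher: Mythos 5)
Your overall skeleton is the one the paper itself uses: the proof of Theorem \ref{thrm_finite_time} is declared to follow the proof of Theorem \ref{thrm1}(i), which is exactly a noise--drift decomposition, piecewise-constant interpolation of the iterates, an (extended) Arzel\`a--Ascoli compactness step, and identification of the limit with the unique extended solution; your use of Doob $L^2$-convergence in place of the martingale maximal inequality for the noise term is an innocuous variation, and in the general scheme the paper's extra bias term $D^n$ disappears because \ref{a5_prelim} makes the conditional mean exactly $g(\Ups_n)$. However, two steps that you wave through are precisely the ones the paper has to work for. First, boundedness: you restrict to ``a sample path where the iterates are bounded,'' but the theorem carries no such hypothesis, and the existence of a convergent subsequence $\Ups_{n_l}\to\ups(0)$ is itself part of the assertion --- under \ref{a3_prelim}--\ref{a5_prelim} alone it can fail (a constant drift already pushes $\Ups_n\to\infty$), so it cannot simply be assumed. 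In the paper's proof this is the advertised ``key idea'': the iterates are dominated by a sample-mean sequence $\overline{\Pi}_n$ built from the bounding offspring of \ref{a1}, which converges a.s.\ by the SLLN (see \eqref{eqn_overline_S_n} and \eqref{Eqn_XnYnSnetc_general}), so boundedness is \emph{derived} from the structure of the scheme to which the theorem is applied. Relatedly, your claim that $g(\bar\Ups^{n_l})$ is uniformly bounded ``from boundedness of iterates and \ref{a5_prelim}'' does not follow: $g$ is only measurable, so bounded iterates do not bound the drift; the paper instead gets an explicit bound $|g|\le\hat m$ from the bounded-offspring structure and uses \ref{a4_prelim} for the noise.

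Second, the identification step, which you correctly single out as the main obstacle, is not resolved by ``an averaging/dominated-convergence route'': dominated convergence needs pointwise convergence of $g(\bar\Ups^{n_l}(s))$, which is exactly what a merely measurable $g$ denies you at its discontinuities, so as written you never establish that $\int_0^t g(\bar\Ups^{n_l}(s))\,ds\to\int_0^t g(\tilde\ups(s))\,ds$. The paper's route is different in substance here: after writing the interpolation as initial value plus $\int_0^t g(\Ups^n(s))\,ds$ plus error terms and proving the errors vanish uniformly (Lemma \ref{lemma_equi_cont_thrm1}, the analogue of \eqref{eqn_diff_term1}), it invokes the Kushner--Yin extended Arzel\`a--Ascoli argument with the modifications needed for measurable right-hand sides to obtain a uniform limit satisfying the integral fixed-point equation \eqref{eqn_ups_inf}, then shows any such solution is an extended solution in the sense of Definition \ref{defn_solution_prelim} because $g\circ\ups$ is locally integrable (differentiation a.e.), and finally uses the assumed uniqueness of the extended solution --- verified for the BP instances via Theorem \ref{thrm_attractors_beta} --- to identify it with $\ups(\cdot)$ started at $\lim_{n_l}\Ups_{n_l}$. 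Your appeal to uniqueness at the end is the right closing move, but it only helps once the limit has been shown to satisfy the integral equation, and that is the gap your proposal leaves open.
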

Thus, with probability $1$, there exists a sub-sequence along which the iterates closely follow the ODE solution when initialised with $\lim_{n_l \to \infty} \Ups_{n_l}$, over any finite time window, as number of iterations increases to $\infty$. In the above, $t_n$ gives the time mapping between the ODE and the stochastic iterates ($\Ups_n$) in terms of the step-size sequence $(\epsilon_k)$. 

The approximation is explained in Figure \ref{fig:finite_time_result}. The red dots represent the SA-iterates, and the solid curves represent the ODE trajectories starting at different iterate values, initialized at values of sub-sequence $(\Ups_{n_l})$. At first, consider an ODE trajectory (see dashed-dotted curve) which starts at $\Ups_n$, for some fixed $n$; it can be easily seen that it poorly approximates $(\Ups_{n_l})$. As $n_l$ increases, the approximation improves (see solid curve and then dashed curve). Further, notice that the gap between the iterates decreases as $n_l$ increases, because of the time mapping ($n \mapsto t_n$) and further because $\epsilon_n \to 0$. 
\begin{figure}[http]
    \centering
    \includegraphics[scale = 0.4]{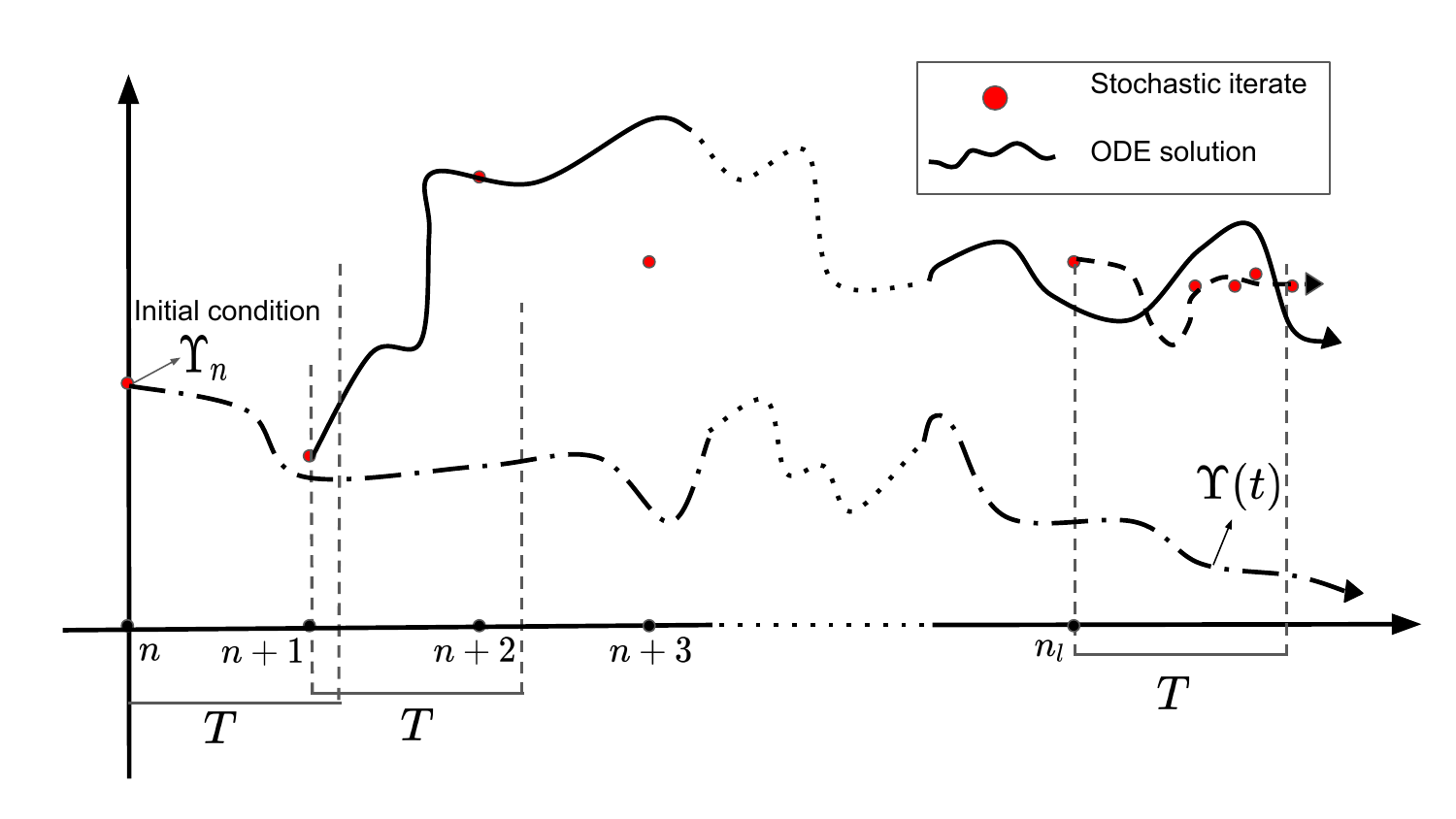}
    \caption{Finite time approximation of SA trajectory}
    \label{fig:finite_time_result}
\end{figure}
\subsection{Asymptotic result - new behaviour `hovering around'}
The second result focuses on the limiting behaviour of the SA-based algorithm. Towards this, we first discuss the existing result \cite[Chapter 5, Theorem 2.1]{kushner2003stochastic}, which is related to us. For the asymptotic result, the authors additionally assume the following:
\begin{enumerate}[label=\textbf{A.\arabic*}, ref=\textbf{A.\arabic*}]
\setcounter{enumi}{6}
    \item Consider the ODE \eqref{eqn_SA_ODE} where $g$ is a continuous function. Let $\mathbf{A} $ be the attractor set, defined as in Definition \ref{defn_attractors}, with $\mathbf{D}$ as the compact subset of domain of attraction. Assume $p := P(\mathcal{V}) > 0$, where\footnote{We say that a sequence of sets $A_n$ is infinitely visited, to be more precise, a sample point $\omega$ visits $A_n$ infinitely often (i.o.) if $\omega \in \cap_n \cup_{k \geq n}A_k$. Basically, for every $n$, there exists a $k > n$, such that $\omega \in A_k$. } $\mathcal{V} :=  \{\omega : \Ups_n(\omega) \in \mathbf{D} \mbox{ i.o.}\}$. \label{a7_prelim}
\end{enumerate}
Then, under \ref{a3_prelim}-\ref{a7_prelim}, the authors prove that $\Ups_n$ converges to the attractor set $\mathbf{A}$ of ODE \eqref{eqn_SA_ODE} w.p. at least $p > 0$. Same result is proved in \cite[Chapter 5, Theorem 2.2]{kushner2003stochastic} even when the function $g(\cdot)$ is measurable in \eqref{eqn_SA_ODE}, under some additional conditions.

The above mentioned results focus on convergence towards the attractor set, given the SA-iterates visit a subset of the corresponding domain of attraction i.o. 
In this thesis, we extend these results where we also consider limiting behaviour around saddle points. 
\begin{enumerate}[label=\textbf{A.\arabic*}, ref=\textbf{A.\arabic*}]
\setcounter{enumi}{7}
    \item (a) Let $\mathbf{A}$ and $\mathbf{S}$ be the attractor and saddle sets as in Definitions \ref{defn_attractors_prelim} and \ref{defn_saddle_pt_prelim} respectively. Let  $\mathbf{D} \subset \mathbf{D}_\mathbf{A} \cup \mathbf{D}_\mathbf{S}$ be a compact subset of combined domain of attraction for $\mathbf{A}$ and $\mathbf{S}$. 
    
    (b) Assume $p := P(\mathcal{V}) > 0$, where $\mathcal{V} :=  \{\omega : \Ups_n(\omega) \in \mathbf{D} \mbox{ i.o.}\}$. \label{a8_prelim}
\end{enumerate}
For SA-based algorithm under \ref{a3_prelim}-\ref{a5_prelim} and \ref{a8_prelim}, we prove that w.p. at least $p$, either $\Ups_n$ converges to $\mathbf{A}\cup\mathbf{S}$ or exhibits an interesting non-convergent, nonetheless some `nearness' behaviour, which we define below:
\begin{definition}\label{defn_hovers_basic}
    The stochastic process $\Ups_n$ is said to  \underline{hover around a set $\mathbf{S}$} if $ \Ups_n \in N_\delta (\mathbf{S})$ i.o. for all $\delta >0$ and $\Ups_n \notin N_{\delta_1} (\mathbf{S}) \mbox{ i.o., for some } \delta_1 >0$.
\end{definition}
Thus, \textit{hovering around depicts a type of the limiting behavior of the stochastic process where the trajectory goes arbitrarily close to the set $\mathbf{S}$ i.o., but also exits a neighbourhood of it i.o.} 
Observe that this new behaviour is different than the `lingering around'\footnote{Such behaviour is observed in \cite{jagers2011population} for branching processes that switch between super-to-sub critical regimes due to 
current population dependency.} behaviour discussed in \cite{jagers2011population}, where the underlying process stays in an $\epsilon$-band around carrying capacity for an exponentially long time, if at all it enters the band (for some $\epsilon > 0$). The notion does not include the phenomenon of entering the band i.o., as in hovering around.

Now, the main result is as follows, proof of which follows as in Theorem \ref{thrm1}(ii):
\begin{theorem}\label{thrm_SA_asym_result}
Assume \ref{a3_prelim}-\ref{a5_prelim} and \ref{a8_prelim}. Define the sets ${\cal C}_1 : =\{\Ups_n \stackrel{n\to\infty}{\to} \mathbf{A} \cup \mathbf{S}\}$ and ${\cal C}_2 := \{ \Ups_n \mbox{ hovers around } \mathbf{S} \}$. Then, $P({\cal C}_1 \cup {\cal C}_2) \geq p$. \eop
\end{theorem}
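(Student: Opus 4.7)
The plan is to work on the event $\mathcal{V}$ (which has probability $p$) and establish that, almost surely, $\mathcal{V} \subset \mathcal{C}_1 \cup \mathcal{C}_2$. The main tool is the finite-time approximation in Theorem \ref{thrm_finite_time}, combined with the ODE stability properties of the sets $\mathbf{A}$ and $\mathbf{S}$ from Definitions \ref{defn_stable_prelim}-\ref{defn_saddle_pt_prelim}. The strategy is to first show that on $\mathcal{V}$ the iterates approach $\mathbf{A} \cup \mathbf{S}$ infinitely often, and then to perform a dichotomy on whether they remain there.

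First, I would prove that on $\mathcal{V}$, $\Ups_n \in N_\varepsilon(\mathbf{A} \cup \mathbf{S})$ i.o.\ for every $\varepsilon > 0$. Since $\mathbf{D}$ is compact and $\Ups_n \in \mathbf{D}$ i.o., there is a subsequence $(n_l)$ with $\Ups_{n_l} \to x_0$ for some $x_0 \in \mathbf{D} \subset \mathbf{D}_\mathbf{A} \cup \mathbf{D}_\mathbf{S}$. Applying Theorem \ref{thrm_finite_time} (possibly along a further subsequence) yields iterates that track the extended ODE solution $\ups(\cdot)$ with $\ups(0) = x_0$ over $[0,T]$. By definition of the combined domain of attraction, $\ups(t) \to \mathbf{A}$ or $\ups(t) \to \mathbf{S}$ as $t \to \infty$; choosing $T$ large, $\ups(T) \in N_{\varepsilon/2}(\mathbf{A} \cup \mathbf{S})$, and the tracking then forces some $\Ups_{k_l} \in N_\varepsilon(\mathbf{A} \cup \mathbf{S})$.

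Next, I would split $\mathcal{V}$ into $\mathcal{E}_1 := \mathcal{V} \cap \{d(\Ups_n, \mathbf{A} \cup \mathbf{S}) \to 0\}$ and $\mathcal{E}_2 := \mathcal{V} \setminus \mathcal{E}_1$. Clearly $\mathcal{E}_1 \subset \mathcal{C}_1$, so it suffices to show $\mathcal{E}_2 \subset \mathcal{C}_2$. On $\mathcal{E}_2$ there exists $\delta_1 > 0$ with $\Ups_n \notin N_{\delta_1}(\mathbf{A} \cup \mathbf{S})$ i.o., and in particular $\Ups_n \notin N_{\delta_1}(\mathbf{S})$ i.o., giving one half of Definition \ref{defn_hovers_basic}. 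For the other half, the key claim is that repeated exits of $N_{\delta_1}(\mathbf{A} \cup \mathbf{S})$ force repeated deep entries into $N_\delta(\mathbf{S})$, because deep entries into $N_\eta(\mathbf{A})$ (for sufficiently small $\eta$) do not allow exits. Concretely, by local stability of $\mathbf{A}$ one picks $\eta < \delta_1$ with $\overline{N_\eta(\mathbf{A})} \subset \mathbf{D}_\mathbf{A}$ and such that all ODE trajectories starting in $N_\eta(\mathbf{A})$ remain inside $N_{\delta_1/2}(\mathbf{A})$ for all future time; by Theorem \ref{thrm_finite_time}, this stability carries over to the iterates on an arbitrarily long window once $n$ is large, so only finitely many of the approaches established in Step 1 can be into $N_\eta(\mathbf{A})$. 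Hence infinitely many are into $N_\varepsilon(\mathbf{S})$, for every $\varepsilon > 0$, completing hovering.

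The main obstacle, echoing the subtlety already handled in the classical Kushner-Yin attractor theorem, is upgrading the finite-window tracking in Theorem \ref{thrm_finite_time} to an \emph{absorption} statement near $\mathbf{A}$ that is uniform over all sufficiently late entry times. This will be done using \ref{a3_prelim}-\ref{a4_prelim}: the square-summability of $(\epsilon_n)$ together with the uniform second-moment bound on $L(\xi_{n+1},\Ups_n)$ lets one control the cumulative martingale noise via Doob's inequality on tail sums $\sum_{k \geq n} \epsilon_{k+1} (L(\xi_{k+1}, \Ups_k) - g(\Ups_k))$, so that with probability tending to one as $n \to \infty$ the iterates cannot overcome the inward drift supplied by the stable ODE near $\mathbf{A}$. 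Crucially, the same martingale control does \emph{not} furnish absorption near the saddle set $\mathbf{S}$, since the ODE does not contract there; this asymmetry is precisely what permits the new limiting behaviour $\mathcal{C}_2$ and explains why the conclusion is a disjunction rather than convergence to $\mathbf{A} \cup \mathbf{S}$ outright.
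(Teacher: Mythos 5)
Your proposal is sound and reaches the right conclusion, but it is organized differently from the paper's argument (which is the proof of Theorem \ref{thrm1}(ii) in Appendix \ref{appendix_B}). The paper anchors everything at the ODE limit of a convergent subsequence: on the event $\mathcal{V}$ it extracts $\Ups_{n_k}\to \ups_0\in\mathbf{D}$, lets the extended ODE solution from $\ups_0$ converge to a specific point $\ups^*\in\mathbf{A}\cup\mathbf{S}$, and then splits cases. If $\ups^*$ is an attractor it proves full convergence $\Ups_n\to\ups^*$ by the Kushner--Yin annulus argument (segments of the interpolated trajectory between $\partial N_{\delta_2}$ and $\partial N_{\delta_1}$, extended Arzel\`a--Ascoli, contradiction with local asymptotic stability); if $\ups^*$ is a saddle it uses the q-attractor structure — exponential attraction along the affine subspace ${\mathbb S}(\ups^*)$, as in Definition \ref{defn_q_attractor} — to get visits to every neighbourhood of $\ups^*$ i.o., after which the convergence-versus-hovering dichotomy is immediate from Definition \ref{defn_hovers_basic}. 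You instead perform a global dichotomy on whether $d(\Ups_n,\mathbf{A}\cup\mathbf{S})\to 0$, and on the non-convergent event obtain hovering around $\mathbf{S}$ \emph{by exclusion}: i.o.\ $\varepsilon$-approaches to $\mathbf{A}\cup\mathbf{S}$ at every scale, combined with an absorption lemma near $\mathbf{A}$ that rules out i.o.\ deep entries into $N_\eta(\mathbf{A})$ on non-convergent paths. This buys something: you never need the q-attractor/exponential-stability structure of $\mathbf{S}$, only the attractor-side absorption, so your argument matches the hypotheses \ref{a8_prelim} as literally stated; the paper's route, in exchange, yields finer information (convergence to the particular $\ups^*$, and i.o.\ visits to the particular saddle point), which is what the BP applications later exploit.

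One caution on your final step: the absorption near $\mathbf{A}$ is exactly the paper's ``Step A'', and with $g$ merely measurable (\ref{a5_prelim}) it cannot be obtained from a pointwise ``inward drift plus Doob'' estimate alone; the rigorous version is the compactness argument on interpolated segments, where the Doob/tail-martingale control you describe enters only through the extended equicontinuity lemma (it shows $\sup_{m\geq n}\big|\sum_{k=n}^{m}\epsilon_{k+1}\left(L(\xi_{k+1},\Ups_k)-g(\Ups_k)\right)\big|\to 0$ a.s.), and the contradiction with local stability is then extracted from a limit ODE trajectory trapped in the annulus. Also phrase the absorption almost surely with a path-dependent threshold $N(\omega)$ (rather than ``probability tending to one''), so that on the non-convergent event every late deep entry into $N_\eta(\mathbf{A})$ would force permanent containment in $N_{\delta_1}(\mathbf{A})$, contradicting the i.o.\ exits; with that adjustment your exclusion argument closes correctly.
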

Thus, with probability at least $p$, $\Ups_n$ has only three limiting behaviours: (i) convergence to attractor set $\mathbf{A}$ or (ii) convergence to saddle set $\mathbf{S}$ or (iii) hovering around the saddle set $\mathbf{S}$. Our result affirms that one of the three events occur w.p. at least $p >0$, but it does not comment on the probability of the individual events.

To the best of our knowledge, the notion of hovering around is new to the literature of SA. Such behaviour is observed as the domains of $\mathbf{A}$ and $ \mathbf{S}$ are close to each other (to be more accurate, $\mathbf{D}_\mathbf{A}$ and the attracting sub-region of $\mathbf{D}_\mathbf{S}$) and the SA trajectory can hop between the two domains due to inherent randomness (see $\xi_i$ in \eqref{eqn_SA_scheme}).
\begin{figure}[http]
\centering
  \centering
  \includegraphics[trim = {0cm 0cm 0cm 0cm}, clip, scale = 0.4]{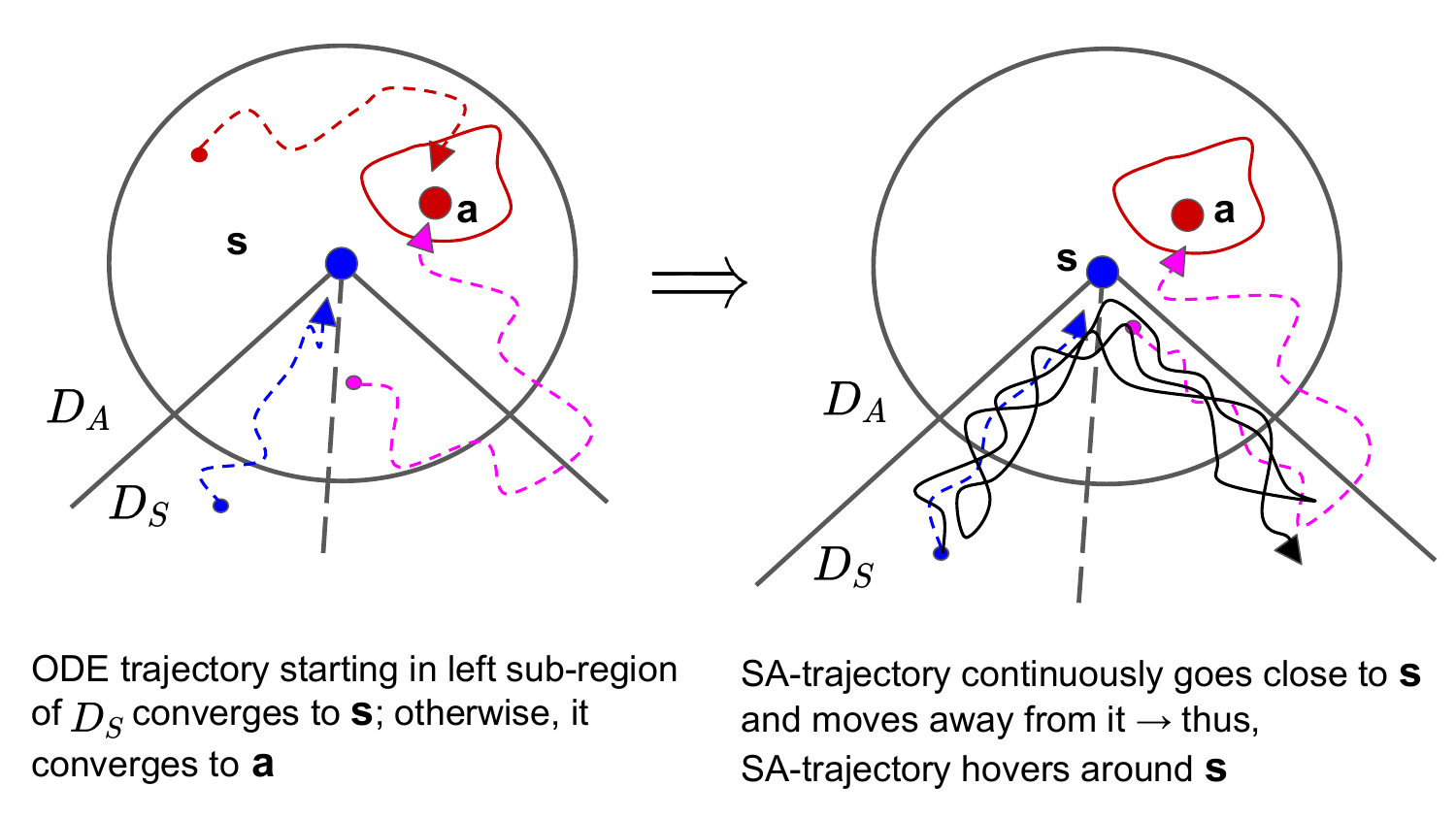}
\caption{hovering around behaviour of SA trajectory ($\mathbf{a} \in \mathbf{A}$ and $\mathbf{s} \in \mathbf{S}$)}
    \label{fig:hovering}
\end{figure}

We pictorially illustrate this behaviour in Figure \ref{fig:hovering} -- the left sub-figure shows that the ODE trajectory converges to saddle point ($\mathbf{s}$) and attractor ($\mathbf{a}$) when the ODE is initialized in left sub-region of $\mathbf{D}_\mathbf{S}$ and $\mathbf{D}_\mathbf{A}$ respectively. More importantly, the ODE initialized in the right sub-region of $\mathbf{D}_\mathbf{S}$ converges to $\mathbf{a}$. Also, note that the  left and right sub-regions of $\mathbf{D}_\mathbf{S}$ are divided by a lower-dimensional line. In right sub-figure, we show the interpolated trajectory for the SA-iterates (briefly called SA trajectory, and shown in black) starting in $\mathbf{D}_\mathbf{S}$; observe that the SA trajectory follows the ODE trajectory initialized at different points (see magenta and blue curves) for finite time-intervals, but then it moves close and away from $\mathbf{s}$ continuously, leading to the hovering around $\mathbf{s}$.

Next, we consider a specific form for the function $g(\cdot)$, which we discussed in sub-section \ref{subsec_ourODE} and will be seen with respect to all SA-schemes related to BPs. In particular,  assume the following:
\begin{enumerate}[label=\textbf{A.\arabic*}, ref=\textbf{A.\arabic*}]
\setcounter{enumi}{8}
    \item The function $g(\ups) = h(z(\ups)) - \ups, \mbox{ where } h, z \mbox{ are as in } \eqref{eqn_our_ODE}, \eqref{eqn_z_ODE} \mbox{ respectively} $ and satisfy \ref{a1_prelim}-\ref{asmp_ode}.\label{ass_g_prelim}
    \item  Assume $P(\{\omega : | \Ups_n(\omega) | \le B \mbox{ i.o.}\}) = 1$, for some $B < \infty$. \label{a10_prelim}
\end{enumerate}
Then, the ODE associated with the SA algorithm \eqref{eqn_SA_scheme} is given by (see \eqref{eqn_SA_ODE}):
\begin{align}\label{eqn_SA_ODE_specific}
    \dot{\ups} = g(\ups) = h(z(\ups)) - \ups, \mbox{ with }  \ups(0) = \Ups_n.
\end{align}
Due to the above structure of the ODE, its  asymptotic limits are given by Theorem \ref{thrm_our_ODE}. Further, the asymptotic behaviour of the SA-scheme is then given by Theorem \ref{thrm_SA_asym_result} as in the following:
\begin{corollary}\label{cor_SA_prelim}
Assume \ref{a3_prelim}-\ref{a5_prelim} and \ref{ass_g_prelim}-\ref{a10_prelim}. Define the sets ${\cal C}_1 : =\{\Ups_n \stackrel{n\to\infty}{\to} \mathbf{A} \cup \mathbf{S}\}$ and ${\cal C}_2 := \{ \Ups_n \mbox{ hovers around } \mathbf{S} \}$. Then, we have (with $a, b$ as in \ref{asmp_ode}):
\begin{align}
\hspace{2cm}
P({\cal C}_1 \cup {\cal C}_2) \geq P(\{\omega: z(\Ups_n(\omega)) \in [a,b] \mbox{ i.o.}\}). \hspace{2cm} \mbox{ \eop} 
\end{align}
\end{corollary}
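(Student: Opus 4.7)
My approach is to reduce the statement to Theorem~\ref{thrm_SA_asym_result} by using Theorem~\ref{thrm_our_ODE} to identify the attractor and saddle sets, and then by exhibiting a compact subset of the combined domain of attraction that the iterates visit infinitely often with at least the desired probability. Under \ref{ass_g_prelim}, the drift $g(\ups)=h(z(\ups))-\ups$ fits the form of~\eqref{eqn_our_ODE}, so Theorem~\ref{thrm_our_ODE} describes $\mathbf{A}$ and $\mathbf{S}$ explicitly and, importantly, gives the inclusion $\{x:z(x)\in[a,b]\}\subseteq\mathbf{D}_{\mathbf{A}}\cup\mathbf{D}_{\mathbf{S}}$. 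Hypotheses \ref{a3_prelim}--\ref{a5_prelim} of Theorem~\ref{thrm_SA_asym_result} are given by hypothesis, so only the compact-set condition in \ref{a8_prelim} needs verification.

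\textbf{Exhausting by compact sets.} Since $\{z\in[a,b]\}$ need not be bounded, I would introduce the increasing family $\mathbf{D}_k:=\overline{\{x:z(x)\in[a,b]\}\cap\{x:\|x\|\le k\}}$ for $k\in\mathbb{N}$. Each $\mathbf{D}_k$ is closed and bounded, hence compact; provided the closure does not add points outside the combined domain of attraction (which holds when $\mathbf{A}\cup\mathbf{S}$ is the finite point set furnished by Theorem~\ref{thrm_our_ODE} and the domains of attraction are open neighbourhoods), each $\mathbf{D}_k$ satisfies the requirements of \ref{a8_prelim}. Applying Theorem~\ref{thrm_SA_asym_result} with $\mathbf{D}=\mathbf{D}_k$ yields
\[
P(\mathcal{C}_1\cup\mathcal{C}_2)\;\ge\;P(\{\Ups_n\in\mathbf{D}_k\text{ i.o.}\}),
\]
and monotone continuity of probability in $k$ gives $P(\mathcal{C}_1\cup\mathcal{C}_2)\ge P\big(\bigcup_{k}\{\Ups_n\in\mathbf{D}_k\text{ i.o.}\}\big)$.

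\textbf{Hardest step: joint i.o.} The remaining ingredient is
\[
P\Big(\bigcup_{k}\{\Ups_n\in\mathbf{D}_k\text{ i.o.}\}\Big)\;\ge\;P(\{z(\Ups_n)\in[a,b]\text{ i.o.}\}),
\]
and this is where \ref{a10_prelim} is essential and where the main difficulty lies: two events occurring i.o.\ separately need not occur i.o.\ jointly, so the almost-sure boundedness $\|\Ups_n\|\le B$ i.o.\ does not combine trivially with $z(\Ups_n)\in[a,b]$ i.o. My plan is to invoke the finite-time tracking of Theorem~\ref{thrm_finite_time}: whenever $z(\Ups_n)\in[a,b]$, the ODE trajectory started at $\Ups_n$ stays in the combined domain by Theorem~\ref{thrm_our_ODE} and is drawn toward the bounded set $\mathbf{A}\cup\mathbf{S}$, so after a sufficiently long finite window it lies in a fixed bounded ball; finite-time tracking then forces $\Ups_{n+j}$ into the same ball while preserving membership in $\{z\in[a,b]\}$ up to a small error. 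Combined with the full-measure event of \ref{a10_prelim} this produces infinitely many indices at which $\Ups_n\in\mathbf{D}_k$ for some $k$, giving the required inclusion. The genuinely delicate point will be making the ``preserving $z\in[a,b]$'' step rigorous when $z$ is only measurable and its sub-level sets are not necessarily closed; I expect this to require a careful subsequence extraction using the absolute continuity of extended ODE solutions (Definition~\ref{defn_solution_prelim}) together with forward invariance of $\{z\in[a,b]\}$ under the ODE flow implied by Theorem~\ref{thrm_our_ODE}.
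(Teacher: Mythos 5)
Your reduction is the paper's reduction: Theorem~\ref{thrm_our_ODE} supplies $\mathbf{A}$, $\mathbf{S}$ and the inclusion $\{\ups : z(\ups)\in[a,b]\}\subset \mathbf{D}_{\mathbf{A}}\cup\mathbf{D}_{\mathbf{S}}$, and the corollary is then an application of Theorem~\ref{thrm_SA_asym_result} once a suitable compact set as in \ref{a8_prelim} is produced. Where you diverge is in how that compact set is produced, and this is where your proposal has a genuine gap. The paper does it in one line: it sets $\mathbf{D}:=\left(\mathbf{D}_{\mathbf{A}}\cup\mathbf{D}_{\mathbf{S}}\right)\cap\{\ups:|\ups|\le B\}$ with $B$ the \emph{deterministic} bound from \ref{a10_prelim}, observes $\mathbf{D}$ is compact, notes $P(\{\Ups_n\in\mathbf{D}\ \mbox{i.o.}\})\ge P(\{z(\Ups_n)\in[a,b]\ \mbox{i.o.}\})$, and quotes Theorem~\ref{thrm_SA_asym_result}. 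You instead build an exhaustion $\mathbf{D}_k$ and then defer the resulting inequality $P\bigl(\cup_k\{\Ups_n\in\mathbf{D}_k\ \mbox{i.o.}\}\bigr)\ge P(\{z(\Ups_n)\in[a,b]\ \mbox{i.o.}\})$ to a plan involving Theorem~\ref{thrm_finite_time}, forward invariance of $\{z\in[a,b]\}$ under the flow, and a subsequence extraction that you yourself flag as not yet rigorous. That step is never completed, so the proof as written does not close; moreover the machinery you invoke (finite-time tracking, invariance of a measurable sub-level set) is not needed for this corollary and is not obviously sufficient as sketched — nothing in \ref{asmp_ode} or Theorem~\ref{thrm_our_ODE} gives you forward invariance of $\{z\in[a,b]\}$ along the stochastic iterates, which is what your "preserving $z\in[a,b]$" step would require.

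Two further remarks. First, your parenthetical justification that the closures $\mathbf{D}_k$ stay inside $\mathbf{D}_{\mathbf{A}}\cup\mathbf{D}_{\mathbf{S}}$ is not a proof: since $z$ is only measurable, $\overline{\{z\in[a,b]\}\cap\{\|x\|\le k\}}$ can contain points with $z\notin[a,b]$, and finiteness of $\mathbf{A}\cup\mathbf{S}$ does not place those points in the combined domain of attraction; the paper avoids this by intersecting the \emph{whole} combined domain with the closed $B$-ball rather than the set $\{z\in[a,b]\}$. Second, the joint-i.o.\ subtlety you correctly identify (separately i.o.\ does not give jointly i.o.) is real in full generality, but the paper treats the inequality as immediate, and in the setting the corollary is designed for — BPs, where $z(\Ups_n)=\Bc_n\in[0,1]=[a,b]$ for \emph{every} $n$ — the issue vanishes: membership in $\{z\in[a,b]\}$ holds at all indices, so \ref{a10_prelim} alone yields $\Ups_n\in\mathbf{D}$ i.o.\ almost surely. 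The clean fix for your write-up is therefore to drop the exhaustion and the tracking argument and follow the paper: take $\mathbf{D}=\left(\mathbf{D}_{\mathbf{A}}\cup\mathbf{D}_{\mathbf{S}}\right)\cap\{|\ups|\le B\}$ and apply Theorem~\ref{thrm_SA_asym_result} directly.
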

\begin{proof}
Under \ref{ass_g_prelim} (specifically, under \ref{a1_prelim}-\ref{asmp_ode}), the attractor ($\mathbf{A}$) and saddle ($\mathbf{S}$) sets are given by Theorem \ref{thrm_our_ODE}. Further, the  combined domain of attraction for the SA-scheme \eqref{eqn_SA_scheme}, $  \mathbf{D}_{\mathbf{A}} \cup \mathbf{D}_{\mathbf{S}} \supset  \{\ups \in \mathbb{R}^n : z(\ups) \in [a, b]\}$. Define $\mathbf{D} := \mathbf{D}_{\mathbf{A}} \cup \mathbf{D}_{\mathbf{S}}\cap \{\ups: |\ups| \leq B\}$, where $B$ is given in \ref{a10_prelim}, and observe $\mathbf{D}$ is compact. Then, clearly:
$$
    P(\{\omega: \Ups_n(\omega) \mbox{ visits } \mathbf{D}\mbox{ i.o.} \}) \geq P(\{\omega: z(\Ups_n(\omega)) \in [a,b] \mbox{ i.o.}\}).
$$
Thus, under \ref{a3_prelim}-\ref{a5_prelim}, the corollary follows from Theorem \ref{thrm_SA_asym_result}.
\end{proof}
To conclude, the structure of the approximating ODE as in \eqref{eqn_SA_ODE_specific} provides flexibility to analyze the SA-based algorithms, and therefore, the BPs that we will study in coming chapters. We briefly state the key observations/advantages in the following:
\begin{itemize}
    \item When the approximating ODE \eqref{eqn_SA_ODE_specific} satisfies \ref{a1_prelim} and \ref{a2_prelim}, the analysis of the one-dimensional ODE \eqref{eqn_one_dim_ODE} is sufficient to identify the attractor and saddle sets. In particular, we need to identify the description of the equilibrium points of ODE \eqref{eqn_one_dim_ODE}  as in \ref{asmp_ode}.
    \item The saddle points are in fact the q-attractors, where the underlying ODE trajectory ($\ups$) converges exponentially to $\mathbf{A}$, when started in a sub-region of $\mathbf{D}_{\mathbf{S}}$ and converges to $\mathbf{S}$, when started in the complimentary sub-region.
    \item For BPs, the function $z$ equals $\bc$ which represents the proportion of the current (living) population-sizes of one of the population-types; thus $a = 0$ and $b = 1$ in \ref{asmp_ode}. This also trivially implies that $P(\{\omega: z(\Ups_n(\omega)) \in [a,b] \mbox{ i.o.}\}) = 1$.
    \item Thus, to comment on $P({\cal C}_1 \cup {\cal C}_2)$, it is only left to find a bound on the stochastic iterates, as in \ref{a10_prelim}.
    \item We prove \ref{a10_prelim} for two BPs, namely BP with attack in Chapter \ref{ch:journal1} and BP with unnatural deaths in Chapter \ref{ch:journal2}; the proof is provided using simple strong law of large numbers based arguments on some appropriate bounding sequence. Thus, for the said BPs, $P({\cal C}_1 \cup {\cal C}_2) = 1$ (see Corollary \ref{corollary_BPA} and Theorem \ref{thrm_BP_to_fake} respectively).
\end{itemize}

\section{Summary}
We conclude this chapter by giving the three step procedure to analyse the new BPs considered in this thesis (see sub-section \ref{sub_sec_newBP}):
\begin{enumerate}[label=(\roman*)]
    \item scale the population-sizes of the two types of populations to form an appropriate SA-based iterative scheme,
    \item identify the appropriate ODE that can approximate the above SA-scheme and determine the asymptotic limits of the stochastic iterates using Theorem \ref{thrm_our_ODE} and Theorem \ref{thrm_one_dim_ODE}, and 
    \item the SA iterates corresponding to BP can be approximated over any finite time window by the above ODE, as per Theorem \ref{thrm_finite_time} and the limiting behaviour of SA iterates is given by Corollary \ref{cor_SA_prelim}.
\end{enumerate}
This procedure is followed precisely for BPs introduced and analyzed in Chapters \ref{ch:journal1}-\ref{ch:STPBP}. 
\chapter{Total-current population-dependent BP and Viral competing markets}\label{ch:journal1}

In this chapter, we introduce total-current population-dependent BP\footnote{The work in this chapter has been submitted to a journal.} and analyze the same using the three step procedure discussed in Chapter \ref{ch:basics}. Further, the proofs of some of the generalized results of Chapter \ref{ch:basics} are provided here.  Furthermore, a specific BP, named BP with attack, is discussed; it holds its theoretical relevance in addition to providing insights about the viral competing markets\footnote{An initial study about viral competing markets is in ``Agarwal, Khushboo, and Veeraruna Kavitha. ``Co-virality of competing content over osns?.'' 2021 IFIP Networking Conference (IFIP Networking). IEEE, 2021.'' } on OSNs. Numerical study to validate the theoretical results is also presented towards the end.

\vspace{-7mm}
\section{Introduction}
It is a common practice to study growth patterns and limit proportions for analyzing Markov chains that are predominantly transient, like branching processes (BPs) under the super-critical regime (for example, \cite{athreya2004branching, kesten1967limit}). This chapter  investigates precisely the time-asymptotic proportion of population types for a general class of continuous-time two-type population size-dependent Markov BPs. The offspring depends on the current (living) as well as the total (living and dead) populations, and can also be negative to model attack (removal of offspring of another type). We analyze such \textit{total-current population-dependent BPs}  in what we call  \textit{throughout super-critical regime} - the expected number of offspring produced by any individual is strictly greater than one, for all population sizes. We will refer to the proportion of the current population size (of one of the types) as the proportion and the time-asymptotic proportion as the limit proportion.

The literature mainly considers offspring that depend only on the current population; such models are essential in several biological applications (for example, \cite{klebaner1984population, yakovlev2009relative}). Recently, authors in  \cite{agarwal2022saturated, hautphenne2022fluid} introduced total-population dependent BPs; however, both papers analyze the BPs which shift from the super-to-sub critical regime, while we are interested in throughout-super-critical BPs. To the best of our knowledge, no other work considers such total-population dependency.

The importance of limit proportions is discussed in various papers, for example, \cite{ranbir2019decomposable, jagers1969proportions, klebaner1989geometric,  klein1980multitype} and several others. Further, they are crucial objects for the analysis of many applications. For example, authors in \cite{kapsikar2020controlling} design a warning mechanism robust against fake news propagation, where the control depends on the proportion of posts marked as fake. In \cite{agarwal2021co}, we study the relative visibility of advertisement posts defined in terms of the limit proportion of unread copies of posts shared by competing content providers. The limit proportions in prey-predator BP of \cite{coffey1991galton} denote
the proportions in which preys and predators co-survive (if at all).

To analyze proportions, it is sufficient to study the embedded chain of the underlying BP. This study is derived using stochastic approximation (SA) techniques (e.g., \cite{kushner2003stochastic}); we have previously used such an amalgam of SA-based methods in BPs in \cite{agarwal2021co, agarwal2022saturated, kapsikar2020controlling}. In this chapter, we include a notion of hovering around saddle points and prove that the sets of attractors and saddle points of an autonomous, non-smooth ordinary differential equation (ODE) almost surely describe the limit proportion. In fact, we prove that the limit set of a single-dimensional ODE suffices. We also prove that the ODE solution approximates certain normalized trajectories of the current and total population sizes over any finite time window. 

Previously, SA based approach has been used in the \polya urn (stochastic process closely related to BPs) literature to investigate limit proportions of the balls of a specific colour (see, for example, \cite{athreya1968embedding, higueras2006central, janson2004functional, arthur1987non}). However, the urn-based literature majorly deals with non-extinction scenarios and considers dependency on the current number of balls (not total) in the urn. Further, to the best of our knowledge, no finite time approximation trajectories exist for \polya urn-based models. Furthermore, we also introduce and analyze `BP with attack', where deletion of offspring (attack) from a population type and addition of the same to the other type (acquisition) occurs, in addition to the production of offspring of own type. Thus, this chapter significantly generalizes the models not only in the BP literature but also in the \polya urn literature by including (total and current) population dependency and negative offspring. We provide a more extensive comparison to the existing results in Section \ref{sec_survey}.

\textbf{Organization:} The main result is provided in Section \ref{sec_probdesc_mainresult} and proved in Section \ref{sec_proof_thrm1}. The ODE analysis is derived in Section \ref{procedureAR}, while BP with attack and its application are in Section \ref{sec_BPA}. Section \ref{sec_6} discusses numerical examples for finite time approximation.

\textbf{Notations:} For convenience, we refer the random variable and the corresponding sequence by the same symbol when the context is clear, for example, $\Ups_n$. We abbreviate infinitely often as i.o. and almost surely as a.s. We also use acronyms like BP, SA and ODE defined in the introduction. 
For any function $f$ and time $\tau$, let $f(\tau^-) := \lim_{t \uparrow \tau} f(t)$ and $f(\tau^+) := \lim_{t \downarrow \tau} f(t)$.

\subsection{Problem description}\label{sec_prob}
Consider $x$ and $y$-types of populations, and  
let $\cx_0, \cy_0$ be their respective initial sizes. The lifetime of any individual of any type is exponentially distributed with parameter $0 < \lambda < \infty$ (i.e., \textit{we consider Markovian BPs}). The time instance at which an individual completes its lifetime is referred to as its `death' time.   

Let $\Cx(t), \Cy(t)$ be the \textit{current population} and $\Ax(t), \Ay(t)$ be the \textit{total population} sizes at time $t$. 
Define $\Om(t) := (\Cx(t), \Cy(t), \Ax(t), \Ay(t))$ 
and observe 
$(\Ax(0), \Ay(0)) = (\cx_0, \cy_0)$. Let $\tau$ be the death time of any individual. Let $\offs_{ij}(\Om(\tau^-))$, with  $i, j \in \{x, y\}$, be integer-valued random variables representing $j$-type offspring produced by an $i$-type parent, conditioned on the sigma algebra $\sigma\{\Om(\tau^-)\}$. 
Basically, when $\Om(\tau^-) = \om$, the random offspring are represented by $\offs_{ij}(\om)$ for each $i, j$.
When an individual of $i$-type dies, the sizes of $i$ and $j$-type populations change by $\offs_{ii}(\Om(\tau^-))$ and $\offs_{ij}(\Om(\tau^-))$ respectively\footnote{For each $i, j$, the distribution of $\offs_{ij}(\Om(\tau^-))$ depends on the population size $\Om(\tau^-)$, and not on the value of the epoch, $\tau$.}. Further, the current size (not the total size) of $i$-type reduces by $1$ due to death. The dynamics can then be written as follows, when an $i$-type parent dies, for $i, j \in \{x, y\}$ and $j \neq i$:
\begin{equation}\label{evolve_x_up_time}
\begin{aligned}
C^i(\tau^+) &= C^i(\tau^-)  + \offs_{ii}(\Om(\tau^-)) - 1, \ \ \  A^i(\tau^+) = A^i(\tau^-)  + \offs_{ii}(\Om(\tau^-)), 
\\
C^j(\tau^+) &= C^j(\tau^-) + \offs_{ij}(\Om(\tau^-)), \ \ \ A^j(\tau^+) = A^j(\tau^-) + \offs_{ij}(\Om(\tau^-)).
\end{aligned}
\end{equation}

We consider a significantly generic framework to study \tcprocessnospace, which includes `attack+acquisition' (acquired individuals change their type);  negative (valued) offspring are used to model such attacks. 

In any BP, the expected/mean offspring plays a determining role in the growth of any population. In this chapter, we are keen to analyze the super-critical\footnote{See \cite{athreya1968some, athreya2004branching} for an introduction to super-critical population-independent BPs.} variant of TC-BPs, which we define formally in the next few lines.
Let $\om = (\cx, \cy, \ax, \ay)$ be a realisation of the random vector $\Om$. Let \textit{$m_{ij} (\om) := E[ \offs_{ij} (\om) ]$ for $i, j \in \{x, y\}$ represent the conditional expectation of the number of offspring, conditioned on $\om$}; we refer these as mean functions and $M(\om) := [m_{ij}(\om)]$ as mean matrix. Then, any BP which satisfies $m_{ix}(\om) + m_{iy}(\om) > 1$ for each $\om$ and $i\in\{x,y\}$ is called \underline{throughout-super-critical BP}. We assume the following for the random number of offspring conditioned on $\om$, which also ensures such super-criticality:
\begin{enumerate}[label=\textbf{B.\arabic*}, ref=\textbf{B.\arabic*}]
    \item \label{a1} There exist two integrable random variables $\overline{\offs}$ and $\underline{\offs}$ which bound the random offspring as: $\underline{\offs} \leq \offs_{ix}(\om) + \offs_{iy}(\om) \leq \overline{\offs}$ a.s., for each $\om$. Also,  $E[\overline{\offs}^2] < \infty$ and $E[\underline{\offs}] > 1$. Further, $\offs_{ii}(\om) \geq 0$ a.s., for each $i, \om$.
\end{enumerate}
Like the population-independent counterparts, the \tcprocess satisfying \ref{a1} also exhibits \textit{dichotomy}: the sum current population, $S^c(t) := \Cx(t) + \Cy(t)$ either explodes (i.e., $S^c(t) \to \infty$ as $t \to \infty$) exponentially at a rate at least $\lambda(E[\underline{\offs}]-1)$ or gets extinct ($S^c(t) = 0$ for all $t \geq t_e$ where $t_e < \infty$) a.s., by Lemma \ref{lemma_sum_pop} in Appendix \ref{appendix_prelim}. Now, our aim is two-fold: (i) to evaluate the limit proportion, $\lim_{t \to \infty} \frac{\Cx(t)}{\Cx(t) + \Cy(t)}$ in non-extinction paths, and (ii) to derive the deterministic approximate trajectories for the underlying BP.

\section{Main result}\label{sec_probdesc_mainresult}
 
When one considers a process which explodes with time, like a typical BP, it is a common practice to scale the process appropriately such that the scaled process converges to a  finite limit; this enables the asymptotic study of the rate of explosion,   proportions of various components of the process, etc. Further, since we are primarily interested in studying limit proportion, it suffices to analyze the embedded process (discrete-time chain defined at death instances). It is important to observe here that such an embedded process is very different from a corresponding BP in discrete-time.

Consider $n \geq 1$. Let $\tau_n$ be the time at which $n$-th individual dies. Let $\Om_n := (\Cx_n, \Cy_n, \Ax_n, \Ay_n)$ be the individual (current and total) populations and $S^c_n :=  \Cx_n + \Cy_n$ be the sum current population, both immediately after ${\tau}_n$, e.g., $\Cx_n = \Cx({\tau}_n^+)$. The current population can get extinct, and thus let $\nu_e := \inf \{n : S_n^c = 0\}$ be the extinction epoch,  
with the usual convention that $\nu_e = \infty$, when $S_n^c > 0$ for all $n$. \textit{For the sake of completion, define $\Om_n := \Om_{\nu_e}$ and $\tau_{n} :=\tau_{\nu_e}$, for all $n \geq \nu_e$, when $\nu_e < \infty$.}

Analogous to $S_n^c$, define the sum total population, $S^a_n := \Ax_n + \Ay_n$. Define the ratios:
\begin{align}\label{eqn_ratios}
\begin{aligned}
    \Ups_n &:= \left(\Pc_n, \Tc_n, \Pa_n, \Ta_n\right), \mbox{ where}\\
    \Pc_n &:= \frac{S^c_{n}}{n}, \Tc_n := \frac{\Cx_{n}}{n}, \Pa_n := \frac{S^a_{n}}{n} \mbox{ and } \Ta_n := \frac{\Ax_{n}}{n}, \mbox{ for } n\geq 1, \mbox{ with}
    \end{aligned}
\end{align}
$\Ups_0 := (\cx_0 + \cy_0, \cx_0, \cx_0 + \cy_0, \cx_0)$.  \textit{Define $\Bc_n := \Tc_n/\Pc_n  = \Cx_n/S_n^c$ as the proportion of $x$-type population among current population}; observe that conditioned on $\Om_n$, the probability of $x$-type individual dying before others is given by $\Bc_n$ in Markovian BPs. Let $\ups := (\pc, \tc, \pa, \ta)$ be a realisation of $\Ups$ and $\bc = \cx/(\cx+\cy) = \tc/\pc$ be a realisation of $\Bc$. 

In the literature, it has been a common practice to assume that the mean matrix converges to a constant matrix for studying (current) population-dependent BPs (\cite{jagers1997coupling, klebaner1984population, klebaner1989geometric}) and they assume convergence at a certain rate (as in \ref{a2} given below). \textit{We extend such work by allowing our total-current population-dependent mean functions, $m_{ij}(\om)$, to converge to proportion-dependent mean functions, $\minf_{ij}(\bc(\ups))$ (which can further be discontinuous)}, while still using the similar convergence criterion. In other words, the limit mean matrix in our case can depend on the proportion.
\begin{enumerate}[label=\textbf{B.\arabic*}, ref=\textbf{B.\arabic*}]
\setcounter{enumi}{1}
    \item Define $\bc(\ups) := \tc/\pc = \cx/s^c$. As sum current population, $s^c \to \infty$:  \label{a2}
    \begin{align*}
    |m_{ij}(\om) - \minf_{ij}(\bc(\ups))| \leq \frac{1}{(s^c)^\alpha}, \mbox{ for each }i, j \in \{x, y\}, \mbox{ for some } \alpha \geq 1.
    \end{align*}
\end{enumerate}
Under \ref{a1}-\ref{a2}, we analyze the ratios $\Ups_n$ using SA techniques; specifically, using the solutions of the following ODE:
\begin{align}\label{eqn_ODE}
\dot{\ups} &= \ga(\ups) = \mathbf{h}(\bc)1_{\{\pc > 0\}} - \ups, \mbox{ where } \mathbf{h}(\bc) := (h_{\psi}^{c},  h_{\theta}^{c},  h_{\psi}^{a},  h_{\theta}^{a}), \mbox{ with} \nonumber \\
        h_{\psi}^{c}(\bc) &= \bc \bigg(\minf_{xx}(\bc) +     \minf_{xy}(\bc)\bigg) + (1-\bc)\bigg(\minf_{yy}(\bc) + \minf_{yx}(\bc)\bigg) - 1,  \nonumber\\
        h_{\theta}^{c}(\bc)  &= \bc \bigg(\minf_{xx}(\bc) - 1\bigg) + (1-\bc) \minf_{yx}(\bc), \\
        h_{\psi}^{a}( \bc) &=  \bc \bigg(\minf_{xx}(\bc) + \minf_{xy}(\bc) \bigg) + (1-\bc)\bigg(\minf_{yy}(\bc) + \minf_{yx}(\bc) \bigg)   \mbox{ \normalsize  and}  \nonumber  \\
        h_{\theta}^{a}( \bc)  &= \bc \minf_{xx}(\bc) + (1-\bc) \minf_{yx}(\bc).  \nonumber
\end{align}Given that the above ODE is autonomous and non-smooth (the right hand side is discontinuous),  we next assume the existence of the  unique solution in extended sense (the definition is same as in Definition \ref{defn_solution_prelim}, but is re-written here for the ease of reading):
\begin{definition}\label{defn_solution}
A function $\ups(\cdot)$ is said to be an \underline{extended solution of ODE \eqref{eqn_ODE}} if it is absolutely continuous, and satisfies the equation \eqref{eqn_ODE} for almost all $t \geq 0$.
\end{definition}
\begin{enumerate}[label=\textbf{B.\arabic*}, ref=\textbf{B.\arabic*}]
\setcounter{enumi}{2}
    \item There exists a unique solution $\ups(\cdot)$ for ODE \eqref{eqn_ODE} in the extended sense  over any bounded interval. \ifnonauto{}{{\color{red}Further, assume that the solution is continuous w.r.t. initial conditions, except $0$ such that  for each $T < \infty$, 
    \begin{align}\label{eqn_continuity}
        \sup_{0 \leq t \leq T} d(\ups_{(n)}(t), \ups_{(\infty)}(t)) \to 0, \mbox{ if } \ups_{(n)}(0) \to \ups_{(\infty)}(0), \mbox{ as }n \to \infty,
    \end{align}
    where $\ups_{(n)}(\cdot)$ is the unique solution with initial condition $\ups_{(n)}(0)$, for all $n \leq \infty$.}} \label{a3}
\end{enumerate}
Assumption \ref{a3} is immediately satisfied by standard results in ODEs if $\minf_{ij}(\cdot)$ are Lipschitz continuous and if there was no indicator, $1_{\{\pc > 0\}}$ (see \cite[Theorem 1, sub-section 1.4]{piccinini2012ordinary}). We prove the same for ODE \eqref{eqn_ODE} also when  $\minf_{ij}(\cdot)$ are discontinuous and under certain conditions in Theorem \ref{thrm_attractors_beta} in Section \ref{procedureAR}; such discontinuous functions are typical for BPs with attack.

For systems modelling the BPs,  the following subset of the domain is relevant:
\begin{align}\label{invariant_set}
    \cD_I &:= \{\ups \in (\mathbb{R}^+)^4: \tc \leq \pc \leq \pa \mbox{ and } \ta \leq \pa\}.
\end{align}Therefore, we will be interested in initial conditions $\ups(0) \in \cD_I$ for the ODE \eqref{eqn_ODE}.

Next, we recall the definitions of asymptotically stable and  saddle points for autonomous ODE (see \cite{piccinini2012ordinary}), that facilitates the desired a.s. convergence of ratios $(\Ups_n)$ - 
some of the definitions are stated differently to suit our purpose and  can also be applied for the cases with generalised solutions of ODE. These definitions are exactly the same as in Chapter \ref{ch:basics}, but are re-written here for the ease of reading.
\begin{definition}\label{defn_equi_pt}
    A set $\mathbf{E} := \{\ups: \mathbf{g}(\ups) = 0\}$ is called the set of \underline{equilibrium points} for the ODE \eqref{eqn_ODE}.
\end{definition}
Define open ball, $N_\epsilon(\cA) := \{x: d(x, \cA) < \epsilon\}$ for some finite set $\cA$.
\begin{definition}\label{defn_stable}
A subset $\cA$ of $\mathbf{E}$ is said to be a \underline{(locally) stable set} for ODE \eqref{eqn_ODE}
if for any $\epsilon > 0$, there exists a $\delta > 0$ such that every solution 
 of the ODE $\ups(t) \in N_\epsilon(\cA)$ for every $t > 0$, if initial condition $\ups(0) \in N_\delta(\cA)$.
 \end{definition}
\begin{definition}\label{defn_attractors}
A subset $\cA$ of the locally stable set is called an \underline{attractor} or \underline{asymptotically} \underline{stable set} and $\cD_\cA \subset \cD_I$ is the \underline{domain of attraction} for ODE \eqref{eqn_ODE}
if every solution $\ups(t) \to \cA$ as $t \to \infty$ when $\ups(0) \in \cD_\cA$.
\end{definition}
Let $\cA^\complement$ be the complement of $\cA$.
\begin{definition}\label{defn_saddle}
A set ${\cR} \subset \cA^\complement \cap \mathbf{E}$ is said to be \underline{saddle set} if 
there exists ${\mathbf D}_S$ such that $d(\ups(t) , \cA) \stackrel{t \to \infty}{\longrightarrow} 0 $   for some $\ups(0) \in \cR^\complement \cap {\mathbf D}_S$ and   $d(\ups(t), \cR) \stackrel{t \to \infty}{\longrightarrow} 0 $  for some other $\ups(0) \in \cR^\complement \cap {\mathbf D}_S$. 
\end{definition}
Next, we focus on special types of saddle points which are attracted exponentially to $\cR$ along a particular affine sub-space, and to $\cA$ in the remaining space. Such saddle points are facilitated by the virtue of ODE structure in \eqref{eqn_ODE}.
\begin{definition}\label{defn_q_as}
Any non-zero $\ups^*  \in \cR$  is said to be (quasi) \underline{q-attractor} if (i) for any  $\ups(0) \in {\mathbb S}(\ups^*) := \{\bc(\ups) = \bc(\ups^*)\}$, $\ups(t) \stackrel{t \to \infty}{\longrightarrow} \ups^*$ exponentially, and (ii)  $\ups(t) \stackrel{t \to \infty}{\longrightarrow} \cA$ for other initial conditions. Further, if $\ups^* = \mathbf{0} \in \cR$, it is called \underline{q-attractor} if the above happens with  $ {\mathbb S}(\ups^*) := \{\pc = 0\}$.
\end{definition}

By virtue of ODE structure in \eqref{eqn_ODE}, we will see that the saddle points in our case are q-attractors defined in Definition \ref{defn_q_as} (see Theorem \ref{thrm_attractors_beta} of Section \ref{procedureAR}). Finally, consider the following subset of $\cD_I$, which represents the combined domain of attraction towards $\cA\cup\cR$ (attractors and saddle points):
\begin{align}\label{eqn_domain of attraction}
    \cD &:= (\cD_\cA \cup \cD_\cR) \cap \cD_I  = \{\ups \in \cD_I: \ups(t) \to {\cA}\cup \cR \mbox{ as } t \to \infty, \mbox{ if } \ups(0) = \ups\}.
\end{align}
Thus, if the ODE starts in $\cD$, it converges asymptotically to $\cA\cup\cR$. The main result is:  when BP ($\Ups_n$) visits some compact subset of $\cD$ i.o., then either $\Ups_n$ converges asymptotically to $\cA\cup\cR$ or hovers around $\cR$ (notion defined below).
\begin{definition}\label{defn_hovers}
    The stochastic process $\Ups_n$ is said to  \underline{hover around a set} $\cR$ if $ \Ups_n \in N_\delta (\cR)$ i.o., for all $\delta >0 \mbox{ and  } \Ups_n \notin N_{\delta_1} (\cR) \mbox{ i.o., for some } \delta_1 >0$.
\end{definition}
\textit{Hovering around depicts a type of the limiting behavior of the stochastic process where the trajectory goes arbitrarily close to the set $\cR$ i.o., but still comes out of a neighbourhood of it i.o.}
Contrary to the existing results, our SA based Theorem \ref{thrm1} given below proves the possibility of above behavior as well as convergence to the saddle set ($\cR$). We require an extra assumption and the proof is deferred to the next section.
\begin{enumerate}[label=\textbf{B.\arabic*}, ref=\textbf{B.\arabic*}]
\setcounter{enumi}{3}
    \item (a) Let $\cA\cap\cD_I$ be the attractor set  as in Definition \ref{defn_attractors}. Let each $\ups \in \cR\cap\cD_I$ be the q-attractor as in Definition \ref{defn_q_as}. Consider $\cD$ as in \eqref{eqn_domain of attraction} and let  $\cS := \cD \cap \{\pa \leq b\}$, for some $b > 0$, be a compact subset of combined domain of attraction.
    
    (b) Assume $p_{b} := P(\mathcal{V}) > 0$, where $\mathcal{V} :=  \{\omega : \Ups_n(\omega) \in \cS \mbox{ i.o.}\}$. \label{a4}
\end{enumerate}

\begin{theorem}\label{thrm1}
  Under \ref{a1}-\ref{a3}, we have:
  \begin{enumerate}[label=(\roman*)]
        \item For every $T>0$, a.s.  there exists a sub-sequence $(n_l)$ such that:
            $$
            \sup_{k: t_k \in [t_{n_l}, t_{n_l} + T]} d(\Ups_k, \ups(t_k - t_{n_l})) \to 0  \mbox{ as } l \to \infty, \mbox{ where } t_n := \sum_{k=1}^n \frac{1}{k} \mbox{ and}
            $$
        $\ups(\cdot)$ is the extended solution of ODE \eqref{eqn_ODE} which starts at $\ups(0) =
        \lim_{n_l \to \infty} \Ups_{n_l}$.
        \item Further, assume \ref{a4}. Then, $P({\cal C}_1 \cup {\cal C}_2) \geq p_b$, where
        \begin{align*}
                    \hspace{-10mm}
        \begin{aligned}
                 \hspace{7mm}{\cal C}_1 &: =\{\Ups_n \to (\cA \cup \cR)\cap \cD_I \mbox{ as } n \to \infty\}, \mbox{ and }
                {\cal C}_2 := \{ \Ups_n \mbox{ hovers around } \cR \}. \hspace{5mm}  \mbox{ \eop}
            \end{aligned}
        \end{align*}
  \end{enumerate}
\end{theorem}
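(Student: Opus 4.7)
The plan is to frame $\{\Ups_n\}$ as a stochastic-approximation (SA) iteration with step size $1/n$ and then apply the ODE method. From the dynamics \eqref{evolve_x_up_time} and the normalizations in \eqref{eqn_ratios}, one verifies directly that
\begin{align*}
    \Ups_{n+1} = \Ups_n + \tfrac{1}{n+1}\bigl[\ga(\Ups_n) + M_{n+1} + \eta_{n+1}\bigr],
\end{align*}
where $\ga$ is the drift field of \eqref{eqn_ODE}, $M_{n+1}$ is a martingale difference with respect to $\mathcal{F}_n := \sigma(\Ups_0,\dots,\Ups_n)$, and $\eta_{n+1}$ captures the bias $m_{ij}(\Om_n) - \minf_{ij}(\bc(\Ups_n))$. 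Assumption \ref{a1} delivers $\sup_n E|M_{n+1}|^2 < \infty$ and linear growth of $\ga$; \ref{a2} gives $|\eta_{n+1}| = O(1/(S^c_n)^\alpha)$ on non-extinction paths, while on extinction paths both the iterate and the ODE solution decay like $1/n = e^{-t_n}$ (since $\ga = -\ups$ when $\pc=0$), so the approximation is automatic. The step-size $1/n$ satisfies $\sum 1/n = \infty$ and $\sum 1/n^2 < \infty$, placing us inside the standard SA setting.

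For part (i), I would interpolate on the time scale $t_n = \sum_{k=1}^n 1/k$ by setting $\bar\Ups(t) := \Ups_n$ on $[t_n, t_{n+1})$. The martingale tail $\sum_{k \ge n}\tfrac{1}{k+1}M_{k+1}$ converges a.s.\ by Doob's $L^2$-martingale convergence, and the bias tail is absolutely summable on non-extinction paths using Lemma \ref{lemma_sum_pop}. Hence for any $T>0$,
\begin{align*}
    \bar\Ups(t_n + t) = \Ups_n + \int_0^t \ga(\bar\Ups(t_n+s))\,ds + o(1) \quad \text{uniformly on } [0,T].
\end{align*}
Extracting a subsequence $(n_l)$ along which $\Ups_{n_l}\to \upsilon_0$ (possible since the iterates are eventually bounded on the visit event and the interpolation is equicontinuous on compact sets), an Arzel\`a--Ascoli argument produces a further subsequence along which $\bar\Ups(t_{n_l}+\cdot)$ converges uniformly on $[0,T]$ to an absolutely continuous function satisfying \eqref{eqn_ODE} a.e. The uniqueness clause of \ref{a3} then identifies this limit as the extended solution $\ups(\cdot)$ starting at $\upsilon_0$.

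For part (ii), I would condition on $\mathcal{V}$, which has probability $p_b$. On $\mathcal{V}$, $\Ups_n\in\cS$ i.o., so every subsequential limit $\upsilon_0$ lying in $\cS$ belongs to $\cD_\cA\cup\cD_\cR$. Part (i) forces the set of subsequential limit points of $\{\Ups_n\}$ to be forward-invariant under the ODE flow; since every trajectory starting in $\cD_\cA\cup\cD_\cR$ approaches $\cA\cup\cR$, this limit set is in fact contained in $\cA\cup\cR$. Three scenarios then exhaust the picture: (a) all limit points lie in $\cA$, in which case local stability of $\cA$ together with the vanishing noise tail yields $\Ups_n\to\cA$, hence $\mathcal{C}_1$; (b) all limit points lie in $\cR$ and $\Ups_n$ eventually stays in every neighborhood of $\cR$, again giving $\mathcal{C}_1$; (c) all limit points lie in $\cR$ but $\Ups_n$ exits some fixed $\delta_1$-neighborhood of $\cR$ i.o., which is precisely $\mathcal{C}_2$. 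A mixed scenario with limit points in both $\cA$ and $\cR$ is excluded: once $\Ups_n$ enters a sufficiently small neighborhood inside $\cD_\cA$, attractor stability and a Gronwall estimate driven by the summable noise trap the iterates there and prevent any return to a neighborhood of $\cR$.

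The hardest part is (ii) around the saddle set. The q-attractor structure of Definition \ref{defn_q_as} is the principal source of subtlety: ODE trajectories started off the codimension-one manifold $\mathbb{S}(\upsilon^*)$ are driven to $\cA$, so convergence to $\cR$ requires the iterates to essentially track this manifold, whereas hovering corresponds to the noise repeatedly pushing them across it. The discontinuity of $\ga$ at $\pc=0$ adds further difficulty, and is precisely why \ref{a3} is postulated and the extended-solution framework is used to pass to the limit inside $\int_0^t \ga(\bar\Ups(t_n+s))\,ds$. It is also why it is natural to phrase the visit hypothesis \ref{a4} in terms of the combined compact set $\cS\subset\cD_\cA\cup\cD_\cR$ rather than in terms of $\cR$ alone.
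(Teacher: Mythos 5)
Your part (ii) is where the real gap lies. Its backbone is the assertion that the set of subsequential limits of $(\Ups_n)$ is contained in $\cA\cup\cR$, followed by a three-way case analysis. This cannot stand as written: on the hovering event the iterates leave $N_{\delta_1}(\cR)$ i.o., and since you yourself invoke boundedness, Bolzano--Weierstrass then produces limit points at distance at least $\delta_1$ from $\cR$; so in the hovering scenario the limit set is \emph{not} contained in $\cR$, your case (c) (``all limit points lie in $\cR$'' yet the iterates exit a fixed neighbourhood i.o.) is vacuous, and combined with your exclusion of the ``mixed'' scenario your argument would in fact conclude $\Ups_n\to\cA\cup\cR$ on $\mathcal V$ --- a strictly stronger statement than the theorem makes, and one the paper deliberately avoids because hovering cannot be ruled out (that is the whole reason ${\cal C}_2$ appears). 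The faulty step is the inference ``every ODE trajectory started in $\cD_\cA\cup\cD_\cR$ converges to $\cA\cup\cR$, hence the (invariant) limit set of the iterates lies in $\cA\cup\cR$'': around a q-attractor the invariant/chain-recurrent structures compatible with vanishing noise are not exhausted by the equilibria, and that is precisely the mechanism behind hovering. The paper's proof makes no claim about the full limit set. It fixes one limit point $\ups_0\in\cS$ (compactness of $\cS$ plus \ref{a4} give its existence), uses part (i) to track the ODE from $\ups_0$, which by \ref{a4} converges to some $\ups^*\in(\cA\cup\cR)\cap\cD_I$, and then argues locally: if $\ups^*\in\cA$, local stability plus a contradiction argument on interpolated excursion segments between $\partial N_{\delta_2}(\ups^*)$ and $\partial N_{\delta_1}(\ups^*)$ (with the two cases of bounded and unbounded excursion durations) yields $\Ups_n\to\ups^*$; if $\ups^*\in\cR$, the exponential stability along ${\mathbb S}(\ups^*)$ from Definition \ref{defn_q_as} gives i.o. visits to $N_{\delta}(\ups^*)\cap{\mathbb S}(\ups^*)$ for every $\delta$, and then either the iterates never exit such neighbourhoods i.o. (convergence to $\cR$) or they exit some $N_{\delta_1}$ i.o., which is hovering by Definition \ref{defn_hovers}. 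You would need to restructure part (ii) along these local lines rather than through containment of the limit set.

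In part (i) there is a smaller but genuine gap: you justify extraction of a convergent subsequence (and equicontinuity) by boundedness ``on the visit event'', but part (i) assumes only \ref{a1}--\ref{a3} and is claimed almost surely, so \ref{a4} is not available there. The paper's ``key idea'' is to dominate the iterates by the sample mean $\overline{\Pi}_n$ built from $\overline{\offs}$ of \ref{a1}, so that by the strong law $\Tc_n\le\Ta_n\le\Pa_n\le\overline{\Pi}_n$ is a.s. uniformly bounded (this is also what makes the interpolations equicontinuous in the extended sense, which requires bounded initial values), and to use the analogous lower bounding sequence to get $\Pc_n\ge\Delta>0$ on survival paths, i.e. $S^c_n\ge\Delta n$. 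That linear-in-$n$ lower bound is exactly what makes the bias tail $\sum_n n^{-1}\,|m_{ij}(\Om_n)-\minf_{ij}(\Bc_n)| \le \sum_n n^{-1}(S^c_n)^{-\alpha}$ summable; your appeal to Lemma \ref{lemma_sum_pop} gives growth of $S^c_n$ only in terms of the random times $\tau_n$ and does not by itself provide a rate in $n$, so the summability claim is unproved as stated. Both points are fixable, but they are precisely the steps the paper supplies and your proposal omits.
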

Thus, the BP either converges to attractor/saddle set or it can hover around a saddle point, with combined probability at least $p_b > 0$; in fact, the saddle points are q-attractors defined in Definition \ref{defn_q_attractor}. The above result is a specific case of Theorem \ref{thrm_finite_time} and Corollary \ref{cor_SA_prelim} when applied to SA-based scheme corresponding to BPs. We will show that \ref{a1}-\ref{a4} are satisfied for BP with attack in Section \ref{sec_BPA}, with $p_b = 1$, i.e, the above results are true a.s.

\subsection{Significance of Theorem \ref{thrm1}} 

\noindent \textbf{BP trajectories -} Theorem \ref{thrm1}(i) provides \textit{a novel approach for studying the asymptotic trajectory of the BPs using ODE solution}.
Consider the solution of ODE \eqref{eqn_ODE} initialised with $\lim_{n_l \to \infty} \Ups_{n_l}$. Then, the BP $\Ups_k$ is  close to ODE solution $\ups(t_k-t_{n_l})$ at all transition epochs, $k$ with $t_k \in [t_{n_l}, t_{n_l} + T]$. This approximation improves as $n_l$ increases. \textit{The result is true a.s., for all $T < \infty$, independent of $p_b$ and only requires \ref{a1}-\ref{a3}.}  

We suggest a better finite-time approximation using a non-autonomous ODE in Section \ref{sec_6}, inspired by \cite{agarwal2022saturated} where saturated  total population-dependent BP is studied.

\vspace{1mm}
\noindent \textbf{Limit proportion -}
Theorem \ref{thrm1}(ii) provides an alternate approach to derive limit behaviour via the attractors or saddle points (q-attractors) of ODE \eqref{eqn_ODE}.

In \textit{extinction paths}, where both populations get extinct, $\Ups_n \to \mathbf{0}$  as $n \to \infty$, say with probability $p_e > 0$. Thus, extinction paths are in the set $\mathcal{V}$ of \ref{a4}. While in survival paths, the BP either converges or hovers around $\left(\cA \cup (\cR - \{\mathbf{0}\})\right)\cap \cD_I$, with probability at least $p_b - p_e$. 
As an example of convergence to saddle point, the vector $\mathbf{0}$ is a saddle point of ODE \eqref{eqn_ODE} (shown in the proof of Theorem \ref{thrm_attractors_beta}) and is also a limit of the BP in extinction paths.

\vspace{1mm}
\noindent \textbf{Population independent to population dependent BPs -}
One can analyze any general BP with limit mean matrix (say) $M^\infty$ using a population-independent BP with mean matrix as $M^\infty$. The knowledge about limits of the  latter BP can be useful in deriving ODE limits and, thus, the limits  of the former BP. One still needs to show that the  former BP  visits the domain of attraction i.o., given latter  visits the same i.o.

\vspace{1mm}
\noindent \textbf{Limitation -} By Theorem \ref{thrm1}, one can not comment on the individual probability of $\Ups_n$ converging to a particular limit in $\left(\cA \cup \cR\right)\cap \cD_I$ or the likelihood of hovering around. Further, $\Bc_n \to \{0, 1\}$ does not always imply the extinction of $x$ or $y$-type population; however, in BP with attack, this is true (see the discussion at the end of Appendix \ref{appendix_prelim}).

\section{Proof of Theorem \ref{thrm1}}\label{sec_proof_thrm1}

From equation \eqref{evolve_x_up_time}, the embedded process immediately after $n$-th death, when the death is for example of an $x$-type individual, is given by:
\begin{equation}\label{evolve_x_up_gen}
\begin{aligned}
\Cx_{n} &= \Cx_{n-1}  + \offs_{xx, n}(\Om_{n-1}) - 1, \ \ \  \Ax_{n} = \Ax_{n-1}  + \offs_{xx, n}(\Om_{n-1}), \\
\Cy_{n} &= \Cy_{n-1} + \offs_{xy, n}(\Om_{n-1}),  \ \ \ \Ay_{n} = \Ay_{n-1} + \offs_{xy, n}(\Om_{n-1}).
\end{aligned}
\end{equation}
To begin with, we make an important observation to derive an appropriate SA-based scheme which represents the above dynamics and also to prove a boundedness assumption for ratios $\Ups_n$ required for most SA-based studies. 

\noindent \textbf{Key idea:}
Consider a BP with population-independent and positive offspring, i.e., in \ref{a1}, assume $\offs_{ix}(\om) + \offs_{iy}(\om) = \overline{\offs}$ for all $\om$ and all $i \in \{x, y\}$. Let $\overline{\Pi}_n$ represent the sample mean formed by the sequence of   offspring plus the initial population size, i.e., 
\begin{align}\label{eqn_overline_S_n}
    \overline{\Pi}_n = \frac{1}{n}\left(\sum_{k=1}^n  \overline{\offs}_k + s_0^a \right).
\end{align}
By strong law of large numbers, $\overline{\Pi}_n \to \overline{m} := E[\overline{\offs}_1]$ a.s. 
For this special case, ${\Psi}^a_n = \overline{\Pi}_{n}1_{n < \nu_e} + \nu_e \overline{\Pi}_{\nu_e}/n 1_{n \geq \nu_e}$ (see \eqref{evolve_x_up_gen} and recall ${\Psi}^a_n = (\Ax_n+\Ay_n)/n$); hence ${\Psi}^a_n $ converges either to $0$ (in extinction paths, i.e., $\nu_e < \infty$) or to ${\overline m}$ (in survival paths); $\Pc_n$ respectively converges to $0$ or $\overline{m}-1$. This observation actually completes the  proof  for this special case with $\cA = \{(0, 0), (\overline{m}-1, \overline{m})\}$, further when single population (say $x$-type) is considered. It is well known that the sample mean \eqref{eqn_overline_S_n} \textit{can be written as a SA-based scheme} and in \eqref{eqn_SA_pi} given below, we will see that this is true even for the general case. Further, clearly, \eqref{eqn_overline_S_n} becomes an upper bound for all components of $\Ups_n$, which \textit{helps in bounding $\Ups_n$  uniformly in $n$ and a.s.} (see \eqref{Eqn_XnYnSnetc_general} given below), again under~\ref{a1}.  

Analogous to $\overline{\Pi}_n$ as in \eqref{eqn_overline_S_n}, one can construct a lower bounding sequence using $\underline{\offs}$ of \ref{a1}; this provides a uniform positive lower bound for $\Pc_n$, which will help the proof. 


{\bf Proof:}
For any $n \geq 1$, let $\Pi_n$ represent the sample mean formed by the sequence of (possibly $\om$-dependent) offspring plus the initial population size till $\nu_e$, i.e., (recall, $\Phi_{k-1}$ is the population-size vector) 
\begin{align}\label{eqn_pi_n}
     \Pi_n &= \frac{1}{n}\left(\sum_{k=1}^{\min\{n, \nu_e\}} \left( H_k\offs_{x, k}(\Om_{k-1}) + \overline{H}_k \offs_{y, k}(\Om_{k-1})  \right) + s_0^a \right), \mbox{ where } \nonumber  \\
    \offs_{x, k} &:= \offs_{xx, k} + \offs_{xy, k}, \ \  
    \offs_{y, k} := \offs_{yy, k} + \offs_{yx, k},  
\end{align}and $H_k= 1-\overline{H}_k$ is the indicator that an $x$-type individual dies at $k$-th epoch. It is easy to observe that ${\Pi}_n$  can be re-written as (observe that $\nu_e$ also equals $\inf\{n: \Pc_n = 0\}$)
\begin{align}\label{eqn_SA_pi}
\Pi_n =  \Pi_{n-1} + \frac{1}{n} \bigg[ \left(H_n \offs_{x, n}(\Om_{n-1}) + \overline{H}_n \offs_{y, n}(\Om_{n-1}) \right)1_{\Pc_{n-1} > 0} - \Pi_{n-1} \bigg].
\end{align}

In fact, $\Pi_n = \Pa_n$ for all $n\geq 1$, and so the above iterative equation represents $\Pa_n$. Similarly, other ratios in $\Ups_n$ can be re-written as (see \eqref{evolve_x_up_gen}, \eqref{eqn_pi_n}, \eqref{eqn_SA_pi}): 
\begin{align}\label{eq_stoch_approx_scheme_general}
\begin{aligned}
\Ups_n &= \Ups_{n-1} + \frac{1}{n}\mathbf{L}_{n}, \mbox{ where } \mathbf{L}_{n} := (L_{n}^{\psi, c}, L_{n}^{\theta, c}, L_{n}^{\psi, a}, L_{n}^{\theta, a})^t, \mbox{ with}\\
L_{n}^{\psi, c} &:=  \left\{ H_n\bigg(\offs_{x, n}(\Om_{n-1})-1\bigg) + \overline{H}_n \bigg(\offs_{y, n}(\Om_{n-1})-1\bigg)\right\} 1_{\Pc_{n-1} > 0}   -  \Pc_{n-1}, \\
L_{n}^{\theta, c} &:= \left\{H_{n}\bigg(\offs_{xx, n}(\Om_{n-1}) - 1\bigg) + \overline{H}_{n} \offs_{yx, n}(\Om_{n-1})\right\}1_{\Pc_{n-1} > 0} - \Tc_{n-1},  \\
L_{n}^{\psi, a} &:=  \bigg\{ H_n \offs_{x, n}(\Om_{n-1}) + \overline{H}_n \offs_{y, n}(\Om_{n-1}) \bigg\}1_{\Pc_{n-1} > 0}  - \Pa_{n-1}, \mbox{ and}\\
L_{n}^{\theta, a} &:= \bigg\{H_{n}\offs_{xx, n}(\Om_{n-1}) + \overline{H}_{n} \offs_{yx, n}(\Om_{n-1})\bigg\}1_{\Pc_{n-1} > 0}  - \Ta_{n-1}. 
\end{aligned}
\end{align}


The proof of part (i) has two major steps: (a) to construct a sequence of piece-wise constant interpolated trajectories for almost all sample-paths; (b) to prove that the designed trajectories are equicontinuous in extended sense\footnote{\label{footnote_defn_equi}\begin{definition}
  {\bf Equicontinuous in extended sense (\cite[Equation (2.2), pp. 102]{kushner2003stochastic})):} \label{defn_equi} Suppose that for each $n$, $f_n(\cdot)$
is an $\mathbb{R}^r$-valued measurable function on $(-\infty,\infty)$ and $(f_n(0))$ is bounded.
Also suppose that for each $T$ and $\epsilon > 0$, there is a $\delta > 0$ such that
\begin{align}\label{eqn_footnote}
\limsup_n \sup_{0\leq t-s\leq \delta, |t| \leq T} |f_n(t) - f_n(s)| \leq \epsilon. 
\end{align}
 Then the sequence $(f_n(\cdot))$ is said to be equicontinuous in the extended sense.
\end{definition}}. These steps are majorly as in \cite[Theorems 2.1-2.2]{kushner2003stochastic}, but for the changes required for measurable $\mathbf{\ga}(\cdot)$.

For many steps of the proof, we will work only with $\tc$-component of the vector $\ups$, when the proof for the remaining components goes through in exactly similar manner.

Let $\Ups^n(\cdot) := (\Psi^{n, c}(\cdot), \Theta^{n, c}(\cdot), \Psi^{n, a}(\cdot), \Theta^{n, a}(\cdot))$  be  the constant piece-wise interpolated trajectory defined as below (see \eqref{eq_stoch_approx_scheme_general}, and recall $t_n = \sum_{i=1}^n \epsilon_{i-1}$, where $\epsilon_{i-1} = \frac{1}{i}$):
\begin{align}\label{eqn_interpolated_traj_1}
    \Theta^{n, c}(t) := \Tc_n + \sum_{i=n}^{\eta(t_n + t)-1} \epsilon_i L_i^{\theta, c}, \mbox{ for all } t \ge 0,
\end{align}
$\Psi^{n, c}(t), \Psi^{n, a}(t)$ and $\Theta^{n, a}(t)$ are defined analogously. Towards proving equicontinuity, we first consider upper-boundedness of $\Ups^n(0) = \Ups_n$, as the iterates are trivially lower bounded by $0$. The claim is immediately true by strong law of large numbers a.s., to be more precise on the set $\{\overline{\Pi}_n  \to \overline{m}\}$,  because of the following  observation (see \eqref{eqn_overline_S_n}-\eqref{eq_stoch_approx_scheme_general}):
\begin{equation}\label{Eqn_XnYnSnetc_general}
     \Pc_n \leq \Pa_n \mbox{ and } \Tc_n \leq \Ta_n \le \Pa_n = \Pi_n \leq  \overline{\Pi}_n   \mbox{ for all } n, 
\end{equation}
as then for any sample path $\omega \in \{\overline{\Pi}_n  \to \overline{m}\}$ and   $\epsilon > 0$, there exists a $n_\epsilon(\omega) < \infty $,
\begin{align}\label{eqn_bounded_iterates}
\begin{aligned}
\hspace{6mm}
   \sup_n  \max\{\Theta^{n, c}(0), \Psi^{n, c}(0), \Theta^{n, a}(0), \Psi^{n, a}(0)\}  \leq \hspace{16mm} &    \\
   &\hspace{-8cm}\max\left \{ \max_{n \le n_\epsilon(\omega)\}}  \max\{\Theta^{n, c}(0), \Psi^{n, c}(0), \Theta^{n, a}(0), \Psi^{n, a}(0)\}, \   \overline{m} + \epsilon \right \}. 
   \end{aligned}
\end{align}
%
%
Towards the second part of equicontinuity (see \eqref{eqn_footnote} in footnote \ref{footnote_defn_equi}), the interpolated trajectory for $\Theta^{n,c}(\cdot)$ in \eqref{eqn_interpolated_traj_1} can be re-written in `almost integral form', for any $t \geq 0$:
\begin{align}\label{eqn_diff_term1}
\begin{aligned}
     \Theta^{n, c}(t) &:= \Tc_n + \int_0^t \rho_\theta^c(\Ups^n(s), s) ds + {\cal E}_1^{n, c}(t), \mbox{ with the difference term, } \\
     {\cal E}_1^{n, c}(t) &:= \sum_{i=n}^{\eta(t_n + t)-1} \epsilon_i L_i^{\theta, c} - \int_0^t \rho_\theta^c(\Ups^n(s), s) ds, \mbox{ where}
\end{aligned}
\end{align}
$\gna = (\rho_\psi^c, \rho_\theta^c, \rho_\psi^a, \rho_\theta^a)$ is the conditional expectation, $E[\mathbf{L}_n|\mathcal{F}_n] =: \gna(\Ups_n, t_n)$, with respect to the sigma algebra, ${\cal F}_n  := \sigma\{\Om_k : 1 \leq k < n \}$, and equals (see \eqref{eq_stoch_approx_scheme_general}):

 \vspace{-10mm}
{\small
\begin{align}\label{eqn_nonauto_g}
\rho_\psi^c (\ups, t) &:=      \left\{ \bc \bigg(m_{xx}(\om) +     m_{xy}(\om)\bigg) + (1-\bc)\bigg(m_{yy}(\om) + m_{yx}(\om)\bigg) - 1 \right\} 1_{\{\pc > 0\}}- \pc, \nonumber \\
\rho_\theta^c (\ups, t)  &: = \left\{ \bc \bigg(m_{xx}(\om) - 1\bigg) + (1-\bc) m_{yx}(\om)\right\} 1_{\{\pc > 0\}} - \tc,  \\
\rho_\psi^a (\ups, t) &:=      \left\{ \bc \bigg(m_{xx}(\om) + m_{xy}(\om)\bigg) + (1-\bc)\bigg(m_{yy}(\om) + m_{yx}(\om)\bigg)\right\} 1_{\{\pc > 0\}} - \pa,   \mbox{ \normalsize  and,}  \nonumber \\
\rho_\theta^a (\ups, t)  &: = \bigg\{\bc m_{xx}(\om) + (1-\bc) m_{yx}(\om)\bigg\} 1_{\{\pc > 0\}} - \ta,   \mbox{ \normalsize with, }   \eta(t) := \max\left  \{ n: t_n \le t \right \},  \nonumber\\
\om &= \om(\ups, t) := 
  \big(\tc \eta(t),  \ (\pc-\tc)\eta(t),\ \ta \eta(t), \  (\pa-\ta)\eta(t)\big). \nonumber
\end{align}}

We further re-write the interpolated trajectory using the autonomous ODE \eqref{eqn_ODE}:
\begin{align}\label{eqn_diff_term2}
\begin{aligned}
     \Theta^{n, c}(t) &:= \Tc_n + \int_0^t g_\theta^c(\Ups^n(s)) ds + {\cal E}_1^{n, c}(t) + {\cal E}_2^{n, c}(t), \mbox{ where}
\end{aligned}
\end{align}
$$
{\cal E}_2^{n, c}(t) := \int_0^t \rho_\theta^c(\Ups^n(s), s) ds - \int_0^t g_\theta^c(\Ups^n(s)) ds.
$$
In Appendix \ref{appendix_B}, we show that ${\cal E}_1^{n, c}(t) + {\cal E}_2^{n, c}(t)$ converges uniformly to $0$, as $n \to \infty$, over any finite time window and further show:

\begin{lemma}\label{lemma_equi_cont_thrm1}
The sequence $(\Ups^n(\cdot))$ is equicontinuous in extended sense a.s. \eop
\end{lemma}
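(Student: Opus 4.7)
The plan is to verify the two conditions in footnote \ref{footnote_defn_equi} for each of the four components of $\Ups^n(\cdot)$; I shall only write the argument for $\Theta^{n,c}(\cdot)$, the other three being identical in structure. The boundedness of $\Ups^n(0) = \Ups_n$ on the full a.s.\ event $\{\overline{\Pi}_n\to\overline m\}$ is already given by \eqref{eqn_bounded_iterates}, and the same bound extends uniformly to $\Ups^n(t)$ for $t \in [0,T]$ since $\Ups^n(t) = \Ups_{\eta(t_n+t)}$ and the SLLN bound in \eqref{Eqn_XnYnSnetc_general} is uniform in $n$.

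The core idea is the decomposition in \eqref{eqn_diff_term2}: for $0 \le s \le t \le T$,
\begin{equation*}
\Theta^{n,c}(t) - \Theta^{n,c}(s) \;=\; \int_s^t g_\theta^c(\Ups^n(u))\,du \;+\; \bigl[{\cal E}_1^{n,c}(t)-{\cal E}_1^{n,c}(s)\bigr] \;+\; \bigl[{\cal E}_2^{n,c}(t)-{\cal E}_2^{n,c}(s)\bigr].
\end{equation*}
Since $\Ups^n(u)$ is a.s.\ uniformly bounded and $\minf_{ij}$ are finite, the function $g_\theta^c$ is bounded by a deterministic constant $C$ on the relevant range, giving $\bigl|\int_s^t g_\theta^c(\Ups^n(u))\,du\bigr| \le C(t-s)$. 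It therefore remains to show that $\sup_{t\in[0,T]}\bigl|{\cal E}_1^{n,c}(t)\bigr|$ and $\sup_{t\in[0,T]}\bigl|{\cal E}_2^{n,c}(t)\bigr|$ both tend to $0$ a.s.\ as $n\to\infty$.

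For the Riemann--sum error ${\cal E}_1^{n,c}$, I split $L_i^{\theta,c} = M_i + \rho_\theta^c(\Ups_{i-1},t_{i-1})$ where $M_i := L_i^{\theta,c} - E[L_i^{\theta,c}\mid{\cal F}_i]$ is a martingale difference. By \ref{a1}, $\sup_i E[M_i^2\mid{\cal F}_i] \le E[\overline{\offs}^2]<\infty$, and since $\sum_i \epsilon_i^2 = \sum_i 1/i^2 < \infty$, Doob's $L^2$--martingale convergence theorem yields that $\sum_i \epsilon_i M_i$ converges a.s. Consequently, the tail $\sup_{t\in[0,T]}\bigl|\sum_{i=n}^{\eta(t_n+t)-1}\epsilon_i M_i\bigr|\to 0$ a.s.; the remaining piece, namely the difference between $\sum_{i=n}^{\eta(t_n+t)-1}\epsilon_i\rho_\theta^c(\Ups_{i-1},t_{i-1})$ and $\int_0^t \rho_\theta^c(\Ups^n(u),u)\,du$, is a standard piecewise--constant versus Riemann--integral discrepancy of order $\epsilon_n\sup|\rho_\theta^c|\to 0$. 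For the mean--approximation error ${\cal E}_2^{n,c}$, \ref{a2} gives $|\rho_\theta^c(\ups,u) - g_\theta^c(\ups)| \le C'(s^c(u))^{-\alpha}$; on survival paths $s_k^c$ grows to infinity (Lemma \ref{lemma_sum_pop}), so the integrand is uniformly small, while on extinction paths both $\rho_\theta^c$ and $g_\theta^c$ vanish for all large $n$ because of the indicators $1_{\{\pc>0\}}$.

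Given these two uniform convergences, for any $\epsilon>0$ I pick $\delta := \epsilon/(2C)$ and $n$ large enough that $2\sup_{t\in[0,T]}\bigl|{\cal E}_1^{n,c}(t)\bigr|+2\sup_{t\in[0,T]}\bigl|{\cal E}_2^{n,c}(t)\bigr|\le \epsilon/2$, yielding \eqref{eqn_footnote} for $\Theta^{n,c}(\cdot)$. The main obstacle will be the martingale--tail step: one must convert the a.s.\ convergence of $\sum_i\epsilon_i M_i$ into \emph{uniform-in-}$t$ smallness of the partial--sum tail indexed by the random stopping times $\eta(t_n+t)$, which I handle by noting that the supremum over $t\in[0,T]$ of the tail equals the Cauchy oscillation of the partial sums beyond index $n$, and this oscillation tends to $0$ a.s.\ once the series converges.
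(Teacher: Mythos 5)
Your proof is correct and follows essentially the same route as the paper's: boundedness of the initial values via \eqref{eqn_bounded_iterates}, and the decomposition of $\Theta^{n,c}$ into a bounded drift integral plus a martingale error, a piecewise-constant/Riemann discrepancy, and a bias term controlled by \ref{a2}, each shown to vanish uniformly on $[0,T]$ almost surely. The only (harmless) variations are that you handle the martingale tail via $L^2$-bounded martingale convergence and the Cauchy oscillation of the partial sums rather than the maximal inequality used in the paper, and you control the bias term on survival paths using $S^c_k \to \infty$ directly instead of the paper's uniform lower bound $\Pc_i \geq \Delta$ obtained from the lower-bounding sequence built with $\underline{\offs}$.
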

Now, consider the set $N$ of all sample paths for which $(\Ups^n(\cdot))$ is not equicontinuous - by Lemma \ref{lemma_equi_cont_thrm1}, $P(N) = 0$ (see proof of above Lemma for precise definition of $N$). Then, by extended version of Arzela-Ascoli Theorem \cite[section 4, Theorem 2.2, pp. 127]{kushner2003stochastic}, there exists a sub-sequence $(\Ups^{n_m}(\omega, \cdot))$ which converges to some continuous limit, call $\ups(\omega, \cdot)$, uniformly on each bounded interval for $\omega \notin N$ such that: 
\begin{align}\label{eqn_ups_inf}
\ups(t) = \lim_{n_m \to \infty} \ups_{n_m}(\omega) + \int_0^t \ga (\ups(s)) ds.
\end{align}
Thus, for every $\epsilon > 0$ and $T > 0$, there exists $n(\omega, \epsilon, T)$ such that: 
\begin{align}\label{eqn_dist_scheme_ODE_}
\sup_{l \in L} d({\Ups}_l , \ups(t_l - t_{n_m})) \leq \epsilon/2 \mbox{ for all } n_m \geq n(\omega, \epsilon, T), 
\end{align}
where $L:= \{l :  t_{n_m} \leq t_l \leq T + t_{n_m}\}$; observe for any $l \in L$, ${\Ups}^{n_m}(t) = {\Ups}_l$ if $t = t_l - t_{n_m}$. Now, we are left to show that $\ups(\cdot)$ in \eqref{eqn_ups_inf}, the solution of the fixed point equation (of the integral operator), is the extended solution of ODE \eqref{eqn_ODE} starting at $\ups(0) = \lim_{n_l \to \infty} \Ups_{n_l}$, i.e., 
$$
    \lim_{h \to 0} \frac{\ups(t+h) - \ups(t)}{h} = \ga(\ups(t)) = \frac{d \ups(t)}{d t} \mbox{ for almost all } t.
$$
One can easily show that the function $\ga \circ \ups$ is locally integrable, and thus, by 
\cite[Theorem 3.21]{folland1999real}, the claim holds. 
This completes part (i).

For part (ii), under \ref{a4}, the proof is again inspired from \cite{kushner2003stochastic} and \cite[Theorem 2.3.1, pp. 39]{kushner2012stochastic}, even when the solution of ODE \eqref{eqn_ODE} is in extended sense, not the classical one. Further major difference in the proof is to include the arguments required to prove the event of hovering around $\cR$. We complete this proof in Appendix \ref{proof_thrm1}. \eop

\section{Derivation of $\cA, \cR$ via analysis of proportion ODE}\label{procedureAR}

Under \ref{a2}, $\om$-dependent mean functions converge to just $\bc$-dependent mean functions, and thus, one may anticipate that the analysis of $ \bc(\ups(t))  = \bc(t)$ plays a crucial role. In fact, we claim and prove that the time limits of $\bc$, obtained from the following  limit ODE for $\bc$ (derived using \eqref{eqn_ODE}), leads to the required analysis:

\vspace{-6mm}
\begin{align}\label{eqn_beta_ODE}
\begin{aligned}
\dot{\bc} &= \frac{1}{\pc} g_\beta(\bc)1_{\{\pc > 0\}},\mbox{ where}\\ 
g_\beta(\bc) &:= - \bc \minf_{xy}(\bc) + (1-\bc) \minf_{yx}(\bc)\\
    &\hspace{15mm}+ \bc(1-\bc) \bigg\{\minf_{xx}(\bc) + \minf_{xy}(\bc) - \big(\minf_{yx}(\bc) +\minf_{yy}(\bc)\big)  \bigg\}.
    \end{aligned}
\end{align}
From above, $g_\beta$ depends only on $\bc$, thus, \textit{one might expect that the asymptotic analysis of $\bc$ is independent of other components of $\ups$}. We will see that this is indeed true, and in fact, asymptotic analysis of all components of $\ups$ can be derived using $g_\beta$.
In this regard, we define the following:
\begin{definition}
    Any point $\bstar \in [0,1]$ is  (projected) \underline{p-stable} if $\mathbf{h}(\bstar)$ is an attractor for ODE \eqref{eqn_ODE}; a $\bstar$ is called \underline{p-saddle} if $\mathbf{h}(\bstar)$ is a saddle point, more specifically, q-attractor defined in Definition \ref{defn_q_attractor}.
\end{definition}
\begin{figure}[htbp]
    \centering
    \includegraphics[trim = {2.8cm 0.1cm 3cm 0cm}, clip, scale = 0.4]{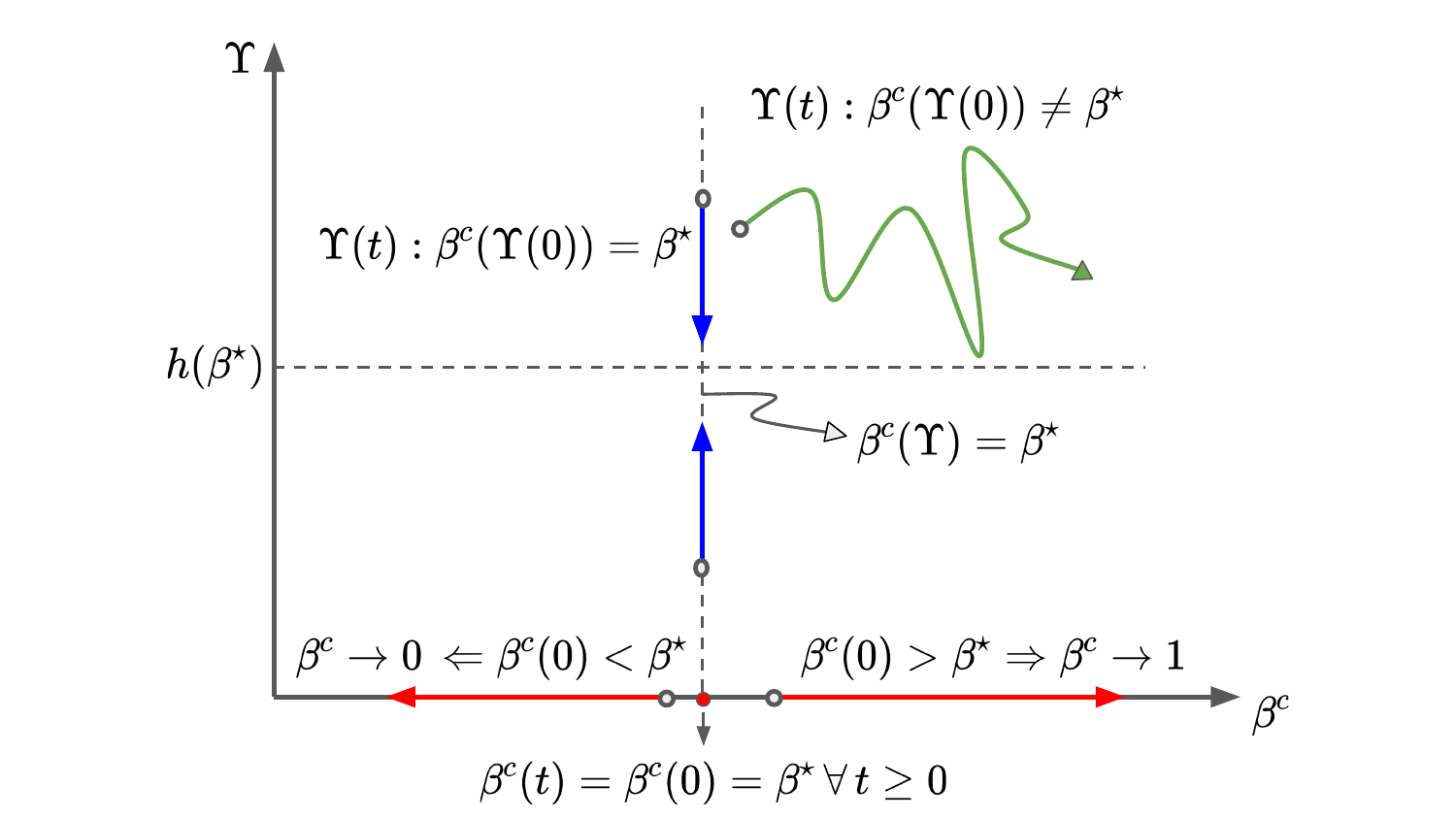}
    \caption{Repeller of \eqref{eqn_beta_ode_simple} leads to saddle point of \eqref{eqn_ODE}}\label{fig_saddle}
\end{figure}
Under certain conditions, we will show that the attractors of the following one-dimensional ODE are p-stable, while the repellers are p-saddle:
\begin{align}\label{eqn_beta_ode_simple}
    \dot{\bc} &= g_\beta(\bc).
\end{align}
When $\bstar$ is a repeller of \eqref{eqn_beta_ode_simple}, we have $g_\beta(\bstar) = 0$. Thus, when ODE \eqref{eqn_ODE} is initialised with $\bc(\ups(0)) = \bstar$, the ODE solution may remain in affine sub-space $\{\bc(\ups) = \bstar\}$ and may converge to $\mathbf{h}(\bstar)$ (see Figure \ref{fig_saddle}). 
While if $\bc(\ups(0)) \neq \bstar$,  one might expect the solution of ODE \eqref{eqn_ODE} to repel away from $\mathbf{h}(\bstar)$, by definition of repeller. These observations indicate that $\bstar$ should be p-saddle and we precisely prove the same in our \textit{second important result} below. This result is instrumental in deriving $\cA$ and $\cR$ using the limit set of ODE \eqref{eqn_beta_ode_simple}; see Appendix \ref{proof_thrm2} for the proof. 
\begin{theorem}\label{thrm_attractors_beta}
Consider the interval $[0,1]$ such that $g_\beta(0) \geq 0$ and $g_\beta(1) \leq 0$. Let ${\cal I} = \{x_i^* : 1 \leq i \leq n\}$ be the set of dis-continuities with $1 \le n < \infty$ and ${\cal J} :=  \{y_i^* : 1 \leq i \leq m\}  \subset {\cal I}^\complement$ be the set of points with $m < \infty$  (${\cal J}$ is empty when $m = 0$) such that:

    \noindent  (a) $g_\beta(x) = 0$ for each $x \in {\cal I} \cup {\cal J}$, i.e., ${\cal I} \cup {\cal J}$ is the set of equilibrium points for \eqref{eqn_beta_ode_simple},
    
    \noindent (b) for each $1\leq i \leq n$, there exists an open/closed/half-open non-empty interval around $x_i^* \in {\cal I}$, say ${\cal N}_i^*$, such that 
    
    (i) $\cup_{1\leq i\leq n} {\cal N}_i^* = [0,1]- {\cal J}$ and  ${\cal N}_i^* \cap {\cal N}_j^* = \emptyset$ for $i\neq j$,
    
    (ii) $g_\beta(\beta) > 0$ for all $\beta \in  {\cal N}^{-}_i:= {\cal N}^*_i\cap[0, x_i^*)$, $g_\beta$ is Lipschitz continuous on $ {\cal N}^{-}_i$,

    (iii) $g_\beta(\beta) < 0$ for all $\beta \in {\cal N}^{+}_i:
    = {\cal N}^*_i \cap (x_i^*, 1]$, $g_\beta$ is Lipschitz continuous on ${\cal N}^{+}_i$.

\noindent  Then, ODE \eqref{eqn_ODE} satisfies \ref{a3}. Further, the set ${\cal I}$ is an attractor for \eqref{eqn_beta_ode_simple} and p-stable for  \eqref{eqn_ODE}; also, ${\cal J}$ is the set of repellers for \eqref{eqn_beta_ode_simple} and p-saddle for \eqref{eqn_ODE}. 
Furthermore,  $\cA := \{\mathbf{h}(x_i^*) : x_i^* \in {\cal I}\}$ is the attractor set, $\cR := \{\mathbf{h}(y_i^*) : y_i^* \in {\cal J}\}\cup \{\mathbf{0}\}$ is the saddle set  in $\cD_I$ and entire $\cD_I$ is the combined domain of attraction for  \eqref{eqn_ODE}. \eop
\end{theorem}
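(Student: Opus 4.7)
The plan is to first analyze the scalar ODE \eqref{eqn_beta_ode_simple} on $[0,1]$ and then lift the conclusions to the full four-dimensional ODE \eqref{eqn_ODE} by exploiting its separable structure $\dot{\ups} = \mathbf{h}(\bc)1_{\{\pc > 0\}} - \ups$. On each half-neighbourhood ${\cal N}_i^-$ and ${\cal N}_i^+$, standard Picard--Lindel\"of theory guarantees local existence and uniqueness of a solution of \eqref{eqn_beta_ode_simple} since $g_\beta$ is Lipschitz there. The sign conditions $g_\beta > 0$ on ${\cal N}_i^-$ and $g_\beta < 0$ on ${\cal N}_i^+$, together with $g_\beta(0)\ge 0$ and $g_\beta(1)\le 0$, make each half-interval positively invariant (via Theorem \ref{thrm_comparison}) and force any trajectory $\bc(t)$ on either side to be strictly monotone towards $x_i^*$. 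I would extend the trajectory past any finite-time reach of $x_i^*$ by the constant $x_i^*$, using $g_\beta(x_i^*)=0$, and verify uniqueness of the extended absolutely continuous solution because the sign reversal at $x_i^*$ prevents any trajectory from crossing it. This shows each $x_i^*\in{\cal I}$ is an attractor of \eqref{eqn_beta_ode_simple} with domain ${\cal N}_i^*$. For $y_j^*\in{\cal J}$, since the ${\cal N}_i^*$ partition $[0,1]-{\cal J}$, the point $y_j^*$ is the common boundary between some ${\cal N}_{i_1}^+$ (on its left, where $g_\beta<0$) and ${\cal N}_{i_2}^-$ (on its right, where $g_\beta>0$); trajectories slightly perturbed from $y_j^*$ therefore move away on both sides, making $y_j^*$ a repeller, while the solution starting exactly at $y_j^*$ is the constant $y_j^*$.

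Next I would derive the ODE satisfied by $\bc(\ups(t)) = \tc(t)/\pc(t)$ along a solution of \eqref{eqn_ODE} started with $\pc(0) > 0$. A direct computation using $\dot\pc = h_\psi^c(\bc)-\pc$ and $\dot\tc = h_\theta^c(\bc)-\tc$ yields
\begin{align*}
\dot{\bc} \;=\; \frac{\dot\tc}{\pc} - \frac{\tc\,\dot\pc}{\pc^2} \;=\; \frac{h_\theta^c(\bc) - \bc\, h_\psi^c(\bc)}{\pc} \;=\; \frac{g_\beta(\bc)}{\pc},
\end{align*}
which recovers \eqref{eqn_beta_ODE}. The throughout-super-critical condition \ref{a1} yields $h_\psi^c(\bc) \ge c_0 > 0$ for some $c_0$ and all $\bc\in[0,1]$; applying Theorem \ref{thrm_comparison} to $\dot\pc = h_\psi^c(\bc(t)) - \pc$ gives $\pc(t)\ge \pc(0)e^{-t} + c_0(1-e^{-t}) \ge \min\{\pc(0),c_0\}>0$ for all $t\ge 0$. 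Hence the time change $\tau(t) = \int_0^t ds/\pc(s)$ is well defined and strictly increasing to infinity, and under this rescaling $\bc(\cdot)$ satisfies exactly \eqref{eqn_beta_ode_simple}. The scalar analysis of the first paragraph then transfers verbatim: $\bc(t)\to x_i^*$ when $\bc(0)\in{\cal N}_i^*$, and $\bc(t)\equiv y_j^*$ when $\bc(0)=y_j^*\in{\cal J}$, while $\bc(t)$ leaves every small neighbourhood of $y_j^*$ whenever $\bc(0)\neq y_j^*$ is close by.

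With the asymptotic behaviour of $\bc(t)$ in hand, each component of $\ups$ satisfies a linear ODE of the form $\dot u = h(\bc(t)) - u$, whose explicit solution
\begin{align*}
u(t) \;=\; u(0)e^{-t} + \int_0^t e^{-(t-s)}\, h(\bc(s))\, ds
\end{align*}
converges to $h(\bstar)$ whenever $\bc(s)\to \bstar$ and $h(\bc(s))\to h(\bstar)$; the latter follows from Lipschitz continuity of the $\minf_{ij}$ on the appropriate closed half-neighbourhood approached by $\bc(s)$. Thus $\ups(t)\to\mathbf{h}(x_i^*)\in\cA$ whenever $\bc(0)\in{\cal N}_i^*\setminus{\cal J}$, establishing $\cA=\{\mathbf{h}(x_i^*):x_i^*\in{\cal I}\}$ as the attractor set. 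For $y_j^*\in{\cal J}$, if $\bc(0)=y_j^*$ then $\bc(t)\equiv y_j^*$, the ODE collapses to $\dot\ups = \mathbf{h}(y_j^*)-\ups$, and $\ups(t)\to \mathbf{h}(y_j^*)$ exponentially; if $\bc(0)\neq y_j^*$ then $\bc(t)$ leaves every neighbourhood of $y_j^*$ and converges to some $x_i^*\in{\cal I}$, whence $\ups(t)\to \mathbf{h}(x_i^*)\in\cA$. This is precisely the q-attractor property of Definition \ref{defn_q_as}. The origin $\mathbf{0}\in\cR$ is handled analogously: on the affine subspace $\{\pc=0\}$ the indicator $1_{\{\pc>0\}}$ vanishes so $\dot\ups=-\ups$ and $\ups(t)\to\mathbf{0}$ exponentially, while trajectories with $\pc(0)>0$ are captured by the preceding analysis and therefore avoid $\mathbf{0}$ (since $\pc(t)$ stays uniformly positive).

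The main obstacle I anticipate is verifying \ref{a3}, namely uniqueness of the extended solution of \eqref{eqn_ODE} in the presence of a possibly discontinuous $\mathbf{h}$ at the points of ${\cal I}$ and the indicator $1_{\{\pc>0\}}$ at $\pc=0$. I would handle this by a concatenation argument at hitting times: before $\bc(t)$ reaches a discontinuity point $x_i^*$, uniqueness of $\bc$ and hence of $\ups$ follows from the Lipschitz bound on the relevant side of $x_i^*$, while after reaching $x_i^*$ the ODE collapses to a linear system with constant forcing $\mathbf{h}(x_i^*)$ and therefore admits a unique solution; the sign reversal of $g_\beta$ at $x_i^*$ prevents the trajectory from returning through the discontinuity, and a parallel argument applies at the boundary $\{\pc=0\}$. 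Combining these pieces verifies \ref{a3} and shows that every initial condition in $\cD_I$ is covered either by the ${\cal N}_i^*$-driven analysis above or by the $\{\pc=0\}$ branch, so $\cD_I$ is exactly the combined domain of attraction of $\cA\cup\cR$.
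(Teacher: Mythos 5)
Your overall architecture (reduce to the scalar ODE, lower-bound $\pc$, time-change to \eqref{eqn_beta_ode_simple}, then solve the linear component equations by variation of constants) is sound and close in spirit to the paper, but there is a genuine gap at the heart of the argument: the identification of the limit of $\ups(t)$ at the discontinuity points $x_i^*\in{\cal I}$. In your third paragraph you pass to the limit in $u(t)=u(0)e^{-t}+\int_0^t e^{-(t-s)}h(\bc(s))\,ds$ by claiming $h(\bc(s))\to h(\bstar)$ "from Lipschitz continuity of the $\minf_{ij}$ on the appropriate closed half-neighbourhood". But the hypothesis only gives Lipschitz continuity of $g_\beta$ (and of the relevant $\minf_{ij}$) on ${\cal N}_i^-$ and ${\cal N}_i^+$, which exclude $x_i^*$; the points of ${\cal I}$ are by assumption genuine discontinuities of $\mathbf{h}$. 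If $\bc(s)\to x_i^*$ only asymptotically from one side, your argument yields convergence of $\ups(t)$ to the one-sided limit $\lim_{x\uparrow x_i^*}\mathbf{h}(x)$, which in general differs from $\mathbf{h}(x_i^*)$ — e.g.\ for the BP with attack (\ref{k2}), $h_\theta^c(1^-)=\bpam_{xx}^\infty+\bpam_{xy}^\infty-1\neq \bpam_{xx}^\infty-1=h_\theta^c(1)$. So as written the step identifying $\cA=\{\mathbf{h}(x_i^*)\}$ fails.

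The missing ingredient, which is exactly what the paper supplies (its Lemma \ref{lemma_tau_finite}), is \emph{finite-time absorption} at $x_i^*$: because $x_i^*$ is a discontinuity point with $g_\beta>0$ on ${\cal N}_i^-$ and a strictly positive left limit there, $\inf_{{\cal N}_i^-}g_\beta>0$, so the hitting time $\tau=\inf\{t:\bc(t)=x_i^*\}$ is finite; for $t>\tau$ the trajectory stays on the affine set $\{\bc=x_i^*\}$ and the forcing equals $\mathbf{h}(x_i^*)$ exactly, whence $\ups(t)\to\mathbf{h}(x_i^*)$. You allude to "any finite-time reach of $x_i^*$" in your first paragraph and to "after reaching $x_i^*$" in your uniqueness argument, but you never prove this happens, and your paragraph-three limit passage is inconsistent with it. To repair the proof, replace the continuity-based limit passage by: (i) a uniform positive lower bound for $g_\beta$ on ${\cal N}_i^-$ (and upper bound on ${\cal N}_i^+$) coming from the discontinuity hypothesis, giving $\tau<\infty$ (the paper formalizes this via its Lemma \ref{lemma_general_ode_BPA}); and (ii) concatenation at $\tau$ with the constant-forcing linear system, which simultaneously delivers \ref{a3} and the exponential convergence to $\mathbf{h}(x_i^*)$ (and, for $y_i^*\in{\cal J}$ and for $\mathbf{0}$, the q-attractor property). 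With that fix, your time-change formulation of the $\bc$-dynamics and your comparison-theorem lower bound on $\pc$ (which plays the role of the paper's Lemma \ref{lemma_psi_c_general}) are acceptable alternatives to the paper's treatment.
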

We believe that the above Theorem can be extended for $g_\beta$ which is continuous, by standard ODE results, and we precisely do so in the next Chapter (see Theorem \ref{thrm_beta_ODE_prop}). Observe that the above Theorem is a special case of Theorem \ref{thrm_one_dim_ODE} and Theorem \ref{thrm_our_ODE}, when the right hand side of ODE \eqref{eqn_one_dim_ODE} is discontinuous at the equilibrium points, with $a = 0$ and $b = 1$. Here, we required $g_\beta$ to be discontinuous for BP with attack (see assumption \ref{k2} in Section \ref{sec_BPA}), and thus the hypothesis of Theorem \ref{thrm_attractors_beta}. The last part of the Theorem asserts that the p-stable/p-saddle points are the only attractors/saddle points of ODE \eqref{eqn_ODE}, other than $\mathbf{0} \in \cR$.

\section{Related work}\label{sec_survey}
There is a vast literature related to BPs, however, we simply discuss few relevant strands related to our work. 

Irreducible population-dependent BP with discrete and continuous-time framework are considered in \cite{klebaner1989geometric, jagers1997coupling} respectively; they do not consider total population dependent offspring; further, the population-dependent mean matrix converges to a constant mean matrix, but we support proportion-dependent mean matrix in the limit.  In \cite{athreya1968some}, authors consider continuous-time, but population-independent, irreducible BPs. 

In \cite{coffey1991galton}, the prey-predator BP is analyzed in discrete-time setting and co-survival conditions are identified, but the limit proportion is not derived; they also do not consider population-dependency.  In Section \ref{sec_BPA}, we consider a continuous-time population-dependent BP with double-sided attack and acquisition. One can also analyze the continuous time population-dependent variant of prey-predator BP using our results. 

In \cite{agarwal2021co}, we introduce BP with attack and provide limit proportion for the case with population-independent and symmetric offspring, i.e., with $m_{xx}(\om) = m_{yy}(\om) = m$, for all $\om$, for some $m > 1$. We significantly generalize by considering total population-dependency and symmetric/asymmetric offspring. We analyze a particular case of proportion dependent BP (offspring depend on the proportion of the populations) along with other co-authors in \cite{kapsikar2020controlling}. Our results cover the model in \cite{kapsikar2020controlling} and can also be used to generalize their result which will be a part of our future work.

\textbf{\polya urn models:} In \cite{athreya1968embedding}, it is shown that the \polya urn models can be embedded into a continuous-time population-independent BP. Thus, the asymptotic analysis of the continuous-time BPs can be derived using the corresponding analysis of the \polya urn models. However, our work differs from the \polya urn literature (\cite{higueras2006central, janson2004functional, arthur1987non}) for reasons mentioned in the introduction - they neither consider total population-dependency nor commonly deal with extinction (non-replacement) scenarios; recall, in BPs, extinction occurs with non-zero probability, even in the super-critical regime. In \cite{janson2004functional}, which is an exception, the possibility of extinction is considered, but they do not consider population-dependency.

In \cite{higueras2006central}, authors analyze the urn model with the removal of balls of other colours (not the chosen one) - same as a negative offspring in BP with attack of Section \ref{sec_BPA}. However, they assume a unique attractor for ODE and a constant number of additions (offspring) to the urn. We again have a significant generalization with a random number of offspring and where the random trajectory of the BP with attack can converge 
 to or hover around one of the attractors/saddle points of ODE (see Corollary \ref{corollary_BPA}).

\section{Branching Process with Attack} \label{sec_BPA}
Consider a BP with two population types, say $x$ and $y$. Each individual of any type lives for a random time, $\tau \sim exp(\lambda)$, where $\lambda \in (0, \infty)$. It produces a random number of offspring before dying. The BP also includes  attack and acquisition by rival types. 
    
To be precise, an individual of (say) $x$-type produces $\xi_{xx}(\Om(\tau^-))$ offspring of its type. Further, it attacks/removes $\xi_{xy}(C^y(\tau^-))$ individuals of $y$-type population; naturally, the attacked population can not exceed the population available to be attacked at $\tau^-$, hence $\xi_{xy}(C^y(\tau^-)) \leq C^y(\tau^-)$ a.s.; note that the number of attacks do not depend on the size of the attacking population. The attacked individuals are then deleted from the $y$-population, and acquired by (i.e., added to) the $x$-population.  Thus, for example, when a $x$-type individual dies, the current populations change as follows:
    
    \vspace{-0.5cm}
    {\small
    \begin{align*}
    \Cx(\tau^+) &= \Cx(\tau^-) + \xi_{xx}(\Om(\tau^-)) + \xi_{xy}(C^y(\tau^-)) - 1,  \mbox{ and }
    \Cy(\tau^+) = \Cy(\tau^-) - \xi_{xy}(C^y(\tau^-)).
    \end{align*}}The total and $y$-population also evolve similarly. 
    We call such a BP as \textit{Branching Process with Attack}.  The dynamics in  \eqref{evolve_x_up_time} capture this BP, when for each $i, j$:
    \begin{align}\label{eqn_dynamics_BPA}
    \offs_{ii}(\Om(\tau^-)) := \xi_{ii}(\Om(\tau^-)) + \xi_{ij}(C^j(\tau^-)), \mbox{ \normalsize and } \offs_{ij}(\Om(\tau^-)) := - \xi_{ij}(C^j(\tau^-)).
    \end{align}
Next, we assume:
\begin{enumerate}[label=\textbf{K.\arabic*},ref=\textbf{K.\arabic*}]
    \item For each $i \in \{x, y\}$, assume that there exist integrable random variables, $\overline{\xi}$, $\underline{\xi}$, such that $0 \leq \underline{\xi} \leq \xi_{ii}(\om) \leq \overline{\xi}$ a.s. for each $\om$ and $E[\overline{\xi}]^2 < \infty$, $E[\underline{\xi}] > 1$. Further, let the attack offspring $\xi_{ij}(\om)$ be integrable for each $\om$ and for each $i \neq j \in \{x, y\}$. 
    \label{k1}
\end{enumerate}
    The above assumption immediately implies \ref{a1}. Define the expectations conditioned on $\om$ as $\bpam_{ij}(\om) := E[\xi_{ij}(\om)]$ for  $i, j \in \{x, y\}$. We further assume (see \eqref{eqn_dynamics_BPA}):
\begin{enumerate}[label=\textbf{K.\arabic*},ref=\textbf{K.\arabic*}]
\setcounter{enumi}{1}
    \item For  $i, j \in \{x, y\}$, let $\bpam_{ij}^\infty \geq 0$  with $\bpam_{xy}^\infty > 0$. Assume $\minf_{ij}(\bc)$ satisfy the following: \label{k2}
    \begin{align*}
    \begin{aligned}
    \minf_{xy}(\bc) &=  -\bpam_{xy}^\infty 1_{\{\bc < 1\}}, \ \minf_{yx}(\bc) = -\bpam_{yx}^\infty  1_{\{\bc > 0\}},\\
    \minf_{xx}(\bc) &= \bpam_{xx}^\infty  + \bpam_{xy}^\infty  1_{\{\bc < 1\}} \mbox{ and }  \minf_{yy}(\bc) = \bpam_{yy}^\infty  + \bpam_{yx}^\infty  1_{\{\bc > 0\}}.
    \end{aligned}
    \end{align*}Further, let the conditions of \ref{a2} be satisfied with $\{(m_{ij}, m_{ij}^\infty)\}_{i, j}$ replaced by \\ $\{(\bpam_{ij}, \bpam_{ij}^\infty)\}_{i, j}$.
\end{enumerate}
We are interested in the BP where attack is prominent\footnote{If both $\bpam_{xy}^\infty, \bpam_{yx}^\infty = 0$, then it will lead to two independent (non-attacking) BPs at limit; if required, one can derive the analysis for this case, as done in Theorem \ref{thrmBPA}.} even at the limit, thus, $\bpam_{xy}^\infty > 0$ without loss of generality in \ref{k2}. If $\bpam_{yx}^\infty = 0$, then it leads to single-sided attack at limit, but recall anything is possible in transience. 
Observe the cross-mean function in \ref{k1} converge to (almost) constant limit, e.g., $\bpam_{xy}(\om) \stackrel{s^c \to \infty}{\to} \bpam_{xy}^\infty1_{\{\bc < 1\}}$. The reason behind the indicator is that there is no attack at limit when $\bc = 1$; this is because $\Cy_n \to 0$ when $\limsup_{n \to \infty} \bc(\Ups_n) = 1$ as proved at the end of Appendix \ref{appendix_prelim}.

For BP with attack, the ODE \eqref{eqn_ODE} has the following form: 

\vspace{-4mm}
{\small
\begin{align}\label{ODE_BPA}
    \begin{aligned}
        \dot{\ups} &= \mathbf{h}(\bc) 1_{\{\pc > 0\}} - \ups, \mbox{ where } \mathbf{h}(\bc) := (h_\psi^c, h_\theta^c, h_\psi^a, h_\theta^a), \mbox{ is such that}\\
        h_\psi^c &=  \bc \bpam_{xx}^\infty + \big(1 -\bc \big) \bpam_{yy}^\infty - 1, \
        h_\theta^c = \bc\left(\bpam_{xx}^\infty +\bpam_{xy}^\infty1_{\{\bc < 1\}} -1\right) - \left(1-\bc\right)\bpam_{yx}^\infty 1_{\{\bc > 0\}}, \\
        h_\psi^a &=  \bc \bpam_{xx}^\infty + \big(1 -\bc \big) \bpam_{yy}^\infty, \mbox{ and } h_\theta^a = \bc\left(\bpam_{xx}^\infty+\bpam_{xy}^\infty1_{\{\bc < 1\}}   \right) - \left(1-\bc\right)\bpam_{yx}^\infty 1_{\{\bc > 0\}}.
    \end{aligned}
\end{align}}

 We begin with the analysis of the above ODE towards providing ODE approximation result for BP with attack using Theorem \ref{thrm1}.
 
\subsection{Analysis of ODE for BP with attack} 
Define the parameter vector $\mathbf{e} := \{\bpam_{ij}^\infty: i, j \in \{x, y\}\}$, and consider the following class of limit mean functions (by \ref{k2}, the vector ${\mathbf e}$ defines $M^\infty$):
\begin{align}\label{eqn_setE}
    \cM &:= \{ \mathbf{e}: \bpam_{yx}^\infty  > 0 \} \cup \{ \mathbf{e}:  \bpam_{yx}^\infty = 0  \mbox{ and } \bpam_{xx}^\infty  + \bpam_{xy}^\infty  < \bpam_{yy}^\infty \}, \mbox{ which implies}\\
    \cM^\complement &= \{ \mathbf{e}: \bpam_{yx}^\infty  = 0 \} \cap \{ \mathbf{e}:  \bpam_{yx}^\infty > 0  \mbox{ or } \bpam_{xx}^\infty  + \bpam_{xy}^\infty  \geq \bpam_{yy}^\infty \} = \{ \mathbf{e}:  \bpam_{yx}^\infty = 0, \bpam_{xx}^\infty  + \bpam_{xy}^\infty  \geq \bpam_{yy}^\infty \}. \nonumber
\end{align}
Observe that the first and second sub-classes in $\cM$ consider double and single-sided attack, respectively (at the limit); both classes consider acquisition.
An important question for a BP with attack is regarding the survival of the individual types and co-survival. Corollary \ref{corollary_BPA} of Theorem \ref{thrm1} given later provides answers to such questions. Prior to that, the next theorem derives the asymptotic analysis of \eqref{eqn_beta_ode_simple} and also shows that this analysis is sufficient for analysis of \eqref{ODE_BPA} (see proof in Appendix \ref{proof_thrmBPA}).

\begin{theorem}\label{thrmBPA}
Assume \ref{k1} and \ref{k2}. Then, \ref{a3} holds for \eqref{ODE_BPA}. Further, we have:
    \begin{enumerate}[label=(\roman*)]
        \item For ODE \eqref{eqn_beta_ode_simple}, no interior $\bc \in (0, 1)$ is  an attractor, $\bstar = 1$ is always an attractor, but  $\bstar = 0$ is  an attractor only if $\mathbf{e} \in \cM$.
        
        Further, again for \eqref{eqn_beta_ode_simple} in $[0,1]$: if $\mathbf{e} \in \cM$, then, $\bstar_r$, the unique zero of $g_\beta$, is the only repeller; while if $\mathbf{e} \notin \cM$, then $0$ is the only repeller.
        \item The attractors and repellers of ODE \eqref{eqn_beta_ode_simple} determine the attractor ($\cA$) and saddle ($\cR$) sets of ODE \eqref{ODE_BPA} respectively:
            \[
            \cA = 
            \begin{cases}
            \{ \mathbf{h}(1), \mathbf{h}(0)\}, &\mbox{ if } \mathbf{e} \in \cM,\\
            \{\mathbf{h}(1)\}, &\mbox{ if } \mathbf{e} \notin \cM,
            \end{cases}
            \mbox{ and }
            \cR = 
            \begin{cases}
            \{ \mathbf{0}, \mathbf{h}(\bstar_r)\}, &\mbox{ if } \mathbf{e} \in \cM,\\
            \{ \mathbf{0}, \mathbf{h}(0)\}, &\mbox{ if } \mathbf{e} \notin \cM, \ \ \mbox{ where}
            \end{cases}
            \]
            for example, $\mathbf{h}(1) = (\bpam^\infty_{xx}-1, \bpam_{xx}^\infty-1, \bpam_{xx}^\infty, \bpam_{xx}^\infty)$ and $\mathbf{h}(0) = (\bpam^\infty_{yy}-1, 0, \bpam^\infty_{yy}, 0)$.
        \item The combined domain of attraction of $\cA\cup \cR$, i.e., $\cD = \cD_I$ defined in \eqref{invariant_set}. \eop
    \end{enumerate}
\end{theorem}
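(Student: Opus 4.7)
The plan is to reduce Theorem \ref{thrmBPA} to a one-dimensional problem: once the attractor/repeller structure of the scalar ODE \eqref{eqn_beta_ode_simple} on $[0,1]$ is identified, parts (ii)--(iii) together with \ref{a3} follow immediately from Theorem \ref{thrm_attractors_beta} via the translation $\bstar\mapsto\mathbf{h}(\bstar)$ and the always-present saddle point $\mathbf{0}$. The work therefore concentrates in computing $g_\beta$ explicitly under \ref{k2} and sign-charting it on $[0,1]$. The key structural observation is that attack-and-acquisition produces a row-sum cancellation: $\minf_{xx}(\bc)+\minf_{xy}(\bc)\equiv\bpam_{xx}^\infty$ and $\minf_{yy}(\bc)+\minf_{yx}(\bc)\equiv\bpam_{yy}^\infty$ for every $\bc\in[0,1]$. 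Substituting these into \eqref{eqn_beta_ODE} collapses the third bracket to the constant $\bpam_{xx}^\infty-\bpam_{yy}^\infty$, yielding, for $\bc\in(0,1)$,
\begin{equation*}
g_\beta(\bc)\;=\;\bc\,\bpam_{xy}^\infty - (1-\bc)\,\bpam_{yx}^\infty + \bc(1-\bc)(\bpam_{xx}^\infty-\bpam_{yy}^\infty),
\end{equation*}
a single quadratic with boundary values $g_\beta(0)=g_\beta(1)=0$ and one-sided limits $g_\beta(0^+)=-\bpam_{yx}^\infty$ and $g_\beta(1^-)=\bpam_{xy}^\infty>0$.

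Next I sign-chart $g_\beta$ on $(0,1)$ in the three sub-cases. When $\bpam_{yx}^\infty>0$ (first piece of $\cM$), the quadratic is strictly negative at $0^+$ and strictly positive at $1^-$; as a quadratic it has an odd number of zeros in $(0,1)$, hence exactly one $\bstar_r$, crossing from $-$ to $+$. When $\bpam_{yx}^\infty=0$ and $\bpam_{xx}^\infty+\bpam_{xy}^\infty<\bpam_{yy}^\infty$ (second piece of $\cM$), I factor $g_\beta(\bc)=\bc\bigl[\bpam_{xy}^\infty+(1-\bc)(\bpam_{xx}^\infty-\bpam_{yy}^\infty)\bigr]$; the affine bracket is strictly negative at $\bc=0$ (by hypothesis) and strictly positive at $\bc=1$ (equal to $\bpam_{xy}^\infty$), producing again a unique interior zero $\bstar_r$. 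When $\mathbf{e}\notin\cM$, the same factorisation together with monotonicity of the affine bracket in $\bc$ (handled according to the sign of $\bpam_{xx}^\infty-\bpam_{yy}^\infty$) forces the bracket to stay $\ge 0$ on $[0,1]$, so $g_\beta>0$ throughout $(0,1]$ and there is no interior zero. In all cases $g_\beta$ is polynomial (hence Lipschitz) away from the isolated endpoint discontinuities, exactly as required by Theorem \ref{thrm_attractors_beta}.

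This three-case sign chart directly yields part (i): the interior carries no attractor; $\bstar=1$ is always an attractor (since $g_\beta>0$ just below $1$); $\bstar=0$ is an attractor iff $\mathbf{e}\in\cM$ (since then $g_\beta<0$ just above $0$); and the unique repeller is $\bstar_r$ when $\mathbf{e}\in\cM$, or $\bstar=0$ when $\mathbf{e}\notin\cM$. Feeding the resulting partition $(\mathcal{I},\mathcal{J})$ into Theorem \ref{thrm_attractors_beta} with the half-open neighbourhoods $\mathcal{N}_i^*$ chosen to partition $[0,1]\setminus\mathcal{J}$ around each attractor then delivers parts (ii) and (iii), and supplies \ref{a3} as a by-product.

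The main obstacle I foresee is the boundary bookkeeping in the sub-case $\mathbf{e}\in\cM$ with $\bpam_{yx}^\infty=0$: there $g_\beta$ is actually continuous at $\bc=0$, so $0$ is formally not a discontinuity of $g_\beta$ and yet must still be classified as an attractor. To fit this into Theorem \ref{thrm_attractors_beta} I will either enlarge $\mathcal{I}$ to admit $0$ as a boundary equilibrium that is dynamically attracting from its only admissible side, or invoke the more general scalar classification of Theorem \ref{thrm_one_dim_ODE} to cover this one-sided attraction directly. Either route is a short technical adaptation; past it, the entire argument is a routine application of the machinery built in Sections \ref{sec_probdesc_mainresult}--\ref{procedureAR}.
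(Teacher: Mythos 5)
Your proposal is correct and follows essentially the same route as the paper: reduce to the scalar ODE \eqref{eqn_beta_ode_simple}, compute $g_\beta$ explicitly under \ref{k2} as the quadratic $-\bpam_{yx}^\infty+\bc\,\widetilde{m}^\infty-(\bc)^2\minf$ on $(0,1)$, sign-chart it case by case (the paper splits on the sign of $\minf$ and uses convexity/concavity, you split on membership in $\cM$ and use a parity/factorisation argument --- an immaterial difference), and then lift to \eqref{ODE_BPA} via Theorem \ref{thrm_attractors_beta}. The corner case you flag ($\bpam_{yx}^\infty=0$ with $\mathbf{e}\in\cM$, where $g_\beta$ is continuous at $0$) is exactly what the paper's ``analogously'' step glosses over, and your fix via Theorem \ref{thrm_one_dim_ODE} (or Theorem \ref{thrm_beta_ODE_prop}) is the right patch; the only blemish is the harmless slip of writing $g_\beta>0$ on $(0,1]$ when in fact $g_\beta(1)=0$ with $g_\beta(1^-)=\bpam_{xy}^\infty>0$.
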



\subsection{Analysis of random trajectory of BP with attack}\label{subsec_BPA}
By Theorem \ref{thrm1}, the following holds (proof in Appendix \ref{proof_cor_BPA}):
\begin{corollary}\label{corollary_BPA}
Consider the BP as in \eqref{eqn_dynamics_BPA}, and assume \ref{k1}-\ref{k2}. Then, we have:
\begin{enumerate}[label=(\roman*)]
    \item The assumption \ref{a3} holds for ODE \eqref{ODE_BPA}, and hence Theorem \ref{thrm1}(i) is applicable.
    \item The following is true w.p. $1$ for BP with attack:
    
    $\noindent$ $\bullet$ if $\mathbf{e} \in \cM$, either $\Ups_n$ converges to $\{\mathbf{0}, \mathbf{h}(0), \mathbf{h}(\bstar_r), \mathbf{h}(1)\}$ or  hovers around $\{\mathbf{0}, \mathbf{h}(\bstar_r)\}$, where $\bstar_r$ is as in Theorem \ref{thrm_attractors_beta} and
    
    $\noindent$ $\bullet$ if $\mathbf{e} \notin \cM$, either $\Ups_n$ converges to $\{\mathbf{0}, \mathbf{h}(0), \mathbf{h}(1)\}$ or hovers around $\{\mathbf{0}, \mathbf{h}(0)\}$. \eop
\end{enumerate}
\end{corollary}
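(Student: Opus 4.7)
The plan is to invoke the two parts of Theorem \ref{thrm1} and reduce to what is already proved in Theorem \ref{thrmBPA}. Part (i) is essentially a free corollary: assumption \ref{k1} immediately implies \ref{a1} (set $\overline{\offs}=\overline{\xi}$, $\underline{\offs}=\underline{\xi}$, noting that $\offs_{ix}(\om)+\offs_{iy}(\om)=\xi_{ii}(\om)$ under \eqref{eqn_dynamics_BPA}), assumption \ref{k2} directly gives \ref{a2}, and Theorem \ref{thrmBPA} already establishes \ref{a3} for the ODE \eqref{ODE_BPA}. So Theorem \ref{thrm1}(i) applies verbatim.

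For part (ii), the real work is to verify \ref{a4} with probability one, so that $P(\mathcal{C}_1 \cup \mathcal{C}_2) \geq p_b = 1$. The attractor and saddle sets are supplied by Theorem \ref{thrmBPA}(ii), and Theorem \ref{thrmBPA}(iii) gives $\cD = \cD_I$; it only remains to exhibit a compact $\cS = \cD \cap \{\pa \leq b\}$ that is visited infinitely often almost surely. The strategy is to choose $b$ large enough that $\Pa_n \leq b$ eventually, a.s., and to note that $\Ups_n \in \cD_I$ deterministically for all $n$.

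The structural inclusion $\Ups_n \in \cD_I$ follows from the BP-with-attack dynamics: $\Tc_n \leq \Pc_n$ and $\Ta_n \leq \Pa_n$ are immediate from $\Cx_n,\Ax_n \geq 0$ and $\Cy_n,\Ay_n \geq 0$ (the constraint $\xi_{xy}(C^y(\tau^-))\leq C^y(\tau^-)$ keeps $y$-counts nonnegative, and symmetrically for $x$), and $\Pc_n \leq \Pa_n$ holds because in each step the sum $S^c$ changes by $\xi_{ii}-1$ while $S^a$ changes by $\xi_{ii}$, so $S^a_n - S^c_n = n \geq 0$. For the $b$-bound, the key observation (already developed in the ``Key idea'' paragraph preceding \eqref{eqn_overline_S_n}) is that $\Pa_n = \Pi_n \leq \overline{\Pi}_n$ for all $n$, with $\overline{\Pi}_n$ the sample mean of the i.i.d.\ upper-bounding offspring $\overline{\xi}_k$. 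Since $E[\overline{\xi}]=:\overline{m}<\infty$ by \ref{k1}, the SLLN gives $\overline{\Pi}_n \to \overline{m}$ a.s., so for any fixed $b > \overline{m}$ one has $\Pa_n \leq b$ for all sufficiently large $n$ on a set of probability one. Hence $\Ups_n \in \cS$ eventually, in particular i.o., a.s., giving $p_b=1$.

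Applying Theorem \ref{thrm1}(ii) then yields $P(\mathcal{C}_1 \cup \mathcal{C}_2) = 1$ with $\cA$ and $\cR$ as in Theorem \ref{thrmBPA}(ii), which is precisely the dichotomy in the two bullets of the corollary, split by whether $\mathbf{e} \in \cM$ or $\mathbf{e} \notin \cM$. I do not expect a genuine obstacle here; the only mildly delicate point is tracking the sign/monotonicity of $S^a_n$ and $S^c_n$ under attack to conclude $\Ups_n \in \cD_I$ deterministically, and that is resolved by the one-line computation above.
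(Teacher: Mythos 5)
Your proposal is correct and follows essentially the same route as the paper: part (i) via Theorem \ref{thrmBPA} giving \ref{a3}, and part (ii) by verifying \ref{a4} with $p_b=1$ through an SLLN bound on $\Pa_n$ (the paper uses the bound $\Pa_n \le \frac{1}{n}\bigl(\sum_{k\le \min\{n,\nu_e\}}(\overline{\xi}_{xx,k}+\overline{\xi}_{yy,k})1_{\{\Pc_k>0\}}+s_0^c\bigr)$, yours the equivalent $\overline{\Pi}_n$ bound from \eqref{Eqn_XnYnSnetc_general}), then invoking Theorem \ref{thrm1}(ii) with the sets from Theorem \ref{thrmBPA}. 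Your explicit check that $\Ups_n \in \cD_I$ deterministically is a detail the paper leaves implicit, but it is the same argument in substance.
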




Recall from Theorem \ref{thrmBPA}, ODE  for \eqref{ODE_BPA} has three types of saddle points: $\mathbf{h}(0)$ when $\mathbf{e} \notin \cM$, $\mathbf{h}(\bstar_r)$ when $\mathbf{e} \in \cM$ and vector $\mathbf{0}$ for all cases. 
The sample paths in which BP hovers around $\mathbf{0}$ or 
  $\mathbf{h}(0)$ or converges to/hovers around $\mathbf{h}(\bstar_r)$ indicate co-survival. Both populations survive in insignificant numbers in the first case, 
 $x$-population is comparatively small in the second case and both populations survive in large numbers in the last case. Further, only $x$ or $y$-population survives when the process converges to $\mathbf{h}(1)$ or $\mathbf{h}(0)$ respectively, see the end of Appendix \ref{appendix_prelim}.

We re-iterate that our approach does not provide the probability with which BP converges or hovers around different limit points of the ODE \eqref{ODE_BPA}.

\begin{figure}[htbp]
    \centering
    \includegraphics[trim = {0cm 1.5cm 0cm 0.4cm}, clip, scale = 0.35]{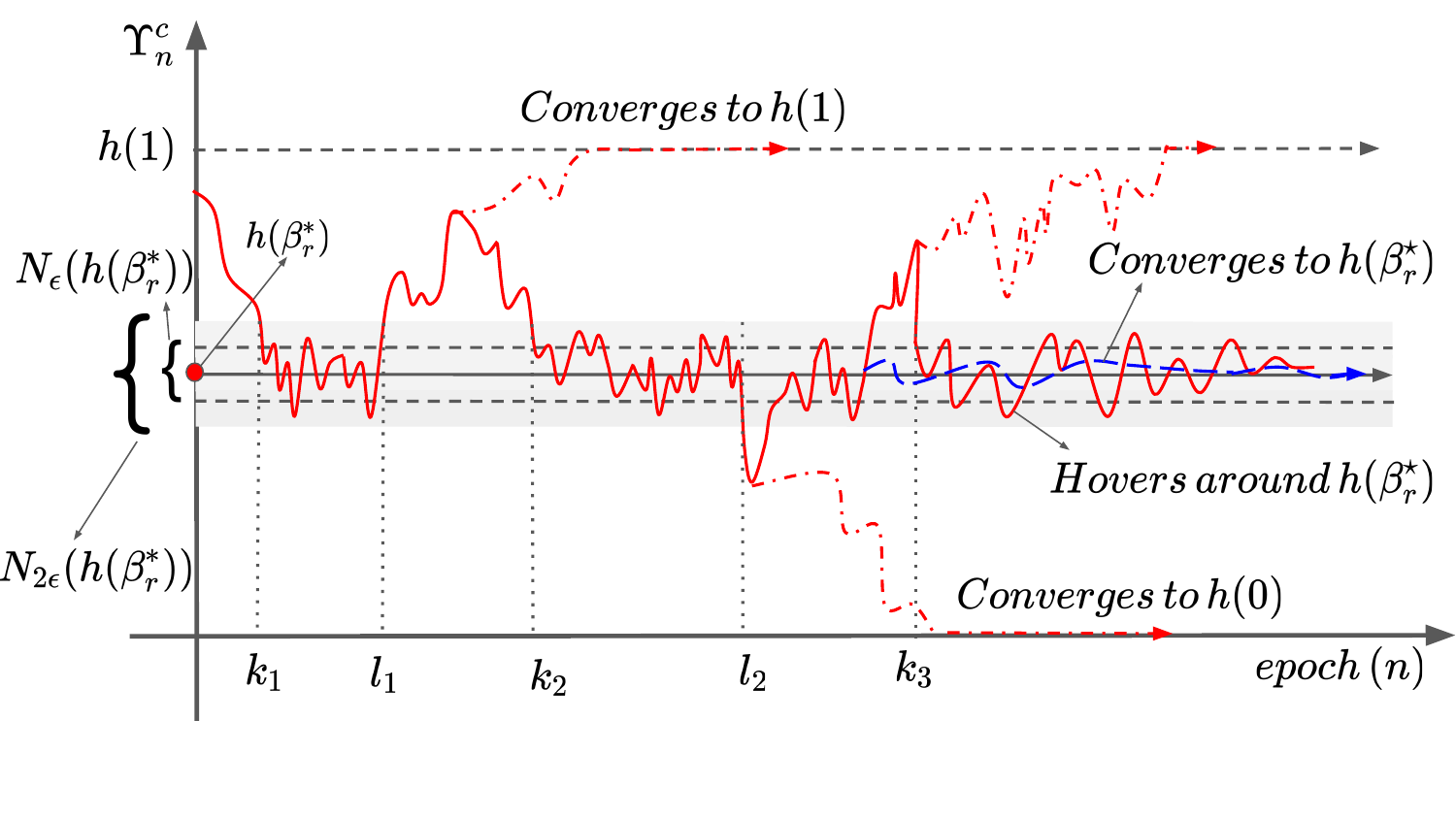}
    \caption{Behavior of BP with attack trajectory when $\mathbf{e} \in \cM$}
    \label{fig_repeller_BPA}
\end{figure}
Now, we would like to explain the behaviour of the BP with a pictorial representation in Figure \ref{fig_repeller_BPA}. Consider $\mathbf{e} \in \cM$ and survival paths. Say, the process enters $\epsilon$-neighbourhood of $\mathbf{h}(\bstar_r)$ at epochs say $k_1, k_2, \dots$ (for some $\epsilon>0$), remains in its $2\epsilon$-neighbourhood for some epochs and then exits at epochs $l_1, l_2, \dots$ At every exit, it can either get attracted to $\mathbf{h}(0)$ or $\mathbf{h}(1)$ or it can re-enter the neighbourhood. The solid red line in the figure represents the sample path when the trajectory enters and exits the $\epsilon$-neighbourhood i.o., i.e., hovers around $\mathbf{h}(\bstar_r)$ with $\delta_1 = 2\epsilon$. Some sample paths can converge to $\mathbf{h}(\bstar_r)$ - see blue dashed line. Similar behaviour is exhibited when $\mathbf{e} \notin \cM$.

\subsection{Application - Viral competing markets} \label{subsec_viral_competing}
In online social networks, content providers (CPs) share a variety of content, which is shared (again) by the recipients and thus may get viral (i.e., the number of copies of the post grows significantly with time). After reading the post, the user most likely loses interest in it forever. Thus, reading the post is analogous to death, while the number of new shares by a user is analogous to offspring. Further, unread and total (read $+$ unread) copies are analogous to the current and total population, respectively.
\begin{figure}[http]
    \centering
    \includegraphics[scale = 0.4]{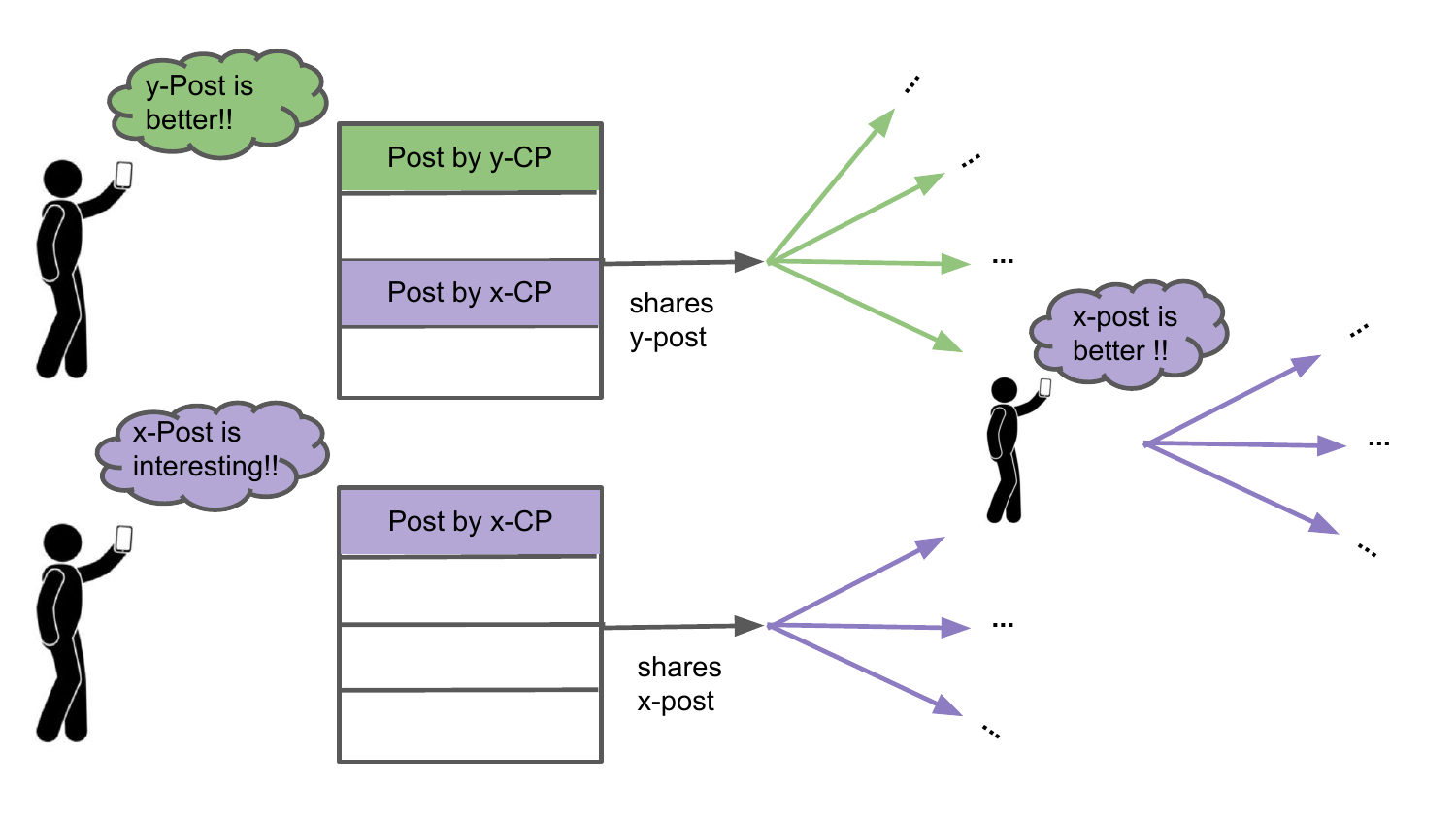}
    \caption{Viral competing markets}
    \label{fig:viral_competing}
\end{figure}

On such networks, contents often compete with each other (e.g., advertisements of similar products); when a new competing post (say $y$-type) is shared on the user's screen, the user might find $y$-post more attractive than an older $x$-post (see Figure \ref{fig:viral_competing}). This aspect leads to viral competing markets, where we say $y$-post has attacked and acquired the opportunities of $x$-post. Such attacks are dependent on the current copies. Further, the network is closed, and some users may share with previous recipients who would not be interested in the post again. Thus, the effective shares depend on the total copies. BP with attack precisely captures such dynamics (see \cite{agarwal2021co} for modelling details).

In \cite{agarwal2021co}, we analyzed such markets in a restricted setting, while Corollary \ref{corollary_BPA} can handle the generality mentioned here. Both the posts are prominent when the process converges to or hovers around $\mathbf{h}(\bstar_r)$. While, the convergence to $\mathbf{h}(0)$ or $\mathbf{h}(1)$ represents the dominance of one of the posts.

From Corollary \ref{corollary_BPA}, one can get more interesting insights. For instance, let $y$-CP be more influential, and thus $y$-post is shared more on average in the limit, so $\bpam_{xx}^\infty < \bpam_{yy}^\infty$. If the competition is ignored, the analysis is provided using independent BPs. Such analysis indicates the possibility of co-virality (both posts get viral simultaneously). However, when a typical user receives both posts, it may find $x$-post more appealing, leading to $\bpam_{xx}^\infty + \bpam_{xy}^\infty > \bpam_{yy}^\infty$ with $\bpam_{yx}^\infty = 0$. Therefore, $\mathbf{e} \notin \cM$, thus $\mathbf{h}(1)$ is a limit, which implies that $x$-post can dominate the post of more influential $y$-CP. Further, none of the limits indicate co-virality. 

On the other hand, when some users prefer the $y$-post ($\bpam_{yx}^\infty > 0$), while others prefer the $x$-post, then, co-virality is possible due to interior saddle point $\mathbf{h}(\bstar_r)$.

\section{Finite horizon approximation}\label{sec_6}
In Theorem \ref{thrm1}(i), we proved the finite time approximation of $\Ups_n$ using the autonomous ODE \eqref{eqn_ODE}; such an ODE is obtained using the limit proportion-dependent mean functions ($\minf_{ij}(\bc)$). However, directly using the population-dependent mean functions $m_{ij}(\om)$, one may anticipate better approximation in transience. 

We claim that ODE, $\dot{\ups} = \gna(\ups, t)$, constructed using the actual conditional expectation, $E[\mathbf{L}_n | \mathcal{F}_n] = \gna(\ups, t)$ given in \eqref{eqn_nonauto_g} better approximates the BP; recall, the difference term ${\cal E}_1^{n}(\cdot)$ 
 of \eqref{eqn_diff_term1} converges to $0$ as shown in the proof of Theorem \ref{thrm1}. The approximation should further improve when the new ODE is initialised with $\Ups_{n_m}$, and not with $\lim_{n_m \to \infty} \Ups_{n_m}$ as in Theorem \ref{thrm1}. From \eqref{eqn_nonauto_g}, the new ODE is non-autonomous and discontinuous. Also by \ref{a2},  the right hand side  $\gna(\Ups, t)$, converges to that of ODE \eqref{eqn_ODE}, $\ga(\Ups)$, as $t \to \infty$. Approximation by such non-autonomous ODE is proved for super-to-sub critical total population-dependent BP in \cite{agarwal2022saturated}.  


We support our claim using three numerical examples of different types. 
\begin{example}
Consider a population-dependent BP with only one (say $x$-type) population, and let $\Cx(0) = 2$. Assume that initially, the population-dependent mean offsprings reduce linearly with an increase in total population size ($\ax$), and then gets fixed to $1.2$  as below:
\[
    m_{xx}(\om) =
    \begin{cases}
    3 - 0.002 \ax, \mbox{ if } \ax \leq 400,\\
    1.2, \mbox{ if } \ax > 400,
    \end{cases}
    \mbox{ for any } \om = (\cx, \ax).
\]
Clearly, the limit mean function is $\minf_{xx} = 1.2$, when $\cx \to \infty$. From FIGURE \ref{figure1}, one can see that the the curves ($\pc_n = \cx_n/n, \pa_n = \ax_n/n$ versus $n$, for all $n \geq n_m = 5$) for random trajectory (black curve) and non-autonomous ODE trajectory (red) are close by. However, the curve for autonomous ODE trajectory (blue) matches with the other curves only as $n$ grows large. It can also be seen from the plots that the random BP trajectory converges to the attractor of the autonomous ODE, as $n$ increases.
\begin{figure}[htbp]
\centering
\begin{minipage}{.5\textwidth}
  \centering
  \includegraphics[trim = {0cm 8cm 0cm 8cm}, clip, scale = 0.3]{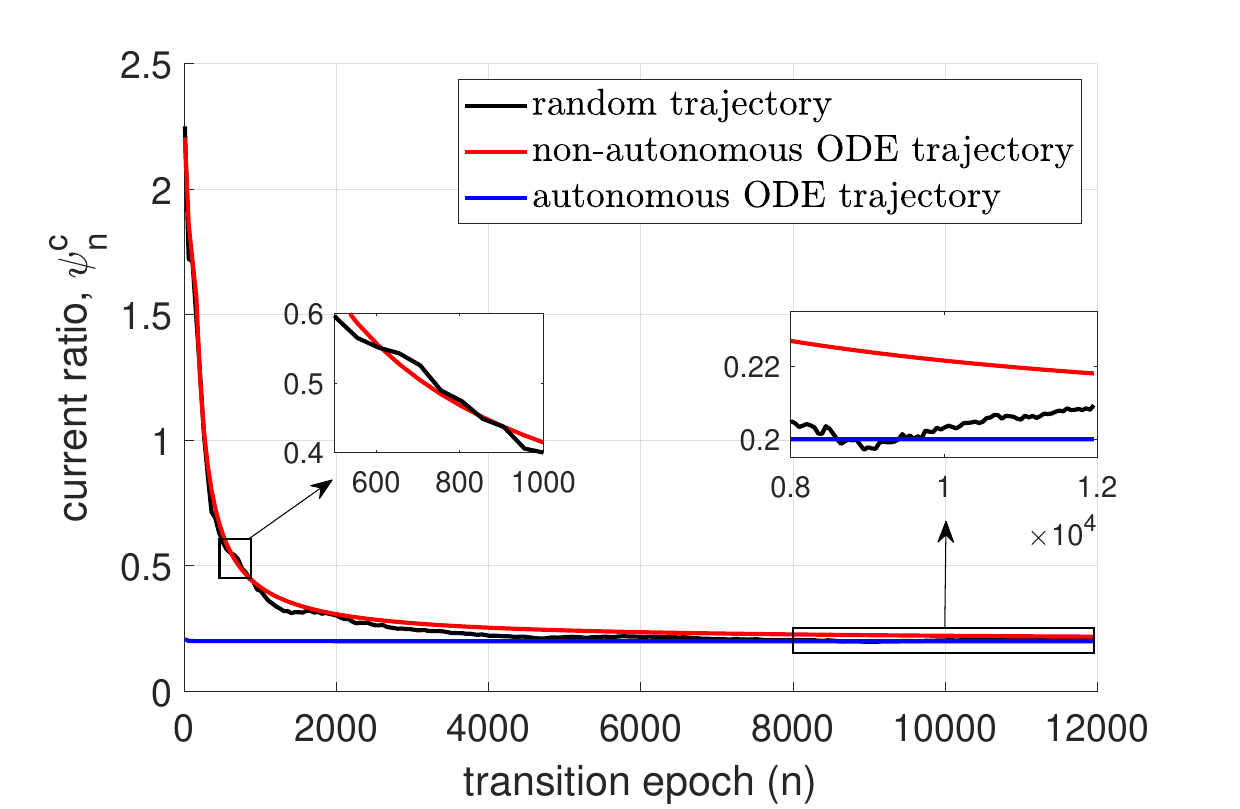} 
\end{minipage}%
\begin{minipage}{.5\textwidth}
  \centering
  \includegraphics[trim = {0cm 8cm 0cm 8cm}, clip, scale = 0.3]{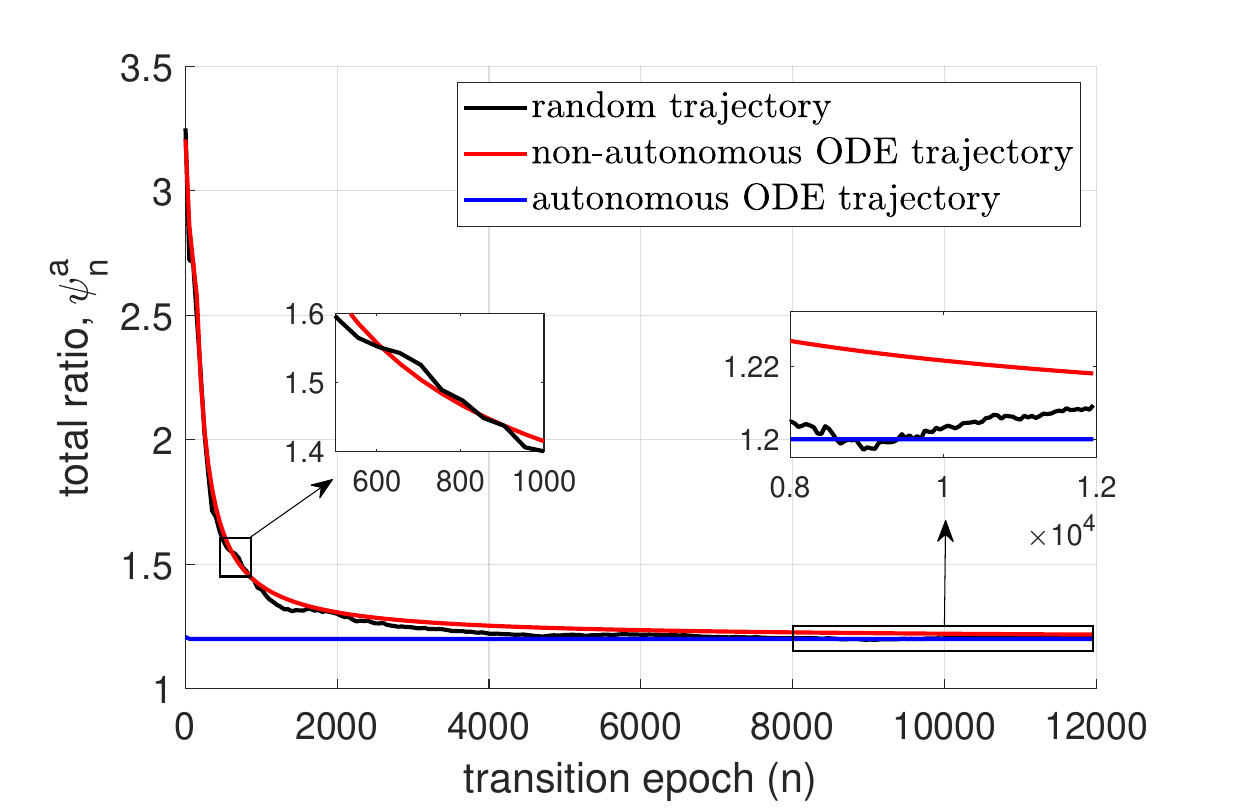}
\end{minipage}
\caption{Finite horizon approximation, single type PD-BP - one sample path}
\label{figure1}
\end{figure}
\end{example} 

\begin{example} Consider a PrD-BP with two population types ($x$ and $y$-type), $\Cx(0) = \Cy(0) = 100$ and the mean matrix:
\begin{align*}
M(\om) = M(\bc) = M^\infty(\bc) =
 \begin{bmatrix}
 6\bc & 2\bc\\
 4\bc & 5.6 \bc
 \end{bmatrix}.
 \end{align*}
\begin{figure}[htbp]
    \centering
    \includegraphics[trim = {0cm 6cm 0cm 6cm}, clip, scale = 0.4]{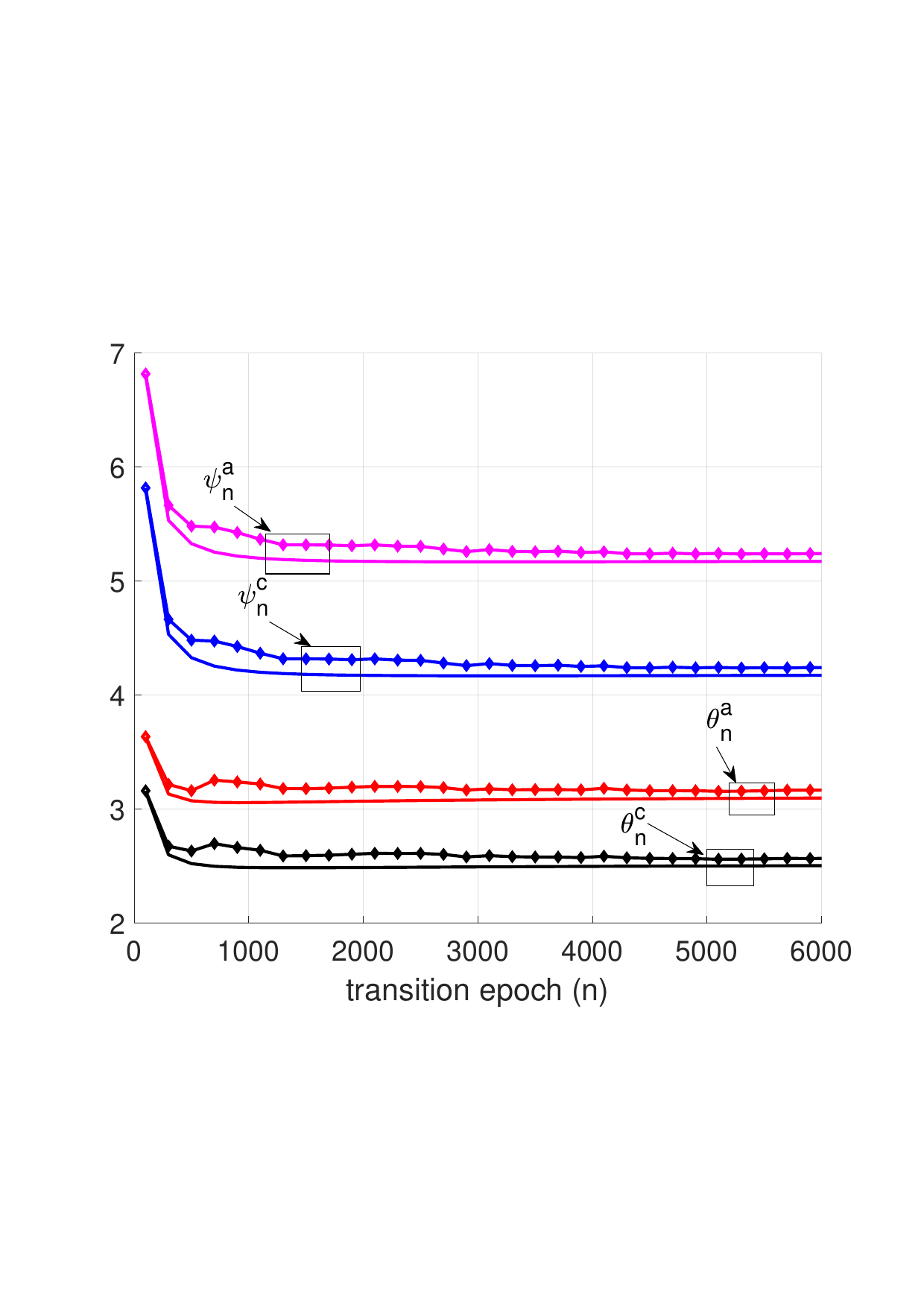}
    \caption{PrD-BP:  marked line- random trajectory (one sample path), and solid line- ODE trajectory}
\label{figure2}
\end{figure}
In the above, the mean matrix is always proportion-dependent. Further, the process is not in throughout super-critical regime, however, in a neighbourhood of the attractor of the corresponding ODE and stochastic system (in survival paths), the process is in super-critical regime. This chapter does not cover the theoretical analysis of such processes, nonetheless, we numerically illustrate in FIGURE \ref{figure2} that the curves for random trajectory and ODE trajectory (for all $n \geq n_m = 100$) match well with each other; observe that the mean matrix has the same structure from the start and hence the autonomous and non-autonomous ODE solutions are the same, except for the initial values. We leave the analysis of such processes as a part of future work.
\end{example}
\begin{example}
Let $\Cx(0) = \Cy(0) = 1200$ and let the dynamics be as in BP with attack till $S^a$ is below a certain threshold, and then let the population progress with proportion-dependent mean offspring. Specifically, $M(\om) = M^t(\om)1_{\{s^a \leq 10^4\}} +  M^\infty(\bc)1_{\{s^a > 10^4\}}$, where
  \begin{align*}
M^t(\om) = 
\begin{bmatrix}
4 & -\min(2, \cy)\\
-\min(1, \cx) & 2.2
\end{bmatrix} \mbox{ and }  M^\infty(\bc) = \begin{bmatrix}
4 \bc + 1 & 9\bc+1\\
8\bc+1 & 2.2\bc+1
\end{bmatrix}.
\end{align*}
\begin{figure}[htbp]
\centering
\vspace{-4mm}
\begin{minipage}{.5\textwidth}
  \centering
  \includegraphics[trim = {0cm 8cm 0cm 8cm}, clip, scale = 0.36]{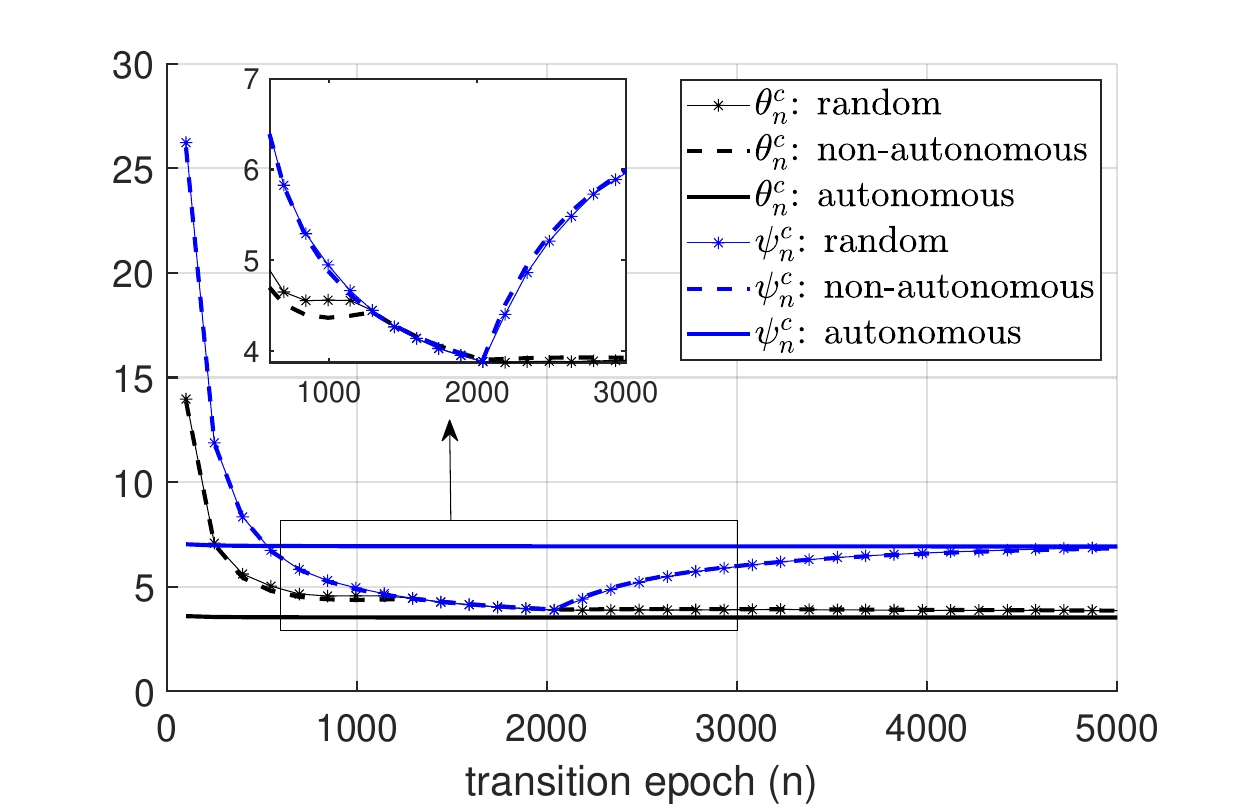} 
\end{minipage}%
\begin{minipage}{.5\textwidth}
  \centering
  \includegraphics[trim = {0cm 8cm 0cm 8cm}, clip, scale = 0.36]{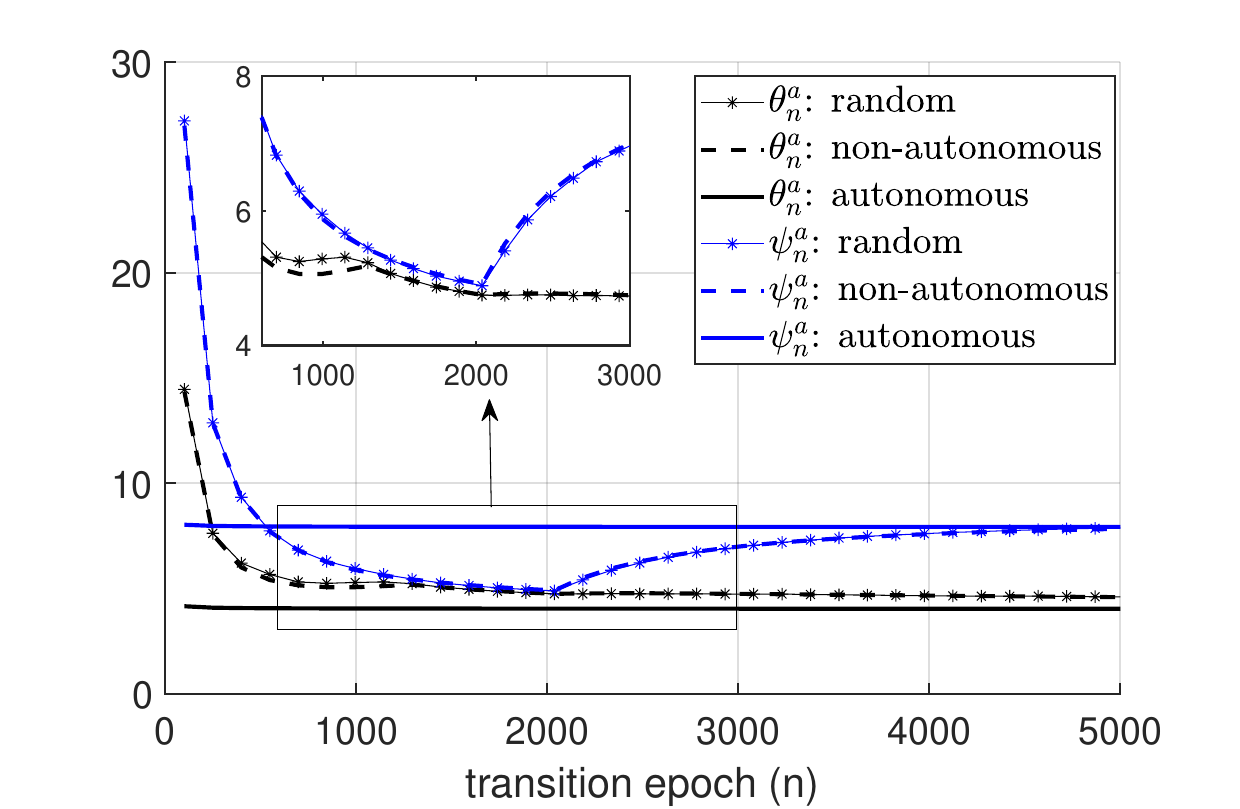}
\end{minipage}
\caption{Finite horizon approximation (current on left, and total on right side)}
\label{fig_eg3}
\end{figure}

The process is in throughout super-critical regime. We plot one sample path of BP and corresponding solutions of autonomous and non-autonomous ODEs\footnote{The ODE trajectories are estimated using the well known Piccard's iterative method (e.g., \cite{piccinini2012ordinary}).} (for all $n \geq n_m = 100$ and $T=12$). The current and total populations are in Figure \ref{fig_eg3}, while the proportion $\bc(\ups_n)$ is in Figure \ref{fig_eg3_beta}. From the plots, one can see that the non-autonomous ODE solution (dashed lines) better approximates the random BP trajectory (dotted lines), than the autonomous ODE (solid lines). As seen from the sub-figures, the non-autonomous ODE well captures the transition, unlike ODE \eqref{eqn_ODE}.
\begin{figure}[http]
\centering
  \includegraphics[trim = {0cm 8.2cm 0cm 8.8cm}, clip, scale = 0.4]{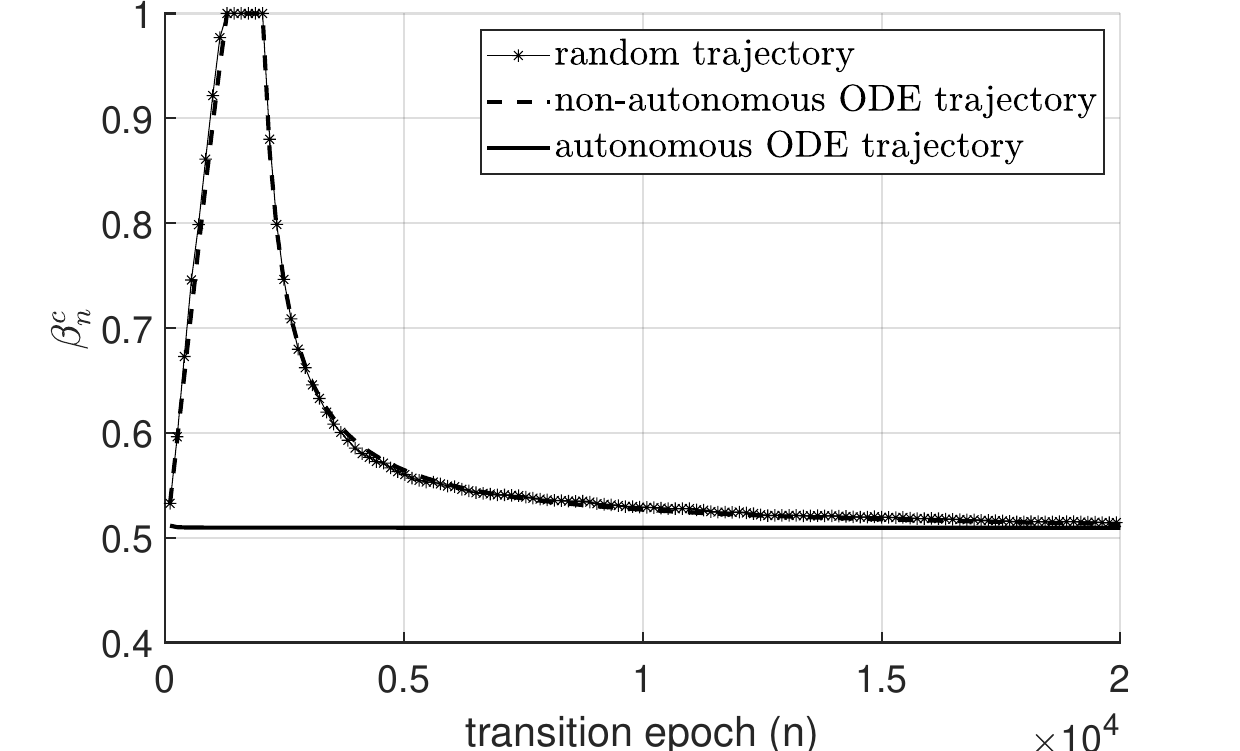}
  \caption{Proportion trajectory, $\bc_n$}
  \label{fig_eg3_beta}
\end{figure}

Initially, $x$-type individuals attack more aggressively than $y$-type and thus, the $y$-population depletes faster. In fact, by transition epoch ($1300$) proportion $\bc_n = 1$. Later, $M(\om) = M^\infty(\bc)$ does not have attack component, the $y$-population is regenerated and $\bc_n$ declines to $\approx 0.51$ indicating co-survival. This example also illustrates that the dynamics in transience (here, BP with attack) does not influence the limiting behaviour.
\end{example}

\section{Summary and conclusion}
We studied time-asymptotic proportion for a class of two-type continuous-time total-current population-dependent Markov BPs. We extended the stochastic approximation result to include the possibility of hovering around the saddle points of an appropriate ODE and to analyze BPs. The summary to derive the limiting behaviour is:

(i) if the BP satisfies the assumption \ref{a1}, then the sum current population exhibits dichotomy with probability $1$ (see Lemma \ref{lemma_sum_pop});

(ii) identify the limit mean functions $\minf_{ij}(\bc)$ satisfying \ref{a2}, if required using the discussion in Appendix \ref{appendix_prelim} for BPs with negative  offspring or attack;

(iii) identify the attractors and repellers of one-dimensional ODE \eqref{eqn_beta_ode_simple};

(iv) identify the attractor and saddle sets of ODE \eqref{eqn_ODE} using (iii) and Theorem \ref{thrm_attractors_beta}; these provide the limit proportion; 

(v) Theorem \ref{thrm_attractors_beta} also facilitates the proof of \ref{a4} to conclude about limiting behaviour of BP via Theorem \ref{thrm1}.

Interestingly, the limit proportion of any BP depends only on the limit mean matrix, irrespective of the dynamics in transience. A finite-time approximation result is also provided. We analyzed a recently introduced variant of BP with attack and acquisition under significantly more general conditions; such BP captures essential aspects of competing content propagation over online social networks.

\chapter{Robust fake post detection: BPs with unnatural deaths}\label{ch:journal2}

In this chapter, we design robust mechanisms\footnote{The work in this chapter is published as ``Agarwal, Khushboo, and Veeraruna Kavitha. ``Robust fake-post detection against real-coloring adversaries." Performance Evaluation 162 (2023): 102372". Further, an initial study of this work is published as a letter, see ``Suyog Kapsikar, Indrajit Saha, Khushboo Agarwal, Veeraruna Kavitha, and Quanyan Zhu. ``Controlling fake news by collective tagging: A branching process analysis.'' IEEE Control Systems Letters 5, no. 6 (2020): 2108-2113.''} for maximizing fake post detection on OSNs, while minimally affecting the incorrect identification of real/authentic posts. The main idea is to leverage users' responses to warn new recipients of the post about the actuality of the posts. Users may not always respond actively and even behave adversarily. The analysis is provided by a new variant of BP where unnatural deaths can occur, which we also analyze here. Additionally, this chapter provides the proofs of the remaining results of Chapter \ref{ch:basics} (the ones which are not covered by Chapter \ref{ch:journal1}).

\section{Introduction}
\label{sec_intro}
The prevalence of online social networks (OSNs), like Facebook or Twitter, is unprecedented today. A variety of content is available on the OSNs for users to consume, which can be for education, entertainment, advertisement or awareness purposes, among many more. 
Users also read news on such platforms instead of using classical mediums like newspapers.

One of the reasons for such high usage of OSNs is the ease with which users can access or share information. Further, no instant check ensures the shared post is authentic. On the one hand, this freedom allows users to express their views freely. However, at the same time, it provides users with the flexibility to post fake content, i.e., the posts that contain fabricated (mis)information that propagates through OSNs like authentic posts (see \cite{lazer2018science}). Once a post is shared on the OSN with an initial set of users, called seed users, the post can be further shared repeatedly by the recipients of the post to the extent of getting viral (the copies of the post grow significantly with time), or the post can get extinct in the initial phase (\cite{ranbir2019decomposable, agarwal2021co, agarwal2022saturated, van2010viral}). 

There are several reasons for a fake post to go viral. Authors in \cite{talwar2019people} theorize that users may share any information obtained from their reliable
source, or they can share any exciting post to seek their peers’
attention and have a sense of belonging. Also, users share posts that match their beliefs to continue using social media (due to its perceived usefulness). 
There have been many instances in the past where fake posts have proven to be fatal, and the most controversial of all is the 2016 US Presidential elections (\cite{allcott2017social}). Thus, studies on the generation, propagation, detection, and control of fake posts are the need of the hour. In this chapter, we focus on the detection aspect of fake posts.

Machine learning or deep learning is one of the commonly used approaches for fake post detection (see \cite{feng2022misreporting, sharma2019combating, ruchansky2017csi, ahmed2021detecting}). However, as argued in \cite{ahmed2021detecting}, such algorithms often face difficulty in obtaining training datasets in certain languages, and it gets difficult to determine the actuality using only the content (\cite{sharma2019combating}). 
Another approach used for fake post identification is using crowd-signals. The basic idea is to allow users to declare any post as real or fake, and then leverage user responses to identify the actuality of the post. Such an approach is being used by Facebook\footnote{\url{https://www.facebook.com/help/1753719584844061}}, where any user can report any post on the OSN. They can also provide specific reasons for reporting the post. When a post is reported, it is reviewed by third-party fact-checking organizations and is removed if it is against their policies. 
However, until the post is reviewed, the users on the OSN can view it without any warning. 

In \cite{kapsikar2020controlling}, the authors design a warning-based mechanism to control fake posts using crowd-signals. The idea is to leverage users' fake/real responses (tags) to the post and generate a warning signal for future recipients. Since the real-time warning signal/status of the post is continuously displayed to the users, this approach of using crowd-signals is different and should be more effective than that of Facebook. The objective is to ensure the maximal correct identification of the fake post while maintaining the proportion of fake tags for the real post within a given threshold. The paper assumes that each user participates in the tagging process.

In this chapter, we consider a more realistic framework. Firstly, we assume that not all users would be willing to tag. Secondly, if a user tags, it can consider the warning signal provided by the OSN; or it can tag without viewing the warning. And lastly, the users can be adversarial who always assign the real tag to any post.

We compare and show that the warning mechanism in \cite{kapsikar2020controlling} is insufficient for such a system. With just $1\%$ (with $2\%$) adversaries in the system and  everyone else tagging exactly as in \cite{kapsikar2020controlling}, we observed that the performance decreases approximately by $10\%$ (nearly $18.2\%$). This observation highlights the need for mechanisms that are robust against adversaries. We design such mechanisms in this chapter.

The new warning mechanisms are designed by cleverly eliminating the effect of adversarial users. We derive a one-dimensional ordinary differential equation (ODE) that captures the performance of any such general warning mechanism, and utilizing that ODE, we design the new warning mechanisms as well as illustrate the improved performance guarantees theoretically. 

We also present a Monte-Carlo simulation-based exhaustive numerical study to confirm our theoretical findings. The performance is expressed in two ways: (i) quality of service (QoS), which measures the proportion of fake tags for the fake post, and (ii) improved QoS (i-QoS), which represents the proportion only from non-adversarial users. The second metric, i-QoS, provides a better interpretation of the performance of warning mechanisms, as actions of adversarial users can not be controlled. Accordingly, the threshold with respect to the real post also changes to consider the responses only from non-adversarial users. 

According to the parameters in \cite{kapsikar2020controlling}, the non-adversarial users are assumed to be smart (i.e., have high intrinsic ability to identify the actuality of the posts). Thus, no warning mechanism can accentuate their ability beyond a limit -- we observe minor improvements in QoS of $2.66\%$ and $5.34\%$ with $1\%$ and $2\%$ of adversary respectively; these numbers translate to $98.64\%$ and $98.63\%$ of i-QoS under new mechanisms as compared to $95.8\%$ and $92.53\%$ with the mechanism as in \cite{kapsikar2020controlling}.

In another instance, where users are less informed and more likely to wrongly recognize the posts (as in reality), significant improvements are noticed even for a larger fraction of adversaries. Under the newly proposed mechanism, the QoS is $52.89\%$ (i-QoS is $80.86\%$) improving from only $45.31\%$ (i-QoS is only $45.31\%$) under the old mechanism, when an exorbitantly high fraction of adversarial users ($32.5\%$) are involved. This performance is achieved with minimal knowledge about users' sensitivity to the warning and their behavioural type. 
 
The warning dynamics are modelled using a new variant of branching processes (BPs).
This chapter also contributes towards total-current population-dependent two-type branching processes with population-dependent death rates and also considers a variety of unnatural deaths. In particular, we derive all possible limits and limiting behaviours of the population sizes as time progresses.

\textbf{Related Literature for Branching processes with unnatural deaths:}  
The literature on BPs has previously investigated unnatural deaths in a restricted setting. The BP analyzed in \cite{BPwithinteraction} is population-independent, while the authors in \cite{etheridge2013conditioning} consider unnatural deaths due to competition, modelled using a quadratic function of population size. The BP with pairwise interaction in \cite{ojeda2020branching} models natural births and deaths, along with additional births and deaths occurring due to cooperation and competition. Further, the birth and death rates in \cite{ojeda2020branching} are proportional to current population sizes. Our work provides a more generalized framework where the interactions are not limited to cooperation or competition. Further, the birth and death rate functions can additionally depend on the total and current population-sizes. 

\section{Problem description}\label{sec_prob_desc}
Consider an OSN with a large user base like Facebook or Twitter. Any post, $u$ on the OSN can be either fake ($u = F$) or real ($u = R$). The OSN aims to identify the actuality of the post. In \cite{kapsikar2020controlling}, the authors have proposed a warning mechanism where the recipients of the post themselves are guided in such a way that it leads to correct identification. We first study its robustness against adversarial users and then propose improved mechanisms.

We begin by describing the system and the warning mechanism of \cite{kapsikar2020controlling}. The posts are stored in a last-in-show-at-top structure named timeline for each user. The users are given a warning for each post and asked to assign a tag (fake or real) to it. Whenever a user views the post on its timeline, it guesses the actuality of the post, assigns the tag as real or fake accordingly and then forwards the same to its friends. This results in more unread copies of the post tagged as fake or real. 
The process continues when another user with the post on its timeline visits the OSN. The warning mechanism relies on the tags provided by the users and is updated with each new tag. 

We will now introduce a few notations and then discuss the propagation and tagging dynamics of the post. Let the fake and real tagged copies of the $u$-post be denoted as $x$-type and $y$-type, respectively. Further, let $\Cx(t)$ and $\Cy(t)$ be the number of users who have received the post tagged as fake and real, respectively, but have not yet read/shared it; thus, these are the number of unread copies of the post with fake or real tag. The total number of users who have received the post tagged as fake or real are represented by $\Ax(t)$ and $\Ay(t)$ respectively; these are read plus unread copies of the post. Define $\Om(t) := (\Cx(t), \Cy(t), \Ax(t), \Ay(t))$ be the tuple of number of copies at time $t$.

Each post contains two pieces of information: first, the sender's tag and second, the warning by the OSN, which is available at the click of a button (see Figure \ref{fig_post_design}). Users can exhibit different behaviours about utilizing the provided information. For example, some users may prefer to read the warning before tagging, while others may not. Therefore, motivated by \cite{agarwal2023single}, we broadly divide user behaviour into four categories. 

\begin{figure}[http]
    \centering
    \includegraphics[trim = {1.2cm 0cm 7.2cm 2cm}, clip, scale = 0.35]{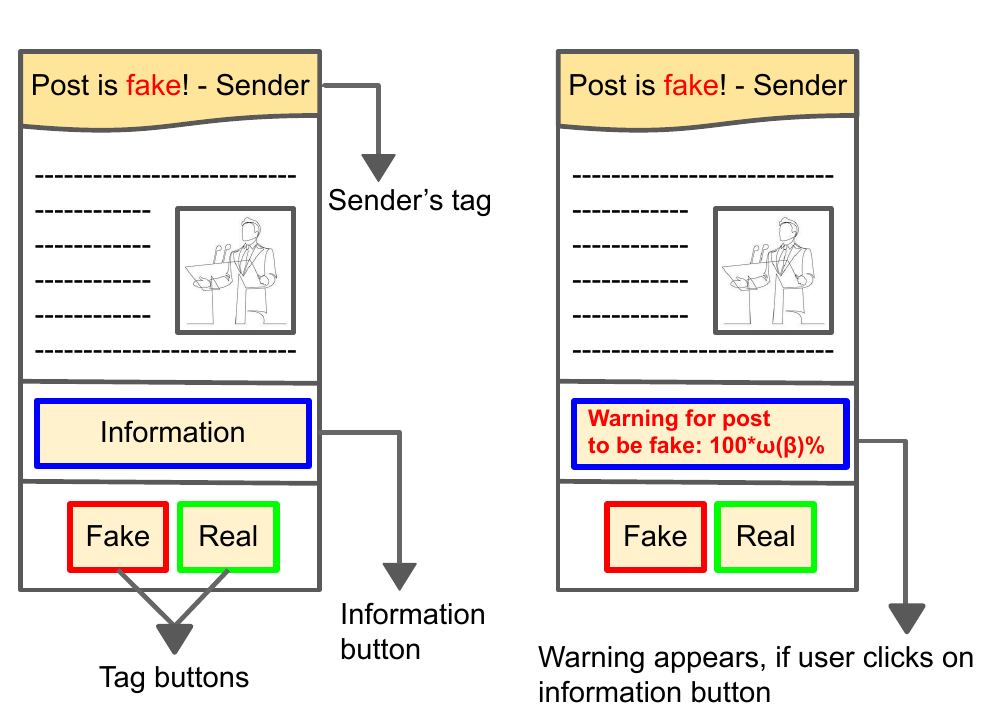}
    \caption{Design of the post}
    \label{fig_post_design}
\end{figure}
\subsection{Warning-ignoring (wi) users} These users tag the post only based on the sender's tag and their intrinsic ability to judge the post's actuality, not the warning. They prefer to invest less time in the system. Let $\tau$ be the time when a wi-user (with an unread copy of the post) reads it. At this time, the user will tag and then share the post with its friends. Let $I_{x, wi}(\Om(\tau^-))$ and $I_{y,wi}(\Om(\tau^-))$ be the indicator that the $wi$-user with fake or real tagged copy of the post tags it as fake.

If the sender has tagged the post as fake, then the recipient tags the post as fake or real with probability (w.p.) $p_x^u \in (0,1)$ and $1-p_x^u$ respectively. Similarly, let $p_y^u \in (0, 1)$ be the probability of fake tagging the post received with a real tag. Therefore:
\begin{align}\label{eqn_tag_wi}
    P(I_{x,wi}(\Om(\tau^-)) = 1|{\cal G}_\tau) &= p_x^u \mbox{ and } P(I_{y, wi}(\Om(\tau^-)) = 1|{\cal G}_\tau) = p_y^u, 
\end{align}where $\mathcal{G}_t$ is the sigma-algebra generated by $\{\Om(t'); t'\le t\}$. Naturally, the users get more suspicious about the post when received with fake tag. Thus, we assume $p_x^u > p_y^u$ for any $u \in \{R, F\}$.

As mentioned, after tagging, the user forwards the post to some/all of its friends. The number of shares depends on how attractive the post is, which we measure by $\eta^u \in (0,1)$. As argued in \cite{sui2023falsehood}, the design of fake posts is deceptive and more appealing; therefore, we assume $\eta^F > \eta^R$. 

Let ${\mathcal F}$ be the number of friends of a typical user of the OSN and assume that  ${\cal F}$ is independent and identically distributed across various users. Let $\tau^+$ and $\tau^-$ be the usual limits, e.g., $\Cx(\tau^-) := \lim_{t \uparrow \tau} \Cx(t)$. When a wi-user receives a post with a fake tag and shares it with a fake tag, it generates $\xi_{xx, wi}$ number of fake tagged copies. Similarly, when it tags the post as real, it shares to $\xi_{xy, wi}$ friends. Define $\xi_{yx, wi}$ and $\xi_{yy, wi}$ in a similar manner. We assume ($k$ is some constant):
\begin{eqnarray}
\label{eqn_shares_wi}
 \xi_{ix, wi}(\Om(\tau^-))  = \xi_{iy, wi}(\Om(\tau^-)) \sim Bin\left(\mathcal{F},\eta^u + \frac{k}{(\Sa(\tau^-))^2}\right) \mbox{ for } i \in \{x, y\},
\end{eqnarray}$Bin(\cdot, \cdot)$ denotes a binomial random variable; many times, users receive the post more than once, however, they may not be interested in it again. Thus, the new effective shares in \eqref{eqn_shares_wi} reduces with the total copies/shares of the post generated so far, i.e., $\Sa(\tau^-) := \Ax(\tau^-) + \Ay(\tau^-)$, for example as in \eqref{eqn_shares_wi}. The distribution considered in \eqref{eqn_shares_wi} is a specific example; however, our analysis can extend to any total-current shares-dependent sharing-distribution that satisfies assumption \ref{a2_prop} (see Section \ref{sec_prop_BP}).



\subsection{Warning-seeking (ws) users}
These users also click on the warning button, i.e., they incorporate the sender's tag, their innate capacity and the warning provided by the OSN to decide the tag.

Suppose a ws-user views the fake tagged post at time $\tau$. Let $\omega_\tau$ be the warning at this time. Then, as in \cite{kapsikar2020controlling}, we assume that such user tags the post as fake (real) w.p. $\min\{\alpha_x^u \omega_\tau, 1\}$ (respectively, $1-\min\{\alpha_x^u \omega_\tau, 1\}$) before sharing; here, $\alpha_x^u > 0$ is the sensitivity parameter to the warning when the post is received with fake tag. Similarly, if the post received by the ws-user has a real tag, then it tags the post as fake or real w.p. $\min\{\alpha_y^u \omega_\tau, 1\}$ and  $1-\min\{\alpha_y^u \omega_\tau, 1\}$, respectively, where $\alpha_y^u > 0$ is the sensitivity parameter when the post is received with real tag. Thus, we have:
\begin{align}\label{eqn_tag_ws}
    P(I_{x, ws}(\Om(\tau^-))=1|{\cal G}_\tau) = \min\{\alpha_x^u \omega_\tau, 1\} \mbox{ and }  P(I_{y, ws}(\Om(\tau^-))=1|{\cal G}_\tau) = \min\{\alpha_y^u \omega_\tau, 1\}.
\end{align}
The \textit{sensitivity parameters are indicative of the user's intrinsic ability to recognize the actuality of the post}. These parameterize \textit{warning-aided identification}, while  $p_F^u, p_R^u$ are the probabilities of \textit{un-aided identification}; both are characteristics of the users of the OSN. We thus assume a linear dependence between the two as in \cite{agarwal2023single}, i.e., we assume a $\rho \in (0,1)$ such that:
\begin{align}\label{eqn_prob_wi}
p_F^u = \alpha_x^u \rho \mbox{ and } p_R^u = \alpha_y^u \rho.
\end{align}

Now, similar to wi-users, a ws-user also shares the post with its friends. Using notations as in \eqref{eqn_shares_wi}, we have ($k$ is some constant):
\begin{eqnarray}
\label{eqn_shares_ws}
 \xi_{ix, ws}(\Om(\tau^-)) = \xi_{iy, ws}(\Om(\tau^-)) \sim Bin\left(\mathcal{F},\eta^u + \frac{k}{(\Sa(\tau^-))^2}\right) \mbox{ for } i \in \{x, y\}.
\end{eqnarray}

\subsection{Adversaries (a)} As is usually the case, there can be a small fraction of adversarial users on the OSN. These users aim to harm the efficacy of the system-generated warning by incorrectly tagging the post. Their agenda for doing so can be in self-interest or political. Often, such users do not have prior information about the actuality of the post, but to meet their objective they target to confuse the users about the actuality of the posts. Towards this, we consider that they always tag any post as real. In a way, such users are the ones who wish to color (tag) the posts as real, irrespective of the actuality of the posts. 

Let $I_{x, a}(\Om(\tau^-))$ and $I_{y, a}(\Om(\tau^-))$ be the indicator that an a-user with a fake or real tagged copy of the post tags the post as fake, where $\tau$ is the time when an a-user views the post. Here, we have:
\begin{align}\label{eqn_tag_rc}
    P(I_{x, a}(\Om(\tau^-))=1|{\cal G}_\tau) = P(I_{y, a}(\Om(\tau^-))=1|{\cal G}_\tau) = 0. 
\end{align}

An adversarial user shares the post with a real tag to its friends with probability $\eta_a \in (0,1)$, irrespective of the attractiveness of the post. Therefore, we have ($k$ is some constant):
\begin{align}\label{eqn_shares_adv}
    \xi_{ix, a}(\Om(\tau^-)) \equiv 0 \mbox{ and } \xi_{iy, a}(\Om(\tau^-)) \sim Bin\left(\mathcal{F},\eta_a  + \frac{k}{(\Sa(\tau^-))^2}\right) \mbox{ for } i \in \{x, y\}.
\end{align}

\subsection{Non-participants (np)} In \cite{kapsikar2020controlling}, it is assumed that all users viewing the post share and tag it. In reality, there can be users named as non-participants who neither participate in the tagging process nor share the post. In other words, when they receive a copy of the post, they do not respond, which we capture as:
\begin{align}
    P(I_{i, np}(\Om(\tau^-))=1|{\cal G}_\tau) &= P(I_{i, np}(\Om(\tau^-))=1|{\cal G}_\tau) = 0, \label{eqn_tag_np}
\end{align}
and shares to none, i.e.,
\begin{align}
    \xi_{ix, np}(\Om(\tau^-)) &= \xi_{iy, np}(\Om(\tau^-)) \equiv 0, \mbox{ for } i \in \{x, y\}. \label{eqn_share_np}
\end{align}


\noindent \textbf{Number of shares:} Let ${\cal B} := \{\mbox{wi, ws, a, np}\}$ be the set of types of users in the system. Let $\mu_0, \mu_1, \mu_2, \mu_a$ be the respective proportions of np, wi, ws, a-users on the OSN such that $\mu_1 + \mu_2 + \mu_a + \mu_0 = 1$; \textit{we assume that the OSN knows these proportions}. Since our approach is based on crowd-signals, therefore, it is meaningful to assume that $\mu_2 \in (0, 1)$. Any user of the OSN visits it after a random time which is exponentially distributed with parameter $1$ (without loss of generality); this is a commonly made assumption in the literature (see, for example, \cite{agarwal2021co, dhounchak2023viral, van2010viral}). If required, one can model different users visiting the OSN at different rates; for example, users might visit more often; our framework can easily extend to such a case. Any user of $j$-type, after viewing the post with fake tag ($i = x$) or real tag ($i = y$), generates $\Gamma_{ix, j}$ and $\Gamma_{iy, j}$ number of new fake and real tagged copies of the post respectively, where:
\begin{align}\label{eqn_final_shares}
\begin{aligned}
    \Gamma_{ix, j}(\Om(\tau^-)) &:= I_{i, j} (\Om(\tau^-)) \xi_{ix, j} (\Om(\tau^-)), \mbox{ and }\\
    \Gamma_{iy, j}(\Om(\tau^-)) &:= \bigg(1-I_{i, j}(\Om(\tau^-))\bigg) \xi_{iy, j}  (\Om(\tau^-)), \mbox{ for } i \in \{x, y\} \mbox{ and } j \in {\cal B}.
\end{aligned}
\end{align}

Next, we discuss some meaningful assumptions (inspired by \cite{kapsikar2020controlling}).

\vspace{2mm}
\noindent \textbf{Regime of parameters and assumptions:} The probability of a user fake tagging any $u$-post is higher when the sender's tag is fake; thus, $\alpha_x^u > \alpha_y^u$, for $u \in \{R, F\}$. We assume that the users are more likely to tag fake posts as fake, as compared to tagging real posts as fake, irrespective of the sender's tag, i.e., $\alpha_i^F > \alpha_i^R$, for each $i \in \{x, y\}$. \revg{Since the intent of a-users is to share the post rigorously, therefore, we assume $\eta_a > \eta^u$, for each $u$, only in the numerical experiments; the theoretical results follow even if $\eta_a \leq \eta^u$. }
Thus, in all, we assume the following:
\begin{align}\label{eqn_relation_parameters}
\begin{aligned}
    \alpha_x^u &> \alpha_y^u > 0 , \mbox{ for each }  u \in \{R, F\},   \alpha_i^F > \alpha_i^R \mbox{ for each } i \in \{x, y\},\\
    \eta_a &> \eta^F > \eta^R > 0,
    \mu_2 \in (0,1) \mbox{ and } \rho \in (0,1).
\end{aligned}
\end{align}
For the sake of clarity, we summarize all the notations which will be used consistently throughout the chapter:
\begin{table}[htbp]
\centering
\scalebox{0.75}{
\begin{tabular}{|c|c|l|}
\hline
Sr. No. & Notation                        & \multicolumn{1}{c|}{Description}                                \\ \hline
1.      & ${\cal B} = \{\mbox{wi, ws, a, np}\}$                   & types of users: warning-ignoring, warning-seeking, adversarial, non-participating \\ \hline
2.      & $\mu_0, \mu_1, \mu_2, \mu_a$           & proportion of np, wi, ws and a-users \\ \hline
3.      & $u \in \{R, F\}$ & actuality of the post as real or fake respectively                                        \\ \hline
4.      & $\eta^u, \eta_a$                        & probability of a user/adversary sharing the post to its friend                                     \\ \hline
5.      & $x, y$                    & fake or real tag by the sender \\ \hline
6.      & $\alpha_x^u, \alpha_y^u$                    & sensitivity of a user towards the warning when received with fake or real tag  \\ \hline
\end{tabular}}
\caption{Summary of the notations} \label{table_notations}
\end{table}

\subsection{Warning Mechanism (WM) - system-generated warning}
In \cite{kapsikar2020controlling}, the authors designed a warning mechanism (WM) by leveraging upon the responses of the users. They assumed all users are ws-users and did not consider the adversaries (i.e., $\mu_2 = 1$). The main idea behind the design of the mechanism is to exploit the collective wisdom of the users (via responses of all users), as depicted in Figure \ref{fig_warning_mech} (left side). 
The warning considered in \cite{kapsikar2020controlling} is:
\begin{equation} \label{eqn_warning}
 \omega_t = \bigg ( \frac{w\Cx(t)}{\Cx(t)+b\Cy(t)} + \gamma \bigg) = \bigg ( \frac{w\Beta(t)}{\Beta(t)+b(1 - \Beta(t))} + \gamma \bigg), \mbox{ where } \Beta(t) := \frac{\Cx(t)}{\Cx(t) + \Cy(t)}
\end{equation}represents the relative fraction of (unread) fake tagged copies at time $t$; $w$ and $b$ are the control parameters; $\gamma > 0$ is the parameter which captures the prior knowledge OSN has about the post via some fact-check mechanism. Here, $w \in [0, \overline{w}]$ for $\overline{w} := \frac{1}{\alpha_x^F}-\gamma$. This ensures that a ws-user tags the fake tagged copy of the post as fake with probability $\min\{\alpha_x^u \omega(\beta), 1\} =  \alpha_x^u \omega(\beta)$ for any $\beta \in [0,1]$, when the warning is as in \eqref{eqn_warning}; thus, $\min\{\alpha_y^u \omega(\beta), 1\} = \alpha_y^u \omega(\beta)$ (since $\alpha_y^u < \alpha_x^u$, see \eqref{eqn_relation_parameters}). Further, the parameter $b \in [0, \infty)$. The warning in \eqref{eqn_warning} is generated individually for each post.

\begin{figure}[http]
    \centering
    \includegraphics[trim = {0cm 3.3cm 0cm 0cm}, clip, scale = 0.5]{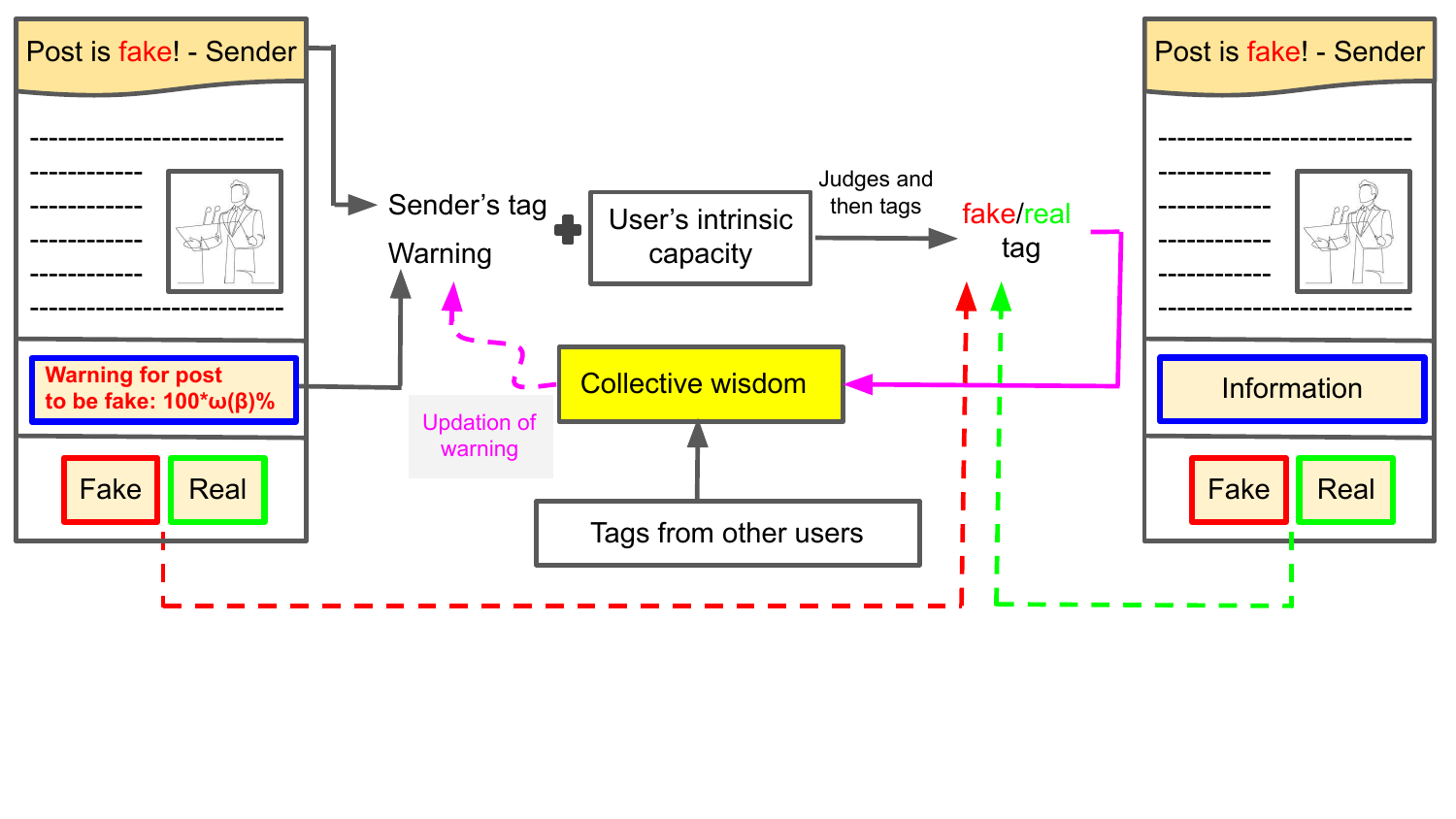}
    \caption{On the left, ws-user tags the post as fake. On the right, a-user tags the post as real, without checking the warning or sender's tag.}
    \label{fig_warning_mech}
\end{figure}

In this chapter, we are considering a variety of user behavior. Therefore, the warning is now influenced by the responses of users who ignore the warning while tagging or are purposely providing incorrect tags. 
In Figure \ref{fig_warning_mech}, we depict that the warning is updated by the response (fake) of the ws-user (left side of the figure) and also by that of a-user (right side of the figure). Similarly, one can visualize how a warning gets updated when a wi-user tags. This suggests that the warning \eqref{eqn_warning} needs to be studied for our complex and more realistic system.

It is clear from the discussion so far that the end goal of the OSN is to nudge users towards the correct identification of the posts. \revg{Let $B^u(t)$ represent the proportion of fake tags, given that the actuality of the post is $u \in \{R, F\}$.} Then, similar to \cite{kapsikar2020controlling}, we aim to optimally choose $w, b$:
\begin{itemize}
    \item to maximize the proportion of fake tags for the fake post, $\max \lim_{t\to \infty} \Beta^F(t)$,  and
    \item to ensure that the proportion of fake tags for the real post, $\lim_{t\to \infty} \Beta^R(t)$, is at most $\delta$, for some $\delta \in (0,1)$.
\end{itemize}
\revr{The above objective is well defined if the limits in the above exist and are unique almost surely. By Theorem \ref{thrm_BP_to_fake} stated in Section \ref{sec_warning_old}, we prove that the limits indeed exist (but need not be unique) for any general warning mechanism. Hence, define ${\cal L}^u := \left\{\lim_{t\to \infty} \Beta^u(t)\right\}$ as the set of all possible limits for $u$-post, across all sample paths, and consider the following optimization problem:
\begin{align}\label{eqn_gen_opt}
    \max_{w, b} \inf({\cal L}^F) \mbox{ subject to } \sup({\cal L}^R) \leq \delta.
\end{align}}
Further, we shall investigate the following two questions:
\begin{enumerate}
    \item How does the optimal WM in \eqref{eqn_warning} perform in the presence of wi-users and a-users? 
    \item If the performance degrades, can we design improved WMs which are robust against adversaries?
\end{enumerate}

\subsection{Warning dynamics and Branching process}
It is clear that when a user tags the post as fake, the fake number of copies (represented by $x$)  gets updated; otherwise, the real ($y$) number of copies gets updated. Further, the user who receives the post can be one among the type $i$, for $i \in {\cal B}$, w.p. given by the proportion of the type it belongs to; for example, the recipient can be a wi-user w.p. $\mu_1$. As discussed in \eqref{eqn_shares_wi}, \eqref{eqn_shares_ws}, \eqref{eqn_shares_adv} and \eqref{eqn_share_np}, the distribution of the number of shares depends on the type of the user who received the post.

Let $\tau$ be the time when a type-$i$ user views the post on its timeline with a fake tag. Then, the number of fake tagged and real tagged copies of the underlying post evolves at time $\tau$ as follows:
\begin{equation}\label{eqn_transition_fake_tag}
\begin{aligned}
\Cx(\tau^+) & = \Cx(\tau^-) - 1 + \Gamma_{xx, i}(\Om(\tau^-)), 
\Cy(\tau^+) = \Cy(\tau^-) + \Gamma_{xy, i}(\Om(\tau^-)),\\ 
\Ax(\tau^+) & = \Ax(\tau^-) + \Gamma_{xx, i}(\Om(\tau^-)), \mbox{ and }
\Ay(\tau^+) = \Ay(\tau^-) + \Gamma_{xy, i}(\Om(\tau^-)).
\end{aligned}
\end{equation}
We argued before that once a user reads a post, it is seldom interested in the same post again; thus, the current (unread) number of fake tagged copies decreases by $1$. Similarly, when a type-$i$ user who received the post with the real tag views the post,  the system evolves as:
\begin{equation}\label{eqn_transition_real_tag}
\begin{aligned}
\Cx(\tau^+) & = \Cx(\tau^-) + \Gamma_{yx, i}(\Om(\tau^-)), 
\Cy(\tau^+) = \Cy(\tau^-) - 1 + \Gamma_{yy, i}(\Om(\tau^-)),\\
\Ax(\tau^+) & = \Ax(\tau^-) + \Gamma_{yx, i}(\Om(\tau^-)), \mbox{ and }
\Ay(\tau^+) = \Ay(\tau^-) + \Gamma_{yy, i}(\Om(\tau^-)).
\end{aligned}
\end{equation} 
We shall briefly call the above warning-mechanism aided dynamics as \underline{warning dynamics}. 
At this point, it is important to state that the dynamics described above can be modelled as a continuous-time total-current population-dependent branching process (TC-BP) discussed in previous chapter, except for varying death rates. We will discuss how such correspondences can be made in Section \ref{sec_warning_old}; in particular, we will see that the viewing of the post can be modelled as a death in an appropriate TC-BP and hence, will have different death-types and rates owing to different types of users. However, we first analyze the TC-BPs with multiple death types in the next section using ODE based stochastic approximation technique, which will be instrumental for our study. 

\vspace{2mm}
\noindent \textbf{Informal outline for design of WMs:}
\revr{
We consider any general warning mechanism $\omega(\beta)$, which depends on the proportion of fake tags ($\beta$) provided by the previous recipients of the post. The limiting behaviour of the warning-guided post-propagation process is analyzed using the ODE derived via the analysis of the underlying BP. In particular, we will show that the analysis of a one-dimensional ODE suffices to study the limits of the underlying process; of course, the limits depend upon the warning mechanism utilized. 
The main idea is to reverse-engineer:  consider the design of the warning mechanism (to achieve the desired output) based  on the anticipated attractors of the one-dimensional ODE. We will follow this approach in Section \ref{sec_warning_old}, and after that, we bring our attention back to the control of fake post propagation over OSNs.}

\section{Total-Current population-dependent BP with multiple death types} \label{sec_prop_BP}

Consider two types of populations, namely $x$ and $y$-types, and  
let $c_{x, 0}$ and $c_{y, 0}$ be their respective initial sizes. An individual can either die naturally, or it may die  differently due to unnatural circumstances. We refer any death which is not natural as `unnatural death'\footnote{
In biological systems, unnatural deaths may occur due to exposition to a virus, competition with other species, etc. We discuss unnatural deaths for the application at hand in Section \ref{sec_warning_old}.}. Let $D_i := \{0, 1, \dots, d_i\}$ be the set of variety of deaths for $i$-type individual, where $d_i \in [0, \infty)$. Here, $d = 0$ represents the natural death and $d \in D_i-\{0\}$ represents an unnatural death; $D_x$ need not equal $D_y$ as some circumstances may affect only one population. We shall briefly refer to the death of variety $d$ as $d$-death.

Now, given that the interest of this chapter is in controlling the fake post propagation over OSNs, our focus is on the time-asymptotic proportion of the population (fake tags). Therefore, it is sufficient to study the embedded process (discrete-time chain defined at death instances) of the continuous-time Markov process. In Chapter \ref{ch:journal1}, we analyzed the TC-BP using stochastic approximation based approach, where only natural deaths occur. In this section, we will follow same approach to incorporate different varieties of deaths. We begin by introducing few notations which are exactly as in previous chapter, however are re-written here for the ease of reading.

Let $\tau_n$ be the time at which $n$-th individual dies. Consider any $n \geq 1$. Let $\Om_n := (C_{x, n}, C_{y, n}, A_{x, n}, A_{y, n})$, where $C_{x, n}, C_{y, n}$ represent the \textit{current population} and $A_{x, n}, A_{y, n}$ are the \textit{total population} sizes immediately after ${\tau}_n$, e.g., $C_{x, n} = \Cx({\tau}_n^+)$. Let $S_n :=  C_{x, n} + C_{y, n}$ be the sum current population, again immediately after $\tau_n$.  Let $\om = (\cx, \cy, \ax, \ay)$ be a realisation of the random vector $\Om$. \revg{Any individual can die naturally or unnaturally. We assume that the time till $d$-death of an $i$-type individual is exponentially distributed with parameter $\lambda_{i, d} \in (0, \infty)$. An individual in the population will die according to the first death (variety) event that occurs. By memoryless property, after any given instance of time (e.g., $\tau_n$), the death-time of any $i$-type individual in the population is again exponentially distributed with parameter $\sum_{d} \lambda_{i, d}$, and hence the first death in the two populations is exponentially distributed with parameter $\left(\sum_{d} \lambda_{x, d} + \sum_{d} \lambda_{y, d}\right)$. We further assume that the parameter $\lambda_{i, d}$ depends on the population-size, i.e., $\lambda_{i, d}(\om_n)$. conditioned on $\om_n$, for each $i \in \{x, y\}$. Observe that we have population-dependency even for the natural deaths, in contrast to the classical models studying only natural deaths (see, for example, \cite{klebaner1993population, athreya2012classical, jagers1969proportions}).}


The current population can get extinct, and thus let $\nu_e := \inf \{n : S_n = 0\}$ be the extinction epoch, with the usual convention that $\nu_e = \infty$, when $S_n > 0$ for all $n$. \textit{For the sake of completion, define $\Om_n := \Om_{\nu_e}$ and $\tau_{n} :=\tau_{\nu_e}$, for all $n \geq \nu_e$, when $\nu_e < \infty$.} \revg{We refer the sample paths in which $\nu_e = \infty$ as the non-extinction paths, and the complementary ones as the extinction paths.} \textit{Define $\Beta_n :=  C_{x, n}/S_n$ as the proportion of $x$-type population among current population}. Let $\beta = \cx/(\cx+\cy)$ be a realisation of $\Beta$. 

\hide{
Define $\Om(t) := (\Cx(t), \Cy(t), \Ax(t), \Ay(t))$, where $\Cx(t), \Cy(t)$ represent the \textit{current population} and $\Ax(t), \Ay(t)$ are the \textit{total population} sizes at time $t$. Observe 
$(\Ax(0), \Ay(0)) = (\cx_0, \cy_0)$.  Let $\om = (\cx, \cy, \ax, \ay)$ be a realisation of the random vector $\Om$. Now, conditioned on $\om$, say an $i$-type individual lives for an exponentially distributed with parameter $\lambda_{i, d}(\om) \in (0, \infty)$ before it $d$-dies, where $i \in \{x, y\}$. In classical BPs, such parameters are population-independent, while here, we consider them to depend not only on the population sizes, but also on the variety of death. 
Here, $\lambda_{i, 0}(\om)$ represents the lifetime parameter if populations were living (and reproducing) independently.

Define $\beta := \cx/(\cx + \cy)$ as the proportion of $x$-type individuals, conditioned on $\om$. }

\subsection{Evolution of embedded process}
In classical BPs, each individual lives for a random time which is exponentially distributed with a common parameter  (say) $\lambda > 0$. Thus, an individual to die at $n$-th epoch is of $x$-type w.p.\footnote{This happens due to the memory-less property of exponential distribution and as minimum of $k$ independent and identically distributed exponentially distributed random variables with parameter $\lambda$ is exponentially distributed with parameter $k\lambda$. } $\beta_n$, conditioned on $\Om_n = \om_n$. In similar lines, with the possibility of unnatural deaths, the probability that an $i$-type individual $d$-dies is given by:
\begin{align}\label{eqn_prob_d_death}
\begin{aligned}
\mbox{P($x$-type individual $d$-dies$|\om$)} &= \frac{\lambda_{x, d}(\om) \beta}{d(\om)} \mbox{ and} \\
\mbox{P($y$-type individual $d$-dies$|\om$)} &= \frac{\lambda_{y, d}(\om) (1-\beta)}{ d(\om)}, \mbox{ where }
\end{aligned}
\end{align}
\begin{align*}
d(\om) &:= \sum_{d \in D_x} \lambda_{x, d}(\om) \beta + \sum_{d \in D_y} \lambda_{y, d}(\om) (1-\beta).
\end{align*}
In all, the overall probability that an $i$-type individual is the first to die after previous death instance, $\tau$, is given by:
\begin{align}\label{eqn_prob_death}
\begin{aligned}
\mbox{P($x$-type individual dies$|\om$)} &= \frac{\beta \sum_{d \in D_x} \lambda_{x, d}(\om)}{d(\om)} =: f_{\beta}(\om) \mbox{ and}\\ \mbox{P($y$-type individual dies$|\om$)} &= 1 - f_{\beta}(\om).
\end{aligned}
\end{align}

Suppose an individual of $i$-type dies at $n$-th epoch. Then, the current size (not the total size) of $i$-type reduces by $1$ due to death. Further, if it $d$-dies for $d \in D_i$, it produces $\offs_{ii, d}(\Om_{n-1})$ and $\offs_{ij, d}(\Om_{n-1})$ offspring of $i$-type and $j$-type ($j\neq i$) respectively, conditioned on the sigma algebra $\sigma\{\Om_{n-1}\}$, where $\offs_{ij, d} (\Om_{n-1})$ is an  integer-valued random variable. Basically, when $\Om_{n-1} = \om_{n-1}$, the random offspring are represented by $\offs_{ij, d}(\om_{n-1})$ for each $i, j$ and $d$. Thus, the embedded process immediately after an $i$-type individual $d$-dies at $n$-th epoch is given by:
\begin{equation}\label{evolve_x_up_time_prop}
\begin{aligned}
C^i_n &= C^i_{n-1}  + \offs_{ii, d}(\Om_{n-1}) - 1, \ \ \  A^i_n = A^i_{n-1}  + \offs_{ii, d}(\Om_{n-1}), 
\\
C^j_n &= C^j_{n-1} + \offs_{ij, d}(\Om_{n-1}), \ \ \ \ \ \ \ \ \ A^j_n = A^j_{n-1} + \offs_{ij, d}(\Om_{n-1}), \mbox{ for } i \neq j.
\end{aligned}
\end{equation}
Conditioned on $\om$, we assume the $\om$-dependent random offspring satisfy the following, which also ensures throughout super-criticality, a notion defined in previous chapter:
\begin{enumerate}[label=\textbf{C.\arabic*}, ref=\textbf{C.\arabic*}]
    \item \label{a1_prop} There exist two integrable random variables $\overline{\offs}$ and $\underline{\offs}$ which bound the random offspring as: $0 \leq \underline{\offs} \leq \offs_{ix, d}(\om) + \offs_{iy, d}(\om) \leq \overline{\offs}$ a.s., for each $\om$, for each $d$. Also,  $E[\overline{\offs}^2] < \infty$ and $E[\underline{\offs}] > 1$. Further, $\offs_{ii, d}(\om) \geq 0$ a.s., for each $i, \om, d$. Furthermore, assume that $\inf_{\om} \lambda_{x, d}(\om) > 0$ for each $d \in D_x$ and $\inf_{\om} \lambda_{y, d}(\om) > 0$ for each $d \in D_y$.
\end{enumerate}

\subsection{Mean matrix}

Let \textit{$m_{ij, d} (\om) := E[ \offs_{ij, d} (\om) ]$ denote the expectation of the number of $j$-type offspring, when an $i$-type parent $d$-dies, conditioned on $\om$, for $i, j \in \{x, y\}$ and $d \in D_i$}. Further, define the mean matrix $M(\om) := [m_{ij}(\om)]_{i, j\in\{x, y\}}$ as given below:
\begin{eqnarray}\label{eqn_mean_matrix}
M(\om) :=
\left [  
\begin{array}{ll}
    \frac{\sum_{d \in D_x} \lambda_{x, d}(\om) m_{xx, d}(\om)}{\sum_{d \in D_x} \lambda_{x, d}(\om)}    & \frac{\sum_{d \in D_x} \lambda_{x, d}(\om) m_{xy, d}(\om)}{\sum_{d \in D_x} \lambda_{x, d}(\om)}   \\ \\
      \frac{\sum_{d \in D_y} \lambda_{y, d}(\om) m_{yx, d}(\om)}{\sum_{d \in D_y} \lambda_{y, d}(\om)}    & \frac{ \sum_{d \in D_y} \lambda_{y, d}(\om) m_{yy, d}(\om)}{\sum_{d \in D_y} \lambda_{y, d}(\om)}
\end{array}
\right ].  
\end{eqnarray}
Then,  for $j \in \{x, y\}$, we have (see \eqref{eqn_prob_d_death}, \eqref{eqn_prob_death} and \eqref{eqn_mean_matrix}):

\vspace{-4mm}
{\small
\begin{align}
\begin{aligned}
    E[j\mbox{-type offspring produced by an } x\mbox{-type parent}|\om] &= \sum_{d \in D_x} \frac{\lambda_{x, d}(\om) \beta}{d(\om)} m_{xj, d}(\om) \\
    &= f_{\beta}(\om) m_{xj}(\om),\\
    E[j\mbox{-type offspring produced by a } y\mbox{-type parent}|\om] &= \sum_{d \in D_y} \frac{\lambda_{y, d}(\om) (1-\beta)}{d(\om)} m_{yj, d}(\om)\\
    &= (1-f_{\beta}(\om)) m_{yj}(\om).
\end{aligned}
\end{align}}

As in Lemma \ref{lemma_sum_pop}, one can prove the dichotomy for the sum current population of TC-BP with multiple death types, as in the following:
\begin{lemma}\label{lemma_dichotomy}
Assume \ref{a1_prop} and define $\underline{m} =: E[\underline{\offs}]$. Then, we have:
$$
P\left(\left\{\liminf_{n} S_n e^{-\underline{\lambda}(\underline{m}-1)\tau_n} > 0\right\} \cup \left\{\lim_{n \to \infty} S_n = 0\right\}\right) = 1, 
$$where $\underline{\lambda} := \inf_\om\{\lambda_{x, 0}(\om), \dots, \lambda^x_{d_x}(\om), \lambda_{y, 0}(\om), \dots, \lambda^y_{d_y}(\om)\} > 0$.
\end{lemma}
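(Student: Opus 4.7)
The plan is to adapt the argument of Lemma \ref{lemma_sum_pop} from Chapter \ref{ch:journal1}, exploiting the fact that the multiple-death-type structure only enters through the per-individual \emph{total} death rate $\sum_{d \in D_i} \lambda_{i, d}(\om)$ and the \emph{total} offspring count $\offs_{ix, d}(\om) + \offs_{iy, d}(\om)$. By assumption \ref{a1_prop}, the former is bounded below by $\underline{\lambda}$ and the latter by $\underline{\offs}$, uniformly in $\om$ and $d$. Hence the original process $S(t)$ pathwise dominates a much simpler reference process in which these two quantities are attained exactly.

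First I would construct, on a common probability space, a coupled population-independent continuous-time Markov BP $\tilde{S}(t)$ in which every living individual has an exponential lifetime with parameter $\underline{\lambda}$ and, upon death, produces a total offspring count distributed as $\underline{\offs}$ (split arbitrarily between the two types), with all these variables i.i.d. across deaths. The coupling is built by thinning the Poisson clocks driving the original deaths so that each $\tilde{S}$-individual's clock fires at rate $\underline{\lambda}$ and is dominated by the corresponding original clock, together with a standard stochastic-dominance coupling of $\underline{\offs}$ to the realized total offspring. This yields $\tilde{S}(t) \leq S(t)$ for every $t$. I would then invoke the classical Malthusian growth theorem (e.g.\ \cite[Chapter III, Theorem 2]{athreya2004branching}) for the super-critical population-independent BP $\tilde{S}(t)$ to conclude that $\tilde{S}(t) e^{-\underline{\lambda}(\underline{m}-1)t}$ converges a.s.\ to a non-negative variable $W$ with $\{W > 0\}$ coinciding a.s.\ with the non-extinction event of $\tilde{S}$, and with $P(W > 0) > 0$ since $\underline{m} > 1$. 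On the coupling event $\{W > 0\}$, passage to the embedded times $\tau_n$ (which tend to $\infty$ because the total death rates are uniformly bounded above) immediately gives $\liminf_n S_n e^{-\underline{\lambda}(\underline{m}-1)\tau_n} \geq W > 0$.

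The main obstacle, and the step I expect to require the most care, is not the Malthusian estimate itself but the dichotomy --- ruling out the possibility that $S(t)$ survives while the particular coupled $\tilde{S}(t)$ dies out (in which case the single coupling gives no information). The standard remedy is a regeneration/restart argument built on the embedded chain. Using the strong Markov property at the embedded jump times, I would attach an independent copy of the reference process $\tilde{S}$ rooted at a freshly born individual each time $S_n$ attains a new maximum. On $\{S_n \not\to 0\}$ the process cannot remain bounded (throughout super-criticality forbids a stable plateau in the embedded drift, and $S_n$ is integer-valued), so infinitely many such restart attempts occur; each one independently survives with probability at least $P(W > 0) > 0$. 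A conditional Borel--Cantelli argument then ensures that at least one attempt survives a.s., and the corresponding sub-tree---dominated by $S$ from the restart onwards---delivers the exponential lower bound on $\liminf_n S_n e^{-\underline{\lambda}(\underline{m}-1)\tau_n}$. Verifying the required independence and measurability of these restart epochs via the Markov structure is the delicate bookkeeping at the heart of the proof; every other step is a direct carry-over from the single-death-type case.
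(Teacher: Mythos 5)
The step that breaks is your claimed pathwise bound $\tilde S(t)\le S(t)$ for all $t$. Under \ref{a1_prop} the original individuals die \emph{faster} than the reference ones (their total rate $\sum_{d}\lambda_{i,d}(\om)\ge\underline{\lambda}$), and a quick death paired with a small offspring realization (the assumption only forces $\offs_{ix,d}(\om)+\offs_{iy,d}(\om)\ge\underline{\offs}\ge 0$) removes mass from the original population earlier than from the slower reference population. If, say, $P(\underline{\offs}=0)>0$, the realized offspring equal $\underline{\offs}$, and the original rates exceed $\underline{\lambda}$ substantially, then for small $t$ one gets $P(S(t)=0)>P(\tilde S(t)=0)$, so even stochastic domination at a fixed time fails and no coupling — certainly not the clock-thinning you describe — can deliver $\tilde S(t)\le S(t)$ for every $t$. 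The comparison must instead be made through two separate one-sided bounds on embedded quantities: (i) jump chains, $S_n\ge Z_n$ after the same number of deaths, since each original death adds at least $\underline{\offs}-1$ (with the reference started below $S$, e.g.\ at a single freshly born individual); and (ii) epochs, $\tau_n\le\sigma_n$, since the original's total death rate at stage $n$ is at least $\underline{\lambda}S_n\ge\underline{\lambda}Z_n$. Combining, $S_ne^{-\underline{\lambda}(\underline{m}-1)\tau_n}\ge Z_ne^{-\underline{\lambda}(\underline{m}-1)\sigma_n}$, and the right-hand side converges to the Kesten--Stigum limit of the reference process along its own epochs (your second-moment remark covers the $L\log L$ condition). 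Your Malthusian step and the restart step both lean on the real-time domination, so as written they inherit this flaw.

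Beyond that, your architecture genuinely differs from the paper's and could be made to work once the comparison is repaired. The paper (proof of Lemma \ref{lemma_sum_pop}, carried over verbatim for this lemma) handles the obstacle you identify — the dominated process dying while the original survives — not by restarts but by giving the reference process state-dependent immigration: one immigrant whenever it hits zero, so that by the immigration results it cites the reference process diverges almost surely, and a single embedded-chain coupling up to the extinction epoch $\nu_e$ suffices (on non-extinction paths $S_n\ge 1$, so the immigrant is always dominated). Your restart-at-record-times plus conditional Borel--Cantelli scheme is a more elementary, self-contained alternative, but it carries obligations you only gesture at: showing $S_n\to\infty$ (hence records i.o.) on $\{S_n\not\to 0\}$ requires the positive-drift, bounded-second-moment martingale argument (the paper's $\underline{\Pi}_n$-type bound), not merely ``no stable plateau''; and successive restart subtrees are not independent, so the survival attempts must be handled conditionally using the uniform lower bound $P(W>0)$. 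Finally, your claim that $\tau_n\to\infty$ ``because the death rates are uniformly bounded above'' invokes a bound that is nowhere assumed; fortunately it is not needed, since slow growth of $\tau_n$ only makes the liminf statement easier.
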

\revg{Thus, the sum current population either gets extinct or in non-extinction paths, it explodes (i.e., it grows exponentially larger at rate $\underline{\lambda}(\underline{m}-1)$).
}

\subsection{Main Result}
We will now provide the first main result of the chapter which determines the limit proportion, $\lim_{t \to \infty} \Bc(t)$ in non-extinction paths and additionally, provides the deterministic approximate trajectories for the underlying BP. The result follows in similar lines to Theorem \ref{thrm1}, while accommodating some important changes for multiple deaths. As established in Lemma \ref{lemma_dichotomy}, the underlying BP can explode. In such a case, it is a common practice to scale the process appropriately that enables convergence to a finite limit (see, for example, \eqref{eqn_ratios} and \cite{kapsikar2020controlling}).



\hide{Let $\tau_n$ be the time at which $n$-th individual dies.
Consider any $n \geq 1$. Let $\Om_n := (C_{x, n}, C_{y, n}, A_{x, n}, A_{y, n})$ be the individual (current and total) populations and $S_n :=  C_{x, n} + C_{y, n}$ be the sum current population, both immediately after $\tau_n$, e.g., $C_{x, n} = \Cx(\tau_n^+)$. The current population can get extinct, and thus let $\nu_e := \inf \{n : S_n = 0\}$ be the extinction epoch,  
with the usual convention that $\nu_e = \infty$, when $S_n > 0$ for all $n$. \textit{For the sake of completion, define $\Om_n := \Om_{\nu_e}$ and $\tau_{n} :=\tau_{\nu_e}$, for all $n \geq \nu_e$, when $\nu_e < \infty$.}}

To this end, define the scaled ratios $\Pc_n := S_{n}/n$ and $\Tc_n := C_{x, n}/n$. Let $Z_n := A_{x, n} + A_{y, n}$ be the total population size immediately after $\tau_n$, and then analogously, define  $\Pa_n$ and $\Ta_n$ for the total population.  Let $\Ups_n := (\Pc_n, \Tc_n, \Pa_n, \Ta_n)$, and let $\Ups_0 := (s_0^c, c_{x, 0}, s_0^c, c_{x,0})$ denote the initial population, where $s_0^a = s_0^c := c_{x, 0} + c_{y, 0}$. \hide{\textit{Define $\Bc_n := \Tc_n/\Pc_n  = C_{x, n}/S_n$ as the proportion of $x$-type population among current population}. } Let $\ups := (\pc, \tc, \pa, \ta)$ be a realisation of $\Ups$. \hide{ and $\beta = \cx/(\cx+\cy) = \tc/\pc$ be that of $\Bc$. }

In \ref{a2}, we assumed that the total-current population-dependent  mean functions converge to proportion-dependent mean functions, which can further be discontinuous. Similar to that, we now assume that the resultant mean functions ($m_{ij}(\om)$, and not $m_{ij, d}(\om)$) converge to proportion-dependent mean functions at a certain rate. However, to accommodate for the variety of deaths, we assume that the the lifetime parameters of the populations also become proportion-dependent asymptotically (at the same rate of convergence as that of the mean functions).
\begin{enumerate}[label=\textbf{C.\arabic*}, ref=\textbf{C.\arabic*}]
\setcounter{enumi}{1}
    \item Define $\beta(\ups) := \tc/\pc = \cx/s$. As sum current population,  $s \to \infty$:  \label{a2_prop}
    \begin{align*}
     |m_{ij}(\om) - \minf_{ij}(\beta(\ups))| &\leq \frac{1}{(s)^\alpha}, \mbox{ for each }i, j \in \{x, y\} \mbox{ and}\\
    |\lambda_{i, d}(\om) - \lambda_{i, d}^\infty(\beta(\ups))| &\leq \frac{1}{(s)^\alpha}, \mbox{ for each } d \in D_i \mbox{ for each } i \in \{x, y\},    \mbox{ for some } \reva{\alpha \ge 1}.
    \end{align*}
\end{enumerate}
Further, under \ref{a2_prop}, the function $f_\beta(\om)$ converges to $f_{\beta}^\infty(\beta)$ as given below (see \eqref{eqn_prob_death}):
\begin{align}\label{eqn_f_infty}
\begin{aligned}
|f_{\beta}(\om) - f_{\beta}^\infty(\beta) | &\leq \frac{1}{(s)^\alpha}, \mbox{ where } f_{\beta}^\infty(\beta) := \frac{\beta \sum_{d \in D_x} \lambda_{x, d}^\infty(\beta)}{d^\infty(\beta)} \mbox{ with}\\
d^\infty(\beta) &:= \beta \sum_{d \in D_x} \lambda_{x, d}^\infty(\beta) + (1-\beta) \sum_{d \in D_y} \lambda_{y, d}^\infty(\beta).
\end{aligned}
\end{align}
In all, under \ref{a1_prop}-\ref{a2_prop}, we analyze the ratios $\Ups_n$ using the solutions of the following ODE:
\begin{align}\label{eqn_ODE_prop}
\dot{\ups} &= \ga(\ups) = \mathbf{h}(\beta)1_{\{\pc > 0\}} - \ups, \mbox{ where } \mathbf{h}(\beta) := (h_{\psi}^{c},  h_{\theta}^{c},  h_{\psi}^{a},  h_{\theta}^{a}), \mbox{ with} 
\end{align}
\begin{align*}
        h_{\psi}^{c}(\beta) &= f_{\beta}^\infty(\beta) \bigg(\minf_{xx}(\beta) +     \minf_{xy}(\beta)\bigg) + \left(1-f_{\beta}^\infty(\beta)\right)\bigg(\minf_{yy}(\beta) + \minf_{yx}(\beta)\bigg) - 1,  \nonumber\\
        h_{\theta}^{c}(\beta)  &= f_{\beta}^\infty(\beta)\bigg(\minf_{xx}(\beta) - 1\bigg) + \left(1-f_{\beta}^\infty(\beta)\right) \minf_{yx}(\beta), \\
        h_{\psi}^{a}( \beta) &=  f_{\beta}^\infty(\beta) \bigg(\minf_{xx}(\beta) + \minf_{xy}(\beta) \bigg) + \left(1-f_{\beta}^\infty(\beta)\right)\bigg(\minf_{yy}(\beta) + \minf_{yx}(\beta) \bigg)   \mbox{ \normalsize  and}  \nonumber  \\
        h_{\theta}^{a}( \beta)  &= f_{\beta}^\infty(\beta)\minf_{xx}(\beta) + \left(1-f_{\beta}^\infty(\beta)\right) \minf_{yx}(\beta).  \nonumber
\end{align*}
Now, exactly as in \ref{a3}, we assume the following (see  Definition \ref{defn_solution} for the definition of extended solution):
\begin{enumerate}[label=\textbf{C.\arabic*}, ref=\textbf{C.\arabic*}]
\setcounter{enumi}{2}
    \item There exists a unique solution $\ups(\cdot)$ for ODE \eqref{eqn_ODE_prop} in the extended sense over any bounded interval.\label{a3_prop}
\end{enumerate}
As per Definition \ref{defn_attractors}, let $\cA$ be the attractor set and $\cR$ be the saddle set with respect to the ODE \eqref{eqn_ODE_prop}. For systems modelling the BPs,  the following subset of the combined domain of attraction of $\cA$ and $\cR$ is relevant (recall the definition of ratios $\ups$):
\begin{align}\label{eqn_domain of attraction_prop}
    \cD &:= \{\ups \in (\mathbb{R}^+)^4: \tc \leq \pc \leq \pa, \ta \leq \pa \mbox{ and } \ups(t) \to {\cA}\cup \cR \mbox{ as } t \to \infty, \mbox{ if } \ups(0) = \ups\}.
\end{align}
Therefore, we will be interested in initial conditions $\ups(0) \in \cD_I$ for the ODE \eqref{eqn_ODE_prop}.

In Definition \ref{defn_hovers}, we introduced a new notion of limiting behavior of the stochastic process, named `hovering around the saddle set' - here, the stochastic trajectory visits every neighborhood of $\cR$ infinitely often (i.o.), but also leaves some neighborhood of $\cR$ i.o. The main result in Theorem \ref{thrm1} states that with certain positive probability, the random trajectory either converges to the attractor set or it converges to/hovers around a special kind of saddle set.  In particular, if any non-zero saddle point, $\ups^* \neq \mathbf{0}$, is attracted exponentially to $\cR$ along a particular affine sub-space, $\revg{{\mathbb S}(\ups^*) := \{\ups: \beta(\ups) = \beta(\ups^*)\}}$ and to $\cA$ in the remaining space, then such $\ups^*$ are named as (quasi) q-attractor in Definition \ref{defn_q_as}.  We have a similar result for the case with multiple deaths; we would like to mention again that the coming result does not assert the positive probability of hovering around.

Similar to \ref{a4}, under above definition, we finally assume the following:
\begin{enumerate}[label=\textbf{C.\arabic*}, ref=\textbf{C.\arabic*}]
\setcounter{enumi}{3}
    \item (a) Let $\cA\cap\cD_I$ be the attractor set and each $\ups \in \cR\cap\cD_I$ be the q-attractor. Consider $\cD$ as in \eqref{eqn_domain of attraction_prop} and let  $\cS := \cD \cap \{\pa \leq b\}$, for some $b > 0$, be a compact subset of combined domain of attraction.
    
    (b) Assume $p_{b} := P(\mathcal{V}) > 0$, where $\mathcal{V} :=  \{\omega : \Ups_n(\omega) \in \cS \mbox{ i.o.}\}$. \label{a4_prop}
\end{enumerate}
We have the following result:
\begin{theorem}\label{thm1}
  Under \ref{a1_prop}-\ref{a3_prop}, we have:
  \begin{enumerate}[label=(\roman*)]
        \item For every $T>0$, a.s.  there exists a sub-sequence $(n_l)$ such that:
            $$
            \sup_{k: t_k \in [t_{n_l}, t_{n_l} + T]} d(\Ups_k, \ups(t_k - t_{n_l})) \to 0  \mbox{ as } l \to \infty, \mbox{ where } t_n := \sum_{k=1}^n \frac{1}{k} \mbox{ and}
            $$
        $\ups(\cdot)$ is the extended solution of ODE \eqref{eqn_ODE_prop} which starts at $\ups(0) =
        \lim_{n_l \to \infty} \Ups_{n_l}$.
        \item Further, assume \ref{a4_prop}. Then, $P({\cal C}_1 \cup {\cal C}_2) \geq p_b$, where
        \begin{align*}
            \hspace{5mm} {\cal C}_1 &: =\{\Ups_n \to (\cA \cup \cR)\cap \cD_I \mbox{ as } n \to \infty\}, \mbox{ and }
                {\cal C}_2 := \{ \Ups_n \mbox{ hovers around } \cR \}. \hspace{4mm}  \mbox{ \eop} 
        \end{align*}
  \end{enumerate}
\end{theorem}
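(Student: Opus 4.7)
The plan is to follow the template of Theorem \ref{thrm1} (and Theorems \ref{thrm_finite_time}, \ref{thrm_SA_asym_result}, Corollary \ref{cor_SA_prelim}) from the SA-based framework, making the modifications needed to accommodate multiple death types. First, I would re-write the embedded recursion \eqref{evolve_x_up_time_prop} as a Robbins-Monro type SA scheme
\begin{align*}
\Ups_n = \Ups_{n-1} + \tfrac{1}{n}\mathbf{L}_n,
\end{align*}
where $\mathbf{L}_n$ collects, in each coordinate, the indicators of which death variety occurred at epoch $n$ together with the corresponding offspring and a $-1$ or $0$ term to accommodate the death. The step size $\epsilon_n = 1/n$ satisfies \ref{a3_prelim}, and \ref{a1_prop} together with the bound $\offs_{ix,d}+\offs_{iy,d}\le\overline{\offs}$ gives a uniform $L^2$ bound on $\mathbf{L}_n$, i.e.\ \ref{a4_prelim}. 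A pathwise upper envelope analogous to \eqref{eqn_overline_S_n} then yields the a.s.\ uniform bound \eqref{Eqn_XnYnSnetc_general} on $\Ups_n$ by SLLN applied to $\overline{\offs}_k$.

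Second, I would compute the conditional mean $\gna(\Ups_n,t_n):=E[\mathbf{L}_n\mid\mathcal{F}_n]$. The only genuinely new piece relative to Chapter \ref{ch:journal1} is that, by \eqref{eqn_prob_d_death}-\eqref{eqn_prob_death}, an $x$-type individual dies at epoch $n$ with probability $f_\beta(\Om_{n-1})$ (instead of $\beta$), so the conditional mean-number of $j$-type offspring produced at that epoch is $f_\beta(\Om_{n-1})m_{xj}(\Om_{n-1})$ with $m_{ij}(\cdot)$ given by the weighted average \eqref{eqn_mean_matrix}. Assumption \ref{a2_prop} and \eqref{eqn_f_infty} say that both $m_{ij}(\om)\to\minf_{ij}(\beta)$ and $f_\beta(\om)\to f_\beta^\infty(\beta)$ at rate $(s)^{-\alpha}$, $\alpha\ge 1$. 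Plugging this into the conditional expectation identifies the asymptotic autonomous drift with the right-hand side $\ga(\ups)=\mathbf{h}(\beta)1_{\pc>0}-\ups$ of ODE \eqref{eqn_ODE_prop}, thereby verifying \ref{a5_prelim} in the sense needed for the analysis; this is the step that packages ``multiple death types'' into a single $\beta$-dependent limit.

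Third, define the piecewise-constant interpolated trajectory $\Ups^n(\cdot)$ as in \eqref{eqn_interpolated_traj_1} and decompose
\begin{align*}
\Ups^n(t)=\Ups_n+\int_0^t\ga(\Ups^n(s))\,ds+\mathcal{E}_1^n(t)+\mathcal{E}_2^n(t),
\end{align*}
where $\mathcal{E}_1^n$ is the martingale-type difference between partial sums of $\mathbf{L}_i$ and the integral of $\gna$, and $\mathcal{E}_2^n$ is the non-autonomous-to-autonomous gap $\int_0^t(\gna(\Ups^n,s)-\ga(\Ups^n))ds$. The first term vanishes uniformly on finite windows by the $L^2$ bound plus $\sum\epsilon_n^2<\infty$; the second is controlled by the $(s)^{-\alpha}$ rate in \ref{a2_prop} combined with the lower growth $S_n\ge c\,e^{\underline{\lambda}(\underline{m}-1)\tau_n}$ from Lemma \ref{lemma_dichotomy} on non-extinction paths. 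Equicontinuity in the extended sense then follows exactly as in Lemma \ref{lemma_equi_cont_thrm1}, and the Ascoli-type extraction in \cite[Ch.\ 4, Thm.\ 2.2]{kushner2003stochastic} produces a subsequence $(n_l)$ whose interpolated trajectories converge uniformly on compacts to a continuous $\ups(\cdot)$ satisfying $\ups(t)=\ups(0)+\int_0^t\ga(\ups(s))ds$ with $\ups(0)=\lim_l\Ups_{n_l}$. Local integrability of $\ga\circ\ups$ together with \cite[Thm.\ 3.21]{folland1999real} and the uniqueness hypothesis \ref{a3_prop} identify this limit with the extended solution of \eqref{eqn_ODE_prop}, proving (i).

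For part (ii), I would invoke the general asymptotic SA result Corollary \ref{cor_SA_prelim}, whose hypotheses are \ref{a3_prelim}-\ref{a5_prelim}, \ref{ass_g_prelim}, \ref{a10_prelim}. The drift $\ga$ has the product-plus-drift structure $h(z(\ups))-\ups$ with $z(\ups)=\beta(\ups)$, so \ref{ass_g_prelim} reduces to \ref{a1_prop}-\ref{a3_prop}. Assumption \ref{a4_prop}(a) supplies the attractor/q-attractor structure (via Definition \ref{defn_q_as}) while \ref{a4_prop}(b) encodes the ``visits a compact subset of the domain of attraction i.o.'' condition with probability at least $p_b$, playing the role of \ref{a10_prelim}. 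The combined domain of attraction $\cD_\cA\cup\cD_\cR$ absorbs $\cS$, and the dichotomy of Lemma \ref{lemma_dichotomy} rules out the pathologies in between. Applying Corollary \ref{cor_SA_prelim} on the event $\mathcal{V}$ then gives $P(\mathcal{C}_1\cup\mathcal{C}_2)\ge p_b$, where convergence to $\cR$ or persistent infinitely-often re-entry/exit of every neighborhood of $\cR$ are exactly the two behaviors captured by hovering around (Definition \ref{defn_hovers}). The main technical obstacle I anticipate is controlling $\mathcal{E}_2^n$ uniformly despite the $1_{\pc>0}$ discontinuity and the $\beta$-dependent death rates: on extinction paths $f_\beta$ can be badly behaved near the absorbing state, so I would handle extinction sample paths separately (they satisfy $\Ups_n\to\mathbf{0}\in\cR$ directly) and restrict the $\mathcal{E}_2^n$ estimate to non-extinction paths where Lemma \ref{lemma_dichotomy} secures $S_n\to\infty$ at a definite exponential rate.
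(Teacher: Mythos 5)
Your overall architecture is exactly the paper's: the published proof of Theorem \ref{thm1} rewrites the embedded chain as an SA scheme with death-type indicators, reuses the interpolation/equicontinuity/Arzel\`a--Ascoli machinery of Theorem \ref{thrm1}, and states that part (ii) carries over unchanged, so most of your plan needs no comment. The two places where the new work for multiple death types actually sits are treated too quickly, though. First, the bias term: the paper's appendix bounds $|D_i^{\theta,c}|$ by propagating \ref{a2_prop} through the ratio $f_{\beta}(\om) = \beta\sum_{d}\lambda_{x,d}(\om)/d(\om)$, and this needs the uniform lower bound $d(\Om_i)\ge \Delta_1>0$, which is precisely why \ref{a1_prop} assumes $\inf_\om\lambda_{i,d}(\om)>0$; the outcome is $|f_\beta(\Om_i)-f^\infty_\beta(\Bc_i)|\le C/(S_i^c)^\alpha$ with $C$ depending on $|D_x|+|D_y|$ and $\Delta_1$. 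Citing \eqref{eqn_f_infty} as given skips the one computation that distinguishes this theorem from Theorem \ref{thrm1}; you should verify it rather than assume it.

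Second, to get $\sum_i\epsilon_i|D_i^{\theta,c}|<\infty$ on survival paths you invoke Lemma \ref{lemma_dichotomy}, which gives growth of $S_n$ in terms of the continuous time $\tau_n$. That does not directly produce a bound indexed by the epoch $i$: without an extra argument relating $\tau_i$ to $i$, exponential growth in $\tau_i$ yields nothing summable. The paper instead obtains $S_i^c=\Pc_i\, i\ge \Delta i$ from a lower bounding sample-mean sequence built from $\underline{\offs}$ (the mirror image of the upper envelope you already use), exactly as in the Chapter \ref{ch:journal1} proof; with $\epsilon_i=1/i$ and $\alpha\ge 1$ this gives $\sum_i \epsilon_i|D_i^{\theta,c}|\le \sum_i K/(\Delta\, i(i+1))<\infty$ immediately. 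Your route is repairable but, as written, the step from Lemma \ref{lemma_dichotomy} to the needed summable bound is missing. A cosmetic point for part (ii): \ref{a4_prop}(b) plays the role of the infinitely-often-visits hypothesis \ref{a8_prelim} (not of the boundedness condition \ref{a10_prelim}); the argument you sketch is otherwise the paper's.
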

\textit{All the proofs of this Chapter are provided in Appendix \ref{appendix_journal2}.}

\subsection{Derivation of attractor and saddle sets}
It is evident from Theorem \ref{thm1} that the limit proportion, $\lim_{n \to \infty} \Beta_n$ can be deduced if one derives the attractor and saddle (specifically, q-attractor) sets. 
In Section \ref{procedureAR}, we proposed a procedure to derive these sets for the ODE \eqref{eqn_ODE_prop}, when only natural deaths occur. The main idea was to  exploit the dependence of limit mean functions on $\beta$ as in \ref{a2_prop} and finally, it is showed that the analysis of $\beta(\ups)$-ODE suffices. We extend the same approach for the new process with both natural and unnatural deaths. Towards this, one can derive the following limit $\beta$-ODE, using \eqref{eqn_ODE_prop}:
\begin{align}\label{eqn_beta_ODE_prop}
\begin{aligned}
\dot{\beta} &= \frac{1}{\pc} g_\beta(\beta)1_{\{\pc > 0\}},\mbox{ where}\\ 
g_\beta(\beta) &:= -f^\infty_{\beta}(\beta) m_{xy}^\infty(\beta) + (1-f^\infty_{\beta}(\beta))m_{yx}^\infty(\beta)   + \beta - f^\infty_{\beta}(\beta) \\
&\hspace{1cm}+ (1-\beta)f^\infty_{\beta} (\beta) (m_{xx}^\infty(\beta) + m_{xy}^\infty(\beta)) - \beta (1-f^\infty_{\beta}(\beta))  (m_{yy}^\infty(\beta) + m_{yx}^\infty(\beta))  .
    \end{aligned}
\end{align}
Similar to \eqref{eqn_beta_ODE}, we will also show that \textit{the asymptotic analysis of $\beta$ is independent of other components of $\ups$}. In particular, the result stated below shows that the analysis of the following one-dimensional ODE suffices:
\begin{align}\label{eqn_beta_ode_simple_prop}
    \dot{\beta} &= g_\beta(\beta).
\end{align}

\begin{theorem} \label{thrm_beta_ODE_prop}
\reva{Consider the interval $[0,1]$ such that $g_\beta(0) \geq 0$ and $g_\beta(1) \leq 0$.} Define ${\cal I} := \{x^*: g_\beta (x^*) = 0\}$ and say ${\cal I}= \{x_i^*:     1 \leq i \leq n\}$, for some $1 \leq n < \infty$. For each $i$, let there exist an open/closed/half-open non-empty interval around $x_i^* \in {\cal I}$, say ${\cal N}_i^*$, such that $\cup_{1\leq i\leq n} {\cal N}_i^* = [0,1]$ and ${\cal N}_i^* \cap {\cal N}_j^* = \emptyset$ for $i\neq j$. Define ${\cal N}_i^- := {\cal N}_i^*\cap[0, x_i^*)$ and 
${\cal N}_i^+ := {\cal N}_i^*\cap (x_i^*, 1]$. Let $g_\beta(x)$ be Lipschitz continuous on ${\cal N}_i^-$ and $ {\cal N}_i^+$ for each $i$:
\begin{enumerate}[label=(\roman*)]
    \item if $g_\beta(x) > 0$ for all $x \in {\cal N}_i^-$, $g_\beta(x) < 0$ for all $x \in {\cal N}_i^+$, then, $x_i^*$ is an attractor for ODE \eqref{eqn_beta_ode_simple_prop}; 
    \item   if $g_\beta(x) < 0$ for all $x \in {\cal N}_i^-$ and $g_\beta(x) > 0$ for all $x \in {\cal N}_i^+$, then, $x_i^*$ is a repeller for ODE \eqref{eqn_beta_ode_simple_prop}; 
    \item else if $g_\beta(x) > 0$ (or $g_\beta(x) < 0$) for all $x \in {\cal N}_i^-$ and $g_\beta(x) > 0$ (or $g_\beta(x) < 0$ respectively) for all $x \in {\cal N}_i^+$, then, $x_i^*$ is a saddle point for ODE \eqref{eqn_beta_ode_simple_prop}. 
\end{enumerate}
Further, ODE \eqref{eqn_ODE_prop} satisfies \ref{a3_prop}. Furthermore, the attractor and saddle sets in $\cD_I$ are respectively given by:
\begin{align*}
{\cal A} &:= \{\mathbf{h}(x^*): x^* \in {\cal I} \mbox{ is an attractor for the ODE \eqref{eqn_beta_ode_simple_prop}}\},\\
{\cal S} &:= \{\mathbf{h}(x^*): x^* \in {\cal I} \mbox{ is a repeller or saddle point for the ODE \eqref{eqn_beta_ode_simple_prop}}\} \cup \{\mathbf{0}\}, \mbox{ and}
\end{align*}entire $\cD_I$ is the combined domain of attraction for  \eqref{eqn_ODE_prop}. \eop
\end{theorem}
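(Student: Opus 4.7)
The plan is to reduce the four-dimensional ODE \eqref{eqn_ODE_prop} to the one-dimensional ODE \eqref{eqn_beta_ode_simple_prop} for $\beta(\ups) = \tc/\pc$, paralleling Theorem \ref{thrm_attractors_beta} of Chapter \ref{ch:journal1} and extending it from the purely discontinuous case to the piecewise-Lipschitz one.

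First I would analyze \eqref{eqn_beta_ode_simple_prop} on $[0,1]$. Lipschitz continuity of $g_\beta$ on each ${\cal N}_i^-$ and ${\cal N}_i^+$ yields local existence and uniqueness by Picard--Lindel\"of, while the boundary conditions $g_\beta(0) \geq 0$ and $g_\beta(1) \leq 0$ make $[0,1]$ positive invariant. Between consecutive elements of ${\cal I}$, $g_\beta$ has constant sign, so the scalar solution is strictly monotone and bounded, hence converges to an equilibrium. The sign pattern on the two sides of each $x_i^*$ immediately classifies it as attractor, repeller or saddle, establishing (i)--(iii). At an equilibrium where two Lipschitz pieces meet, uniqueness of the extended solution is obtained by noting that once the trajectory arrives at $x_i^*$ from its Lipschitz-compatible side, the only absolutely continuous extension either remains at $x_i^*$ (attractor or saddle) or proceeds along the unique Lipschitz piece whose interior sign drives the flow outward (repeller).

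Next I would lift to \eqref{eqn_ODE_prop}. Using $\dot{\tc} = h_\theta^c(\beta) - \tc$ and $\dot{\pc} = h_\psi^c(\beta) - \pc$ wherever $\pc > 0$, a direct differentiation gives
\begin{align*}
    \dot\beta \;=\; \frac{\dot{\tc}\,\pc - \tc\,\dot{\pc}}{\pc^{2}} \;=\; \frac{h_\theta^c(\beta) - \beta\,h_\psi^c(\beta)}{\pc} \;=\; \frac{g_\beta(\beta)}{\pc},
\end{align*}
so $t \mapsto \beta(\ups(t))$ is, up to a monotone time-change by $\pc$, a trajectory of \eqref{eqn_beta_ode_simple_prop}. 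Hence $\beta(\ups(t)) \to \beta^* \in {\cal I}$. Substituting back, $\dot\ups = \mathbf{h}(\beta(t))\mathbf{1}_{\{\pc > 0\}} - \ups$ is a linear non-autonomous system whose forcing converges to $\mathbf{h}(\beta^*)$, and variation of constants gives $\ups(t) \to \mathbf{h}(\beta^*)$. If $\pc \equiv 0$, the ODE reduces to $\dot\ups = -\ups$ and $\ups(t) \to \mathbf{0}$, contributing the $\{\mathbf{0}\}$ term in ${\cal S}$. Global uniqueness, and hence assumption \ref{a3_prop}, follows by patching Picard--Lindel\"of on $\{\pc > 0\}$ with the explicit exponential decay on $\{\pc = 0\}$; the set $\{\pc = 0\}$ is absorbing, so no ambiguity arises at the interface.

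The main obstacle is verifying the q-attractor structure of Definition \ref{defn_q_as} for every $\ups^* = \mathbf{h}(\beta^*)$ with $\beta^*$ a repeller or saddle of \eqref{eqn_beta_ode_simple_prop}. On the affine slice $\mathbb{S}(\ups^*) = \{\beta(\ups) = \beta^*\}$ one has $g_\beta(\beta^*) = 0$, so the $\beta$-coordinate is frozen and the full ODE collapses to the linear autonomous system $\dot\ups = \mathbf{h}(\beta^*) - \ups$, giving exponential convergence to $\ups^*$. For initial conditions off this slice, the one-dimensional analysis routes $\beta$ to some attractor $\beta^{**} \in {\cal I}$, and the lifted trajectory then converges to $\mathbf{h}(\beta^{**}) \in {\cal A}$. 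Assembling these cases identifies ${\cal A}$ and ${\cal S}$ as stated, and since every initial condition in $\cD_I$ produces a trajectory with $\beta(\ups(t))$ converging in $[0,1]$, it shows that $\cD_I$ is the combined domain of attraction.
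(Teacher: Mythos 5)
Your overall route is the same as the paper's: classify the equilibria of the scalar ODE \eqref{eqn_beta_ode_simple_prop} by the sign pattern of $g_\beta$ on ${\cal N}_i^-$ and ${\cal N}_i^+$, observe that along any solution of \eqref{eqn_ODE_prop} with $\pc>0$ the proportion satisfies $\dot\beta = g_\beta(\beta)/\pc$ (a time-changed copy of the scalar ODE), conclude $\beta(\ups(t))\to\beta^*\in{\cal I}$, and then drive $\ups$ to $\mathbf{h}(\beta^*)$ because $\dot\ups=\mathbf{h}(\beta(t))-\ups$ is linear with converging forcing (the paper does this with the comparison theorem and the sandwich functions $\underline{\mathbf{h}}_\delta,\overline{\mathbf{h}}_\delta$, which is the same estimate as your variation-of-constants step); the case $\pc(0)=0$ giving $\mathbf{0}$ and the q-attractor structure on the frozen slices $\{\beta(\ups)=\beta^*\}$ are also handled as in the paper. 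Two small omissions in this part are easily repaired but should be stated: the time-change argument needs $\pc(t)$ to stay strictly positive and bounded above along the trajectory (the analogue of Lemma \ref{lemma_psi_c_general}, cited in the paper's proof), otherwise the reparametrised time need not cover $[0,\infty)$.

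The genuine gap is in your verification of \ref{a3_prop}. The theorem only assumes $g_\beta$ Lipschitz on ${\cal N}_i^-$ and ${\cal N}_i^+$, so $g_\beta$ (and hence $\ups\mapsto\mathbf{h}(\beta(\ups))$) is allowed to be discontinuous at the equilibria $x_i^*$; the paper's proof explicitly splits into the continuous and discontinuous subcases, deferring the latter to the construction in Theorem \ref{thrm_attractors_beta}. In that situation ``patching Picard--Lindel\"of on $\{\pc>0\}$'' does not work: the right-hand side of \eqref{eqn_ODE_prop} is not locally Lipschitz (not even continuous) in $\ups$ on the affine slices $\{\beta(\ups)=x_i^*\}$, and a trajectory can reach such a slice in finite time (e.g.\ an attractor $x_i^*$ approached from ${\cal N}_i^-$ with a strictly positive one-sided limit of $g_\beta$). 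Existence and uniqueness of the extended solution across that hitting time is exactly what needs proof, and the paper obtains it by the explicit construction of Theorem \ref{thrm_attractors_beta}: solve the Lipschitz-extended system \eqref{eqn_func_fli} up to the hitting time $\tau$, then switch to the frozen linear system \eqref{eqn_omega} on the slice, and invoke uniqueness of each smooth piece. By contrast, the interface $\{\pc=0\}$ that you single out is harmless, since $\pc$ remains positive whenever it starts positive, so the two regimes never meet; your one-line treatment of uniqueness at a discontinuous repeller for the scalar ODE (``proceeds along the unique outward Lipschitz piece'') is likewise unreliable --- with a jump in $g_\beta$ an extended solution starting at the repeller may either stay or leave immediately --- which is a further sign that the uniqueness issue for \eqref{eqn_ODE_prop} on the discontinuity slices is not addressed in your proposal.
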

The above result provides the limiting behaviour of a one-dimensional ODE with possibly discontinuous right hand sides, that typically arises while studying our type of application. \revg{The condition $g_\beta(0) \geq 0$ and $g_\beta(1) \leq 0$ ensures that the interval $[0,1]$ is positive invariant for the ODE \eqref{eqn_beta_ode_simple_prop}.}  
It is important to note that in Theorem \ref{thrm_attractors_beta}, we considered the function $g_\beta$ such that its zeroes could be either attractors or repellers for the ODE \eqref{eqn_beta_ode_simple}. The above result is an extension of the former as here the zeroes of the function $g_\beta$ can be either attractors or repellers or saddle points for the ODE \eqref{eqn_beta_ode_simple_prop}. Such an extended result is required for the application at hand, as we will see in the coming sections. 

\hide{\section{Proportion-dependent BP and controlling fake news}\label{sec_prop_BP}

Consider a two-type (say $x$ and $y$ types) population-dependent BP. Here, each individual of any type lives for an exponentially distributed time, $\tau$. Before dying, it produces a random number of non-negative population-dependent offsprings of both $x$ and $y$ types. Now, the classical models assume the existence of the limit population-independent mean matrix, $M^\infty$ for the population-dependent mean matrix observed during transience. We consider the variant of BP, where in the limit, the mean matrix is proportion-dependent, and call such a BP as \textit{proportion-dependent BP} (PrD-BP). The dynamics of such a process can be captured by \eqref{evolve_x_up_time_prop}.
    
    Observe that since $\lim_{t \to \infty} \beta(t)$ may not be known a priori, it is not easy to compute $M^\infty(\cdot)$ for PrD-BP. However, by virtue of Theorem \ref{thm1} and upcoming Theorem \ref{thrmProp}, the possible limits of $M^\infty$ can be derived using the attractors of the ODE \eqref{eqn_beta_ode_simple_prop} under 
 an additional assumption given below:
    
    
\begin{enumerate}[label=\textbf{A.\arabic*}, ref=\textbf{A.\arabic*}]
\setcounter{enumi}{4}
    \item  Assume $\minf_{ij}(\cdot)$, for $i, j \in \{x, y\}$ to be non-negative Lipschitz continuous on $[0, 1]$. Let $\cR_\beta$ be the finite set of repellers for the ODE \eqref{eqn_beta_ode_simple_prop}. Further, let $\cA_\beta := \{\bstar_1, \dots, \bstar_k  \in [0, 1]\} $ be the attractor for the ODE \eqref{eqn_beta_ode_simple_prop} for some $k < \infty$, with respective domain of attraction of each $\bstar_i$ as $\cD(\bstar_i)$ such that $\cup_{i=1}^k \cD(\bstar_i) = [0,1] - \cR_\beta$. \label{a5}
\end{enumerate}

\subsection{Analysis of ODE for PrD-BP}
We will now see that attractors/repellers of the ODE \eqref{eqn_beta_ode_simple_prop} once again translates to appropriate attractors/saddle points for the ODE \eqref{eqn_ODE_prop} as summarized below (see proof in Appendix \ref{appendix_B}):

\begin{theorem}\label{thrmProp}
Assume \ref{a5}. Then, the ODE \eqref{eqn_ODE_prop} satisfies \ref{a3_prop}. Further, the following statements are true for the ODE \eqref{eqn_ODE_prop}:
\begin{enumerate}[label=(\roman*)]
    \item the attractor set, $\cA =  \{\mathbf{h}(\bstar) : \bstar \in \cA_\beta\}$, 
    \item the repeller set, $\cR = \{\mathbf{0}\}\cup \{\mathbf{h}(\bstar_r) : \bstar_r \in \cR_\beta\}$, and
    \item the combined DoA of $\cA$ and $\cR$, i.e., $\cD = \cD_I$. \eop
\end{enumerate} 
\end{theorem}

\subsection{Analysis of random trajectory of PrD-BP}
By Theorem \ref{thm1}, we have: 

\begin{corollary}\label{corollary_prop}
Consider the dynamics as in \eqref{evolve_x_up_time_prop}, and assume \ref{a1_prop}-\ref{a2_prop}, \ref{a5}. Then, following statements are true:
\begin{enumerate}[label=(\roman*)]
    \item the assumption \ref{a3_prop} holds for the ODE \eqref{eqn_ODE_prop}, and hence Theorem \ref{thm1}(i) is true,
    \item $\Ups_n$ either converges to $\cA \cup \cR$, as $n \to \infty$ or hovers around $\cR$  w.p. $1$, where $\cA$ and $\cR$ are given in Theorem \ref{thrmProp}.  \eop
\end{enumerate}
\end{corollary}
Clearly, Corollary \ref{corollary_prop}(i) follows by standard results of ODE (e.g., \cite{piccini1984ordinary}) under \ref{a5}, and rest can be proved exactly as in \cite[Corollary 1]{agarwal2021new}.
We now discuss the implications of this Corollary.

$\bullet$ Let $M^\infty(\beta) \equiv M^\infty$, where all entries of constant matrix, $M^\infty$ are strictly positive, i.e., we have an irreducible mean matrix at limit which does not depend on population sizes/proportions. Then, by simple algebra (concavity/convexity of $\ga_\beta(\cdot)$), there exists a unique zero, $\bstar \in (0, 1)$, which is the only attractor for the ODE \eqref{eqn_beta_ode_simple_prop} with DoA as $[0,1]$. 
Thus, $\Ups_n \to \{\mathbf{h}(\bstar), \mathbf{0}\}$, and therefore, the two populations co-survive in proportion equals to $\bstar$, if at all. This particular feature is analyzed already in the literature, except that we allow anything to occur during transience, along with total-population dependency (see Example $3$ in Section \ref{} where the process progresses as in BPA in the initial transition epochs, and then as PrD-BP).

$\bullet$ Now, consider $M^\infty(\beta) \equiv M^\infty$, where all entries of $M^\infty$ are strictly positive except say $\minf_{yx} = 0$, i.e., consider a decomposable proportion-independent process in the limit. Here, if $\minf_{xx} > \minf_{yy}$, the attractor for the ODE \eqref{eqn_beta_ode_simple} is uniquely given by:
\begin{align}\label{eqn_beta_decomposable}
    \bstar = \frac{\minf_{xx} - \minf_{yy}}{\minf_{xx} - \minf_{yy} + \minf_{xy} } \in (0,1).
\end{align}
 with DoA as $(0,1]$; thus, $1$ is not an attractor; further, $\bstar_r = 0$ is a repeller for the ODE \eqref{eqn_ODE}. In such a case, we have four kinds of sample paths: (i) both populations get extinct, (ii) only $x$-type gets extinct ($\Bc(t) \to 0$, in spite of $\bstar_r = 0$ being a repeller for the ODE), (iii) $x$-type population survives in small numbers (due to hovering around $\mathbf{h}(0)$), and (iv) the populations co-survive in proportion equals to $\bstar$. This result matches with \cite{jagers1969proportions} when one considers the Markovian variant of their age-dependent population-independent model. 
 
 Otherwise if $\minf_{xx} \leq \minf_{yy}$, $\bstar = 0$ is the only attractor for the ODE with domain of attraction as $[0,1]$, which implies a.s. extinction of $x$-type population. This result matches with \cite[Theorem 1(ii)]{ranbir2019decomposable}, \cite{kesten1967limit} which considers population-independent decomposable BP. 

$\bullet$ Lastly, if $M^\infty(\beta)$ is proportion dependent, then the set of limit proportions is $\cA_\beta \cup \cR_\beta$ (see \ref{a5}). This result is true even if the population dependent mean offspring functions, $m_{ij}(\om)$ depend only on $\beta$ (in the transience also). We discuss one example of the latter kind next directly while discussing the application.}

\section{Modelling of warning dynamics using TC-BP with multiple deaths}\label{sec_warning_old}
We begin this section by demonstrating how the warning dynamics can be modelled using TC-BPs with multiple deaths discussed in the previous section. Towards this, we model the copies with fake and real tags as the $x$ and $y$-type populations respectively. The time instance when a user views, tags and shares the post corresponds to the time of death of an individual in the BP. \revg{As seen in Section \ref{sec_prob_desc}, in \eqref{eqn_tag_wi}-\eqref{eqn_final_shares}, the distribution of shares, types of shares, etc., depends on the type-$d$ of the user that reads the post with $d \in {\cal B}$. Thus, one can correspond each $d$-type user to a $d$-death because of the following details.  When a $d$-type user reads and shares the post, the said post becomes a read copy, resulting in a $d$-death. Further, clearly $D_x = D_y = {\cal B}$. At any given time, the proportions of the users of any type are given by $\mu_0, \mu_1, \mu_2$ and $\mu_a$, which also correspondingly represent the proportions of unread copies with np, wi, ws and a-users. Thus, one can easily infer that a type-$d$ user reads the post first among the existing unread copies, or in other words, $d$-type death occurs first with probability $\mu_d/(\mu_0+ \mu_1+\mu_2+\mu_a) = \mu_d$. 
Therefore, one can set the parameter of $d$-death as:}
\begin{align}\label{eqn_lambda}
    \lambda_{z,d} (\phi) := \mu_d  \mbox{ for all } \om, \mbox{ for each }  d \in {\cal B} \mbox{ and } z \in \{x, y\}.
\end{align}
Now, after viewing the post, if a ws-user with fake tagged copy shares the post with fake tag, then we say that the number of shares, $\Gamma_{xx, ws}$, corresponds to the number of $x$-type offspring produced by an $x$-type parent, when ws-death occurs. In general, the number of shares with fake and real tag correspond to offspring of $x$ and $y$-type respectively, see \eqref{eqn_final_shares}; the number of shares (offspring) also depend upon $\Om(\cdot)$.

\revg{The underlying TC-BP with multiple death-types that models the warning dynamics \eqref{eqn_final_shares}  is exactly like the well-known irreducible BP, except for the inclusion of multiple death-types (see 
\cite{athreya2004branching}). In irreducible BPs, the extinction occurs only when both the population-types die; individual extinction of a population-type is not possible. The same is the case with our model. For example, suppose there are no unread copies with fake tag, i.e., $\Cx(t) = 0$ at some time $t > 0$, while the system still has real tagged unread copies ($\Cy(t) > 0$). Then, if at some time $t' > t$, a wi-user or ws-user reads and shares the post, then, with non-zero probability, it can tag the post as fake (see \eqref{eqn_tag_wi}, \eqref{eqn_tag_ws}). If so happens, then it will lead to new unread copies with fake tag, i.e., $\Cx(t') > 0$. Thus, \textit{the number of fake tagged copies can be regenerated even after they are not present on the OSN, as long as there are some unread copies of the post on the OSN}.}

Next, we provide the general framework for analyzing the warning dynamics with respect to any warning mechanism ($\omega$). \revg{Observe that when any real/fake post gets extinct, then it's effect is harmless.} Therefore, our focus shall only be on the non-extinction paths.

\subsection{Analysis of warning dynamics for general WM}
\revg{Consider a general warning mechanism defined using a continuous-function $\omega : [0,1] \to \mathbb{R}^+$ which depends only on the proportion of fake tags $\beta$.} Further, consider any post with actuality, $u \in \{R, F\}$. Then, for each $u$, it is clear from the previous section that the analysis of the TC-BP with multiple-death types, and hence the warning dynamics, is driven by the limit mean matrix (see \eqref{eqn_mean_matrix} and \ref{a2_prop}). Thus, we first construct the limit mean matrix, $M^{\infty, u}(\beta) := [m_{ij}^{\infty, u}(\beta)]_{\{i, j \in\{x,y\}\}}$, as follows
(see \eqref{eqn_tag_wi}-\eqref{eqn_final_shares}):

{\footnotesize
\begin{align}\label{eqn_mean_matrix_red_coloring}
    M^{\infty, u}(\beta) = 
    \begin{bmatrix}
    \bigg(\mu_1\rho \alpha_x^u + \mu_2 \min\{\omega(\beta) \alpha_x^u, 1\}\bigg) m_f\eta^u  & \bigg(\mu_1(1-\alpha_x^u \rho) + \mu_2(1-\min\{\omega(\beta) \alpha_x^u, 1\}) \bigg) m_f\eta^u + \mu_a m_f \eta_a\\
    & \\ 
    \bigg(\mu_1\rho \alpha_y^u + \mu_2 \min\{\omega(\beta) \alpha_y^u, 1\}\bigg) m_f\eta^u & \bigg(\mu_1(1-\alpha_y^u \rho) + \mu_2(1-\min\{\omega(\beta) \alpha_y^u, 1\})\bigg) m_f\eta^u + \mu_a m_f \eta_a
    \end{bmatrix}.
\end{align}}
Next, we will identify the attractor, repeller and saddle sets for the ODE \eqref{eqn_beta_ode_simple_prop} which will lead to the limits for the stochastic trajectory corresponding to the warning dynamics by using Theorem \ref{thrm_beta_ODE_prop} and Theorem \ref{thm1}. Towards this, observe that $d(\phi) = d^\infty(\beta) = 1$, as $\sum_{d } \lambda_{z,d} (\phi) = 1$ for any $\om$ and $z \in \{x, y\}$. This implies, $f^\infty_\beta (\beta) = \beta$ (see \eqref{eqn_f_infty}). Thus, by \eqref{eqn_beta_ODE_prop}, the function $g^u_\beta$ and the corresponding ODE \eqref{eqn_beta_ode_simple_prop} for the warning dynamics for both types of posts, $u \in \{R, F\}$, is given by:
\begin{align}\label{eqn_general_g_beta}
\dot{\beta^u} &= g_\beta^u(\beta) \mbox{ where }\\
    g^u_\beta(\beta) 
    :=  \bigg(-\beta \mu_2 - \beta \mu_1 (1-\alpha_x^u \rho) &+ (1-\beta) \mu_1 \rho  \alpha_y^u \nonumber \\
    + \mu_2 \bigg(\beta \min\{\omega(\beta) &\alpha_x^u, 1\} + (1-\beta)\min\{\omega(\beta) \alpha_y^u, 1\}\bigg) \bigg) m_f\eta^u - \beta \mu_a m_f \eta_a. \nonumber
\end{align}
Define $\cA^u_\beta$ as the set of attractors in $[0,1]$ and $\cR^u_\beta$ as the combined set of repellers and saddle points in $[0,1]$ for the above ODE. Then, we have the following result: 
\begin{theorem}\label{thrm_BP_to_fake}
Consider the warning dynamics as in \eqref{eqn_transition_fake_tag} and \eqref{eqn_transition_real_tag}. Let the distribution of number of friends, ${\cal F} \geq 0$ be such that $E[{\cal F}] \eta_R > 1$ and $E[{\cal F}^2] < \infty$. Then, the following statements are true for each~$u$, the actuality of post:
\begin{enumerate}[label=(\roman*)]
    \item the assumptions \ref{a2_prop} and \ref{a3_prop} hold for the ODE \eqref{eqn_ODE_prop}; Theorem \ref{thm1}(i) is true,
    \item the set $\cA^u_\beta \neq \emptyset$, and then $\Ups_n$ converges to $\cA^u \cup \cR^u$, as $n \to \infty$ or hovers around $\cR^u$ w.p. $1$, where $\cA^u = \{\mathbf{h}(\beta): \beta \in \cA^u_\beta\}$ and $\cR^u = \{\mathbf{0}\} \cup \{\mathbf{h}(\beta): \beta \in \cR^u_\beta\}$. 
    \item Further, any potential limit proportion corresponding to the warning dynamics, i.e., $\beta \in \cA^u_\beta \cup \cR^u_\beta$, can be bounded as below:
    \begin{align}\label{eqn_beta_bar}
        0 < \frac{\mu_1 \rho \alpha_y^u \eta^u }{q^u} =: \underline{\beta}^u &< \beta \leq \overline{\beta}^u := \frac{(\mu_2 + \mu_1 \rho \alpha_y^u) \eta^u}{q^u} \leq 1, \mbox{ where} 
    \end{align}
    the constant $q^u := \bigg( \mu_2 + \mu_1 (1 - (\alpha_x^u - \alpha_y^u) \rho)\bigg) \eta^u + \mu_a \eta_a$. \eop
\end{enumerate}
\end{theorem}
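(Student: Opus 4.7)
\textbf{Proof plan for Theorem \ref{thrm_BP_to_fake}.} My plan is to verify the hypotheses of Theorem \ref{thm1} and Theorem \ref{thrm_beta_ODE_prop} for the warning-guided BP, and then read off the bounds on the zero set of $g^u_\beta$ directly from its closed form.

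First, for part (i), I would verify \ref{a2_prop}. The lifetime parameters $\lambda_{z,d}(\phi)=\mu_d$ in \eqref{eqn_lambda} are constants, so the $\lambda$-convergence in \ref{a2_prop} is trivial. For the mean functions, I would compute $m_{ij}(\phi)$ by averaging the shares in \eqref{eqn_shares_wi}--\eqref{eqn_shares_adv} over user types (weighted by $\mu_d$) and over the tagging probabilities in \eqref{eqn_tag_wi}--\eqref{eqn_tag_rc}. Each $m_{ij}(\phi)$ is $E[\mathcal F]$ times a term of the form $\eta^u+k/(S^a)^2$ (plus the $\eta_a$ piece), so $m_{ij}(\phi)-m_{ij}^{\infty,u}(\beta)=O(1/(S^a)^2)$, and since $S^a\ge S$, this gives the rate $1/s^\alpha$ with $\alpha=2$. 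For \ref{a3_prop} (existence and uniqueness of the extended solution) I would invoke Theorem \ref{thrm_beta_ODE_prop}: the function $g^u_\beta$ in \eqref{eqn_general_g_beta} is continuous in $\beta$ (both $\omega(\cdot)$ and $\min\{\cdot,1\}$ are continuous) and piecewise Lipschitz, so the hypotheses apply. Finally, Theorem \ref{thm1}(i) applies since \ref{a1_prop} holds with $\underline{\xi}=0$, $\overline{\xi}=\mathcal F$ (using $E[\mathcal F^2]<\infty$ and $E[\mathcal F]\eta^R>1$).

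For part (iii) I would work directly from \eqref{eqn_general_g_beta}. Any zero $\beta^*$ of $g^u_\beta$ satisfies
\begin{align*}
\beta^*\bigl[(\mu_2+\mu_1(1-\alpha_x^u\rho))\eta^u+\mu_a\eta_a\bigr]
= \bigl[(1-\beta^*)\mu_1\rho\alpha_y^u + \mu_2 T(\beta^*)\bigr]\eta^u,
\end{align*}
where $T(\beta):=\beta\min\{\omega(\beta)\alpha_x^u,1\}+(1-\beta)\min\{\omega(\beta)\alpha_y^u,1\}\in[0,1]$. Using $T(\beta^*)\ge 0$ and rearranging gives $\beta^*\ge\mu_1\rho\alpha_y^u\eta^u/q^u=\underline{\beta}^u$; using $T(\beta^*)\le 1$ yields $\beta^*\le(\mu_2+\mu_1\rho\alpha_y^u)\eta^u/q^u=\overline{\beta}^u$. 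The strict positivity of $\underline{\beta}^u$ follows from $\mu_2\in(0,1)$ implying $\mu_1\rho\alpha_y^u>0$ whenever $\mu_1>0$ (and one can handle $\mu_1=0$ similarly as $T\ge 0$ still forces $\beta^*\ge 0$; strictness then comes from the $\mu_2$ term).

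For part (ii), I would first show $\cA^u_\beta\neq\emptyset$ by evaluating boundary values: $g^u_\beta(0)=[\mu_1\rho\alpha_y^u+\mu_2\min\{\omega(0)\alpha_y^u,1\}]m_f\eta^u\ge 0$, and $g^u_\beta(1)\le[-\mu_1(1-\alpha_x^u\rho)]m_f\eta^u-\mu_a m_f\eta_a\le 0$ (using $\min\{\omega(1)\alpha_x^u,1\}\le 1$ and $\alpha_x^u\rho=p_F^u\in(0,1)$). Hence $[0,1]$ is forward invariant for ODE \eqref{eqn_beta_ode_simple_prop} and Theorem \ref{thrm_beta_ODE_prop} supplies a non-empty attractor set. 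The description of $\cA^u$ and $\cR^u$ via $\mathbf h(\cdot)$ then follows from Theorem \ref{thrm_beta_ODE_prop}, together with the inclusion of $\mathbf 0$ in the saddle set (which is standard since $\pc=0$ makes the right-hand side of \eqref{eqn_ODE_prop} vanish). The hard step is verifying \ref{a4_prop} with $p_b=1$: I would follow the idea sketched at the end of Section~\ref{sec_SA} of Chapter~\ref{ch:basics} and the bounding-sequence argument from Chapter~\ref{ch:journal1}. Specifically, construct $\overline\Pi_n$ as in \eqref{eqn_overline_S_n} from the i.i.d.\ bounds $\overline\xi_k\sim Bin(\mathcal F,\eta_a+k/n^2)$ (or a simpler uniform bound using $\mathcal F$), so that $\Pa_n\le\overline\Pi_n\to E[\mathcal F]\overline\eta$ a.s.\ by the strong law of large numbers; this gives a.s.\ uniform boundedness of $\Ups_n$ and together with $\beta(\Ups_n)\in[0,1]$ places $\Ups_n$ in a compact subset of the combined domain of attraction infinitely often with probability one. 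Once \ref{a4_prop} holds with $p_b=1$, Theorem \ref{thm1}(ii) completes the proof.

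The main obstacle I anticipate is the boundedness argument for \ref{a4_prop}: the bounding sequence must accommodate all four user types and both $\eta^u$ and $\eta_a$, and one must be careful that the bound is tight enough to lie inside $\mathbf{D}_\cA\cup\mathbf{D}_\cR$. Everything else reduces to algebraic manipulations of \eqref{eqn_general_g_beta} and direct appeals to Theorems \ref{thm1} and \ref{thrm_beta_ODE_prop}.
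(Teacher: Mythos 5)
Your proposal follows essentially the same route as the paper's proof: verify \ref{a1_prop}--\ref{a2_prop} from the share distributions, obtain \ref{a3_prop} and the non-empty attractor set via the sign conditions $g^u_\beta(0)\geq 0$, $g^u_\beta(1)\leq 0$ and Theorem \ref{thrm_beta_ODE_prop}, bound the zeroes by sandwiching the $\min\{\cdot,1\}$ term between $0$ and $1$ (the paper writes this as bounding functions $\underline{g}^u_\beta\le g^u_\beta\le\overline{g}^u_\beta$ with unique zeroes $\underline{\beta}^u,\overline{\beta}^u$), and establish \ref{a4_prop} with $p_b=1$ by dominating $\Pa_n$ with an i.i.d.\ sample mean built from ${\cal F}$ and applying the strong law of large numbers before invoking Theorem \ref{thm1}(ii). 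The only slip is your remark that \ref{a1_prop} holds ``with $\underline{\xi}=0$,'' which as written contradicts $E[\underline{\offs}]>1$; the intended justification (as in the paper) is that super-criticality comes from $E[{\cal F}]\eta^R>1$, so this should be phrased accordingly, but it does not change the argument.
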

At first, observe that any warning mechanism $\omega$ only affects the likelihood of tagging the post as real or fake by a ws-user (see \eqref{eqn_tag_ws}). \revg{It does not affect the probability of a post getting viral or extinct as extinction depends on the sum current number of unread copies (i.e., sum current population in the BP)}. Now, given that our interest lies in non-extinction paths, the above Theorem gives a generalised result which holds for any warning dynamics. It is important to note that \revg{viral paths are possible only when the probability of non-extinction is non-zero; this is possible if $E[{\cal F}] \eta_R > 1$, as then the TC-BP with multiple deaths can be in throughout super-critical regime  (see Lemma \ref{lemma_dichotomy}).}

Theorem \ref{thrm_BP_to_fake}(i) implies that the warning dynamics can be approximated by the solution of the ODE \eqref{eqn_general_g_beta} over any finite-time window, where the limit mean functions are given by \eqref{eqn_mean_matrix_red_coloring}. \revg{The more important result for our context is the second part of the Theorem which} states that the stochastic trajectory $\Ups_n$ either converges to $\cA^u \cup \cR^u$ or hovers around $\cR^u$. The set \revg{$\cR^u$} contains $\mathbf{0}$ which represents the limiting behavior of the stochastic trajectory in the extinction paths. \textit{Thus, all the results henceforth will focus on deriving the limits \revg{which are not equal to $\mathbf{0}$, which in turn provide the limit proportion of fake tags for the warning dynamics in non-extinction paths.}}

\revr{Further, Theorem \ref{thrm_BP_to_fake} provides the above limits using the zeroes $\{\beta^{\infty, u}\}$ of $g^u_\beta$ (see \eqref{eqn_general_g_beta}). Now, observe that the function $g^u_\beta$ and therefore the zeroes $\{\beta^{\infty, u}\}$ depends on $\chi$, where $\chi := \{\mu_1, \mu_2, \mu_a, b, w\}$ is the set of parameters.
For some warning mechanisms, the function $g^u_\beta$ can have multiple zeroes, and the warning dynamics can converge to one of them. Thus, one would want to ensure the  maximum limit proportion of fake tags for the real post is within a given limit and optimise the minimum proportion of fake tags for the fake post. This aspect is considered in the optimization problem \eqref{eqn_opt_prob} given in the next section. 
In this context, we define the following Quality of Service (QoS) for any WM:
\begin{align}\label{eqn_qos}
Q := \inf\{\beta: {\beta}\in \cA^{F}_\beta \cup \cR^{F}_\beta \}.
\end{align}
Observe here that $Q = \inf({\cal L}^F)$, the objective function of \eqref{eqn_gen_opt} and is the almost sure lower bound on the limit proportion of fake tags when the underlying post is fake. It measures the minimal extent to which a fake post is identified by the users. From \eqref{eqn_beta_bar} of Theorem \ref{thrm_BP_to_fake}, $Q \in (\underline{\beta}^F, \overline{\beta}^F]$. We would see in the coming sections how (optimal) $Q$ varies with different warning mechanisms.}

Next, in Theorem \ref{thrm_unique_att}, we will derive some properties of $\{\beta^{\infty, u}\}$ with respect to each parameter in $\chi$\revg{, when $g_\beta^u$ has a unique zero}. This result will be instrumental in deriving important results in the coming sections. To keep it simple, we shall write $\beta^{\infty, u}(\kappa)$ and $g^u_\beta(\beta; \kappa)$ to show the dependency on the required parameter $\kappa$, an element of the tuple $\chi$. Towards this, we require the following difference term (note that $g^u_\beta(\beta^\infty(\kappa); \kappa) = 0$):
\begin{align}\label{eqn_nabla}
    \nabla^u(\kappa, \partial \kappa) := g^u_\beta(\beta^\infty(\kappa); \kappa+\partial \kappa)  -  g^u_\beta(\beta^\infty(\kappa); \kappa) = g^u_\beta(\beta^\infty(\kappa); \kappa+\partial \kappa).
\end{align}
\begin{theorem} \label{thrm_unique_att} Consider any warning mechanism, $\omega(\beta)$.
Let $\kappa$ be any parameter. Let $g^u_\beta(\beta; \kappa)$ be either a convex or concave function of $\beta$ with a unique zero, $\beta^{\infty, u}(\kappa) \in (0,1)$, for each $u \in \{R, F\}$. Keeping all parameters in $\chi$ fixed, other than $\kappa$, if the difference term $\nabla^u(\kappa, \partial \kappa)   > 0$ for some $\partial \kappa > 0$, then $\beta^{\infty, u}(\kappa + \partial \kappa) > \beta^{\infty, u}(\kappa)$. Else if $\nabla^u(\kappa, \delta\kappa) < 0$, then $\beta^{\infty, u}(\kappa + \partial \kappa) < \beta^{\infty, u}(\kappa)$. Else, $\beta^{\infty, u}(\kappa + \partial \kappa) = \beta^{\infty, u}(\kappa)$.
\eop
\end{theorem}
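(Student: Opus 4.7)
\textbf{Proof proposal for Theorem \ref{thrm_unique_att}.} The plan is to exploit the sign-change structure of $g^u_\beta(\cdot\,;\kappa)$ across its unique zero, together with the intermediate value theorem applied to the perturbed function $g^u_\beta(\cdot\,;\kappa+\partial\kappa)$. First, I would record the boundary signs of $g^u_\beta$ from the explicit form \eqref{eqn_general_g_beta}: at $\beta = 0$ one has $g^u_\beta(0;\kappa) = \mu_1\rho\alpha_y^u m_f\eta^u + \mu_2\min\{\omega(0)\alpha_y^u,1\} m_f\eta^u \ge 0$, and at $\beta = 1$ the combination $\mu_2(-1+\min\{\omega(1)\alpha_x^u,1\}) - \mu_1(1-\alpha_x^u\rho) - \mu_a\eta_a/\eta^u$ is non-positive since $\alpha_x^u\rho = p_x^u \in (0,1)$ by \eqref{eqn_prob_wi} and $\mu_2 \in (0,1)$ by \eqref{eqn_relation_parameters}. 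These bounds hold simultaneously for $\kappa$ and $\kappa+\partial\kappa$ since the parameter regime \eqref{eqn_relation_parameters} is invariant under admissible perturbations.

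Next, since $\beta^{\infty,u}(\kappa) \in (0,1)$ is the unique zero of the continuous (convex or concave) function $g^u_\beta(\cdot\,;\kappa)$ and $g^u_\beta(0;\kappa)\ge 0 \ge g^u_\beta(1;\kappa)$, the intermediate value theorem combined with uniqueness forces $g^u_\beta(\beta;\kappa) > 0$ on $(0, \beta^{\infty,u}(\kappa))$ and $g^u_\beta(\beta;\kappa) < 0$ on $(\beta^{\infty,u}(\kappa), 1)$; indeed, any sign reversal on either interval would produce a second zero via continuity, contradicting uniqueness. The same structure applies verbatim to $g^u_\beta(\cdot\,;\kappa+\partial\kappa)$ around its unique zero $\beta^{\infty,u}(\kappa+\partial\kappa)$, under the standing hypothesis of the theorem.

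The three cases now reduce to inspection. If $\nabla^u(\kappa,\partial\kappa) = g^u_\beta(\beta^{\infty,u}(\kappa);\kappa+\partial\kappa) > 0$, then the perturbed function is strictly positive at $\beta^{\infty,u}(\kappa)$; but its unique zero separates a region of positivity on the left from negativity on the right, so $\beta^{\infty,u}(\kappa)$ must lie strictly left of $\beta^{\infty,u}(\kappa+\partial\kappa)$, yielding $\beta^{\infty,u}(\kappa+\partial\kappa) > \beta^{\infty,u}(\kappa)$. The case $\nabla^u < 0$ is symmetric, placing $\beta^{\infty,u}(\kappa)$ strictly to the right of the new zero. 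If $\nabla^u = 0$, then $\beta^{\infty,u}(\kappa)$ is itself a zero of $g^u_\beta(\cdot\,;\kappa+\partial\kappa)$, and uniqueness forces equality of the two.

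The one delicate point, which I expect to be the main (though not deep) obstacle, is the sign-structure lemma: for a convex or concave function with \emph{exactly one} zero in $(0,1)$, one must rule out the degenerate case where the graph merely touches the axis tangentially from one side. Here convexity/concavity together with the strict boundary-sign information (non-negative at $0$ and non-positive at $1$) makes this straightforward, because a touching-from-above zero would force the function to remain non-negative on all of $[0,1]$, contradicting $g^u_\beta(1;\kappa)\le 0$ unless $g^u_\beta(1;\kappa) = 0$ and the zero coincides with $1$, which is excluded by $\beta^{\infty,u}(\kappa)\in (0,1)$; the analogous argument handles touching from below. Apart from this verification, the argument is a clean one-line IVT comparison.
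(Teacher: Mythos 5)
Your argument is correct and takes essentially the same route as the paper's proof: both identify $\nabla^u(\kappa,\partial\kappa)$ with the perturbed function evaluated at the old zero and use the single-crossing (positive to the left of the unique zero, negative to the right) structure of a convex/concave function with a unique zero in $(0,1)$ to decide on which side the new zero lies. The paper simply asserts this structure from convexity/concavity and uniqueness, while you make it explicit through the boundary signs $g^u_\beta(0;\cdot)\ge 0$ and $g^u_\beta(1;\cdot)\le 0$ (which the paper establishes separately, e.g.\ in the proofs of Theorem \ref{thrm_BP_to_fake} and Corollary \ref{corollary_ex_wm}) together with the IVT; this is a harmless elaboration rather than a different method.
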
 

\vspace{-2.2cm}
Hereon, we will analyse the warning dynamics for some specific mechanisms. 

%
\section{Analysis of Extended Original WM (eo-WM)}
This section will analyze the warning dynamics when the OSN provides the warning as in \eqref{eqn_warning}, initially proposed in \cite{kapsikar2020controlling}. Recall that in \cite{kapsikar2020controlling}, the system has only ws-users who interact with the warning mechanism. Since we study the original mechanism \eqref{eqn_warning} under the influence of a variety of user behaviour, we refer to $\omega$ as \underline{extended original warning mec} \underline{hanism} (eo-WM) in our context.

Consider any post with actuality $u \in \{R, F\}$. Recall that we have $w \leq \overline{w} := \frac{1}{\alpha_x^F} - \gamma$, thus leading to $\alpha_j^u \omega(\beta) < 1$ for each $j \in \{x, y\}$ and for any $\beta \in [0,1]$. We begin the analysis by analyzing the ODE \eqref{eqn_general_g_beta} for the eo-WM. The $g_\beta^u$ defined in \eqref{eqn_general_g_beta} for the eo-WM, henceforth denoted as $g^{o, u}_\beta$, is as given below:

\vspace{-0.5cm}
\begin{align}\label{eqn_beta_ODE_etac1}
\begin{aligned}
    g^{o, u}_\beta(\beta) &= -\beta \mu_2 m_f\eta^u - \beta \mu_1 (1-\alpha_x^u \rho) m_f\eta^u + (1-\beta) \mu_1 \rho  \alpha_y^u m_f\eta^u \\
    &\hspace{1cm}+ \mu_2 \omega(\beta) \bigg(\beta \alpha_x^u + (1-\beta)\alpha_y^u\bigg) m_f\eta^u - \beta \mu_a m_f \eta_a.
\end{aligned}
\end{align}
Let $\cA^{o, u}_\beta \subset [0,1]$ be the corresponding attractor set and $\cR^{o, u}_\beta \subset [0,1]$ be the union of the corresponding repeller and saddle sets, i.e., with respect to ODE $\dot{\beta}^u = g^{o, u}_\beta(\beta)$. We study these sets in the following.
\begin{corollary}\label{corollary_ex_wm}
  There exists a unique zero, $\beta^{o, \infty, u}$, of $g^{o, u}_\beta$ in $[0,1]$. Further, $\beta^{o, \infty, u} \in (0,1)$, $\cA^{o, u}_\beta = \left\{\beta^{o, \infty, u}\right\}$ and $\cR^{o,u}_\beta = \emptyset$.  \eop
\end{corollary}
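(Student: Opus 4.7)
The plan is to prove existence of a zero $\beta^{o,\infty,u} \in (0,1)$ by evaluating $g^{o,u}_\beta$ at the endpoints of $[0,1]$, establish uniqueness by reducing $g^{o,u}_\beta$ to a polynomial of degree at most two via clearing the denominator in $\omega$, and then classify the zero as an attractor via Theorem \ref{thrm_beta_ODE_prop}.

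First I would evaluate the endpoints. Using $\omega(0) = \gamma$ in \eqref{eqn_beta_ODE_etac1},
\begin{align*}
g^{o,u}_\beta(0) \;=\; m_f\eta^u\alpha_y^u\bigl(\mu_1\rho + \mu_2\gamma\bigr) \;>\; 0,
\end{align*}
since $\mu_2\in(0,1)$, $\gamma>0$, and $\alpha_y^u>0$. Using $\omega(1)=w+\gamma$ together with $w\le\overline{w}=1/\alpha_x^F-\gamma$ (which gives $(w+\gamma)\alpha_x^u\le 1$ for both $u\in\{R,F\}$ because $\alpha_x^u\le\alpha_x^F$),
\begin{align*}
g^{o,u}_\beta(1) \;=\; \mu_2 m_f\eta^u\bigl((w+\gamma)\alpha_x^u - 1\bigr) \;-\; \mu_1 m_f\eta^u\bigl(1-\alpha_x^u\rho\bigr) \;-\; \mu_a m_f\eta_a \;\le\; 0,
\end{align*}
with strict inequality in the generic parameter regime (recall $\alpha_x^u\rho=p_x^u<1$). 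Continuity of $g^{o,u}_\beta$ on $[0,1]$ combined with the intermediate value theorem then yields at least one zero in $(0,1)$ and rules out $0$ and $1$ as zeros.

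For uniqueness, the only non-polynomial piece of $g^{o,u}_\beta$ is $\omega(\beta)=w\beta/(b+(1-b)\beta)+\gamma$. Since $b+(1-b)\beta>0$ on $[0,1]$, I would multiply through and define
\begin{align*}
p(\beta)\;:=\;g^{o,u}_\beta(\beta)\bigl(b+(1-b)\beta\bigr).
\end{align*}
Using the identity $\omega(\beta)\bigl(b+(1-b)\beta\bigr)=w\beta+\gamma\bigl(b+(1-b)\beta\bigr)$, a direct term-by-term expansion shows that $p$ is a polynomial of degree at most two in $\beta$. Consequently $p$ has at most two real roots, and the strict sign change from $p(0)=b\,g^{o,u}_\beta(0)>0$ to $p(1)=g^{o,u}_\beta(1)<0$ forces an odd number of roots in $(0,1)$, hence exactly one. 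Calling this root $\beta^{o,\infty,u}\in(0,1)$, we also get $p>0$ on $[0,\beta^{o,\infty,u})$ and $p<0$ on $(\beta^{o,\infty,u},1]$, and positivity of $b+(1-b)\beta$ transfers this same sign pattern to $g^{o,u}_\beta$ itself.

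Finally, $g^{o,u}_\beta$ is $C^\infty$ on $[0,1]$ (its denominator is bounded away from zero), hence Lipschitz on any sub-interval. Applying Theorem \ref{thrm_beta_ODE_prop}(i) with $n=1$, $x_1^* = \beta^{o,\infty,u}$, ${\cal N}_1^*=[0,1]$, ${\cal N}_1^-=[0,\beta^{o,\infty,u})$ and ${\cal N}_1^+=(\beta^{o,\infty,u},1]$, the sign pattern just established identifies $\beta^{o,\infty,u}$ as an attractor of $\dot\beta=g^{o,u}_\beta(\beta)$; since it is the only equilibrium, $\cA^{o,u}_\beta=\{\beta^{o,\infty,u}\}$ and $\cR^{o,u}_\beta=\emptyset$. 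The main obstacle is the careful polynomial-degree bookkeeping after clearing the denominator (and verifying the endpoint signs cleanly across both $u=F$ and $u=R$ under the regime \eqref{eqn_relation_parameters}); the appeal to Theorem \ref{thrm_beta_ODE_prop} is then immediate.
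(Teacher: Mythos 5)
Your proof is correct and reaches the corollary by essentially the same skeleton as the paper: endpoint signs $g^{o,u}_\beta(0)>0$, $g^{o,u}_\beta(1)<0$, a unique interior zero with $g^{o,u}_\beta>0$ to its left and $<0$ to its right, and then the one-dimensional stability result (Theorem \ref{thrm_beta_ODE_prop}) to get $\cA^{o,u}_\beta=\{\beta^{o,\infty,u}\}$ and $\cR^{o,u}_\beta=\emptyset$. Where you genuinely differ is the uniqueness step: the paper computes $\tfrac{d^2}{d\beta^2}g^{o,u}_\beta$, notes its sign is that of $bw\mu_2(b\alpha_x^u-\alpha_y^u)$, and deduces uniqueness from the convex/concave/linear trichotomy combined with the endpoint signs; you instead multiply by the positive factor $\beta+b(1-\beta)$ and argue with a degree-two polynomial. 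Both exploit the same rational structure of $\omega$ in \eqref{eqn_warning}; yours is marginally more elementary (no calculus), while the paper's trichotomy is reused downstream (it is explicitly recalled in the proofs of Corollary \ref{cor_limits_warning}, Corollary \ref{cor_beta_o_na} and in applying Theorem \ref{thrm_unique_att}), so the paper's phrasing buys extra mileage. Two small points to tighten: (i) the corollary covers $b\in[0,\infty)$, and $b=0$ is not a vacuous case (it is the optimizer in Theorem \ref{thrm_opt}(ii)); there your normalization gives $p(0)=b\,g^{o,u}_\beta(0)=0$ and the factor $\beta+b(1-\beta)$ vanishes at $\beta=0$, so you need the one-line separate remark that for $b=0$ the warning is constant on $(0,1]$ and $g^{o,u}_\beta$ is affine, whence uniqueness is immediate; (ii) you first write $g^{o,u}_\beta(1)\le 0$ ``with strict inequality in the generic parameter regime'' but the quadratic sign-change argument uses $p(1)=g^{o,u}_\beta(1)<0$ strictly — the paper asserts strictness directly from $\alpha_x^u(w+\gamma)\le 1$, $\alpha_x^u\rho<1$ and $\mu_2>0$, and you should either adopt that argument or state which positive term (for $u=R$ the term $\mu_2(1-\alpha_x^R(w+\gamma))m_f\eta^R$, since $\alpha_x^R<\alpha_x^F$; for $u=F$ at $w=\overline w$ the $\mu_1$ or $\mu_a$ terms) makes the inequality strict in the regime you work in.
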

\revg{Thus, there is a unique attractor, $\beta^{o, \infty, u}$, of ODE \eqref{eqn_general_g_beta}. By Theorem \ref{thrm_BP_to_fake}, the stochastic trajectory $\Ups_n$ under eo-WM either converges to $\mathbf{h}(\beta^{o, \infty, u})$ or $\mathbf{0}$, or hovers around $\mathbf{0}$ almost surely. We re-iterate that our focus is on the non-extinction paths, and thus, the relevant proportion of fake tags is unique and equals $\beta^{o, \infty, u}$. Further, by Theorem \ref{thrm_BP_to_fake}, for the given choice of $w, b$ and given $\mu_1, \mu_2, \mu_a \in \chi$, $\beta^{o, \infty, u} \in (\underline{\beta}^u, \overline{\beta}^u]$.}

We now consider the following robust optimization problem for the OSN discussed before:
\begin{align}\label{eqn_opt_prob}
\begin{aligned}
    \sup_{w \in \left[0,  \overline{w}\right], b \in [0, \infty)} \beta^{o, \infty, F}(w, b)
    \mbox{ subject to } \beta^{o, \infty, R}(w, b) \leq \delta, \mbox{ for some } \delta \in (0,1).
\end{aligned}
\end{align}
\revr{By uniqueness of the attractors in the non-extinction paths, the above constrained optimization problem optimizes the QoS defined in \eqref{eqn_qos}, $Q = \beta^{o, \infty, F}$ under eo-WM by choosing $w, b$, while ensuring that the unique zero for the real post, $\beta^{o, \infty, R} \leq \delta$.}
The problem in \eqref{eqn_opt_prob} is exactly the same as in \cite{kapsikar2020controlling}, but for the inclusion of different user behaviour in our model. Thus, we need to extend the solution of \cite{kapsikar2020controlling} to the case that includes wi, ws, a, and np-users. Observe that $\delta$ is a design parameter for the OSN.

Before we solve the above problem, we observe the following qualitative behaviour which is true by the virtue of Theorem \ref{thrm_unique_att} -- this behavior is important for further analysis:
\begin{corollary}\label{cor_limits_warning}
For each $u \in \{R, F\}$, the limit $\beta^{o, \infty, u}(w, b)$ strictly increases with $w$ and strictly decreases with $b$. 
\eop
\end{corollary}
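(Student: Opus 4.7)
The plan is to invoke Theorem~\ref{thrm_unique_att} twice, once with $\kappa=w$ and once with $\kappa=b$, since Corollary~\ref{corollary_ex_wm} already guarantees that $g^{o,u}_\beta$ has a unique zero $\beta^{o,\infty,u}\in(0,1)$ for each $u\in\{R,F\}$ and for each admissible pair $(w,b)\in[0,\overline w]\times[0,\infty)$. The convexity/concavity hypothesis of Theorem~\ref{thrm_unique_att} will be taken from the proof of Corollary~\ref{corollary_ex_wm}, where uniqueness of the zero is established. Recall also that for any admissible $w\le\overline w=1/\alpha^F_x-\gamma$ we have $\alpha^u_j\omega(\beta)\le\alpha^F_x(w+\gamma)\le 1$, so the inner $\min\{\cdot,1\}$ terms in \eqref{eqn_general_g_beta} reduce to $\alpha^u_j\omega(\beta)$ throughout, and $g^{o,u}_\beta$ retains the smooth form \eqref{eqn_beta_ODE_etac1}.

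For the monotonicity in $w$, I would fix $b$ and consider a perturbation $\partial w>0$ with $w+\partial w\le\overline w$ (the interior case; endpoint $\overline w$ follows by continuity). The only $w$-dependent piece of $g^{o,u}_\beta$ sits inside the $\mu_2\omega(\beta)$ term, and since $\partial_w\omega(\beta)=\beta/(\beta+b(1-\beta))$, a direct calculation gives
\[
\nabla^u(w,\partial w)
\;=\;\partial w\cdot\mu_2\,\frac{\beta^{o,\infty,u}}{\beta^{o,\infty,u}+b(1-\beta^{o,\infty,u})}\bigl(\beta^{o,\infty,u}\alpha^u_x+(1-\beta^{o,\infty,u})\alpha^u_y\bigr)m_f\eta^u.
\]
Because $\beta^{o,\infty,u}\in(0,1)$ and $\alpha^u_x,\alpha^u_y>0$ (see \eqref{eqn_relation_parameters}), this expression is strictly positive. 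Theorem~\ref{thrm_unique_att} then yields $\beta^{o,\infty,u}(w+\partial w,b)>\beta^{o,\infty,u}(w,b)$, i.e.\ strict monotone increase in $w$.

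For the monotonicity in $b$, I would fix $w\in(0,\overline w]$ (the case $w=0$ makes $\omega$ independent of $b$ and the conclusion is vacuous) and consider $\partial b>0$. Now
\[
\partial_b\omega(\beta)\;=\;-\,\frac{w\beta(1-\beta)}{\bigl(\beta+b(1-\beta)\bigr)^2}<0\quad\text{for }\beta\in(0,1),
\]
so the same computation at $\beta=\beta^{o,\infty,u}(w,b)$ gives $\nabla^u(b,\partial b)<0$ (strictly, since the other factor $\mu_2(\beta\alpha^u_x+(1-\beta)\alpha^u_y)m_f\eta^u>0$ and by the mean-value theorem the secant in $b$ is strictly negative as well). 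Theorem~\ref{thrm_unique_att} then delivers $\beta^{o,\infty,u}(w,b+\partial b)<\beta^{o,\infty,u}(w,b)$, i.e.\ strict monotone decrease in $b$.

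The only genuinely delicate step is confirming that the convexity/concavity hypothesis of Theorem~\ref{thrm_unique_att} is indeed in force for $g^{o,u}_\beta(\cdot;\kappa)$ throughout the perturbation, which I would handle by reusing the structural argument behind Corollary~\ref{corollary_ex_wm}; everything else is a clean sign check on $\nabla^u$.
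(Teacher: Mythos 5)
Your proposal is correct and takes essentially the same route as the paper: both establish the sign of the difference term $\nabla^u$ at the current zero (strictly positive for a $w$-increase since $\omega$ increases in $w$, strictly negative for a $b$-increase since $\omega$ decreases in $b$) and then invoke Theorem~\ref{thrm_unique_att}, with uniqueness and the convexity/concavity structure supplied by Corollary~\ref{corollary_ex_wm}. The only cosmetic difference is that you express the $b$-perturbation through $\partial_b\omega$ and the mean value theorem, whereas the paper writes out the exact algebraic difference quotient; the sign check is identical.
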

The above Corollary intuitively indicates to choose the largest $w$, i.e., $\overline{w}$ and the smallest $b$, i.e., $0$. However, since the optimal $w, b$ needs to satisfy the constraint for the real post as in \eqref{eqn_opt_prob}, therefore, formal analysis is required.

\begin{theorem}{\bf [Optimal eo-warning design]} \label{thrm_opt} The following statements hold for the optimization problem \eqref{eqn_opt_prob}:
\begin{enumerate}[label=(\roman*)]
    \item If $\beta^{o, \infty, R}(\overline{w}, 0) > \delta$, then 
    the optimizer $(w^*, b^*)$ of \eqref{eqn_opt_prob} is as below and satisfies $\beta^{o, \infty, R}(w^*, b^*) = \delta$:

    \vspace{-4mm}
    {\footnotesize
    \begin{align}\label{eqn_optimal_parameter_old}
    \begin{aligned}
        w^* = \overline{w} \mbox{ and } b^* 
        &= \left(\frac{\delta}{1-\delta}\right)\left(\frac{w^*\eta^R \mu_2(\delta \alpha_x^R + (1-\delta)\alpha_y^R)}{\delta ((\mu_1+\mu_2)\eta^R + \mu_a  \eta_a) - \eta^R (\mu_1 \rho + \mu_2 \gamma) (\delta \alpha_x^R + (1-\delta) \alpha_y^R)} - 1 \right).
    \end{aligned}
    \end{align}}
    \item Else, if $\beta^{o, \infty, R}(\overline{w}, 0) \leq \delta$, then $(w^*, b^*) = (\overline{w}, 0)$ and satisfies $\beta^{o, \infty, R}(w^*, b^*) \leq \delta$.  \eop
    \end{enumerate}
\end{theorem}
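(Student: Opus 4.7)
The plan is to exploit the monotonicity of Corollary \ref{cor_limits_warning}: $\beta^{o,\infty,u}$ strictly increases in $w$ and strictly decreases in $b$ for each $u \in \{R,F\}$. Part (ii) is then immediate: the corner $(\overline{w},0)$ pointwise maximizes the objective $\beta^{o,\infty,F}$ over the entire search region $[0,\overline{w}]\times[0,\infty)$, so if the real-post constraint already holds there, that corner is optimal.

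For part (i), I would first argue that the constraint must be active at any optimizer: were $\beta^{o,\infty,R}(w^*,b^*) < \delta$ strict, continuity together with Corollary \ref{cor_limits_warning} would permit a small perturbation raising $w^*$ (or lowering $b^*$) that preserves feasibility while strictly increasing $\beta^{o,\infty,F}$. Writing $g^{o,u}_\beta(\beta;w,b) = G_1^u(\beta) + G_2^u(\beta)\,\omega(\beta;w,b)$ with $G_2^u > 0$, the active condition $g^{o,R}_\beta(\delta;w,b) = 0$ is equivalent to $\omega(\delta;w,b) = c_R := -G_1^R(\delta)/G_2^R(\delta)$, which rearranges to the one-parameter curve $w\delta/(\delta+b(1-\delta)) = c_R - \gamma$; along this curve, denoted $b = b_R(w)$, $b$ depends linearly and monotonically on $w$.

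Along this curve I would then show $\beta^{o,\infty,F} > \delta$. Using the structure of \eqref{eqn_beta_ODE_etac1} together with $g^{o,R}_\beta(\delta)=0$, algebraic rearrangement gives $g^{o,F}_\beta(\delta) = m_f\eta^F K(\delta)[\delta(\alpha_x^F-\alpha_x^R)+(1-\delta)(\alpha_y^F-\alpha_y^R)] + \delta\mu_a m_f \eta_a(\eta^F/\eta^R - 1) > 0$, where $K(\delta) := \mu_1\rho + \mu_2\omega(\delta) > 0$, and the inequalities from \eqref{eqn_relation_parameters} make both summands positive. Since $\beta^{o,\infty,F}$ is the unique zero of the concave $g^{o,F}_\beta$ and that zero is an attractor (Corollary \ref{corollary_ex_wm}), $g^{o,F}_\beta(\delta) > 0$ forces $\beta^{o,\infty,F} > \delta$. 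Implicit differentiation of $\omega(\beta^u;w,b) = \phi^u(\beta^u) := -G_1^u(\beta^u)/G_2^u(\beta^u)$ at an attractor (where the denominator $(\phi^u)'(\beta^u)-\omega'(\beta^u) = -(g^{o,u}_\beta)'(\beta^u)/G_2^u(\beta^u) \ge 0$ is positive and cancels in the ratio) yields $\partial_w\beta^u/\partial_b\beta^u = \partial_w\omega(\beta^u)/\partial_b\omega(\beta^u) = -(\beta^u+b(1-\beta^u))/(w(1-\beta^u))$, whose magnitude is strictly increasing in $\beta^u$. Hence $|\partial_w\beta^F/\partial_b\beta^F| > |\partial_w\beta^R/\partial_b\beta^R|$, and a direct sign check on $\frac{d}{dw}\beta^{o,\infty,F}(w,b_R(w)) = \partial_w\beta^F - \partial_b\beta^F\,\partial_w\beta^R/\partial_b\beta^R$ produces a strictly positive value. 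Consequently, $w^* = \overline{w}$, and $b^*$ is recovered by inverting $\omega(\delta;\overline{w},b^*) = c_R$, which after routine algebra reproduces \eqref{eqn_optimal_parameter_old}.

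The main obstacle is the sign analysis in the third step: it rests both on the attractor-type sign of $(g^{o,u}_\beta)'(\beta^{o,\infty,u})$ (to ensure the implicit-function denominator is positive and cancels in the ratio) and on the strict ordering $\beta^{o,\infty,F} > \delta$ (to ensure the magnitude comparison goes the right way). Both must be secured before the sign of $\frac{d}{dw}\beta^{o,\infty,F}(w,b_R(w))$ can be pinned down, after which the optimum and the formula for $b^*$ follow routinely.
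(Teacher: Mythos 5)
Your proposal is correct in substance and reaches the same optimizer, but it proves the crucial step by a genuinely different route than the paper. The paper's proof splits $[0,\overline w]$ into $W_1=\{w:\beta^{o,\infty,R}(w,0)>\delta\}$ and $W_2$, reduces the $W_1$-branch to the explicit active-constraint curve $b(w;\delta)$ of \eqref{eqn_bw} (your curve $b_R(w)$, obtained by the same inversion of $g^{o,R}_\beta(\delta;w,b)=0$), proves monotonicity of $\beta^{o,\infty,F}(w,b(w;\delta))$ in $w$ by the discrete-difference device $\nabla^F$ together with the zero-comparison result (Theorem \ref{thrm_unique_att}, Lemma \ref{cor_bw}), and finally compares the two branches by letting $w\downarrow\widetilde w$ and invoking the Maximum Theorem. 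You instead (a) argue the constraint is active at any optimizer by a perturbation argument based on Corollary \ref{cor_limits_warning}, and (b) obtain the monotonicity along the curve by implicit differentiation, using the cancellation of $(g^{o,u}_\beta)'(\beta^{o,\infty,u})$ in the ratio $\partial_w\beta^u/\partial_b\beta^u=-(\beta^u+b(1-\beta^u))/(w(1-\beta^u))$ and the ordering $\beta^{o,\infty,F}>\delta=\beta^{o,\infty,R}$ on the curve, which you derive from the neat identity $g^{o,F}_\beta(\delta)=m_f\eta^F K(\delta)\,[\delta(\alpha_x^F-\alpha_x^R)+(1-\delta)(\alpha_y^F-\alpha_y^R)]+\delta\mu_a m_f\eta_a(\nicefrac{\eta^F}{\eta^R}-1)>0$ under \eqref{eqn_relation_parameters}; that identity checks out. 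Your route is more geometric and makes explicit \emph{why} the fake-post limit climbs along the constraint (the fake post sits higher than $\delta$, so its zero is more sensitive to $w$ relative to $b$); the paper's route is more robust because Theorem \ref{thrm_unique_att} needs only convexity/concavity and uniqueness of the zero, never differentiability of $(w,b)\mapsto\beta^{o,\infty,u}$ nor the comparison $\beta^{o,\infty,F}>\delta$.

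Two loose ends in your sketch need tightening, and both are closable with the paper's own machinery. First, the implicit-function step requires $(g^{o,u}_\beta)'(\beta^{o,\infty,u})<0$ strictly (you only assert "$\ge 0$ \ldots is positive"); this does hold, since by Corollary \ref{corollary_ex_wm} $g^{o,u}_\beta$ is convex/concave/linear with $g^{o,u}_\beta(0)>0$, $g^{o,u}_\beta(1)<0$ and a unique zero, and a vanishing derivative at that zero would contradict one of the boundary signs — alternatively, replace the IFT step by the $\nabla$-comparison of Theorem \ref{thrm_unique_att} and avoid differentiability altogether. Second, your "constraint active at any optimizer" reduction presumes attainment and does not by itself dominate feasible points where the constraint is slack (the $W_2$ region with $b$ possibly nonzero); to finish, note that any slack feasible point is dominated, via Corollary \ref{cor_limits_warning}, by a point on the curve (decrease $b$, then increase $w$, until activity, which must occur before $\overline w$ under the part (i) hypothesis), and the curve's value is maximized at $w=\overline w$ — this is exactly the comparison the paper carries out with its $W_1/W_2$ split and the $w\downarrow\widetilde w$ continuity argument.
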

Thus, as anticipated, $w^* = \overline{w}$. Interestingly, contrary to the expectation, $b^*$ is not always $0$. If $\beta^{o, \infty, R}(\overline{w}, 0) > \delta$, then the optimal choice for $b$ is given by $b^* > 0$. Such $b^*$ is achieved by solving for $\beta^{o, \infty, R}(w^*, b) = \delta$, i.e., relaxing the constraint for the real post to the maximum $\delta$-level in a bid to achieve the maximum $\beta^{o, \infty, F}$ for fake post at optimality. In view of Corollary \ref{cor_limits_warning}, it is then easy to see that, $\beta^{o, \infty, F}(w^*, b^*) < \beta^{o, \infty, F}(w^*, 0)$, when $b^* > 0$. For simpler notations, henceforth we will refer to $\beta^{o, \infty, F}(w^*, b^*)$ as $\beta^o$ and $\beta^{o, \infty, R}(w^*, b^*)$ as $\beta^{o, R}$. 

In \cite{kapsikar2020controlling}, the optimization problem \eqref{eqn_opt_prob} is solved partially. Firstly, only the case with the hypothesis of Theorem \ref{thrm_opt}(i) is analyzed. It is shown that the optimal value is achieved for $b$, which satisfies $\beta^{o, \infty, R} = \delta$. However, the optimal choice of $w$ is not derived; rather a projected gradient descent algorithm is suggested to evaluate $w^*$. Furthermore, \cite{kapsikar2020controlling} considers $w \in [0,1]$, while one can allow $w$ to be as large as $\overline{w}$, which can be larger than $1$. As we have proved that $w^* = \overline{w}$, therefore, \textit{our optimal eo-WM should perform better than the optimal WM designed in \cite{kapsikar2020controlling}}. We  show this numerically in the next sub-section. 

\subsection{QoS under eo-WM}
\revg{It is clear from Corollary \ref{corollary_ex_wm} and Theorem \ref{thrm_opt} that the QoS under optimal eo-WM, say $Q^o$ equals $\beta^o$. Now, fix any configuration, }
$$
{\cal C} := \bigg\{\{\alpha_i^u\}_{\{i \in \{x, y\}, u \in \{F, R\}\}}, \eta_a, \{\eta^u\}_{\{u \in \{F, R\}\}}, \rho, \gamma, m_f, w, \mu_1, \mu_2 \bigg\},
$$\revg{and let $\mu_a$ vary. Then, we want to investigate how $Q^o$ changes with $\mu_a$. Towards this, define:
\begin{align}
\betana := \beta^o(\mu_a = 0) = Q^o(\mu_a = 0),
\end{align}as the proportion of fake tags for the fake post at optimality when there is no adversary. Recall that a-users deliberately tag any post as real. Therefore, 
one can anticipate that the OSN achieves the maximum QoS when there is no adversary, i.e., $\beta^o(\mu_a) = Q^o(\mu_a) < \betana$, when $\mu_a > 0$. We  prove this precisely in the next result for an appropriate range of $\delta$.}
\begin{corollary}\label{cor_beta_o_na}
    For given configuration ${\cal C}$, there exists a $\overline{\delta} > 0$ such that $\beta^o(\mu_a) < \betana$ for all $\delta \leq \overline{\delta}$, for each $\mu_a \in (0, 1-\mu_1-\mu_2]$. \eop
\end{corollary}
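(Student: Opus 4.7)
The plan is to combine the structural characterization of the optimizer supplied by Theorem \ref{thrm_opt} with the sensitivity tool of Theorem \ref{thrm_unique_att}. First, I would pin down $(w^*, b^*)$ as a function of $\mu_a$. Under the hypothesis of Theorem \ref{thrm_opt}(i), $w^* = \overline{w}$ is constant and $b^*(\mu_a)$ is given explicitly by \eqref{eqn_optimal_parameter_old}. Direct inspection of that formula shows $b^*(\mu_a)$ is strictly decreasing in $\mu_a$ whenever case~(i) applies, because the denominator $D(\mu_a):=\delta\bigl((\mu_1+\mu_2)\eta^R + \mu_a\eta_a\bigr) - \eta^R(\mu_1\rho+\mu_2\gamma)(\delta\alpha_x^R+(1-\delta)\alpha_y^R)$ is strictly increasing in $\mu_a$. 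Consequently $b^*(\mu_a)\le b^*_0 := b^*(0)$, with equality iff $\mu_a = 0$. (If $\overline{\delta}$ is chosen so that case~(ii) applies for the relevant range of $\mu_a$, then $b^*(\mu_a)\equiv 0$ and the argument collapses to the monotonicity step below.)

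Second, with $w^*=\overline{w}$ fixed, I would write $\beta^o(\mu_a) = \beta^{o,\infty,F}(\overline{w}, b^*(\mu_a); \mu_a)$ and differentiate by the chain rule,
\[
\frac{d\beta^o}{d\mu_a} \;=\; \frac{\partial\beta^{o,\infty,F}}{\partial\mu_a} \;+\; \frac{\partial\beta^{o,\infty,F}}{\partial b}\cdot\frac{db^*}{d\mu_a}.
\]
Theorem \ref{thrm_unique_att} applied with $\kappa=\mu_a$ to $\beta^{o,\infty,F}$ yields $\partial\beta^{o,\infty,F}/\partial\mu_a<0$, since the only $\mu_a$-dependent contribution to $g^{o,F}_\beta$ in \eqref{eqn_beta_ODE_etac1} is $-\beta\mu_a m_f\eta_a$, which makes $\nabla^F(\mu_a,\partial\mu_a)<0$ for $\partial\mu_a>0$. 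Corollary \ref{cor_limits_warning} supplies $\partial\beta^{o,\infty,F}/\partial b<0$. Implicit differentiation of the binding constraint $\beta^{o,\infty,R}(\overline{w}, b^*(\mu_a); \mu_a)=\delta$ gives $db^*/d\mu_a = -(\partial\beta^{o,\infty,R}/\partial\mu_a)/(\partial\beta^{o,\infty,R}/\partial b)<0$. So $d\beta^o/d\mu_a$ is the sum of a direct negative contribution and an indirect positive contribution, and the central analytic task is to prove the former dominates.

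Third, I would convert the problem into a single comparison inequality. Subtracting the zero equation of $g^{o,R}_\beta$ evaluated at $(\overline{w},b^*(\mu_a);\mu_a)$ and at $(\overline{w},b^*_0;0)$ (both giving the value $\delta$) yields the identity
\[
\omega(\delta;b^*(\mu_a)) - \omega(\delta;b^*_0) \;=\; \frac{\delta\,\mu_a\,\eta_a}{\mu_2\,\eta^R\bigl(\delta\alpha_x^R + (1-\delta)\alpha_y^R\bigr)},
\]
which quantifies how much $b^*$ must shrink. Combined with the algebraic identity $\omega(\beta;b)-\omega(\beta;b') = \overline{w}\beta(1-\beta)(b'-b)/[(\beta+b(1-\beta))(\beta+b'(1-\beta))]$, this translates the forced change of $\omega$ at $\beta=\delta$ into the corresponding change at $\beta=\betana$, which in turn drives the indirect term. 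After cancellation, testing the sign of $d\beta^o/d\mu_a$ reduces to a single inequality of the form $\eta^F(1-\betana)(\betana\alpha_x^F+(1-\betana)\alpha_y^F)(\delta+b^*_0(1-\delta))^2 < \eta^R(1-\delta)(\delta\alpha_x^R+(1-\delta)\alpha_y^R)(\betana+b^*_0(1-\betana))^2$ (and its $\mu_a$-parametrized analogue along the path).

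The hard part will be establishing this scalar inequality uniformly for $\mu_a\in(0,1-\mu_1-\mu_2]$: the direct effect is proportional to $\eta_a/\eta^F$ while the indirect effect is mediated by $\eta_a/\eta^R$, and the geometry of $\omega(\cdot;b)$ changes substantially as $b^*$ varies with $\delta$. One would then choose $\overline{\delta}$, depending on the configuration ${\cal C}$, so that the inequality holds throughout; integrating $d\beta^o/d\mu_a<0$ from $0$ to $\mu_a$ then yields the strict conclusion $\beta^o(\mu_a)<\betana$. The most delicate point is ensuring the inequality does not fail for $\delta$ close to the feasibility boundary $\beta^{o,\infty,R}_\infty(0)$, where $b^*_0\to\infty$ and the two effects become comparable in magnitude; controlling $\overline{\delta}$ away from this boundary is the crux.
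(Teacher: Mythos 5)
There is a genuine gap: your argument reduces the corollary to a scalar dominance inequality (direct adversarial effect beats the indirect effect of the shrinking $b^*$) and then stops, explicitly labelling that inequality "the hard part" and asserting that $\overline{\delta}$ "would" be chosen to make it hold. That quantitative step is the entire content of the corollary, and you neither prove it nor exhibit the limiting mechanism that makes it true for small $\delta$. In the paper the dominance is established, and the key is that it only needs to hold at the single point $\betana$, not along the whole path in $\mu_a$: one evaluates $g^{o,F}_\beta(\betana)$ under the perturbed parameters $(\mu_a, b^*(\mu_a))$, uses $g^{o,F}_\beta(\betana;\mu_a=0,b^*_0)=0$, and writes the residue as $\betana\mu_a m_f\eta_a\,(\Xi(\delta,\mu_a)-1)$, where $\Xi$ contains the explicit closed-form ratio $(b^*_0-b^*)/(\eta_a\mu_a)$ from \eqref{eqn_optimal_parameter_old}. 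That ratio carries a factor $\delta^2/(1-\delta)$, so $\Xi\to 0$ as $\delta\to 0$ uniformly in $\mu_a$ and $b^*$, giving $g^{o,F}_\beta(\betana)<-\betana\mu_a m_f\eta_a/2<0$ for all $\delta\le\overline{\delta}$; then uniqueness of the zero of $g^{o,F}_\beta$ (convex/concave/linear with $g^{o,F}_\beta(0)>0$, Corollary \ref{corollary_ex_wm}) forces $\beta^o(\mu_a)<\betana$. Nothing in your write-up plays the role of this $\delta^2$-driven uniform limit, so the claim that $\overline{\delta}$ exists is unsupported.

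Two further weaknesses of the derivative route, even if the inequality were supplied: (i) differentiating the implicit zero $\beta^{o,\infty,F}$ in $\mu_a$ and $b$, and the binding constraint to get $db^*/d\mu_a$, presupposes smoothness and a nondegenerate $\partial g/\partial\beta$ at the zero, none of which is established in the paper or needed by it — Theorem \ref{thrm_unique_att} is a finite-difference sign test, not a derivative statement; (ii) as $\mu_a$ increases, the optimizer can switch from case (i) of Theorem \ref{thrm_opt} ($b^*>0$, constraint binding) to case (ii) ($b^*=0$), so $b^*(\mu_a)$ is only piecewise smooth and the mixed situation $b^*_0>0$, $b^*(\mu_a)=0$ must be treated separately (the paper's Case 2); your remark that case (ii) makes the argument "collapse" only covers the sub-case $b^*_0=0$ as well. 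Finally, you aim at the strictly stronger statement $d\beta^o/d\mu_a<0$ along the whole range, which is more than the corollary asks and harder to verify uniformly than the paper's pointwise comparison at $\betana$.
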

\revg{Thus, the above corollary confirms our anticipation that the performance degrades with introduction of the a-users in the system, however for a smaller range of $\delta$; observe that the OSN is interested in keeping $\delta$ as small as possible, therefore, such choices of $\delta$ are indeed meaningful. \textit{Henceforth, we consider such $\delta$, i.e., $\delta \leq \overline{\delta}$}. }
 In the next subsection, we will validate this result numerically and reinforce the requirement to design better WMs in the presence of adversaries.

\subsection{Numerical analysis for eo-WM}
\revg{At first, we would like to compare eo-WM with the mechanism in \cite{kapsikar2020controlling} with just a-users added -- in the first example, any user on the OSN can either be a ws-user or an a-user ($\mu_2 + \mu_a = 1$). Thus, there is no wi-user and everyone participates.} Further, let the parameters be as in \cite{kapsikar2020controlling}:
\begin{align}\label{eqn_param_plot1}
\begin{aligned}
    m_f &= 28, \eta^F = 0.08, \eta^R = 0.05, \gamma = 0.1, \eta_a = 0.55, \delta = 0.02,\\
    \alpha_x^F &= 0.85, \alpha_y^F = 0.6375, \alpha_x^R = 0.3 \mbox{ and } \alpha_y^R = 0.09.
\end{aligned}
\end{align}
For such parameters, we perform Monte-Carlo  (MC) simulation, and also evaluate the zeroes of $g_\beta^{o, u}$ for each $u \in \{R, F\}$. In Figure \ref{fig:exWM}, we plot the outputs of MC simulations and the theoretical limits against $\mu_a$, which can be seen to be close to each other. The constraint for the real post is satisfied. In fact, the proportion of tags (for both fake and real posts) decreases with $\mu_a$, which is intuitive as a-users deliberately real tag the posts.

\begin{figure}[http]
\centering
\includegraphics[trim = {1.6cm 6.5cm 0cm 6.5cm}, clip, scale = 0.3]{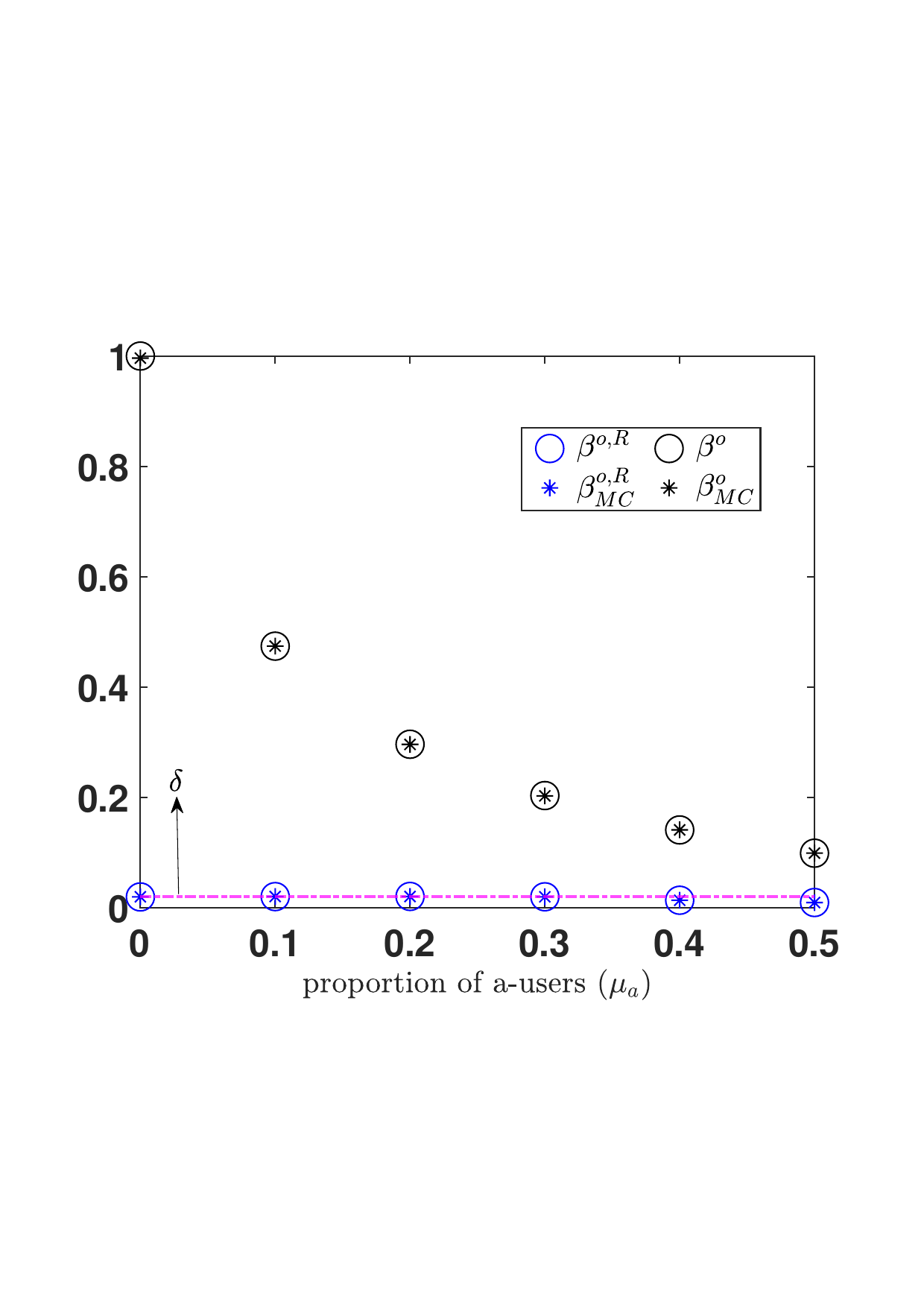}
  \caption{Limits of warning dynamics under eo-WM}
  \label{fig:exWM}
\end{figure}
Under the optimal eo-WM, $99.981\%$ of users can identify the fake post as fake; this optimal value is higher than the reported $90\%$ in \cite{kapsikar2020controlling}, as we use $w^* = 1.076$, while algorithm  in \cite{kapsikar2020controlling}  uses $w^* = 1$. 
Now, it is interesting to note that with just $1\%$ and $2\%$ of a-users on the OSN, the performance of the eo-WM decreases to $89.798\%$ and $81.74\%$ respectively (\revr{in fact, there is degradation with respect to the new QoS defined in \eqref{eqn_iqos} which focuses only on non-adversarial users; $99.981\%$ decreases to $95.8\%$ and $92.53\%$ respectively with $1\%$ and $2\%$ of a-users}). This depicts that the original WM is not sufficient to control the fake post propagation in the presence of adversaries.  

\revr{Next, we consider a second example with parameters almost as in \eqref{eqn_param_plot1}, but with 
the proportion of ws-users ($\mu_2$) fixed and with $\mu_a$, the proportion of a-users varying. We set $\mu_2 = 0.5$, 
 $\mu_1 = 0$ and let the fraction of non-participants equal $0.5 - \mu_a$.  
For ease of reference, the users of this example are referred to as `\underline{smart users}', as here $\alpha_x^F - \alpha_x^R = 0.55$ and $\alpha_y^F - \alpha_y^R = 0.5475$ indicating that the users are capable of distinguishing the fake posts from real posts to a reasonable extent, even without external aid and irrespective of sender tag. 
\begin{figure}[http]
\begin{minipage}{\textwidth}
\centering
\begin{minipage}{.5\textwidth}
  \centering
  \includegraphics[trim = {1cm 6cm 0cm 6cm}, clip, scale = 0.3]{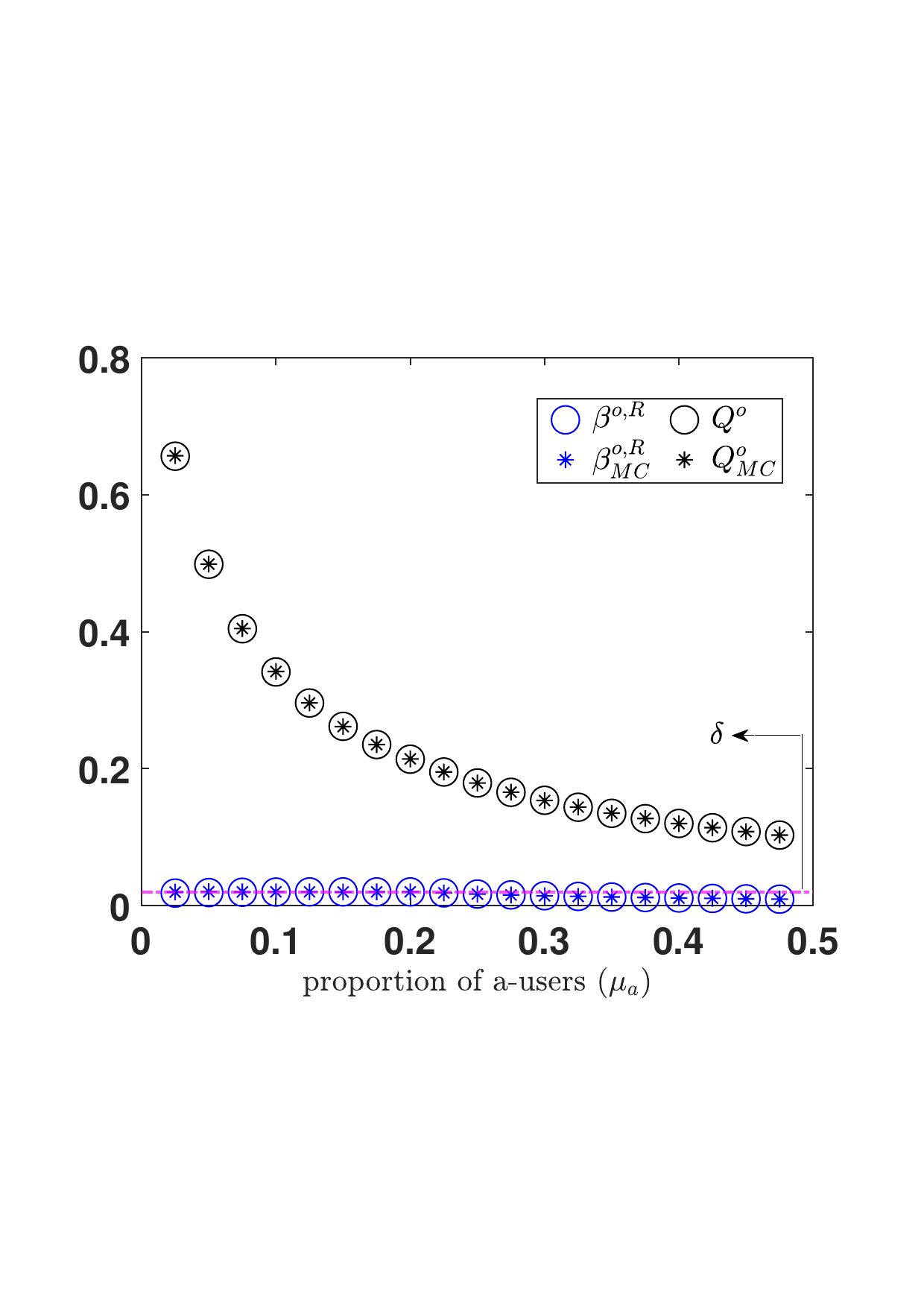}
\end{minipage}%
\begin{minipage}{.5\textwidth}
  \centering
  \includegraphics[trim = {1cm 6cm 0cm 6cm}, clip, scale = 0.3]{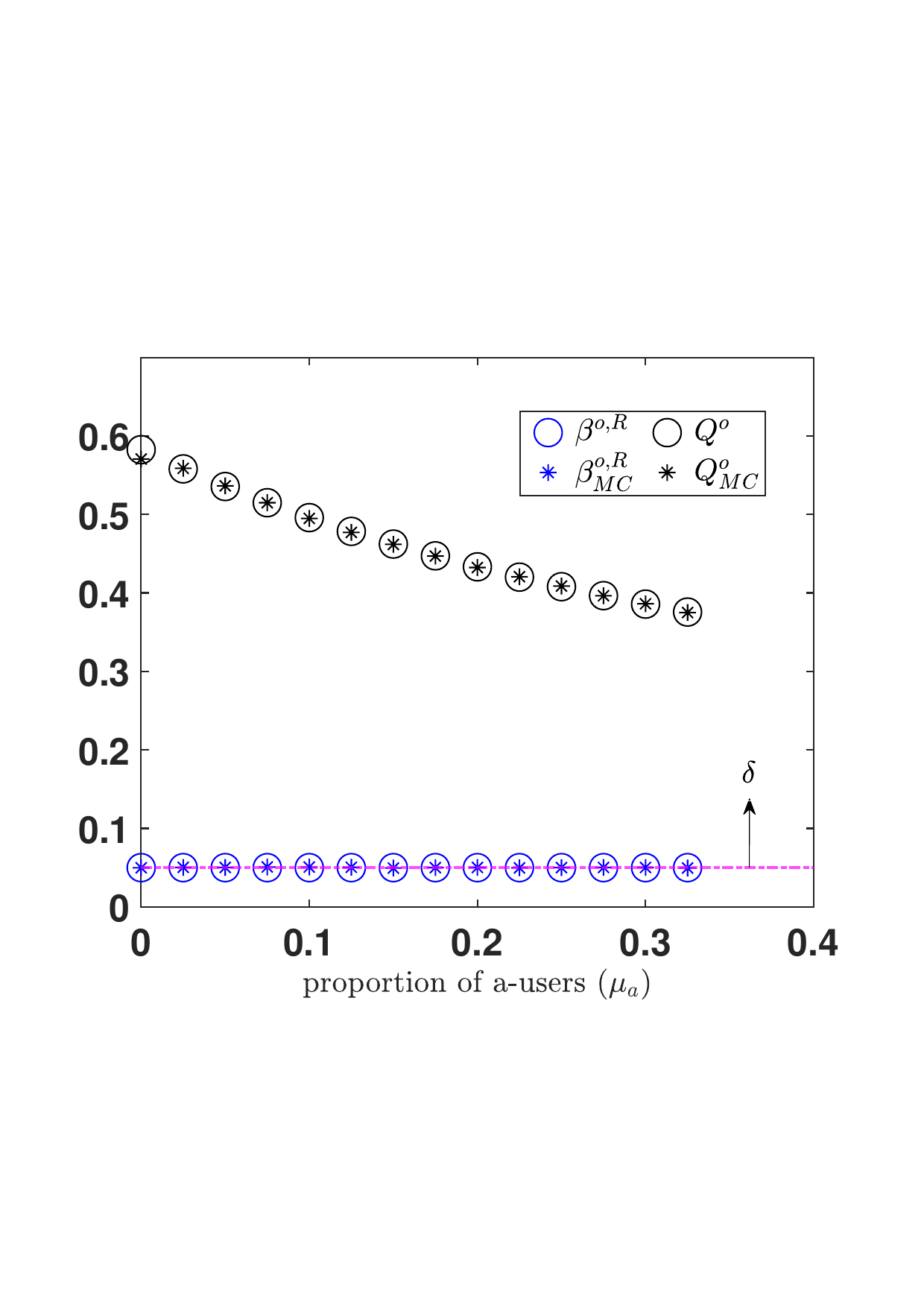}
\end{minipage}
\caption{Limits of warning dynamics under eo-WM with smart (left) and naive (right) users respectively}
\label{fig:eoWM_delta}
\end{minipage}
\begin{minipage}{\textwidth}
\centering
\begin{minipage}{.5\textwidth}
  \centering
  \includegraphics[trim = {1cm 6cm 0cm 6cm}, clip, scale = 0.3]{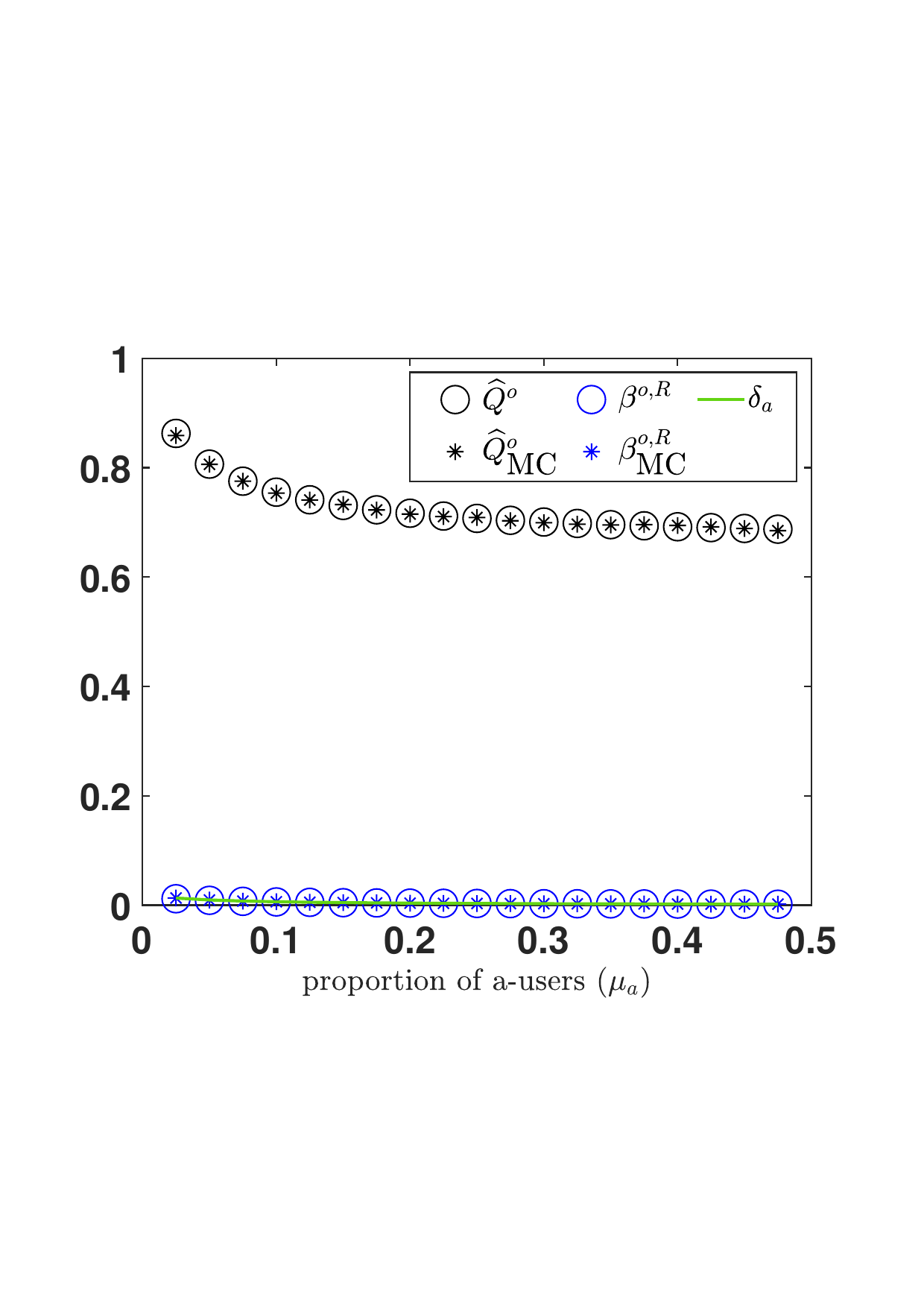}
\end{minipage}%
\begin{minipage}{.5\textwidth}
  \centering
  \includegraphics[trim = {1cm 6cm 0cm 6cm}, clip, scale = 0.3]{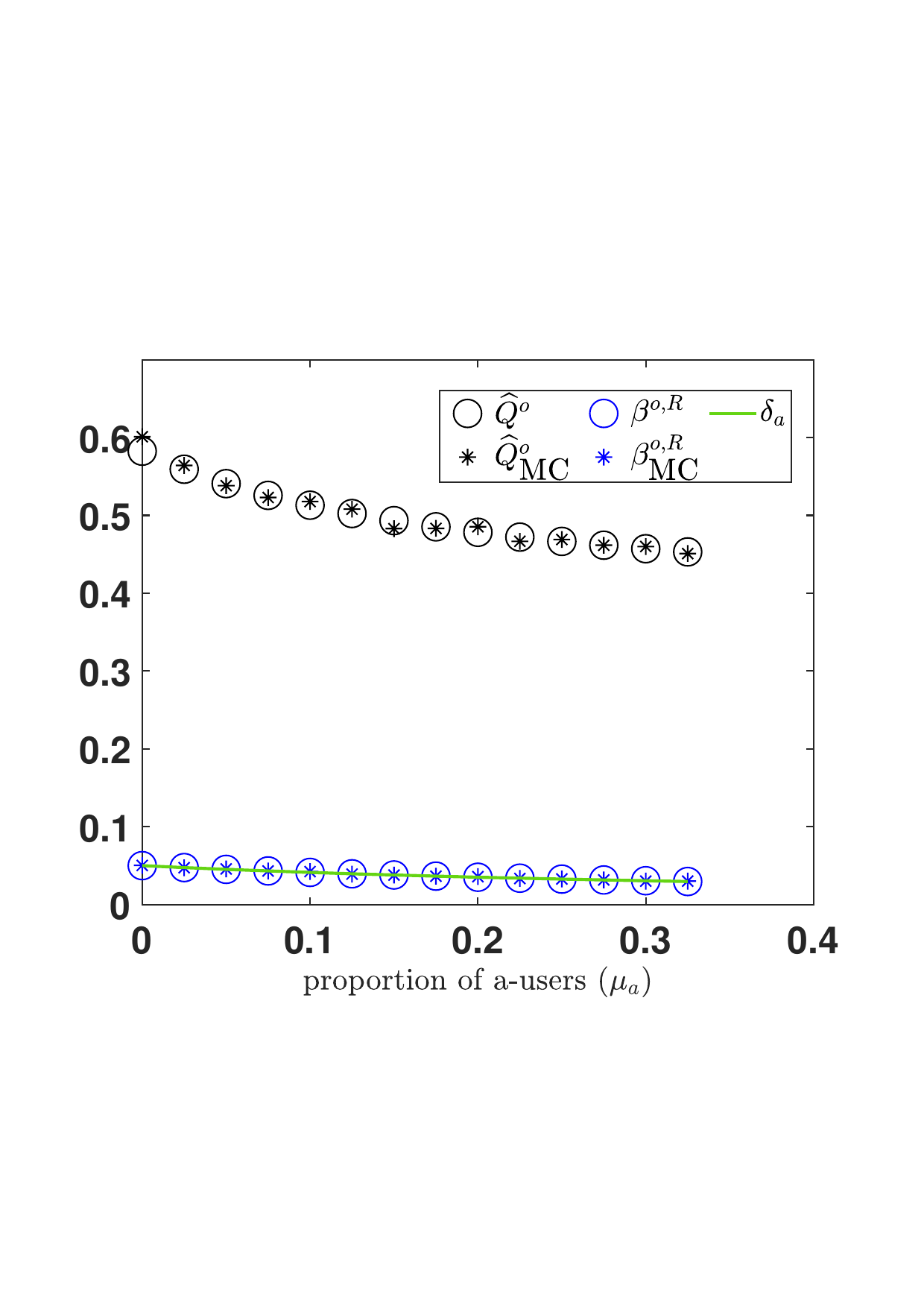}
\end{minipage}
\caption{i-QoS under eo-WM with smart (left) and naive (right) users respectively}
\label{fig:QOS_eo}
\end{minipage}
\end{figure}

We compare smart users with users in another example scenario where $\alpha_x^F - \alpha_x^R = 0.18$ and $\alpha_y^F - \alpha_y^R = 0.135$. As the differences between the distinguishing parameters are small, these users are referred to as `\underline{naive users}'. For this example, the remaining parameters are fixed as below (for diversity, we also consider more attractive posts):
\begin{align}\label{eqn_param_plot3}
\begin{aligned}
    \rho &= 0.9, m_f = 30, \eta^F = 0.52, \eta^R = 0.4, \gamma = 0.1, \eta_a = 0.55, \delta = 0.05,\\
    \alpha_x^F &= 0.3, \alpha_y^F = 0.225, \alpha_x^R = 0.12, \alpha_y^R = 0.09, \mu_1 = 0.15 \mbox{ and } \mu_2 = 0.5.
\end{aligned}
\end{align}Typically, the users may be naive -- may not possess sufficient intrinsic ability to distinguish between the posts to the level that smart users can. Interestingly, as seen below, the proposed mechanism effectively guides even naive users.

In Figure \ref{fig:eoWM_delta}, we illustrate the QoS ($Q$ defined in \ref{eqn_qos}) and the proportion of fake tags for the real post for examples with smart and naive users in left and right sub-figures, respectively. Many observations are similar to the first example: QoS decreases with an increase in $\mu_a$, and the proportion of fake tags for real post is at most $\delta$. The QoS in the left sub-figure with smart users is also less than that for first example provided in Figure \ref{fig:exWM}, which also considers smart users -- however, for the example in Figure \ref{fig:eoWM_delta}(left), the proportion of ws-users ($\mu_2$) is lesser than that in Figure \ref{fig:exWM} and the number of np-users is non-zero. Furthermore, as one may anticipate, the QoS with naive users is even smaller.}

\revr{\subsection{Improved QoS -- QoS among non-adversaries}
It is important to note that the OSN can only control/guide the fake tags from non-adversarial users. The aim is also confined to correctly identifying the actuality of the posts by such users. Hence, it is more appropriate to consider a metric/QoS focused on the proportion of fake tags only from ws and wi-users. We  aim to capture precisely the same in this sub-section and define the appropriate optimization problem. Towards this, let $X_1^u, X_2^u, X_a^u$ be the respective proportion of wi, ws and a-users at limit who fake tag the $u$-post; observe $X_a^u = 0$ and recall, np-users do not participate. Similarly, define $Y_1^u, Y_2^u, Y_a^u$ as the corresponding proportion of users who real tag; observe $Y_a^u = 1$. The limit approaches when the number of users that read the post, $t \uparrow \infty$, and consider a large enough $t$. Then, the number of fake tags by ws-users after $t$-th user reads the post can be approximated by $tX_2^u m_f \eta^u$ (one can anticipate this by the law of large numbers and because of \ref{a2_prop}). The other numbers can be approximated similarly, and as a result, one can re-write the overall proportion of fake tags as:
\begin{align*}
    \beta^u \approx \frac{(X_1^u + X_2^u)m_f \eta^u}{(X_1^u + X_2^u + Y_1^u + Y_2^u)m_f \eta^u + Y_a^u m_f \eta_a}.
\end{align*}
In a similar manner, the proportion of fake tags from non a-users represented by $\beta_a^u$, the quantity of actual interest, can be approximated as below:
$$
\beta_a^u \approx \frac{(X_1^u + X_2^u)m_f \eta^u}{(X_1^u + X_2^u + Y_1^u + Y_2^u)m_f \eta^u} = \frac{X_1^u + X_2^u}{X_1^u + X_2^u + Y_1^u + Y_2^u}.
$$
Thus, one can relate the two QoS metrics as follows:
\begin{align}\label{eqn_beta_u_only_wiws}
    \beta_a^u = \left(\frac{(\mu_1 + \mu_2) \eta_u + \mu_a \eta_a}{(\mu_1 + \mu_2) \eta^u}\right)\beta^u.
\end{align}
The above discussion motivates us to define an `improved quality of service (i-QoS)' with respect to any warning-mechanism:
\begin{align}\label{eqn_iqos}
    \widehat{Q} := \inf \left \{ \left(\frac{(\mu_1 + \mu_2) \eta^u + \mu_a \eta_a}{(\mu_1 + \mu_2) \eta^u}\right)\beta : \beta \in \cA_\beta^F \cup \cR_\beta^F\right \} =  \left(\frac{(\mu_1 + \mu_2) \eta^u + \mu_a \eta_a}{(\mu_1 + \mu_2) \eta^u}\right)Q.
\end{align}
One can interpret $\widehat{Q}$ as the almost sure lower bound on the limit proportion of fake tags for fake post from non a-users. Henceforth, we also consider the comparison of various warning mechanisms using this more relevant metric, i-QoS. Further, we illustrate a lot more improvement when optimization problem \eqref{eqn_opt_prob} is instead designed using i-QoS.
Observe that i-QoS is simply a constant multiple of QoS, and hence by Corollary \ref{corollary_ex_wm} and by \eqref{eqn_iqos}, the i-QoS for eo-WM (represented by $\widehat{Q}^o$) is unique. Thus, the original optimization problem \eqref{eqn_opt_prob} changes to the following, for some $\delta \in (0,1)$:
\begin{align}\label{eqn_new_opt}
    \sup_{w \in \left[0,  \overline{w}\right], b \in [0, \infty)} \widehat{Q}^o(w, b)
    \mbox{ subject to } {\beta}^{o, \infty, R}(w, b) \leq \delta_a := \frac{\delta ((\mu_1 + \mu_2)\eta^R) }{((\mu_1 + \mu_2)\eta^R + \mu_a\eta_a)}.
\end{align}
Observe that the above optimization problem has the same structure as in \eqref{eqn_opt_prob}, except that $\delta$ is replaced by $\delta_a$; hence, $w^*, b^*$ can be derived by Theorem \ref{thrm_opt} directly. The optimal value of the above problem represents the fraction of non a-users (wi and ws-users) who correctly identify the fake post as fake. 
When $\mu_a > 0$ is sufficiently large, then QoS is sufficiently small (lesser than $1-\mu_a$), as it includes the effects of a-users real tagging. However, this does not imply that the WM failed; in fact, on the contrary, at the extreme end, WM is completely successful in eliminating the effect of adversaries if optimal $\widehat{Q}^o = 1$  (indicating that all the non a-users correctly identify the fake post).

In Figure \ref{fig:QOS_eo}, we continue with the two examples of Figure \ref{fig:eoWM_delta}, where we plot i-QoS and its MC estimates, and the corresponding quantities for the real post; the left sub-figure has smart users and right sub-figure has naive users. It is clear that the proportion of fake tags for the real post ($\beta^{o, R}$, see blue curves) are within $\delta_a$-threshold for both cases. More interestingly, the results of the said figure for the fake post indicate that the results of Figure \ref{fig:eoWM_delta} are mis-leading; the latter figure shows extremely high level of degradation in QoS with $\mu_a$, while the same is not the case in the former; this is obviously because  the latter also counts the (intentional) real tags from a-users. For example, when $\mu_a = 0.3$, the QoS is $15.38\%$ in Figure \ref{fig:eoWM_delta}(left), while the actual fraction of fake tags among the smart non a-users is around $70.06\%$. Thus, the degradation with $\mu_a$ may not be as large as depicted in Figure \ref{fig:eoWM_delta}, nonetheless there is sufficient degradation as $\mu_a$ increases (for example, from $99.981\%$ at $\mu_a = 0$ to $70.06\%$ for $\mu_a = 0.3$). 

The above illustrations motivate us to design better warning mechanisms which achieve higher performance. The underlying theme of the entire chapter is to optimize/increase the proportion of fake tags for the fake post while still ensuring that the constraint in \eqref{eqn_new_opt} for the real post is satisfied. In this section, we optimized the performance of the eo-WM for the fake post and achieved exactly $\delta$-threshold for the real post. In the coming sections, we will attempt to design WMs that increase performance without compromising the real post. As mentioned before, this goal is achieved by designing appropriate WMs such that the resultant $g_\beta$ of \eqref{eqn_general_g_beta} has zeroes with desirable properties, which in turn dictate the limiting behaviour of WM as confirmed by Theorem \ref{thrm_BP_to_fake}. }  To this end, the first idea is to eliminate the effect of adversaries, which we consider next.

\section{Eliminating Adversarial Effect WM (ea-WM)}
The OSN may not know the exact set of adversarial users, but it knows the proportion of adversarial users ($\mu_a$). We aim to use this knowledge to design a new, improved WM which performs better even when $\mu_a$ is large. \textit{The idea is to construct a WM specific to any given $\mu_a > 0$, namely $\omega^a(\beta)$, such that $g^F_\beta$ under the new WM exactly matches that corresponding to $g^{o, F}_\beta$ with $\mu_a = 0$, at optimality (see \eqref{eqn_beta_ODE_etac1}).} In other words, using the knowledge of $\mu_a$, we are creating a hypothetical situation with no adversaries, and hence we name $\omega^a$ as \underline{eliminating adversarial effect WM} (ea-WM). If possible, one can anticipate that the performance will improve for the fake post under ea-WM; we will identify such conditions below. Further, one still needs to ensure that the performance of real post is not degraded beyond $\delta$ as in \eqref{eqn_opt_prob} \revr{(beyond $\delta_a$ as in \eqref{eqn_new_opt} when i-QoS is considered); this is ensured by the WM proposed in this section (and by coming WMs as well).} Towards this, we define $\omega^a$ as:
\begin{align}\label{eqn_warning_ea}
    {\omega}^a(\beta) = \omega(\beta) + \frac{\beta \mu_a m_f \eta_a}{ \mu_2 m_f\eta^F \bigg(\beta \alpha_x^F + (1-\beta)\alpha_y^F\bigg) }.
\end{align}
Consider $w, b$ and $\beta$ such that $\min\{\alpha_x^u \omega^a(\beta), 1\} = \alpha_x^u \omega^a(\beta)$. Then $g^F_\beta$ under ea-WM, henceforth denoted as $g^{a, F}_\beta$, matches with $g^{o, F}_\beta(\beta; \mu_a = 0)$, because (see \eqref{eqn_general_g_beta}):
\begin{align}\label{eqn_g_F_eaWM}
\begin{aligned}
    g^{a, F}_\beta(\beta; \mu_a > 0) &= -\beta \mu_2 m_f\eta^F - \beta \mu_1 (1-\alpha_x^F \rho) m_f\eta^F + (1-\beta) \mu_1 \rho  \alpha_y^F m_f\eta^u \\
    &\hspace{1cm}+ \mu_2 \omega^a(\beta) \bigg(\beta \alpha_x^F + (1-\beta)\alpha_y^F\bigg) m_f\eta^F\\
    &= g^F_\beta(\beta; \mu_a = 0).
\end{aligned}
\end{align}
Thus, if $\min\{\alpha_x^u \omega^a(\beta), 1\} = \alpha_x^u \omega^a(\beta)$ is satisfied in a neighborhood of $\betana$, then one can design the required ea-WM, if further the performance of real post is within $\delta$-threshold \revr{(or $\delta_a$-threshold)}. In view of Theorem \ref{thrm_opt}, we set $w, b$ as follows for the new ea-WM \revr{(similarly, with $\delta_a$)}:
\[
w = \overline{w} \mbox{ and } b = 
\begin{cases}
    b^*|_{\mu_a = 0} = \left(\frac{\delta}{1-\delta}\right)\left(\frac{\overline{w} \mu_2(\delta \alpha_x^R + (1-\delta)\alpha_y^R)}{\delta (\mu_1+\mu_2) - (\mu_1 \rho + \mu_2 \gamma) (\delta \alpha_x^R  + (1-\delta) \alpha_y^R)} - 1 \right), &\mbox{if } \beta^{a, \infty, R}(\overline{w}, 0) > \delta,\\
    0, & \mbox{otherwise}.
\end{cases}
\]
Now, similar to eo-WM, for each $u \in \{R, F\}$, we will first identify the set of attractors ($\cA^{a, u}_\beta$) and the combined set of repellers and saddle points ($\cR^{a, u}_\beta$) for the \revg{ODE \eqref{eqn_general_g_beta} under ea-WM, i.e., $\dot{\beta}^u = g_\beta^{a, u}(\beta)$.}
\begin{theorem}\label{corollary_ea_wm}
    Define 
    \begin{align}\label{eqn_l_threshold_mua_eaWM}
        \Delta_a := \mu_2 \eta^F \left(\frac{1}{\alpha_x^F} - \omega(\betana)\right) \left(\frac{\betana\alpha_x^F + (1-\betana)\alpha_y^F}{\betana \eta_a}\right). 
        \end{align}
    Then, the following statements are true for the fake post:
    \begin{enumerate}[label=(\roman*)]
        \item If $0 < \mu_a \leq \min\{1-\mu_1-\mu_2, \Delta_a\}$, then ${\beta}^{a} \geq \betana$ for all ${\beta}^{a} \in \cA^{a, F}_\beta \cup \cR^{a, F}_\beta$.
    
        \item Else, i.e., if $\Delta_a < \mu_a < 1-\mu_1-\mu_2$, then $ {\beta}^{a} \in (\beta^o, \betana)$ for all ${\beta}^{a} \in \cA^{a, F}_\beta \cup \cR^{a, F}_\beta$.
    \end{enumerate}
    For the real post, ${\beta}^{a, R} < \delta$ for all ${\beta}^{a, R} \in \cA^{a, R}_\beta \cup \cR^{a, R}_\beta$. \eop
\end{theorem}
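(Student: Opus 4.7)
The key design principle of the ea-WM is the identity
\begin{equation*}
g^{a,F}_\beta(\beta;\mu_a) \;=\; g^F_\beta(\beta;\mu_a=0) \qquad \text{whenever } \alpha_x^F\omega^a(\beta)\le 1,
\end{equation*}
which follows directly from (\ref{eqn_warning_ea})--(\ref{eqn_g_F_eaWM}) and the observation that $\alpha_y^F<\alpha_x^F$, so the $\alpha_y^F$-$\min$ is automatically inactive whenever the $\alpha_x^F$-$\min$ is. Whenever either $\min$ is active, the replacement of a linear term by its cap $=1$ strictly decreases the value of $g^{a,F}_\beta$, so in general
\begin{equation*}
g^{a,F}_\beta(\beta;\mu_a) \;\le\; g^F_\beta(\beta;\mu_a=0), \qquad \beta\in[0,1],
\end{equation*}
with equality iff no $\min$ is active at $\beta$. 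The threshold $\Delta_a$ is precisely the value of $\mu_a$ at which $\omega^a(\betana)=1/\alpha_x^F$; this is a straightforward rearrangement of (\ref{eqn_l_threshold_mua_eaWM}). I would also verify, using the structure of $\omega(\cdot)$ in (\ref{eqn_warning}) and the elementary computation $\frac{d}{d\beta}\left[\frac{\beta}{\beta\alpha_x^F+(1-\beta)\alpha_y^F}\right] = \frac{\alpha_y^F}{(\beta\alpha_x^F+(1-\beta)\alpha_y^F)^2}>0$, that $\omega^a$ is non-decreasing in $\beta$ (the case $b=0$, which makes $\omega$ constant, is handled analogously since $\omega^a$ is then still non-decreasing).

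\textbf{Case (i), $\mu_a\le\Delta_a$.} Monotonicity of $\omega^a$ gives $\omega^a(\beta)\le\omega^a(\betana)\le 1/\alpha_x^F$ for all $\beta\le\betana$, so $g^{a,F}_\beta(\beta)=g^F_\beta(\beta;\mu_a=0)$ on $[0,\betana]$. Since $\betana$ is (by Corollary \ref{corollary_ex_wm} applied with $\mu_a=0$) the unique zero of $g^F_\beta(\cdot;\mu_a=0)$ in $[0,1]$ and the function is positive below it, $\betana$ is a zero of $g^{a,F}_\beta$ and no zero of $g^{a,F}_\beta$ lies strictly below $\betana$. This gives $\beta^a\ge\betana$ for every $\beta^a\in \cA^{a,F}_\beta\cup\cR^{a,F}_\beta$.

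\textbf{Case (ii), $\mu_a>\Delta_a$.} Now $\omega^a(\betana)>1/\alpha_x^F$, so the $\alpha_x^F$-$\min$ is active at $\betana$, giving a strict drop:
\begin{equation*}
g^{a,F}_\beta(\betana;\mu_a) \;<\; g^F_\beta(\betana;\mu_a=0) \;=\; 0.
\end{equation*}
For $\beta>\betana$, the same inequality (with $\le$) combined with $g^F_\beta(\beta;\mu_a=0)<0$ forces $g^{a,F}_\beta(\beta)<0$, so there are no zeros in $(\betana,1]$. For the lower bound, compare with the eo-WM: since $\omega^a(\beta)\ge\omega(\beta)$ and $\alpha_x^F\omega(\beta)<1$ always, we have $\min\{\alpha_i^F\omega^a(\beta),1\}\ge\alpha_i^F\omega(\beta)$ for $i\in\{x,y\}$. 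The remaining terms of $g^{a,F}_\beta$ and $g^{o,F}_\beta$ (including the $-\beta\mu_a m_f\eta_a$ term) coincide, so $g^{a,F}_\beta(\beta;\mu_a)\ge g^{o,F}_\beta(\beta;\mu_a)$ with strict inequality whenever the compensation is strictly positive (i.e., for $\beta>0$). Since $\beta^o$ is the unique zero of $g^{o,F}_\beta$ by Corollary \ref{corollary_ex_wm}, we get $g^{a,F}_\beta(\beta^o;\mu_a)>0$ and $g^{a,F}_\beta(\beta;\mu_a)>0$ on $[0,\beta^o]$, ruling out zeros in that range. Continuity of $g^{a,F}_\beta$ on $(\beta^o,\betana)$ then yields at least one zero in the open interval, and the argument above shows every zero lies there.

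\textbf{Real post.} A direct computation substituting $\omega^a-\omega$ into the $g^R_\beta$ expression (\ref{eqn_general_g_beta}) while both $\min$'s are inactive shows
\begin{equation*}
g^{a,R}_\beta(\beta;\mu_a) \;=\; g^R_\beta(\beta;\omega,\mu_a=0)\;-\;\beta\mu_a m_f\eta_a\left[1-\frac{\eta^R(\beta\alpha_x^R+(1-\beta)\alpha_y^R)}{\eta^F(\beta\alpha_x^F+(1-\beta)\alpha_y^F)}\right],
\end{equation*}
and the bracketed factor is strictly positive by the regime assumptions $\eta^F>\eta^R$, $\alpha_i^F>\alpha_i^R$. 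Activating either $\min$ can only further decrease $g^{a,R}_\beta$, so $g^{a,R}_\beta(\beta;\mu_a)<g^R_\beta(\beta;\omega,\mu_a=0)$ for all $\beta\in(0,1]$. By the choice $b=b^*|_{\mu_a=0}$ in Theorem \ref{thrm_opt}, $\delta$ is the unique zero of $g^R_\beta(\cdot;\omega,\mu_a=0)$; hence $g^{a,R}_\beta(\delta;\mu_a)<0$ and $g^{a,R}_\beta(\beta;\mu_a)<0$ for all $\beta\ge\delta$, so every $\beta^{a,R}\in\cA^{a,R}_\beta\cup\cR^{a,R}_\beta$ lies strictly below $\delta$ (with the case $b=0$ treated analogously, where $\beta^{a,\infty,R}(\overline{w},0)\le\delta$ holds by construction).

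\textbf{Main obstacle.} The delicate part is the bookkeeping for which of the two $\min$'s is active at various $\beta$ and simultaneously tracking the strict vs.\ non-strict inequalities; in particular, ruling out any additional zero of $g^{a,F}_\beta$ in Case (ii) below $\beta^o$ requires the sharpness of the comparison $g^{a,F}_\beta\ge g^{o,F}_\beta$, which in turn rests on the fact that the eo-WM never saturates for the allowed range of $w$.
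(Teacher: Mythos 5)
Your proposal is correct and follows essentially the same route as the paper's proof: the design identity $g^{a,F}_\beta(\cdot;\mu_a)=g^{F}_\beta(\cdot;\mu_a=0)$ whenever the cap is inactive, the reading of $\Delta_a$ as the saturation threshold $\alpha_x^F\omega^a(\betana)=1$, the comparison $g^{a,F}_\beta\ge g^{o,F}_\beta$ against the eo-WM (at the same $\mu_a$) for the lower bound $\beta^a>\beta^o$, and the explicit negative correction $-\beta\mu_a m_f\eta_a\bigl[1-\tfrac{\eta^R(\beta\alpha_x^R+(1-\beta)\alpha_y^R)}{\eta^F(\beta\alpha_x^F+(1-\beta)\alpha_y^F)}\bigr]$ for the real post. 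The only difference is organizational: the paper introduces the saturation point $\widetilde\beta$ and argues piecewise on $[0,\widetilde\beta]$ and $(\widetilde\beta,\betana)$, whereas you apply the eo-WM comparison uniformly on $[0,\beta^o]$ — a mild streamlining of the same argument.
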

In view of the above and Theorem \ref{thrm_BP_to_fake}, we get that the stochastic iterates $\Ups_n$ under ea-WM for the $u$-post either converge to $\{\mathbf{h}(\beta) : \beta \in \cA^{a, u}_\beta \cup \cR^{a, u}_\beta \} \cup \{\mathbf{0}\}$, or hover around $\{\mathbf{h}(\beta) : \beta \in \cR^{a, u}_\beta \} \cup \{\mathbf{0}\}$. \revr{Unlike eo-WM, the above Theorem does not guarantee a unique limit for the warning dynamics under ea-WM in the non-extinction paths, but Theorem \ref{thrm_BP_to_fake}(ii) ensures that there exists at least one attractor of the ODE \eqref{eqn_general_g_beta}, as} $\cA^{a, u}_\beta \neq \emptyset$.

\revr{Now, note that ea-WM provides a higher warning than the eo-WM, even for the real post. Even with such a WM, it is proved above that the proportion of the real post is maintained\footnote{Some equilibrium points can be saddle points and according to Theorem \ref{thrm_BP_to_fake}, the warning dynamics can hover around such points. However, then the warning dynamics move arbitrarily close to such points, and we assume the equilibrium points to be representative of the limiting behaviour. This leads to a small level of inaccuracy in the sense that the warning dynamics can go above or below the point in case of hovering around.} within $\delta$-threshold. Further, due to higher warning, we expect a higher QoS under ea-WM; next we discuss the same.
Let the QoS \eqref{eqn_qos} under ea-WM be represented by $Q^a$. In view of Theorem \ref{corollary_ea_wm}, we claim that $Q^a > Q^o$ for the following reasons:

(i) when $\mu_a$ is small, i.e., when  $\mu_a \leq \Delta_a$, we have $Q^a \geq \betana > Q^o$ (by Theorem \ref{corollary_ea_wm}(i) and Corollary \ref{cor_beta_o_na}). Thus, ea-WM with adversaries achieves higher QoS than the original eo-WM without adversaries. Then, one can say that the former eliminated the effect of adversaries completely.

(ii) when $\mu_a$ is larger, i.e., when $\mu_a > \Delta_a$,  ea-WM still improves over eo-WM as $Q^a > Q^o$ by Theorem \ref{corollary_ea_wm}(ii). However, in this case, the QoS under ea-WM is lesser than the QoS under eo-WM with no adversary as $Q^a < \betana $. Thus, in this case, the effect of adversaries is not completely eliminated by ea-WM.

Similar design and observations follow when one attempts to design ea-WM with i-QoS, i.e., by replacing $\delta$ with $\delta_a$. Recall again that with i-QoS, we consider a more relevant problem that focuses only on the responses from non a-users.}

\subsection{Numerical analysis for ea-WM}\label{num_exp_eaWM}
In this sub-section, we will numerically quantify the improvement achieved by ea-WM in comparison to eo-WM; we consider only i-QoS based problems and results.
\revr{In Figure \ref{fig:eaWM}, we continue with the two examples considered in Figure \ref{fig:exWM} (i.e., with smart and naive users) for ea-WM. We plot the i-QoS with respect to ea-WM (denoted as $\widehat{Q}^a$) evaluated via the exact zeroes of $g^{a, u}_\beta$ and the corresponding MC estimates for the ea-WM. Once again, we observe a close match between the theoretical expressions and the corresponding MC estimates. 
\begin{figure}[http]
\centering
\begin{minipage}{.5\textwidth}
  \centering
  \includegraphics[trim = {1cm 6cm 0cm 6cm}, clip, scale = 0.3]{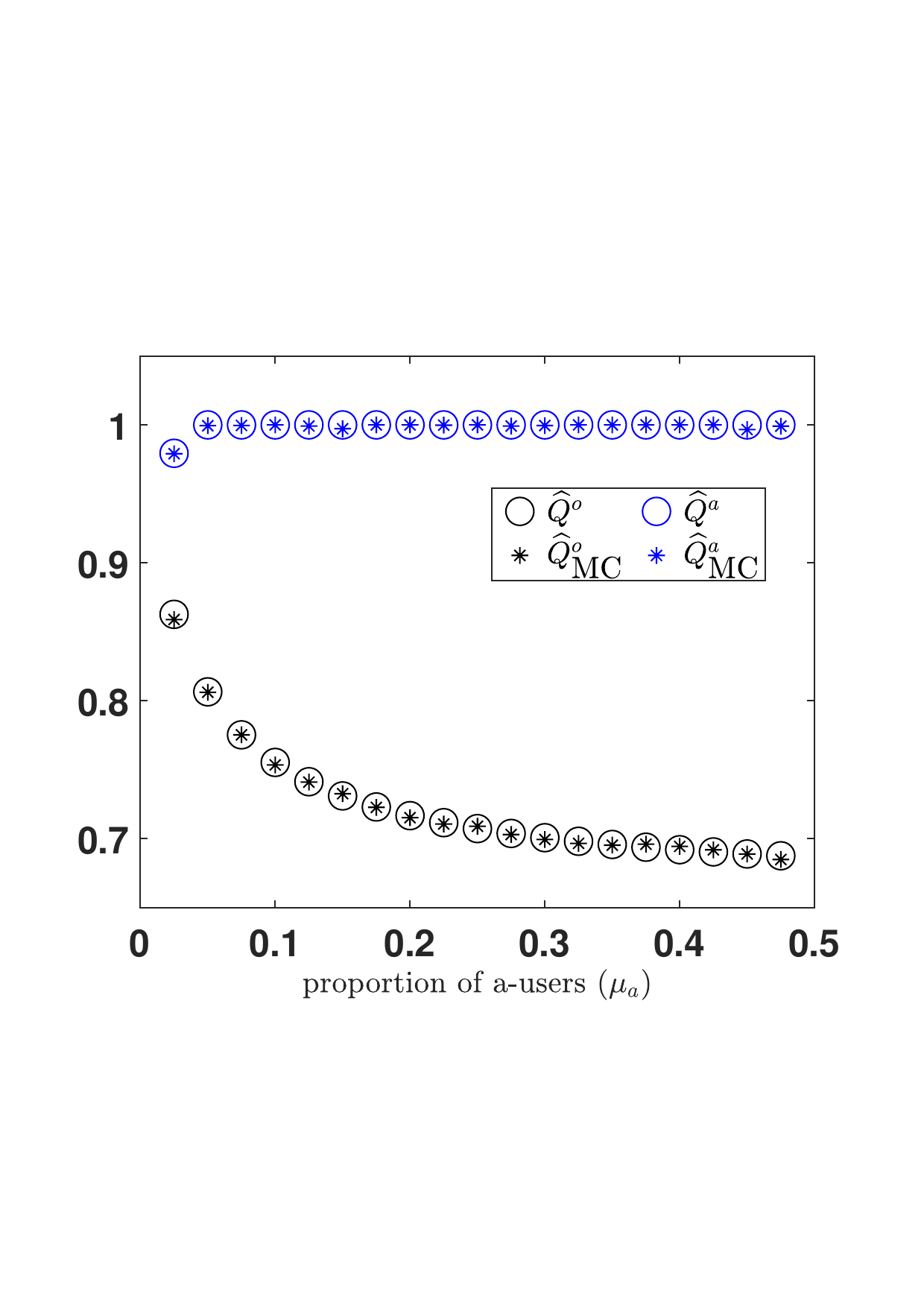}
\end{minipage}%
\begin{minipage}{.5\textwidth}
  \centering
  \includegraphics[trim = {1cm 6cm 0cm 6cm}, clip, scale = 0.3]{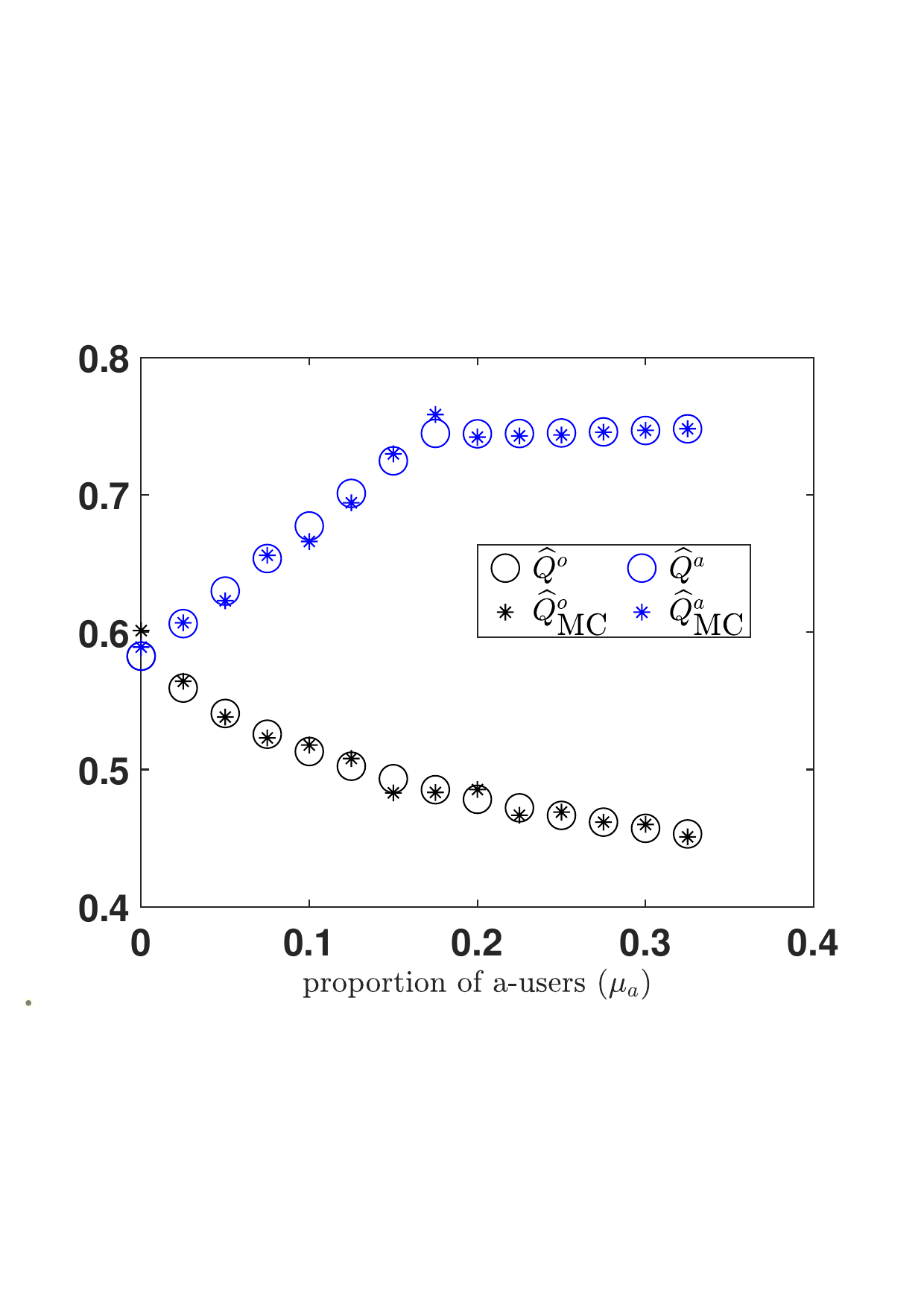}
\end{minipage}
\caption{Comparison of i-QoS under eo-WM and ea-WM, with smart (left) and naive (right) users respectively}
\label{fig:eaWM}
\end{figure}

Further, as seen from the figure \eqref{fig:eaWM}, in all the case studies, the i-QoS improves; nonetheless, this way of improvement does not degrade the performance of the real post, as confirmed by Theorem \ref{corollary_ea_wm}) and also as observed in Figure \ref{fig:ehWM_real} which plots the performance for the real post.
More interestingly,  the i-QoS and the improvement (with respect to eo-WM) both increase sharply with $\mu_a$.  
 Thus,  even in the presence of a larger fraction of a-users confusing the WM, ea-WM is able to nudge the non a-users to identify the fake post as fake correctly. In view of Theorem \ref{corollary_ea_wm}, this may  be true as  ea-WM provides increasingly high warning levels with increase in $\mu_a$ (see \eqref{eqn_warning_ea}).  
 One probably can design a better WM  that  provides higher warning levels even with a smaller value of $\mu_a$ (which again ensures the required real performance), and the quest further is precisely for the same. 
 }

\revr{From Figure \ref{fig:eaWM}(left), for the case study  with smart users, observe that $\widehat{Q}^a = 1$, the maximum possible i-QoS, for $\mu_a \geq 0.05$. However, ea-WM fails to achieve such high i-QoS with naive users --- i-QoS is less than $0.8$  in right sub-figure of Figure \ref{fig:eaWM}. The quest again is for a better WM  which works well even for naive users, and this is considered in the immediate next. 


}

\section{Enhanced WM (eh-WM)} \label{sec_enWM}
In this section, we design an improved version of ea-WM. The idea is to design a warning $\omega^h$ such that $\omega^a(\beta) < \omega^h(\beta)$ for all $\beta \in [0,1]$. \revg{In lines of Theorem \ref{thrm_unique_att},} such monotonicity of the WM will ensure that the zeroes of the function $g_\beta^F$ (see \eqref{eqn_general_g_beta}) corresponding to the new WM are larger than that of $g_\beta^{a, F}$. However, the design should be such that the performance of the new WM for the real post is not compromised. Towards this, we design an \underline{enhanced warning mechanism (eh-WM)} as follows:
\begin{align}\label{eqn_warning_eh}
    \omega^h(\beta)  = \zeta \omega^a(\beta), \mbox{ for an appropriate choice of } \zeta > 1, \mbox{ with } w, b \mbox{ as in ea-WM.}
\end{align}
For given $\zeta$, denote the $g_\beta^u$ of \eqref{eqn_general_g_beta} corresponding to the eh-WM as $g_{\beta, \zeta}^{h, u}$. Further, define $\beta^{h}_\zeta$ as a zero of $g_{\beta, \zeta}^{h, F}$ in $[0,1]$ and $\beta^{h, R}_\zeta$ as a zero of $g_{\beta, \zeta}^{h, R}$ in $[0,1]$. Observe that:

\vspace{-4mm}
{\small
\begin{align*}
    g_{\beta, \zeta}^{h, F}(\beta) &= g_\beta^{a, F}(\beta) + \mu_2 m_f \eta^F \bigg\{ \beta \bigg( \min\{1, \zeta \omega^a(\beta) \alpha_x^F\} - \min\{1, \omega^a(\beta) \alpha_x^F\} \bigg) \\
    &\hspace{1cm}+ (1-\beta) \bigg( \min\{1, \zeta \omega^a(\beta) \alpha_y^F\} - \min\{1, \omega^a(\beta) \alpha_y^F\} \bigg) \bigg\} \geq g_\beta^{a, F}(\beta),
\end{align*}}with equality only if $\alpha_j^F \omega^a(\beta) > 1$ for each $j \in \{x, y\}$. This implies that any zero of $g_{\beta, \zeta}^{h, F}$ is larger or equal to the smallest zero of $g_\beta^{a, F}$. Thus, it clear that $\beta_\zeta^{h} \geq Q^a$ for any $\beta^h_\zeta \in \cA^{h, F}_{\beta, \zeta} \cup \cR^{h, F}_{\beta, \zeta}$. Therefore, we have:
$$
\inf\{\beta : \beta  \in \cA^{h, F}_{\beta, \zeta} \cup \cR^{h, F}_{\beta, \zeta}\} =: Q^h_\zeta \geq Q^a.
$$
That is, the QoS under eh-WM (for any $\zeta$) is higher or at par with the QoS under ea-WM.

Now, one can anticipate that higher the warning level is, the more cautiously users tag the posts. Thus, as $\zeta$ increases, the proportion of fake tags must increase. However, one can not choose an arbitrarily large $\zeta$ as then the performance for the real post is degraded. Thus, we consider the following problem to optimally choose $\zeta = \zeta^*$ such that $Q^h_\zeta$ is maximized, while satisfying constraint in \eqref{eqn_opt_prob}:
        \begin{align}
        \label{eqn_opt_phi}
            \max_\zeta & \hspace{2mm} \revg{Q^h_\zeta}
             \mbox{ subject to } \beta \leq \delta \mbox{ for each } \beta \in \cA^{h, R}_{\beta, \zeta} \cup \cR^{h, R}_{\beta, \zeta}.
        \end{align}
We have the following optimal design for the   eh-WM (proof is in  appendix):
\begin{theorem}\label{corollary_eh_WM}
Define the constant 
\begin{align}\label{eqn_phi_bar}
\overline{\zeta} := \frac{\delta \bigg( \mu_2 \eta^R + \mu_1 (1-\alpha_x^R \rho) \eta_R + \mu_a \eta_a \bigg) - (1-\delta) \mu_1 \rho \alpha_y^R \eta_R}{\mu_2 \omega^a(\delta) \bigg( \delta \alpha_x^R + (1-\delta) \alpha_y^R \bigg) \eta^R}.
\end{align}
The $\zeta^*$ defined below is greater than $1$ and is the optimizer of the problem \eqref{eqn_opt_phi}:
\begin{eqnarray}\label{eqn_phi_star}
    \zeta^* :=
    \left \{ \begin{array}{ll}
       \overline{\zeta}, & \mbox{ if \hspace{2mm}}  \overline{\zeta} < \frac{1}{\alpha_y^R \omega^a(\delta)}, \mbox{ or if  \hspace{2mm}} \overline{\zeta} \geq \frac{1}{\alpha_y^R \omega^a(\delta)}, \hspace{2mm} \underline{\beta}^F = 0 \mbox{ and } b = 0,\\
    \frac{1}{\omega^a (\underline{\beta}^F) \alpha^F_y}, & \mbox{ else.} 
     \end{array} \right.  \mbox{\eop}
\end{eqnarray}   
\end{theorem}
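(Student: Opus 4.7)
I would first establish the monotonicity $\zeta \mapsto Q^h_\zeta$ is non-decreasing and that the feasible set in \eqref{eqn_opt_phi} is a left-closed $\zeta$-interval, then locate the largest feasible $\zeta$. Both monotonicities follow from inspecting \eqref{eqn_general_g_beta}: with $\omega^h = \zeta \omega^a$, only the clipped warning term $\mu_2 [\beta \min\{\zeta \omega^a(\beta) \alpha_x^u, 1\} + (1-\beta) \min\{\zeta \omega^a(\beta) \alpha_y^u, 1\}] m_f \eta^u$ depends on $\zeta$ and is weakly increasing in $\zeta$, with strict monotonicity wherever at least one $\min$ is unclipped. Consequently $\nabla^u(\zeta, \partial\zeta) \ge 0$ in the sense of \eqref{eqn_nabla}, and Theorem \ref{thrm_unique_att}, applied piecewise on each regime determined by the clippings, implies that every zero of $g^{h,F}_{\beta,\zeta}$ and every zero of $g^{h,R}_{\beta,\zeta}$ moves weakly rightward as $\zeta$ grows. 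Hence $Q^h_\zeta$ is non-decreasing and the real constraint tightens with $\zeta$, so the maximiser is the largest $\zeta$ at which either the largest root of $g^{h,R}_{\beta,\zeta}$ hits $\delta$ or the F-clippings become fully saturated on the relevant range (beyond which the objective can no longer grow).

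\textbf{Computing $\zeta^*$.} Assume first that at $\beta = \delta$ both R-clippings are inactive. Then $g^{h,R}_{\beta,\zeta}(\delta) = 0$ is linear in $\zeta$ and a direct substitution into \eqref{eqn_general_g_beta} produces $\zeta = \overline{\zeta}$ as in \eqref{eqn_phi_bar}; this derivation is self-consistent exactly when $\overline{\zeta} < 1/(\alpha_y^R \omega^a(\delta))$, the authors' condition (the intermediate case $1/(\alpha_x^R \omega^a(\delta)) \le \overline{\zeta} < 1/(\alpha_y^R \omega^a(\delta))$ is handled by gluing across the $x$-clipping kink, which preserves $\overline{\zeta}$ as the correct root by continuity of $g^{h,R}_{\beta,\zeta}(\delta)$ in $\zeta$). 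When instead $\overline{\zeta} \ge 1/(\alpha_y^R \omega^a(\delta))$, both R-clippings at $\delta$ are already saturated before the linear formula can bind, so $g^{h,R}_{\beta,\zeta}(\delta)$ becomes independent of $\zeta$ on the saturated range and the real constraint is automatically respected; the binding factor then becomes saturation of the F-warning. Since $Q^h_\zeta \ge \underline{\beta}^F$ by Theorem \ref{thrm_BP_to_fake}(iii) and $\alpha_y^F < \alpha_x^F$, both F-clippings first saturate over the whole range $[\underline{\beta}^F, 1]$ precisely at $\zeta = 1/(\omega^a(\underline{\beta}^F) \alpha_y^F)$, beyond which $Q^h_\zeta$ is constant; this yields the second branch of \eqref{eqn_phi_star}. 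The sub-case $\underline{\beta}^F = 0$ with $b = 0$ must be checked separately: here $\omega^a(\underline{\beta}^F) = \gamma$ is the boundary value at which the F-saturation threshold ceases to bind meaningfully, so the linear branch $\zeta^* = \overline{\zeta}$ is recovered.

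\textbf{Positivity and main obstacle.} To show $\zeta^* > 1$: in the linear branch, Theorem \ref{corollary_ea_wm} yields $\beta^{a,R} < \delta$ \emph{strictly} at $\zeta = 1$, so by continuity of the zero of $g^{h,R}_{\beta,\zeta}$ in $\zeta$ one can push $\zeta$ strictly above $1$ while preserving feasibility, forcing $\overline{\zeta} > 1$; in the saturation branch, $\omega^a(\underline{\beta}^F) \alpha_y^F < 1$ because $w \le \overline{w} = 1/\alpha_x^F - \gamma$ in \eqref{eqn_warning} and the construction \eqref{eqn_warning_ea} preserves this bound at $\beta = \underline{\beta}^F$. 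The principal technical obstacle is cleanly handling the piecewise structure imposed by the four clippings $\zeta \omega^a(\beta) \alpha_j^u = 1$: Theorem \ref{thrm_unique_att} is stated for a single convex/concave piece with a unique zero, so it must be invoked on each regime and matched continuously across the kinks to propagate monotonicity of $\zeta \mapsto \beta^{h,u}_\zeta$ globally, and in particular, verifying that $\overline{\zeta}$ from the linear derivation remains the correct critical value throughout the intermediate regime (where the $x$-clipping is active at $\beta = \delta$ but the $y$-clipping is not) is where the bulk of the technical care lies.
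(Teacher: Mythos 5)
Your overall route is essentially the paper's: obtain $\overline{\zeta}$ by solving the unclipped real-post equation $g^{h,R}_{\beta,\zeta}(\delta)=0$ (linear in $\zeta$); show any feasible $\zeta$ in the first regime is at most $\overline{\zeta}$ while the QoS is non-decreasing in $\zeta$; and, when $\overline{\zeta}\geq 1/(\alpha_y^R\omega^a(\delta))$, argue the real constraint holds automatically and the objective saturates once both fake-post clippings are active on $[\underline{\beta}^F,1]$, i.e.\ at $\zeta=1/(\omega^a(\underline{\beta}^F)\alpha_y^F)$. One technical remark: Theorem \ref{thrm_unique_att} is not the right vehicle for the monotonicity of the limit proportions in $\zeta$, since the clipped $g^{h,u}_{\beta,\zeta}$ need be neither convex/concave nor have a unique zero; the paper instead uses exactly the elementary fact you also mention (pointwise $\zeta$-monotonicity of $g^{h,u}_{\beta,\zeta}$ together with $g^{h,u}_{\beta,\zeta}(0)>0$ forces the infimum of the zero set to be non-decreasing in $\zeta$), so this part is a detour rather than a gap. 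The "intermediate clipping regime" at $\beta=\delta$ that you flag is treated no more delicately in the paper, so I do not count it against you.

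The genuine gap is the sub-case $\underline{\beta}^F=0$, $b=0$. Your premise $\omega^a(\underline{\beta}^F)=\gamma$ is false: ea-WM fixes $w=\overline{w}=1/\alpha_x^F-\gamma$, so with $b=0$ one has $\omega(\beta)\equiv w+\gamma=1/\alpha_x^F$ and hence $\omega^a(\beta)\geq 1/\alpha_x^F$ for all $\beta$, with $\omega^a(0)=1/\alpha_x^F$, not $\gamma$. More importantly, "the F-saturation threshold ceases to bind meaningfully" does not explain why $\overline{\zeta}$ is optimal there. What is needed, and what the paper proves, is that for every $\zeta\geq\overline{\zeta}$ and every $\beta\in[0,1]$ one has $\zeta\,\alpha_j^F\,\omega^a(\beta)>1$ for both $j\in\{x,y\}$, so that $g^{h,F}_{\beta,\zeta}$ is linear in $\beta$ and independent of $\zeta$ on this range; combined with feasibility of all $\zeta\geq\overline{\zeta}$ (already established in this regime) and the $\zeta$-monotonicity of the QoS, this makes $\overline{\zeta}$ an optimizer. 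Without that full-saturation step, your argument cannot exclude that the objective keeps improving beyond $\overline{\zeta}$ up to $1/(\omega^a(0)\alpha_y^F)=\alpha_x^F/\alpha_y^F$, which would contradict the stated first branch for this sub-case. Relatedly, your justification of $\zeta^*>1$ in the second branch ("the construction \eqref{eqn_warning_ea} preserves the bound") is not sound as stated: \eqref{eqn_warning_ea} adds a strictly positive term to $\omega$, so $\omega^a(\underline{\beta}^F)\alpha_y^F<1$ does not follow from $w\leq\overline{w}$ alone; this point needs a separate argument (the paper's proof is itself silent on it).
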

\revg{Thus, the choice of $\zeta$, which gives the maximum proportion of fake tags for the fake post, is given by $\zeta^*$. Such a $\zeta^*$ also ensures that the performance of eh-WM for the real post is not degraded beyond $\delta$-level. The problem \eqref{eqn_opt_phi} can also be designed and solved in terms of the better metric i-QoS and by replacing $\delta$ by $\delta_a$ analogously. 
Henceforth, when we refer to eh-WM, it corresponds to the case with $\zeta = \zeta^*$ and when $\delta = \delta_a$.
We present the numerical results with respect to eh-WM directly in terms of i-QoS and the correspondingly modified $\delta_a$-threshold.}

\subsection{Numerical analysis for eh-WM}
We now (MC) simulate the warning dynamics under eh-WM \revr{for the two examples with smart and naive users, and the  MC-estimates again well match the theoretical values, as seen from  Figure \ref{fig:ehWM_real} (for real post) and Figure \ref{fig:ehWM} (for fake post).  
Next, we discuss the qualitative analysis. To begin with, the Figure \ref{fig:ehWM_real} re-affirms the results of Theorem \ref{corollary_eh_WM} about the real post --- the proportion of fake tags for the real post is at most $\delta_a$.

\begin{figure}[http]
\centering
\begin{minipage}{.5\textwidth}
  \centering
  \includegraphics[trim = {1cm 6cm 0cm 6cm}, clip, scale = 0.3]{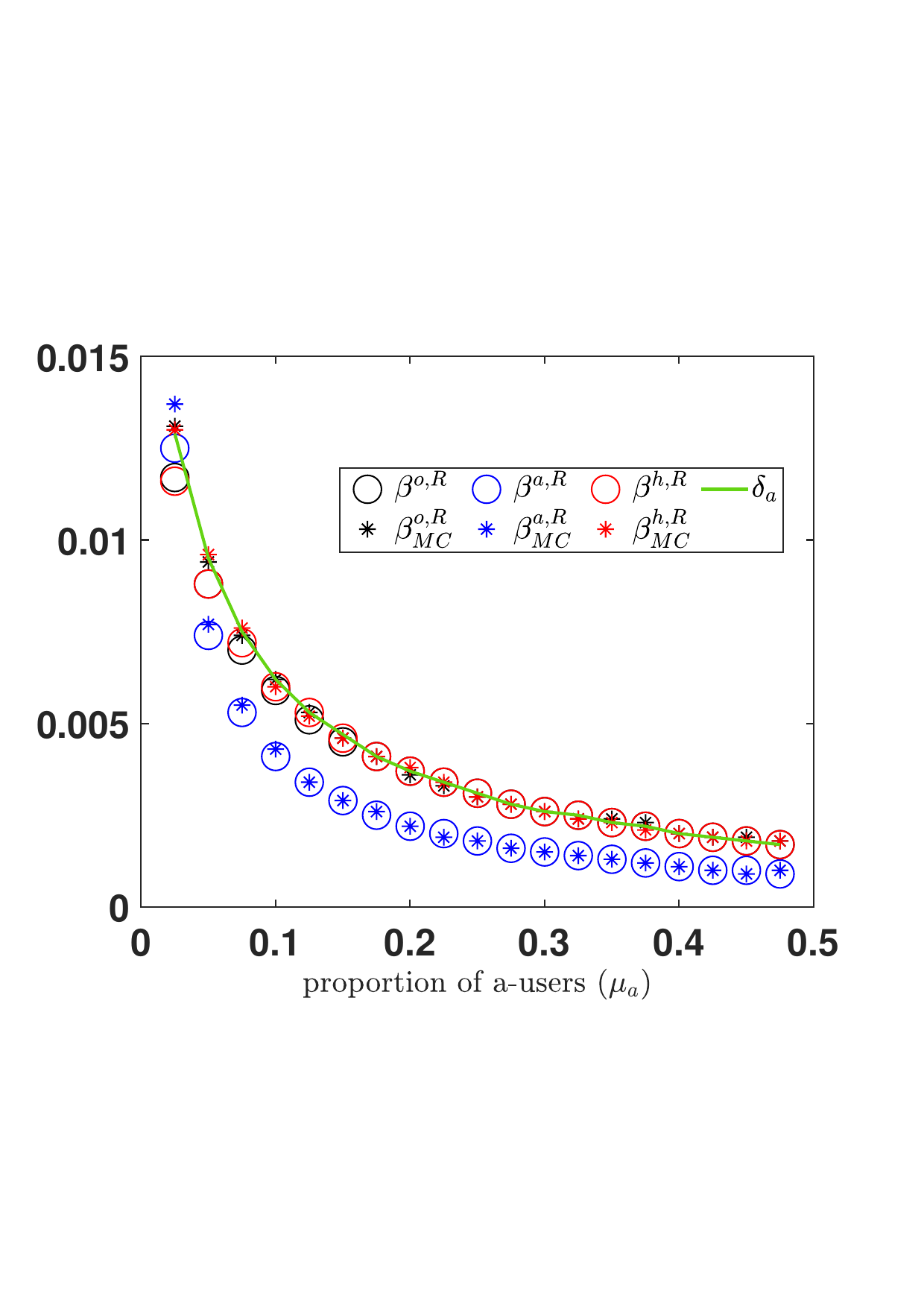}
\end{minipage}%
\begin{minipage}{.5\textwidth}
  \centering
  \includegraphics[trim = {1cm 6cm 0cm 6cm}, clip, scale = 0.3]{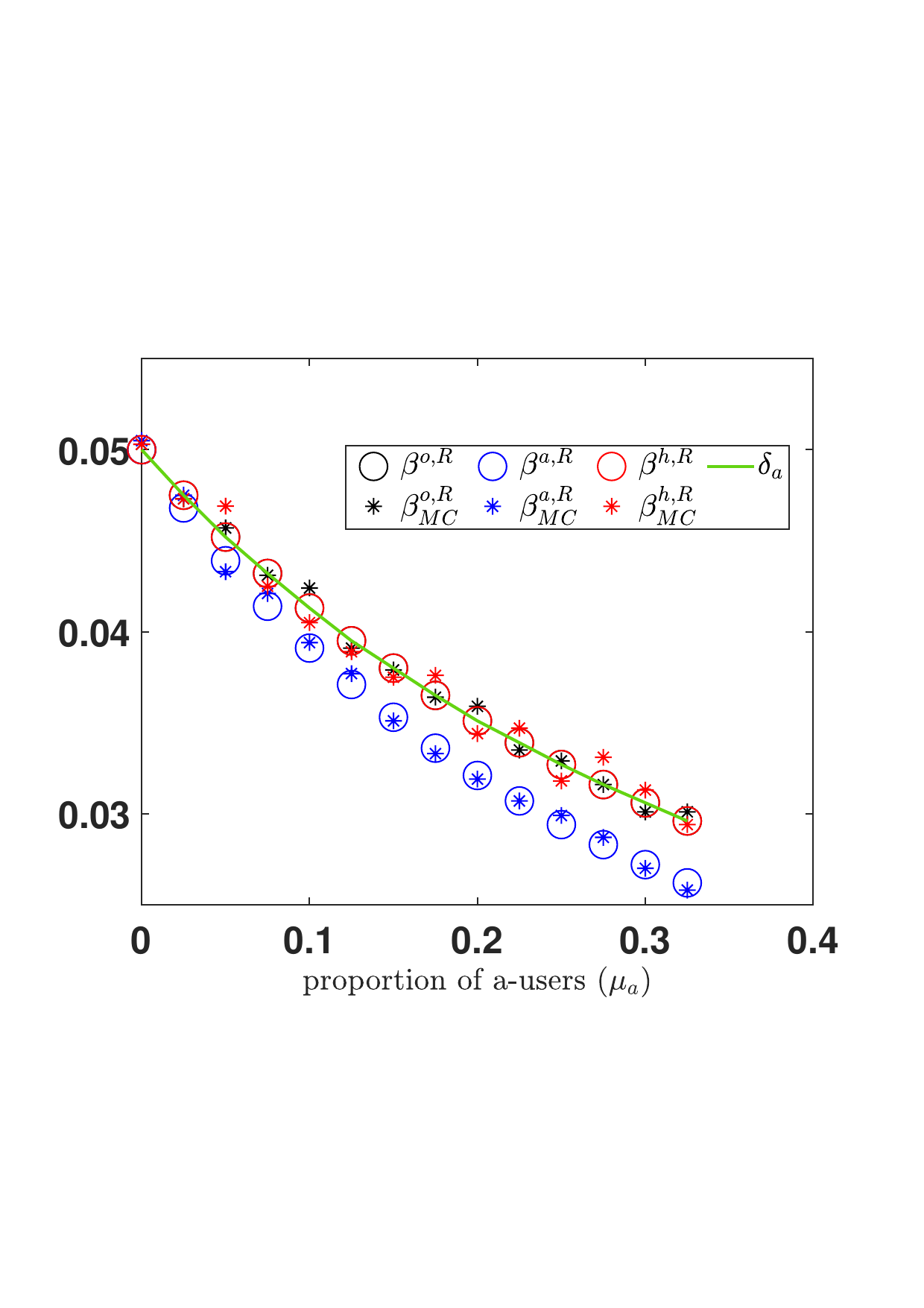}
\end{minipage}
\caption{Limits of warning dynamics for real post under three WMs with smart (left) and naive (right) users  respectively}
\label{fig:ehWM_real}
\end{figure}
\begin{figure}[http]
\centering
\begin{minipage}{.5\textwidth}
  \centering
  \includegraphics[trim = {1cm 6cm 0cm 6cm}, clip, scale = 0.3]{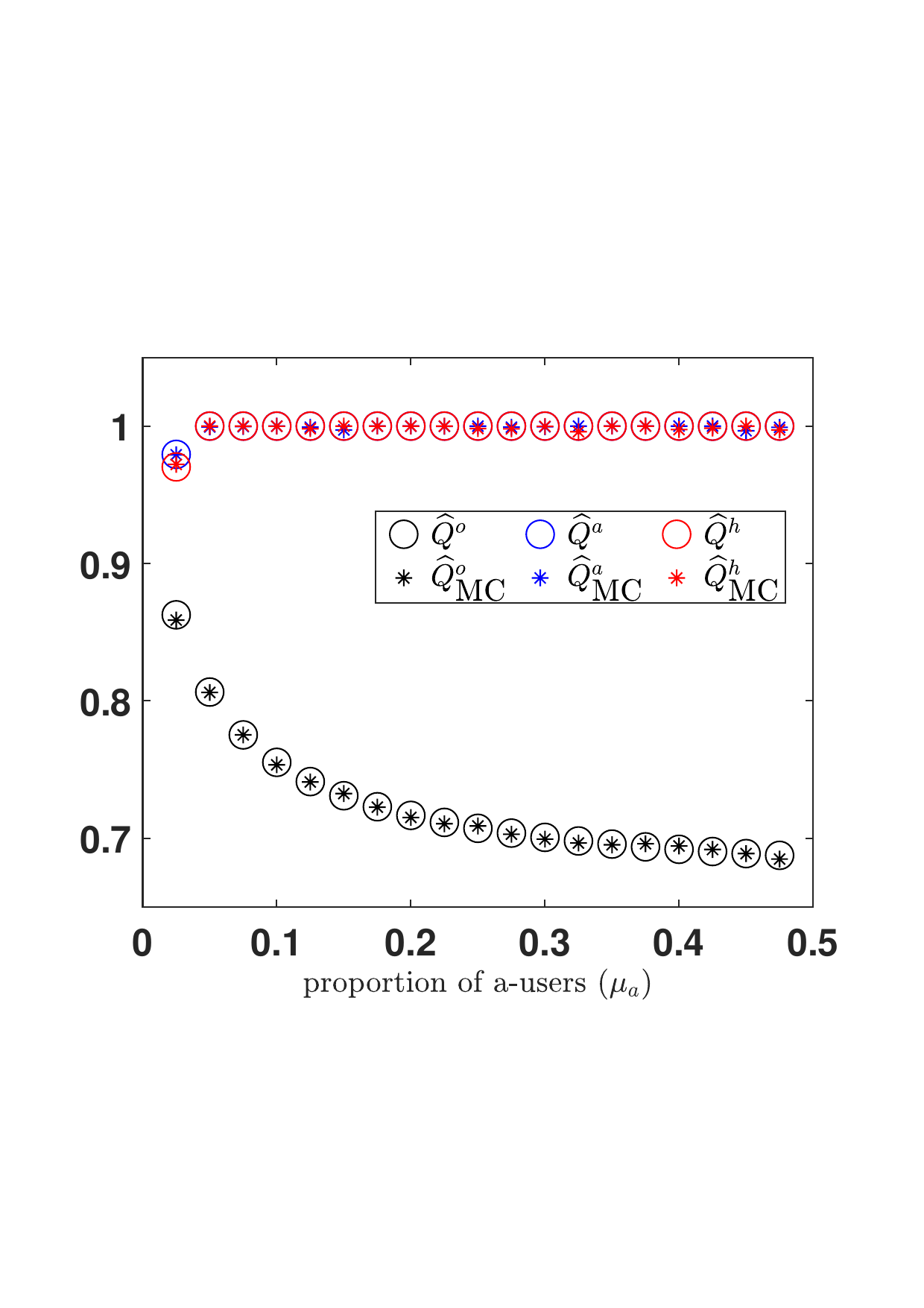}
\end{minipage}%
\begin{minipage}{.5\textwidth}
  \centering
  \includegraphics[trim = {1cm 6cm 0cm 6cm}, clip, scale = 0.3]{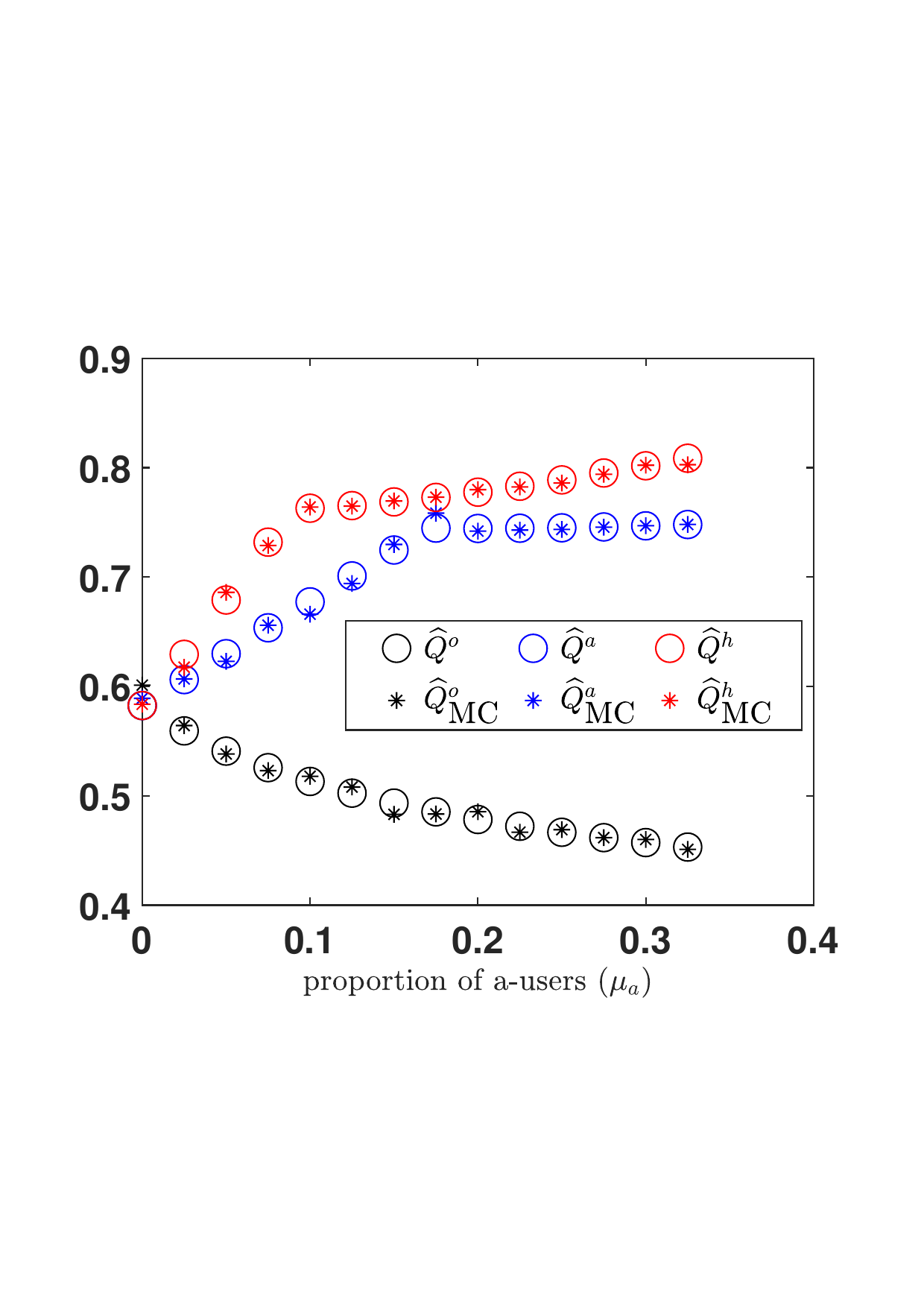}
\end{minipage}
\caption{Comparison of i-QoS under three WMs with smart (left) and naive (right) users  respectively}
\label{fig:ehWM}
\end{figure}
In Figure \ref{fig:ehWM}, we plot the i-QoS under eh-WM $\widehat{Q}^h$ (i.e., with $\zeta^*$), along with that corresponding to the previous two WMs. For the example with smart users, eh-WM performs at par with ea-WM; recall, ea-WM almost achieved $\widehat{Q}^a = 1$. However, for the case with naive users, $\widehat{Q}^h \gg \widehat{Q}^a$; thus, eh-WM is more robust against adversaries than ea-WM. Therefore, eh-WM is able to guide the naive non a-users about the actuality of fake posts better than ea-WM. }

\revr{As an example, when $10\%$ of a-users are trying to harm the system, the eh-WM ensures that $76.29\%$ of naive non a-users correctly identify the fake post, while this fraction is only $51.31\%$ under ea-WM (observe, $\widehat{Q}^h - \widehat{Q}^a$ is as large as $0.2498$, for $\mu_a = 0.1$). }

\revr{As seen from the example with naive users, eh-WM (red curve) performs significantly better than ea-WM (blue curve). Even then, the i-QoS under eh-WM is much better with higher values of $\mu_a$. This limitation probably calls for a very different design of WM, which can generate high warning levels even for smaller values of $\mu_a$. This is attempted in the immediate next.



}

\section{Enhanced-2 WM (eh2-WM) and learning}
It is intuitive that as warning increases, the users are alarmed rigorously about the actuality of the posts; this should lead to more users correctly identifying the posts, and thus higher QoS; in fact, Theorem \ref{thrm_unique_att}  captures precisely this intuition. If one can control the warning so that it does not harm the performance of the real post beyond $\delta$-threshold, providing a higher warning should be effective. We designed ea-WM and eh-WM along these intuitions with a higher warning than eo-WM (recall, there is an additive term in \eqref{eqn_warning_ea} and multiplicative term in \eqref{eqn_warning_eh}), and still managed to ensure the performance of the real post is within the desired level (see Theorem \ref{corollary_ea_wm} and Theorem \ref{corollary_eh_WM}). Further motivated by this, in this section, we aim to design another improved version of eo-WM, named \underline{enhanced-2 WM (eh2-WM)} and denoted by $\omega^{h2}$, which provides higher warning signals to the users (in fact, even for the cases with smaller $\mu_a$); this mechanism also facilitates learning the required parameters $b$ and $w$.

To achieve the same, we again utilize the eo-WM but now with a larger $w$, and ensure that there is a unique limit proportion for the real post which satisfies the $\delta$-threshold. From \eqref{eqn_warning}, a bigger $w$ results in higher warning levels; hence, we set $w = w^{h2} := \nicefrac{1}{\alpha_x^R} - \gamma$ and choose a corresponding $b$ as in Theorem \ref{thrm_opt}. This value of $w$ ensures that $\alpha_j^R \omega^{h2} (\beta) \le 1$ for all $j \in \{x,y\}$ and all $\beta \in [0,1]$ for real post (i.e., when $u = R$) and hence using the same logic as in Corollary \ref{corollary_ex_wm}, we have a unique zero/attractor for the real post; further the choice of $b$ as in Theorem \ref{thrm_opt} ensures the said unique attractor $\beta^{R,h2}$ corresponding to the real post is within the required threshold $\delta$. 
However, unlike eo-WM, with larger $w$ we may not have   a unique limit proportion for the fake post under eh2-WM. Nonetheless, the resultant QoS (and hence i-QoS) is bigger than that with eo-WM by Theorem  \ref{thrm_unique_att}, as with bigger $w$, $\omega^{h2}(\beta) > \omega(\beta)$ for all $\beta$.  

It is important to observe here that the new enhanced WM (eh2-WM) generates high levels of warning signals, and its design does not depend on parameters like $\mu_a$. Thus, one can anticipate that it will enhance the performance even for the smaller values of $\mu_a$. To illustrate the same, we tabulate the i-QoS, $\widehat{Q}^{h2}(w^{h2}, b(w^{h2}))$, achieved under eh2-WM for the case with naive users:
\begin{table}[http]
    \centering
    \begin{tabular}{|c|c|c|c|c|}
    \hline
         & $\mu_a = 0$ &   $\mu_a = 0.1$ &  $\mu_a = 0.2$ &  $\mu_a = 0.3$\\ \hline
         $\widehat{Q}^{h2}(w^{h2}, b(w^{h2}))$ & $0.8289$ & $0.8270$ & $0.8257$ & $0.8246$\\ \hline
    \end{tabular}
    \caption{i-QoS under perfect knowledge of user sensitive parameters}\label{table_WM_perfect}
\end{table}
Clearly, the i-QoS under eh2-WM is consistently higher than that with eh-WM (see Figure \ref{fig:ehWM}, where the red curve is below $0.8$ for all $\mu_a$). More importantly, the i-QoS under eh2-WM is almost the same for all values of $\mu_a$.  


%

\noindent \textbf{Learning the parameters:} At this point, it is important to note that all the discussions so far assumed that the user-sensitive parameters ($\rho$ and $(\alpha_i^u)$ for each  $i \in \{x, y\}$ and $u \in \{R, F\}$) and proportions of users of different types ($\mu_1, \mu_2$ and $\mu_a$) are known to the OSN. However, such information is not easily accessible to the OSN, and the purpose now is to design a WM without such knowledge. Towards this, \textit{we propose an algorithm which directly learns the parameters of the WM, $b$ and $w$}. We only require that there is a non-zero proportion of ws-users\footnote{it can be checked by noticing the users who click on the information button (see Figure \ref{fig_post_design}) }, i.e., $\mu_2 > 0$ and the knowledge of ratio $\alpha_x^R/\alpha_y^R$ (details are given below). The design would only utilize various random quantities observed during the post propagation process.

The main idea is to consider a real post known to the OSN and train the parameters $w$ and $b$ using the responses of the users. 

Basically, we add a SA-based step which tunes $b$ such that the corresponding $\beta^{o, R}$ eventually approaches $\delta$ - recall, the constraint in optimization problem \eqref{eqn_opt_prob} requires that $\beta^{o, R} \leq \delta$. Further, $w$ is tuned such that $\alpha_y^R \omega^{h2}(1)$ approaches $ 1-\kappa$, where constant $\kappa \geq 1 -~\alpha_y^R/\alpha_x^R$. From \eqref{eqn_warning}, $\omega^{h2}(1; w, b) = w+ \gamma$, and hence such a tuning ensures that $w$ approaches $\nicefrac{(1-\kappa)}{\alpha_y^R} - \gamma$ (and by choice of $\kappa$, eventually $w \leq \nicefrac{(1-\kappa)}{\alpha_y^R} - \gamma$) --- thus, eventually $\alpha_j^R \omega^{h2}(1) \leq 1$ for each $j \in \{x, y\}$, as planned for the real post. Here, we would like to stress that the tuning of $w$ is done with respect to $\alpha_y^R$, instead of $\alpha_x^R$, as there may not be sufficient estimates corresponding to fake tags for the real posts (recall, $\delta$ is typically a small value). Thus, the algorithm requires some idea on the ratio $\alpha_x^R/\alpha_y^R$. In all, if such a tuning (of both $w$ and $b$) is possible, then it would ensure a unique attractor below $\delta$-threshold for the real post. 

The above tuning for $w$ requires warning levels $\omega^{h2}(1)$, corresponding to $\beta = 1$; however, in the eo-WM, the warning levels were generated according to the then estimates of $\beta$, the proportion of fake tags. To minimally disrupt the normal functioning of the WM, we propose some special epochs at which such special warning is provided -- at time epoch $k$, if a ws-user who received the post with real tag clicks on the information button, the OSN generates such a warning with probability $\eta_k$, where $\eta_k \downarrow 0$, as $k \to \infty$. Only such special epochs are used to learn $w$. To summarize, the updates for $w$ at epoch $k$ are as follows: if a ws-user that received the post with real tag reads the post, then we have:
\begin{align}\label{update_w}
    w_k \gets 
    \max \left\{1, w_{k-1} - \epsilon_k\left( I_{k} - (1-\kappa) \right) \right\}, &\mbox{ with probability }\eta_k, 
\end{align}where $I_{k}$ is the indicator that the user tags the post as fake and $\epsilon_k := c_1(\frac{1}{k+1})^{c_2}$ with some appropriate $c_1 > 0$ and $c_2 \in ( 0.5, 1]$. In all other cases, we set $w_k = w_{k-1}$.



Next, we discuss the updates for $b$.  For each $k \geq 1$, update $b_k$ as below:
\begin{align}\label{update_b}
\begin{aligned}
    b_k &\gets \max \left\{0, b_{k-1} + \epsilon_k (B_k^{h2, R} - \delta) \right\},  \mbox{ where as before } B_k^{h2, R} := \frac{\Cx(\tau_k^-)}{\Cx(\tau_k^-) + \Cy(\tau_k^-)},
\end{aligned}
\end{align}and the post-propagation process updates as in \eqref{eqn_transition_fake_tag} and \eqref{eqn_transition_real_tag} --- the warning shown to the $k$-th user reading the post would have been generated using $(w_k, b_k)$ as below: 
\begin{align}\label{eqn_warning_learn}
\omega^{h2}(B^{h2, R}_k) := \omega(B^{h2, R}_k) = \frac{w_k B^{h2, R}_k}{B^{h2, R}_k + b_k(1-B^{h2, R}_k)} +~\gamma, 
\end{align}
at the normal epochs (when $w_k$ is not updated); for the special epochs, the warning $\omega^{h2}(B^{h2, R}_k) := \omega(1) = w_{k} + \gamma$ is generated.


The brief idea behind such a design is that as is usually the case with SA algorithms, the SA iterates $b_k$ and  $w_k$ converge to ensure the expected values of the respective  update-terms $B_k^{h2, R} - \delta$ in \eqref{update_b} and  $I_{k} - (1-\kappa)$ in \eqref{update_w} converge to $0$ as $k \to \infty$. That is, $\beta^{h2,R}_k = E[B_k^{h2, R} ]$ approaches $\delta$ and $w_k$ approaches\footnote{Observe that the conditional expected value conditioned that the user is a ws-user who received the post with real tag, $E[I_{ k}] = \alpha_y^R \omega(1) = \alpha_y^R  (w_k + \gamma) $.} $(1-\kappa)/\alpha_y^R - \gamma$. As already mentioned, such a limit of $w$ ensures that the unique limit for the real post $\beta^{h2,R}$ is near $\delta$; thus, the constraint in \eqref{eqn_opt_prob} is satisfied, and the discussion at the beginning of this section also ensures that the QoS is strictly improved in comparison to the eo-WM. 

The learning algorithm is summarized in Algorithm \ref{alg_WM}. The analysis of the above learning algorithm  would require rigorous two-time scale (projected) SA-based tools - observe $w_k$ is updated minimally and further probability $\eta_k \downarrow 0$. We skip the analysis here but validate and illustrate the improved performance of the \underline{learning WM} (referred to as l-eh2-WM)  via numerical examples in the following sub-section.
\newcommand{\SampS}{{\tiny \mathbb{S}}}
\subsection{Numerical analysis for l-eh2-WM}\label{subsec_numerical_estimate}
In Table \ref{table:WM}, we continue with the example with naive users to test the learning algorithm. Towards this, we fix   $\kappa = 1 - \nicefrac{\alpha_y^R}{\alpha_x^R} + 10^{-3}$, $\eta_k = 1.5(\nicefrac{1}{k})^{0.8}$, $\eta_0 = 0.008$, $w_0 = 6$ and $b_0 = 10^{-4}$. The choice of $\epsilon_k$ for learning $b$ and $w$ is $2.2(\nicefrac{1}{k})^{0.7}$. We initialize the system such that the content provider shares a real post with the real tag to $20$ users.

For a given sample size (number of samples available for learning and represented by $\SampS$), we consider $150$ sample paths for the post-propagation of the real post under l-eh2-WM; the idea is to measure  the  efficacy of l-eh2-WM algorithm  via the fraction of times it achieves an i-QoS   within $\pm0.05$ of that corresponding to the case with perfect information (i.e., $\widehat{Q}^{h2}(w^{h2}, b(w^{h2}))$). 
We consider different sample sizes $\SampS$ in the range $10^4$ to $10^5$.

In Table \ref{table:WM}, for different values of $\SampS$, we tabulate $f_\SampS$, the fraction of sample paths for which $|\widehat{Q}^{h2}(w^{h2}, b(w^{h2}))- \widehat{Q}^{o}_\SampS(b_\SampS, w_\SampS)| \leq 0.05$.

\begin{table}[http]
\centering
\begin{tabular}{|c|ccccc|}
\hline
                              & \multicolumn{5}{c|}{$\SampS$ }                                                                                                                                                                           \\ \hline
                                 & \multicolumn{1}{c|}{$10^4$} & \multicolumn{1}{c|}{$2.5* 10^4$} & \multicolumn{1}{c|}{$5*10^4$} & \multicolumn{1}{c|}{$7.5*10^4$} & \multicolumn{1}{c|}{$10^5$} \\ \hline
\multirow{1}{*}{$\mu_a = 0$}  
                               & \multicolumn{1}{c|}{0.73}    & \multicolumn{1}{c|}{0.89}      & \multicolumn{1}{c|}{0.91}    & \multicolumn{1}{c|}{0.95}    &  \multicolumn{1}{c|}{0.93}
                               \\ \hline
\multirow{1}{*}{$\mu_a = 0.1$} 
                               & \multicolumn{1}{c|}{0.41}    & \multicolumn{1}{c|}{0.57}      & \multicolumn{1}{c|}{0.76}    & \multicolumn{1}{c|}{0.84}      & \multicolumn{1}{c|}{0.91}    \\ \hline
\multirow{1}{*}{$\mu_a = 0.2$} 
                               &  \multicolumn{1}{c|}{0.19}    & \multicolumn{1}{c|}{0.44}      & \multicolumn{1}{c|}{0.64}    & \multicolumn{1}{c|}{0.74}      & \multicolumn{1}{c|}{0.79}           \\ \hline
\end{tabular}
\caption{Fraction of sample paths that learnt the parameters ($b, w$) sufficiently well and achieved the desired level of i-QoS under l-eh2-WM}
\label{table:WM}
\end{table}

\RestyleAlgo{ruled}
\begin{algorithm}
\caption{Design of learning WM}\label{alg_WM}
(i) Consider a real post.

(ii) Initialize $\Cx(\tau_0)$ and $\Cy(\tau_0)$; calculate $B_0^{h2, R}$. Fix a large enough $\SampS < \infty$.

(iii) Initialize $b_0$ and $\eta_0$ sufficiently small, and choose a  $w_0 > 1$.

(iv) At $k$-th epoch, $\tau_k$, when $k$-th user reads the post, for $k \in \{1, 2, \dots, \SampS\}$:
\begin{itemize}
\item  set the $w$-update flag, $J_{ws} = 0$
    \item if the reader is a ws-user, then provide warning, $\omega^{h2}$, which is set as below:
        \begin{itemize}
            \item  toss a biased coin such that $P(\mbox{head appears}) = \eta_{k-1} > 0$, let $\eta_{k-1} \to 0$
            \item if head appeared and if the said user received with post with real tag, 
            \begin{itemize}
                \item set warning corresponding to $\beta = 1$, i.e., set $\omega^{h2}(B^{h2, R}_{k-1}) := w_{k-1} + \gamma$
                \item set the indicator $J_{ws} = 1$
            \end{itemize}
             
            \item else, set warning as per WM, i.e., set $\omega^{h2}(B^{h2, R}_{k-1})$  as in \eqref{eqn_warning_learn}
        \end{itemize}
    \item observe the tag $I_k$ and the number of shares by the said user and accordingly, update proportion of fake tags, $B_k^{h2, R} = \frac{\Cx(\tau_{k-1}^+)}{\Cx(\tau_{k-1}^+) + \Cy(\tau_{k-1}^+)}$
    
    \item update the parameters, using the new estimate $B_k^{h2, R}$ and $I_k$
    \begin{itemize}
        \item if $w$-update flag, $J_{ws} = 1$, then update  $w_k$ as in  \eqref{update_w}
        \item update $b_k$ as in \eqref{update_b}
    \end{itemize}
\end{itemize}

\end{algorithm}



It can be seen from the table that the fraction of sample paths with the desired property ($f_\SampS$) increases with $\SampS$, thus depicting that the l-eh2-WM is progressively able to achieve the performance close the case with perfect knowledge. One may anticipate that more iterations/shares should be required to achieve i-QoS of eh2-WM (i.e., with perfect knowledge) as $\mu_a$ increases; the same is evident from the table; for example, when $\SampS = 10^5$, $f_\SampS$ is as large as $0.91$ for $\mu_a = 0.1$, but $\mu_a = 0.3$, it is much smaller and equals $0.79$. 
 Thus, this example illustrates that l-eh2-WM has learned and tuned the WM sufficiently well when it has more than $7.5*10^4$ samples for the proportion of a-users up to $0.2$. 

The performance of the learning algorithm is sensitive to the initial conditions and the parameters of the two-timescale algorithm (like, $\epsilon_k$), as is the usual case with SA-based algorithms. Using the trial-and-error method, we picked a good enough set of values, while an extensive study on a better choice of these parameters is outside the scope of this work. 


Next, in Figure \ref{fig:estimate}, we continue with
the two examples considered in Figure \ref{fig:eoWM_delta}. In the left and right sub-figures, we consider the instances with smart and naive users, respectively and present the results directly in terms of i-QoS. The learning algorithm is again initialized and tuned appropriately, and now, with a large sample size, $\SampS = 10^6$.

\begin{figure}[http]
\centering
\begin{minipage}{.5\textwidth}
  \centering
  \includegraphics[trim = {1cm 6cm 0cm 6cm}, clip, scale = 0.3]{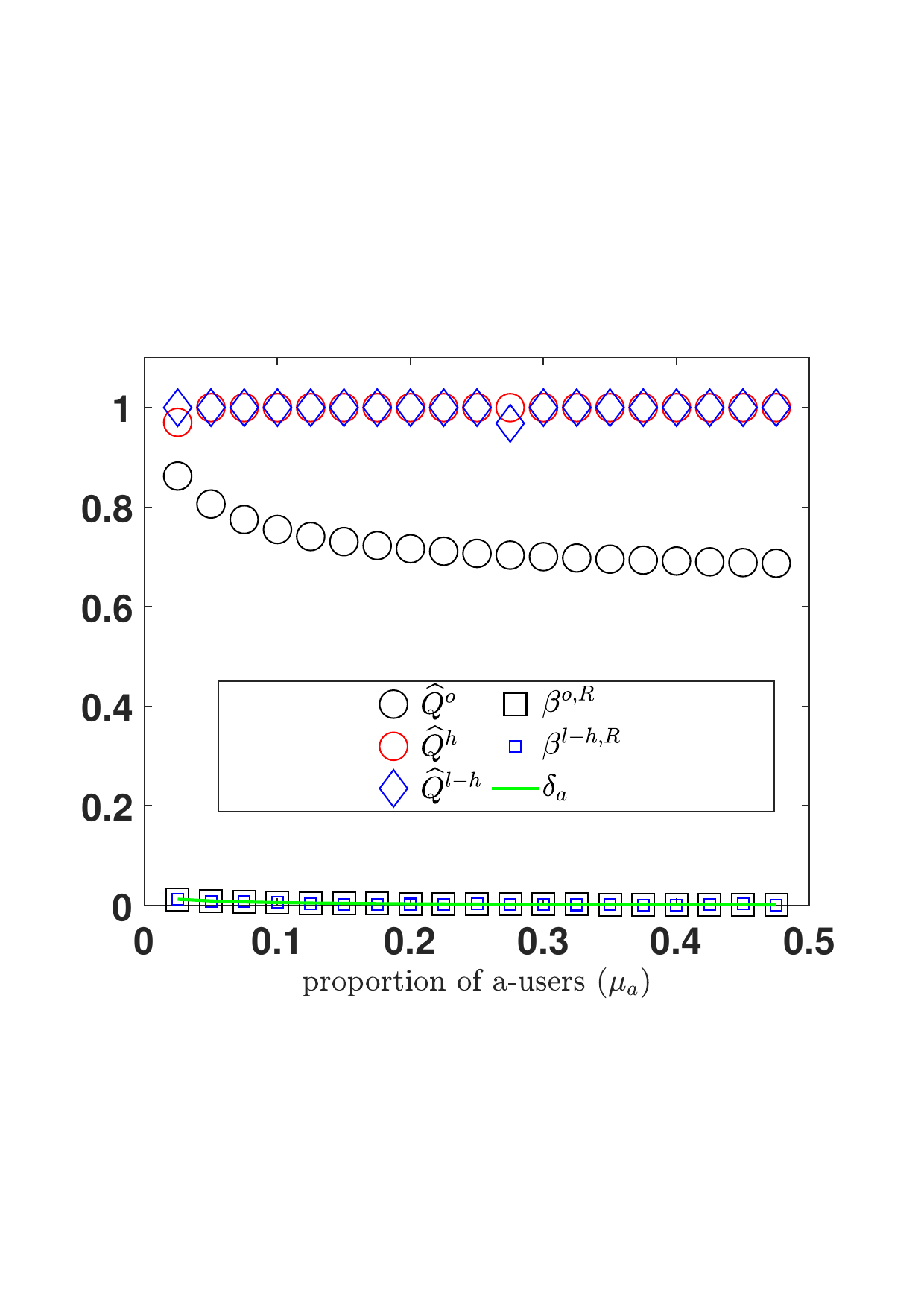}
\end{minipage}%
\begin{minipage}{.5\textwidth}
  \centering
  \includegraphics[trim = {1cm 6cm 0cm 6cm}, clip, scale = 0.3]{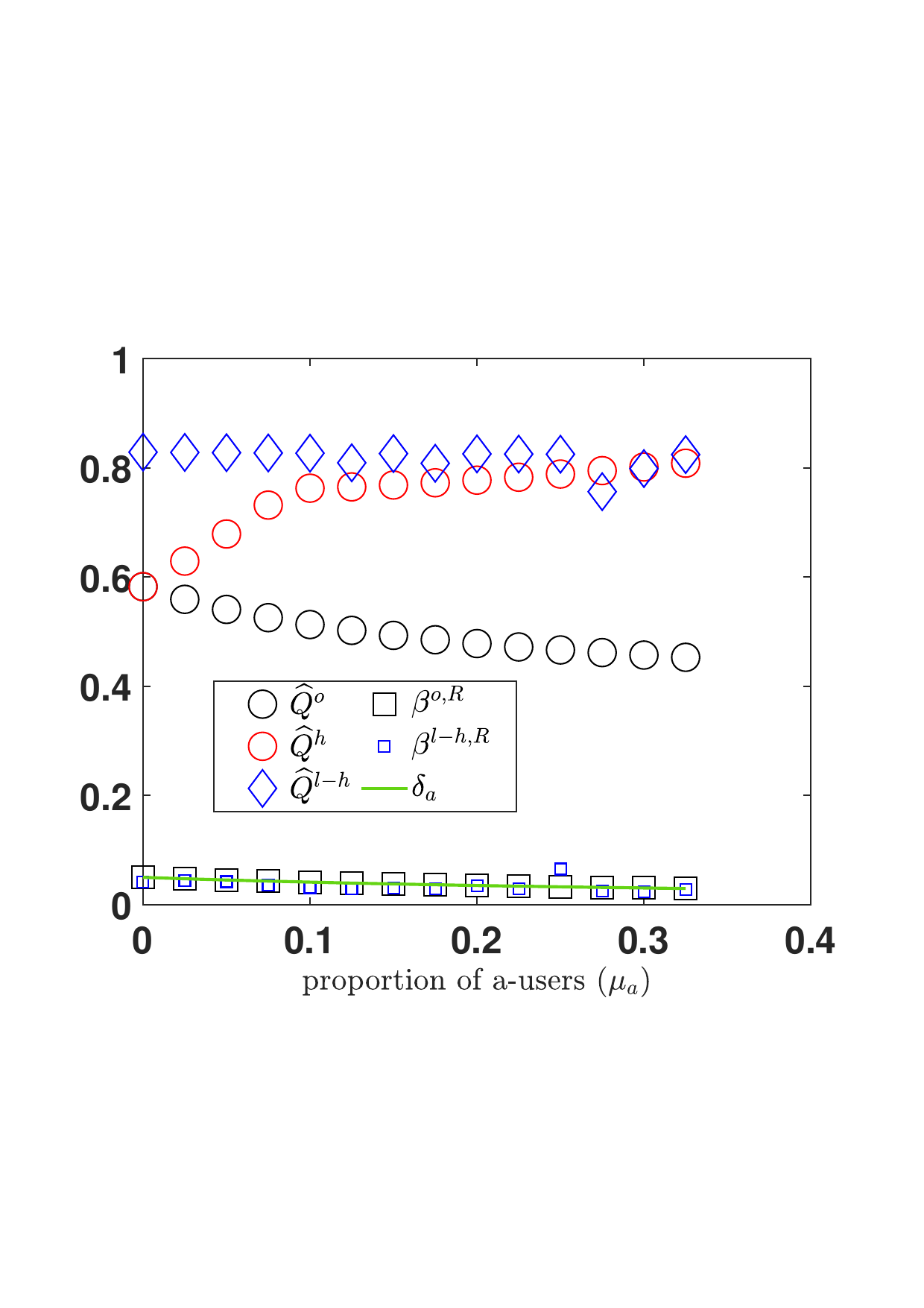}
\end{minipage}
\caption{Comparison of limits of warning dynamics under eo-WM, eh-WM and l-eh2-WM with smart (left) and naive (right) users  respectively}
\label{fig:estimate}
\end{figure}
The figure shows that for all values of $\mu_a$, the i-QoS under l-eh2-WM  (marked in diamond) is higher than the eo-WM; in fact, it performs superior to all the previous WMs. Of course, the i-QoS can not be further improved for smart users --- even l-eh2-WM achieves i-QoS close to $1$, as eh-WM. The superior performance of eh2-WM (actually that of l-eh2-WM with large $\SampS$) is clearly depicted in the case with naive users. 
From Table \ref{table_WM_perfect} and Figure \ref{fig:ehWM}, it is clear that the eh2-WM outperforms eh-WM and performs similarly for all values of $\mua$. The l-eh2-WM  with large $\SampS$ has exactly similar performance traits, as seen from Figure \ref{fig:estimate}.
Furthermore, the proportion of fake tags for the real post is also within the $\delta_a$-threshold, thus satisfying the constraint in \eqref{eqn_new_opt}.


\section{Conclusions and Future Work}
There is a huge requirement to identify fake posts on ever-active OSNs. Further, any algorithm attempting to identify fake posts faces challenges from adversarial users and users unwilling to participate. Our first aim in this chapter is to derive the performance of a promising recently proposed algorithm in the presence of adversaries who always real tag any post. A severe degradation in performance is observed with just 1\% of adversaries.

The algorithm collects binary signals (fake/real tags) from all previous users, generates a warning based on the fraction of fake tags and compels further users to judge and consume the post cautiously based on the warning level provided. Using new results in branching processes (also derived in this chapter), we obtain a one-dimensional ordinary differential equation (ODE) that analyses any generic iterative warning mechanism depending upon the fraction of fake tags. 
This ODE is instrumental in deriving robust adaptations of the previous mechanism -- in particular, we use concepts like eliminating the effects of adversaries,  the inherent monotone characteristics of relevant performance on certain parameters, etc. 
The new mechanisms illustrate significant performance improvement both in the presence and the absence of adversaries compared to the previous method. An algorithm that improves the existing method without relying on the usually inaccessible users-specific information is also proposed.

This chapter also contributes towards total-current population-dependent two-type branching processes with population dependent death rates and also considers a variety of unnatural deaths. In particular, we derive all possible limits and limiting behaviours of the population sizes as time progresses.

In future, one can think of several new directions. The one-dimensional ODE can also be utilized to study other types of adversaries, like always fake tagging adversaries or more informed adversaries that mis-tag both posts (fake tag authentic post and real tag the fake post). One can again derive improved algorithms, as we illustrated with real tagging adversaries. One can also study the influence of users who share but refuse to tag or other important behavioural characteristics. Further, we designed two types of enhanced warning mechanisms, which improved over the existing mechanism. However, the two new mechanisms are not comparable, as one can perform better than the other in some instances. In future, one can attempt to design a combination of the two which outperforms all of them and also design the corresponding learning mechanism.

\chapter{Saturated viral markets: Saturated total population-dependent BP}\label{ch:STPBP}

In this chapter, we study the effect of re-forwarding the post to the same set of users on the OSN. This effect leads to saturated viral markets\footnote{The work in this chapter is published, see ``Agarwal, Khushboo, and Veeraruna Kavitha. ``Saturated total-population dependent branching process and viral markets.'' 2022 IEEE 61st Conference on Decision and Control (CDC). IEEE, 2022.''}, which we analyze using a new variant of single-type total-population dependent BP that transitions from super-to-sub critical regime. Theoretical trajectories for the unread and read copies of the post and several important metrics are derived.

\section{Introduction}
Social media offers a global platform for communication;  people share their content, forward interesting posts among the ones shared with them, etc. It plays a significant role in marketing, e.g., a post advertising a specific product/service can get viral (reach a large number of users). It is also used for spreading propaganda, sharing knowledge, influencing a large population's beliefs, etc.

The content propagation (CP) over an online social network (OSN) can be outlined as follows: a) a post is shared with an initial set of users, called seed users; b) the seed users forward the post to their friends/followers if they like it;  c) the recipients follow suit, and this continues. The content either gets extinct in the initial phase or gets viral, and eventually, copies get \textit{saturated}, and the propagation ceases.

There are several approaches for studying CP, and we use branching processes (BPs) to analyze the same. Further, since a post can witness a huge surge in the shares in a short duration, we consider continuous-time BPs (e.g., \cite{dhounchak2023viral, iribarren2011branching, van2010viral}). We discuss and compare other techniques at the end. 

\noindent \textbf{Branching processes:} A variety of BPs have been studied before (e.g., \cite{athreya2001branching,agarwal2021co,klebaner1984population} are a few strands of  them). We discuss a few relevant varieties and provide some details to model the CP process using BPs. 
 The course of any BP largely depends upon the expected number of offspring ($\Gamma$).
 We have super-critical BPs   when the  
expected offspring $E[\Gamma] > 1$ (see, e.g., \cite{athreya2001branching}); such BPs have a positive probability of exploding (population  grows exponentially) and  can mimic the viral CP.    In the critical/sub-critical regime ($E[\Gamma]\leq1$), the BP gets extinct with probability one, i.e., the population eventually declines to zero. The current population in a BP represents the live population, while the total population also includes the dead ones. In the context of  CP, the number of unread copies (the corresponding recipients are yet to view/read the post) represents the current population. While the total number of recipients, including the ones that already read the post, represents the total population.

\noindent \textbf{Saturation:} When a post is viral and is already forwarded to a noticeable fraction of the network, a significant fraction of further forwards (by future users) can overlap a  part of the network that already received the post. The expected number of effective forwards (after deleting re-forwards) represents the expected offspring when one attempts to model CP using a BP. The number of re-forwards depends on the total copies (number of users that already received the post) and not just on the current population/unread copies.
We are only aware of current-population  dependent BPs (e.g., \cite{klebaner1984population}).

Thus the existing BP models are insufficient to mimic a saturated CP due to two imperative factors: a) the overlaps in forwards/offspring depend upon the total copies/ population, and b) the usual content propagation process traverses from super-critical to sub-critical regime before getting extinct. 
We thus consider a  new variant of continuous-time total-population dependent Markovian BP, named as \textit{saturated total-population dependent BP (STP-BP)}. The total copies either increase with time or saturate; if re-forwards are proportional to the total copies, then the resultant value of expected offspring decreases with time. This also ensures the desired transition from the super-critical to the sub-critical regime. The saturated BP well mimics the CP.

In previous chapters, we analyzed many new variants of BPs using a new approach based on stochastic approximation techniques, e.g., attack and acquisition  BP (competing viral markets, see Chapter \ref{ch:journal1}), and proportion-dependent BP (fake news on OSNs, see \cite{kapsikar2020controlling} and Chapter \ref{ch:journal2}). Further modifying the said approach to address the required finite horizon analysis allows us to analyze saturation resulting from total population dependency.

\noindent \textbf{Key contributions:} At first, we formally analyze the STP-BP. \textit{We derive an appropriate ordinary differential equation (ODE) and its solution, which (time asymptotically) almost surely approximates the embedded chain of the CP process over any finite time window. } These deterministic solutions depict exponential growth and linear fall for unread copies. Secondly,  we model and fit an appropriate total-copies dependent piece-wise linearly decreasing function for the expected offspring, having two different slopes. 
We further derive important metrics like the peak number of unread copies, time asymptotic value of the total copies, and others. The growth of the total copies is exponential and depends on the reduction rates in the expected forwards.

We corroborate our theoretical results by performing Monte-Carlo simulation on SNAP Twitter-dataset \cite{mcauley2012learning}. 
The description of the theoretical trajectories depends only on four parameters of the OSN (e.g.,  two rates of reduction of the expected forwards) and the attractiveness of the post.
  
\noindent \textbf{Related work:} The study of CP on OSNs has been a topic of interest for a long time, and several approaches have been used for its analysis. 
Random graph models are widely used to analyze CP on OSNs (e.g., see \cite{deijfen2016winner, zhou2019cost, lu2019beyond}); in particular, \cite{lu2019beyond} considers re-exposure of users with a post on OSN. However, such models can not capture aspects like virality, as discussed in \cite{van2010viral,dhounchak2023viral, iribarren2011branching}, while  BPs facilitate virality analysis like growth patterns, extinction/virality probability etc.

The epidemiology-based models (in particular SIR) are also used to study CP, which succeed in capturing saturation (e.g., \cite{rodrigues2016can, freeman2014viral, jiang2019quantitative}). The current and total copies are respectively modelled as infected and (infected$+$recovered) populations. We argue that this approach is not suitable for analyzing viral markets on OSNs/email platforms: (a) this set of literature directly starts with an appropriate ODE (e.g.,  \cite{freeman2014viral} considers exponentially diminishing infection rate/interest in the post, while \cite{rodrigues2016can} considers standard SIR ODE), (b) they do not delve into the details of the random dynamics (e.g.,  the chance encounters between various individuals), and more importantly, (c) on OSNs,  majority of users share the post to a subset of their friends  `only once' after viewing and lose interest immediately after; in contrast, in SIR based models, infected individuals keep infecting/spreading the information for a random/prefixed time before recovering/losing-interest. In other words, \textit{SIR-based models well capture the typical behaviour of word-of-mouth dynamics (individuals remain interested in gossip,  keep sharing it, and then lose interest) and are insufficient for viral markets over OSNs}.

There is a brief indirect mention of the saturation effect in \cite{van2010viral}, where using BPs, the authors predict the future progress of CP using the available history of an ongoing campaign; they use the well-known Kolmogorov's backward equations  (for PGFs in BPs) to achieve this. In contrast, we provide a theoretical study of a new relevant variant of BP, facilitating an exhaustive study of the saturated CP on OSNs.

\section{Problem description and background}\label{sec_model}
Consider an OSN where the content of interest is forwarded by its users. At the start of the propagation, the post is shared by its content provider to  an initial set of users,  called seed users.
These users view the post on  the OSN at random time instances. Then, they forward it to some or all of their friends, depending on how much they like the post.\footnote{The post may seem appealing to the users for the offers mentioned, the creativity or the informational quotient of the content (see \cite{van2010viral}).} This subset of users further forward the post when they visit the OSN\footnote{When a user views, reads and forwards some of the posts on its timeline.}, and the post propagation continues likewise (more details in  \cite{dhounchak2023viral, agarwal2021co, van2010viral}). 
The random instances at which users visit the OSN are called wake-up times. The time gap between the wake-up and the post-reception times of any recipient  is  exponentially distributed with parameter $\lambda$.  
Let $\mathcal{F}$ be the (random) number of friends of a typical user, with finite mean. 
The attractiveness of the post  is captured by  factor $\rho$, which specifies a subset of friends $(\offs)$ to whom the post is forwarded. Naturally, $\rho$ determines the  growth of the post.

We will see that the content propagation (CP) majorly depends on the expected number of such forwards, which in turn depends on the then total recipients of the post (say $a$). That is, $E[\offs] := m(a)$, where $m$ is some function which will be discussed in detail in the coming sections. The study of such a dependency (using the SNAP dataset) and its influence on the CP is the key differentiating feature of our work. As one may anticipate again, such dependency will eventually lead to a reduction in the effective forwards of the post, in turn leading to the saturation of  shares. This effect has been  observed majorly through numerical studies in the past, e.g.,  see \cite[Fig. 6]{dhounchak2023viral} and \cite[Fig. 1]{hemsley2016studying} for total and current shares, respectively. \textit{We aim to provide an analytical explanation of saturation in terms of an approximating deterministic trajectory and some relevant performance measures.} We next describe the new variant of `saturated' BP, STP-BP.


\subsection{Saturated Content propagation and Branching process}

Let $A(t)$ denote the   total number of copies of the post on the network, i.e., all the copies which has been received (viewed or not, forwarded or not) by the users on the OSN till time $t$; we briefly refer this number as \textit{total shares}. Further, let $C(t)$ be the unread/live/current number of copies of the post till time $t$. These represent the copies that have been received but not yet viewed  by the users till time $t$, and are solely responsible for further propagation of the post; we refer to this number as \textit{current shares}.

Let $A(0) = C(0) = a_0$ be the number of seed users. The previous discussion clearly shows that the number of forwards directly depends on the total shares, i.e., $\offs = \offs(A(t))$. Observe that $\offs$  does not depend on the current shares, as is usually considered in the BP literature.

Let $\tau^+$ and $\tau^-$ denote the time instances immediately after and before wake-up time $\tau$ (of any user),  e.g., $A(\tau^+) := \lim_{t \downarrow \tau} A(t)$. 
Then, the evolution of the system at transition epoch $\tau$ is:
\begin{equation}\label{evolve_cont}
\begin{aligned}
C(\tau^+) &= C(\tau^-)  + \offs(A(\tau^-)) - 1, \mbox{ and } A(\tau^+) = A(\tau^-)  + \offs(A(\tau^-)).
\end{aligned}
\end{equation}

The above dynamics can easily be placed  in a BP framework when the current shares are modelled as a population. Accordingly, the number of forwards $(\offs(A(\tau^-)))$ can be viewed as the offspring of the population. Further, when a user shares the post, the current shares reduce by $1$ (see \eqref{evolve_cont}); this is  exactly like a death in a BP (see \cite{dhounchak2023viral, kapsikar2020controlling, agarwal2021co} for similar details). 

To analyse total population dependent BPs, one needs to study two-dimensional tuple $\Om(t) := (C(t), A(t))$ simultaneously, a realisation of which is denoted by $\om = (c, a)$. In contrast, the existing BP models can analyse  $C(t)$ alone if the need is only to analyse the current population. We now proceed towards analytically deriving the trajectories of the tuple $\Om(t)$ and other salient features.

\section{Dynamics and ODE approximation }\label{sec_dynamics}

To facilitate the study of STP-BP, we analyse the embedded chain corresponding to the underlying continuous-time jump process (CTJP), as in \cite{kapsikar2020controlling, agarwal2021co}. It is a standard technique to use embedded chains when transience, recurrence, extinction and similar properties of CTJP are studied. We use it for a similar purpose in this chapter. 
In particular, we observe the  dynamics in \eqref{evolve_cont} at the time instances when a user with an unread copy of the post wakes-up. 
Let $\tau_n$ be $n^{th}$ such transition epoch.\footnote{If the post gets extinct at $n^{th}$ epoch, we set $\tau_{k} :=\tau_n$ for all $k \geq n$ and the same is true for rest of the quantities.}
Let $C_n := C(\tau_n^+)$ be the current shares of the post immediately after $\tau_n$. Similarly define $A_n$. Note that the  time taken by the first user to wake-up after $n^{th}$ transition epoch, ($\tau_{n+1}-\tau_n$), is exponentially distributed with parameter  $\lambda C_n$. Thus, if a user wakes up at $\tau_n$, then:
\begin{align}\label{evolve_SA}
C_{n} &= C_{n-1}  + \offs_{n}(A_{n-1}) - 1, \ A_{n} = A_{n-1}  + \offs_{n}(A_{n-1}).
\end{align}

As in \cite{kapsikar2020controlling, agarwal2021co}, we use stochastic approximation (SA) approach to study the embedded chain. Towards this, define the following fractions of current and total shares respectively, $\Pc_n := \nicefrac{C_n}{n}$ and $\Pa_n := \nicefrac{A_n}{n}$ for $n \geq 1$, with $\Pc_0 = \Pa_0 := a_0$. Let $\Ups_n := (\Pc_n,  \Pa_n)$. Further, define:
\begin{eqnarray}\label{eqn_eps_n}
    \epsilon_n = \frac{1}{n+1}, \ t_n := \sum_{k=1}^n \epsilon_{k-1}, \mbox{ and } \eta(t) := \max\left  \{ n: t_n \le t \right \}. \hspace{-2mm}
\end{eqnarray}Then, the evolution of $\ups_n$ can be captured by 2-dimensional SA based updates given below (see \eqref{evolve_SA}):
\begin{equation}\label{eqn_SA}
\begin{aligned}
\Pc_n &= \Pc_{n-1} + \epsilon_{n-1} \left[\offs_{n}(A_{n-1}) - 1 - \Pc_{n-1} \right ]1_{\Pc_{n-1} > 0}, \\
\Pa_n &= \Pa_{n-1} + \epsilon_{n-1} \left [\offs_{ n}(A_{n-1})  - \Pa_{n-1} \right ]1_{\Pc_{n-1} > 0}.
\end{aligned}
\end{equation}

We analyse these fractions using SA techniques (e.g., \cite{kushner2003stochastic}), which helps in approximating the same using the solutions of the ODE (see \eqref{eqn_eps_n}):
\begin{align}\label{eqn_ODE_stpbp}
&\dot{\pc} = \left(m(a) - 1 - \pc\right) I, \  
\dot{\pa} = \left(m(a)  - \pa\right)I, \mbox{ with}\\
&I := 1_{\pc > 0}, \ a(t) :=     \pa(t) \eta(t), \mbox{ and } m(a) := E[\offs(a)]. \nonumber
\end{align}

By Lemma \ref{lemma_existence}, the solution for this non-autonomous and non-smooth ODE exists over any finite time interval in the extended sense (satisfies the ODE for almost all $t$). In Theorem \ref{thrm1_stpbp} given below,  we will prove that the above ODE indeed approximates \eqref{evolve_SA}.

\noindent \textbf{Approximation result:} The study of continuous-time population size-dependent BPs has been limited in the literature. 
This chapter uses the ODE approximation result to study the saturated BP. Now, for mathematical tractability, we require the following assumption on offspring distributions: 
\begin{enumerate}[label=\textbf{D.\arabic*}, ref=\textbf{D.\arabic*}]
    \item There exists an integrable random variable, $\hat{\offs}$, such that $\offs(a) \leq \hat{\offs}$ almost surely for every $a$ and $E[\hat{\offs}]^2 < \infty$.  \label{d1}
    \item The mean function, $m(\cdot)$ is Lipschitz continuous. \label{d2}
\end{enumerate}The assumption \ref{d1} is readily satisfied (details in the next section), while \ref{d2} is an extra assumption required for additional affirmation (see Theorem \ref{thrm1_stpbp}(ii)).
We will now see that the piece-wise constant interpolation $\Ups^n(\cdot) := (\Psi^{n, c}(\cdot), \Psi^{n, a}(\cdot))$ of $\Ups_n$ trajectory defined as:
\begin{align}
\Ups^n(t) = \Ups_n \mbox{ if } t \in [t_n, t_{n+1}),
\end{align}
satisfies an almost integral representation  as below, with $I_n := 1_{\Psi^{n, c}(s) > 0}$ (see \eqref{eqn_linear2} for derivation):
\begin{align}\label{eqn_piecewise_main}
\Psi^{n, a}(t) &= \Pa_n + \int_0^t \hspace{-1mm} \bigg(m(\Psi^{n, a}(s) n) - \Psi^{n, a}(s)\bigg)I_n ds + \varepsilon^{n, a}(t),\\
\Psi^{n, c}(t) &= \Pc_n + \int_0^t \bigg(m(\Psi^{n, a}(s) n) - 1 -  \Psi^{n, c}(s)\bigg)I_n ds  + \varepsilon^{n, c}(t). \nonumber
\end{align}Let $\hat{\ups}^n(\cdot)$ be the  solution of ODE \eqref{eqn_ODE_stpbp}, with $\hat{\ups}^{n}(0) = \ups_{n}$. Observe that $\Ups^n(\cdot)$ in \eqref{eqn_piecewise_main} is similar to $\hat{\ups}^n(\cdot)$, except for the difference term $\varepsilon^n(t) := (\varepsilon^{n, c}(t), \varepsilon^{n, a}(t))$. 
Further, if at all $\varepsilon^n(t) = 0$ for all $t \leq T $, then, by uniqueness of the  solution (see Lemma \ref{lemma_existence}), the BP trajectory \eqref{eqn_SA} would have coincided with it, i.e., $\ups_k = \hat{\ups}(t_k)$ for all $k$ such that $t_k \leq T$. However, it is not true in general; nevertheless, we will show that $||\varepsilon^n|| \to 0$ as $n \to \infty$ (see norm $||\cdot||$ in \eqref{eqn_norm}). 

Thus, we have two operators which are converging towards each other; the first operator including $\varepsilon^n$ in \eqref{eqn_piecewise_main} provides the BP trajectory, while the second operator without $\varepsilon^n$ in \eqref{eqn_piecewise_main} provides the ODE solution.
Further, using the Maximum theorem, we show that  the difference between  the two solutions of the operators \eqref{eqn_piecewise_main} with  and without $\varepsilon^n$ is small when $||\varepsilon^n||$ is small. Formally, we state the result as follows:

\begin{theorem}\label{thrm1_stpbp}
For any $\ups(\cdot) = (\pc(\cdot), \pa(\cdot))$, define the norm with any finite $T > 0$:
\begin{align}
\begin{aligned}\label{eqn_norm}
||\ups|| &:= \max\{||\pc||, ||\pa||\}, \mbox{ where } \\
||\psi^i|| &:= \sup\{t \in [0,T]: |\psi^i(t)|\} \mbox{ for any } i \in \{a, c\}.
\end{aligned}
\end{align}
Under assumption \ref{d1}, we have the following almost surely:
\begin{enumerate}
    \item[(i)] $||\varepsilon^n|| \to 0$ as $n \to \infty$, and 
    \item[(ii)] if \ref{d2} also holds, then, the difference  $\sup_{k : k \geq n, t_k \leq T} || \Ups^n(t_k) - \hat{\ups}^n(t_k) ||$ depends upon the magnitude of $||\varepsilon^n||$.
\end{enumerate}
\end{theorem}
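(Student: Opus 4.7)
The plan is to follow the stochastic approximation programme, tailored to the non-autonomous dynamics of STP-BP.

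First, for part (i), I would decompose the error $\varepsilon^n(t)$ defined implicitly by \eqref{eqn_piecewise_main} into two contributions, obtained by iterating \eqref{eqn_SA} from index $n$ and adding/subtracting the conditional mean $m(A_k)=E[\offs_{k+1}(A_k)\mid {\cal F}_k]$, where ${\cal F}_k$ is the natural filtration. The first is a martingale piece $M^n(t) := \sum_{k=n}^{\eta(t_n+t)-1}\epsilon_k(\offs_{k+1}(A_k)-m(A_k))1_{\Pc_k>0}$; the second is a Riemann-sum discretization error $D^n(t)$ arising from replacing the step-function sum $\sum\epsilon_k[m(A_k)-\Pa_k]1_{\Pc_k>0}$ by the integral in \eqref{eqn_piecewise_main}. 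I would then show both vanish uniformly over $[0,T]$ almost surely. For $M^n$, the partial sums form an $L^2$-martingale whose predictable quadratic variation is bounded by $E[\hat{\offs}^2]\sum_k\epsilon_k^2<\infty$ by \ref{d1} and \eqref{eqn_eps_n}; Doob's $L^2$ maximal inequality applied to the tail $k\ge n$ yields uniform a.s.\ convergence of $\sup_{t\le T}|M^n(t)|$ to zero. For $D^n$, the integrand is uniformly bounded a.s.\ since $m\le E[\hat{\offs}]$ and the ratios $\Pc_k,\Pa_k$ are dominated by the sample mean $\tfrac{1}{n}\sum_{k\le n}\hat{\offs}_k$ (which converges by the strong law of large numbers, in the spirit of \eqref{Eqn_XnYnSnetc_general} of Chapter \ref{ch:journal1}); thus $|D^n(t)|=O(\epsilon_n)\to 0$.

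For part (ii), I would view \eqref{eqn_piecewise_main} and the integral form of ODE \eqref{eqn_ODE_stpbp} (initialized at $\Ups_n$) as fixed-point equations of two integral operators on $C([0,T];\mathbb{R}^2)$ equipped with the norm \eqref{eqn_norm}. The Lipschitz hypothesis \ref{d2} makes the drift of the ODE Lipschitz on bounded sets, so Lemma \ref{lemma_existence} delivers a unique solution $\hat{\ups}^n$. Writing $\delta^n(t) := \Ups^n(t)-\hat{\ups}^n(t)$ and subtracting the two integral equations gives $\delta^n(t)=\varepsilon^n(t)+\int_0^t J^n(s)\,\delta^n(s)\,ds$, where the kernel $J^n$ is bounded by the Lipschitz constant $L$ of the drift. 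Gronwall's inequality then yields $\sup_{t\le T}|\delta^n(t)|\le e^{LT}\,||\varepsilon^n||$, which is the quantitative dependence claimed; since $\Ups^n(t_k)=\Ups_k$ for $k\ge n$, this transfers directly to the embedded-chain supremum. Equivalently, the Maximum theorem cited in the statement asserts that the fixed point of the integral operator depends continuously on the driving data $\varepsilon^n$, so $||\varepsilon^n||\to 0$ forces the difference of fixed points to zero.

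The principal obstacle is the interplay between the discontinuous indicator $I_n=1_{\Pc>0}$ (which can flip at the random extinction epoch) and the non-smooth, non-autonomous drift of ODE \eqref{eqn_ODE_stpbp} driven by $\eta(t)$ and by the growing argument $\Psi^{n,a}(s)\,n$ of $m$. One must verify that the sum-to-integral replacement in $D^n$ remains valid across the extinction epoch: once $\Pc_k=0$ the SA increments vanish, so the indicator flips at most once and contributes at most a single $\epsilon_n$-sized jump. Combined with the non-standard argument of $m$ (which must be reconciled with the actual value $A_{\eta(t_n+s)}$ along the embedded chain), these features force one to work with the extended-solution framework of Lemma \ref{lemma_existence} rather than classical ODE theory, and to track the argument of $m$ with care so that the martingale decomposition closes without leaving unaccounted remainders.
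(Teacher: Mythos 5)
Your part (i) is essentially the paper's argument: the same martingale-plus-Riemann-discrepancy decomposition of $\varepsilon^n$, the Doob/martingale inequality driven by \ref{d1} and $\sum_i\epsilon_i^2<\infty$ for the noise term, the strong-law bound on the iterates via the sample mean of $\hat{\offs}$, and an $O(\epsilon_n)$ estimate for the sum-to-integral error. No issues there.

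Part (ii) is where you diverge from the paper, and your route has a concrete gap. You subtract the two integral equations and invoke Gronwall with "the Lipschitz constant $L$ of the drift", concluding $\sup_{t\le T}|\Ups^n(t)-\hat{\ups}^n(t)|\le e^{LT}\,||\varepsilon^n||$ with $L$ fixed. But the drift in \eqref{eqn_ODE_stpbp}/\eqref{eqn_piecewise_main} acts on the fraction through $\pa\mapsto m(\pa\,n)$ (more precisely $m(\pa\,\eta(t_n+s))$), so under \ref{d2} its Lipschitz constant is $k_m\,\eta(t_n+T)+1$, which grows (roughly like $n e^{T}$) with $n$ — exactly as in the paper's own Lemma \ref{lemma_unique_ode_sol}, where the Gronwall constant is $2(k_m\eta+1)$ but only uniqueness is needed so its size is irrelevant. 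With the correct $n$-dependent constant, $e^{(k_m\eta(t_n+T)+1)T}||\varepsilon^n||$ does not tend to $0$ (the martingale part of $||\varepsilon^n||$ decays only polynomially), so your estimate cannot support the intended reading that the approximation improves as $n\to\infty$; it only gives an $n$-degrading modulus. A second, smaller issue: after subtraction, on the random time set where $1_{\{\Psi^{n,c}>0\}}$ and $1_{\{\hat{\psi}^{c}>0\}}$ disagree, the integrand difference is of the order of the drift itself, not of $|\Ups^n-\hat{\ups}^n|$, so it cannot be absorbed into the Gronwall kernel; your "single $\epsilon_n$-sized jump" remark handles the sum-to-integral replacement inside $\varepsilon^n$, not this mismatch. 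The paper avoids both problems by proving only the qualitative statement actually claimed: it casts $\Ups^n(\cdot)$ and $\hat{\ups}^n(\cdot)$ as the unique minimizers (fixed points) of a parametric least-squares functional over a weak-compact ball $\cd_B^2$, shows joint continuity of that functional using \ref{d2} and bounded convergence, and applies Berge's Maximum Theorem together with the uniqueness lemma to get continuity of the fixed point in the data $(\varepsilon,u_0,\eta)$ restricted to the relevant parameter set — so "the difference depends on $||\varepsilon^n||$" without any explicit exponential constant. Your closing sentence gestures at this Maximum-Theorem formulation, but it is not equivalent to your quantitative Gronwall claim, and only the former is what the paper establishes.
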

\noindent \textbf{Proof} is provided in Appendix \ref{Appendix_STPBP}. \eop

\noindent \textbf{Remarks:} (i) For each $n$, consider the ODE initialised with the value of the embedded chain, $\ups_{n}$. Then, the embedded chain values at transition epochs, $k \in [n , \eta(t_{n} + T)]$, are  close to the ODE solution, $\hat{\ups}^{n}(t_k-t_{n})$,  at time points $t_k \in [t_{n}, t_{n}+T] $. This approximation improves as $n$ increases, and \textit{the result is true almost surely} (a.s.) and for all $T < \infty$.

\noindent (ii) \textbf{Dichotomy:} We have a `modified 
dichotomy': either the population gets extinct ($\ups_{n} = 0$ in initial epochs), or the population explodes exponentially  as confirmed by  ODE-solution \eqref{eqn_total_ode}   in Section \ref{ode_based_analysis}.  In contrast to classical dichotomy (e.g., \cite{athreya2001branching}), \textit{in both the sets of sample paths, the saturated BP eventually gets extinct}.

\noindent (iii) As  the ODE solution approximates the embedded chain (BP) trajectory, one can analyse the latter using the former.  \textit{In contrast to many existing studies, we would consider the analysis of the ODE trajectories and not the attractors, which is more relevant here. } Prior to that, we derive an appropriate function that can represent $m(\cdot)$, the total-share-dependent expected forwards (TeF) in a typical OSN.

\section{Population dependent expected forwards}\label{sec_mean}

It is clear from the ODE \eqref{eqn_ODE_stpbp} and Theorem \ref{thrm1_stpbp} that the expected number of forwards, $m(\cdot)$, influence the course of any post; we construct a piece-wise linear function to capture the same. We estimate the parameters and validate the model using the SNAP dataset \cite{mcauley2012learning} in Section \ref{sec_numerical}. The following aspects are considered for the model:

$\bullet$ Any user on the OSN forwards the post to a subset of friends based on its attractiveness.
A fraction of users in this subset will have previously received the post. 
\textit{Most likely, such users will not be interested in the same post again.} Suppose $\kappa$ is the fraction of common friends between any two typical users, then $\kappa a$ denotes the number of such re-forwards if `$a$' number of users already had the post. \textit{Thus a linearly decreasing function is a suitable choice for $m(\cdot)$.   }

$\bullet$ It is likely that a user with a high number of friends is also a friend to a large number of users; the dataset supports this observation. 
Thus, a user with more connections is more likely to receive the post earlier. 
To capture such aspects, {\it we model the friends of a user, ${\cal F}(a)$, to be independent across users, but with decreasing expected values, i.e., $a \mapsto E[{\cal F}(a)]$ is itself a decreasing function}. With this, {\it the assumption \ref{d1} is readily satisfied by considering for example, $\hat{\offs} = {\cal F}(a_0)$}, with $E[{\cal F}(a_0)]^2 < \infty$.

$\bullet$ Further, as time
progresses, the post would naturally proceed towards saturation. Therefore, it is likely that the slope of $m(\cdot)$ is drastically different towards the end. To summarize,
\textit{we model the TeF function $m(\cdot)$ as a piece-wise linearly decreasing function, with two different slopes, where the initial slope is bigger than that towards the end}.

$\bullet$ It is reasonable to assume that the expected number of forwards is proportional to the attractiveness factor $\rho > 0$. Hence, we model the TeF function $m(\cdot)$ as $m(a) = \rho m_N(a) ~ \forall ~ a$, where $m_N(\cdot)$ is the TeF with $\rho = 1$. Such a factorization of $m(\cdot)$  is supported by the experiments  on the SNAP dataset 
for a wide range\footnote{\label{footnote}Such a common  fit is good mostly for $\rho \geq 0.4$, for others we directly derive a good linear fit of $m(\cdot)$ by trial-and-error, see Section \ref{sec_numerical}.} of $\rho$.
The latter function $m_N(\cdot)$ is determined solely by the characteristics of the network. Basically, $m_N(\cdot)$ corresponds to a hypothetical situation where every recipient shares with all of its friends, but new shares are only the effective forwards. Conclusively:
\begin{align}\label{eqn_network_mean_func}
m(a) &= \rho m_N(a), \mbox{ where with } \widetilde{m} := \overline{m} - \bax (\kappa_1-\kappa_2), \nonumber \\
m_N(a) &:= 
 (\overline{m} - \kappa_1 a)1_{ a \leq \bax} + (\widetilde{m} - \kappa_2 a)1_{ a > \bax}.
\end{align}
Thus, the TeF is determined by four parameters, $\overline{m}, \kappa_1, \kappa_2$ and $\bax$. Here, $\rho\overline{m} = \rho E[\mathcal{F}(a_0)]$ is the expected number of forwards in the beginning, and $ \rho\kappa_1, \rho\kappa_2$ reflect the slopes of the TeF.  Observe that the slope reduces to $\rho\kappa_2 < \rho\kappa_1$ when total shares reach a particular value, $\bax$. We assume\footnote{Then, there is a possibility of exponential growth leading to virality.} $\rho \overline{m} > 1$.

$\bullet$ We corroborate that the above model well captures the content propagation using an instance of the SNAP dataset: (a) we first obtain the TeF curve by estimating $m_N(a)$ (i.e., with $\rho = 1$) for some values of $a$;  (b) then fit a two slope curve using a naive approach; and (c) use the estimated (four) parameters to derive the solution of the ODE \eqref{eqn_ODE_stpbp} and other theoretical conclusions, for posts with different $\rho$ values.

Estimating the parameters mentioned above in step (b) is crucial in any such study. For now, we estimate the same using a simple trial-and-error method, as explained in footnote \ref{footnote} and Section \ref{sec_numerical}. However, more sophisticated study is essential to estimate these parameters more accurately, and we leave it for the future. As said earlier, this study aims to provide a theoretical understanding of the CP process.

\section{ODE  analysis}\label{ode_based_analysis}
Using the TeF  \eqref{eqn_network_mean_func}, we solve the ODE \eqref{eqn_ODE_stpbp}. By Theorem \ref{thrm1_stpbp}, this solution (a.s.) approximates the CP process. We also derive the approximate trajectories for $a(t)$ and $c(t)$, which show the desirable saturation effect. Let $\tau_s := \inf\{t: a(t) = \bax \}$ and $\tau_e := \inf\{t: c(t) = 0 \}$, the extinction time. 

\textit{In extinction sample paths (we have, $\ups_{n} = 0$),  from ODE \eqref{eqn_ODE_stpbp}, $\hat{\ups}^{n}(t) = 0$ $\forall$ $t$. We now consider viral sample paths.}

\vspace{0.5mm}
\noindent \textbf{Total and current fractions:} The fractions $(\pa, \pc)$ are given by the solution of the ODE  \eqref{eqn_ODE_stpbp}, which can be directly obtained using the integrating factor (IF) approach. For $t \leq \tau_e$, the closed-form (extended) solution for $\pa$ is given by: 
\begin{align*}
    \pa(t) = e^{-\int_{u_1}^t (u_4 \eta(s') + 1) ds'}  \left(u_2 + u_3\int_{u_1}^t  e^{\int_{u_1}^s (u_4 \eta(s') + 1) ds'} ds\right), 
\end{align*}where  constants {\small$u : = (u_1, u_2, u_3, u_4)$} in the two phases equal:
\begin{equation*}
u = 
\begin{cases}
\left(0, c_0, \overline{m} \rho, \kappa_1 \rho \right), & 0 \leq t \leq \tau_s,\\
\left( \tau_s,  \pa(\tau_s), \widetilde{m} \rho , \kappa_2 \rho  \right), & \tau_s < t \leq \tau_e.
\end{cases}
\end{equation*}

After extinction, i.e, for $t > \tau_e$, from the ODE \eqref{eqn_ODE_stpbp},  $\pa(t) = \pa(\tau_e)$. Similarly, the current fraction of shares till time $t$ evaluates to the following, again using IF approach:
\begin{align*}
    \pc(t) &=  e^{-(t-v_1)}\left(v_2 + \int_{v_1}^t  e^{(s - {v_1})}(v_3 - v_4 a(s) - 1)  ds \right), \mbox{with} \nonumber \\ 
    v  &= (v_1, v_2, v_3, v_4)  =   
    \begin{cases}
    \left(0, c_0, \overline{m}\rho, \kappa_1\rho \right), & 0 \leq t \leq \tau_s,\\
    \left( \tau_s,  \pc(\tau_s), \widetilde{m}\rho, \kappa_2\rho \right), & \tau_s < t \leq \tau_e.
    \end{cases}
\end{align*}Further,  $\pc(t) = 0$ for  all $t \geq \tau_e$.

\noindent \textbf{Trajectories of shares:} It is more relevant to analyse the trajectories corresponding to the current and total shares; and we provide approximate expressions for the same.  Recall from \eqref{eqn_ODE_stpbp}, $a(t) = \pa(t) \eta(t)$ and $c(t) = \pc(t) \eta(t)$. Thus, we begin with an approximation for  $\eta(t)$, defined in \eqref{eqn_eps_n}:
\begin{align}\label{eqn_eta}
\begin{aligned}
    \eta(t) &\approx \max\{n : \gamma + \ln(n) \leq t \}= \floor{e^{t-\gamma}} \approx e^{t-\gamma},
\end{aligned}
\end{align}where $\gamma$ is Euler-Mascheroni constant. \textit{Henceforth, we use this approximation of $\eta(t)$ in all the computations.} With this approximation and using \eqref{eqn_ODE_stpbp}, the ODE for $a(\cdot)$ is given by:
\begin{align}\label{approx_total_ode}
    \dot{a} &=  
     \dot{\pa}e^{t-\gamma} + \pa e^{t-\gamma}1_{c > 0} = m(a)e^{t - \gamma}1_{c > 0}.
\end{align}
The solution of the above ODE with $m(\cdot)$ as in \eqref{eqn_network_mean_func}, can be obtained using the standard techniques in ODE theory:
\begin{equation}\label{eqn_total_ode}
a(t) = 
\begin{cases}
 w_1 - w_2 e^{-w_3 e^{t}}, &\mbox{ when } 0 \leq t \leq \tau_e,\\
a(\tau_e), & \mbox{ when } t > \tau_e,
\end{cases}
\end{equation}
where the constants are given by the following (with $w = (w_1, w_2, w_3)$, observe by continuity $a(\tau_s) = \bax$):
\begin{equation}\label{total_pop_constants}
w = 
\begin{cases}
\left( \frac{\overline{m}}{\kappa_1}, (w_1 - a_0)e^{w_3}, \kappa_1 \rho e^{-\gamma} \right), & 0 \leq t \leq \tau_s,\\
\left( \frac{\widetilde{m}}{\kappa_2}, (w_1- \bax)e^{w_3  e^{\tau_s}},  \kappa_2 \rho e^{-\gamma} \right), & \tau_s < t \leq \tau_e.
\end{cases}
\end{equation}Observe that $a(\cdot)$ saturates as the current shares get extinct. We believe, only STP-BP can capture these effects. Proceeding as in \eqref{approx_total_ode}, the ODE for $c(t)$ is given by:
\begin{align}\label{approx_current_ode}
    \dot{c} = (m(a) - 1)e^{t-\gamma}1_{c > 0}.
\end{align}
By solving, the trajectory of the current shares is given by:
\begin{align}\label{eqn_current_ode}
    c(t) &= \left(c(\varphi) - a(\varphi) + a(t) + e^{-\gamma}(e^\varphi - e^t)\right)1_{t < \tau_e}, 
\end{align}
where $\varphi := \tau_s 1_{t > \tau_s}$. After extinction (for $t \geq \tau_e$), $c(t) = 0$. 

\vspace{0.5mm}
\noindent \textbf{Shares at transition epochs:} From Theorem \ref{thrm1_stpbp}, the value of the embedded chain at the transition/wake-up epoch $n$ is approximated by the ODE solution at $t_n$, defined in \eqref{eqn_eps_n}. Thus, it is more important to evaluate $a(t)$ and $c(t)$ at time points $ t = t_n$. Towards this, define $n_s := \eta(\tau_s)$ and $n_e := \eta(\tau_e)$ as the respective counterparts of $\tau_s$ and $\tau_e$. Using the same approximation ($t_n \approx \gamma + ln(n)$) as in \eqref{eqn_eta}, now for the mapping $n \mapsto t_n$, the shares, by \eqref{evolve_SA}, \eqref{eqn_total_ode} and \eqref{total_pop_constants}, equal:
\begin{align}\label{eqn_total_epoch}
a(t_n) &=
     w_1 - w_2 e^{-n w_3 e^\gamma},  \mbox{ when } 0 \leq n \leq n_e,\\
c(t_n) &=  w_1 - w_2 e^{-n w_3 e^\gamma} - n, \mbox{ when } 0 \leq n \leq n_e.\label{eqn_current_epoch}
\end{align}
\noindent In the above, \eqref{eqn_current_epoch} follows because  $a(t_n)-c(t_n) = n$ from \eqref{evolve_SA}. After extinction, $(a(t_n), c(t_n)) $ $=(a(\tau_e), 0)$  $\forall$ $n \geq n_e$. 

We now investigate other important metrics related to CP.

\noindent \textbf{Growth rates:} In standard BPs, the population exhibits dichotomy, i.e., either the population grows exponentially large (at a constant rate),  or declines to zero. In the former case, both $C(t_n)$ and $A(t_n)$ grow like $e^{(\rho \overline{m} -1)\tilde{\tau}_n}$, where $\tilde{\tau}_n := \lambda \tau_n$ is independent\footnote{Using properties of exponential distribution, this can easily be proved. Such an effect is seen since we are discussing the embedded chain.} of $\lambda$, as time progresses. While in STP-BP, from \eqref{eqn_total_epoch}, which again approximates $A(t_n)$ a.s., and time asymptotically, it is clear that the total shares have exponential growth, however, the rates are different in the two phases. The growth rate in the initial phase is $w_3 e^\gamma = \kappa_1 \rho$, while it decreases to $\kappa_2 \rho$ in the later phase (since $\kappa_1 > \kappa_2$). Further, \textit{the current shares also experience an initial exponential growth, which is further modulated by the growing factor of $n$, in \eqref{eqn_current_epoch}, leading to an eventual linear fall}. This illustrates the modified dichotomy discussed after Theorem \ref{thrm1_stpbp}. More attractive posts have higher growth rate.

At this point, we would like to admit that discussing the growth rates at transition epochs is a non-standard practice in the BP literature. However, for the saturated BP, that does not grow forever, we believe such a discussion is relevant and important. In future, we plan to include the influence of $\{\tau_n\}$ on these growth rates, by extending the ODE analysis to fractions $\left\{\nicefrac{\tau_n}{n}\right\}$; this would help us derive the standard growth patterns discussed in the BP literature for STP-BP. 
 
\noindent  \textbf{Peak of current shares:} Define $c^* = \sup_t c(t)$, i.e., the peak (maximum) current shares. It can be obtained from \eqref{eqn_current_ode} $\left(c''(t) < 0\right)$ and equals (recall $n_s = a(\varphi) - c(\varphi)$):
\begin{eqnarray}\label{peak}
    c^* = w_1 - \frac{\left(1 + \ln(w_2 w_3 e^\gamma) \right)}{w_3 e^\gamma}, \mbox{ as } e^{-\gamma  + \varphi} = n_s,
\end{eqnarray}
where $\varphi$ takes different values in two phases as in \eqref{eqn_current_ode}.

\noindent \textbf{Life span and Max reach:} Recall $n_e$ is the epoch at which CP of the post terminates, or in other words, it represents the life span of the post. Now, substituting $c(t_n)$ from \eqref{eqn_current_epoch}, $n_e$ can be written as the solution of the equation:
\begin{eqnarray}\label{eqn_max_share}
    w_1 - w_2 e^{-n_e w_3 e^\gamma} = n_e,
\end{eqnarray}
where  $w$ is given by the second line of \eqref{total_pop_constants}. 
Further, by \eqref{eqn_current_epoch}, the \textit{max reach} or saturated total shares $a(t_{n_e})$ equal $n_e$.

\noindent \textbf{Probability of virality:} When one transitions from super-critical to sub-critical regime, it is clear that the process gets extinct a.s. (see modified dichotomy). One of the important questions related to CP is the chances of virality, where total/current shares grow significantly large. This necessitates the definition of a different probability related to STP-BP, the probability of virality (PoV). In view of the initial exponential growth of current shares \eqref{eqn_current_epoch} and the dichotomy remark after Theorem \ref{thrm1_stpbp}, we define PoV, $p_\Delta := P(C(t) > \Delta \mbox{ for some } t)$ for some threshold $\Delta > 0$. We conjecture that for small $\Delta$, $p_\Delta \approx 1 - p_e$, where $p_e$, the probability of extinction of standard population independent BP solves the equation: $f(s) = s$, where $f(\cdot)$ is the PGF of $\Gamma(a_0)$ (see \cite{athreya2001branching}). This is an important aspect for future study.

\section{Numerical Experiments}\label{sec_numerical}
We now perform exhaustive Monte-Carlo (MC) simulations on the SNAP Twitter dataset to validate our theory (see \cite{mcauley2012learning}). The dataset consists of inter-connections among $81,306$ users, with $29.77$ average number of friends. \textit{In all the case studies, we consider only viral sample paths.}

\hide{
\noindent \textbf{Validation using synthetic data:} In Theorem \ref{thrm1_stpbp}, we established that the solutions of the ODE \eqref{eqn_ODE_stpbp} can almost surely approximate the dynamics of the CP process over any finite time window. We corroborate this result using MC simulations for  $\bax = 1400$, $\overline{m} \rho = 5$,  $\rho \kappa_1 = 0.003$, and $\rho \kappa_2 = 28 \times 10^{-5}$ in Fig. \ref{fig_synthetic_and_mean_curve}(i). In particular, we plot two sample paths of the embedded chain \eqref{evolve_SA}, against the Piccard's iterates of the ODE \eqref{eqn_ODE_stpbp}, approximate trajectories derived in \eqref{eqn_total_ode}, \eqref{eqn_current_ode} and the trajectories obtained at transition epochs in \eqref{eqn_total_epoch}, \eqref{eqn_current_epoch} for the total and current shares. In the said figure, we apply Piccard's iterates with $n_m = 5$. Then, we evaluate the shares (represented by red triangles) by multiplying the fractions obtained by $\eta(t_k)$ for finite $k > n_m$. Similarly, the approximate trajectories are plotted at transition epochs, and are displayed by red circles. 
From the figure, it can be seen that Theorem \ref{thrm1_stpbp} indeed holds true for synthetic data, and further the derived trajectories also well approximate the embedded chain. 
}

\noindent \textbf{MC simulations over the dataset:} Suppose the content provider initially shares the post with $2$ random seed users chosen from the dataset, identified by their user IDs. These users are added to the total and current-shares lists. At any time, one random user from the current-shares list wakes-up and forwards the post to a random subset of its friends, each chosen independently with probability $\rho$. Out of this subset, we ignore the friends who had the post. Then, we delete this user from the current-shares list, and update the two lists with the new effective forwards. The propagation continues in this manner and terminates when the current-shares list is empty, thus completing one sample run of the CP process.

\noindent \textbf{Estimation of TeF:} 
To estimate the TeF discussed in Section \ref{sec_mean}, we create bins of equal length ($1000$ in our case). Then, we simulate the CP process $861$ times over the dataset  for $\rho = 1$. In each run and bin $n$, we count: (i) the number of transitions that have occurred and (ii) the sum total of effective forwards,  while the number of total shares belongs to that bin. These two entries are accumulated bin-wise over $861$ viral sample paths. For each bin, we divide the effective forwards by the number of transitions, which  represents the estimated TeF curve (see solid black curve in Fig. \ref{fig_synthetic_and_mean_curve}(i)).

We repeat the above routine for $\rho = 0.4, 0.6$ as well, and the corresponding estimates of $m(\cdot)$ are plotted in Fig. \ref{fig_synthetic_and_mean_curve}(i) after dividing by the respective $\rho$ values. The resultant picture  gives the confidence that one can derive the individual $m(\cdot)$ curves by using $m_N(\cdot)$ as suggested by \eqref{eqn_network_mean_func}. We also plot an approximate piece-wise linear curve (see the dashed curve in Fig. \ref{fig_synthetic_and_mean_curve}(i)), with the parameters $\overline{m}  = 21.321042, \kappa_1 = 532\times 10^{-6}, \kappa_2 = 83\times 10^{-6}$ and $\bax = 35000$ obtained using trial and error method. Henceforth, we refer this $m_N(\cdot)$ curve as the \textit{common-fit (C-fit) curve}. For some sets of simulations, again using the trial and error method, we individually choose \textit{best fit $m(\cdot)$ curve} for the given case study. 

\begin{figure}
    \centering
    \includegraphics[trim = {2.5cm 0cm 1cm 0cm}, clip, scale = 0.3]{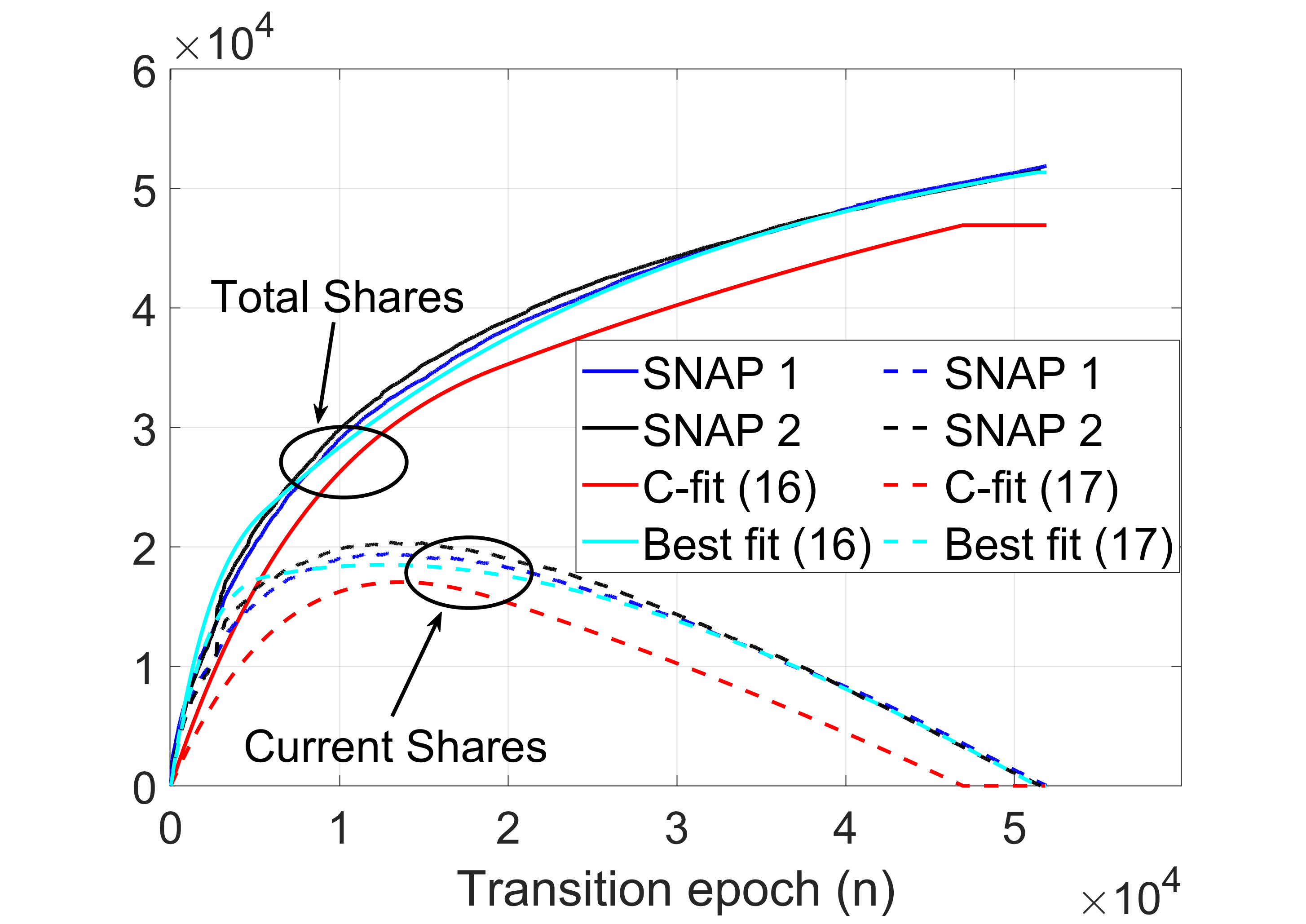}
    \caption{Validation of theoretical trajectories against two instances of CP on SNAP dataset, for $\rho = 0.2$}
    \label{fig:SNAP_2}
\end{figure}
\begin{figure*}
\begin{minipage}{0.5\textwidth}
\centering
    \includegraphics[trim = {1cm 0cm 1cm 0cm}, clip, scale = 0.3]{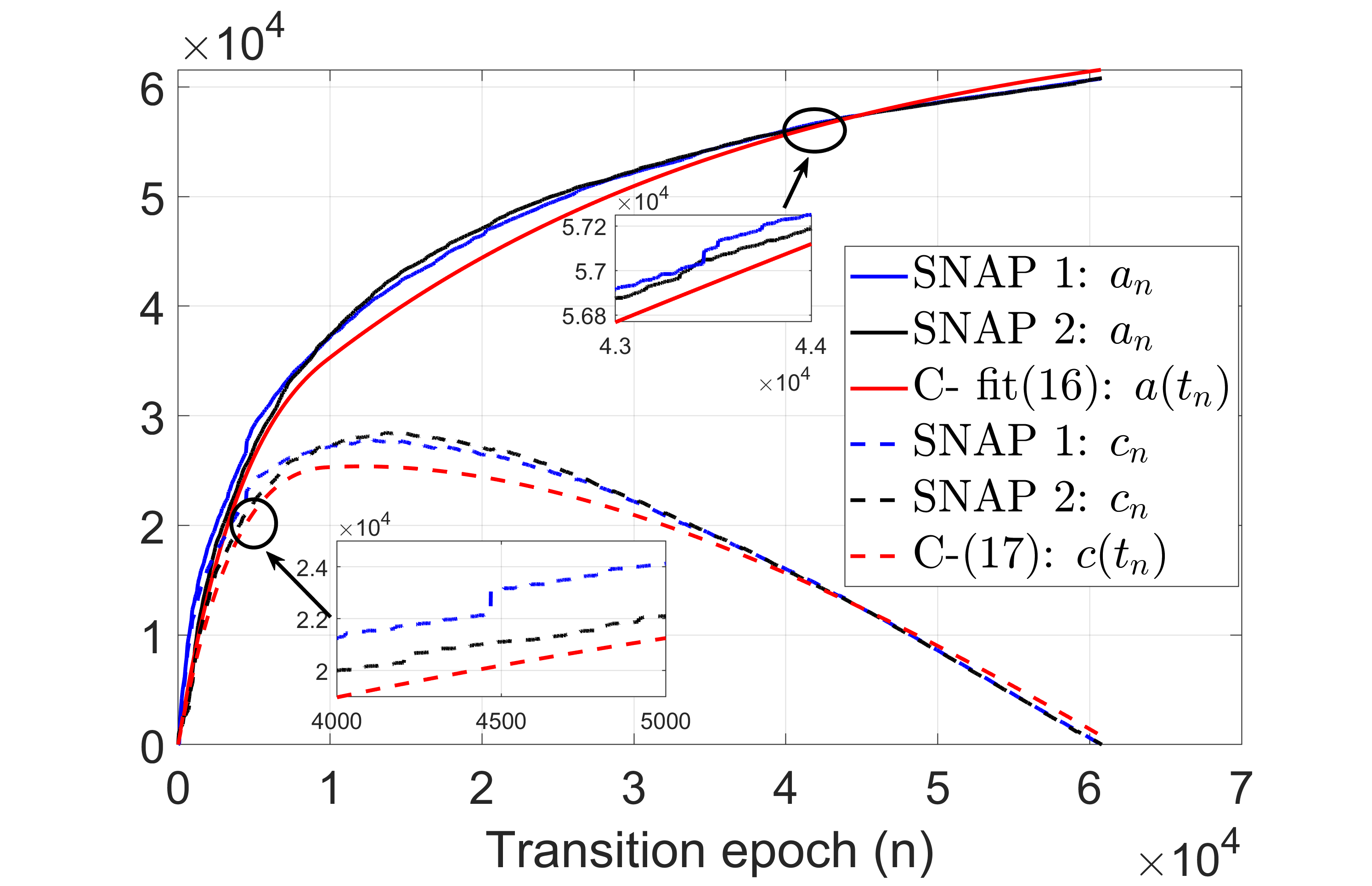}
\end{minipage}%
\begin{minipage}{0.5\textwidth}
\centering
    \includegraphics[trim = {1cm 0cm 1.5cm 0cm}, clip, scale = 0.35]{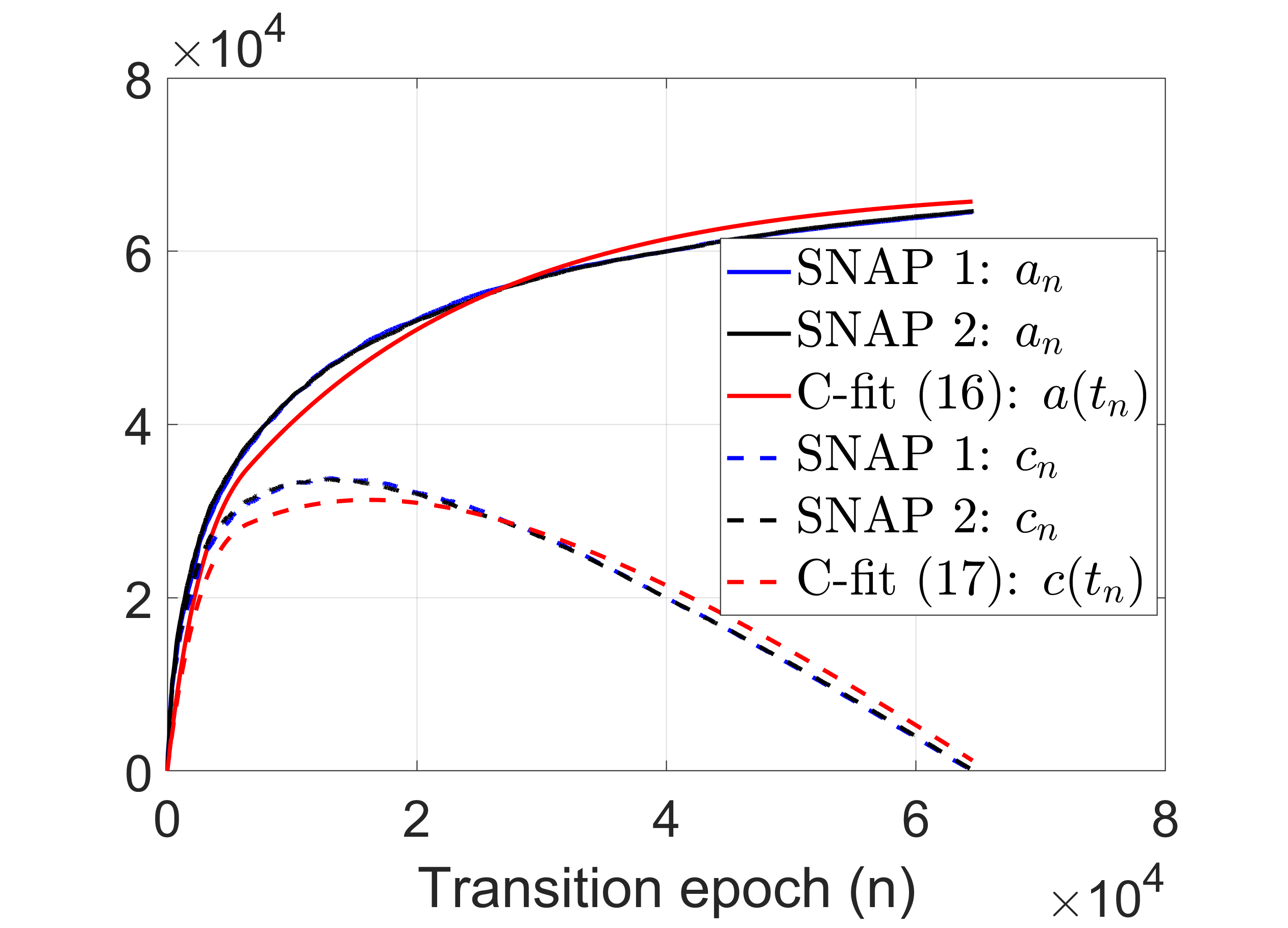}
\end{minipage}
\caption{\label{fig_SNAP}Validation of theoretical trajectories against two instances of CP on SNAP dataset, for $\rho = 0.4, 0.6$ respectively}
\end{figure*}
In Fig. \ref{fig_SNAP}, we obtain the theoretical curves for total and current shares using C-fit TeF (discussed above), which are then compared with two instances of the CP process on the SNAP dataset. The theoretical curves well approximate the dataset curves for $\rho = 0.4, 0.6$. In fact, we observed this is true for $\rho \geq 0.4$, in general. However, for lower values of $\rho$ (e.g., $\rho = 0.2$ in Fig. \ref{fig_SNAP}), the best fit curve better approximates. This suggests that one can obtain C-fit curves for higher and lower ranges of $\rho$ separately. At last in Fig. \ref{fig_synthetic_and_mean_curve}(ii), for different $\rho$ values, we show that the peak shares and max. reach from the C-fit curve approximate the respective values for an instance of CP on the dataset with a maximum error of $11.4645\%$ and $2.4837\%$ respectively (for $\rho \geq 0.4$). 

\begin{figure}[htbp]
\vspace{-3mm}
\begin{minipage}{0.5\linewidth}
\centering
    \includegraphics[trim = {1.2cm 0cm 1.2cm 0cm}, clip, scale = 0.4]{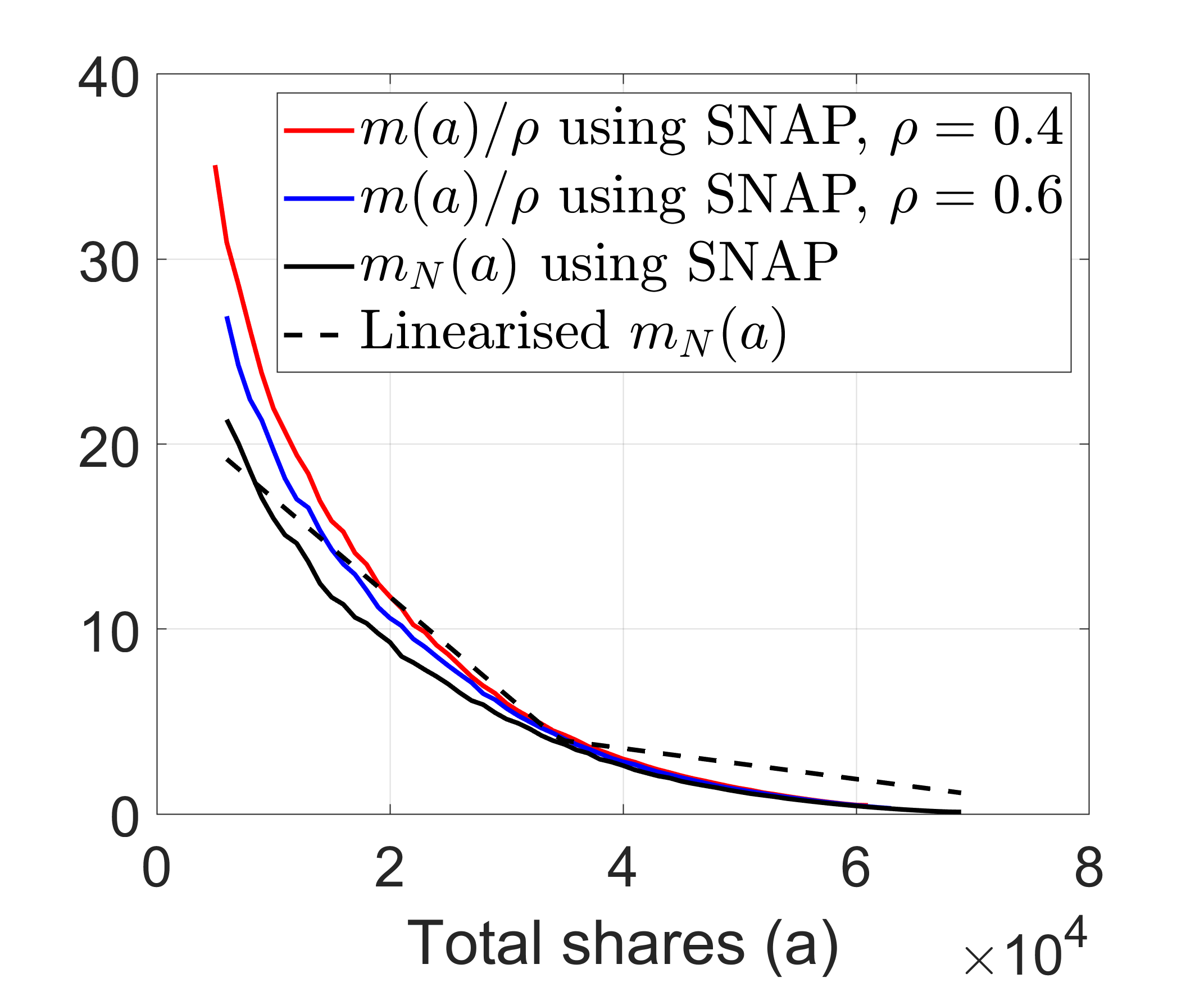}
\end{minipage}%
\begin{minipage}{0.5\linewidth}
\centering
\includegraphics[trim = {3cm 0cm 2cm 0cm}, clip, scale = 0.35]{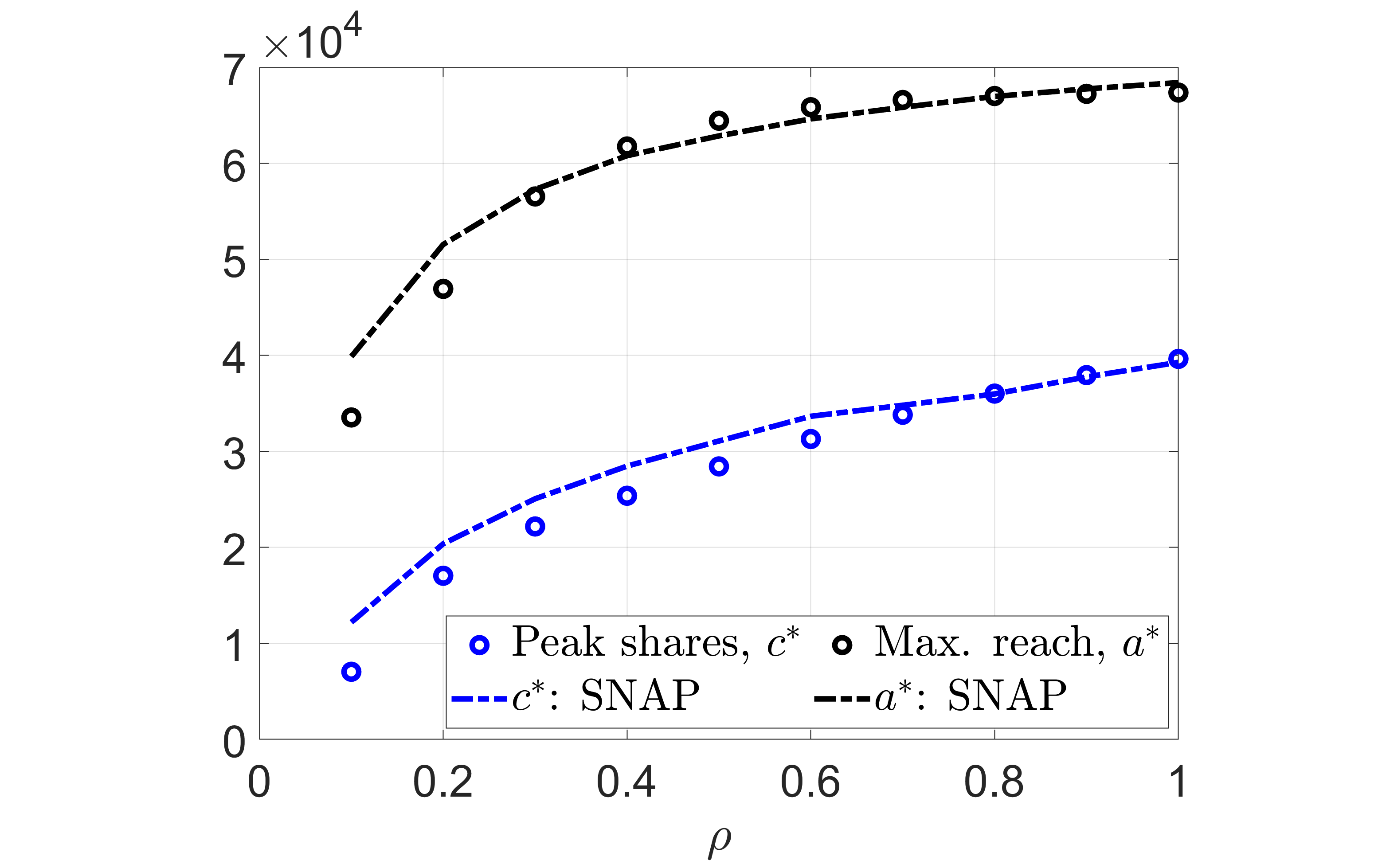}
\end{minipage}
\caption{(i) Left: piece-wise linear TeF; (ii) Right: Peak shares \eqref{peak} and Max reach  \eqref{eqn_max_share} versus SNAP estimates \label{fig_synthetic_and_mean_curve}}
\vspace{-1mm}
\end{figure}

\section{Conclusions}
In this chapter, we studied the saturation effect experienced by the total shares/copies of the post due to continual re-forwarding on OSNs. We captured the dynamics via a newly introduced, saturated total-population dependent branching process. The analysis uses the stochastic approximation technique, which provides an ODE dependent on the post's total expected (effective) forwards, TeF. We modelled TeF as a piece-wise linearly decreasing function with two slopes. The derived trajectories (dependent on four network-specific parameters) asymptotically and almost surely approximate the total and unread copies over any finite time interval. 

Unlike classical dichotomy (either explosion or extinction), the unread copies observe an explosion followed by extinction or direct extinction under saturated BPs. Interestingly, maximum reach (number of users that received the post) and the life span (number of users that read the post) are equal. We showed that derived expressions provide a good fit to the simulated instances of propagation over the SNAP dataset. Here, we fitted the parameters of TeF using a naive method; however, advanced methods of estimation/learning can improve the approximation.

\chapter{Single out fake posts: participation game and its design}  \label{ch:MFG}  

In this chapter, we continue with the problem of fake post detection on OSNs. In Chapter \ref{ch:journal2}, we assumed that a non-zero fraction of users consider the proposed warning mechanism; however, this need not always be true. Therefore, we now design a participation mean-field game\footnote{The work in this chapter is accepted and presented at the American Control Conference (ACC) 2023, see ``Agarwal, Khushboo, and Veeraruna Kavitha. ``Single out fake posts: participation game and its design.'' arXiv preprint arXiv:2303.08484 (2023).''} among the users of the OSN, which ensures the desired proportions of users and achieves the desired levels of actuality identification of posts on the OSN at the Nash Equilibrium.  

\section{Introduction}
It is well-established that fake news has disastrous implications for society. The severity has increased multi-fold with the increasing usage of OSNs\footnote{$2.93$B people are active monthly on Facebook, out of $5.3$B people using the internet in 2022 (\url{https://www.itu.int/en/ITU-D/Statistics/Pages/stat/default.aspx}, \url{https://s21.q4cdn.com/399680738/files/doc_news/Meta-Reports-Second-Quarter-2022-Results-2022.pdf}).}, e.g., several fake news were either intentionally/unintentionally shared by the users  during the COVID-19 pandemic (\cite{cinelli2020covid}).   

As said in Chapter \ref{ch:journal2}, machine learning or deep learning is one of the commonly used approaches for fake post detection (see \cite{feng2022misreporting, sharma2019combating, ruchansky2017csi, ahmed2021detecting}). However, as argued in \cite{ahmed2021detecting}, such algorithms often face difficulty in obtaining training datasets in certain languages, and it gets difficult to determine the actuality using only the content (\cite{sharma2019combating}). Hence, algorithms whose applicability is not restricted due to limited datasets and language barriers are required.

Another approach used for fake post identification is crowd-signals, which we discussed in Chapter \ref{ch:journal2}. To the best of our knowledge, only \cite{kapsikar2020controlling} attempts to guide the users in this manner, which strengthens the collective wisdom; we further build upon this idea here. 

Authors in \cite{freire2021fake} mention that the limited users' willingness to give their opinions publicly is a limitation for the application of crowdsourcing models. 
This calls for the design of an appropriate participation game which sufficiently motivates the users to provide their responses.


Recent algorithms also learn the users' credibility based on their shared posts while utilizing their signals (\cite{freire2021fake, tschiatschek2018fake}). However, it is computationally expensive to learn each user's credibility on enormously large platforms like Facebook; hence, improved algorithms with less knowledge are required. Our mechanism requires just the knowledge of the fraction  of the adversarial users (who purposely mis-tag fake posts as real).

Along with other authors in \cite{kapsikar2020controlling}, we conducted an initial study towards singling out fake posts using crowd-signals. A mechanism is designed where each user tags the post as real/fake based on its intrinsic ability to identify the actuality, the sender's tag and the system-generated warning. The warnings are generated by compiling the tags of the previous recipients. The mechanism proposed in this chapter differs from that in \cite{kapsikar2020controlling} as (i) the previous work assumed that the network has some prior knowledge about the actuality, while the OSN is oblivious in our case, (ii) we do not assume that all users participate in the tagging process, and (iii) no users foul played in \cite{kapsikar2020controlling}, while our analysis is robust against adversarial users (similar to Chapter \ref{ch:journal2}).  

We extended the work of \cite{kapsikar2020controlling} in Chapter \ref{ch:journal2}, where we considered users exhibiting different behaviours, including adversaries. In this chapter, we further extend the work by considering a mean-field  participation game among the users of the OSN. For each post on the user's timeline, the user can tag the post as real/fake. The user can utilize the warning level of that post to make a more informed decision. The main objective is to design an actuality identification (AI) $(\theta, \delta)$ game where at some Nash Equilibrium (called AI-NE), at least $\theta$ fraction of  non-adversarial users tag the fake post as fake, and not more than $\delta$ fraction of non-adversarial users mis-tag the real post as fake. Towards this, the users are rewarded if $\td$ is achieved at NE and earn more if they consider the warning level in their judgement.

We propose an easily implementable warning mechanism for the polynomial response function of the users. The designed AI game has at most two NEs - one NE always exists and is always AI, and the other NE, if it exists, achieves the desired $\delta$-detection of the real posts. We also identify the conditions required to design an AI game.  

In Chapter \ref{ch:journal2}, we considered a linear response function for the warning-seeking users, but here we model the polynomial response function. Unlike the work in \cite{kapsikar2020controlling} and Chapter \ref{ch:journal2}, here, we do not assume that OSN has any prior knowledge about the actuality of the posts. Further, in Chapter \ref{ch:journal2}, we assumed that there is a given proportion of users of different types, while here, we design a game which results in desired proportions at Nash equilibrium. Furthermore, in Chapter \ref{ch:journal2}, the objective function was to maximize the proportion of fake tags for the fake post while ensuring the $\delta$-threshold for the real post; as said above, in this chapter, we again keep the objective same for the real post, but now we can design a game to achieve any $\theta$-threshold for the fake post (which can be higher than the optimal value achieved in Chapter \ref{ch:journal2}) under certain conditions. Thus, this chapter considers a more general framework and can achieve desired results under certain conditions.


\section{System Description}\label{sec_system_desc}

Consider an online social network (OSN) where   content providers create and share  fake ($F$) or real ($R$) content in the form of posts. More often than not, neither the users of the OSN nor the network are aware that the given post is fake/real. Additionally, there is an \textit{adversary} in the system who designs fake posts and creates fake accounts/employs bots to confuse the other users about the actuality of its post by \textit{declaring fake posts as real}; \textit{they do not mis-tag the real post}. We refer to all such fake accounts as adversarial ($a$) users. In the presence of such $a$-users, the OSN is interested in designing a mechanism to detect the actuality of any given post. In particular, for any $\theta \gg \delta > 0$, the OSN aims to guide $\na$ users (referred to as just `users' or $na$-users) such that at least $\theta$-fraction of them correctly detect the fake post (denoted as $F$-post) as fake, and at maximum $\delta$-fraction of them consider the real post ($R$-post) as fake. 

\begin{figure}[htbp]
    \centering
    \includegraphics[trim = {1cm 1.7cm 1cm 1.7cm}, clip, scale=0.35]{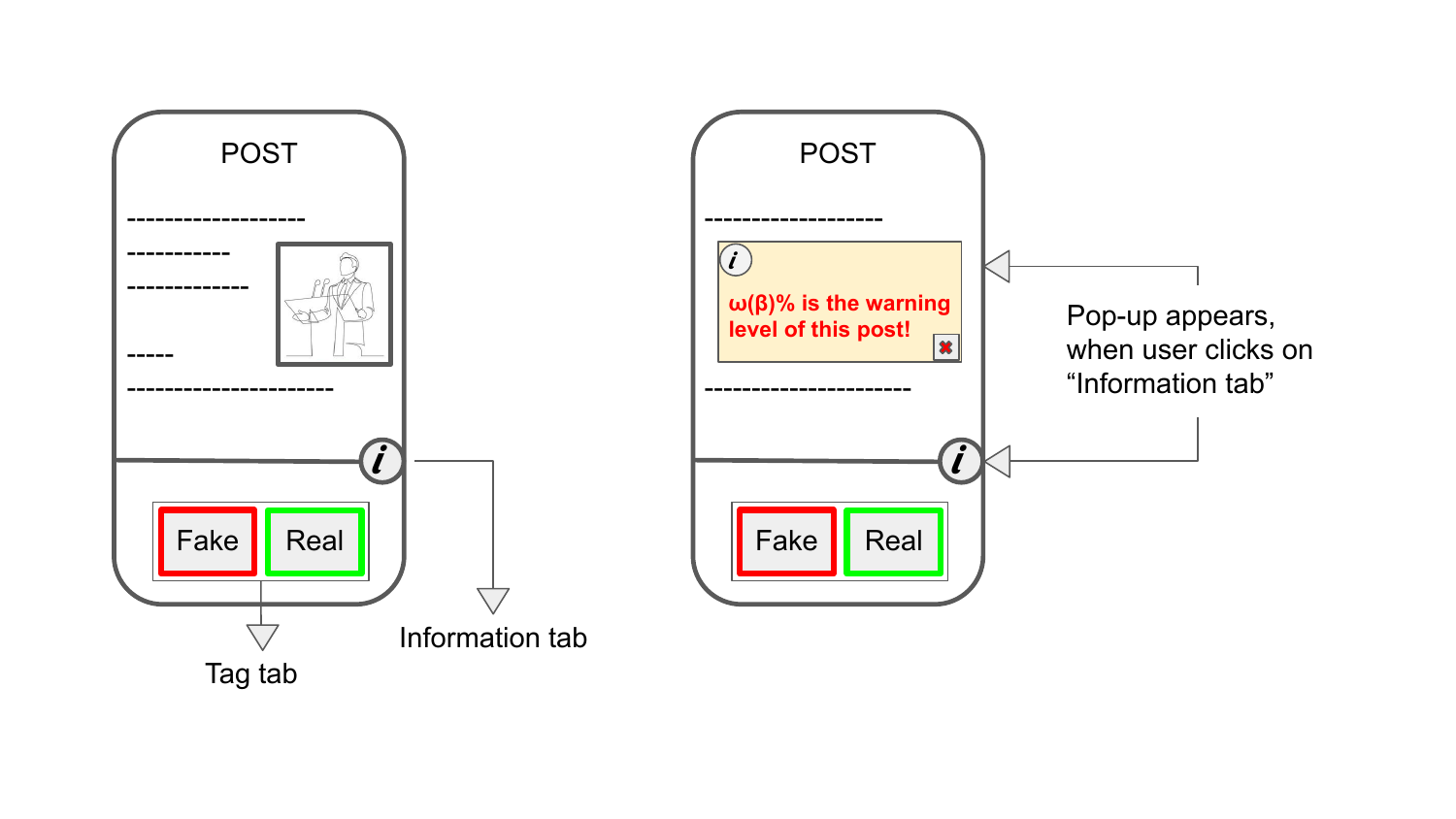}
    \caption{Warning mechanism for each post on OSN}
    \label{fig:mechanism}
\end{figure}
Towards this, motivated by the work in \cite{kapsikar2020controlling}, we propose the following new warning mechanism. For each post shared on the OSN, the OSN additionally designs two tabs - the tag tab and the information tab\TR{ (see \cite[Fig. 1]{arxiv})}{ (see \autoref{fig:mechanism})}. When a user clicks the former tab, it directly tags the post (as $R$ or $F$) based on its innate capacity to judge the actuality, while on clicking the latter tab, it tags additionally using the warning level. By the innate capacity, any user judges the $F$-post as fake with probability (w.p.) $\alpha_F > 0$ and mis-judges  the $R$-post as fake w.p. $\alpha_R > 0$; assume $\Delta_R > 1$ for $\Delta_u := \nicefrac{\alpha_F}{\alpha_u}$. Recall that $a$-users mis-tag any $F$-post w.p. $1$.

\textit{Observe that the difference between $\alpha_F$ and $\alpha_R$ might be minimal, so users might not accurately distinguish between the $F$/$R$-posts without any additional information.} The OSN aims to leverage upon this difference and accentuate the capacity of the users by providing them with additional information based on tags of other users (i.e., via collective wisdom) to single out the fake posts; this information is based on the responses of all the users, as the OSN can not differentiate between $a$-users and others. Now, the users can access this additional information by clicking on the information tab, after which a pop-up window appears with a warning level, $\omega$, whose design is discussed later in detail. The warning level influences the decision of the users to tag the post as real/fake. 

In reality, some users might not participate/tag, some might participate only based on their innate capacities, and the rest participate by considering both the warning provided by the information tab and their innate capacity. We represent the actuality of the underlying post as $u \in \{F, R\}$ and assume that the post is fake w.p. $p$, i.e., $P(u = F) = p$ with $p \in (0,1)$. Thus, the second type of users tag the $u$-post as fake w.p. $\alpha_u$. While, the third type of users tag the post as fake w.p. $r(\alpha_u, \omega(\beta))$ where $r: [0,1] \times \mathbb{R}^+ \mapsto [0, 1] $, the \underline{response function}, is defined as:
\begin{align}\label{eqn_response_func}
r(\alpha, \omega) = \min\{h(\alpha, \omega), 1\}.
\end{align}In the above, the function $h(\cdot, \cdot)$ is such that $r(\cdot, \cdot)$ is Lipschitz continuous in $\omega$ and $\omega : [0,1] \mapsto \mathbb{R}^+ $ is the warning level designed by the OSN - the design of the warning depends on the fraction of fake tags so far.
The warning continuously updates as more users tag the post; we discuss these dynamics in Section \ref{sec_participation_game}. We anticipate that users perceive the post as fake with a higher probability if the warning level is high. Further, the response ($r$) towards fake tagging the post should naturally increase with innate capacity, $\alpha_u$.

\subsection{Objectives of the OSN}\label{sec_objectives}
The OSN aims to design an appropriate warning mechanism. Its primary objective is to achieve \underline{$\td$}, for some $\theta > \alpha_F$ and $\delta > \alpha_R$, defined as follows:
\begin{itemize}
    \item if the underlying post is fake, then at least $\theta$ fraction of non-adversarial users tag it as fake, or
    \item if the underlying post is real, then at most $\delta$ fraction of non-adversarial users tag it as fake.
\end{itemize}Thus, the OSN is willing to compromise a slightly larger fraction ($\delta$) of mis-tags for the real posts to obtain the desired level ($\theta$) of identification of fake posts. However, one may have to ensure a sufficient number of users participate in the tagging process. Towards this, the OSN induces a participation game among users, which we discuss next.

\section{Participation game by OSN}\label{sec_participation_game}
Let $n$ be the number of users on the OSN. Let $N_a$ be the $a$-users out of $n$ users. The remaining users on the OSN have three actions/strategies, $s$: (i) not to participate (say $s = 0$), (ii) participate and tag only based on intrinsic ability (say $s = 1$), and (iii) participate and tag based on both intrinsic ability and the warning level (say $s = 2$); let us call the   users as type $0, 1, 2$ in the order of described actions. Let $N_i$ be the number of type $i$   users for $i \in \{0,1, 2\}$. Define $\mu_i := \nicefrac{N_i}{n}$ for $i \in \{0,1,2,a\}$; let $\mua \in [0,1)$ be fixed. \textit{We assume that OSN knows the fraction $\mua$, but not the identity of the adversarial users.} 

To motivate the   users to participate, the OSN provides publicly visible attributes to each user. For example, the OSN might reflect each user's average participation status on its profile, which gets updated with each post. We believe that such public recognition leads to pro-social behaviour (e.g., \cite{reiffers2019reputation}), i.e., participating in the tagging process. Thus, for the perceived stardom among its peers, each type $1, 2$ user receives a positive utility, say $\cp > 0$. 

The OSN further announces that the participants would get a total reward of $nR$, if the mechanism achieves $(\theta, \delta)$-success; furthermore, each type $2$ user gets $\gamma$ times\footnote{If a user clicks on information tab, then the OSN perceives such a user to be type $2$ user.} the reward provided to the type $1$ user (where $\gamma > 1$); observe, $R$ and $ \gamma$ are  the design parameters for the OSN. \textit{We assume that $a$-users directly tag the post like type $1$ users; and as already mentioned, OSN can not differentiate between $a$ and type $1$ users.} Thus, the OSN provides the same reward to $a$-users, as it does to type $1$ users. Hence, upon success, a type $1$ user gets a reward of  $\nicefrac{nR}{(N_1 + N_a + \gamma N_2)} =  \nicefrac{R}{(\mu_1 + \mua +\gamma\mu_2)}$, and a type $2$ user gets $\nicefrac{\gamma R}{(\mu_1+ \mua + \gamma\mu_2)}$ reward. It usually takes a long time for the OSN to ascertain the actuality of the posts, and thus, the reward is delivered to the users after the confirmation of the actuality.

The type $0$ users earn  a positive utility for the perceived comfort they experience by not participating and a negative utility for the public disapproval; this amounts to a consolidated utility $\cnp \in \mathbb{R}$ for each such user. We assume $\cp \geq \cnp$; if $\cp < \cnp$, then the OSN needs to provide an additional participation reward ($\geq \cnp - \cp > 0$) to all participants, irrespective of the outcome of the game. 

A type $2$ user also incurs a cost, $\ce > 0$, for the extra time invested in the process.

Thus, given the mechanism announced by the OSN, the   users make choices and participate in the tagging process. The tagging process may take some finite positive time as the users tag asynchronously. However, since the users make a \underline{participation choice} oblivious to the choices of the others, one can model it as a simultaneous move strategic form game after capturing the tagging process by the fixed-point (FP) equation in \eqref{eqn_fp} described next. Observe here that the tagging process, and hence utility (of any user) depends on the strategy profile ${\bf s}=(s_1, \cdots, s_n)$ chosen by all the users.

Let $X_u$ be the number of fake tags provided by the users for the $u$-post\footnote{Henceforth, we reserve sub-script $u$ for the actuality of the post, i.e., at places, we may not explicitly write that $u \in \{R, F\}$.}, where $u \in \{R, F\}$. Define $\beta_u := \nicefrac{X_u}{(N_1+N_2 + N_a)}$ as the proportion of fake tags (observe, $N_1 + N_2 + N_a$ is the number of participants). The proportion satisfies the following FP equation: 
\begin{align}\label{eqn_fp}
    \beta_F(\bmu) &= \alpha_F \eta + (1-\eta-\eta_a) r(\alpha_F, \omega(\beta_F(\bmu))), \\
    \beta_R(\bmu) & = \alpha_R \eta + (1-\eta-\eta_a) r(\alpha_R, \omega(\beta_R(\bmu))), \mbox{ where} \nonumber \\
    \bmu &= (\mu_0, \mu_1, \mu_2), \ \eta = \eta(\bmu) := \frac{\muone}{\muone+\mutwo+\mua}, \mbox{ and } 
    \eta_a = \eta_a(\bmu) := \frac{\mua}{\muone+\mutwo+\mua},\nonumber
\end{align}
with the terms explained as below:

$\bullet$ $\eta$ and $\eta_a$ represent the fraction of type $1$ and $a$-users among participants, and empirical measure $\bmu$ is defined by ${\bf s}$ $=(s_1, \cdots, s_n)$ as:
$$
    \mu_i = \frac{\sum_{j = 1}^n 1_{\{s_j = i\}}}{n}, \mbox{ for } i \in \{0,1,2\};
$$
$\bullet$ Let $Z_{i}^{(j)}$ be the indicator that $i$-th user, among type $j$, has tagged the post as fake. The fraction of type $1$ users that tagged the post as fake equals $\sum_i \nicefrac{Z_i^{(1)}}{N_1} \approx \alpha_u$, when $n$ is sufficiently large  and $\mu_1 > 0$ (w.p.~$1$, by law of large numbers). Thus, the first term $\alpha_u \eta$ results from $\sum_i \nicefrac{Z_i^{(1)}}{(N_1+N_2+N_a)}$, for $u\in\{F, R\}$;

$\bullet$ While the tagging process is ongoing, the type $2$ users are provided refined warnings based on the tags of the previous users. Thus, when $n$ is large, we anticipate the warning to stabilise. The stabilised value is reflected by the tags of the future users as well. Then, $r(\alpha, \omega)$ is the response of the type $2$ users at stabilised warning level $\omega$, resulting from the stabilised fraction of fake tags $\beta_u$. Thus, the second term (similar to the first term) approximately equals $\sum_i \nicefrac{Z_i^{(2)}}{(N_1+N_2+N_a)} \approx (1-\eta-\eta_a) r(\alpha, \omega(\alpha, \beta_u))$.  The overall fraction of fake tags ($\beta_u$) equals the sum of the corresponding terms; hence, the FP equation;


$\bullet$ \textit{When the number of OSN users increases (i.e., as $n \to \infty$), the limit fraction of fake tags (for any given strategy profile of the users) is indeed given by the FP equation \eqref{eqn_fp}}; Lemma \ref{lemma_beta} of next subsection proves this.

The OSN is   interested in the fake tags from $na$-users only, $\nicefrac{X_F}{(N_1+N_2)}$, but it can not distinguish between the tags from $na$ and $a$-users; it can observe only the overall fraction of fake tags, $\beta_u$. Hence, \textit{$\td$ is redefined as follows} in terms of $\beta_u$ (see \eqref{eqn_fp}):

\vspace{-4mm}
{\small
\begin{align}\label{eqn_td}
\begin{aligned}
    \beta_F(\bmu) &\geq \theta \left(1 - \eta_a(\bmu)\right) := \theta_a(\bmu), \mbox{ and} \\
    \beta_R(\bmu) &\leq \delta \left(1 - \eta_a(\bmu)\right) := \delta_a(\bmu).
\end{aligned}
\end{align}}
Thus, $\td$ depends on $\textbf{s}$ only via $\bmu$, when $n$ is large (also, see \TR{\footref{footnote_uni_dev}}{footnote \ref{footnote_uni_dev}}). Let $P_{\bmu}(\cdot) := P(\cdot|\bmu)$ be the corresponding conditional probability. From \eqref{eqn_td}, the probability of mechanism being $\td$ful, given the empirical ($\textbf{s}$-dependent) distribution $\bmu$ and parameterised by $(\theta, \delta)$, is given by (recall $p$ is the probability of underlying post being fake):
\begin{eqnarray}\label{eqn_prob_success}
P_{\bmu}(S; \theta, \delta) = p  P_{\bmu}(\beta_{F, \overline{k}} \geq \theta_a) +  (1-p)  P_{\bmu}(\beta_{R, \overline{k}} \leq \delta_a), 
\end{eqnarray}where $\beta_{u, k}$ for any  $1\leq k\leq n$ represents the proportion of the fake tags for the $u$-post, immediately after the  $k$-th participant tags (see details in sub-section \ref{subsec_ode}), and ${\overline k} = n(1-\mu_0)$ is the index of the last participant to tag. Recall that the game is played continually, however, OSN will design a mechanism  anticipating the eventual responses of the users.
Finally, the utility of $i$-th user is\footnote{\label{footnote_uni_dev}Here, the influence of a single user's action is negligible, as is usually the case in MFGs; observe that if, for example, $\muone, \mutwo$ fractions correspond to $(s_i = 1, s_{-i})$, then the fractions corresponding to $(s_i = 2, s_{-i})$ equal $\muone - \frac{1}{n}, \mutwo + \frac{1}{n}$ which respectively converge to $\muone, \mutwo$ as $n \to \infty$.} ($\textbf{s} = (s_i, \textbf{s}_{-i})$, a standard game-theoretic notation):
\begin{align}\label{eqn_util}
U(s_i, \textbf{s}_{-i}) = 
\begin{cases}
\cnp, &\mbox{if } s_i = 0,\\
\cp + \frac{R P_{\bmu}(S; \theta, \delta)}{\muone + \mua + \gamma \mutwo}, &\mbox{if } s_i = 1,\\
\cp - \ce + \frac{\gamma R P_{\bmu}(S; \theta, \delta)}{\muone + \mua + \gamma \mutwo}, &\mbox{if } s_i = 2.
\end{cases}
\end{align}

This completes the description of the participation game represented by $\big<\{1, \dots,  n-N_a\},$ $ \{0,1,2\}, (U_i) \big>$ for any given $N_a$. We derive the solution of this game, Nash Equilibrium\TR{}{\footnote{
\begin{definition}\cite{narahari2014game}
Given a strategic form game $\left<\{1, \dots, n\}, (S_i), (U_i) \right>$, the strategy profile $\textbf{s}^* = (s_i^*)_{i = 1}^n$ is called a \underline{pure
strategy Nash equilibrium} if $U_i(s^*) = \mbox{argmax}_{s\in S_i} U_i(s, \textbf{s}_{-i}^*)$ for each $i \in \{1, \dots, n\}$.
\end{definition}}} (NE).
From \eqref{eqn_util}, $U(s_i, \textbf{s}_{-i}) = U(s_i, \bmu)$ for $n$ large enough (see \footref{footnote_uni_dev}). As a result, the utility of any user depends on its strategy and the relative proportion of users, $\bmu$. It is thus appropriate to analyse such large population game using mean field game (MFG) theory. For MFGs with countable action set (as described below), the solution concept is again NE, and is equivalently given by the following (see \cite{carmona2018probabilistic}):
\begin{definition}\label{defn_NE_MFG}
Consider a mean field game, with a countable strategy set, $S$. Let $\mu_s$ represents the fraction of players choosing action $s$, for some $s\in S$. Further, let the utility of any player be $U(s, \bmu)$, for $\bmu = (\mu_s)_{s \in S}$. 
Then, $\bmu^* = (\mu_s^*)_{s \in S}$ is called a \underline{Nash equilibrium of the MFG} if $\S(\bmu^*) \subseteq \mbox{argmax}_s U(s, \bmu^*)$, where $\S(\bmu) := \{s : \mu_s > 0\}$ represents the support. \label{defn_NE}
\end{definition}
We now consider the MFG variant of the participation game by letting $n \to \infty$. The utilities remain as in \eqref{eqn_util}, but the probability of success\footnote{The $\lim_{k \to \infty} P_{\bmu}(\cdot)$ may not exist for all distributions $\bmu$, and hence it is appropriate to define probability of success with $\liminf$ in \eqref{eqn_prob_success}.} changes to (when $\mu_0 < 1-\mua$):
\begin{align}\label{eqn_prob_success_n_large}
\begin{aligned}
P_{\bmu}(S; \theta, \delta) &= p \liminf_{k \to \infty} P_{\bmu}(\beta_{F, k} \geq \theta_a) +  (1-p) \liminf_{k \to \infty} P_{\bmu}(\beta_{R, k} \leq \delta_a),
\end{aligned}
\end{align}
and $P_{(1-\mua, 0, 0)}(S; \theta, \delta) := 0$.
Let $\bmu^* = (\muzero^*, \muone^*, \mutwo^*)$ be the NE of MFG (when its exists), where $\mu_i^* \geq 0$ for $i \in \{0,1,2\}$ and $\sum_{i=0}^2 \mu_i^* = 1-\mua$. The OSN aims to appropriately design ($R, \gamma, \omega$) such that the equilibrium outcome of the resultant game achieves $\td$; \textit{we represent the game compactly by $\G(R, \gamma, \omega)$}. We call the MFG as Actuality identification (AI) game as per the definition below:
\begin{definition}\label{defn_AI}
A game $\G(R, \gamma, \omega)$ is called an \underline{AI game} if there exists parameters $R > 0, \gamma \geq 1$ and a warning mechanism $\omega$ such that for some NE $\bmu^*$ of the participation game  the following is true:
\begin{align*}
\liminf_{k \to \infty} P_{\bmu^*}(\beta_{F, k} \geq \theta_a) &= 1 \mbox{ and }
\liminf_{k \to \infty} P_{\bmu^*}(\beta_{R, k} \leq \delta_a) = 1.
\end{align*}
Such an NE is called an \underline{AI-NE}.
\end{definition}
In simpler words, a participation game is called an AI game if at some NE, $\td$ is achieved. 
Observe that the Definition \ref{defn_AI} requires that $P_{\bmu^*}(S;\theta, \delta) = 1$ (see \eqref{eqn_prob_success_n_large}), which implicitly demands that the random tagging-dynamics driven by the warning mechanism leads to $\td$ w.p. $1$. In other words, the two limit infimums in \eqref{eqn_prob_success} equal $1$.

We derive sufficient conditions for designing an AI game after providing the analysis of the MFG in the next section. 
For the sake of clarity, we re-state that the design parameters of the system are $R, \gamma$, along with the warning mechanism $\omega$, the parameters $\alpha_R, \alpha_F, \cp, \cnp, \ce$ are user-specific, while $p$ is a post specific parameter. We henceforth assume the following: 

\noindent \textbf{(P)} Assume $R > 0, \gamma > 1$, $\alpha_F \in (\alpha_R, 1)$, $\cnp \leq \cp$, $\ce > 0$, $p \in (0,1)$, $\mua \in [0,1)$. 


\section{MFG: Analysis and Design}\label{sec_MFG}
We first derive the limit of $\beta_{u, k}(\bmu)$ that defines \eqref{eqn_prob_success_n_large} for any $\bmu$. Then, we appropriately choose $R, \gamma$ and $\omega$ such that the resultant is an AI game.

\subsection{Tagging dynamics}\label{subsec_ode}

For any given $\bmu$, recall that users asynchronously visit the OSN and provide the tag; some users also utilise the warning level.\footnote{We skip explicit mention of the dependence of various entities (e.g., $\eta, \eta_a$) on  $\bmu$ at few places for simplifying notations and improving on clarity.} This leads to continuous-time evolution of the proportion of fake tags, ($\beta_{u, k}$), and the corresponding warning levels, $(\omega(\beta_{u, k}))$. However, it is sufficient to observe the tagging process whenever a user decides to tag, i.e., the embedded process; let $k \in \mathbb{Z}^+$ be the index of such decision epochs. The time duration between two decision epochs must follow some distribution, however, it's specific details are immaterial for the study. At any decision epoch, the participant can be an $a$-user, type $1$ or $2$ user w.p. $\eta_a$, $\eta$ or $1-\eta-\eta_a$ respectively.

Let $X_{u,k}$ be the number of fake tags for $u$-post at $k$-th epoch, where $u \in \{F, R\}$ is fixed. The fraction of fake tags at $(k+1)$-th epoch, $\beta_{u, k+1}(\bmu)$, can then be written as:
\begin{align}\label{eqn_dynamics_beta}
\begin{aligned}
    \beta_{u, k+1} &:= \frac{X_{u,k+1}}{k+1} = \frac{ X_{u,k} + 1_{\{\mbox{tag for $u$-post} = F\}}}{k+1} \\
    &= \beta_{u,k} + \frac{1}{k+1} L_{u,k}, \mbox{ where }  \\
    L_{u,k} &:= 1_{\{\mbox{tag for $u$-post} = F\}} - \beta_{u,k}.
\end{aligned}
\end{align}
This iterative process can be analysed using stochastic approximation tools (see \cite{kushner2003stochastic}). Ordinary Differential Equation (ODE) based analysis is a common approach to study such processes - define the conditional expectation of $L_{k, u}$ with respect to sigma algebra, ${\mathcal F}_{k, u} := \sigma\{X_{u,j}: j < k\}$:
\begin{align}\label{eqn_cond_exp}
\begin{aligned}
    E[L_{u,k}|{\mathcal F}_k] &= g_u(\beta_{u, k}), \mbox{ where}\\
g_u(\beta) := 
     \alpha_u \eta + (1-&\eta-\eta_a) r(\alpha_u, \omega(\beta)) - \beta. 
\end{aligned}
\end{align}
Then, the dynamics in \eqref{eqn_dynamics_beta} can be captured via the following autonomous ODE (proved in Lemma \ref{lemma_beta} given below):
\begin{align}\label{eqn_ode}
    \dot{\beta_u} = g_u(\beta_u).
\end{align}
The right hand side of the above ODE is Lipschitz continuous, and thus has unique global solution (see \cite[Theorem 1, sub-section 1.4]{piccinini2012ordinary}).
Define the domain of attraction (DoA)
$$
\cD_u := \{\beta \in [0,1]: \beta_u(t) \stackrel{t \to \infty}{\longrightarrow} \cA_u, \mbox{ if } \beta_u(0) = \beta\},
$$for asymptotically stable (AS) set $\cA_u$ of the ODE \eqref{eqn_ode} in the interval $[0, 1]$ (see \cite{piccinini2012ordinary}).
Assume the following: 

\noindent \textbf{(A)} $P(\beta_{u, k} \in \cD_u \mbox{ infinitely often}) = 1.$
\begin{lemma}\label{lemma_beta}
Under \textbf{(A)}, the sequence $(\beta_{u,k})$ converges to $\cA_u$ w.p. $1$, as $k \to \infty$.  \eop
\end{lemma}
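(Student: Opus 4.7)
The plan is to recognise the recursion \eqref{eqn_dynamics_beta} as a classical one-dimensional stochastic approximation (SA) scheme with step size $\epsilon_k = 1/(k+1)$ and driving field $g_u$, and then invoke the asymptotic SA theory already developed in the thesis (Theorem \ref{thrm1}(ii), or equivalently the specialization in Corollary \ref{cor_SA_prelim}). First I would rewrite \eqref{eqn_dynamics_beta} in canonical SA form
$$
\beta_{u,k+1} \;=\; \beta_{u,k} + \epsilon_{k+1}\bigl(g_u(\beta_{u,k}) + M_{u,k+1}\bigr),
$$
where $M_{u,k+1} := L_{u,k} - g_u(\beta_{u,k})$ is, by \eqref{eqn_cond_exp}, a martingale-difference sequence with respect to the filtration $({\mathcal F}_{u,k})$. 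The associated mean-field ODE is then exactly \eqref{eqn_ode}.

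Next I would check the standing SA hypotheses. The gains satisfy $\sum_k \epsilon_k = \infty$ and $\sum_k \epsilon_k^2 < \infty$, as required. The noise $M_{u,k+1}$ is uniformly bounded, since $L_{u,k} \in \{0,1\}$ and $\beta_{u,k} \in [0,1]$, so $\sup_k E[M_{u,k+1}^2] \le 1$. The iterates themselves stay in the compact interval $[0,1]$ because they are empirical proportions, so no separate stability/tightness argument is needed. Finally, $g_u$ is Lipschitz continuous on $[0,1]$: this follows from the Lipschitz continuity of $r(\alpha_u,\cdot)$ in $\omega$ asserted after \eqref{eqn_response_func} together with the linear dependence of the remaining terms of $g_u$ on $\beta$.

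With these verifications, Theorem \ref{thrm1}(i) delivers the finite-horizon ODE approximation of the interpolated iterates, and Theorem \ref{thrm1}(ii), combined with assumption \textbf{(A)}, yields that almost surely $\beta_{u,k}$ either converges to $\cA_u\cup\cR_u$ or hovers around a saddle set $\cR_u$ of the ODE \eqref{eqn_ode}. The concluding step is to rule out the saddle/hovering alternative: because $\cD_u$ is defined to be the \emph{pure} domain of attraction of $\cA_u$ (trajectories started in $\cD_u$ converge only to $\cA_u$), I would take the compact set appearing in the SA theorem to be a compact subset of $\cD_u$, so that assumption \textbf{(A)} forces every ODE limit extracted along a convergent subsequence of $(\beta_{u,k})$ to lie in $\cA_u$ rather than in $\cR_u$.

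The main obstacle will be this last separation argument: one must ensure that infinitely many visits of the random iterates to $\cD_u$ translate, via the finite-horizon approximation, into actual convergence to $\cA_u$ despite the possible presence of saddle equilibria elsewhere in $[0,1]$. The $1/(k+1)$ gains and the bounded-noise structure make the deviation of the SA path from the nearest ODE trajectory shrink uniformly on compact time windows, so that once the iterate enters a neighbourhood of $\cA_u$ carried inside $\cD_u$ by the flow of \eqref{eqn_ode}, the contractive drift near $\cA_u$ keeps it there; repeated visits then upgrade this to almost-sure convergence, completing the proof.
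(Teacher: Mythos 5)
Your proposal is essentially correct and its core is the same as the paper's: cast \eqref{eqn_dynamics_beta} as a one-dimensional SA scheme with step $1/(k+1)$, martingale-difference noise (by \eqref{eqn_cond_exp}), bounded iterates, square-summable gains and Lipschitz drift $g_u$, and then conclude via an ODE-method convergence theorem under \textbf{(A)}. The one real difference is the citation route. The paper closes the argument in one step by invoking the classical result \cite[Chapter 5, Theorem 2.1]{kushner2003stochastic}, which concerns attractors only: since \textbf{(A)} says $(\beta_{u,k})$ visits (a compact subset of) the domain of attraction $\cD_u$ of $\cA_u$ infinitely often w.p.\ $1$, that theorem directly yields $\beta_{u,k}\to\cA_u$ a.s., with no saddle or hovering alternative ever appearing. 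You instead route through the thesis's extended results (Theorem \ref{thrm1}(ii)/Corollary \ref{cor_SA_prelim}), whose stated conclusion is only the trichotomy ``converge to $\cA\cup\cR$ or hover around $\cR$'', and you are then forced to add an exclusion step. Your exclusion sketch (restrict the compact set to the pure domain of attraction, use the finite-horizon approximation plus the contractive drift near $\cA_u$, and upgrade via repeated visits) is exactly the classical Kushner/Step-A argument, so it can be made rigorous — but note that it does not follow from the \emph{statement} of Theorem \ref{thrm1}(ii); you would either have to reopen its proof or simply cite the classical attractor-only theorem, which is what the paper does and is the cleaner route. Two cosmetic slips: $L_{u,k}=1_{\{\mathrm{tag}=F\}}-\beta_{u,k}\in[-1,1]$, not $\{0,1\}$, and Lipschitzness of $g_u$ in $\beta$ needs $\beta\mapsto r(\alpha_u,\omega(\beta))$ Lipschitz (true for the mechanisms used), not merely $r$ Lipschitz in $\omega$; neither affects the argument.
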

The proof of above Lemma is in \TR{\cite[Appendix]{arxiv}}{Appendix \ref{appendix_MFG}}.
It is clear that the attractor set ($\cA_u$) depends on the choice of warning mechanism $\omega(\cdot)$, $\bmu$ and the response function $r(\cdot, \cdot)$. \textit{Henceforth, we assume $\cA_u$ to be a singleton (for each $\bmu$) and provide the analysis; we prove this assumption and \textbf{(A)} for a special response function $r$ and the correspondingly chosen $\omega$ in Section \ref{sec_response1}}. One needs to characterise $\cA_u$ and prove \textbf{(A)} for other response functions considered in future. Thus, by Lemma \ref{lemma_beta}, $\beta_u \in \cA_u$ uniquely signifies the eventual fraction of fake tags for the $u$-post. Hence, if $\cA_u = \{\beta_u^*\}$ for both $u$, then \eqref{eqn_prob_success_n_large} simplifies to:
\begin{align}\label{eqn_simplified_prob_success}
P_{\bmu}(S; \theta, \delta) = p 1_{\{\beta_{F}^* \geq \theta_a(\bmu)\}} + (1-p) 1_{\{\beta_R^* \leq \delta_a(\bmu)\}}.
\end{align}

\subsection{Design of AI game}\label{sec_analysis}
Any $\bmu$ that satisfies Definition \ref{defn_NE_MFG} qualifies to be a NE of the MFG. However, the OSN is interested in designing a game (i.e., choosing $R, \gamma, \omega$) for which at least one AI-NE exists (see Definition \ref{defn_AI}). We achieve the same in Theorem \ref{thrm_AI}. We will further derive the conditions under which the game has only AI-NE for a specific response function in Section \ref{sec_response1}.

Before we state the result, define \underline{$\bmu_x := (0,x, 1-x-\mua)$}  and \underline{$\beta_u^x := \beta_u^*(\bmu_x)$}, i.e., the attractor obtained via Theorem \ref{thrm_AI} for $\bmu = \bmu_x$, for any $x \in [0, 1-\mua]$, for $u\in\{R,F\}$.

Consider a response function $r(\alpha, \omega)$. Suppose there exists a warning mechanism $\omega$   that satisfies the following conditions related to the ODE \eqref{eqn_ode}  for some $\eta \in (0,1-\mua)$:

\noindent (B.i) $\exists$ a  $\beta_R^\eta  \in [0,\delta_a(\bmu_\eta)]$ such that $g_R(\beta_R^\eta) = 0$, and 

\noindent (B.ii) $\exists$ a $\beta_F^\eta \in [\theta_a(\bmu_\eta),1]$ such that $g_F(\beta_F^\eta) = 0$,

\noindent (B.iii) there are no other equilibrium points of the ODE \eqref{eqn_ode} in $[0,1]$ for each $u$, with respect to $\bmu_\eta$, and

\noindent (B.iv) $\frac{\partial g_u(\beta_u^\eta)}{\partial \beta_u^\eta} < 0$ for each $u$. 

For such a pair of $(r, \omega)$, one can anticipate that the tagging dynamics converge to a unique limit point for each $u$, leading to an AI game. This indeed is true as claimed below (proof in \TR{\cite[Appendix]{arxiv}}{Appendix \ref{appendix_MFG}}).
\begin{theorem}\label{thrm_AI}
Consider a pair of response and warning mechanism that satisfy assumptions (B.i)-(B.iv). Then, $\beta_u^\eta$ is AS with DoA as $[0,1]$ for each $u$. Further, choose $R, \gamma$:
\begin{align}\label{eqn_R_gamma}
\begin{aligned}
\gamma &> \underline{\gamma}(\eta) := \frac{1}{1-p}\left(\frac{1-(\eta+\mua)(1-p)}{1-\eta-\mua} \right) \mbox{ and }
R = \ce \left(1 -\eta - \mua + \frac{1}{\gamma-1}  \right).
\end{aligned}
\end{align}Then, $\G(R, \gamma, \omega)$ is an AI game with the following properties:

\noindent (a) $\bmu_\eta$ is an AI-NE,

\noindent (b) any $\bmu_x$, with $x \in [0, \eta)\cup\{1-\mua\}$ is not an NE, and 

\noindent (c) any $\bmu$ with $\mu_0 > 0$ can not be a NE.  \eop
\end{theorem}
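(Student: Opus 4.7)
My approach has three stages: first, pin down the attractor of the scalar ODE $\dot\beta_u = g_u(\beta_u)$ using (B.i)--(B.iv) and upgrade to an a.s.\ statement for $\beta_{u,k}$ via Lemma~\ref{lemma_beta}; second, verify that $\bmu_\eta$ is both a Nash equilibrium of the induced MFG and achieves $\td$; and third, exclude the other candidate profiles enumerated in (b)--(c).

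For the ODE, $g_u$ is Lipschitz on $[0,1]$ with $g_u(0)\geq 0$ and $g_u(1)\leq 0$; by (B.iii) the only equilibrium in $[0,1]$ is $\beta_u^\eta$, so by continuity $g_u$ is positive on $[0,\beta_u^\eta)$ and negative on $(\beta_u^\eta,1]$ (consistent with the sign in (B.iv)). Hence every trajectory converges to $\beta_u^\eta$, the DoA equals $[0,1]$, and Lemma~\ref{lemma_beta} yields $\beta_{u,k}\to\beta_u^\eta$ a.s.\ under $\bmu_\eta$. Combined with (B.i) and (B.ii) this gives $\beta_R^\eta\leq\delta_a(\bmu_\eta)$, $\beta_F^\eta\geq\theta_a(\bmu_\eta)$, and therefore $P_{\bmu_\eta}(S;\theta,\delta)=1$. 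For the Nash property, introduce the shorthand $f(x):=x+\mua+\gamma(1-x-\mua) = \gamma-(\gamma-1)(x+\mua)$; from \eqref{eqn_util} one obtains $U(2,\bmu_x)-U(1,\bmu_x) = -\ce + (\gamma-1)R\,P_{\bmu_x}(S)/f(x)$, and the prescribed $R$ is precisely the value satisfying $(\gamma-1)R/f(\eta) = \ce$. Hence $U(1,\bmu_\eta)=U(2,\bmu_\eta) = \cp + R/f(\eta) > \cnp = U(0,\bmu_\eta)$, so $\arg\max_s U(s,\bmu_\eta)=\{1,2\}$ matches the support of $\bmu_\eta$; this is the NE condition of Definition~\ref{defn_NE_MFG}, and combined with $P_{\bmu_\eta}(S)=1$ this establishes (a).

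For (b) with $x\in[0,\eta)$, monotonicity of $f$ in $x$ (since $\gamma>1$) gives $f(x)>f(\eta)$, so $(\gamma-1)R/f(x)<\ce$; regardless of $P_{\bmu_x}(S)\in[0,1]$ this yields $U(2,\bmu_x) < U(1,\bmu_x)$, contradicting $\mutwo=1-x-\mua>0$ being in the NE support. When $x=1-\mua$, the mass on strategy~$2$ is zero, so \eqref{eqn_fp} collapses to $\beta_u^{1-\mua}=\alpha_u(1-\mua)$; using $\theta>\alpha_F$ and $\delta>\alpha_R$ one gets the fake-tag condition in \eqref{eqn_td} failing and the real-tag condition holding, hence $P_{\bmu_{1-\mua}}(S)=1-p$. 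Since $f(1-\mua)=1$, the inequality $U(2,\bmu_{1-\mua})>U(1,\bmu_{1-\mua})$ reduces after substituting $R$ to $(\gamma-1)(1-\eta-\mua)(1-p) > p$, which rearranges algebraically to $\gamma > \underline{\gamma}(\eta)$ and therefore holds by hypothesis; this profitable deviation to strategy~$2$ rules out $\bmu_{1-\mua}$. For (c), any $\bmu$ with $\muzero>0$ satisfies $U(1,\bmu)\geq\cp\geq\cnp=U(0,\bmu)$, with strict inequality whenever $P_{\bmu}(S)>0$. The main technical obstacle I anticipate is the degenerate sub-case $\cp=\cnp$ together with $P_{\bmu}(S)=0$; I expect to handle this by showing that the designed warning $\omega$ keeps at least one of the two summands in \eqref{eqn_prob_success_n_large} bounded away from zero on every $\bmu$ in the interior of the simplex.
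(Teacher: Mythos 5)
Your overall route coincides with the paper's: the sign analysis of $g_u$ on $[0,1]$ combined with (B.iii)--(B.iv) and Lemma \ref{lemma_beta}, the identity $(\gamma-1)R=\ce\bigl(\gamma-(\gamma-1)(\eta+\mua)\bigr)$ that makes $U(1,\bmu_\eta)=U(2,\bmu_\eta)>U(0,\bmu_\eta)$, the monotonicity of $f(x)=x+\mua+\gamma(1-x-\mua)$ to kill $x\in[0,\eta)$, and the rearrangement of $(\gamma-1)(1-\eta-\mua)(1-p)>p$ into $\gamma>\underline{\gamma}(\eta)$ to kill $x=1-\mua$ are exactly the computations the paper performs (your treatment of $[0,\eta)$ is in fact more explicit than the paper's one-line assertion). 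Two smaller remarks: at $\bmu_{1-\mua}$ you read off $\beta_u=\alpha_u(1-\mua)$ from the fixed-point equation \eqref{eqn_fp}, whereas the success probability is defined through the stochastic limits in \eqref{eqn_prob_success_n_large}; the paper justifies this step by a strong-law argument (Lemma \ref{lemma_beta_only_type1}), which is the clean way to phrase it, though the fix is routine.

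The genuine gap is in part (c). You correctly isolate the degenerate sub-case $\cp=\cnp$ with $P_{\bmu}(S;\theta,\delta)=0$, but your proposed repair --- showing that the designed warning keeps one summand of \eqref{eqn_prob_success_n_large} bounded away from zero for every $\bmu$ in the interior of the simplex --- does not close it. First, assumptions (B.i)--(B.iv) only constrain the ODE \eqref{eqn_ode} at the specific profile $\bmu_\eta$; for a generic $\bmu$ you have no characterization of the attractors, hence no control on $\liminf_k P_{\bmu}(\beta_{u,k}\geq\theta_a)$ or $\liminf_k P_{\bmu}(\beta_{R,k}\leq\delta_a)$. Second, the profiles you must exclude are not interior ones: in the degenerate case $q:=P_{\bmu}(S;\theta,\delta)=0$ forces $U(2,\bmu)=\cp-\ce<\cp=U(1,\bmu)$, so $2\notin\S(\bmu)$ and $\mu_2=0$, i.e.\ the problematic candidates lie on the boundary of the simplex. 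The paper closes the case there: with $\mu_2=0$ and $\mu_1>0$ the warning is irrelevant and the law of large numbers gives $\beta_R=\alpha_R\,\eta(\bmu)<\delta\,(1-\eta_a(\bmu))=\delta_a(\bmu)$ almost surely (since $\alpha_R<\delta$), so $q\geq 1-p>0$, contradicting $q=0$; the corner profile $(1-\mua,0,0)$ is then handled separately. Replacing your interior-positivity plan with this explicit $\mu_2=0$ computation is what is needed to complete part (c).
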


Thus, if the OSN chooses $(R, \gamma, \omega)$ as per Theorem \ref{thrm_AI}, then the resultant is an AI game. Observe that at AI-NE ($\bmu_\eta$), there is a non-zero proportion of type $1$ and $2$ users. The OSN is able to motivate all the $na$-users to participate in the tagging process at $\bmu_\eta$. 

It can be seen from \eqref{eqn_R_gamma} that the OSN can monetarily benefit (reduced $R$) by either choosing a larger $\gamma$ (a bigger disparity between rewards provided to type $1$, $2$ users) or larger $\eta$ (that leads to a larger fraction of type $1$ users). Interestingly, $R$ can be reduced to an arbitrarily small value (observe the infimum of the achievable $R$ equals $0$). Further, even if the perceived cost of processing a warning ($\ce$) is high, one can design the desired AI game by appropriately scaling $\gamma, \eta$ with the same reward, $R$. 

Any NE requires a mixed behaviour; if all the users consider the warning ($\mu_2 = 1-\mua$) or tag only based on their intrinsic abilities ($\mu_1 = 1-\mua$), then there is no NE. 

Ideally, the OSN would want to design a game where any NE, $\bmu_x^*$, is an AI. Theorem \ref{thrm_AI} provides such guarantees only for $x \in [0, \eta]\cup\{1-\mua\}$ (any such $x \neq \eta$ is not a NE). We next delve into the remaining configurations for a specific response function. 





\vspace{-5mm}
\section{A specific response function}\label{sec_response1}
In this section, we specifically consider a class of polynomial response functions, $r(\alpha, \omega) = \min\{h(\alpha, \omega), 1\}$ with $h(\cdot, \cdot)$ defined as (extension of the linear response in \cite{kapsikar2020controlling}):
\newpage
\begin{align}\label{eqn_response1}
    h(\alpha, \omega) = c \alpha^a \omega^b, \mbox{ where } a, b, c \in \mathbb{R}^+.
\end{align}Recall that any type $1$ user fake tags a $u$-post w.p. $\alpha_u$. However, if a user incorporates a warning level as well, it responds differently - it fake tags the $u$-post w.p. $r(\alpha_u, \omega)$. We model the said effect in \eqref{eqn_response1} via $a, b$, which indicates the positive correlation of the user's response to the innate capacity and the warning level, respectively. Next, we introduce a few notations:
\begin{align}\label{eqn_notations}
    \eta^*_l &:=  \frac{(1-l)(1-\mua)}{1-\alpha_F} \mbox{ for any } l \in \mathbb{R}, \nonumber\\
    K_\delta &= \kappa^2-4\delta\alpha_R\alpha_F(\Delta_R)^a \geq 0 \mbox{ for } 
    \kappa := \delta\left((\Delta_R)^a(1-\alpha_F) - 1 \right) - \alpha_R (\Delta_R)^a, \\
    \delta_a &:= \delta(\bmu_x) = \delta(1-\mua) \mbox{ for any } x \in (0, 1-\mua), \mbox{ and}  \nonumber\\
    \overline{\eta} &:= \frac{\delta_a((1-\mua)cw\alpha_R - 1)}{cw\alpha_R \delta_a - \alpha_R}.  \nonumber
\end{align}

We choose the following warning mechanism that would modulate users' response given in \eqref{eqn_response1} for $\td$ful identification of the posts (see Theorem \ref{thrm_response1}):
\begin{align}\label{eqn_warning_MFG}
    \omega( \beta) = w^{1/b} \alpha_R^{(1-a)/b} \beta^{1/b},
\end{align}where $w$ will be appropriately chosen as per Algorithm \ref{alg_AI}. Note that while designing the warning mechanism ($\omega$), we assume that the OSN knows $\alpha_R$. 

\begin{theorem}\label{thrm_response1}
Consider the response function as in \eqref{eqn_response1}. Let $\theta \in \left(\max\left\{\alpha_F, \frac{\delta}{(\Delta_R)^a}\right\}, 1\right]$ and $\delta \in (\alpha_R, \theta)$. If the parameters satisfy the conditions in Algorithm \ref{alg_AI},
then, choosing $R, \gamma$ as in \eqref{eqn_R_gamma} and the warning mechanism as in \eqref{eqn_warning_MFG} for $w$ given in Algorithm \ref{alg_AI} leads to a game $\G(R, \gamma, \omega)$ such that:

\noindent (i) $\bmu_\eta$ is an AI-NE, for $\eta$ in Algorithm \ref{alg_AI}, and

\noindent (ii) $\bmu_{x_\eta}$ is the only other NE, with $x_\eta := \frac{p}{\gamma-1} + p(1-\mua-\eta) + \eta$, if $x_\eta > \eta_{\widetilde \theta}^*$.  
\eop
\end{theorem}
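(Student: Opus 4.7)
\textbf{Proof proposal for Theorem \ref{thrm_response1}.}

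The plan is to reduce the theorem to an application of Theorem \ref{thrm_AI} (for part (i)) together with a careful enumeration of indifference conditions (for part (ii)). First, with the chosen pair of response and warning, the composition $r(\alpha_u,\omega(\beta))$ simplifies to a piece-wise linear function of $\beta$: substituting \eqref{eqn_response1} into \eqref{eqn_warning_MFG} yields $r(\alpha_u,\omega(\beta))=\min\{cw\alpha_R(\alpha_u/\alpha_R)^a\,\beta,\,1\}$, which equals $cw\alpha_R\beta$ for $u=R$ and $cw\alpha_R(\Delta_R)^a\beta$ for $u=F$ (until saturation at $1$). Consequently, the ODE \eqref{eqn_ode} becomes piece-wise linear in $\beta_u$ and its unique zero in the non-saturated regime can be written in closed form as
\begin{align*}
\beta_R^{\eta}=\frac{\alpha_R\eta}{1-cw\alpha_R(1-\eta-\mua)},\qquad
\beta_F^{\eta}=\frac{\alpha_F\eta}{1-cw\alpha_R(\Delta_R)^a(1-\eta-\mua)},
\end{align*}
with the corresponding saturated formula $\beta_F^{\eta}=\alpha_F\eta+(1-\eta-\mua)$ if the $F$-branch saturates. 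The first step is therefore to use the quantities $K_\delta$, $\kappa$ and $\overline{\eta}$ defined in \eqref{eqn_notations} to show that Algorithm \ref{alg_AI} picks $(w,\eta)$ so that (B.i)--(B.ii) hold, i.e.\ $\beta_R^{\eta}\le\delta_a(\bmu_\eta)$ and $\beta_F^{\eta}\ge\theta_a(\bmu_\eta)$; uniqueness (B.iii) is immediate from linearity away from saturation, and (B.iv) reduces to a sign check on $1-(1-\eta-\mua)cw\alpha_R(\Delta_R)^a$ and its $R$-analogue.

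Given this, Theorem \ref{thrm_AI} applies verbatim: $\beta_u^{\eta}$ is AS with DoA $[0,1]$, assumption (A) holds trivially, and the choice of $(R,\gamma)$ via \eqref{eqn_R_gamma} makes $\bmu_\eta$ an AI-NE, proving (i). For part (ii), I combine Theorem \ref{thrm_AI}(b)--(c) with a direct indifference analysis. Theorem \ref{thrm_AI}(c) rules out all $\bmu$ with $\mu_0>0$, and part (b) rules out $\bmu_x$ for $x\in[0,\eta)\cup\{1-\mua\}$. It remains to examine $\bmu_x=(0,x,1-x-\mua)$ for $x\in(\eta,1-\mua)$, where both strategies $s=1$ and $s=2$ must be supported. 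Using \eqref{eqn_simplified_prob_success} together with the closed-form expressions for $\beta_u^{x}$, one checks that for $x>\eta$ the fake-post threshold is violated but the real-post threshold continues to hold (the former by monotonicity since $\beta_F^{x}$ falls below $\theta_a(\bmu_x)$ as $x$ departs from $\eta$, and the latter from the structure of $\beta_R^{x}$ coupled with the Algorithm's choice of $w$), so $P_{\bmu_x}(S;\theta,\delta)=1-p$. Setting $U(1,\bmu_x)=U(2,\bmu_x)$ and substituting the formula for $R$ in \eqref{eqn_R_gamma} yields, after the algebra I sketched already,
\begin{align*}
x \;=\; \eta+p(1-\mua-\eta)+\frac{p}{\gamma-1} \;=\; x_\eta,
\end{align*}
which is thus the unique candidate. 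Finally, one verifies $U(0,\bmu_{x_\eta})\le U(1,\bmu_{x_\eta})$, which reduces to the inequality $x_\eta>\eta^*_{\widetilde\theta}$ via a rearrangement using $\cp\ge\cnp$; this is exactly the stated hypothesis in (ii).

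The main obstacle will be the bookkeeping in Step~2, namely translating the free choice of $w$ in Algorithm \ref{alg_AI} into the simultaneous satisfaction of (B.i)--(B.iv) across both the saturated and unsaturated $F$-regimes, while also ensuring that $x_\eta\in(\eta,1-\mua)$ so that the second NE is a genuine interior mixed profile. A related subtlety is proving that no \emph{other} $\bmu_x$ with $x\in(\eta,1-\mua)\setminus\{x_\eta\}$ can be an NE: because $P_{\bmu_x}(S;\theta,\delta)$ is locally constant in $x$ on the region where only the $R$-constraint holds, the indifference equation has exactly one solution there, and one must exclude the possibility of a different regime (both constraints or neither) arising on some subinterval by another monotonicity argument in $x$ on $\beta_F^{x}$ and $\beta_R^{x}$.
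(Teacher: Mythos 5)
Your plan for part (i) is essentially the paper's route: derive the piecewise-linear attractors $\beta_u^x$ (the paper's Lemma \ref{lemma_beta_traj}), use the Algorithm's choice of $(w,\eta)$ together with $\widetilde{\theta}\ge\theta$ to verify (B.i)--(B.iv) at $\bmu_\eta$ (noting that at $\eta$ the $F$-branch is in the saturated regime $r=1$ while the $R$-branch is unsaturated), and invoke Theorem \ref{thrm_AI}. That part is fine as a plan, though the real work is exactly the bookkeeping you defer.

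Part (ii), however, contains a genuine gap. You assert that for every $x\in(\eta,1-\mua)$ the fake-post threshold is violated, so $P_{\bmu_x}(S;\theta,\delta)=1-p$, and you then solve a single indifference equation. This is false: since $\eta\le\eta^*_{\widetilde{\theta}}$ and $\beta_F^x=\alpha_F x+1-x-\mua$ is continuous and decreasing there, one has $\beta_F^x\ge\widetilde{\theta}(1-\mua)\ge\theta(1-\mua)=\theta_a(\bmu_x)$ for all $x\in(\eta,\eta^*_{\widetilde{\theta}}]$, i.e.\ the success probability is still $1$ on that sub-interval. The paper excludes equilibria there by a different mechanism: with $q=1$ and $R$ calibrated by \eqref{eqn_R_gamma} to give indifference exactly at $x=\eta$, the denominator $x+\mua+\gamma(1-x-\mua)$ in \eqref{eqn_util} is strictly decreasing in $x$ (as $\gamma>1$), so $U(2,\bmu_x)>U(1,\bmu_x)$ for $x>\eta$ and the support $\{1,2\}$ cannot be an equilibrium support; only after this does the $q=1-p$ regime yield the unique candidate $x_\eta$. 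Your closing remark about "excluding a different regime by monotonicity of $\beta_F^x,\beta_R^x$" gestures at the issue but does not supply this utility-comparison step, which is the missing idea. Separately, your claim that $U(0,\bmu_{x_\eta})\le U(1,\bmu_{x_\eta})$ "reduces to $x_\eta>\eta^*_{\widetilde{\theta}}$" misattributes the role of that hypothesis: $U(0)\le U(1)$ holds automatically because $\cnp\le\cp$ and the reward term is nonnegative. The condition $x_\eta>\eta^*_{\widetilde{\theta}}$ is instead what makes the $q=1-p$ computation self-consistent (so that $\beta_F^{x_\eta}$ can indeed lie below the threshold and $\bmu_{x_\eta}$ is not excluded by the success-probability-one argument); without recognizing this, the statement and proof of (ii) do not fit together.
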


\noindent The proof of above Theorem is in  \TR{\cite[Appendix]{arxiv}}{Appendix \ref{appendix_MFG}}. Thus, the OSN can design an AI game for any $(\theta, \delta)$ in the following:
\begin{align}\label{eqn_feasible}
\RAI := \{(\theta, \delta) : \theta > f(\theta, \delta) \mbox{ or } \theta \leq f(\theta, \delta) \mbox{ with } K_\delta \geq 0\}.
\end{align}Note that the above algorithm assumes the knowledge of user-specific parameters, the estimation of which is independent of linguistic barriers, as discussed in the introduction.

\RestyleAlgo{ruled}
\begin{algorithm}[ht]
\caption{Design of AI game, assume $K_\delta \geq 0$}\label{alg_AI}
\eIf{
$
\theta > f(\theta, \delta) := \frac{\delta_a-\eta^*_\theta \delta}{(\Delta_R)^a(\delta_a-\alpha_R \eta^*_\theta)}
$
}{
$\widetilde{\theta} \gets \theta$
}{
\If{$K_\delta \geq 0$}{
$\widetilde{\theta} \gets \min\left\{\max\left\{\frac{-\kappa + \sqrt{K_\delta}}{2(\Delta_R)^a\alpha_R}, 1 - \frac{\delta(1-\alpha_F)}{\alpha_R} \right\} + \epsilon, 1\right\}$, \mbox{ for } $\epsilon > \max\left\{0, \theta - \frac{-\kappa + \sqrt{K_\delta}}{2(\Delta_R)^a\alpha_R}\right\}$
}}

\textbf{Choose:}

(i) $w \gets \frac{1}{c \alpha_R}
\left( 
\frac{1}{1-\mua} \max\left\{1, \frac{1}{(\Delta_R)^a \widetilde{\theta}}\right\} + \epsilon_1\right)$, where

\vspace{-2mm}
{\footnotesize
$$
\hspace{-6mm} 0 < \epsilon_1 < \min\left\{ \frac{1}{\delta_a}, \fone \right\} - \frac{1}{1-\mua}\max\left\{1, \frac{1}{(\Delta_R)^a\widetilde{\theta}} \right\},
$$}


(ii) $\eta \gets \overline{\eta} + \epsilon_2$, where $\epsilon_2 \in (0, \eta^*_{\widetilde{\theta}}-\overline{\eta}]$.
\end{algorithm}

We show in \TR{\cite[Lemma 5]{arxiv}}{Lemma \ref{lemma_feasibility_w}} that a feasible $w$ exists as per Algorithm \ref{alg_AI}.  It is important to note that the warning mechanism in \eqref{eqn_warning_MFG} is designed such that $\beta_F^\eta$ and $\beta_R^\eta$ correspond to $r = 1$ and $r < 1$ respectively, for some $\eta$. Such a design helps to ensure that $\beta_F^\eta \geq \theta_a$ and $\beta_R^\eta \leq \delta_a$. However, if the desired $\theta$ is small, AI is not achievable due to insufficient difference between $\theta$ and $\delta$. Then, we choose some $\widetilde{\theta} > \theta$ as in Algorithm \ref{alg_AI}. We formalise these ideas in the proof of Theorem \ref{thrm_response1}. In all, the OSN actually achieves 
$(\widetilde{\theta}, \delta)$-success at $\bmu_\eta$, for  $\widetilde{\theta} \geq \theta$\TR{ (see \cite[Lemma 4]{arxiv}).}{(see Lemma \ref{lemma_theta_tilde}).}


The designed game $\G(R, \gamma, \omega)$ has a unique NE, which is AI if $x_\eta \leq \eta_{\widetilde \theta}^*$; else, there is another NE, $\bmu_{x_\eta}$. In the latter case, $x_\eta > \eta_{\widetilde \theta}^* \geq \eta$, and therefore, the performance of $\omega(\cdot)$ might degrade due to a larger proportion of type $1$ users at this NE. Next, we characterise  $\bmu_{x_\eta}$ (see proof in \TR{\cite{arxiv}}{Appendix \ref{appendix_MFG}}).
\begin{theorem}\label{thrm_perf_x_eta}
Define $x_F := \xdoubleF$. Under the hypothesis of Theorem \ref{thrm_response1}, if $x_\eta > \eta^*_{\widetilde{\theta}}$, then:
\[
\hspace{3cm}\beta_R^{x_\eta} \leq \delta_a, \ \beta_F^{x_\eta} \geq 
\begin{cases}
 \frac{1}{cw\alpha_R (\Delta_R)^a}, &\mbox{if } x_\eta \in (\eta^*_{\widetilde{\theta}}, x_F]. \\
 \alpha_F(1-\mua), &\mbox{if } x_\eta \in (x_F, 1-\mua).  \hspace{3.2cm}\mbox{\eop}
\end{cases}
\]
\end{theorem}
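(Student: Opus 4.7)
\textbf{Proof plan for Theorem \ref{thrm_perf_x_eta}.}
The strategy is to exploit the explicit form of the fixed-point equation \eqref{eqn_fp} evaluated at $\bmu_{x_\eta} = (0, x_\eta, 1-x_\eta-\mua)$ together with the NE structure identified in Theorem \ref{thrm_response1}. First I would substitute the response function \eqref{eqn_response1} and the designed warning mechanism \eqref{eqn_warning_MFG} into \eqref{eqn_fp} to obtain, for $\bmu_x = (0, x, 1-x-\mua)$, the simplified FP equations $\beta_u = \alpha_u x + (1-x-\mua)\min\{cw\alpha_R(\Delta_R)^a \beta_u,\, 1\}$ (for $u=F$), and $\beta_R = \alpha_R x + (1-x-\mua)\min\{cw\alpha_R\beta_R,\, 1\}$, using the identity $\alpha_F^a\alpha_R^{1-a} = \alpha_R(\Delta_R)^a$.

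Next I would handle the real-post bound $\beta_R^{x_\eta}\leq \delta_a$ as a consequence of the NE characterization: since $\bmu_{x_\eta}$ has $\mu_0=0$, the indifference condition $U(1,\bmu_{x_\eta})=U(2,\bmu_{x_\eta})$ from \eqref{eqn_util}, combined with the choice $R = \ce(1-\eta-\mua+\tfrac{1}{\gamma-1})$ from \eqref{eqn_R_gamma}, forces $P_{\bmu_{x_\eta}}(S;\theta,\delta)$ to equal $\nicefrac{\ce(x_\eta+\mua+\gamma(1-x_\eta-\mua))}{(\gamma-1)R}$. Plugging in the formula $x_\eta = \frac{p}{\gamma-1}+p(1-\mua-\eta)+\eta$ (after straightforward algebra) yields $P_{\bmu_{x_\eta}}(S;\theta,\delta) = 1-p$. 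In view of \eqref{eqn_simplified_prob_success}, the real-post event $\{\beta_R^{x_\eta}\leq \delta_a\}$ must hold (while $\beta_F^{x_\eta} < \theta_a$); this yields the first claim.

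For the fake-post bound I would perform a case split based on whether the saturating branch $\min\{\cdot,1\}=1$ is active for $u=F$. At the critical threshold $x_F = \xdoubleF$, the saturated expression $\beta_F = \alpha_F x + (1-x-\mua) = (1-\mua)-x(1-\alpha_F)$ equals exactly $\frac{1}{cw\alpha_R(\Delta_R)^a}$. Hence for $x_\eta\in(\eta^*_{\widetilde\theta}, x_F]$, saturation holds, and monotone decrease of $x\mapsto (1-\mua)-x(1-\alpha_F)$ gives $\beta_F^{x_\eta}\geq \frac{1}{cw\alpha_R(\Delta_R)^a}$. For $x_\eta\in(x_F,1-\mua)$, the unsaturated branch yields $\beta_F^{x_\eta} = \frac{\alpha_F x_\eta}{1-(1-x_\eta-\mua)cw\alpha_R(\Delta_R)^a}$, and the required inequality $\beta_F^{x_\eta}\geq \alpha_F(1-\mua)$ simplifies (after dividing by the positive factor $1-\mua-x_\eta$) to the single inequality $cw\alpha_R(\Delta_R)^a(1-\mua)\geq 1$, which is ensured by the choice of $w$ in Algorithm \ref{alg_AI} together with $\Delta_R>1$.

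The main obstacle I anticipate is verifying that the saturation branch is indeed the \emph{correct} root of the FP equation at $\bmu_{x_\eta}$ throughout the stated regime (i.e., that the attractor identified in the sense of assumption \textbf{(A)} and Lemma \ref{lemma_beta} coincides with the branch used above), and that the computation of $P_{\bmu_{x_\eta}}(S;\theta,\delta)=1-p$ is internally consistent with the assertion $\beta_R^{x_\eta}\leq\delta_a$ and $\beta_F^{x_\eta}<\theta_a$ (rather than the reverse configuration). This should follow from the uniqueness/attractor analysis already established in the proofs of Theorems \ref{thrm_AI} and \ref{thrm_response1} (in particular, verification of (B.iv) restricted to each branch), but care is needed at the boundary $x_\eta=x_F$ where the min switches branches.
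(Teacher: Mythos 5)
Your treatment of the fake-post bounds is correct and is essentially the paper's argument: the paper first establishes the two-branch formula \eqref{eqn_betaF} for the attractor via Lemma \ref{lemma_beta_traj} (exactly your ``which branch is the attractor'' verification), notes $\overline{\rho}_F(x_F)=\tfrac{1}{cw\alpha_R(\Delta_R)^a}$, and then uses monotone decrease of $\beta_F^x$ on each branch to conclude $\beta_F^{x_\eta}\geq\beta_F^{x_F}$ on $(\eta^*_{\widetilde\theta},x_F]$ and $\beta_F^{x_\eta}\geq\beta_F^{1-\mua}=\alpha_F(1-\mua)$ on $(x_F,1-\mua)$. Your direct algebraic reduction of the second case to $cw\alpha_R(\Delta_R)^a(1-\mua)\geq 1$ is equivalent (that same inequality is what makes $x\mapsto\alpha_F x/\rho_F(x)$ decreasing), and it does follow from the choice of $w$ in Algorithm \ref{alg_AI}.

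The gap is in your argument for $\beta_R^{x_\eta}\leq\delta_a$. You propose to deduce it from the NE indifference condition at $\bmu_{x_\eta}$, which forces $P_{\bmu_{x_\eta}}(S;\theta,\delta)=1-p$, and then read off from \eqref{eqn_simplified_prob_success} that the real-post event must be the one that holds. Two problems. First, this inference is only valid when $p\neq 1/2$: since $P_{\bmu}(S;\theta,\delta)\in\{0,p,1-p,1\}$, the value $1-p$ pins down the configuration $\{\beta_F^{x_\eta}<\theta_a,\ \beta_R^{x_\eta}\leq\delta_a\}$ only if $1-p\neq p$, and the standing assumption is merely $p\in(0,1)$. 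Second, and more structurally, it is circular relative to how the NE status of $\bmu_{x_\eta}$ is established: in the proof of Theorem \ref{thrm_response1}, step (c) shows $\bmu_{x_\eta}$ is a NE with $P_{\bmu_{x_\eta}}(S)=1-p$ \emph{by first proving} $\beta_R^x<\delta_a$ for all $x\in(\eta,1-\mua)$ (via the branch formula \eqref{eqn_betaR}), so you cannot then recover $\beta_R^{x_\eta}\leq\delta_a$ from that NE property without assuming what is to be shown. The paper's (and the simple) route avoids both issues: from \eqref{eqn_betaR} the map $x\mapsto\beta_R^x$ is strictly decreasing (on the saturated branch because $\alpha_R<1$, on the other branch because $cw\alpha_R(1-\mua)>1$ by the choice of $w$), and since $x_\eta=\tfrac{p}{\gamma-1}+p(1-\mua-\eta)+\eta>\eta$ (as $p>0$), one gets $\beta_R^{x_\eta}<\beta_R^{\eta}\leq\delta_a$, where $\beta_R^{\eta}\leq\delta_a$ was already shown in the proof of Theorem \ref{thrm_response1}. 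You already have all ingredients for this in your own FP branch analysis; replace the indifference-based step with this monotonicity argument and the proof is complete.
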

It is easily verifiable that $\beta_F^{x} = \widetilde{\theta}(1-\mua)$ for $x = \eta^*_{\widetilde{\theta}}$. Now, as one might expect, we prove in \TR{\cite[Equations (16), (17)]{arxiv}}{\eqref{eqn_betaF}, \eqref{eqn_betaR}} that the proportion of fake tags should decrease with an increase in type $1$ users for any $u$-post. In view of this, Theorem \ref{thrm_perf_x_eta} states that if the proportion of type $1$ users, \hide{$x_\eta \leq \eta^*_{\widetilde{\theta}}$, then, the second NE ($\bmu_{x_\eta}$) achieves $(\widetilde{\theta}, \delta)$-success, and hence $\td$. Then. we have a game with both the NEs as  AI. 
Even when  }$x_\eta > \eta^*_{\widetilde{\theta}}$, the proposed warning mechanism does not degrade the quality of the tagging process for $R$-post, as from Theorem \ref{thrm_perf_x_eta}, we have $\beta_R^{x_\eta} \leq \delta_a$. However, the users can not identify the $F$-post up to $\theta_a(\bmu_{x_\eta})$-level. Theorem \ref{thrm_perf_x_eta} provides the worst performance of the warning for tagging of $F$-posts.

We next numerically comment upon a more detailed performance of $\bmu_{x_\eta}$ for $F$-post using the normalized degradation metric, ${\cal P} := \nicefrac{(\theta_a(\bmu_{x_\eta}) - \beta_F^{x_\eta})100}{\theta_a}$. Towards this, we consider a large number of samples/configurations of system parameters chosen randomly and independently from some appropriate uniform distributions to obtain the fraction of configurations that achieve AI; we also obtain  the fraction of configurations that have   ${\cal P} < 10\%$. Pick  $\alpha_R \sim  U(0.25, 0.3),\ \mua \sim U(0, 0.2),\ a \sim U(2,3), \ p \sim U(0, 0.5) \mbox{ and } \delta = \alpha_R + 0.01$. 
Define $d$ as the normalised difference  between the innate capacity of users to identify $F, R$-posts, i.e., $d = \nicefrac{(\alpha_F - \alpha_R)}{\alpha_F}$.
We generate $10^4$ samples for different values of $d$. 
%
%

Now, say that the OSN demands $\theta = 0.75$. Then, it can always design an AI game (for all random configurations) if $d \geq 0.01$. 
Further, $21.16\%$ of samples have ${\cal P} < 10\%$ for $d = 0.08$, which gradually increases to $58.57\%$ as users get smarter, $d = 0.28$. Thus, if the OSN aims to achieve higher performance with respect to $\bmu_{x_\eta}$ as well, then it requires users to be slightly more intelligent (higher $d$).



\old{Before we state the result, we introduce few notations:
\begin{align}
        \underline{\gamma}_2^*(\eta) :=  1-\frac{p}{q(\eta)}, \mbox{ where}
\end{align}the function $q(\eta)$ is defined as:
\begin{align}\label{eqn_func_q}
    q(\eta) &:=  (1-\mua)\left(p - \frac{1-\theta}{1-\alpha_F} \right) + \eta (1-p).
\end{align}
Next, define the following terms which will provide the possible proportion of type $1$ users under to-be designed AI game (see Theorem \ref{thrm_response1}):
\begin{align}
    \eta^* &:= \frac{(1-\mua)(1-\theta)}{1-\alpha_F}, \ \eta_*:= \frac{\eta^* - p(1-\mua)}{1-p}, \nonumber \\
    \underline{\eta} &:= \max\left \{ (1 - \mu_a)(1 - \theta),                     \frac{(1-\delta)(1-\mua)\mua}{\delta(1-\mua) + \mua-\alpha_R} \right\}.     \nonumber
\end{align}
Denote the regime of parameters for which OSN can design an AI game (see Theorem \ref{thrm_response1}) as:

\vspace{-2mm}
{\small
\begin{align*}
\mathcal{R}_{AI} &:= \{(p, \eta, \gamma) : 0 < p \in \frac{\eta^*-\underline{\eta}}{1-\mua-\underline{\eta}}, \eta \in (\underline{\eta}, \min\left\{\eta^*, \eta_*\right\})\\
&\hspace{30mm}\mbox{ and } \gamma > \max\{ \underline{\gamma}_1^*(\eta), \underline{\gamma}_2^*(\eta)\}.
\end{align*}}
It is natural to assume a threshold on the proportion of $a$-users, and thus:

\noindent \textbf{(B)} Assume $\mua < \Gamma := \frac{(\delta-\alpha_R)(1-\theta)}{(1-\delta)(\theta-\alpha_F)}.$
\begin{theorem}\label{thrm_response1}
Assume \textbf{(B)} and let $\cp \geq \cnp$. Consider the response function as in \eqref{eqn_response1}. Let $\Delta := \frac{\alpha_F}{\alpha_R} > 1$ with $\Delta^a \theta(1-\mua) > 1$. Consider the warning mechanism $\omega(\cdot)$ defined as,

\vspace{-2mm}
{\small
\begin{align}\label{eqn_warning_response1}
 \omega( \beta)= w^{1/b} \alpha_R^{(1-a)/b} \beta^{1/b}, \mbox{ with }
 w := \frac{1}{c \alpha_R \Delta^a \theta (1-\mua)}.
\end{align}}
Let $R$ be as in Lemma \ref{lemma_R_gamma}. Then, the following are true:

\noindent (i) If $(p , \eta, \gamma) \in \mathcal{R}_{AI}$, then $\G(R, \gamma, \omega)$ is an AI game, with $\bmu^* = (0, \eta, 1-\eta - \eta_a)$ as the unique equilibria of the game.
    
\noindent (ii) If $p \in (0,1)$, $\eta \in (\max\{\underline{\eta}, \eta_*\}, \eta^*)$ and $\gamma > \underline{\gamma}_1^*(\eta)$, then $\G(R, \gamma, \omega)$ has only two NE -
    
    (a) $\bmu_1^* := (0, \eta, 1-\eta - \eta_a)$ with $P_{\bmu_1^*}(S) = 1$, and
    
    (b) $\bmu_2^* := (0, \eta_0, 1-\eta_0 - \eta_a)$, where 
    \begin{align}\label{eqn_eta_0}
        \eta_0 = \frac{p + (\gamma-1)[p(1-\mua-\eta)+\eta]}{\gamma-1},
    \end{align}with $P_{\bmu_2^*}(S) = 1-p$.
\eop
\end{theorem}}
\section{Conclusion}
OSNs are flooded with fake posts, and several techniques have been proposed to detect the same. A significant fraction of them depend upon crowd signals; however, none focuses on the limited willingness of the crowd to participate. We filled this gap by formulating an appropriate (mean-field) participation game where the users are encouraged to provide their responses (fake/real) for each post via a simple reward-based scheme. Further,  our algorithm ensures minimal wrong judgement of real/authentic posts and maximal actuality identification of the fake ones.

We proposed a simple warning mechanism for the polynomial response function of the users. Our mechanism is robust against adversarial users, independent of language barriers, and continually guides the users in making more informed decisions by utilizing the warning signals shared by the OSN. 
Under our design, the resultant game always has a Nash Equilibrium (NE), which meets the desired objective. We also identify the condition in which another NE exists; it achieves the desired identification level for real posts, but fails to achieve the desired level for fake posts. 
\chapter{Summary and Conclusions} \label{ch:summary}
The thesis mainly focused on branching processes (BPs) and online social networks (OSNs). Towards the first domain, we introduced new generalized variants of multi-type population-dependent BPs and analyzed the same in continuous-time and Markovian framework. The key features of our two-type BPs in an appropriately defined super-critical regime can be summarized as follows: 
\begin{enumerate}
    \item In a departure from the classical literature, which considers offspring distribution dependent only on the current (living) population, we consider that the offspring distribution can depend on the current and/or total (living and dead) population.
    \item Further, the literature considers that the population dependency diminishes as time progresses so that the limit of population-dependent mean offspring functions are constants. We assume that the limit mean functions can depend on the proportions of the populations; thus, there is population dependency even at the limit. This change implies that different limit means are possible sample path-wise. 
    \item We also introduced a BP where any individual of a population type can produce negative offspring of (i.e., can attack) the other population and also produces offspring of its type. The attacking population then acquires the attacked individuals. 
    \item We also studied BPs where death can occur unnaturally (due to external factors like competition or climate change) or naturally. 
\end{enumerate}
We also considered a single-type BP, where the offspring distribution is only total population dependent, and the BP permanently transitions from the super-to-sub critical regime as total population size grows with time. Such a variant models systems where the reproductive capacity of individuals diminishes (due to, say, resource constraints) to the extent that it leads to the eventual extinction of the current population and saturation of the total population size. 

In general, we analyzed the BPs using the stochastic approximation (SA) technique approach. We focused on deriving the limits of such BPs, particularly the time-asymptotic (limit) proportion of the populations when two populations interact. Interestingly, we showed that the limit proportion either converges to the attractors or saddle points or hovers around the saddle points of an appropriate ODE with a certain non-zero probability. In fact, we showed that the said probability is one for BP with attack and BP with unnatural deaths (under some assumptions).
The approximating ODE is non-trivial, proportion-dependent, autonomous and measurable.

The convergence to saddle points is new to the SA-based literature, where existing results focus only on attractors of the ODE. Further, the behaviour of hovering around, induced due to the consideration of saddle points, is also not seen before. Thus, our time-asymptotic result is novel for BPs and SA literature. We also proved an almost sure finite time approximation result for the BP trajectory, again using the ODE trajectory. 

All the BPs discussed above are theoretically relevant and instrumental in analyzing various aspects of content propagation over OSNs. On OSNs, users share the post (which they like) with friends. Some of the friends of the user may  already have a read/unread copy of the same post; such users are most likely not interested in the post again. Thus, one needs to consider both unread copies (current population) and the unread plus read copies (total population) while modelling such dynamics. Next, we summarize the results derived in this thesis related to  OSNs.

Content providers (CPs) often use OSNs to share their product information to make it viral, as then, their product may attract huge attention and sales. However, the viral markets on OSN are competitive, as one can have simultaneous propagation of  posts related to  similar products on the same OSN. We precisely studied such viral competing markets using BP with attack. We provided  the explicit conditions in which both the posts can get viral simultaneously and derived the limit proportion of the  copies of the two posts. Interestingly,  the design/content of the post is the critical factor for virality in such a competitive environment; the influence of the CP is secondary.

Further, as discussed above, re-forwarding the post is an important aspect. We captured this effect using saturated total-population dependent BP because the (effective) mean number of shares decreases as the total number of shares  increases. Using the approximation result over finite-time, we derived the deterministic approximate trajectories for the current and total shares, which depend only on the network characteristics. These trajectories led to the expressions for important metrics like the peak number of unread copies, the lifespan of posts and others. 

We also designed warning mechanisms based on users' responses to identify the fake posts propagating over OSNs. Towards this, we proposed a model where the OSN allows the users to assign a fake or real tag to each post, and then, based on each response of the user, it generates a warning for future recipients of the post. In reality, users may respond differently to the warning mechanism: some users may not tag at all, some users may tag only based on their understanding, some users may consider warning as well for tagging, and others may adversarily tag any post as real. The dynamics of such a complicated process are captured via BP with unnatural deaths. We show theoretically and numerically that the designed mechanisms are robust against adversaries and lead to maximal correct (and minimal wrong) identification of fake (and real) posts. Further, we designed an algorithm that estimates the parameters required for the warning mechanism without assuming the knowledge of the proportions of users exhibiting different behaviour and with minimal knowledge of other user-specific parameters.

In the above problem, we assumed that there is a non-zero fraction of users who tag based on warning. However, in reality, users are reluctant to tag and  may not necessarily consider the warning. To overcome these issues, we designed a mean-field game where users are given rewards to participate in the tagging process. The rewards are designed to achieve desired levels of actuality identification for fake and real posts at Nash equilibrium.

Next, we provide various possible interesting future directions for our work.

\textbf{Future directions for BPs:} First, one needs to identify the conditions under which the  BP-trajectory hovers around saddle set with non-zero probability. 
One may also find it worthwhile to extend the analysis to the case where more than two types of populations are involved. The extension should follow analogously and be straightforward if the structure of  the limit mean functions is preserved in terms of proportions. 

\textbf{Future directions for applications:} In this thesis, we applied our BP-based results to understand post-propagation dynamics over OSNs. In the future, one may even consider using our new `BP with unnatural deaths' to extend the existing numerical understanding of the complicated interactions in ecological systems and provide a rigorous theoretical analysis.

Further, regarding OSNs, one can answer numerous interesting questions by exploiting the structure of the derived deterministic trajectories of content propagating over OSNs. Some potential questions are as follows:
\begin{enumerate}[label = (\roman*)]
    \item What is the optimal number of initial (seed) users that CPs should buy to make their post viral in a competitive environment over OSNs? How should a CP strategically divide its money into buying seed users and designing an attractive post?
    \item In reality, any CP  shares its post repeatedly (and not just once) with new seed users each time. This leads to more rigorous re-forwarding and, in fact, clustering effects. In such a case, how do the trajectories change?
\end{enumerate}
Lastly, recall that in our final problem about singling out fake posts, we designed a participation mean-field game (MFG) where the game starts with a given fixed proportion of users who react differently to the warning mechanism; thus, the underlying game is in a static setting. In practice, the users exhibit different behavioural traits as the game proceeds. Thus, the dynamic MFG is needed to capture the dynamics more appropriately.


\begin{appendices}
    \chapter{For Chapter \ref{ch:journal1}}\label{appendix_journal1}

\section{Some preliminary results}\label{appendix_prelim}
In this Appendix, we state some important auxiliary results, which are also helpful in further understanding of the subject at hand. The Lemma \ref{lemma_sum_pop} and the discussion thereafter provide insights into the derivation of the limit mean matrices of \ref{a2}.


\begin{lemma}{\bf [Dichotomy]}
\label{lemma_sum_pop}
Let assumption \ref{a1} hold and define $\underline{m} =: E[\underline{\offs}]$. Then, we have:
$$
P\left(\left\{\liminf_{n} S_n^c e^{-\lambda(\underline{m}-1)\tau_n} > 0\right\} \cup \left\{\lim_{n \to \infty} S_n^c = 0\right\}\right) = 1. 
$$
\end{lemma}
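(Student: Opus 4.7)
The plan is to reduce the dichotomy to the statement that, almost surely on the non-extinction event $\{\nu_e=\infty\}$, one has $\liminf_n S_n^c e^{-\lambda(\underline{m}-1)\tau_n}>0$. This reduction suffices because $\{\lim_n S_n^c=0\}$ coincides with the extinction event $\{\nu_e<\infty\}$: recall $0$ is absorbing and $S_n^c$ takes integer values. The overall strategy follows the classical super-critical Markov BP template: I would construct a pathwise lower bound for $S^c(t)$ by a population-\emph{independent} continuous-time Markov BP with offspring mean $\underline m>1$, and then invoke the Athreya--Ney growth theorem (\cite[Ch.~III, Thm.~7.2]{athreya2004branching}) to obtain the desired exponential lower bound at rate $\lambda(\underline m-1)$.

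The first step is to construct a ``shadow'' BP $\tilde Y(t)$ embedded in the joint evolution of $(\Cx,\Cy)$. By \ref{a1}, for every realization $\phi$ and every parent type $i$, the total offspring $\xi_{ix}(\phi)+\xi_{iy}(\phi)$ stochastically dominates $\underline{\xi}$, so Strassen's theorem lets one enlarge the probability space and attach to each individual $v$ an independent random variable $U_v\sim\underline{\xi}$ with $U_v\le \xi_{i_v x}+\xi_{i_v y}$ almost surely. Mark the $c_0^x+c_0^y$ initial individuals as shadow-active and, whenever a shadow-active individual dies, mark $U_v$ of its offspring as shadow-active. The resulting $\tilde Y(t)$ is, by construction, a population-independent continuous-time Markov BP with exponential$(\lambda)$ lifetimes and offspring law $\underline{\xi}$, and $\tilde Y(t)\le S^c(t)$ for every $t$. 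The Athreya--Ney theorem then yields $\tilde Y(t)\,e^{-\lambda(\underline m-1)t}\to W$ a.s., with $\{W>0\}$ equal (up to null sets) to the survival event of $\tilde Y$, whose probability is $1-q$ for some $q\in(0,1)$ per initial ancestor. On $\{W>0\}$, evaluating at $t=\tau_n^+$ and using $S_n^c\ge\tilde Y(\tau_n^+)$ immediately produces the desired liminf inequality.

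The main obstacle is the event $\mathcal E=\{\tilde Y\text{ goes extinct},\ S^c\text{ survives}\}$, on which the single coupling is insufficient. I would handle $\mathcal E$ by restarting shadows: at the extinction time $T_1<\infty$ of $\tilde Y$ there is at least one living, shadow-\emph{inactive} individual in $S^c$; pick one and declare it shadow-active, thereby launching a new shadow $\tilde Y^{(2)}$ that, by the strong Markov property of the underlying BP and the independence of the pre-sampled $U$'s across individuals, is an independent single-root $\underline{\xi}$-BP with survival probability $1-q$. Iterating this restart rule as long as $S^c$ survives produces a conditionally i.i.d.\ sequence $\{\tilde Y^{(k)}\}_{k\ge1}$, and the conditional Borel--Cantelli lemma guarantees that, almost surely on $\{S^c\text{ survives}\}$, some $\tilde Y^{(k)}$ eventually survives; on that event $S^c(t)\ge\tilde Y^{(k)}(t)\ge W_k\,e^{\lambda(\underline m-1)(t-T_{k-1})}/2$ for all large $t$, which transfers through $t=\tau_n^+$ to the liminf inequality on $\{\nu_e=\infty\}$ and closes the dichotomy. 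The delicate part of the write-up is the measurability and independence bookkeeping for the successive shadow restarts; this is routine once the $U_v$-coupling is in place but is where the technical care concentrates.
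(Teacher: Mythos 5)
There is a genuine gap, and it sits exactly at the point where your construction touches the feature that makes this lemma non-classical: negative cross-type offspring. Lemma \ref{lemma_sum_pop} is stated for the general total-current population-dependent BP of this chapter, in which $\offs_{ij}(\om)$ for $j\neq i$ may be negative (attack), so that a death of one individual \emph{removes (or converts) living individuals of the other type}. Your shadow coupling marks individuals and assumes, implicitly, that a marked individual leaves the current population only through its own death, at which point it contributes $U_v$ marked offspring. Under \ref{a1} one only knows $\underline{\offs}\le \offs_{ix}(\om)+\offs_{iy}(\om)$, so when $\offs_{ij}<0$ a shadow-active individual of type $j$ can be deleted by somebody else's death before it ever reproduces. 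Then either you keep counting it (and its fictitious subtree) in $\tilde Y$, in which case $\tilde Y(t)\le S^c(t)$ fails, or you remove it, in which case $\tilde Y$ is no longer a population-independent $\underline{\offs}$-BP and the Athreya--Ney growth theorem no longer applies. Since the lemma is invoked precisely for BPs with attack (Section \ref{sec_BPA}), this is not a removable technicality of the write-up: the genealogical, individual-level coupling does not exist in the stated generality. (In the acquisition interpretation the individual survives but changes type; your construction never tracks marks through type changes either, so even that case is not covered as written.)

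The repair is to abandon the genealogical marking and compare only the \emph{sum} embedded chain: at every death epoch the increment of $S^c_n$ is $\offs_{ix}+\offs_{iy}-1\ge \underline{\offs}-1$ almost surely, regardless of how the cross-type removals are allocated, so $S^c_n$ dominates the embedded chain $Z_n$ of a single-type BP with offspring law $\underline{\offs}$. This is the paper's route. The paper also avoids your restart/conditional Borel--Cantelli bookkeeping (which is otherwise a legitimate classical device) by a different trick: it endows the lower-bounding BP with state-dependent immigration of one individual whenever it hits zero, cites Yamazato to conclude $Z(t)\to\infty$ a.s. (the $x\log x$ condition follows from $E[\overline{\offs}^2]<\infty$), and then reads off the growth rate $\lambda(\underline{m}-1)$ from the classical super-critical result, transferring it to $S^c_n$ along the epochs $\tau_n$ via the domination. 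So your overall dichotomy strategy and the restart idea are sound in spirit, but the coupling must be done at the level of the sum, and as written your proof does not cover the attack dynamics the lemma is designed for.
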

\begin{proof}
Let $\Cx(0) = \cx_0$ and $\Cy(0) = \cy_0$. Consider a fictitious population-independent BP with single-type population, say $z$-type. Let $Z(0) = \cx_0 + \cy_0$. Each time an individual dies in the new process, assume that random number of offspring (distributed as $\underline{\offs}$ in \ref{a1}) are produced.  Further, assume that if $Z(t) = 0$ for some $t < \infty$, then, exactly $1$ individual is immigrated into the new system; this leads to the classical continuous time branching process with state-dependent immigration as in \cite{yamazato1975some}. Observe $\sum_{j=2}^\infty j P(\underline{\offs} = j) log(j) < \infty$ due to finite second moment assumption on $\overline{\offs}$ in \ref{a1}. Thus, by \cite[Theorems 6 and 8]{yamazato1975some}, $P(Z(t) \to \infty) = 1$, under \ref{a1}.

For completing the proof, we couple the embedded chains of the two BPs, for all $n \leq \nu_e$, where $\nu_e$ is the extinction epoch of the given system (see Section \ref{sec_probdesc_mainresult}); the offspring in the $Z(\cdot)$ branching process are given by $\underline{\offs}$ of \ref{a1}. If $\nu_e < \infty$, then $S_n^c = 0$ for all $n \geq \nu_e$. Otherwise, by coupling arguments, $S^c_n \geq Z_n$ for all $n$, and thus $S^c_n \to \infty$ as $n \to \infty$. Further, in the latter case, by \cite[Theorem 1, Chapter 1]{athreya2012classical}, the growth rate of $S_n^c$ is at least as large as that of $Z_n$, i.e., $\lambda(\underline{m}-1)$.
\end{proof}


\noindent \underline{\textbf{Limit mean matrices for BPs with negative offspring:}} 

In BPs with negative offspring, in the the survival sample-paths, by Lemma \ref{lemma_sum_pop}, $S^c_n \to \infty$. In such cases, one needs to identify the limit mean matrix of \ref{a2}. Say $0 < \liminf_{n \to \infty} \bc(\Ups_n) \leq \limsup_{n \to \infty} \bc(\Ups_n) <1$. Then, for such sample-paths, both populations would have exploded, i.e., $(\Cx_n, \Cy_n) \to (\infty, \infty)$. Hence, there are sufficient number of individuals to be attacked of both types, which results in the saturation of the number of attacks\footnote{To be realistic, the number of attacks by a single individual should saturate, i.e., for example, $\lim_{\cy \to \infty}m_{xy}(\cy) = \minf_{xy} < \infty$. The case with unsaturated attacks in easier to analyze, and one can easily prove for BP with attack that only one of the two population types survives with probability $1$.
\vspace{2.4mm}
}; thus, it is appropriate to consider $\minf_{xy}(\bc)$ as some constant for all $\bc \in (0,1)$, and so is the case with $\minf_{yx}(\bc)$.

On the other hand, say $\limsup_{n\to \infty} \bc(\Ups_n) = 1$, then, $\bc(\Ups_n) = 1$ i.o. This implies $\bc(\Ups_n) = 1$ for all $n$ large enough, as $\bc(\Ups_n) = 1$ is an absorbing state for processes with attack, like BP with attack and prey-predator BP. Thus, clearly $\minf_{xy}(\bc) = 0$ for $\bc = 1$. Similarly, $\minf_{yx}(\bc) = 0$ for $\bc = 0$.

\section{For Results in Chapter \ref{ch:journal1}} \label{appendix_B}

Throughout the Appendix, we will consider the solution of the integral operator as the extended solution of ODE \eqref{eqn_ODE}. The fact that these two solutions are equivalent, is proved towards the end of the proof of Theorem \ref{thrm1}(i).

\noindent \textbf{Proof of Lemma \ref{lemma_equi_cont_thrm1} (contd.).} \label{proof_lemma1}
By \eqref{eqn_bounded_iterates}, $(\Ups^n(0))_n$ is bounded. We will now prove \eqref{eqn_footnote} for $(\Theta^{n, c}(t))$; it can be proved analogously for other components of $\Ups^n(\cdot)$. 
\hide{Towards this, re-write the scheme for $(\Theta_n^c)$ as follows (see
\eqref{eqn_bias},  \eqref{eqn_nonauto_ODE}):
\begin{align*}
    \Tc_{n+1} &= \Tc_n + \epsilon_n L_n^{\theta, c} =  \Tc_n + \epsilon_n (\delta M_n^{\theta, c} + g_\theta^{c}(\Ups_n) + D_n^{\theta, c}), \mbox{ where}
\end{align*}
$\delta M_n^{\theta, c} :=  L_n^{\theta, c} - g_\theta^{c}(\Ups_n) - D_n^{\theta, c} = L_n^{\theta, c} - \rho_\theta^{c}(\Ups_n, t_n)$. }
Observe from \eqref{eqn_diff_term1} and \eqref{eqn_diff_term2} that the interpolated trajectory can be re-written as:
\begin{align}\label{eqn_interpolated_traj}
\begin{aligned}
    \Theta^{n, c}(t) &:= \Tc_n + \int_0^t g_\theta^c(\Ups^n(s)) ds +  \sum_{i=n}^{\eta(t_n+t)-1} \epsilon_i L_i^{\theta, c} - \int_0^t g_\theta^c(\Ups^n(s)) ds\\
    &=  \Tc_n + \int_0^t g_\theta^{c}(\Ups^n) ds + M^{n, \theta, c}(t) + \rho^{n, \theta, c}(t) + D^{n, \theta, c}(t), \mbox{ where}\\
    M^{n, \theta, c}(t) &:=  \sum_{i=n}^{\eta(t_n + t)-1}  \epsilon_i \left(L_i^{\theta, c} - \rho_\theta^{c}(\Ups_i, t_i)\right), \\ \rho^{n, \theta, c}(t) &:=  \sum_{i=n}^{\eta(t_n + t)-1}\epsilon_i g_\theta^{c}(\Ups_i) - \int_0^t g_\theta^{c}(\Ups^n) ds, \mbox{ and}  \\
    D^{n, \theta, c}(t) &:= \sum_{i=n}^{\eta(t_n + t)-1} \epsilon_i \left(\rho_\theta^{c}(\Ups_i, t_i) - g_\theta^{c}(\Ups_i)\right).
\end{aligned}
\end{align}
\hide{Then, \eqref{eqn_interpolated_traj_1} can be re-written as: 

\vspace{-6mm}
{\small 
\begin{align}\label{eqn_interpolated_traj}
    \Theta^{n, c}(t) &:= \Tc_n + \sum_{i=n}^{\eta(t_n + t)-1}\epsilon_i (\delta M_i^{\theta, c} + g_\theta^{c}(\Ups_i) + D_i^{\theta, c}) \nonumber \\
    &= \Tc_n + \int_0^t g_\theta^{c}(\Ups^n) ds + M^{n, \theta, c}(t) + \rho^{n, \theta, c}(t) + D^{n, \theta, c}(t), \mbox{ where} \\
M^{n, \theta, c}(t) &:=  \sum_{i=n}^{\eta(t_n + t)-1}  \epsilon_i \delta M_i^{\theta, c}, \ \ \   D^{n, \theta, c}(t) := \sum_{i=n}^{\eta(t_n + t)-1} \epsilon_i D_i^{\theta, c}, \mbox{ and} \nonumber \\
\rho^{n, \theta, c}(t) &:=  \sum_{i=n}^{\eta(t_n + t)-1}\epsilon_i g_\theta^{c}(\Ups_i) - \int_0^t g_\theta^{c}(\Ups^n) ds. \nonumber 
\end{align}}}
Now, fix $T > 0$ and define the set $S^\delta_T := \{(s, t) : 0\leq t-s\leq \delta, 0\leq t \leq T \}$. Then:
\begin{align}\label{eqn_sup_bound} 
    \sup_{S_T^\delta} |\Theta^{n, c}(t) - \Theta^{n, c}(s)| 
     &\leq \sup_{S_T^\delta} \left|\int_s^t g_\theta^{c}(\Ups^n) dr\right| +  \sup_{S_T^\delta} \left|M^{n, \theta, c}(t) - M^{n, \theta, c}(s)\right| \nonumber \\
     &+ \sup_{S_T^\delta} \left|\rho^{n, \theta, c}(t) - \rho^{n, \theta, c}(s) \right| + \sup_{S_T^\delta} \left|D^{n, \theta, c}(t) - D^{n, \theta, c}(s) \right|.
\end{align}
To prove our claim, we begin with the first term  of   \eqref{eqn_sup_bound}. From  \eqref{eqn_ODE} and \eqref{eqn_bounded_iterates}, $|g_\theta^{c}(\Ups)| \leq \hat{m}$ for an appropriate $\hat{m} > 1$, for any $\Ups$, and, thus:
\begin{align*}
     \left|\int_s^t g_\theta^{c}(\Ups^n) dr\right| 
     \hide{&\leq \int_s^t \hspace{-3mm} \left|g(\theta^{n, c}(r)) \right|dr \\
     &=  \int_s^t  \left|\bc\big(m_{xx}(\om) - 1\big) + (1-\bc) m_{yx}(\om) - \theta^{n, c}(r) \right|dr\\
     &\leq \int_s^t  \left|\bc (\hat{m} - 1) + (1-\bc) \hat{m}  \right|dr }
     \leq \hat{m}(t-s), \mbox{ so, }  \sup_{S_T^\delta} \int_s^t  \left|g_\theta^{c}(\Ups^n) \right|dr \leq  \delta \hat{m}.
\end{align*}For the second term of   \eqref{eqn_sup_bound}, define $M_n^{\theta, c} := \sum_{i=0}^{n-1}\epsilon_i \left(L_i^{\theta, c} - \rho_\theta^{c}(\Ups_i, t_i)\right)$. Then, it is easy to prove that $(M_n^{\theta, c})$ is a Martingale   with respect to $(\mathcal{F}_n)$. Thus, using Martingale inequality, 
for each $\mu > 0$ (where, $E_n(\cdot)$ denotes the expectation conditioned on $(\mathcal{F}_n)$): 
$$
P\left\{\sup_{m\leq j \leq n} |M_j^{\theta, c} - M_m^{\theta, c}| \geq \mu \right\} \leq \frac{E_n\left|\sum_{i=m}^{n-1} \epsilon_i \left(L_i^{\theta, c} - \rho_\theta^{c}(\Ups_i, t_i)\right) \right|^2}{\mu^2}.
$$
Observe, $E\left[\left(L_i^{\theta, c} - \rho_\theta^{c}(\Ups_i, t_i)\right)\left(L_j^{\theta, c} - \rho_\theta^{c}(\Ups_j, t_j)\right)\right] 
= 0$ for $i < j$. Using this: 
\begin{align*}
    P\left\{\sup_{m\leq j \leq n} |M_j^{\theta, c} - M_m^{\theta, c}| \geq \mu \right\} &\leq 
    \frac{\sum_{i=m}^{n-1} \epsilon_i^2 E_n\left| L_i^{\theta, c} - \rho_\theta^{c}(\Ups_i, t_i) \right|^2}{\mu^2}.
\end{align*}
Note that under \ref{a1} and \eqref{eqn_bounded_iterates}, for some $K > 0$:
\begin{align*}
     \sup_n E_n|L_n^{\theta, c}- \rho_\theta^c(\Ups_i, t_i)|^2 &\leq
     \DetailK{\sup_n E\left|H_{n}\left(\offs_{xx, n}(\Om_{n-1}) - 1\right) + \overline{H}_{n} \offs_{yx, n}(\Om_{n-1}) - \Tc_{n-1}\right|^2\\
     &\leq} \sup_n E_n\left(\overline{\offs}_n - 1 \right)^2 + \sup_n E_n|\rho_\theta^c(\Ups_i, t_i)|^2 < K.
\end{align*}Thus, for every $n \geq m$:
\begin{align*}
    P\left\{\sup_{m\leq j \leq n} |M_j^{\theta, c} - M_m^{\theta, c}| \geq \mu \right\} &\leq  \frac{K}{\mu^2} \sum_{i=m}^{\infty} \epsilon_i^2.
\end{align*}
By first letting $n \to \infty$ (and using continuity of probability), then, letting $m \to \infty$, 
\begin{align}
    \lim_{m \to \infty} P\left\{\sup_{m\leq j } |M_j^{\theta, c} - M_m^{\theta, c}| \geq \mu \right\} &= 0 \mbox{ for each }\mu > 0.\label{eqn_equi_cont_M}
\end{align}
Now, by \eqref{eqn_equi_cont_M} and continuity of probability, for each $\mu > 0$:
\begin{align}\label{eqn_lim_sup_second_term}
 P\left\{\lim_{m \to \infty} \sup_{m\leq j } |M_j^{\theta, c} - M_m^{\theta, c}| \geq \mu \right\} = 0. 
\end{align}
Let $A_k := \lim_{m \to \infty} \sup_{m\leq j } |M_j^{\theta, c} - M_m^{\theta, c}| < 1/k$, then, $P(A_k) = 1$ for each $k > 0$. We further restrict our attention to  sample paths  $\omega \notin \underline{N :=  (\cap_k A_k)^c \cup \{\overline{\Pi} \nto \overline{m} \}}$. Now, the second term in \eqref{eqn_sup_bound} is upper bounded by $2\sup_{t\geq 0}|M^{n, \theta, c}(t)|$. For any $\omega \notin N$:
\hide{
\vspace{-4mm}
{\small
\begin{align*}
|M^{n, \theta, c}(t)| &= \left|\sum_{i=n}^{\eta(t_n + t)-1}\epsilon_i \delta M_i^{\theta, c} \right| 
= \bigg|M^{\theta, c}_{\eta(t_n + t)} - M^{\theta, c}_n\bigg|.
\end{align*}}
This gives us:}
\begin{align*}
\sup_{t\geq 0}|M^{n, \theta, c}(t)| 
&= \sup_{t \geq 0} |M^{\theta, c}_{\eta(t_n + t)} - M^{\theta, c}_n|  = \sup_{j \geq n} |M^{\theta, c}_{j} - M^{\theta, c}_n|\\
\implies  \lim_{n \to \infty} \sup_{S_T^\delta}|M^{n, \theta, c}(t)|  &\leq \lim_{n \to \infty} \sup_{\eta(t_n + t) \geq n} |M^{\theta, c}_{\eta(t_n + t)} - M^{\theta, c}_n| < 1/k,
\end{align*}where the last inequality holds because we have considered sample paths which are not in $N$.
Letting $k \to \infty$, we get, $M^{n, \theta, c}(\cdot) \to 0$ uniformly on each bounded interval. 

For the third term in \eqref{eqn_sup_bound}, observe that when  $t = t_k - t_n$ $(k > n)$, $\rho^{n, \theta, c}(t) = 0$. Thus, 
\hide{Towards this, for $k = n+1$, we have $t_k - t_n = \epsilon_n$:
\begin{align*}
    \rho^{n, \theta, c}(t_{n+1} - t_n) &=  \sum_{i=n}^{\eta(t_{n+1})-1}  \epsilon_i g(\tc_i) - \int_0^{\epsilon_n} g(\theta^{n, c}(s)) ds\\
    &= \epsilon_n g(\tc_n) - \epsilon_n g(\tc_n) = 0,
\end{align*}
where the second equality holds true because for $0 \leq s \leq \epsilon_n$, $\theta^{n, c}(s) = \tc_n$.
Let the claim be true for $k = n+l$, $l > 0$, then for $ k = n+l+1$, we have:
\begin{align*}
    \rho^{n, \theta, c}(t_{n+l+1} - t_n) &= \rho^{n, \theta, c}(t_{n+l} - t_n)  + \epsilon_{n+l}g(\theta_{n+m}^c) - \int_{t_{n+m}-t_n}^{t_{n+m+1}-t_n} g(\theta^{n, c}(s))ds = 0.
\end{align*}
Thus, by induction the claim is true. Next, we need to show that
$\rho^{n, \theta, c}(t) \to 0$ uniformly in $t$ as $n\to \infty$.} \hide{, i.e., for every $\epsilon > 0$, there exists $n_\epsilon$ such that for all $n \geq n_\epsilon$ and for all $t > 0$, $|\rho^{n, \theta, c}(t) - 0| < \epsilon$.}  for any $|t| \leq T$ (following similar steps as in first term, and noting $\epsilon_{\eta(t_n + t)} \le \epsilon_n$):
\begin{align*}
    |\rho^{n, \theta, c}(t)| &=
    \left| \int_{t_{\eta(t_n + t)} - t_n}^t g_\theta^c( \Ups^n) ds \right|
    \hide{\\
    &\leq  \int_{ t_{\eta(t_n + t)} - t_n}^t \left|g_\theta^{c, \infty}(\ups^n) ds \right| \leq (t - \eta(t_n + t)) \overline{m} }
    < \epsilon_n \hat{m}.
\end{align*}Thus, $\rho^{n, \theta, c}(\cdot)$ uniformly converges to $0$ as $n \to \infty$ on each bounded interval.  

For the last term in \eqref{eqn_sup_bound}, we claim that $D^{n,\theta, c}(t)$ also converges to $0$ uniformly on each bounded interval in $(0, \infty)$ as $n \to \infty$, for each $\omega \notin N$. Towards this, first consider $\omega \in N^c \cap \{S_n^c \to 0\}$, i.e, extinction paths. Then, $\rho_\theta^c(\Ups_i, t_i) = 0$ and $g_\theta^{c}(\Ups_i) = 0$ for all $i > \nu_e$. Thus, trivially $\lim_{n \to \infty}D^{n,\theta, c}(t) = 0$ for all $t \in (0, \infty)$.

Next, consider $\omega \in N^c \cap \{S_n^c \nto 0\}$; for such sample paths, we first derive a uniform positive lower bound for $\Pc_n$, required to prove the claim. To this end, analogous to $\overline{\Pi}_n$ defined in \eqref{eqn_overline_S_n}, one can define $\underline{\Pi}_n$ using $ \underline{\offs}$ given in \ref{a1}. Then, following similar steps as before, i.e., using strong law of large numbers and computing as in \eqref{eqn_bounded_iterates}, we get $\Pa_n \geq \Pc_n \geq \Delta$ for an appropriate $\Delta > 0$, for all $n\geq 1$. Thus, we have for each $i \geq 1$ (see $\tc$ component of \eqref{eqn_ODE}, \eqref{eqn_nonauto_g} and assumption \ref{a2}):
\begin{align*}
|D_i^{\theta, c}| = |\Bc_i(m_{xx}(\Om_i) -m_{xx}^\infty(\Bc_i)) + (1-\Bc_i)(m_{yx}(\Om_i) - m_{yx}^\infty(\Bc_i))| \leq \frac{2}{S_i^c} = \frac{2}{\Pc_i \eta(t_i)} \leq \frac{2}{\Delta i}. 
\end{align*}This implies that, (recall $\epsilon_i = 1/(i+1)$)
\begin{align*} 
|D^{n, \theta, c}(t)| = \left|\sum_{i= n}^{\eta(t_n + t) - 1}\epsilon_i D_i^{\theta, c}\right| \leq \sum_{i= n}^{\eta(t_n + t) - 1} \frac{2}{\Delta i (i+1)} \leq \sum_{i= n}^{\infty} \frac{2}{\Delta i (i+1)}, \mbox{ for any }t.
\end{align*}Thus, $D^{n, \theta, c}(t)$ uniformly converges to $0$ as $n \to \infty$. In all, by \eqref{eqn_sup_bound} and above analysis, it is clear that for each $T>0$ and for any $\epsilon > 0$, there exists $n_\epsilon$ such that $\sup_{S_T^\delta} |\Theta^{n, c}(t) - \Theta^{n, c}(s)| < \epsilon $ for all $n \geq n_\epsilon$; hence $(\Theta^{n, c}(\cdot))$ is equicontinuous in extended sense. \eop

\noindent \textbf{Proof of Theorem \ref{thrm1} (ii).} \label{proof_thrm1}
The proof is constructed for sample paths $\omega \notin N$, however, for simplicity, we drop $\omega$ (see Lemma \ref{lemma_equi_cont_thrm1} for definition of set $N$). \hide{Consider the set $S(\omega)$ defined as:
$$
    S(\omega) := \{\ups: \Ups_n(\omega) \mbox{ exits } N_{\delta_{\mbox{\scriptsize{$\ups$}}}}(\ups) \mbox{ i.o. for some } \delta_\ups > 0\}.
$$
If $S(\omega)^c \cap \cR \neq \emptyset$
, then $\Ups_n(\omega) \to \cR$, and more precisely, $\Ups_n(\omega) \to \cR-S(\omega)$. Otherwise,  i.e., if $\cR \subseteq S(\omega)$, 
then $\Ups_n(\omega) \in \chi := \cD_b \cap \left(\cap_{\ups \in \cR} N_{\delta_\ups}^c(\ups)\right)$ i.o., observe $\chi$ is compact. For simpler notations, we drop $\omega$ henceforth. Therefore,}
By \ref{a4}, $\Ups_n\in \cD_b$ i.o. Since $\cD_b$ is compact, $({\Ups}_n)$ has a limit point ${\ups}_0 \in \cD_b$; then, there exists a sub-sequence $(n_k)$ such that ${\Ups}_{n_k} \to {\ups}_0$. Further, by (extended) equicontinuity of $(\Ups^n(\cdot))$, there exists further sub-sequence (denote it again by $(n_k)$, for simpler notations) $({\Ups}^{n_k}(\cdot))$ which converges to the extended solution $\ups(\cdot)$ of the ODE \eqref{eqn_ODE} uniformly on each bounded interval. Also observe, ${\Ups}^{n_k}(0) = {\Ups}_{n_k} \to {\ups}_0$, and recall $\ups(0) = \ups_0$  is the initial condition for ODE \eqref{eqn_ODE}. Under characterization of attractor or q-attractor in \ref{a4}, the ODE solution $\ups(t)$ converges to some $\ups^* \in (\cA \cup \cR)\cap \cD_I$ as $t \to \infty$. 

We will now show that for any $\delta_1 > 0$, $\Ups_n$ visits $N_{\delta_1}(\ups^*)$ i.o. We will also discuss other convergence aspects to complete the proof. Towards this, fix $\delta_1 > 0$.

{\bf Step A:} To begin with, assume $\ups^* \in \cA \cap \cD_I$.  Then, by \ref{a4} (local stability) it is possible to choose $0 < \delta_2 < \delta_1$ such that any ODE solution, ${\widetilde \ups}(\cdot)$, satisfies the following:
\begin{equation}\label{eqn_local_stability}
    {\widetilde \ups} (t) \in N_{\delta_1}(\ups^*)
    \mbox{ for all } t \geq 0, \mbox{ when initial condition } {\widetilde \ups}(0) \in cl( N_{\delta_2}(\ups^*) ).
\end{equation}
Further, by convergence of solution, $\ups(t) \to \ups^*$, thus there exists $T_{\delta_2} < \infty$ such that:
\begin{align}\label{eqn_dist_ODE_A}
    d({\ups}(t) , \ups^*) < \delta_2/2 \mbox{ for all } t \geq T_{\delta_2}.
\end{align}Now, following similar steps as in part (i) (see \eqref{eqn_dist_scheme_ODE_}), there exists $\overline{n} < \infty$ such that:
\hide{
Further, there exists $\overline{N}$ such that (by uniform convergence):
$$
\sup_{T_{\delta_2} \leq t \leq 2 T_{\delta_2}} d({\ups}^{n_k}(t) , {\ups}(t)) < \delta_2/2 \mbox{ for all } n_k \geq \overline{N}.
$$
Consider $t = t_l - t_{n_k}$ ($l > n_k$) such that $T_{\delta_2} \leq t \leq 2T_{\delta_2}$. Let $L:= \{l : T_{\delta_2} + t_{n_k} \leq t_l \leq 2T_{\delta_2} + t_{n_k}\}$. Then, we have (note that ${\ups}^{n_k}(t_l - t_{n_k}) = {\Theta}_l$):}
\begin{align}\label{eqn_dist_scheme_ODE}
\sup_{l \in L_k} d({\Ups}_l , {\ups}(t_l)) < \delta_2/2 \mbox{ for all } n_k \geq \overline{n},
\end{align}
for $L_k:= \{l : T_{\delta_2} + t_{n_k} \leq t_l \leq 2T_{\delta_2} + t_{n_k}\}$.
Using \eqref{eqn_dist_ODE_A} and \eqref{eqn_dist_scheme_ODE},  for all $n_k \geq \overline{n}$:
\begin{align}\label{eqn_dist_scheme_A}
\sup_{l \in L_k} d({\Ups}_l , \ups^*) &\leq \sup_{l \in L_k}d({\Ups}_l , {\ups}(t_l)) + \sup_{l \in L_k}d(\ups(t_l) , \ups^*) < \delta_2.
\end{align}Thus, ${\Ups}_n$ visits $N_{\delta_2}(\ups^*)$ i.o., and hence $N_{\delta_1}(\ups^*)$ i.o.

Henceforth, the proof is majorly as in proof of \cite[Theorem 2.3.1, pp. 39]{kushner2012stochastic}, except for \textit{few changes to consider convergence to q-attractors, not just attractors}. Contrary to the claim, assume that ${\Ups}_n$ exits $N_{\delta_1}(\ups^*)$ i.o. Thus, by \eqref{eqn_dist_scheme_A}, $\Ups_n$ moves from $N_{\delta_2}(\ups^*)$ to $\cD_b - N_{\delta_1}(\ups^*)$ i.o. 
Let $\overline{\Ups}^0(\cdot)$ be the usual linear interpolated trajectory of $\Ups_n$, i.e., 
\begin{align*}
    \overline{\Ups}^0(t_n) = \Ups_n, \mbox{ and } \overline{\Ups}^0(t) = \frac{t_{n+1}-t}{\epsilon_n}\Ups_n + \frac{t-t_{n}}{\epsilon_n}\Ups_{n+1} \mbox{ for } t \in (t_n, t_{n+1}).
\end{align*}
Then, there exists sequence $(l_j, r_j)$ such that (i) $\dots > r_j > l_j > r_{j-1} > l_{j-1}> \dots$, (ii) $r_j \to \infty$, (iii) $\overline{\Ups}^0(l_j) \in \partial N_{\delta_2}(\ups^*)$, $\overline{\Ups}^0(r_j) \in \partial N_{\delta_1}(\ups^*)$, and (iv) $\overline{\Ups}^0(t) \in cl(N_{\delta_1}(\ups^*)) -  N_{\delta_2}(\ups^*)$, for all $t\in [l_j, r_j]$. Consider the segments (one for each $j$) of $\overline{\Ups}^0(\cdot)$, i.e., consider  functions, $\q_j (t) := \overline{\Ups}^0(l_j+t)$ for any $ t\geq 0$;
observe by construction that for each $j,$ we have $\q_j (t) \in \left \{ \ups: \delta_2 <  d(\ups, \ups^*) \le \delta_1 \right \}$ for all $0 < t \le r_j - l_j$.


\textbf{Case (a):} Suppose there is a $T < \infty$ such that for some sub-sequence (call it $j$ again) $r_{j} - l_{j} \to T$. Now, consider a sub-sequence of $(\q_j(\cdot))$ which (again) converges to some solution of ODE, $\widetilde{\ups}(\cdot)$ uniformly over $[0,T]$.\footnote{The equicontinuity in extended sense can easily be extended to linear interpolated trajectories.} Then, $\widetilde{\ups}(0) \in \partial N_{\delta_2}(\ups^*)$ and $\widetilde{\ups}(T) \in \partial N_{\delta_1}(\ups^*)$. This contradicts \eqref{eqn_local_stability}. For $T = 0$, there is an obvious contradiction.

\textbf{Case (b):} If $r_{j} - l_{j} \to \infty$, then, $\widetilde{\ups}(0) \in \partial N_{\delta_2}(\ups^*)$ and $\widetilde{\ups}(t) \in cl(N_{\delta_1}(\ups^*)) -  N_{\delta_2}(\ups^*)$ for all $t > 0$. Then, it is a contradiction to $\ups^*$ being an attractor. 

In all, ${\Ups}_n \to \ups^*$; since $\ups^* \in \cA \cap \cD_I$ is arbitrary, we have ${\Ups}_n \to \cA \cap \cD_I$.

{\bf Step S:} 
Now consider $\ups^* \in \cR \cap \cD_I$. 
If $\nu_e < \infty$, i.e., in extinction sample paths, $\Ups_n \to {\bf 0}$ and we are done. For others, $\lim \inf_n \Pc_n > 0$ by Lemma \ref{lemma_sum_pop}.
Thus, with $\nu_e = \infty$ and $\ups^* \in \cR \cap \cD_I$, 
 by Definition \ref{defn_q_attractor}, the initial condition $\ups_0  \in {\mathbb S} (\ups^*)$ with  $\bc(\ups_0) = \bc (\ups^*)$.

Similar to step A, by exponential stability (\ref{a4}), one can show that \eqref{eqn_local_stability} follows for any ODE solution $\widetilde{\ups}(\cdot)$ when initial condition  $\widetilde{\ups}(0) \in N_{\delta_2}(\ups^*) \cap \mathbb{S}(\ups^*)$. Further, clearly \eqref{eqn_dist_ODE_A}-\eqref{eqn_dist_scheme_A} also hold for this case. Thus, $\Ups_n$ visits $N_{\delta_1}(\ups^*) \cap \mathbb{S}(\ups^*)$ i.o. 

Further, if for every $\delta_1 > 0$, $\Ups_n$ does not exit $N_{\delta_1}(\ups^*) \cap \mathbb{S}(\ups^*)$ i.o., then $\Ups_n \to \Ups^* \in \cR\cap \cD_I$. Otherwise, for every $\delta_2 > 0$, $\Ups_n$ visits $N_{\delta_2}(\ups^*) \cap \mathbb{S}(\ups^*)$ and for some $\delta_1 > 0$, $\Ups_n$ exits $N_{\delta_1}(\ups^*) \cap \mathbb{S}(\ups^*)$ i.o. 
\eop




\hide{

\begin{lemma}\label{lemma_nonauto_auto_ode}
For any $T>0$,
\begin{align}\label{eqn_nonauto_auto_ode}
    g(\om(\widetilde{\ups}^n(s), t_n+s), \widetilde{\ups}^n(s)) \to g_\infty(\ups^*(s)) \mbox{ for all } s \in [0, T], \mbox{ as } n \to \infty.
\end{align}
\end{lemma}
\begin{proof} Consider the following, with $\widetilde{\ups}^n(s) = \om(\widetilde{\ups}^n(s), t_n+s)$:
\begin{align*}
    \left|g(\om(\widetilde{\ups}^n(s), t_n+s), \widetilde{\ups}^n(s)) - g_\infty(\ups^*(s))\right| &
    = \left|g(\widetilde{\ups}^n(s), t_n+s) - g_\infty(\ups^*(s))\right|\\
    &      \hspace{-5.5cm}  
    \leq \left|g(\widetilde{\ups}^n(s), t_n+s) - g(\ups^*(s), t_n+s)\right| + \left|g(\ups^*(s), t_n+s) - g_\infty(\ups^*(s))\right|
\end{align*}
Again by \textbf{A}.5, the second term converges to $0$, as $n \to \infty$. For the first term, we illustrate the claim for $\pa$, with $\widetilde{\om}_n(s) = \om(\widetilde{\ups}^n(s), t_n+s)$ and $\om_n^*(s) = \om(\ups^*(s), t_n+s)$:

\vspace{-2mm}
{\small
\begin{align*}
    \left|g_\psi^a(\widetilde{\ups}^n(s), t_n+s) - g_\psi^a(\ups^*(s), t_n+s)\right| 
    &\leq \bigg|\bc \sum_{j \in \{x, y\}} \bigg(m_{xj}(\widetilde{\om}_n(s)) - m_{xj}(\om^*_n(s))\bigg) \\
    &\hspace{-4.7cm}+ (1-\bc)\sum_{j \in \{x, y\}} \bigg(m_{yj}(\widetilde{\om}_n(s)) - m_{yj}(\om^*_n(s))\bigg)  - \left((\widetilde{\psi}^a)^n(s) - (\pa)^*(s)\right) \bigg|\\
    &\hspace{-4.7cm}\leq \sum_{i, j} |m_{ij}(\widetilde{\om}_n(s)) - \minf_{ij}(\bstar)| + \sum_{i, j} |m_{ij}(\om^*_n(s)) - \minf_{ij}(\bstar)| + |(\widetilde{\psi}^a)^n(s) - (\pa)^*(s)|\\
    &\hspace{-4.7cm}\leq \frac{4}{(\tilde{s}^{c, n})^\alpha} + \frac{4}{(s^{c, *})^\alpha} + |(\widetilde{\psi}^a)^n(s) - (\pa)^*(s)| \leq \frac{8}{(\Delta \eta(t_n+s))^\alpha} + |(\widetilde{\psi}^a)^n(s) - (\pa)^*(s)|.
\end{align*}}By uniform convergence of $(\widetilde{\ups}^n)$ on each bounded interval, and   $\eta(t_n + s) \to \infty$, as $n \to \infty$, the claim holds.
\end{proof}}

\vspace{2mm}
\newcommand{\OL}{\Omega}
\noindent \textbf{Proof of Theorem \ref{thrm_attractors_beta}.}\label{proof_thrm2}  Recall $\bc(\ups) := \tc/\pc$.  Consider the initial condition $\ups(0) \in \cD_I$ with $\pc(0) = 0$, then ODE \eqref{eqn_ODE} simplifies to $\dot{\ups} = -\ups$, which clearly has a unique solution and further $\ups(t) \to \mathbf{0}$ as $t \to \infty$. We claim that ${\mathbf{0}} \in \cR$ as we next show that   with $\pc(0) > 0$, the solution $\ups$ converges to other equilibrium points. 

Let $\pc(0) > 0$, and  say without loss of generality, $\bc(\ups(0)) \in {\cal N}_i^{-}$ for some $i$. By Lemma \ref{lemma_psi_c_general}, $\pc(t) > 0$ for all $t \geq 0$, thus ODE \eqref{eqn_ODE} simplifies to $\dot{\ups} = \mathbf{h}(\bc(\ups)) - \ups$. Consider the following smooth ODE, with initial condition $\ups(0) $ (by (c), the right hand side given below is Lipschitz continuous):
\begin{align}\label{eqn_func_fli}
\begin{aligned}
    \dot{\ups} &= f_l^i(\bc) - \ups, \mbox{ where } \\
    f_l^i(x) := \mathbf{h}(x)1_{\{x < x^*_i\}\cap N_i^*} &+ \mathbf{h}_l^* 1_{\{x \geq x^*_i\}} + \mathbf{h}_l^o 1_{x \leq \Delta_l^i}, \mbox{ with } \\
    \mathbf{h}_l^* := \lim_{x_n \up x^*_i} \mathbf{h}(x_n), \ \mathbf{h}_l^o := \lim_{x_n \downarrow \Delta_l^i} \mathbf{h}(x_n), &\mbox{ and } \Delta_l^i := \inf\{ \bc(\ups) : \bc(\ups) \in {\cal N}_i^*\}.
\end{aligned}
\end{align}Then, by \cite[Theorem 1, sub-section 1.4, pp. 6]{piccinini2012ordinary}, the above smooth ODE has a unique solution, say $\ups^1(t)$. Let $\tau:= \inf\{ t : \bc(\ups^1(t)) = x^*_i\}$, then by Lemma \ref{lemma_tau_finite}, $\tau < \infty$. Observe that the solution of the original ODE \eqref{eqn_ODE}, with the same initial condition $\ups(0)$, coincides with $\ups^1(\cdot)$ for all $t < \tau$, as $\pc(t) > 0$ for all $t >0$ by Lemma \ref{lemma_psi_c_general} for such initial condition. Now, let  $\ups^\tau := \ups^1 (\tau)$ and observe $\beta^c (\ups^\tau)  = x_i^*$. Using similar logic, one can prove that $x_i^*$ is an attractor for ODE \eqref{eqn_beta_ode_simple}. - these kind of statements are required for $z$-ODE in prelim chapter. Further,  by uniqueness of the solutions of the smooth\footnote{The ODEs \eqref{eqn_func_fli} and \eqref{eqn_omega} are the two smooth ODEs.} ODEs, the solution of ODE \eqref{eqn_ODE} for $t > \tau$ is given by:
\begin{align}\label{eqn_ups2}
    \ups^2(t) = (\pc(t), x^*_i \pc(t), \pa(t), \ta(t)),
\end{align}where the three components of $\ups^2(\cdot)$, defined as $\OL(\cdot) := (\pc(\cdot), \pa(\cdot), \ta(\cdot))$ is the solution of the following initial value problem (IVP) for all $t \geq \tau$ (see \eqref{eqn_ODE}):
\begin{align}\label{eqn_omega}
\dot{\OL} = \mathbf{h}_i  - \OL, \mbox{ with } \OL (\tau) :=  \OL(\ups^*),  \mbox{ where constant, } \mathbf{h}_i := (h_{\psi}^c, h_{\psi}^a, h_{\theta}^a)|_{x^*_i}.
\end{align}Observe that $\bc(t) = x^*_i$ for all $t > \tau$ by (a). With this, $\ups(t) := \ups^1(t)1_{t < \tau} + \ups^2(t)1_{t > \tau}$ is the unique solution, which satisfies ODE \eqref{eqn_ODE}  for all $t \neq \tau$, and with initial condition $\ups(0)$. Thus, \eqref{eqn_ODE} satisfied \ref{a3}. Clearly from \eqref{eqn_omega}, 
$$
\ups(t) \to \mathbf{h}(x_i^*), \mbox{ where } \mathbf{h}(x_i^*) = (h^c_{\psi}, x_i^*h^c_{\psi}, h^a_{\psi}, h^a_{\theta})|_{x_i^*}.
$$
Similarly, one can show that $\ups(t) \to \mathbf{h}(x_i^*)$, if $\bc(\ups(0)) \in {\cal N}_i^{+}$. 

Thus, $\mathbf{h}(x_i^*)$ is an attractor for ODE \eqref{eqn_ODE}, with domain of attraction as $\cD_i := \{\ups\in \cD_I: \bc(\ups) \in  {\cal N}_i^*\} \cap \{\pc > 0\}.$ Since $x_i^* \in {\cal I}$ is arbitrary, $\cA = \{ \mathbf{h}(x_i^*) : x_i^* \in {\cal I}\}$, with corresponding domain of attraction as $\cD_{\cal A} =\cup_{1 \leq i \leq n} \cD_i$.  Also, ${\cal I}$ is an attractor for \eqref{eqn_beta_ode_simple}.

By hypothesis (b.i), any initial condition $\ups(0)$ with $\bc(\ups(0)) \in [0,1]-{\cal J}$ is already considered above. Now consider $\ups(0)$ with $\bc(\ups(0)) = y_i^* \in {\cal J}$, i.e., $\ups(0) \in {\mathbb S} (h(y_i^*))$. Then, the analysis follows as in \eqref{eqn_ups2}-\eqref{eqn_omega} to show that $\ups(t) \to \ups(y_i^*)$ as $t \to \infty$; the exponential convergence is clear from ODE \eqref{eqn_omega}. This proves that $\mathbf{h}(y_i^*)$ is a saddle point for ODE \eqref{eqn_ODE}.  Clearly, by (a), (b.ii)-(b.iii), $y_i^* \in {\cal J}$ is a saddle point for ODE \eqref{eqn_beta_ode_simple}. Hence, the theorem follows, as similar things are true for ${\mathbf 0}$.  \eop

\begin{lemma}\label{lemma_tau_finite}
The time $\tau$ defined in the proof of Theorem \ref{thrm_attractors_beta} is finite. 
\end{lemma}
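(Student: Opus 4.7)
The plan is to derive a scalar ODE for $\beta(t) := \bc(\ups^1(t))$ along the smoothed trajectory and show that this ODE drives $\beta$ to $x_i^*$ in finite time. Assume without loss of generality $\bc(\ups(0)) \in {\cal N}_i^-$; the symmetric case $\bc(\ups(0)) \in {\cal N}_i^+$ is handled analogously with the opposite sign of $g_\beta$.

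First, while $\beta(t) \in {\cal N}_i^-$, the smoothed right-hand side $f_l^i$ coincides with $\mathbf{h}$, so the components satisfy $\dot{\tc} = h_\theta^c(\beta) - \tc$ and $\dot{\pc} = h_\psi^c(\beta) - \pc$. By Lemma \ref{lemma_psi_c_general}, $\pc(t) > 0$ for all $t \ge 0$, so I can differentiate $\beta = \tc/\pc$ to obtain
\begin{align*}
\dot{\beta} \pc = \dot{\tc} - \beta\, \dot{\pc} = h_\theta^c(\beta) - \beta\, h_\psi^c(\beta).
\end{align*}
A direct algebraic verification using the explicit formulas in \eqref{eqn_ODE} and \eqref{eqn_beta_ODE} yields the identity $h_\theta^c(\beta) - \beta\, h_\psi^c(\beta) = g_\beta(\beta)$, hence $\dot{\beta} = g_\beta(\beta)/\pc$ along the trajectory.

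Next, I would bound $\pc$ above. Under \ref{a1} the limit functions $\minf_{ij}$ are bounded on $[0,1]$, so $h_\psi^c$ is bounded by some $M<\infty$; applying the standard integrating-factor comparison to $\dot{\pc} = h_\psi^c(\beta) - \pc$ gives $\pc(t) \le \max\{\pc(0),M\} =: \pc^{\max}$ for all $t\ge 0$. By (b.ii), $g_\beta > 0$ on ${\cal N}_i^-$, so $\dot{\beta}>0$ and $\beta$ is strictly increasing while in ${\cal N}_i^-$.

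The main obstacle is to show that $\beta(t)$ actually reaches $x_i^*$ in finite time, rather than approaching it only asymptotically. Here I would exploit the fact that $x_i^* \in {\cal I}$ is a point of discontinuity of $g_\beta$: since $g_\beta$ is Lipschitz on ${\cal N}_i^-$, its left limit $g_\beta(x_i^{*-})$ exists, and in the settings of interest (such as BP with attack, where this left limit equals $\bpam_{xy}^\infty > 0$) the discontinuity at $x_i^*$ forces $g_\beta(x_i^{*-}) > 0$. Combined with positivity and continuity of $g_\beta$ on any compact subset of ${\cal N}_i^-$ bounded away from $\partial {\cal N}_i^-$, there is a strictly positive lower bound $\eta>0$ with $g_\beta(\beta) \ge \eta$ throughout $[\bc(\ups(0)), x_i^*)\cap {\cal N}_i^-$. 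Using the change of variable $t \mapsto \beta$ (valid since $\dot{\beta}>0$),
\begin{align*}
\tau \;=\; \int_0^{\tau} dt \;=\; \int_{\bc(\ups(0))}^{x_i^*} \frac{\pc(t(\beta))}{g_\beta(\beta)}\, d\beta \;\le\; \frac{\pc^{\max}\,(x_i^* - \bc(\ups(0)))}{\eta} \;<\; \infty,
\end{align*}
which establishes $\tau < \infty$ and completes the proof. The symmetric argument for $\bc(\ups(0)) \in {\cal N}_i^+$ uses (b.iii) and $g_\beta < 0$, driving $\beta$ downward to $x_i^*$ by the same finite-time estimate.
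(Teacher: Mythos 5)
Your proof is correct and follows essentially the same route as the paper: both reduce to the scalar ODE $\dot{\beta} = g_\beta(\beta)/\pc$, use the strictly positive left limit of $g_\beta$ at the discontinuity point $x_i^*$ to obtain a uniform positive lower bound on $g_\beta$ over ${\cal N}_i^-$, and combine this with boundedness of $\pc$ to force $\beta$ to hit $x_i^*$ in finite time. The only cosmetic difference is the final step --- you integrate after a change of variables, whereas the paper invokes its auxiliary Lemma \ref{lemma_general_ode_BPA}(a.ii); your explicit upper bound on $\pc$ even spells out a point the paper leaves implicit via Lemma \ref{lemma_psi_c_general}.
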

\begin{proof}
By hypothesis (b),  $g_\beta(\cdot) > 0$ and continuous, for all $\bc \in {\cal N}_i^{-}$. Further,  $x_i^*$ is a point of discontinuity for $g_\beta$ and  $g_\beta(x_i^*) = 0$; thus $\bc(\mathbf{h}_l^*) = \lim_{x_n \up x_i^*} g_\beta (x_n) > 0$ (see \eqref{eqn_func_fli}), which implies, $\inf_{\{\bc \in  {\cal N}_i^{-}\}} g_\beta(\bc) > 0$. Observe $\tau$ is determined by $\bc$-component of $\ups^1(\cdot)$, the solution of  ODE \eqref{eqn_func_fli}. From \eqref{eqn_func_fli}, the latter is a continuous extension of the original ODE \eqref{eqn_ODE}, thus, the $\bc$-component of  the ODE \eqref{eqn_func_fli} can be uniformly lower bounded by $\inf_{\{\bc \in  {\cal N}_i^{-}\}} g_\beta(\bc) > 0$.
Thus, by Lemma \ref{lemma_general_ode_BPA}(a.ii), $\tau < \infty$.
\end{proof}

\begin{lemma}\label{lemma_general_ode_BPA}
Consider an initial value problem $\dot{z} = f(z, t)$, with $z(0) \in (z_0^l, z_0^u)$ where $f$ is a measurable function with finitely many discontinuities. 
\begin{enumerate}[label=(\alph*)]
    \item Say $f(z, t) > 0$, for all $z \in (z_0^l, z_0^u)$ and all $t$. Then:
    \begin{enumerate}[label=(\roman*)]
        \item  $z(\cdot)$ is an increasing function of $t$ till $\tau^u := \inf\{t : z(t) \geq z_0^u\}$.
        \item Say $f(z, t) > \delta$ for some $\delta > 0$, for all $z \in (z_0^l, z_0^u)$ and all $t$. Then, $\tau^u < \infty$.
    \end{enumerate}
    \item If $f(z, t) < 0$, for all $z \in (z_0^l, z_0^u)$ and all $t$, then $t \mapsto z(t)$ is a decreasing function till $\tau^l := \inf\{t : z(t) \leq z_0^l\}$, and if in addition $f(z, t) < -\delta$ for some $\delta > 0$, for all $z \in (z_0^l, z_0^u)$ and all $t$, then $\tau^l < \infty$.
\end{enumerate} 
\end{lemma}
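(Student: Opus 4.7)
\textbf{Proof proposal for Lemma \ref{lemma_general_ode_BPA}.}

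The plan is to exploit the fact that any extended solution $z(\cdot)$ of the initial value problem is absolutely continuous and satisfies the fundamental theorem of calculus, so that the sign information on $f$ translates directly into monotonicity and linear growth bounds. I do not need a unique classical solution; absolute continuity suffices, and it lets me integrate the (measurable) right-hand side even across the finitely many discontinuities of $f$.

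For part (a)(i), fix any $0 \le t_1 < t_2 < \tau^u$. By the definition of $\tau^u$, the solution satisfies $z(s) \in [z(0), z_0^u) \subset (z_0^l, z_0^u)$ for all $s \in [0, t_2]$ (combined with $z(0) \in (z_0^l, z_0^u)$ and continuity, which rules out leaving the interval on the lower side while $f > 0$ along the trajectory; if needed, I would insert one line noting that if $z$ dipped to $z_0^l$ it would have $\dot{z} > 0$ there, preventing the descent). Since $z(\cdot)$ is absolutely continuous, I would then write
\begin{align*}
z(t_2) - z(t_1) = \int_{t_1}^{t_2} \dot{z}(s)\, ds = \int_{t_1}^{t_2} f(z(s), s)\, ds > 0,
\end{align*}
using the hypothesis $f > 0$ on the relevant region and the fact that Lebesgue integration is unaffected by the finitely many discontinuity times. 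This yields strict monotonicity on $[0, \tau^u)$.

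For part (a)(ii), the uniform lower bound $f > \delta$ sharpens the previous computation to
\begin{align*}
z(t) - z(0) = \int_{0}^{t} f(z(s), s)\, ds \ge \delta t \quad \text{for all } t < \tau^u.
\end{align*}
Hence $z(t) \ge z(0) + \delta t$ on $[0, \tau^u)$, and since $z(t) < z_0^u$ on this interval, we obtain $\tau^u \le (z_0^u - z(0))/\delta < \infty$.

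Part (b) follows by the symmetric argument, replacing $f$ by $-f$ (or equivalently, considering $-z$): the sign reversal makes $z(\cdot)$ strictly decreasing until it reaches $z_0^l$, and the uniform bound $f < -\delta$ gives $z(t) \le z(0) - \delta t$, yielding $\tau^l \le (z(0) - z_0^l)/\delta < \infty$. The main (mild) obstacle is the bookkeeping around the discontinuities of $f$: I would justify once that absolute continuity of $z$ together with measurability and boundedness of $f$ along the trajectory validates the integral form $z(t) = z(0) + \int_0^t f(z(s),s)\, ds$ for almost every $t$, exactly as in Definition \ref{defn_solution}. Once that is in place, the rest is a short calculation.
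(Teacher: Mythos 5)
Your proposal is correct and follows essentially the same route as the paper: both rest on the integral representation $z(t_2)-z(t_1)=\int_{t_1}^{t_2} f(z(s),s)\,ds$ for the (absolutely continuous) extended solution, using $f>0$ for monotonicity and the uniform bound $f>\delta$ to force $z$ across $z_0^u$ in finite time, with part (b) by symmetry. The only cosmetic difference is that the paper phrases both steps as contradictions while you argue directly with an explicit bound $\tau^u \leq (z_0^u - z(0))/\delta$.
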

\begin{proof}
We will provide the proof for part (a), and it can be done analogously for part (b). Contrary to the claim, let $\tau_1 < \tau_2 < \tau^u$ be two time points such that $z(\tau_1) \geq z(\tau_2)$, with  $z(\tau_1), z(\tau_2) \in (z_0^l, z_0^u)$. Then, we have:
$$
0 \geq  z(\tau_2) - z(\tau_1) = \int_{\tau_1}^{\tau_2} f(z(s), s) ds,
$$
which is a contradiction to the hypothesis. 
Now if possible, let $\tau_u = \infty$, then $z(t) < z_0^u$ for all $t$ and $t \mapsto z(t)$ is an increasing function (as proved before).
Further, since $z(t) = z(0) + \int_{0}^t f(z(s), s) ds > z(0) + t \delta $, there exists $T_\delta > 0$ such that $z(t) \geq z_0^u$ for all $t \geq T_\delta$, which contradicts $\tau^u = \infty$. 
\end{proof}

\begin{lemma}\label{lemma_psi_c_general}
Let \ref{a2} and \ref{a3} hold. Define \begin{align}\label{eqn_bound_m_general}
\begin{aligned}
    \leps &:= \inf\{\minf_{ix}(\bc) + \minf_{iy}(\bc): \bc \in [0,1], i \in \{x, y\}\}, \mbox{ and}\\
    \ueps &:= \sup\{\minf_{ix}(\bc) + \minf_{iy}(\bc): \bc \in [0,1], i \in \{x, y\}\}.
\end{aligned}
\end{align}For any $0 < \epsilon < \leps  - 1$,  define $A_\epsilon := [2\leps  - 1-\epsilon, 2\ueps -1 + \epsilon]$. In case, $\pc(0) \in int(A_\epsilon)$ (interior) for some $\epsilon > 0$, then $\pc(t) \in A_\epsilon$ for all $t\geq 0$.  
Thus, if  $\pc(0) > 0$, then, $\pc(t) > \pc(0) - \delta$ for all $t \geq 0$ and for any $\delta \in (0, \pc(0))$.
\end{lemma}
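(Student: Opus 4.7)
The plan is to exploit the fact that when $\pc>0$ the first coordinate of ODE \eqref{eqn_ODE} decouples into the scalar non-autonomous equation
\[
\dot{\pc}(t)\;=\;h_\psi^c\!\bigl(\bc(\ups(t))\bigr)\;-\;\pc(t),
\]
whose forcing term is uniformly bounded by $\leps-1\le h_\psi^c(\bc)\le \ueps-1$ for every $\bc\in[0,1]$, as a direct consequence of \eqref{eqn_bound_m_general} and the fact that $h_\psi^c(\bc)$ is a convex combination of $s_x(\bc):=\minf_{xx}(\bc)+\minf_{xy}(\bc)$ and $s_y(\bc):=\minf_{yx}(\bc)+\minf_{yy}(\bc)$ minus one. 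The hypothesis $\epsilon<\leps-1$ (together with $\ref{a1}$ which gives $\leps\ge E[\underline{\offs}]>1$) ensures the relevant envelope values sit strictly above zero.

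First I would sandwich $\pc(\cdot)$ by the two autonomous scalar ODEs $\dot{\alpha}=(\leps-1)-\alpha$ and $\dot{\beta}=(\ueps-1)-\beta$, initialised just below and just above $\pc(0)$ inside $A_\epsilon$. These admit the explicit solutions $\alpha(t)=(\leps-1)+(\alpha(0)-(\leps-1))e^{-t}$ and $\beta(t)=(\ueps-1)+(\beta(0)-(\ueps-1))e^{-t}$, both of which stay inside $A_\epsilon$ for all $t\ge 0$ by an elementary monotone-exponential decay argument. Since $\pc(0)\in \mathrm{int}(A_\epsilon)$, one can fit such envelopes with $\alpha(0)<\pc(0)<\beta(0)$, and the Comparison Theorem \ref{thrm_comparison} (applied on the open time interval where $\pc>0$) then yields $\alpha(t)\le \pc(t)\le \beta(t)$, whence $\pc(t)\in A_\epsilon$.

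To close the argument against the discontinuity of $\ga$ at $\{\pc=0\}$, set $\tau:=\inf\{t:\pc(t)=0\}$. On $[0,\tau)$ the comparison step applies and forces $\pc(t)\ge \alpha(t)$; because $\leps-1>0$, one can pick $\alpha(0)>0$ so that $\alpha(\cdot)$ stays strictly positive uniformly in $t$, contradicting $\tau<\infty$. Hence $\pc(t)>0$ for all $t\ge 0$ and the sandwich extends to all $t$, completing the invariance claim. The second assertion then follows by choosing, for a given $\pc(0)>0$ and $\delta\in(0,\pc(0))$, a value $\epsilon>0$ small enough that both $\pc(0)-\delta$ and $\pc(0)$ lie in $\mathrm{int}(A_\epsilon)$, upon which invariance of $A_\epsilon$ gives $\pc(t)>\pc(0)-\delta$ for every $t\ge 0$.

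The main obstacle is the handling of the indicator $1_{\{\pc>0\}}$ in the right-hand side of \eqref{eqn_ODE}, which makes the ODE non-smooth exactly at the boundary one wants to avoid; the remedy is the positivity of $\leps-1$ under \ref{a1}, which is what makes the lower envelope a genuinely positive uniform barrier and justifies applying Comparison Theorem \ref{thrm_comparison} on a time interval that turns out to be all of $[0,\infty)$.
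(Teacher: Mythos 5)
Your overall strategy---reduce to the scalar equation $\dot{\pc}=h_\psi^c(\bc(\ups(t)))-\pc$ on $\{\pc>0\}$, bound the forcing uniformly, and trap $\pc$ between explicit exponential sub/super-solutions---is close in spirit to the paper's proof, which instead argues directly from the sign of $\dot{\pc}$ outside the target interval (entry in finite time via a monotonicity lemma, plus a no-exit argument via IVT/MVT). The two concrete problems are as follows. First, Theorem \ref{thrm_comparison} as quoted requires a right-hand side $f(t,x)$ that is \emph{continuous}, whereas here $t\mapsto h_\psi^c(\bc(\ups(t)))$ is in general only measurable: the limit mean functions $\minf_{ij}$ are allowed to be discontinuous in $\bc$ (exactly the setting of Theorem \ref{thrm_attractors_beta}), and the solution of \eqref{eqn_ODE} is only an extended, absolutely continuous, a.e.\ solution. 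You address the indicator $1_{\{\pc>0\}}$ but not this discontinuity in $t$; so the comparison step needs either a Carath\'eodory-type comparison lemma or a direct differential-inequality argument for absolutely continuous functions (e.g.\ $\tfrac{d}{dt}\big(e^t\pc(t)\big)\ge e^t(\leps-1)$ for a.e.\ $t$, then integrate), which is in effect what the paper's elementary sign/no-crossing argument accomplishes. This is fixable, but as written the appeal to \ref{thrm_comparison} is not justified.

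Second, your envelopes converge to $\leps-1$ and $\ueps-1$, so the sandwich actually establishes invariance of $[\leps-1-\epsilon,\ \ueps-1+\epsilon]$; the assertion that the lower envelope ``stays inside $A_\epsilon$'' is false for the $A_\epsilon$ of the statement, whose left endpoint $2\leps-1-\epsilon$ lies strictly above $\leps-1$, so the envelope eventually leaves $A_\epsilon$ on the left. (The paper's own proof bounds $h_\psi^c(\bc)\ge 2\leps-1$, which is consistent with the printed $A_\epsilon$ only if $\leps$ is read as a bound on the individual limit mean entries rather than on the row sums defined in \eqref{eqn_bound_m_general}; with the printed definition your bound $\leps-1\le h_\psi^c\le\ueps-1$ is the correct one, so the discrepancy traces back to the paper---but your write-up should prove invariance of the interval it actually controls, or flag the mismatch, rather than assert the envelopes remain in $A_\epsilon$.) Relatedly, in the last step, putting $\pc(0)-\delta$ inside $\mathrm{int}(A_\epsilon)$ does not by itself yield $\pc(t)>\pc(0)-\delta$: you need the left barrier level of the invariant interval to dominate $\pc(0)-\delta$, which holds only when $\pc(0)-\delta$ sits below that level; for larger $\pc(0)$ the usable conclusion (and all that the paper's argument, or yours, really delivers) is a uniform strictly positive lower bound on $\pc(\cdot)$, which is what is needed downstream.
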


\begin{proof} Recall from \eqref{eqn_ODE}, ODE for $\pc$ is $\dot{\pc} = h_\psi^c(\bc) 1_{\pc > 0} - \pc$. Now, one can lower bound $h_\psi^c(\bc) - \pc$ as, for all $t$ (by \ref{a1} and \eqref{a2}):
\begin{align}
    h_\psi^c(\bc) - \pc
    &\geq 2\bc \leps + 2\big(1 -\bc \big) \leps - 1 - \pc = 2\leps - 1- \pc.
\end{align}It is easy to observe (by Weierstrass Theorem) that there exists a strict positive uniform lower bound $l_I$ for any closed interval $I \subset (0, 2\leps-1)$ as below:
\begin{eqnarray}\label{eqn_g_psi_general}
\dot{\pc} \geq 2\leps-1- \pc \ge  l_I > 0 \mbox{ for any }  \pc \in I \mbox{ and for all } t. 
\end{eqnarray}For the first part, consider $I = [\pc(0), 2\leps-1-\frac{\epsilon}{2}]$, where $\pc(0) \notin A_\epsilon$. Then, by \eqref{eqn_g_psi_general}, $\dot{\pc} > l_I 
$ for all $\pc \in I$ and all $t$. By Lemma \ref{lemma_general_ode_BPA}(a), we have $\tau^u:=\inf\{t : \pc(t) \geq 2\leps-1-\frac{\epsilon}{2}\} < \infty$, i.e., $\pc(\cdot)$ enters $A_{\epsilon}$ from the left. 

We will now explicitly show that $\pc(\cdot)$ can not exit $A_\epsilon$, once it enters/starts in it (set $\tau^u = 0$ when $\pc(0) \in int(K_\epsilon)$). In contrast, say $\pc$ leaves $A_\epsilon$ and to the left.  Observe  $\pc(\tau^u) > 2\leps-1- \epsilon$. 
For $\pc$ to exit $A_\epsilon$, by continuity of $\pc$ (and Intermediate Value Theorem, IVT), there exist $2\leps-1-\epsilon < \underline{\nu}< \overline{\nu}< 2\leps-1$ such that for some $t_2 > t_1 > \tau^u$,  $\pc(t_2) = \underline{\nu}$ and $\pc(t_1) = \overline{\nu}$. Then, by MVT, we have:
$$
\dot{\pc}(s) = \frac{\pc(t_2) - \pc(t_1)}{t_2-t_1}  = \frac{\underline{\nu}-\overline{\nu}}{t_2-t_1} < 0, 
$$
for some $s \in (t_1, t_2)$. This is a contradiction as $\dot{\pc}(t) > 0$ for $\pc \in (0, 2\leps-1)$ and any $t$.
Conclusively, ODE solution $\pc(\cdot)$ enters $A_{\epsilon}$ from  left when $\pc(0) < 2\leps-1-\epsilon$, and does not exit $A_\epsilon$ from  left.

Similarly from \eqref{eqn_ODE}, $\mathbf{h}_\psi^c(\bc) - \pc$ 
can be  upper bounded  as (by \ref{a1} and \eqref{a2}):
\begin{align}
    \mathbf{h}_\psi^c(\bc) - \pc
    &\leq 2\bc \ueps + 2\big(1 -\bc\big)\ueps -1 - \pc = 2\ueps - 1 - \pc,
\end{align}and $\dot{\pc} \leq   2\ueps - 1 - \pc \le  u_I < 0$ for all $t$ and for any  $\pc \in I $ where $I \subset (2\ueps - 1, \infty)$ is any closed interval. Then, applying similar arguments as above, one can show that $\pc(\cdot)$ enters and does not exit $A_\epsilon$ from/to  right as well.
\end{proof}

\noindent \textbf{Proof of Theorem \ref{thrmBPA}.}\label{proof_thrmBPA} We first study ODE \eqref{eqn_beta_ode_simple}, using which we then analyze ODE \eqref{eqn_ODE} or \eqref{ODE_BPA}. Observe by definition of $\minf_{xy}(\cdot)$, $\minf_{yx}(\cdot)$ in \ref{k2} that $0, 1$ are equilibrium points of ODE \eqref{eqn_beta_ode_simple}. Further,  \textit{$g_\beta(\bc)$ is convex or concave in only $(0, 1)$, respectively if $\minf \leq 0$ or $\geq 0$}, as can be seen from below (see  \ref{k2} for definitions):
\begin{align}\label{eqn_g_beta}
\begin{aligned}
    g_\beta(\bc) &= \left(-\bpam_{yx}^\infty + \bc \widetilde{m}^\infty - (\bc)^2 \minf \right)1_{\bc \in (0, 1)}, \mbox{ where}\\
\widetilde{m}^\infty &:= \bpam^\infty _{xx} + \bpam_{xy}^\infty 
 -\bpam_{yy}^\infty + \bpam^\infty_{yx}, \mbox{ and } \minf := \bpam^\infty _{xx} - \bpam^\infty_{yy}.
\end{aligned}
\end{align}

At first by Lemma \ref{lemma_psi_c_general}, $\mathbf{0}$ is a saddle point for ODE \eqref{eqn_ODE} and hence for \eqref{ODE_BPA}.
Now, let $\minf \geq 0$, and consider the following two sub-cases.  

\textbf{Sub-case 1:} $\bpam_{xy}^\infty > 0$ and $\bpam_{yx}^\infty > 0$. Since $g_\beta(\cdot)$ is continuous in $(0, 1)$:
\begin{align}\label{eqn_case1}
    g_\beta(0^+) = \lim_{\delta \to 0} g_\beta(\delta) = -\bpam_{yx}^\infty  < 0, \mbox{ and }
    g_\beta(1^-) = \lim_{\delta \to 0} g_\beta(1-\delta)  = \bpam_{xy}^\infty  > 0.
\end{align}Therefore, there exists  a unique zero of $g_\beta$, say $\bstar_r \in (0, 1)$. Further by concavity,  $g_\beta(\bc) < 0$ when $\bc < \bstar_r$ and $g_\beta(\bc) > 0$ when $\bc > \bstar_r$. Thus, the result follows for this case by Theorem \ref{thrm_attractors_beta} with $x_1^* = 0$, $x_2^* = 1$ and $y^* = \beta^*_r$. That is, $\{0,1\}$ is the attractor set,  $\{\bstar_r\}$ is the repeller set for ODE \eqref{eqn_beta_ode_simple}. Thus, $\cA = \{\mathbf{h}(0), \mathbf{h}(1)\}$ is the attractor set and $\cD = \{\mathbf{0}, \mathbf{h}(\bstar_r)\}$ is the saddle set for ODE \eqref{eqn_ODE}, with combined domain of attraction, $\cD$ as in (v) of the Theorem.  

\textbf{Sub-case 2:} $\bpam_{xy}^\infty > 0$ and $\bpam_{yx}^\infty = 0$. Observe $\bpam^\infty_{xx} < \bpam^\infty_{yy}$ is not possible here, as it would contradict $\minf \geq 0$. Thus, $\bpam^\infty_{xx} \geq \bpam^\infty_{yy}$. Therefore, for any $\beta  \in (0, 1)$, $g_\beta(\beta) = \beta(1-\beta)(\bpam^\infty_{xx} - \bpam^\infty_{yy}) + \beta \bpam^\infty_{xy} > 0$. Further, $g_\beta(1^-) > 0$, as in case 1. Thus, the result follows for this case as well by Theorem \ref{thrm_attractors_beta} with $x_1^* = 1$ and $y^* = 0$. 



This completes parts (i) and (ii) for the case when $\minf \geq 0$. Analogously, one can prove (i) and (ii) when $\minf \leq 0$. Then, the proof is complete using Theorem \ref{thrm_attractors_beta}.   
\eop



\hide{
\noindent \textbf{Part (b):} Observe that $g_\beta(\cdot)$ with constant $m_{ij}^\infty(\cdot)$ can be re-written (as in \eqref{eqn_g_beta}):
\begin{align}
    g_\beta(\bc) = \minf_{yx} + \bc \widetilde{m}^\infty - (\bc)^2 \minf, \mbox{ for all } \bc \in [0,1].
\end{align}
Unlike part (a), $0, 1$ are not equilibrium points for the ODE \eqref{eqn_beta_ODE}. Now, as in part (a), we provide the proof for $\minf \geq 0$, which implies $g_\beta(\bc)$ is a concave function for all $\bc \in [0,1]$. 

\textbf{(i)} Consider $\minf_{xy} > 0$. At first, let $\minf_{yx} > 0$. Observe that $g_\beta(0) = \minf_{yx} > 0$ and $g_\beta(1) = -\minf_{xy} < 0$. Thus, there exists a unique $\bstar \in (0, 1)$ such that $g_\beta(\bstar) = 0$, $g_\beta(\bc) > 0$ for all $\bc \in [0, \bstar)$ and $g_\beta(\bc) < 0$ for all $\bc \in (\bstar, 1]$. 
This implies that the function $t \mapsto \bc(t)$ is strictly increasing (decreasing) till $\tau^u := \inf\{t : \bc(t) \geq \bstar\}$, if $\bc(t_0) \in [0, \bstar)$ for some $t_0 \in [0, \tau_u)$ (respectively, till $\tau^l := \inf\{t : \bc(t) \leq \bstar\}$, if $\bc(t_0) \in (\bstar, 1]$ for some $t_0 \in [0, \tau_l)$). As a result, $\bstar$ is AS with domain of attraction as $[0,1]$. 

Secondly, let $\minf_{yx} = 0$ with $\minf_{xx} > \minf_{yy}$. Here, $g_\beta(0) = 0$, however $g_\beta(0^+) > 0$. Thus, $\bstar = \widetilde{m}^\infty/\minf \in (0, 1)$ is AS with domain of attraction as $(0,1]$. 

\textbf{(ii)} Consider $\minf_{xy} > 0$ and $\minf_{yx} = 0$ with $\minf_{xx} \leq \minf_{yy}$. Note that $g_\beta(\bc) < 0$ for all $\bc \in (0, 1]$ and $g_\beta(0) = 0$. Then, as in (b.i), $\bstar = 0$ is AS with domain of attraction as $[0,1]$.}

\noindent \textbf{Proof of Corollary \ref{corollary_BPA}.}\label{proof_cor_BPA}
Given limit mean functions as in \ref{k2}, the assumption \ref{a3} is guaranteed by Theorem \ref{thrmBPA}. We now prove the assumption \ref{a4}.

$\cA$ and $\cR$ are the attractor and saddle sets of ODE \eqref{ODE_BPA} respectively, with subset of the combined domain of attraction as $\cD_I$, as identified in Theorem \ref{thrmBPA}. Towards getting a compact sub-domain of $\cD_I$, as in \eqref{eqn_pi_n}, 
 from \eqref{Eqn_XnYnSnetc_general}, \eqref{eqn_dynamics_BPA} and \ref{k1}, one can bound $\Pa_n$:
\begin{align*}
    \Pa_n \leq \overline{\Psi}_n^a :=  \frac{1}{n}\left(\sum_{k=1}^{\min\{\nu_e, n\}} \left( \overline{\xi}_{xx, k} + \overline{\xi}_{yy, k} \right)1_{\{\Pc_k > 0\}} + s_0^c \right).
\end{align*}
As before, $\overline{\Psi}_n^a \to E[\overline{\xi}_{xx,1} + \overline{\xi}_{yy,1}]$ a.s. in survival paths and $\overline{\Psi}_n^a \to 0$ in extinction paths, as $n \to \infty$. Thus, $\cS :=  \cD_I \cap  \left\{\ups :  \pa \in [0, E[\overline{\xi}_{xx,1} + \overline{\xi}_{yy,1}] ]\right\}$ is the compact subset of $\cD_I$ and $p_{b} := P(\Ups_n \mbox{ visits } \cS \mbox{ i.o.}) = 1$. Hence, by Theorem \ref{thrmBPA} and Theorem \ref{thrm1}(ii), we have $\Ups_n \to \cA \cup \cR$ with probability $1$. \eop

    \chapter{For Chapter \ref{ch:journal2}}\label{appendix_journal2}

\noindent \textbf{Proof of Theorem \ref{thm1}}
The proof follows exactly as in \cite{agarwal2021new}, except for some changes due to unnatural deaths. Here, we directly mention the SA based scheme for the new process, and necessary details where ever required. 

From \eqref{evolve_x_up_time_prop}, the embedded process immediately after $n$-th death, when say an $x$-type individual $d$-dies, is given by:
\begin{equation}\label{evolve_x_up_gen_prop}
\begin{aligned}
C_{x, n} &= C_{x, n-1}  + \offs_{xx, d, n}(\Om_{n-1}) - 1, \ \ \  T_{x, n} = T_{x, n-1}  + \offs_{xx, d, n}(\Om_{n-1}), \\
C_{y,n} &= C_{y, n-1} + \offs_{xy, d, n}(\Om_{n-1}),  \ \ \ A_{y, n} = A_{y, n-1} + \offs_{xy, d, n}(\Om_{n-1}).
\end{aligned}
\end{equation}
The ratios in $\Ups_n$ can be re-written as (with $\epsilon_{n-1} := 1/n$):
\begin{align}\label{eqn_SA_scheme_prop}
\Ups_n &= \Ups_{n-1} + \frac{1}{n}\mathbf{L}_{n}, \mbox{ where } \mathbf{L}_{n} := (L_{n}^{\psi, c}, L_{n}^{\theta, c}, L_{n}^{\psi, a}, L_{n}^{\theta, a})^t, \mbox{ with}
\end{align}
\vspace{-2mm}
{\small 
\begin{align*}
\begin{aligned}
L_{n}^{\psi, c} &:=  \left\{ \sum_{d \in [d_x]} \left( H^x_{n,d} (\offs_{x, d, n}(\Om_{n-1})-1) \right) + \sum_{d \in [d_y]} \left( H^y_{n,d} (\offs_{y, d, n}(\Om_{n-1}) - 1)  \right)  \right\} 1_{\Pc_{n-1} > 0}   -  \Pc_{n-1}, \\
L_{n}^{\theta, c} &:= \left\{ \sum_{d \in [d_x]} \left( H^x_{n,d} (\offs_{xx, d, n}(\Om_{n-1})-1) \right) + \sum_{d \in [d_y]} \left( H^y_{n,d} \offs_{yx, d, n}(\Om_{n-1})  \right) \right\}1_{\Pc_{n-1} > 0} - \Tc_{n-1},  \\
L_{n}^{\psi, a} &:=  \bigg\{ \sum_{d \in [d_x]} \left( H^x_{n,d} \offs_{x, d, n}(\Om_{n-1}) \right) + \sum_{d \in [d_y]} \left( H^y_{n,d} \offs_{y, d, n}(\Om_{n-1})   \right)  \bigg\}1_{\Pc_{n-1} > 0}  - \Pa_{n-1}, \mbox{ and}\\
L_{n}^{\theta, a} &:= \bigg\{ \sum_{d \in [d_x]} \left( H^x_{n,d} \offs_{xx, d, n}(\Om_{n-1}) \right) + \sum_{d \in [d_y]} \left( H^y_{n,d} \offs_{yx, d, n}(\Om_{n-1})  \right) \bigg\}1_{\Pc_{n-1} > 0}  - \Ta_{n-1}, \mbox{ where}\\
\offs_{x, d, k} &:= \offs_{xx, d, k} + \offs_{xy, d, k}, \ \  
    \offs_{y, d, k} := \offs_{yy, d, k} + \offs_{yx, d, k},
\end{aligned}
\end{align*}}

\vspace{-0.45cm}
$H^x_{k,d} \in \{0, 1\}$ indicates that an $x$-type individual $d$-dies at $k$-th epoch such that:
$$
\sum_{d \in D_x} H^x_{k,d} \in \{0, 1\} \mbox{ and } \sum_{d \in D_y} H^y_{k,d} := 1 - \sum_{d \in D_x} H^x_{k,d}.
$$

Henceforth, the proof of part (i) has two major steps: (a) to construct a sequence of piece-wise constant interpolated trajectories for almost all sample-paths; (b) to prove that the designed trajectories are equicontinuous in extended sense. We will provide the proof in terms of  $\tc$-component of the vector $\ups$, when the proof for the remaining components goes through in exactly similar manner.

Define $\gna = (\rho_\psi^c, \rho_\theta^c, \rho_\psi^a, \rho_\theta^a)$ as the conditional expectation, $E[\mathbf{L}_n|\mathcal{F}_n] =: \gna(\Ups_n, t_n)$, with respect to the sigma algebra, ${\cal F}_n  := \sigma\{\Om_k : 1 \leq k < n \}$ (see \cite[(16)]{agarwal2021new}).
Let $\Ups^n(\cdot) := (\Psi^{n, c}(\cdot), \Theta^{n, c}(\cdot), \Psi^{n, a}(\cdot), \Theta^{n, a}(\cdot))$  be  the constant piece-wise interpolated trajectory defined as below (see \eqref{eqn_SA_scheme_prop}, and recall $t_n = \sum_{i=1}^n \epsilon_{i-1}$):
\begin{align}\label{eqn_interpolated_traj_1_prop}
    \begin{aligned}
        \Theta^{n, c}(t) &:= \Tc_n + \int_0^t g_\theta^c(\Ups^n(s)) ds +  \sum_{i=n}^{\eta(t_n+t)-1} \epsilon_i L_i^{\theta, c} - \int_0^t g_\theta^c(\Ups^n(s)) ds\\
    &=  \Tc_n + \int_0^t g_\theta^{c}(\Ups^n) ds + M^{n, \theta, c}(t) + R^{n, \theta, c}(t) + D^{n, \theta, c}(t), \mbox{ where}\\
    M^{n, \theta, c}(t) &:=  \sum_{i=n}^{\eta(t_n + t)-1}  \epsilon_i \left(L_i^{\theta, c} - \rho_\theta^{c}(\Ups_i, t_i)\right), \\
    R^{n, \theta, c}(t) &:=  \sum_{i=n}^{\eta(t_n + t)-1}\epsilon_i g_\theta^{c}(\Ups_i) - \int_0^t g_\theta^{c}(\Ups^n) ds,  \\
    D^{n, \theta, c}(t) &:= \sum_{i=n}^{\eta(t_n + t)-1} \epsilon_i D_i^{\theta, c}, \mbox{ where } D_i^{\theta, c}:=\rho_\theta^{c}(\Ups_i, t_i) - g_\theta^{c}(\Ups_i),
    \end{aligned}
\end{align}
$\Psi^{n, c}(t), \Psi^{n, a}(t)$ and $\Theta^{n, a}(t)$ are defined analogously. As in \cite{agarwal2021new}, the  extended equicontinuity can be proved for $M^{n, \theta, c}(\cdot)$, $R^{n, \theta, c}(\cdot)$. For, $D^{n, \theta, c}(\cdot)$ 
 the procedure again follows as in \cite{agarwal2021new}  when $S_n \to 0$; however for sample paths where $S_n \nto 0$, the  arguments for proving the equicontinuity for $D^{n, \theta, c}(\cdot)$ slightly changes as below:
\begin{align}\label{eqn_bound_Di}
\begin{aligned}
|D_i^{\theta, c}| &\leq  |f_{\beta}(\Om_i) (m_{xx}(\Om_i) -1) - f_{\beta}^\infty(\Bc_i) (m_{xx}^\infty(\Bc_i)-1)|\\
&\hspace{2cm}+ | (1-f_{\beta}(\Om_i)) m_{yx}(\Om_i) - (1-f_{\beta}^\infty(\Bc_i))m_{yx}^\infty(\Bc_i)|\\
&\leq |f_{\beta}(\Om_i) m_{xx}(\Om_i)  - f_{\beta}^\infty(\Bc_i) m_{xx}^\infty(\Bc_i)| + |f_{\beta}(\Om_i) - f_{\beta}^\infty(\Bc_i)|\\
&\hspace{2cm}+ |m_{yx}(\Om_i) - m_{yx}^\infty(\Bc_i)|  + |f_{\beta}(\Om_i) m_{yx}(\Om_i)  - f_{\beta}^\infty(\Bc_i) m_{yx}^\infty(\Bc_i)|
\end{aligned}
\end{align}
In the above, under \ref{a2_prop}, the third term is bounded above by $1/(S_i)^\alpha$. The second term can be bounded above as follows:
\begin{align*}
    |f_{\beta}(\Om_i) - f_{\beta}^\infty(\Bc_i)| &= \Bc_i \left|\frac{\sum_{d \in D_x} \lambda_{x, d}(\Om_i)}{d(\Om_i)} - \frac{\sum_{d \in D_x} \lambda_{x, d}^\infty(\Bc_i)}{d^\infty(\Bc_i)} \right | \\
    &\hspace{-2cm} = \Bc_i \left|\frac{\sum_{d \in D_x} \lambda_{x, d}(\Om_i)}{d(\Om_i)} - \frac{\sum_{d \in D_x}  \lambda_{x, d}^\infty(\Bc_i)}{d(\Om_i)} + \frac{\sum_{d \in D_x}  \lambda_{x, d}^\infty(\Bc_i)}{d(\Om_i)} - \frac{\sum_{d \in D_x} \lambda_{x, d}^\infty(\Bc_i)}{d^\infty(\Bc_i)} \right |\\
    &\hspace{-2cm}\leq \frac{\Bc_i}{d(\Om_i)} \sum_{d \in D_x} |\lambda_{x, d}(\Om_i) - \lambda_{x, d}^\infty(\Bc_i)| + \Bc_i\left|  \sum_{d \in D_x}\lambda_{x, d}^\infty(\Bc_i) \left( \frac{1}{d(\Om_i)} - \frac{1}{d^\infty(\Bc_i)}\right)\right|\\
    &\hspace{-2cm}\leq \frac{\Bc_i}{d(\Om_i)} \left(\frac{|D_x|}{(S_i^c)^\alpha} +  \frac{\left|  \sum_{d \in D_x}\lambda_{x, d}^\infty(\Bc_i)\right|}{d^\infty(\Bc_i)} \left|d^\infty(\Bc_i) - d(\Om_i) \right|  \right)\\
    &\hspace{-2cm}\leq \frac{\Bc_i}{d(\Om_i)} \left(\frac{|D_x|}{(S_i^c)^\alpha} +  \frac{\left|  \sum_{d \in D_x}\lambda_{x, d}^\infty(\Bc_i)\right|}{d^\infty(\Bc_i)} \frac{(|D_x| + |D_y|)}{(S_i^c)^\alpha}  \right) \\
    &\hspace{-2cm}\leq \frac{\Bc_i}{d(\Om_i)} \frac{|D_x| + |D_y|}{(S_i^c)^\alpha} \left(1 + \frac{\left|  \sum_{d \in D_x}\lambda_{x, d}^\infty(\Bc_i)\right|}{d^\infty(\Bc_i)}  \right)\\
    &\hspace{-2cm}\leq \frac{\Bc_i}{d(\Om_i)} \frac{|D_x| + |D_y|}{(S_i^c)^\alpha} \left(1 + \frac{1}{B^c_i} \right) \hspace{5mm}\left(\mbox{ since } d^\infty(\Bc_i) \geq B_c^i  \sum_{d \in D_x}\lambda_{x, d}^\infty(\Bc_i)\right)\\
    &\hspace{-2cm}= \frac{(|D_x| + |D_y|)(B_i^c + 1)}{d(\Om_i) (S_i^c)^\alpha} \leq \frac{2(|D_x| + |D_y|)}{d(\Om_i) (S_i^c)^\alpha}
\end{align*}
Define $\Delta_1 := \min\left\{\inf_{\Om} \lambda_{x, d}(\Om), \inf_{\Om} \lambda_{y, d}(\Om) \right\} > 0$, by \ref{a1_prop}. Then:
$$d(\Om_i) \geq \Bc_i \inf_{\Om} \lambda_{x, d}(\Om) + (1-\Bc_i) \inf_{\Om} \lambda_{y, d}(\Om) \geq \Delta_1.$$
Thus, we have:
\begin{align}\label{eqn_sec_term}
    |f_{\beta}(\Om_i) - f_{\beta}^\infty(\Bc_i)| &\leq \frac{2(|D_x| + |D_y|)}{(S_i^c)^\alpha} \frac{1}{\Delta_1}
\end{align}
The first term in \eqref{eqn_bound_Di} can be bounded as follows  under \ref{a2_prop} and \eqref{eqn_sec_term}:
\begin{align*}
    |f_{\beta}(\Om_i) m_{xx}(\Om_i) - f_{\beta}^\infty(\Bc_i) m_{xx}^\infty(\Bc_i)|  &\\
    &\hspace{-2cm}\leq |f_{\beta}(\Om_i)| |m_{xx}(\Om_i) - m_{xx}^\infty(\Bc_i)| + |m_{xx}^\infty(\Bc_i)| |f_{\beta}(\Om_i) - f_{\beta}^\infty(\Bc_i)|\\
    &\hspace{-2cm}\leq \frac{1}{(S_i)^\alpha} + \frac{2(|D_x| + |D_y|)}{(S_i^c)^\alpha} \frac{1}{\Delta_1} \left( E[\overline{\Gamma}] + \frac{1}{(S_i)^\alpha}\right).
\end{align*}
Similarly, the fourth term in \eqref{eqn_bound_Di} can be upper bounded as follows:
\begin{align*}
    |f_{\beta}(\Om_i) m_{yx}(\Om_i) - f_{\beta}^\infty(\Bc_i) m_{yx}^\infty(\Bc_i)| 
    &\leq \frac{1}{(S_i)^\alpha} + \frac{2(|D_x| + |D_y|)}{(S_i^c)^\alpha} \frac{1}{\Delta_1} \left( E[\overline{\Gamma}] + \frac{1}{(S_i)^\alpha}\right).
\end{align*}
Thus, $D_i^{\theta, c}$ can be upper bounded as follows for some $K <\infty$ (recall, $\alpha \geq 1$):
\begin{align*}
    D_i^{\theta, c} &\leq 2\left(\frac{1}{(S_i)^\alpha} + \frac{2(|D_x| + |D_y|)}{(S_i^c)^\alpha} \frac{1}{\Delta_1} \left( E[\overline{\Gamma}] + \frac{1}{(S_i)^\alpha}\right)\right) + \frac{1}{(S_i^c)^\alpha} + \frac{2(|D_x| + |D_y|)}{(S_i^c)^\alpha} \frac{1}{\Delta_1} \\
    &\leq \frac{K}{(S_i^c)^\alpha} \leq \frac{K}{S_i^c} = \frac{K}{\Pc_i \eta(t_i)} \leq \frac{K}{\Delta i}.
\end{align*}
This implies that, (recall $\epsilon_i = 1/(i+1)$ and $\alpha \geq 1$)
\begin{align*} 
|D^{n, \theta, c}(t)| = \left|\sum_{i= n}^{\eta(t_n + t) - 1}\epsilon_i D_i^{\theta, c}\right| \leq \sum_{i= n}^{\eta(t_n + t) - 1} \frac{K}{\Delta i (i+1)} \leq \sum_{i= n}^{\infty} \frac{K}{\Delta i (i+1)}, \mbox{ for any }t.
\end{align*}Thus, $D^{n, \theta, c}(t)$ uniformly converges to $0$ as $n \to \infty$.  In all, $(\Theta^{n, c}(\cdot))$ is equicontinuous in the extended sense. 

The proof of part (ii) follows exactly as in \cite{agarwal2021new}.  \eop



\noindent \textbf{Proof of Theorem \ref{thrm_beta_ODE_prop}}
Observe that each point $x_i^* \in {\cal I}$ can either be a point of dis-continuity or continuity for $g_\beta$.
In the former case, when $x_i^*$ is either an attractor or repeller of the ODE \eqref{eqn_beta_ode_simple_prop}, the result can be proved exactly as in \cite[Theorem 2.]{agarwal2021new}. In fact, when $x_i^*$ is a saddle point of the ODE \eqref{eqn_beta_ode_simple_prop}, the analysis can be easily extended similar to the case when $x_i^*$ is a repeller.

Now consider $x_i^* \in {\cal I}$ such that $g_\beta$ is continuous at $x_i^*$.
Let $\ups(0) \in \cD_I$ with $\pc(0) > 0$.  By \cite[Lemma 5.]{agarwal2021new}, $\pc(t) > 0$ for all $t \geq 0$, thus ODE \eqref{eqn_ODE_prop} simplifies to $\dot{\ups} = \mathbf{h}(\beta(\ups)) - \ups$. Now, we will prove the claim for different possibilities of $x^*$ as in the hypothesis separately. 
Firstly for all cases global solution exists because of Lipschtiz continuity.

\underline{Part (i)} Without loss of generality, let $\beta(0) \in {\cal N}_i^-$. Then, by \cite[Lemma 4(a)(i)]{agarwal2021new}, $\beta(t)$ increases to $x^*_i$ for all $t < \tau:=\inf\{t : \beta(t) = x^*_i\}$. If $t < \infty$, then $\beta(t) = x^*_i$ for all $t \geq \tau$ (as $x^*_i$ is an equilibrium point). Then, clearly, $\beta(t) \to x^*_i$ and $\ups(t) \to \mathbf{h}(x^*_i)$ as $t \to \infty$, as above.

Else say $\tau = \infty$;  then for every $\delta > 0$, there exists a $T_\delta < \infty$ (guaranteed as before by \cite[Lemma 4(a)(i)]{agarwal2021new} because by continuity the RHS of ODE can be uniformly bounded by non-zero values) such that:
\begin{align*}
x^*_i - \delta \leq \beta(t) \leq x^*_i + \delta \mbox{ for all }t \geq T_\delta.
\end{align*}
Thus, $\beta(t) \to x^*_i$ as $t \to \infty$. This also implies that:
\begin{align*}
\underline{\mathbf{h}}_\delta(x^*_i) - \ups &\leq \dot{\ups} \leq  \overline{\mathbf{h}}_\delta(x^*_i) - \ups \mbox{ for all }t \geq T_\delta, \mbox{ for}\\
\overline{\mathbf{h}}_\delta(x^*_i) := &  \sup_{x \in \overline{{\cal N}_{\delta}}(x^*_i)}\mathbf{h}(x) \mbox{ and } \underline{\mathbf{h}}_\delta(x^*_i) := \inf_{x \in \overline{{\cal N}_{\delta}}(x^*_i)}\mathbf{h}(x).
\end{align*}
By Comparison Theorem in \cite{piccinini2012ordinary} for ODEs having Lipschitz continuous right hand sides and using classical methods to derive the upper  and lower bounds, we get: 
$$
\underline{\mathbf{h}}_\delta(x^*_i)   + e^{-t + T_\delta}(
\ups(T_\delta) - \underline{\mathbf{h}}_\delta(x^*_i))
\leq \ups(t) \leq
\overline{\mathbf{h}}_\delta(x^*_i)  + e^{-t + T_\delta}( \ups(T_\delta) - \overline{\mathbf{h}}_\delta(x^*_i))
%
\mbox{ for all }t\geq T_\delta.
$$
Then clearly by considering limits $t\to \infty$ we have:
\begin{align*}
    \underline{\mathbf{h}}_\delta(x^*_i) &\leq \liminf_{t\to \infty} \ups(t) \leq \limsup_{t\to \infty} \ups(t) \leq \overline{\mathbf{h}}_\delta(x^*_i),
\end{align*}
and now letting $\delta \to 0$:
\begin{align*}
    \mathbf{h}(x^*_i) 
\leq \liminf_{t\to \infty} \ups(t) \leq \limsup_{t\to \infty} \ups(t) \leq  \mathbf{h}(x^*_i).
\end{align*}
Hence, $\ups(t) \to \mathbf{h}(x^*_i)$ as $t \to \infty$.

\underline{Part (ii)} If $\beta(0) = x^*_i$, then clearly $\beta(t) = x^*_i$ for all $t \geq 0$ and $\ups(t) \to \mathbf{h}(x^*_i)$ as $t \to \infty$. However if $\beta(0) \in {\cal N}_i^-$, then it can be shown as above that $\beta(t) \to y^* := \max\{y \in {\cal I}: y < x^*_i\}$. Similarly, if $\beta(0) \in {\cal N}_i^+$, then $\beta(t) \to y^* := \min\{y \in {\cal I}: y > x^*_i\}$. Thus, $x^*_i$ is a repeller for ODE \eqref{eqn_beta_ode_simple_prop} and $\mathbf{h}(x^*_i)$ is a saddle point for ODE \eqref{eqn_ODE_prop}.

\underline{Part (iii)} If $\beta(0) = x^*_i$, then clearly $\beta(t) = x^*_i$ for all $t \geq 0$ and $\ups(t) \to \mathbf{h}(x^*_i)$ as $t \to \infty$. Say $g(x) > 0$ for all $x \in {\cal N}_i^-$ and $g(x) > 0$ for all $x \in {\cal N}_i^+$.
Then, if $\beta(0) \in {\cal N}_i^-$, $\beta(t) \to x^*_i$, as shown for part 1. While if $\beta(0) \in {\cal N}_i^+$, then $\beta(t) \to y^* := \min\{y \in {\cal I}: y > x^*_i\}$, as shown for part 2. Thus, $x^*_i$ is a saddle point for ODE \eqref{eqn_beta_ode_simple_prop} and $\mathbf{h}(x^*_i)$ is a saddle point for ODE \eqref{eqn_ODE_prop}.

Lastly, consider the initial condition $\ups(0) \in \cD_I$ with $\pc(0) = 0$, then ODE \eqref{eqn_ODE_prop} simplifies to $\dot{\ups} = -\ups$, which clearly has  unique solution and $\ups(t) \to \mathbf{0}$ as $t \to \infty$. We have shown above that whenever $\pc(0) > 0$, $\ups(t) \nto \mathbf{0}$. Therefore, $\mathbf{0} \in \cR$. \eop


\noindent \textbf{Proof of Theorem \ref{thrm_BP_to_fake}}
 At first, observe that in view of the hypothesis regarding ${\cal F}$ and \eqref{eqn_lambda} with $\sum_i \mu_i = 1$, the assumption \ref{a1_prop} holds. Further, it is clear from \eqref{eqn_tag_wi}-\eqref{eqn_final_shares}, \eqref{eqn_mean_matrix} and \eqref{eqn_mean_matrix_red_coloring} that the assumption \ref{a2_prop} holds. 

We will now prove that $\cA^u_\beta \neq \emptyset$, which will then imply that the assumption \ref{a3_prop} holds,  by Theorem \ref{thrm_beta_ODE_prop}; this would complete Theorem \ref{thm1}(i). Towards proving the claim, note that (recall $\mu_2 > 0$):
\begin{align}\label{eqn_bound_g}
\begin{aligned}
    \underline{g}^u_\beta(\beta)  &< g_\beta^u(\beta) \leq \overline{g}^u_\beta(\beta) \mbox{ for all } \beta \in [0,1] \mbox{ where} \\
    \underline{g}^u_\beta(\beta) &:= \left(-\beta \mu_2 - \beta \mu_1 (1-\alpha_x^u \rho) + (1-\beta) \mu_1 \rho  \alpha_y^u \right) m_f\eta^u - \beta \mu_a m_f \eta_a, 
    \mbox{ and}\\
    \overline{g}^u_\beta(\beta) &:= \left(- \beta \mu_1 (1-\alpha_x^u \rho) + (1-\beta) \mu_1 \rho  \alpha_y^u  + \mu_2 (1-\beta)\right) m_f\eta^u - \beta \mu_a m_f \eta_a.
\end{aligned}
\end{align}
Now, $\underline{g}^u_\beta(0) = \mu_1 \rho  \alpha_y^u m_f\eta^u \geq 0$; thus, $g_\beta^u(0) > 0$. Further, $\overline{g}^u_\beta(1) = -\mu_1(1-\alpha_x^u\rho) m_f\eta^u - \mu_a m_f \eta_a \leq 0$; thus, $g_\beta^u(1) \leq 0$. Since $g_\beta^u(\beta)$ is a continuous function of $\beta$, therefore there exists at least one zero of $g_\beta^u$, say $\beta^{u, \infty}$ such that $g_\beta^u > 0$ in ${\cal N}_\epsilon^-(\beta^{u, \infty})$ and  $g_\beta^u < 0$ in ${\cal N}_\epsilon^+(\beta^{u, \infty})$; ${\cal N}_\epsilon^+(\beta^{u, \infty}) = \emptyset$ if $\beta^{u, \infty} = 1$. Then, by Theorem \ref{thrm_beta_ODE_prop}, $\beta^{u, \infty} \in \cA_\beta^u$; thus, $\cA_\beta^u \neq \emptyset$.

Since $\overline{g}^u_\beta(\beta)$ is a linear function such that $\overline{g}^u_\beta(0) > 0$ and (recall) $\overline{g}^u_\beta(1) \leq 0$, therefore, $\overline{\beta}^u \in (0,1]$, given in \eqref{eqn_beta_bar}, is the unique zero of $\overline{g}^u_\beta(\beta)$. Further, since $g_\beta^u \leq \overline{g}^u_\beta$ and $\overline{g}^u_\beta(\beta) < 0$ for all $\beta \in (\overline{\beta}^u, 1]$ when $\overline{\beta}^u < 1$, therefore, there exists no zero of $g_\beta^u$ in $(\overline{\beta}^u, 1]$; if $\overline{\beta}^u = 1$, then also, any zero of $g_\beta^u$ is atmost $1$. Thus, if at all, there is any zero of $g_\beta^u$, which can be an attractor or repeller or saddle point of \eqref{eqn_general_g_beta}, it is lesser than or equals to $\overline{\beta}^u$. Next, notice that there is a unique zero of the function $\underline{g}^u_\beta$, namely $\underline{\beta}^u \in (0,1)$, as given in \eqref{eqn_beta_bar}. Again using similar arguments as before, we get that $\beta^{u, \infty} > \underline{\beta}^u$. This proves \eqref{eqn_beta_bar}. 

Now, by Theorem \ref{thrm_beta_ODE_prop}, the attractor and saddle sets are as in the hypothesis with subset of the combined domain of attraction as $\cD_I$. 

We will now identify the compact sub-domain of $\cD_I$ for completing the proof using Theorem \ref{thm1}. From \ref{a1_prop} for our case, one can bound $\Pa_n$:
\begin{align*}
    0 \leq \Pa_n \leq \overline{\Psi}_n^a :=  \frac{1}{n}\left(\sum_{k=1}^{\min\{\nu_e, n\}} 2{\cal F}1_{\{\Pc_k > 0\}} + s_0^c \right).
\end{align*}
By strong law of large numbers, $\overline{\Psi}_n^a \to 2E[{\cal F}]$ a.s. in survival paths and $\overline{\Psi}_n^a \to 0$ in extinction paths, as $n \to \infty$. Thus, $\cS :=  \cD_I \cap  \left\{\ups :  \pa \in [0, 2E({\cal F}) ]\right\}$ is the compact subset of $\cD_I$ and $p_{b} := P(\Ups_n \mbox{ visits } \cS \mbox{ i.o.}) = 1$. Hence, by Theorem \ref{thm1}(ii), the claim holds. \eop


\noindent \textbf{Proof of Theorem \ref{thrm_unique_att}}
Let all parameters except $\kappa$ be fixed. 
Consider the case when $\nabla^u(\kappa, \kappa + \partial \kappa) = g^u_\beta(\beta^{\infty, u}(\kappa); \kappa+\partial \kappa) > 0$ for some $\partial \kappa > 0$. Since $g^u_\beta(\beta; \kappa+\partial \kappa)$ is either a convex or concave or linear function of $\beta$ with a unique zero in $(0,1)$, therefore, there exists a $\beta^{\infty, u}(\kappa + \partial \kappa) > \beta^{\infty, u}(\kappa)$ such that $g^u_\beta(\beta^{\infty, u}(\kappa + \partial \kappa); \kappa+\partial \kappa) = 0$. One can prove the claim similarly when $\nabla^u(\kappa, \kappa + \partial \kappa) < 0$. Lastly if $\nabla^u(\kappa, \kappa + \partial \kappa) = 0$, then again due to uniqueness, $\beta^{\infty, u}(\kappa + \partial \kappa) = \beta^{\infty, u}(\kappa)$. \eop


\noindent \textbf{Proof of Corollary \ref{corollary_ex_wm}}
We will first show that the function $g_\beta^{o, u}$ is either convex or concave or linear depending upon warning-specific and user-specific parameters. Towards this, note that for each $u$:
\begin{align}\label{eqn_convex_g_beta}
    \begin{aligned}
    \frac{d g^{o, u}_\beta(\beta)}{d \beta} 
    &= - (\mu_1 + \mu_2)m_f\eta^u + (\alpha_x^u - \alpha_y^u) (\mu_1 \rho + \mu_2 \omega(\beta)) m_f\eta^u  \\
    &\hspace{2cm}+ (\beta \alpha_x^u  + (1-\beta)\alpha_y^u)   \frac{bw \mu_2 m_f\eta^u}{(\beta + b (1-\beta))^2} - \mu_a m_f \eta_a\\
    \implies    \frac{d^2 g^{o, u}_\beta(\beta)}{d\beta^2}
    &= \frac{2 m_f\eta^u bw \mu_2}{(\beta + b (1-\beta))^3} \left( b \alpha_x^u - \alpha_y^u \right).
    \end{aligned}
\end{align} 
Thus, if $bw\mu_2(b \alpha_x^u - \alpha_y^u)= 0$ or $< 0$ or $>0$, then $g_\beta^{o, u}$ is a linear, concave or convex function respectively. From \eqref{eqn_beta_ODE_etac1}:
\begin{align*}
    g^{o, u}_\beta(0) &= 
    \left(\mu_1 \rho + \mu_2 \gamma\right)\alpha_y^u m_f\eta^u > 0, \mbox{ and }\\
    g^{o, u}_\beta(1) 
    &= - \bigg( \mu_1 m_f\eta^u \left(1-\alpha_x^u \rho\right) + \mu_2 (1 - \alpha_x^u(w+\gamma))m_f\eta^u + \mu_a m_f \eta_a \bigg) < 0;
\end{align*}the last inequality in above holds as $\alpha_x^u(w+\gamma) \leq 1$ for each $u$ and $\alpha_x^u \rho < \alpha_x^u < 1$. 
Therefore, there exists a unique $\beta^{o, \infty, u} \in (0,1)$ such that $g^u_\beta(\beta^{o, \infty, u}) = 0$, $g^u_\beta(\beta) > 0$ for all $\beta \in [0, \beta^{o, \infty, u})$ and $g^u_\beta(\beta) < 0$ for all $\beta \in (\beta^{o, \infty, u}, 1]$. This implies that for the ODE \eqref{eqn_general_g_beta}, $t \mapsto \beta^{ u}(t)$ is strictly increasing if $\beta^{u}(0) \in [0, \beta^{o, \infty, u})$ and strictly decreasing if $\beta^{u}(0) \in (\beta^{o, \infty, u}, 1]$. Thus, $\cA_\beta^{o,u} = \{\beta^{o, \infty, u}\}$ with  the domain of attraction as $[0,1]$. Lastly, observe that $g_\beta^{o, u}(\beta) \leq \overline{g}_\beta^u(\beta)$ for each $\beta \in [0,1]$, therefore, $\beta^{o, \infty, u} \leq \overline{\beta}^u$, as these two zeroes are unique zeroes of their respective functions (see \eqref{eqn_bound_g}).  \eop


\noindent \textbf{Proof of Corollary \ref{cor_limits_warning}}
Recall from Corollary \ref{corollary_ex_wm}, $g_\beta^{o, u}$ has a unique attractor, $\beta^{o, \infty, u} \in (0,1)$, for each $u\in\{R, F\}$. Observe further, $g^{o, u}_\beta(\beta^{o, \infty, u}(w); w) = 0$ and $g^u_\beta(\beta^{o, \infty, u}(b); b)  = 0$. Henceforth, the corollary will be proved using Theorem \ref{thrm_unique_att}. For any $\partial w > 0$ and $\partial b > 0$, we get:

\vspace{-6mm}
{\footnotesize
\begin{align*}
        \nabla^u(w, w+\partial w) &= g^{o, u}_\beta(\beta^{o, \infty, u}(w); w+\partial w) \\
        &= g^{o, u}_\beta(\beta^{o, \infty, u}(w); w) \\
        &\hspace{0.5cm}+ m_f\eta^u \mu_2 \bigg(\alpha_x^u \beta^{o, \infty, u}(w) + \alpha_y^u (1-\beta^{o, \infty, u}(w))\bigg) \left(\frac{\partial w \beta^{o, \infty, u}(w) }{\beta^{o, \infty, u}(w) + (1-\beta^{o, \infty, u}(w))b}\right) > 0 \mbox{ and}\\
        \nabla^u(b, b+\partial b) &= g^{o, u}_\beta(\beta^{o, \infty, u}(b); b+\partial b) \\
        &= g^{o, u}_\beta(\beta^{o, \infty, u}(b); b) \\
        &\hspace{0.5cm}- \partial b m_f\eta^u \mu_2 \frac{w \beta^{o, \infty, u}(b) \bigg(\alpha_x^u \beta^{o, \infty, u}(b) + \alpha_y^u (1-\beta^{o, \infty, u}(b)\bigg)}{\bigg(\beta^{o, \infty, u}(b) + (1-\beta^{o, \infty, u}(b))(b+\partial b)\bigg)\bigg(\beta^{o, \infty, u}(b) + (1-\beta^{o, \infty, u}(b))b\bigg)} < 0.
\end{align*}}
Thus, by Theorem \ref{thrm_unique_att}, $\beta^{o, \infty, u}(w, b)$ strictly increases with $w$ and strictly decreases with $b$ for any $u \in \{R, F\}$. \eop

\noindent \textbf{Proof of Theorem \ref{thrm_opt}} 
In this proof, we explicitly show the dependency of zeros of \eqref{eqn_beta_ODE_etac1} on design parameters $(w, b)$.

\noindent \underline{Part (i)} Consider a $\delta > 0$ such that $\beta^{o, \infty, R}(\overline{w}, 0) > \delta$. Then, $w \in [0, \overline{w}] = W_1 \cup W_2$, where $W_1 := \{w : \beta^{o, \infty, R}(w, 0) > \delta \}$ and $W_2 := \{w : \beta^{o, \infty, R}(w, 0) \leq \delta \}$. If $W_2 \neq \emptyset$, by Corollary \ref{corollary_ex_wm}, there exists a $\widetilde{w} > 0$ such that $\beta^{o, \infty, R}(\widetilde{w}, 0) = \delta$,  $W_1 = \{w : w > \widetilde{w} \}$, and $W_2 := \{w : w \leq \widetilde{w} \}$. The proof for case with $W_2 = \emptyset$ is trivially true once the other case is proved. Hence, consider $W_2 \neq \emptyset$.

Consider $w \in W_1$.  Then, by Corollary \ref{cor_limits_warning}, there exists a unique $b(w; \delta) > 0$ such that $\beta^{o, \infty, R}(w, b(w; \delta)) = \delta$ (i.e., the zero of $g_\beta^{o, F}$ equals $\delta$) and hence:
\begin{align}\label{eqn_bw}
\begin{aligned}
    b(w; \delta) &:= \left(\frac{\delta}{1-\delta}\right)\left(w p(\delta) - 1 \right), \mbox{ where}\\
    p(\delta) &:= \frac{\eta^R \mu_2(\delta \alpha_x^R + (1-\delta)\alpha_y^R)}{\delta ((\mu_1+\mu_2)\eta^R + \mu_a  \eta_a) - \eta^R (\mu_1 \rho + \mu_2 \gamma) (\delta \alpha_x^R + (1-\delta) \alpha_y^R)}.
\end{aligned}
\end{align}Thus, again by Corollary \ref{cor_limits_warning} and because $[0, \overline{w}] \cap W_1 = (\widetilde{w}, \overline{w}]$ (as said before):
    \begin{align}\label{eqn_opt_prob_reduced}
    \begin{aligned}
        \sup_{w \in [0, \overline{w}] \cap W_1; b \in [0, \infty); \beta^{o, \infty, R}(w, b) \leq \delta}\beta^{o, \infty, F}(w, b) = \sup_{w \in (\widetilde{w}, \overline{w}]}\beta^{o, \infty, F} (w, b(w; \delta)).
    \end{aligned}
    \end{align}

By Lemma \ref{cor_bw}, $\beta^{o, \infty, F}(w, b(w; \delta))$ strictly increases with $w$, for every $\delta > 0$. Then, the optimal value for the problem in \eqref{eqn_opt_prob_reduced} is given by:
\begin{align}\label{eqn_opt_prob_reducedW1}
    \begin{aligned}
 \sup_{w \in (\widetilde{w}, \overline{w}]}\beta^{o, \infty, F} (w, b(w; \delta)) = \beta^{o, \infty, F}(\overline{w}, b(\overline{w}; \delta)).
\end{aligned}
    \end{align}

Now, consider $w \in W_2$. Then, $\beta^{o, \infty, R}(w, 0) \leq \delta$. Further
by Corollary \ref{cor_limits_warning}, for any $w < \widetilde{w}$ and $b > 0$, we have:
\begin{equation*}
     \beta^{o, \infty, F}(\widetilde{w}, 0) > \beta^{o, \infty, F}({w}, 0) > \beta^{o, \infty, F}(w, b),  \mbox{ and } \beta^{o, \infty, F}(\widetilde{w}, 0) > \beta^{o, \infty, F}(\widetilde{w}, b).
\end{equation*}
Thus, we have:
\begin{align}\label{eqn_opt_prob_reduced_W2}
    \begin{aligned}
        \sup_{w \in [0, \overline{w}] \cap W_2; b \in [0, \infty); \beta^{o, \infty, R}(w, b) \leq \delta}\beta^{o, \infty, F}(w, b) = \beta^{o, \infty, F} (\widetilde{w}, 0).
    \end{aligned}
    \end{align}

In all, by \eqref{eqn_opt_prob_reducedW1}, \eqref{eqn_opt_prob_reduced_W2}, we have:
\begin{align}\label{eqn_opt_W1_W2}
    \sup_{w \in [0, \overline{w}]; b \in [0, \infty); \beta^{o, \infty, R}(w, b) \leq \delta}\beta^{o, \infty, F}(w, b)  =  \max\bigg\{\beta^{o, \infty, F}(\overline{w}, b(\overline{w}; \delta)), \beta^{o, \infty, F}(\widetilde{w}, 0)\bigg\}.
 \end{align}
 
Let us now consider a sequence of $w \down \widetilde{w}$ and observe $\frac{\partial b(w; \delta)}{\partial w} = \left(b(w; \delta) + \frac{\delta}{1-\delta}\right)\frac{1}{w} > 0$. Thus, $b(w; \delta)$ decreases as $w$ decreases. We claim that $\lim_{w \down \widetilde{w}} b(w; \delta) = 0$. Let us suppose on the contrary that the limit is positive; note that the limit can not be negative as $b(w; \delta) > 0$. By continuity of $b(w; \delta)$ with respect to $w$ (see \eqref{eqn_bw}), there exists a $w' < \widetilde{w}$ such that $b(w'; \delta) > 0$, and further $\beta^{o, \infty, R}(w', b(w'; \delta)) = \delta$, by definition of $b(w'; \delta)$. However, since $w' \in W_2$, we also have $\beta^{o, \infty, R}(w', 0) \leq \delta$, leading to a contradiction. Thus, the limit is $0$. 

Consider function $L(\beta; w) := \bigg(g_\beta^{o, F} (\beta(w, b(w))\bigg)^2$. Clearly this function is jointly  continuous and has a unique minimum at $\beta^{o, \infty, F}(w, b(w))$ for each $w$  (as it is the unique zero of $g_\beta^o (\cdot)$). Hence  
by Maximum Theorem: 
$$
\beta^{o, \infty, F}(w, b(w)) \to \beta^{o, \infty, F}(\widetilde{w}, 0), \mbox{ as } w \down \widetilde{w}$$
and further by Lemma \ref{cor_bw}:
$$
\beta^{o, \infty, F}(w, b(w)) \down \beta^{o, \infty, F}(\widetilde{w}, 0).
$$
Thus, $\beta^{o, \infty, F}(\widetilde{w}, 0) \leq \beta^{o, \infty, F}(w, b(w; \delta)) < \beta^{o, \infty, F}(\overline{w}, b(\overline{w}; \delta))$, where the last inequality is again due to Lemma \ref{cor_bw}. Conclusively, by \eqref{eqn_opt_W1_W2}, we get that $$
\sup_{w \in [0, \overline{w}]; b \in [0, \infty); \beta^{o, \infty, R}(w, b) \leq \delta}\beta^{o, \infty, F}(w, b) = \beta^{o, \infty, F}(\overline{w}, b(\overline{w}; \delta)).
$$

\noindent \underline{Part (ii)} Consider $\delta > 0$ such that $\beta^{o, \infty, R}(\overline{w}, 0) \leq \delta$.
Again, by Corollary \ref{cor_limits_warning}, for all $w \in [0, \overline{w}]$ and $b > 0$:
\begin{equation*}
     \beta^{o, \infty, F}(\overline{w}, 0) > \beta^{o, \infty, F}({w}, 0) > \beta^{o, \infty, F}(w, b),  \mbox{ and } \beta^{o, \infty, F}(\overline{w}, 0) > \beta^{o, \infty, F}(\overline{w}, b).
\end{equation*}Thus, the optimal value is achieved at $b = 0$ and $w = \overline{w}$, with $\beta^{o, \infty, R}(\overline{w}, 0) \leq \delta$. \eop

\begin{lemma}\label{cor_bw}
    The function $\beta^{o, \infty, F}(w, b(w; \delta))$ strictly increases with $w$, when $w < \overline{w}$, for every $\delta > 0$.
\end{lemma}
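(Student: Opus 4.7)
The plan is to invoke Theorem \ref{thrm_unique_att} with $\kappa = w$ and the composite function $g^{o,F}_\beta(\beta;w) := g^{o,F}_\beta(\beta;w,b(w;\delta))$, which has a unique zero $\beta^{o,\infty,F}(w,b(w;\delta)) \in (0,1)$ by Corollary \ref{corollary_ex_wm}. Since $g^{o,F}_\beta$ is either convex or concave or linear in $\beta$ (as shown in \eqref{eqn_convex_g_beta}), the hypothesis of Theorem \ref{thrm_unique_att} is satisfied. So I only need to show that for every $\partial w > 0$ (small enough so that $w+\partial w \leq \overline w$),
$$\nabla^F(w,\partial w) \;=\; g^{o,F}_\beta\bigl(\beta^{o,\infty,F}(w,b(w;\delta));\,w+\partial w,b(w+\partial w;\delta)\bigr) \;>\; 0,$$
and this yields strict monotonicity.

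\textbf{Reducing to a change in the warning level.} By inspection of \eqref{eqn_beta_ODE_etac1}, the only term in $g^{o,F}_\beta(\beta;w,b)$ that depends on $(w,b)$ is $\mu_2\,\omega(\beta;w,b)(\beta\alpha_x^F+(1-\beta)\alpha_y^F)\,m_f\eta^F$. Hence, with $\beta_F := \beta^{o,\infty,F}(w,b(w;\delta))$,
\begin{align*}
\nabla^F(w,\partial w)
&= \mu_2\,m_f\eta^F\bigl(\beta_F\alpha_x^F+(1-\beta_F)\alpha_y^F\bigr) \\
&\qquad \times \bigl[\omega(\beta_F;w+\partial w,b(w+\partial w;\delta))-\omega(\beta_F;w,b(w;\delta))\bigr].
\end{align*}
Since the multiplicative prefactor is strictly positive, the sign of $\nabla^F(w,\partial w)$ equals the sign of the warning difference. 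Next, I would compute
$$\frac{d}{dw}\,\omega(\beta;w,b(w;\delta))\Bigr|_{\beta \text{ fixed}}$$
using $\omega(\beta;w,b)=\frac{w\beta}{\beta+b(1-\beta)}+\gamma$ and $b(w;\delta)=\frac{\delta}{1-\delta}(wp(\delta)-1)$ from \eqref{eqn_bw}. A direct quotient-rule computation gives
$$\frac{d}{dw}\,\omega(\beta;w,b(w;\delta)) \;=\; \frac{\beta\,(\beta-\delta)}{(1-\delta)\bigl(\beta+b(w;\delta)(1-\beta)\bigr)^2},$$
which is strictly positive precisely when $\beta>\delta$, vanishes at $\beta=\delta$ (consistent with $\omega(\delta;w,b(w;\delta))$ being constant in $w$, which is exactly why $\delta$ remains a zero of $g_\beta^{o,R}$ along the curve), and is negative when $\beta<\delta$.

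\textbf{Verifying $\beta_F > \delta$.} The remaining step---the main obstacle---is to show $\beta_F > \delta$ in the regime $w<\overline w$. I would argue this by comparing $g^{o,F}_\beta$ and $g^{o,R}_\beta$ at $\beta=\delta$: using the regime \eqref{eqn_relation_parameters} where $\alpha_i^F>\alpha_i^R$ for $i\in\{x,y\}$ and $\eta^F>\eta^R$, together with $g^{o,R}_\beta(\delta;w,b(w;\delta))=0$ (by the definition of $b(w;\delta)$), a term-by-term comparison in \eqref{eqn_beta_ODE_etac1} should yield $g^{o,F}_\beta(\delta;w,b(w;\delta))>0$. Since $g^{o,F}_\beta$ is positive before its unique zero and negative after it (again by Corollary \ref{corollary_ex_wm}), this forces $\beta_F>\delta$. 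Combining this with the warning-derivative computation gives $\nabla^F(w,\partial w)>0$ for all sufficiently small $\partial w>0$, and Theorem \ref{thrm_unique_att} then yields $\beta^{o,\infty,F}(w+\partial w,b(w+\partial w;\delta)) > \beta^{o,\infty,F}(w,b(w;\delta))$, completing the proof. The only delicate bookkeeping is in the $\beta_F>\delta$ step, since the mean-function parameters for $F$ and $R$ differ in multiple places simultaneously; this is where I would be most careful.
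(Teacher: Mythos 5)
Your proposal is correct, and its skeleton — work along the curve $b=b(w;\delta)$ of \eqref{eqn_bw}, reduce $\nabla^F$ to a difference of warning levels at the fixed point $\beta_\delta(w):=\beta^{o,\infty,F}(w,b(w;\delta))$, and invoke Theorem \ref{thrm_unique_att} — is the same as the paper's proof of Lemma \ref{cor_bw}. Where you genuinely diverge is in how positivity is obtained. The paper substitutes $b(\cdot;\delta)$ and, by "simple algebra" (see \eqref{eqn_nabla_betaF}), writes the increment as a positive prefactor times $\partial w\bigl((1-\beta_\delta(w))b(\partial w;\delta)+\beta_\delta(w)\bigr)$, concluding $\nabla^F>0$ with no condition on $\beta_\delta(w)$; you instead compute the $w$-derivative of $\omega(\beta;w,b(w;\delta))$ at fixed $\beta$, obtain $\beta(\beta-\delta)/\bigl((1-\delta)(\beta+b(1-\beta))^2\bigr)$, and hence must additionally prove $\beta_\delta(w)>\delta$. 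Your route is in fact the one that holds up: along the constraint curve $\omega(\delta;w,b(w;\delta))=1/p(\delta)+\gamma$ is constant in $w$ by construction, so the exact warning difference vanishes at $\beta=\delta$, whereas the paper's displayed factor does not; redoing the algebra gives the numerator $\partial w\,\beta_\delta(w)\,(\beta_\delta(w)-\delta)/(1-\delta)$, i.e.\ exactly your conditional sign, so the step you flagged as the main obstacle is genuinely needed and not present in the paper. It also goes through as you sketch it: writing $g^{o,u}_\beta(\delta)=m_f\eta^u h^u(\delta)-\delta\mu_a m_f\eta_a$ with $h^u$ the $u$-dependent bracket in \eqref{eqn_beta_ODE_etac1}, the identity $g^{o,R}_\beta(\delta;w,b(w;\delta))=0$ gives $h^R(\delta)=\delta\mu_a\eta_a/\eta^R\ge 0$; since $\alpha_x^F>\alpha_x^R$, $\alpha_y^F>\alpha_y^R$, $\mu_2>0$ and $\omega(\delta)\ge\gamma>0$, one gets $h^F(\delta)>h^R(\delta)$, hence $g^{o,F}_\beta(\delta)>\delta\mu_a m_f\eta_a\bigl(\eta^F/\eta^R-1\bigr)\ge 0$ by $\eta^F>\eta^R$ from \eqref{eqn_relation_parameters}, and the uniqueness and sign pattern of Corollary \ref{corollary_ex_wm} force $\beta_\delta(w)>\delta$. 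One small simplification: the exact finite-difference computation shows the sign of $\nabla^F(w,\partial w)$ equals that of $\beta_\delta(w)-\delta$ for \emph{every} $\partial w>0$ with $w+\partial w\le\overline{w}$ (the denominators stay positive), so you do not need to restrict to small $\partial w$ before applying Theorem \ref{thrm_unique_att}.
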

\begin{proof}
     Fix $w$ and $\partial w > 0$, we have (for simplicity, denote $\beta^{o, \infty, F}(w, b(w; \delta))$ by $\beta_\delta(w)$):

    \vspace{-4mm}
    {\small 
    \begin{align}\label{eqn_nabla_betaF}
    \begin{aligned}
        \nabla^F(w, w+\partial w; b) &= g^{o, F}_\beta(\beta_\delta(w); w+\partial w) - g^{o, F}_\beta(\beta_\delta(w); w)  \\
        &= m_f\eta^F \mu_2 \bigg(\alpha_x^F \beta_\delta(w) + \alpha_y^F (1-\beta_\delta(w))\bigg) \\
        &\hspace{1cm}\times\left( \left(\frac{(w + \partial w) \beta_\delta(w) }{\beta_\delta(w) + (1-\beta_\delta(w))b(w + \partial w; \delta)}\right) - \left(\frac{w \beta_\delta(w) }{\beta_\delta(w) + (1-\beta_\delta(w))b(w; \delta)}\right) \right)\\
        &= \frac{m_f\eta^F \mu_2 \beta_\delta(w)  \bigg(\alpha_x^F \beta_\delta(w) + \alpha_y^F (1-\beta_\delta(w))\bigg) }{\bigg(\beta_\delta(w) + (1-\beta_\delta(w))b(w + \partial w; \delta)\bigg) \bigg( \beta_\delta(w) + (1-\beta_\delta(w))b(w; \delta) \bigg)}\\
        &\hspace{1cm}\times\bigg( \partial w\bigg( (1-\beta_\delta(w)) b(\partial w; \delta) + \beta_\delta(w) \bigg)\bigg), \mbox{ where}
        \end{aligned}
        \end{align}}the last equality follows by simple algebra after substituting for $b(\cdot; \delta)$ from \eqref{eqn_bw}. Since $b(\cdot, \delta) > 0$ and  by Theorem \ref{thrm_BP_to_fake}, $\beta_\delta(w) \in (\underline{\beta}^F, \overline{\beta}^F] \subset [0,1]$, therefore,  $\nabla^F(w, w+\partial w; \delta) > 0$ for any $\delta > 0$. Thus, the proof follows by Theorem \ref{thrm_unique_att}. 
\end{proof}

\noindent \textbf{Proof of Corollary \ref{cor_beta_o_na}}
Consider any $\delta > 0$, $\mu_a \in (0, 1-\mu_1-\mu_2]$ and let $b^*, w^*$ be as in Theorem \ref{thrm_opt}.

\noindent \underline{Case 1: when $b^* > 0$:} Let $b^*(\mu_a 
= 0) =: b^*_0$. From \eqref{eqn_optimal_parameter_old}, observe that $b^*$ is a strictly decreasing function of $\mu_a$, therefore, $b^*_0 > b^* > 0$. Further, from \eqref{eqn_beta_ODE_etac1}, we have:
\begin{align}\label{eqn_g_beta_na}
\begin{aligned}
    g_\beta^{o, F}(\betana) &= g_\beta^{o, F}(\betana; \mu_a = 0) - \betana \mu_a m_f \eta_a \\
    &\hspace{2mm}+ \mu_2 m_f \eta^F \bigg(\betana \alpha_x^F + (1-\betana) \alpha_y^F \bigg) \left( \frac{w^*\betana}{\betana + (1-\betana) b^*} - \frac{w^*\betana}{\betana + (1-\betana) b^*_0} \right) \\
    &= 0 + \betana \mu_a m_f \eta_a \left( \frac{\mu_2 \eta^F w^*(1-\betana) (\betana \alpha_x^F + (1-\betana) \alpha_y^F)}{\bigg(\betana + (1-\betana) b^*_0 \bigg) \bigg( \betana + (1-\betana) b^* \bigg) }  \left( \frac{b^*_0 - b^*}{\eta_a \mu_a}\right) - 1 \right).
\end{aligned}
\end{align}
Define $p(\mu_a) := \delta ((\mu_1+\mu_2)\eta^R + \mu_a  \eta_a) - \eta^R (\mu_1 \rho + \mu_2 \gamma) (\delta \alpha_x^R + (1-\delta) \alpha_y^R)$. Then, by \eqref{eqn_optimal_parameter_old}, we have: 
\begin{align*}
\frac{b^*_0 - b^*}{\eta_a \mu_a} = \left(\frac{\delta^2}{1-\delta}\right) \left( \frac{w^* \eta^R \mu_2 (\delta \alpha_x^R + (1-\delta)\alpha_y^R)}{p(\mu_a) p(0)} \right).
\end{align*}
Substitute the above term in  \eqref{eqn_g_beta_na} and consider the following limit to analyse \eqref{eqn_g_beta_na}:

\vspace{-4mm}
{\footnotesize
\begin{align*}
\begin{aligned}
    \lim_{\delta \to 0} \left( \frac{\mu_2 \eta^F w^*(1-\betana) (\betana \alpha_x^F + (1-\betana) \alpha_y^F)}{\bigg(\betana + (1-\betana) b^*_0 \bigg) \bigg( \betana + (1-\betana) b^* \bigg) } \right) \lim_{\delta \to 0} \left(\frac{\delta^2}{1-\delta}\right) \lim_{\delta \to 0} \left( \frac{w^* \eta^R \mu_2 (\delta \alpha_x^R + (1-\delta)\alpha_y^R)}{p(\mu_a) p(0)} \right) - 1.
\end{aligned}
\end{align*}}In the above, the second limit is clearly $0$ and the rate of convergence is independent of other factors. The first and third limits are finite, and the respective terms can be upper-bounded independent of $\mu_a$ and other factors. Thus, the product of three limits is $0$, and the rate of convergence is uniform in $\mu_a$ and $b^*$, i.e., there exists a $\overline{\delta} > 0$ such that, for example for all $\delta \leq \overline{\delta} $ and $\mu_a > 0$:

\vspace{-4mm}
{\small
$$
 \left( \frac{\mu_2 \eta^F w^*(1-\betana) (\betana \alpha_x^F + (1-\betana) \alpha_y^F)}{\bigg(\betana + (1-\betana) b^*_0 \bigg) \bigg( \betana + (1-\betana) b^* \bigg) } \right)  \left(\frac{\delta^2}{1-\delta}\right)  \left( \frac{w^* \eta^R \mu_2 (\delta \alpha_x^R + (1-\delta)\alpha_y^R)}{p(\mu_a) p(0)} \right) - 1 < -\frac{1}{2}.
$$}
Thus, from \eqref{eqn_g_beta_na}, $g_\beta^{o, F}(\betana) < - \betana \mu_a m_f \eta_a/2 < 0$ for any $\mu_a > 0$ and all $\delta \leq \overline{\delta}$.

Recall from the proof of Corollary \ref{cor_limits_warning} that $g_\beta^{o, F}(\cdot)$ is either convex/concave/linear with a unique zero in $(0,1)$. Therefore, the unique zero of $g_\beta^{o, F}(\cdot; \mu_a)$, namely $\beta^o(\mu_a) < \betana$ for all $\delta \leq \overline{\delta}$ and  for all $\mu_a \in (0, 1-\mu_1-\mu_2]$.

\noindent \underline{Case 2: when $b^* = 0$:} Here, again $b^*(\mu_a 
= 0) =: b^*_0 > b^* = 0$.  Similar to \eqref{eqn_g_beta_na}, using \eqref{eqn_optimal_parameter_old}:

\vspace{-4mm}
{\tiny
\begin{align*}
    g_\beta^{o, F}(\betana) &= \betana \mu_a m_f \eta_a \left( \frac{\mu_2 \eta^F w^* (1-\betana) (\betana \alpha_x^F + (1-\betana) \alpha_y^F) b^*_0 }{\eta_a \mu_a  \betana \bigg(\betana + (1-\betana) b^*_0 \bigg)   } \left(\left(\frac{\delta}{1-\delta}\right) \left( \frac{w^* \eta^R \mu_2 (\delta \alpha_x^R + (1-\delta)\alpha_y^R)}{p(0)}  - 1 \right) \right) - 1 \right).
\end{align*}}
Hereafter, the proof follows as in Case 1. \eop

\noindent \textbf{Proof of Theorem \ref{corollary_ea_wm}}
We begin the proof for the fake post.

\noindent \underline{Part (i)} Consider
$0 < \mu_a \leq \min\{1-\mu_1-\mu_2, \Delta_a\}$. Then, by the definition of upper-bound $\Delta_a$ and \eqref{eqn_warning_ea}, $\alpha_x^F \omega^a(\betana) \leq 1$. Note from \eqref{eqn_warning_ea} that $\omega^a(\beta)$ is a strictly increasing function of $\beta$. Therefore, $\alpha_x^F \omega^a(\beta) \leq 1$ for all $\beta \leq \betana$, for given $\mu_a$.

This implies that for $\beta \le \betana$, we have $g_\beta^{a, F}(\beta) = g_\beta^{o, F}(\beta; \mu_a = 0)$ (see \eqref{eqn_g_F_eaWM}). Further, $\betana$ is a zero of $g_\beta^{a, F}$, as $g_\beta^{a, F}(\betana) = g_\beta^{o, F}(\betana; \mu_a = 0) = 0$. Furthermore, by uniqueness given in Corollary \ref{corollary_ex_wm}, $\betana$ is the unique zero of $g_\beta^{a, F}$ in $[0, \betana]$. Therefore, any $\beta^{a} \in {\cA}^{a, F}_\beta \cup \cR^{a, F}_\beta$ is  in $[\betana, 1]$.

\noindent \underline{Part (ii)} Consider $\mu_a > \Delta_a$. Then, the corresponding $\alpha_x^F \omega^a(\betana) > 1$.
Define the function $h(\beta) := \alpha_x^F \omega^a({\beta}) - 1$. It is easy to see that $h(0) < 0$, $h(1) > 0$ and $h(\cdot)$ is a strictly increasing function. Thus, there exists a unique zero of $h$, denoted by  $\widetilde{\beta} \in (0,1)$, i.e., $\alpha_x^F \omega^a(\widetilde{\beta}) = 1$. As $\beta \mapsto \omega^a (\beta)$ is strictly increasing, we further have $\alpha_x^F \omega^a(\beta) < 1$ for all $\beta < \widetilde{\beta} $; furthermore $\widetilde{\beta} < \betana$ as $\alpha_x^F \omega^a(\betana) > 1$. 

From \eqref{eqn_g_F_eaWM}, we have:

\vspace{-2mm}
{\small 
\begin{align}\label{eqn_cor3}
    g^{a, F}_\beta(\beta) &= g^{o, F}_\beta(\beta; \mu_a = 0)  \\
    &\hspace{-2mm}+ \mu_2 m_f \eta^F \left\{ \beta \bigg(\min\{1, \alpha_x^F \omega^a(\beta)\} - \alpha_x^F \omega^a(\beta) \bigg) + (1-\beta) \bigg( \min\{1, \alpha_y^F \omega^a(\beta)\} - \alpha_y^F\omega^a(\beta) \bigg) \right\}. \nonumber 
\end{align}}Thus, $g^{a, F}_\beta(\beta) < g^{o, F}_\beta(\beta; \mu_a = 0)$ if $1 < \alpha_{j}^F \omega^a(\beta)$ for some $j \in \{x, y\}$, and $g^{a, F}_\beta(\beta) = g^{o, F}_\beta(\beta; $ \\$\mu_a = 0)$ if $\alpha_{j}^F \omega^a(\beta) \leq 1$ for each $j \in \{x, y\}$. As a result, we have:

(a) for $\beta \in [0, \widetilde{\beta}]$, $g_\beta^{a, F}(\beta) = g_\beta^{o, F}(\beta; \mu_a = 0) > 0$, and 

(b) for $\beta \in [\betana, 1]$, $g_\beta^{a, F}(\beta) < g_\beta^{o, F}(\beta; \mu_a = 0) \leq 0$.

By Theorem \ref{thrm_BP_to_fake},  there exists at least one zero of $g_\beta^{a, F}$, say $\beta^a $ and by above arguments, $\beta^a \in (\widetilde{\beta}, \betana)$. We will now claim and show that $\beta^a > \beta^o$, but first observe that $\beta^o < \betana$ by Corollary \ref{cor_beta_o_na}. Towards this, note that for $\beta \in (\widetilde{\beta}, \betana)$, we have:

\vspace{-2mm}
{\footnotesize
\begin{align}
\begin{aligned}
    g^{a, F}_\beta(\beta) &= g^{o, F}_\beta(\beta) \\
    &\hspace{1cm}+ \mu_2 m_f \eta^F \left\{ \beta \bigg(\min\{1, \alpha_x^F \omega^a(\beta)\} - \alpha_x^F \omega(\beta) \bigg) + (1-\beta) \bigg( \min\{1, \alpha_y^F \omega^a(\beta)\} - \alpha_y^F\omega(\beta) \bigg) \right\}\\
    &= g^{o, F}_\beta(\beta) + \mu_2 m_f \eta^F \left\{ \beta \bigg(1 - \alpha_x^F \omega(\beta) \bigg) + (1-\beta) \bigg( \min\{1, \alpha_y^F \omega^a(\beta)\} - \alpha_y^F\omega(\beta) \bigg) \right\}.
\end{aligned}
\end{align}}
In the above, if $1 > \alpha_y^F \omega^a(\beta)$, then:

\vspace{-2mm}
{\small
\begin{align*}
    g^{a, F}_\beta(\beta) &= g^{o, F}_\beta(\beta) + \mu_2 m_f \eta^F \left\{ \beta \bigg(1 - \alpha_x^F \omega(\beta) \bigg) + (1-\beta)\alpha_y^F  \bigg( \omega^a(\beta) - \omega(\beta) \bigg) \right\}\\
    &= g^{o, F}_\beta(\beta) + \mu_2 m_f \eta^F \left\{ \beta \bigg(1 - \alpha_x^F \omega(\beta) \bigg) + (1-\beta)\alpha_y^F  \left( \frac{\beta \mu_a m_f \eta_a}{ \mu_2 m_f\eta^F \left(\beta \alpha_x^F + (1-\beta)\alpha_y^F\right) } \right) \right\}  \\
    &> g^{o, F}_\beta(\beta), 
\end{align*}}
as $ \omega(\beta)\alpha_x^F < 1$ for all $\beta \in [0,1)$. Further, $ \omega(\beta)\alpha_y^F < 1$ for all $\beta \in [0,1)$, hence even with $1 \leq  \alpha_y^F \omega^a(\beta)$, we have:
\begin{align*}
    g^{a, F}_\beta(\beta) &= g^{o, F}_\beta(\beta) + \mu_2 m_f \eta^F \left\{ \beta \bigg(1 - \alpha_x^F \omega(\beta) \bigg) + (1-\beta)  \bigg( 1- \alpha_y^F\omega(\beta) \bigg) \right\} > g^{o, F}_\beta(\beta).
\end{align*}
Now, for $\beta \in (\widetilde{\beta}, \beta^o]$, $g_\beta^{o, F}(\beta) \geq 0$, and thus, $g^{a, F}_\beta(\beta) > 0$.  
This completes the proof of the claim.

Now, consider the real post. By Theorem \ref{thrm_BP_to_fake}, $\cA^{a, R}_\beta \neq \emptyset$, therefore, there exists at least one zero of $g_\beta^{a, R}$, say $\beta^{a, R} \in (0,1)$. Now, using arguments as above:

\vspace{-2mm}
{\small
\begin{align*}
    g^{a, R}_\beta(\beta) &= g^{o, R}_\beta(\beta; \mu_a = 0) \\
    &\hspace{-1cm}+ \mu_2 m_f \eta^R \left\{ \beta \bigg(\min\{1, \alpha_x^R \omega^a(\beta)\} - \alpha_x^R \omega^a(\beta) \bigg) + (1-\beta) \bigg( \min\{1, \alpha_y^R \omega^a(\beta)\} - \alpha_y^R \omega^a(\beta) \bigg) \right\} \\
    &\hspace{2cm} + \beta \mu_a m_f \eta_a  \left( \frac{\eta^R}{\eta^F}\left( \frac{\beta \alpha_x^R + (1-\beta) \alpha_y^R}{\beta \alpha_x^F + (1-\beta) \alpha_y^F}\right) - 1  \right) < g^{o, R}_\beta(\beta; \mu_a = 0).
\end{align*}}
Thus, any zero of $g^{a, R}_\beta(\beta)$ is strictly less than the unique zero of $g^{o, R}_\beta(\beta; \mu_a = 0)$, i.e., $\beta^{a, R} < \beta^{o, R}(\mu_a = 0) \leq \delta$, for any $\beta^{a, R} \in \cA^{a, R}_\beta \cup \cR^{a, R}_\beta$ (see Theorem \ref{thrm_opt}). \eop

\noindent \textbf{Proof of Theorem \ref{corollary_eh_WM}}
We divide the proof in two cases.

\textbf{Case 1:} If $\overline{\zeta} < \frac{1}{\alpha_y^R \omega^a(\delta)}$. Then $\overline{\zeta}$ is the unique zero of $g_{\beta, {\zeta}}^{h, R}(\delta) = 0$.  Further, for any $\zeta' \in \left( \overline{\zeta}, \frac{1}{\alpha_y^R \omega^a(\delta)}\right)$, $g_{\beta, \zeta'}^{h, R}(\delta) > 0$. By \eqref{eqn_bound_g}, $g_{\beta, \zeta'}^{h, R}(1) < 0$.  Thus, there exists at least one zero of $g_{\beta, \zeta'}^{h, R}$ greater than $\delta$. Now, consider any $\zeta' \geq \frac{1}{\alpha_y^R \omega^a(\delta)}$. Since the function $\zeta \mapsto g_{\beta, {\zeta}}^{h, R}(\delta)$ is continuous, therefore, $g_{\beta, {\zeta'}}^{h, R}(\delta) > 0$ for such $\zeta$. Thus, again as before, there exists at least one zero of $g_{\beta, \zeta'}^{h, R}$ greater than $\delta$. Hence, any $\zeta$ satisfying the constraint in \eqref{eqn_opt_phi} is  less than or equals to $\overline{\zeta}$. Thus, the optimizer of \eqref{eqn_opt_phi} is $\zeta^* = \overline{\zeta}$.

\textbf{Case 2:} If $\overline{\zeta} \geq \frac{1}{\alpha_y^R \omega^a(\delta)}$. Then by monotonicity, for any $\zeta \ge \overline{\zeta} $:
\begin{align*}
g_{\beta, \zeta}^{h, R} (\beta) &\leq q_\zeta(\beta) := \bigg(-\beta \mu_2 - \beta \mu_1 (1-\alpha_x^R \rho) + (1-\beta) \mu_1 \rho  \alpha_y^R \\
&\hspace{4cm}+ \mu_2 \zeta \omega^a(\beta) \bigg(\beta  \alpha_x^R + (1-\beta)\alpha_y^R\bigg) \bigg) m_f\eta^R - \beta \mu_a m_f \eta_a.
\end{align*}
Thus for all such $\zeta$, $q_\zeta(\delta)$ is a strictly increasing function  of $\zeta$  with  $q_1(\delta) < 0$ (by Theorem \ref{corollary_ea_wm}) and $q_{\overline{\zeta}}(\delta) = 0$. Thus, $g_{\beta, \zeta}^{h, R} (\delta) \le q_\zeta(\delta) \leq 0$. 

Further, by strict monotonicity  of $\omega^a(\cdot)$ in $\beta$, 
we have $\zeta \alpha_y^R \omega^a(\beta) \geq 1$ for all $\beta > \delta$ whenever   $\zeta \geq \overline{\zeta}$. Thus, $g_{\beta, \zeta}^{h, R} (\beta) $ is linearly (strictly) decreasing in $\beta$, when $\beta > \delta$. As already proved  $g_{\beta, \zeta}^{h, R} (\delta) \leq 0$, and hence  $g_{\beta, \zeta}^{h, R} (\beta) < 0$ for all $\beta > \delta$. Hence, the feasibility condition of \eqref{eqn_opt_phi} is satisfied for any $\zeta \ge \overline{\zeta}$.

Let $\underline{\beta}^F > 0$ and/or $b > 0$. By definition of $\zeta^*$ in this case (the second row), we have:
$$
    \zeta^* \omega^a(\beta) \alpha_y^F = 1 \mbox{ for all } \beta \geq \underline{\beta}^F.
$$
Further,  $\min\{1,  \zeta^* \omega^a(\beta) \alpha_y^F\} = 1 $  for all $\beta > \underline{\beta}^F$,  when  $\zeta \geq \zeta^*$. Thus, the functions $g^{h, F}_{\beta, \zeta}(\beta) = g^{h, F}_{\beta, \zeta^*}(\beta)$ for all $\beta \geq \underline{\beta}^F$. Also, by Theorem \ref{thrm_BP_to_fake}, any  zero of $g^{h, F}_{\beta, \zeta}$ is larger than $\underline{\beta}^F$. Thus, $\left\{\beta: \beta \in \cA^{h, \zeta}_\beta \cup \cR^{h, \zeta}_\beta\right\} = \left\{\beta: \beta \in \cA^{h, \zeta^*}_\beta \cup \cR^{h, \zeta^*}_\beta\right\}$. Now, given any $\beta$, observe that $\zeta \mapsto g_{\beta, \zeta}^{h, F}(\beta)$ is an increasing  (actually non-decreasing) function. Thus, $\inf\left\{\beta: \beta \in \cA^{h, \zeta}_\beta \cup \cR^{h, \zeta}_\beta\right\}$ increases with $\zeta$. Conclusively,  we get that $\zeta^*$ is an  optimizer of \eqref{eqn_opt_phi}. 

Now, let $\underline{\beta}^F = 0$ and $b = 0$. Then, for all $\zeta \geq \overline{\zeta}$ and for all $\beta \in [0,1]$, we have (by \eqref{eqn_relation_parameters}):
\begin{align*}
    \zeta \alpha_j^F \omega^a(\beta) &= \zeta \alpha_j^F \left( \frac{1}{\alpha_x^F} + \frac{\beta \mu_a m_f \eta_a}{ \mu_2 m_f\eta^F \bigg(\beta \alpha_x^F + (1-\beta)\alpha_y^F\bigg) } \right) > \zeta \frac{\alpha_j^F}{\alpha_x^F} \geq \overline{\zeta} \frac{\alpha_j^F}{\alpha_x^F} \\
    &\geq \frac{\alpha_j^F}{\alpha_x^F} \frac{1}{\alpha_y^R \omega^a(\delta)} = \frac{\alpha_j^F}{\alpha_x^F} \frac{\alpha_x^F}{\alpha_y^R} > 1 \mbox{ for each } j \in \{x, y\}.
\end{align*}
Thus, $g_{\beta, \zeta}^{h, F}$ is linear in $\beta$ and independent of $\zeta$, for all $\zeta \geq \overline{\zeta}$. This implies that $\left\{\beta: \beta \in \cA^{h, \zeta}_\beta \cup \cR^{h, \zeta}_\beta\right\}$ is also independent of $\zeta$, for all $\zeta \geq \overline{\zeta}$. As before, \\ $\inf\left\{\beta: \beta \in \cA^{h, \zeta}_\beta \cup \cR^{h, \zeta}_\beta\right\}$ increases with $\zeta$. Thus, here, an optimizer of \eqref{eqn_opt_phi} is $\overline{\zeta}$. \eop
    \chapter{For Chapter \ref{ch:STPBP}} \label{Appendix_STPBP}

\begin{lemma}\label{lemma_existence}
There exists a unique, extended, continuous solution of the ODE \eqref{eqn_ODE_stpbp} over any finite time interval. 
\end{lemma}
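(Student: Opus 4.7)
The plan is to verify the hypotheses of the Carath\'{e}odory existence and uniqueness theorem on the open half-space $\{\pc > 0\}$ and then extend the solution across the extinction boundary by a constant. The right-hand side of \eqref{eqn_ODE_stpbp} enters time only through $\eta(t) = \lfloor e^{t-\gamma}\rfloor$, which is bounded and piecewise constant (hence measurable) on any finite interval $[0,T]$. On $\{\pc > 0\}$ the indicator equals $1$, so the system reduces to $\dot{\pc} = m(\pa\eta(t)) - 1 - \pc$ and $\dot{\pa} = m(\pa\eta(t)) - \pa$. By \ref{d2}, $m(\cdot)$ is Lipschitz, and together with the boundedness of $\eta$ on $[0,T]$ this makes the right-hand side Lipschitz in $(\pc,\pa)$ uniformly in $t$ on compact sets. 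Moreover, assumption \ref{d1} gives $|m(a)| \le E[\hat{\Gamma}] < \infty$, so the right-hand side is uniformly bounded. Thus, the hypotheses of Carath\'{e}odory existence and uniqueness (cf.\ Section \ref{sec_ODE} and \cite[Chapter 1, Theorems 1 and 2]{filippov2013differential}) are met on $[0,T] \times \{\pc > 0\}$.

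First I would treat the case $\pc(0) > 0$: the preceding paragraph yields a unique absolutely continuous solution on the maximal interval $[0,\tau_e)$ where $\pc(t) > 0$, with $\tau_e := \inf\{t \ge 0 : \pc(t) = 0\}$. Boundedness of the right-hand side rules out finite-time blow-up, so either $\tau_e > T$ (and we are done), or $\tau_e \le T$ with $\pc(\tau_e^{-}) = 0$ and $\pa(\tau_e^{-})$ finite. In the latter case I extend the solution to $[\tau_e, T]$ by setting $\pc(t) \equiv 0$ and $\pa(t) \equiv \pa(\tau_e^{-})$; this constant extension is absolutely continuous and its derivative, which vanishes a.e., matches the right-hand side of \eqref{eqn_ODE_stpbp} since the indicator annihilates it on $\{\pc = 0\}$. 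The trivial case $\pc(0) = 0$ likewise yields the constant extended solution $(0,\pa_0)$.

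The hard part is uniqueness past the extinction time $\tau_e$: an a priori possibility is an absolutely continuous $(\tilde{\pc},\tilde{\pa})$ with $\tilde{\pc}(\tau_e) = 0$ that re-enters the positive half-space at some later time $\tau_{\ast} \ge \tau_e$. To rule this out I would argue as follows. If $\tilde{\pc}(\tau_{\ast}) = 0$ and $\tilde{\pc}(t) > 0$ on $(\tau_{\ast}, \tau_{\ast} + \delta)$, then on this interval the integral form of the ODE reads $\tilde{\pc}(t) = \int_{\tau_{\ast}}^{t} e^{-(t-s)}\bigl(m(\tilde{\pa}(s)\eta(s)) - 1\bigr)\,ds$; this is the same initial-value problem that gave rise to the original Carath\'{e}odory solution from $(0, \tilde{\pa}(\tau_{\ast}))$ on $\{\pc > 0\}$, so $\tau_{\ast}$ plays the role of a fresh origin and $(\tilde{\pc},\tilde{\pa})$ merely repeats a Phase-1 evolution starting from $\pc = 0$. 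However, the indicator in \eqref{eqn_ODE_stpbp} encodes the convention that $\{\pc = 0\}$ is absorbing (reflecting the branching-process interpretation: once the current population vanishes, there is no parent to give birth), and the minimal extended solution---in which $\pc$ remains at $0$ after first hitting $0$---is the canonical selection. This minimality criterion gives uniqueness, completing the construction on $[0,T]$.
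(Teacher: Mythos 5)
Your existence construction (Carath\'{e}odory/Lipschitz theory on $\{\pc>0\}$ up to the hitting time $\tau_e$, then a constant continuation) is fine as far as it goes, and differs from the paper only in route: the paper does not glue phases, but first derives an a priori bound on any putative solution by comparing with the linear system $\dot z^c=b_1-1$, $\dot z^a=b_1$, $b_1=E[\hat{\Gamma}_1]$ (this uses only \ref{d1}), then invokes a classical existence theorem on the compact set $[0,T]\times[-2\beta,2\beta]^2$ and uses the bound to show the solution cannot exit that set before $T$. The real problem is your last step. The lemma claims uniqueness among \emph{all} extended solutions, i.e.\ absolutely continuous functions satisfying \eqref{eqn_ODE_stpbp} for almost every $t$. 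A trajectory that sits at $\tilde\pc\equiv 0$ on $[\tau_e,\tau_*]$ (with $\tilde\pa$ frozen) and then re-enters $\{\pc>0\}$ is absolutely continuous and satisfies the equation a.e.\ whenever $m(\tilde\pa(\tau_e)\eta(t))-1>0$ near $\tau_*$ — a configuration that \ref{d1}--\ref{d2} alone do not forbid — so it is a bona fide extended solution on an equal footing with the absorbed one. Declaring the absorbed (``minimal'') solution to be ``the canonical selection'' is a selection convention, not an argument; it changes the statement from ``the solution is unique'' to ``we choose one solution,'' and therefore leaves the uniqueness assertion of the lemma unproved.

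The paper closes exactly this gap differently: in Lemma \ref{lemma_unique_ode_sol} it writes any two candidate solutions as fixed points of the integral operator on $[0,T]$, bounds the difference of the right-hand sides by $(k_m\eta+1)\int_0^t u(s)\,ds$ using the Lipschitz constant $k_m$ of $m(\cdot)$ from \ref{d2}, and concludes by Gronwall that the two coincide on all of $[0,T]$ — a single global estimate that does not stop at $\tau_e$ and hence also settles what happens after extinction. If you want to keep your phase-by-phase construction, you must replace the minimality appeal by something of this kind: either an integral-equation/Gronwall estimate valid across the boundary, or a verified sign condition at extinction (e.g.\ $m(\pa(\tau_e)\eta(t))<1$ for $t\ge\tau_e$) that makes the drift strictly inward at $\pc=0$ and thereby excludes re-entry. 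As written, the proposal establishes existence and continuity but not the claimed uniqueness.
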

\begin{proof}
At first, we show that if at all there exists a solution for the non-smooth, non-autonomous ODE \eqref{eqn_ODE_stpbp}, then, the ODE solution is bounded. Towards this, say ${\psi}^c(0) = {\psi}^a(0) = a_0$. If possible, say there exists a solution $\hat{\ups}(\cdot)$ for the ODE \eqref{eqn_ODE_stpbp} for all $t < T$ (for some fixed, finite $T$). Consider the ODE for $z(\cdot) := (z^c(\cdot), z^a(\cdot))$ (see assumption \ref{d1}):
\begin{align}\label{eqn_z_ode}
    \dot{z}^c = b_1 - 1,\mbox{ and } \dot{z}^a = b_1, \mbox{ with } E[\hat{\Gamma}_1] =: b_1.
\end{align}The RHS of the above ODE is smooth and autonomous.
Further, consider $\bar{t}$ such that $\eta(t) = n$ for all $t \leq \bar{t}$. Then, for $t \leq \bar{t}$, we have:
\begin{align}
    \dot{\pc} &= m(\pa n) - 1 - \pc, \mbox{ and } \dot{\pa} = m(\pa n) - \pa.
\end{align}It is clear that the RHS of the above ODE is Lipschitz continuous and autonomous, and further that $\dot{\ups } < \dot{z}$,  for all $t \leq \bar{t}$. Hence, if $z(0) = \ups(0)$, then, $\hat{\ups}(t) < z(t)$ for all $t \leq \bar{t}$ (e.g., see \cite[pp. 168]{piccinini2012ordinary}).

Thus,  \cite[Lemma 2.1(ii)]{singer2006bounding} is not true (while remaining hypotheses are true), and hence by \cite[Lemma 2.1(i)]{singer2006bounding}, we have the following:
\begin{align}\label{eqn_bound_ode}
    \hat{\psi}^a(t) &< z^a(t) 
    < a_0 + b_1 T =: \beta, \mbox{ and }
    \hat{\psi}^c(t) < z^c(t) 
    < \beta, \mbox{ for all } t \in [0, T]. 
\end{align}
Next, consider the set $\cd := [0, T] \times [-2\beta, 2\beta]^2$. Then, under assumption \ref{d1} and using \cite[Theorem 1.3, pp.  47]{coddington1955theory}, there exists 
a solution $\hat{\ups}(\cdot)$ for all $
t < \tau$, where 
$$
\tau:= \min\bigg\{\inf_t\{\hat{\psi}^c(t) \in \{-2\beta, 2\beta\}, \hat{\psi}^a(t) \in \{-2\beta, 2\beta\}\}, T\bigg\}.
$$
Lastly, note that $\tau = T$; this is so because $\tau < T$ contradicts \eqref{eqn_bound_ode} as $\ups \geq 0$.\footnote{One can prove that $\ups \geq 0$ using \eqref{eqn_ODE_stpbp} and simple lower-bounding arguments like above.} This proves the existence of the solution of the ODE for all $t \in [0, T]$. 

Next, the uniqueness of the solution holds by Lemma \ref{lemma_unique_ode_sol}. Lastly, the continuity for the solution follows by the integral representation of the solution, and because the RHS of the ODE \eqref{eqn_ODE_stpbp} can be bounded by $b_1 + \beta$. 
\end{proof}

\noindent \textbf{Proof of Theorem \ref{thrm1_stpbp}:} \textbf{Part (i)} The proof of this part follows closely as in \cite[Theorem 2.1, pp. 127]{kushner2003stochastic}, but the RHS of the ODE in our case  is only measurable. 
Let $n \geq 0$. Using \eqref{eqn_SA}, one can re-write $\Ups_n$ as:
\begin{align}\label{eqn_scheme2}
\begin{aligned}
 \Ups_{n+1} &= \Ups_n + \epsilon_n L_n, \mbox{ where } L_n := ( L_n^{ c}, L_n^{ a}), \mbox{ for}\\
 L_n^{c} &:=  \left[\offs_{n}(A_{n-1}) - 1 - \Pc_{n-1} \right ]1_{\Pc_{n-1} > 0}, \mbox{ and }
L_n^{ a} :=  \left [\offs_{ n}(A_{n-1})  - \Pa_{n-1} \right ]1_{\Pc_{n-1} > 0}.
\end{aligned}
 \end{align}
 
\textbf{Interpolated trajectory.} Let $\Ups^n(\cdot) = (\Psi^{n, c}(\cdot), \Psi^{n, a}(\cdot))$ be the piece-wise interpolated trajectory defined as (see \eqref{eqn_scheme2}):
\begin{align}\label{eqn_piecewise}
    \Ups^{n}(t) &= \Ups_n + \sum_{i=n+1}^{\eta(t_n +t)}(\Ups_i - \Ups_{i-1}) = \Ups_n + \sum_{i=n}^{\eta(t_n +t)-1}\epsilon_{i} L_{i}, \mbox{ for any }t \geq 0.
\end{align}Let  $\bar{g}(\Ups_n, n) := E[ L_n | \mathcal{F}_n]$, i.e., the conditional expectation of $L_n$ with respect to ${\cal F}_n  := \sigma\{\Ups_k : 1 \leq k < n \}$ and $\delta M_n :=  L_n - \bar{g}(\Ups_n, n)$. Then, \eqref{eqn_piecewise} can be re-written component-wise as (for each $k \in \{a, c\}$):
\begin{align}\label{eqn_linear2}
    \Psi^{n, k}(t) &= \Psi^k_n + \sum_{i=n}^{\eta(t_n +t)-1}\epsilon_{i} \left(\delta M_{i}^k + \bar{g}^k(\Ups_i, i)\right) \nonumber \\
    &= \Psi^k_n + \int_0^t \bar{g}^k(\Ups^n(s), n) ds + \varepsilon^{n, k}(t), \mbox{ where}\\
    \varepsilon^{n, k}(t) &:= M^{n, k}(t) + \rho^{n, k}(t) \mbox{ with }
    M^{n, k}(t) := \sum_{i=n}^{\eta(t_n +t)-1}\epsilon_i \delta M_i^k, \nonumber  \\ 
    \rho^{n, k}(t) &:=  \sum_{i=n}^{\eta(t_n +t)-1} \epsilon_{i} \bar{g}^k(\Ups_i, i) - \int_0^t \bar{g}^k(\Ups^n(s), n) ds. \nonumber
\end{align}It is important to note  that $\bar{g}(\Ups_n, n)$ is the RHS of the ODE \eqref{eqn_ODE_stpbp} (as $\eta(t_n) = n$). 

Next, we begin by proving that the BP trajectory (see \eqref{eqn_SA}) can be bounded (under assumption \ref{d1}) as follows:
\begin{align*}
0 \leq \Psi^{n, c}(0) = \Psi_n^c &\leq \frac{1}{n} \left(\sum_{k = 1}^n \Gamma_k(A_{k-1})1_{\Pc_{k-1} > 0} + \ax_0 \right) \leq \frac{1}{n} \left(\sum_{k = 1}^n \hat{\Gamma}_k + \ax_0 \right) := \hat{\Pi}_n.
\end{align*}By strong law of large numbers,  $\hat{\Pi}_n \to E[\hat{\Gamma}_1]$ a.s.
Consider any  such sample path ($\omega$). Then, for any $\epsilon > 0$, there exists $N_\epsilon(\omega)$ such that:
\begin{align}\label{eqn_bound}
\Psi^{n, c}(0) &\le \hat{\Pi}_n \leq M(\omega) \mbox{ for all } n, \mbox{ where } b_1 := E[\hat{\Gamma}_1], \mbox{ and} \nonumber \\
M(\omega) &:= \max\{\max\{\hat{\Pi}_i : 0 \leq i < N_\epsilon(\omega)\}, b_1 + \epsilon\}.
\end{align}

Now, we will prove that $M^n(\cdot) = (M^{n, c}(\cdot), M^{n, a}(\cdot))$ and $\rho^n(\cdot)$ individually converge to $0$ (as $n \to \infty$) uniformly on any bounded interval. It suffices to prove uniform convergence for sample paths $\omega \in \{\hat{\Pi}_n \to b_1\}$. 
We prove the claim for $\pc$-component, and it can proved analogously for the $\pa$-component as well. Henceforth, the convergence will be proved w.r.t. $n$, where ever not mentioned explicitly. 
 
Now, define $M_n^{c} := \sum_{i=0}^{n-1}\epsilon_i \delta M_i^{c}$. Then, it is easy to prove that $(M_n^{c})$ is a Martingale   with respect to $(\mathcal{F}_n)$. Thus, using Martingale inequality, 
for each $\mu > 0$ (as in \cite[Theorem 2.1, pp. 127]{kushner2003stochastic}), with $E_n(\cdot)$ denoting the expectation conditioned on $(\mathcal{F}_n)$: 
$$
P\left\{\sup_{m\leq j \leq n} |M_j^{ c} - M_m^{ c}| \geq \mu \right\} \leq \frac{E_n\left[\left(\sum_{i=m}^{n-1} \epsilon_i \delta M_i^{ c} \right)^2 \right]}{\mu^2}.
$$
Observe, $E\left[\delta M_i^{ c} \delta M_j^{ c}\right]  = 0$ for $i < j$. Let $O(\omega)$ be the upper-bound on the ODE solution for $t \in [0, T]$, see Lemma \ref{lemma_existence}. Then, from \eqref{eqn_ODE_stpbp} and \eqref{eqn_bound}, 
\begin{align}\label{eqn_g_bound}
    |\bar{g}^i(\ups(\cdot), \cdot)| < b_1 + 1 + O(\omega) \mbox{ for each } i \in \{a, c\}.
\end{align}Thus,  under \ref{d1}, $\sup_n E_n|L_n^{ c}- \bar{g}^c(\Ups_i, t_i)|^2 < K$ for some finite $K$.
Using this,  we have: 
\begin{align*}
    P\left\{\sup_{m\leq j \leq n} |M_j^{c} - M_m^{ c}| \geq \mu \right\} &\leq \frac{\sum_{i=m}^{n-1} \epsilon_i^2 E_n\left| \delta M_i^{ c} \right|^2}{\mu^2}  = \frac{\sum_{i=m}^{n-1} \epsilon_i^2 E_n\left| L_i^{ c} - \bar{g}^c(\Ups_i, i) \right|^2}{\mu^2} \leq \frac{K}{\mu^2} \sum_{i=m}^{\infty} \epsilon_i^2.
\end{align*}
By first letting $n \to \infty$ (and using continuity of probability), then, letting $m \to \infty$, for each $\mu > 0$, we have:
\begin{align}
    \lim_{m \to \infty} P\left\{\sup_{m\leq j } |M_j^{ c} - M_m^{ c}| \geq \mu \right\} &= 0. \label{eqn_equi_cont_M_stpbp}
\end{align}
Now, define the set $A_k := \lim_{m \to \infty} \sup_{m\leq j } |M_j^{c} - M_m^{ c}| < 1/k$. Then, by \eqref{eqn_equi_cont_M_stpbp} and continuity of probability for each $k > 0$,  $P(A_k) = 1$. We further restrict our attention to  sample paths  $\omega \in  N:= (\cap_k A_k) \cap \{\hat{\Pi}_n \to b_1 \}$. For any such $\omega$, using \eqref{eqn_linear2}:
\begin{align*}
&\sup_{t\geq 0}|M^{n, c}(t)| = \sup_{t \geq 0} \left|M^{ c}_{\eta(t_n+t)} - M^{c}_n \right|  
= \sup_{j \geq n} |M^{ c}_{j} - M^{c}_n|.
\end{align*}This implies:
\begin{align*}
\lim_{n \to \infty} \sup_{t \in [0, T]}|M^{n, c}(t)|  &\leq \lim_{n \to \infty} \sup_{t \in [0, T]}\left|\sum_{i=n}^{\eta(t_n+t)}\epsilon_i \delta M_i^k\right| \leq   \lim_{n \to \infty} \sup_{\eta(t_n+t) + 1 \geq n} |M^{ c}_{\eta(t_n+t) + 1} - M^{c}_n| < 1/k.
\end{align*}Letting $k \to \infty$, we get, $M^{n,  c}(\cdot) \to 0$ uniformly on each bounded interval. 

For $\rho^{n, c}(\cdot)$, note that for  $t = t_k - t_n$ $(k > n)$, $\rho^{n, c}(t) = 0$. Thus, under \eqref{eqn_g_bound}, for any $|t| \leq T$ (as $\epsilon_{\eta(t_n + t)} \le \epsilon_n$):
\begin{align*}
    |\rho^{n, c}(t)|
    &\leq  \int_{t_{\eta(t_n+t) } - t_n}^t \left|\bar{g}^c(\Ups^n(s), \eta_n) \right|ds 
    <    \epsilon_n (b_1+1+O).
\end{align*}Thus, $\rho^{n, c}(\cdot) \to 0$ uniformly on each bounded interval. 

\textbf{Part (ii)}
We construct this proof using Maximum Theorem, which provides parameterized continuity of the optimizers. We begin by constructing the required elements  (i.e., appropriate objective function and domains).

\textbf{Ingredients for Maximum Theorem.} Fix any $\omega \in N$.  Then, the interpolated trajectory $\Ups^n(\cdot)$ and the ODE solution $\hat{\ups}^n(\cdot)$ are bounded as (see \eqref{eqn_bound}):
$$
\sup_{t} \Ups^n(t) = \sup_{n} \Ups_n < 1.1 M(\omega), \mbox{ and } \sup_{t \in [0, T]} \hat{\ups}^n(t) < 1.1 O(\omega).
$$
With the norm \eqref{eqn_norm}, let $\cd^2$ be the Banach space of all those $\ups(\cdot)$ such that both $\pc, \pa$ are left continuous with right limits on $[0, T]$ and $||\ups||< \infty$. Further, let $\cd^2_B$ be the space of all those $\ups(\cdot) \in \cd^2$ such that $||\ups|| \leq C(\omega) := 1.1(M(\omega) + O(\omega))$. 
Define $\cd_p :=  \cd^2_B \times \mathbb{R}^2 \times \mathbb{R}$, and then, define the function $F(\ups; \varepsilon, u_0, \eta): \cd^2_B \times \cd_p \to \mathbb{R}$ as:
\begin{align*}
F(\ups; \varepsilon, u_0, \eta) &:= \sum_{i \in \{a, c\}} \int_0^T \bigg(\Psi^i(t) - h^i(\ups; \varepsilon, u_0, \eta)(t) \bigg)^2 dt,
\end{align*}where for any $t$, the function $h^i$ is defined as:
\begin{align}\label{eqn_h}
    h^i(\ups; \varepsilon, u_0, \eta)(t) &:= u_{0, i} + \int_0^t \bar{g}^i(\ups(s), \eta) ds + \varepsilon^i(t).
\end{align}
We prove the required continuity via the parametric continuity of the following optimization problem:
\begin{align}\label{eqn_opt}
F^*(\varepsilon, u_0, \eta) := \inf_{\ups\in \cd^2_B} F(\ups; \varepsilon, u_0, \eta)  \ \forall \ (\varepsilon, u_0, \eta) \in \cd_p.
\end{align}
It is clear that the minimizer ($\ups^*$) of \eqref{eqn_opt} is the fixed point of  the operator $\ups \mapsto h(\ups; \cdot, \cdot, \cdot)$, if one exists, and then, $F(\ups^*; \cdot, \cdot, \cdot) = 0$. Also, from \eqref{eqn_linear2}, $\Ups^n(\cdot)$ is the optimizer of \eqref{eqn_opt} at parameters $(\varepsilon, u_0, \eta) = (\varepsilon^n, \ups_n, n)$, by choice of $C(\omega)$ and domain $\cd^2_B$. Similarly, the ODE solution $\hat{\ups}^n(\cdot) \in \mbox{arginf}_{\ups\in \cd^2_B} F(\ups; 0, \ups_n, n)$, again by choice of $C(\omega)$ and domain $\cd^2_B$. We complete the remaining proof in two steps. 

$\bullet$ $\mathbf{F(\ups; \varepsilon, u, \eta)}$ \textbf{is jointly continuous}, i.e., 
if $||\ups^n - \ups|| \to 0$, $u_n \to u$, $\eta_n \to \eta$ and $||\varepsilon^n - \varepsilon||\to 0$, we have, $F(\ups^n; \epsilon^n, u_n, \eta_n) \to F(\ups; \epsilon, u, \eta)$. Recall from \eqref{eqn_g_bound}, $\bar{g}^i(\ups(\cdot), \eta_k) \leq b_1 + 1 + O(\omega)$ for each $i\in\{a,c\}$. Further, by assumption \ref{d2}, we have, $m(\ups^{a, n}(s)\eta_n) \to m(\ups^a(s)\eta)$.
This implies $\bar{g}(\ups^n(s), \eta_n) \to \bar{g}(\ups(s), \eta)$. Then, by applying bounded convergence theorem twice, we have the claim. 

$\bullet$ $\mathbf{\cd^2_B}$ \textbf{is weak-compact.}
Consider the projection, $p_s^i(\ups) := \ups^i(s)$, for each $i \in \{a, c\}$ and $s \in [0, T]$. For each $s, i$, we have,  $p_s^i(\cd_B^2) = [-C(\omega), C(\omega)]$, which are clearly compact. By Tychonoff's Theorem, 
$\cd_B^2$
is weak-compact under the well known product topology on $\cd^2$.

Thus, the parametric optimization problem in \eqref{eqn_opt} satisfies the hypothesis of Berge's maximum theorem (e.g., \cite{feinberg2014berges}). So, the set of optimizers defined by (for all  $(\varepsilon, u, \eta)\in \cd_p$):
\begin{align}\label{eqn_h_star}
{\cal H}^*(\varepsilon, u, \eta) &:= \mbox{arg inf}_{\ups \in \cd_B^2} F(\ups; \varepsilon, u, \eta) = \{\ups^*(\varepsilon, u, \eta)\} 
\end{align}
is upper semi-continuous correspondence on $\cd_p$. 

Next, define the set $\Theta \subset \cd_P$ such that
\begin{align}\label{eqn_Theta}
\Theta := \{(\varepsilon^n, \Ups_n,  n), (0, \Ups_n, n) \mbox{ for all } n \}.
\end{align}
By Lemma \ref{lemma_unique_ode_sol}, the optimizers are unique when restricted to $\Theta \subset \cd_p$.  Thus,  ${\cal H}^*$ of \eqref{eqn_h_star} is  continuous on $\Theta$, when viewed as a function. In other words, when arguments (particularly, $(\varepsilon^n, \ups_n, n)$ and $(0, \ups_n, n)$) of ${\cal H}^*$ are close-by,  then the corresponding values of ${\cal H}^*$ are also close-by. Also, by part (i), these arguments of ${\cal H}^*$ are closing-in, as $n \to \infty$.
\eop

\begin{lemma}\label{lemma_unique_ode_sol}
The optimizer of the problem in \eqref{eqn_opt} is unique if $F^* = 0$. This also implies, the solution for ODE \eqref{eqn_ODE_stpbp} is unique for any given initial condition over any bounded interval.
\end{lemma}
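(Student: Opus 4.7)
The plan is to show that $F^{*}(\varepsilon,u_{0},\eta)=0$ together with the form of $F$ forces any optimizer to satisfy the integral fixed-point equation, and then to prove that this fixed-point equation has at most one solution in $\cd^{2}_{B}$. The ODE uniqueness will drop out as the special case $\varepsilon\equiv 0$, since the fixed-point equation is then exactly the integral form of \eqref{eqn_ODE_stpbp}.

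First I would reduce to the fixed-point uniqueness. Because the integrand in $F$ is non-negative and $\int_{0}^{T}(\Psi^{*,i}(t)-h^{i}(\ups^{*};\varepsilon,u_{0},\eta)(t))^{2}dt=0$, we have $\Psi^{*,i}(t)=h^{i}(\ups^{*};\varepsilon,u_{0},\eta)(t)$ for a.e.\ $t\in[0,T]$. Since $\bar{g}(\ups^{*}(\cdot),\eta)$ is bounded (by \eqref{eqn_g_bound}), the map $t\mapsto h^{i}(\ups^{*};\varepsilon,u_{0},\eta)(t)$ has the same regularity as $t\mapsto\varepsilon^{i}(t)$, namely left-continuous with right limits; the same is true of $\Psi^{*,i}$ by definition of $\cd^{2}$. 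Left continuity together with a.e.\ equality gives pointwise equality on $[0,T]$.

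Next I would prove uniqueness of solutions of the fixed-point equation. Suppose $\ups_{1},\ups_{2}\in\cd^{2}_{B}$ both satisfy it with the same parameters $(\varepsilon,u_{0},\eta)$. The $\varepsilon$ and $u_{0}$ terms cancel, yielding
\begin{equation*}
\Delta^{i}(t)\;=\;\int_{0}^{t}\bigl[\bar{g}^{i}(\ups_{1}(s),\eta)-\bar{g}^{i}(\ups_{2}(s),\eta)\bigr]\,ds,\qquad \Delta:=\ups_{1}-\ups_{2}.
\end{equation*}
Define the first hitting times $\tau_{i}:=\inf\{t\geq 0:\psi^{c}_{i}(t)\leq 0\}$ (set to $T$ if no such $t$ exists). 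On $[0,\tau_{1}\wedge\tau_{2})$ both indicators in $\bar{g}$ equal one, and assumption \ref{d2} (Lipschitz continuity of $m$) gives
\begin{equation*}
|\bar{g}^{i}(\ups_{1},\eta)-\bar{g}^{i}(\ups_{2},\eta)|\;\leq\;(L_{m}\eta+1)\bigl(|\Delta^{a}|+|\Delta^{c}|\bigr).
\end{equation*}
Gr\"onwall's inequality applied to $|\Delta^{a}|+|\Delta^{c}|$ then forces $\Delta\equiv 0$ on $[0,\tau_{1}\wedge\tau_{2}]$. In particular $\psi^{c}_{1}(\tau_{1}\wedge\tau_{2})=\psi^{c}_{2}(\tau_{1}\wedge\tau_{2})$, which rules out $\tau_{1}\neq\tau_{2}$; hence $\tau_{1}=\tau_{2}=:\tau$ and $\ups_{1}(\tau)=\ups_{2}(\tau)$.

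Finally I would propagate uniqueness past $\tau$. For $t\geq\tau$ both indicators $1_{\psi^{c}_{i}>0}$ vanish at the instant $\psi^{c}_{i}=0$, so $\bar{g}(\ups_{i},\eta)=0$ there and the fixed-point equation reduces locally to $\ups_{i}(t)=\ups_{i}(\tau)+[\varepsilon(t)-\varepsilon(\tau)]$. In the pure ODE case $\varepsilon\equiv 0$ the solutions are frozen at their common value $\ups_{1}(\tau)=\ups_{2}(\tau)$, so $\ups_{1}\equiv\ups_{2}$ on $[0,T]$ and the ODE uniqueness statement follows at once. In the general fixed-point case with nonzero $\varepsilon$, the same Gr\"onwall argument is iterated on the (at most countable family of) maximal subintervals of $[0,T]$ on which $\psi^{c}$ stays strictly positive, each determined by the common driving term $\varepsilon$; on the complementary set both trajectories equal the common frozen value plus the shared increment of $\varepsilon$. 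Concatenating gives $\ups_{1}\equiv\ups_{2}$ on $[0,T]$.

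The main obstacle is the discontinuity of $\bar{g}$ at $\psi^{c}=0$, which blocks a naive global Gr\"onwall estimate; it is resolved by the observation above that once either coordinate hits zero, $\bar{g}$ vanishes and the dynamics freeze, so the two solutions must reach and leave the boundary $\{\psi^{c}=0\}$ simultaneously driven only by the shared $\varepsilon$.
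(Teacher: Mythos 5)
Your proposal is correct and follows essentially the same route as the paper: both reduce $F^*=0$ to the integral fixed-point equation for $(h^a,h^c)$ and then conclude uniqueness by a Gr\"onwall estimate with constant of the form $(k_m\eta+1)$ coming from assumption \ref{d2}, with the ODE statement as the $\varepsilon\equiv 0$ case. The only difference is that the paper applies the Gr\"onwall bound directly on all of $[0,T]$ without separating out the set where the indicator $1_{\{\pc>0\}}$ switches, so your stopping-time/freezing bookkeeping is additional care layered on the same argument rather than a different method.
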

\begin{proof} If $F^*= 0$ for some parameter say $(\varepsilon, u_0, \eta)$, then by definition, any optimizer is a fixed point for $(h^a, h^c)$, see \eqref{eqn_h}. If possible, let $\ups_1$ and $\ups_2$ be two such distinct fixed points. Then, for each $i \in \{a, c\}$ and $j \in \{1, 2\}$, we have:
\begin{align*}
    \psi_j^i(t) &= u_0 + \int_0^t \bar{g}^i(\ups_j(s),  \eta) ds + \varepsilon^i(t) \mbox{ for any } t\geq 0.
\end{align*}
Let $k_m$ be the Lipschitz constant for function $m(\cdot)$ (see assumption \ref{d2}). Then, we have (see \eqref{eqn_ODE_stpbp}):
\begin{align}\label{eqn_pc}
    |\psi^{c}_1(t) - {\psi}^{c}_2(t)| &= \left|\int_0^t \bigg(\bar{g}^c(\ups_1(s), \eta) - \bar{g}^c(\ups_2(s), \eta) \bigg) ds \right| \nonumber\\
    &\leq \int_0^t \left|  m(\psi^{a}_1(s) \eta) - m(\psi^a_2(s) \eta)\right| ds + \int_0^t \left| \psi^{c}_1(s) - {\psi}^{c}_2(s)  \right| ds \nonumber \\
    &\leq  (k_m \eta +1)  \int_0^t u(s) ds.
\end{align}Similarly,
\begin{align}\label{eqn_pa}
    |\psi^a_1(t) - {\psi}^a_2(t)| 
    \leq (k_m \eta +1) \int_0^t  u(s) ds.
\end{align}
Define $u(s) := \max\{\left| \psi_1^i(s) - \psi_2^i(s) \right|: i \in \{a, c\}\}$ for each $s \geq 0$. Then, from \eqref{eqn_pc}, \eqref{eqn_pa}, we have:
\begin{align*}
    u(t) &\leq \sum_{i \in \{a, c\}}\left| \psi_1^{i}(s) - \psi_2^{i}(s) \right| \leq 2(k_m \eta+1) \int_0^t u(s) ds.
\end{align*}
Applying Gronwall inequality, we have $u(t) = 0$ for each $t \in [0, T]$. This implies, $||\psi_1^{a} - \psi_2^{a}|| = 0$ and $||\psi_1^{c} - \psi_2^{c}|| = 0$, i.e., $||\ups_1 - \ups_2|| = 0$.
\end{proof}
    \chapter{For Chapter \ref{ch:MFG}}\label{appendix_MFG}


\noindent \underline{\textbf{Proof of Lemma \ref{lemma_beta}:} }
The proof of this Lemma follows from \cite[Theorem 2.1, pp. 127]{kushner2003stochastic} under \textbf{(A)}, if further assumptions (A2.1)-(A2.5) of the cited Theorem hold, which we prove next. At first, observe $\sup_n E[L_{u, n}^2] < 2 < \infty$ (since from \eqref{eqn_dynamics_beta}, $\beta_{u, n} \leq 1$) for each $u$. Further, from \eqref{eqn_cond_exp}, there is no bias term as in the cited Theorem and $g_u(\cdot)$ is Lipschitz continuous. Lastly, $\sum_{i\geq 1} \epsilon_i^2 < \infty$ (for $\epsilon_i := 1/i$). \eop





\vspace{2mm}

\noindent \underline{\textbf{Proof of Theorem \ref{thrm_AI}:} } Consider $u \in \{R, F\}$. 
By hypothesis (B.i), (B.ii), $g_u(\beta_u^\eta) = 0$ for some $\beta_u^\eta$. Further, by hypothesis (B.iv), i.e., local stability, $g_u(\beta_u) > 0$ for all $\beta_u \in (\beta_u^\eta-\epsilon, \beta_u^\eta)$,  and $g_u(\beta_u) < 0$ for all $\beta_u \in (\beta_u^\eta, \beta_u^\eta + \epsilon)$, for some $\epsilon > 0$. Since $g_u(\cdot)$ is a continuous function with unique zero (see hypothesis (B.iii)), $g_u(\beta_u) > 0$ for all $\beta_u \in [0, \beta_u^\eta)$ and $g_u(\beta_u) < 0$ for all $\beta_u \in (\beta_u^\eta, 1]$. Thus, $t \mapsto \beta_u(t)$ is strictly increasing and decreasing, if $\beta_u(0) = \beta_u \in [0, \beta_u^\eta)$ and $(\beta_u^\eta, 1]$ respectively.  This implies that the assumption \textbf{(A)} of Lemma \ref{lemma_beta} is satisfied with $\cA_u = \{\beta_u^\eta\}$ and $\cD_u = [0,1]$ for each $u$. Thus, by Lemma \ref{lemma_beta}, $\beta_{u, k} \to \beta_u^\eta$ w.p. $1$. 

For the given $R, \gamma$, we now prove (a)-(c) for the game $\G(R, \gamma, \omega)$. By hypothesis (B.i), (B.ii) and above arguments, $P_{\bmu_\eta}(S; \theta, \delta) = 1$ (see \eqref{eqn_simplified_prob_success}). Also, from \eqref{eqn_util} and \eqref{eqn_R_gamma}, {\small$U(1, \bmu_\eta)$ $ = U(2, \bmu_\eta) > U(0, \bmu_\eta)$}. Thus, $\S(\bmu_\eta) = \mbox{arg max}_s U(s, \bmu)$, which by Definition \ref{defn_NE_MFG} implies that $\bmu_\eta$ is an AI-NE. Hence, part (a). 

Now, if possible, let $\bmu$ be another NE such that $P_{\bmu}(S; \theta, \delta) =: q \in [0,1]$. By \eqref{eqn_util}, {\small$U(1, \bmu) \geq U(0, \bmu)$}, as $Q_p \ge Q_{np}$. 

First consider the case with $U(1, \bmu) > U(0, \bmu)$. Thus, $0 \notin \S(\bmu)$, and hence, any $\bmu$ with $\bmu_0 > 0$ can not be a NE for this case (see Definition \ref{defn_NE_MFG}). For the rest, we divide the proof in two sub-cases:

$\bullet$ If $\S(\bmu) = \{1\}$, then $\bmu = \bmu_{1-\mua}$. By Lemma \ref{lemma_beta_only_type1}, $\beta_F(\bmu) < \theta_a(\bmu)$ and $\beta_R(\bmu) < \delta_a(\bmu)$. Thus, $\bmu$ can be a NE (if at all) when $q = 1-p$. Further, $\bmu$ being a NE implies that $U(1, \bmu) \geq U(2, \bmu)$, with utility function as in \eqref{eqn_util}. That is, $R, \gamma$ should satisfy the following relation:
$$
    R(1-p) (\gamma-1) \leq \ce, \mbox{ i.e., } g(\gamma) \leq 1, \mbox{ where}
$$
$g(\gamma) := (\gamma-(\gamma-1)(\eta+\mua)) (1-p)$. Observe, $\gamma \mapsto g(\gamma)$ is increasing and $g(\underline{\gamma}(\eta)) = 1$; thus, $g(\gamma) > 1$ for $\gamma$ given in hypothesis. This contradicts $\bmu$ being a  NE. 

$\bullet$ If $\S(\bmu) = \{2\}$ or $\{1, 2\}$, then $\bmu = \bmu_x$ for some $x\in [0, 1-\mua) - \{\eta\}$. One can show that $U(1, \bmu) > U(2, \bmu)$ (for any $q \in [0,1]$). Thus, $\S(\bmu) \not\subset \mbox{argmax}_s U(s, \bmu) = \{1\}$, which contradicts $\bmu$ being a  NE. 



Lastly, consider the case where $U(1, \bmu) = U(0, \bmu)$; this is possible only when $q = 0$ and $\cnp = \cp$. Clearly,  $\cp > \cp - \ce = U(2, \bmu)$. 
Thus, $2 \notin \S(\bmu)$. If $\bmu = \bmu_{1-\mua}$, from Lemma \ref{lemma_beta_only_type1}, $\beta_R(\bmu) < \delta_a(\bmu)$. This implies, $q \neq 0$, which is a contradiction. Another possibility for $\bmu$ is $(1-\mua, 0, 0)$ which can not be NE as $\beta_F(\bmu) = 0 = \theta_a(\bmu)$, leading to $q \geq p \neq 0$. The last possibility for $\bmu$ is $(x, 1-x-\mua, 0)$ for any $x \in (0, 1-\mua)$, for which we have:
\begin{align*}
\beta_R(\bmu) &= \alpha_R\left(\frac{1-x-\mua}{1-x} \right) < \delta \left(\frac{1-x-\mua}{1-x} \right) = \delta_a(\bmu).
\end{align*}Thus, $q \neq 0$; hence, any such $\bmu$ also can not be a NE.  \eop

\begin{lemma}\label{lemma_beta_only_type1}
For $\bmu = \bmu_{1-\mua}$,  $\beta_{u, k}(\bmu) \to \alpha_u(1-\mua)$ w.p. $1$, as $k \to \infty$, for each $u$. 
\end{lemma}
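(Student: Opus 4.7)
The plan is to apply Lemma \ref{lemma_beta} directly after explicitly computing the ODE \eqref{eqn_ode} corresponding to $\bmu = \bmu_{1-\mua}$ and identifying its attractor.

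First, I would evaluate the proportions at $\bmu = \bmu_{1-\mua} = (0, 1-\mua, 0)$: from \eqref{eqn_fp}, we have $\eta(\bmu_{1-\mua}) = (1-\mua)/(1-\mua+\mua) = 1-\mua$ and $\eta_a(\bmu_{1-\mua}) = \mua$, so in particular $1-\eta-\eta_a = 0$. Substituting into the expression for $g_u$ in \eqref{eqn_cond_exp}, the warning-based term vanishes entirely and we obtain the simple affine form
\begin{equation*}
g_u(\beta) = \alpha_u(1-\mua) - \beta,
\end{equation*}
which is Lipschitz continuous on $[0,1]$ and admits the unique zero $\beta_u^* = \alpha_u(1-\mua) \in (0,1)$.

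Next, I would verify that $\beta_u^*$ is an asymptotically stable equilibrium of the ODE $\dot\beta_u = g_u(\beta_u)$ with domain of attraction equal to the whole interval $[0,1]$. This is immediate since $g_u(\beta) > 0$ for $\beta < \beta_u^*$ and $g_u(\beta) < 0$ for $\beta > \beta_u^*$, so every solution started in $[0,1]$ converges monotonically to $\beta_u^*$. Hence $\cA_u = \{\alpha_u(1-\mua)\}$ and $\cD_u = [0,1]$.

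Finally, assumption \textbf{(A)} required by Lemma \ref{lemma_beta} holds trivially, because $\beta_{u,k} \in [0,1] = \cD_u$ for every $k \geq 1$ by construction (see \eqref{eqn_dynamics_beta}), so $P(\beta_{u,k} \in \cD_u \text{ i.o.}) = 1$. Invoking Lemma \ref{lemma_beta} then yields $\beta_{u,k} \to \cA_u = \{\alpha_u(1-\mua)\}$ w.p. $1$ for each $u \in \{R,F\}$, completing the proof. There is no real obstacle here; the result is essentially a corollary of Lemma \ref{lemma_beta} once the degenerate structure of $g_u$ at $\bmu_{1-\mua}$ (i.e., the disappearance of the $r(\alpha_u,\omega(\beta))$ term due to $\mu_2 = 0$) is made explicit.
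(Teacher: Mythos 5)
Your proof is correct, but it takes a different route from the paper. At $\bmu_{1-\mua}=(0,1-\mua,0)$ there are no type $2$ users, so the tag given at each decision epoch does not depend on the warning level or on the past at all: the $k$-th tag is fake exactly when the participant is a type $1$ user (w.p. $1-\mua$) who fake-tags (w.p. $\alpha_u$), independently across epochs. The paper exploits this directly: it writes $\beta_{u,k}$ as the empirical mean of these i.i.d. Bernoulli indicators (cf. \eqref{eqn_dynamics_beta}) and concludes by the strong law of large numbers together with \eqref{eqn_cond_exp}, a one-line argument. You instead run the full stochastic-approximation machinery: you correctly compute $\eta=1-\mua$, $\eta_a=\mua$, observe that the $r(\alpha_u,\omega(\cdot))$ term in \eqref{eqn_cond_exp} vanishes so that $g_u(\beta)=\alpha_u(1-\mua)-\beta$, identify the unique globally asymptotically stable equilibrium $\cA_u=\{\alpha_u(1-\mua)\}$ with $\cD_u=[0,1]$, note that assumption \textbf{(A)} holds trivially, and invoke Lemma \ref{lemma_beta}. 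All steps check out (in particular $g_u$ is affine, hence Lipschitz, and the attractor is a verified singleton rather than an assumed one), so the conclusion follows. The trade-off: your argument is heavier but fits the general ODE framework and would survive if the tagging probabilities retained some state dependence; the paper's argument is more elementary and makes transparent that in this degenerate configuration the result is nothing more than the SLLN for i.i.d. Bernoulli variables with mean $\alpha_u(1-\mua)$.
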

\begin{proof}
From \eqref{eqn_dynamics_beta}, $\beta_{u, k}$ can be re-written as follows:
\begin{align*}
    \beta_{u, k}(\bmu) &= \frac{\sum_{i=1}^k 1_{\{\mbox{tag for $u$-post} = F\}}}{k}.
\end{align*}
Thus, $\beta_{u, k}(\bmu) \to \alpha_u (1-\mua)$ w.p. $1$, as $k \to \infty$, by strong law of large numbers, and \eqref{eqn_cond_exp}.
\end{proof}

\noindent \underline{\textbf{Proof of Theorem \ref{thrm_response1}:}} The proof is in $3$ steps:

\noindent (a) $\bmu_\eta$ is an AI-NE such that $\beta_F^\eta \geq \widetilde{\theta}(1-\mua) \geq \theta(1-\mua)$ and $\beta_R^\eta < \delta_a = \delta_a(\bmu_\eta)$,

\noindent (b) by Theorem \ref{thrm_AI}, any $\bmu$ with $\mu_0 > 0$ or any $\bmu_x$ for $x \in [0, \eta) \cup \{1-\mua\}$ is not a NE,

\noindent (c) $\bmu_{x_\eta}$ can be the only other NE, if at all $x_\eta > \eta_{\widetilde \theta}^*$ and $\beta_F^{x_\eta} < \theta(1-\mua)$. 


Define $x_F := \frac{1-\mua - \frac{1}{cw\alpha_R(\Delta_R)^a}}{1-\alpha_F}$.
Define $\rho_F(x) := 1-(1-x-\mua)cw\alpha_R(\Delta_R)^a$ for $x \in (0,1)$. Then, $\rho_F(x) = 0$ for $x =  x_F(1-\alpha_F)$. Also, $\rho_F(x)$ is increasing in $x$. Therefore, $\rho_F(x) < 0$ for $x < x_F(1-\alpha_F)$ and $\rho_F(x) > 0$ for $x >  x_F(1-\alpha_F)$. 

Next define $\overline{\rho}_F(x) := \alpha_F x + 1 - x - \mua$ for $x \in (0,1)$. Then, $\overline{\rho}_F(x) = \frac{1}{cw\alpha_R(\Delta_R)^a}$ for $x = x_F$ and $\overline{\rho}_F(x)$ is decreasing in $x$. Therefore, $\overline{\rho}_F(x) \in {\cal R}_F:= \left\{y: y < \frac{1}{cw\alpha_R(\Delta_F)^a}\right\}$ for all $x > x_F$ and $\overline{\rho}_F(x) \in {\cal R}_F^c$ for all $x \leq x_F$.

In all, by above, $\overline{\rho}_F(x) \in {\cal R}_F^c$ for $x \leq x_F(1-\alpha_F)$. If not, $\rho_F(x) > 0$ for all $x > x_F(1-\alpha_F)$, and then by Lemma \ref{lemma_beta_traj}, both $\overline{\rho}_F(x)$ and $\frac{\alpha_F x}{\rho_F(x)}$ are in ${\cal R}_F^c$ for $x \in (x_F(1-\alpha_F), x_F]$; both $\overline{\rho}_F(x)$ and $\frac{\alpha_F x}{\rho_F(x)}$ are in ${\cal R}_F$ for $x > x_F$. Further, by Lemma \ref{lemma_beta_traj}, $\beta_F^x$ (the attractor of ODE \eqref{eqn_ode}) is given by:
\begin{align}\label{eqn_betaF}
\beta_F^x = 
\begin{cases}
 \overline{\rho}_F(x) \mbox{ if } x \in (0, x_F], \\
 \frac{\alpha_F x}{\rho_F(x)} \mbox{ if } x \in (x_F, 1).
\end{cases}
\end{align}

Similarly, again by Lemma \ref{lemma_beta_traj}, one can show that $\beta_R^x$ is:
\begin{align}\label{eqn_betaR}
\beta_R^x = 
\begin{cases}
 \overline{\rho}_R(x) \mbox{ if } x \in (0, x_R], \\
 \frac{\alpha_R x}{\rho_R(x)} \mbox{ if } x \in (x_R, 1),
\end{cases}
\end{align}
for $\overline{\rho}_R(x) := \alpha_R x + 1 - x - \mua$, $\rho_R(x) := 1-(1-x-\mua)cw\alpha_R$ and $x_R := \frac{1-\mua-\frac{1}{cw\alpha_R}}{1-\alpha_R}$.


 Observe that by the choice of $w$ as in Algorithm \ref{alg_AI}, 
\begin{align}\label{eqn_bound_cwalphaR}
    cw\alpha_R > \frac{1}{(\Delta_R)^a \widetilde{\theta} (1-\mua)}.
\end{align}
Thus, from \eqref{eqn_notations}, $\eta^*_{\widetilde{\theta}} < x_F$. Consider any $x \leq \eta^*_{\widetilde{\theta}}$. By \eqref{eqn_betaF}, 
$\beta_F^x = \overline{\rho}_F(x)$. Since $\beta_F^x$ strictly decreases with $x$, therefore, $\beta_F^x \geq \beta_F^{\eta^*_{\widetilde{\theta}}} = \widetilde{\theta}(1-\mua) \geq \theta(1-\mua)$, by Lemma \ref{lemma_theta_tilde}. 

Observe that $\eta = \overline{\eta} + \epsilon_2 \leq \eta^*_{\widetilde{\theta}}$ (see claim $1$ at the end of the proof for details), thus proving conditions (B.ii), (B.iii) and (B.iv) for $u = F$ of Theorem \ref{thrm_AI} for $\bmu_\eta$.


Again by the choice of $w, \epsilon_2$ and $\delta>\alpha_R$, $\eta - x_R > \overline{\eta}-x_R > 0$ (see claim $2$ at the end of the proof). Therefore, by \eqref{eqn_betaR}, $\beta_R^\eta = \frac{\alpha_R \eta}{{\rho}_R(\eta)}$. 
By the choice of $w$ and since $\beta_R^x$ strictly decreases as $x$ increases, we have, $\beta_R^\eta < \beta_R^{x_R} = 1/(cw \alpha_R) < \delta_a$. 
This proves conditions (B.i), (B.iii) and (B.iv) for $u = R$ of Theorem \ref{thrm_AI}  for $\bmu_\eta$. 

In all, by Theorem \ref{thrm_AI}, $\G(R, \gamma, \omega)$ is an AI game with $\bmu_\eta$ as a NE such that it achieves $(\widetilde{\theta}, \delta)$-success (i.e., $\td$ as $\widetilde{\theta} \geq \theta$ by Lemma \ref{lemma_theta_tilde} \TR{in \cite{arxiv}}{}); further, any $\bmu_x$ for $x\in [0, \eta) \cup \{1-\mua\}$  and any $\bmu$ with $\mu_0 > 0$ can not be a NE, by Theorem \ref{thrm_AI}. This complete steps (a) and (b).

Consider any $x \in (\eta, 1-\mua)$. Since $\beta_R^x$ decreases in $x$, $\beta_R^x < \beta_R^\eta < \delta_a$. This proves (b). Recall $\beta_F^x \geq \theta(1-\mua)$ for $x \in (\eta, \eta^*_{\widetilde{\theta}}]$. Thus:
$$
P_{\bmu_x}(S; \widetilde{\theta}, \delta) = 1 \mbox{ for each } x \in (\eta, \eta^*_{\widetilde{\theta}}].
$$
For the given $R, \gamma$ and chosen $x$, one can show that $U(1, \bmu_x) < U(2, \bmu_x)$. Thus, $\S(\bmu_x) = \{1,2\} \not\subset \mbox{argmax}_s u(s, \bmu) = \{2\}$; under Definition \ref{defn_NE_MFG}, $\bmu_x$ is not a NE. In fact, if $P_{\bmu_x}(S;\theta, \delta) = 1$ for some $x \in (\eta^*_{\widetilde{\theta}}, 1-\mua)$, then again using above arguments, one can show that $\bmu_x$ is not a NE. 

Recall $\beta_R^x < \delta_a$ for each $x \in (\eta, 1-\mua)$. Further by definition of $x_\eta$, $U(1, \bmu_x) = U(2, \bmu_x)$ only for $x = x_\eta$ with $
P_{\bmu_x}(S; \theta, \delta) = 1-p$; further, by \eqref{eqn_util}, $x_\eta$ is the only such possible $x$. Thus, by Definition \ref{defn_NE_MFG}, $\bmu_{x_\eta}$ is a NE, but not AI-NE, if at all $x_\eta > \eta^*_{\widetilde{\theta}}$ and $\beta_F^{x_\eta} < \theta(1-\mua)$. This completes step (c). 

Now, we will prove the sub-claims made above. 

\noindent \underline{Claim 1:} $\eta^*_{\widetilde{\theta}}>\overline{\eta}$. Let us consider the difference:
\begin{align*}
        \eta^*_{\widetilde{\theta}} - \overline{\eta} &=  \eta^*_{\widetilde{\theta}} - \frac{\delta_a((1-\mua)cw\alpha_R - 1)}{cw\alpha_R \delta_a - \alpha_R}\\
        &= \frac{cw\alpha_R   \delta_a (\eta^*_{\widetilde{\theta}} - (1-\mua))  - \eta^*_{\widetilde{\theta}} \alpha_R + \delta_a }{cw\alpha_R \delta_a - \alpha_R}\\
        &=  \frac{-(1 - \mua -  \eta^*_{\widetilde{\theta}})  \delta_a\left( cw\alpha_R   - \frac{- \eta^*_{\widetilde{\theta}} \alpha_R + \delta_a}{\delta_a (1 - \mua  - \eta^*_{\widetilde{\theta}} )} \right) }{cw\alpha_R \delta_a - \alpha_R}
\end{align*}
Now, $1 - \mua >  \eta^*_{\widetilde{\theta}} $. Further, by the choice of $w$, $cw\alpha_R < \fone$, 
which implies the numerator in $\eta^*_{\widetilde{\theta}} - \overline{\eta}$ is strictly positive. Furthermore, we have $cw\alpha_R > \frac{1}{1-\mua} > \frac{\alpha_R}{\delta(1-\mua)}$. This implies that the denominator  in $\eta^*_{\widetilde{\theta}} - \overline{\eta}$  is strictly positive. Therefore, $\eta^*_{\widetilde{\theta}}>\overline{\eta}$.

\noindent \underline{Claim 2:} $\eta > x_R$

\vspace{-4mm}
{\small
\begin{align*}
    \eta - x_R &> \overline{\eta} - x_R \\
    &\hspace{-6mm}= \frac{\delta_a((1-\mua)cw\alpha_R - 1)}{cw\alpha_R \delta_a - \alpha_R} - \frac{1}{1-\alpha_R} \left( 1-\mua-\frac{1}{cw\alpha_R}\right)\\
    &\hspace{-6mm}= ((1-\mua)cw\alpha_R - 1) \left( \frac{\delta_a }{cw\alpha_R \delta_a - \alpha_R} - \frac{1}{cw\alpha_R(1-\alpha_R)} \right)\\
    &\hspace{-6mm}= \frac{((1-\mua)cw\alpha_R-1) (1-cw\alpha_R\delta_a)}{cw\alpha_R(1-\alpha_R) (cw\alpha_R\delta_a - \alpha_R)} > 0 \mbox{ (by choice of $w$)}.  
\end{align*}} \eop

\begin{lemma}\label{lemma_beta_traj}
    Define $\overline{\rho}_u := \alpha_u\eta + 1 - \eta - \eta_a$ and $\rho_u := 1 - (1-\eta-\eta_a)cw\alpha_R(\Delta_u)^a$ for $\eta \in (0, 1-\eta_a)$ and $u \in \{R, F\}$. Consider the regime, ${\cal R}_u := \left\{x: x < \frac{1}{cw\alpha_R(\Delta_u)^a}\right\}$. Then, for the response function given in \eqref{eqn_response1}, the following statements are true:
    \begin{enumerate}
        \item if $\rho_u \leq 0$, then $\overline{\rho}_u \in {\cal R}_u^c$, and the attractors of ODE \eqref{eqn_ode}, ${\cal A}_u = \{\overline{\rho}_u\}$; 
        \item if $\rho_u > 0$, then $\overline{\rho}_u \in {\cal R}_u^c$ if and only if $\frac{\alpha_u\eta}{\rho_u} \in {\cal R}_u^c$. Further, if $\overline{\rho}_u \in {\cal R}_u$ then ${\cal A}_u = \{\frac{\alpha_u\eta}{\rho_u}\}$, while if $\overline{\rho}_u \in {\cal R}^c_u$ then ${\cal A}_u = \{\overline{\rho}_u\}$.
    \end{enumerate}
\end{lemma}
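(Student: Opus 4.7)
}

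The plan is to reduce the lemma to a piecewise-linear analysis of the scalar ODE $\dot\beta_u = g_u(\beta_u)$ defined in \eqref{eqn_cond_exp} under the specific warning mechanism \eqref{eqn_warning_MFG}. First I would compute $r(\alpha_u,\omega(\beta))$ explicitly. Substituting $\omega(\beta)=w^{1/b}\alpha_R^{(1-a)/b}\beta^{1/b}$ into the polynomial response \eqref{eqn_response1} and simplifying, one obtains $h(\alpha_u,\omega(\beta))=cw\alpha_R(\Delta_u)^a\beta=:K_u\beta$, so $r(\alpha_u,\omega(\beta))=\min\{K_u\beta,1\}$. The threshold $\beta=1/K_u$ is precisely the boundary of $\mathcal R_u$, and the function $g_u$ becomes piecewise linear:
\begin{equation*}
g_u(\beta)=\begin{cases} \alpha_u\eta-\rho_u\beta, & \beta\in\mathcal R_u,\\ \overline{\rho}_u-\beta, & \beta\in\mathcal R_u^{\,c},\end{cases}
\end{equation*}
which is continuous at $\beta=1/K_u$ since both definitions agree there. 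Once this is in hand, the lemma is a case analysis on the sign of $\rho_u$ and on which side of the threshold $\overline{\rho}_u$ lies.

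Next I would carry out the two cases. For part (1), with $\rho_u\le 0$, note that $(1-\eta-\eta_a)K_u\ge 1$ forces $\overline{\rho}_u\ge \alpha_u\eta+1/K_u\ge 1/K_u$, hence $\overline{\rho}_u\in\mathcal R_u^{\,c}$. On $\mathcal R_u$ we have $g_u(\beta)=\alpha_u\eta-\rho_u\beta>0$, so trajectories leave $\mathcal R_u$ monotonically; on $\mathcal R_u^{\,c}$, the linear ODE $\dot\beta=\overline{\rho}_u-\beta$ has the unique globally attracting equilibrium $\overline{\rho}_u$, and the attractor of the full ODE is therefore $\{\overline{\rho}_u\}$. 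For part (2), with $\rho_u>0$, the crucial (and cleanest) observation is the equivalence
\begin{equation*}
\overline{\rho}_u\in\mathcal R_u^{\,c}\iff \overline{\rho}_u\ge 1/K_u\iff (\alpha_u\eta+1-\eta-\eta_a)K_u\ge 1\iff \alpha_u\eta K_u\ge\rho_u\iff \tfrac{\alpha_u\eta}{\rho_u}\in\mathcal R_u^{\,c},
\end{equation*}
which gives the first assertion of (2). If $\overline{\rho}_u\in\mathcal R_u$, then $\alpha_u\eta/\rho_u\in\mathcal R_u$ is an interior equilibrium of the linear branch on $\mathcal R_u$ with negative slope $-\rho_u$, hence locally asymptotically stable, and on $\mathcal R_u^{\,c}$ we have $g_u(\beta)=\overline{\rho}_u-\beta<0$ (since $\beta\ge 1/K_u>\overline{\rho}_u$), so trajectories from $\mathcal R_u^{\,c}$ decrease into $\mathcal R_u$ and then converge to $\alpha_u\eta/\rho_u$; thus $\mathcal A_u=\{\alpha_u\eta/\rho_u\}$. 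The symmetric sub-case $\overline{\rho}_u\in\mathcal R_u^{\,c}$ is handled analogously: on $\mathcal R_u$, $g_u(\beta)=\rho_u(\alpha_u\eta/\rho_u-\beta)>0$ since $\beta<1/K_u\le\alpha_u\eta/\rho_u$, so trajectories cross into $\mathcal R_u^{\,c}$ and converge to $\overline{\rho}_u$.

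The main (minor) obstacle will be bookkeeping at the kink $\beta=1/K_u$: I have to verify continuity of $g_u$ there (it holds because $\alpha_u\eta-\rho_u(1/K_u)=\alpha_u\eta+1-\eta-\eta_a-1/K_u=\overline{\rho}_u-1/K_u$) and confirm that the ODE has a unique absolutely continuous solution through this point so that the gluing of the two linear flows is legitimate; this follows from the Lipschitz continuity of $g_u$. Everything else reduces to elementary analysis of one-dimensional linear ODEs with negative coefficient on the dominant branch, which is standard and can be invoked directly (for instance, via Theorem \ref{thrm_beta_ODE_prop}) to certify that the identified equilibrium is the attractor with basin $[0,1]$.
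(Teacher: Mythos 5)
Your proposal is correct and follows essentially the same route as the paper: it reduces $g_u$ under the warning \eqref{eqn_warning_MFG} to the piecewise-linear form $\alpha_u\eta-\rho_u\beta$ on ${\cal R}_u$ and $\overline{\rho}_u-\beta$ on ${\cal R}_u^{c}$, uses the identical equivalence chain for part (2), and tracks monotone trajectories across the kink to identify the unique attractor in each case. The only difference is presentational: the paper writes out the explicit exponential solutions on each linear piece, whereas you certify convergence via sign/monotonicity and standard one-dimensional linear stability, which amounts to the same analysis.
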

\begin{proof}
    At first, let $\rho_u \leq 0$. Then, by definition of $\rho_u$, $1-\eta-\eta_a \geq \frac{1}{cw\alpha_R(\Delta_u)^a}$. Since $\eta > 0$, therefore, 
    $$
    \overline{\rho}_u \geq \alpha_u \eta + \frac{1}{cw\alpha_R(\Delta_u)^a} > \frac{1}{cw\alpha_R(\Delta_u)^a} \implies \overline{\rho}_u \in {\cal R}_u^c.
    $$ 
    
    The ODE \eqref{eqn_ode} can be written in the simplified form as follows (recall \\ $r(\alpha_u, \omega(\beta_u)) = \min\{1, cw\alpha_R(\Delta_u)^a\beta_u\}$):
\begin{align}\label{eqn_simplified_ODE}
\dot{\beta_u} = 
\begin{cases}
 \overline{\rho}_u- \beta_u, \mbox{ if } r(\alpha_u, \omega(\beta_u)) = 1, \mbox{ i.e., } \beta_u \in {\cal R}_u^c, \\
 \alpha_u \eta - \rho_u\beta_u, \mbox{ if } r(\alpha_u, \omega(\beta_u)) < 1, \mbox{ i.e., } \beta_u \in {\cal R}_u.
\end{cases}
\end{align}
Clearly, the RHS of the above ODE is piecewise linear, and hence the solution $\beta_u(\cdot)$ exists.

Now, say $\beta_u(0) \in {\cal R}_u$. Then, $\dot{\beta_u} > 0$, thus, $\beta_u(t)$ increases with $t$. This implies the existence of $\tau <\infty$ such that $cw\alpha_R(\Delta_u)^a\beta_u(\tau) = 1$. Then, the solution of the ODE for all $t\geq \tau$ is:
\begin{align}\label{eqn_beta_sol}
    \beta_u(t) = \overline{\rho}_u + e^{-t+\tau}(\beta_u(\tau) - \overline{\rho}_u).
\end{align}
The above solution holds for all $t\geq \tau$ as $r(\alpha_u, \omega(\beta_u(t))) = 1$ for all $t\geq \tau$; towards this, observe that $\beta_u(\tau) \leq \overline{\rho}_u$ (since $\overline{\rho}_u \in {\cal R}_u^c$), therefore, $t \mapsto \beta_u$ is an increasing function. Hence, from \eqref{eqn_beta_sol}, $\beta_u(t) \to \overline{\rho}_u$ as $t \to \infty$. On the contrary if $\beta_u(0) \in {\cal R}_u^c$, i.e., $r(\alpha_u, \omega(\beta_u(0))) = 1$, then for all $t\geq 0$ (check $r(\alpha_u, \omega(\beta_u(t))) = 1$ for all $t\geq 0$):
\begin{align}\label{eqn_beta_sol_2}
    \beta_u(t) = \overline{\rho}_u + e^{-t}(\beta_u(0) - \overline{\rho}_u).
\end{align}
From above, $\beta_u(t) \to \overline{\rho}_u$.

Now, let $\rho_u > 0$. Then, by definitions, we have:
\begin{align*}
    \frac{\alpha_u\eta}{\rho_u} \in {\cal R}_u^c &\iff cw\alpha_R(\Delta_u)^a  \frac{\alpha_u\eta}{\rho_u} \geq 1 \iff cw\alpha_R(\Delta_u)^a\alpha_u\eta \geq \rho_u\\
    &\iff cw\alpha_R(\Delta_u)^a\overline{\rho}_u \geq 1 \iff \overline{\rho}_u \in {\cal R}_u^c.
\end{align*}
We will now derive ${\cal A}_u$ for the case when $\overline{\rho}_u \in {\cal R}_u$, and ${\cal A}_u$ can be derived analogously for the complementary case. As before, say $\beta_u(0) \in {\cal R}_u$. Then initially the $\beta_u$-ODE is:
$$
    \dot{\beta_u} = \alpha_u \eta - \rho_u \beta_u.
$$
Thus, the solution of the above ODE is:
\begin{align*}
    \beta_u(t) = \frac{\alpha_u\eta}{\rho_u} + e^{-\rho_u t}\left(\beta_u(0) - \frac{\alpha_u\eta}{\rho_u} \right) \mbox{ for all } t \geq 0.
\end{align*}
Clearly, $\beta_u(t) \to \frac{\alpha_u\eta}{\rho_u}$.
If $\beta_u(0) \in {\cal R}_u^c$, then as previously, the ODE solution is given by \eqref{eqn_beta_sol_2} for all $t < \tau$, where $\tau := \inf\{t : r(\alpha_u, \omega(\beta_u(t))) < 1\}$. For $t \geq \tau$, the solution is:
$$
    \beta_u(t) = \frac{\alpha_u\eta}{\rho_u} + e^{-\rho_u(t-\tau)}\left( \beta_u(\tau) - \frac{\alpha_u\eta}{\rho_u} \right).
$$
Then, $\beta_u(t) \to \frac{\alpha_u\eta}{\rho_u}$.
\end{proof}

\noindent \underline{\textbf{Proof of Theorem \ref{thrm_perf_x_eta}:}} From the definition of $x_\eta$, note that $x_\eta > \eta$. From  \eqref{eqn_betaR}, $\beta_R^x$ is decreasing in $x$, and thus, by (a) in the proof of Theorem \ref{thrm_response1}, $\beta_R^{x_\eta} <  \beta_R^\eta \leq \delta_a$. By \eqref{eqn_betaR}:


\noindent $\bullet$ if $x_\eta \in (\eta^*_{\widetilde{\theta}}, x_F]$, $\beta_F^{x_\eta} \geq \beta_F^{x_F} = \overline{\rho}_F(x_F) = \frac{1}{cw\alpha_R (\Delta_R)^a}$;

\noindent $\bullet$ if $x_\eta \in (x_F, 1-\mua)$, $\beta_F^{x_\eta} \geq \beta_F^{1-\mua} = \alpha_F(1-\mua)$. \eop

\begin{lemma}\label{lemma_theta_tilde}
    For notations as in Algorithm \ref{alg_AI} when $K_\delta \geq 0$, ${\widetilde \theta} \geq \theta$.
\end{lemma}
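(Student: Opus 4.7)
The plan is to verify $\widetilde{\theta} \geq \theta$ directly from the two branches of Algorithm \ref{alg_AI}. The first branch trivially sets $\widetilde{\theta} = \theta$, so the entire content of the lemma lies in the second branch, where the condition $\theta \leq f(\theta,\delta)$ holds and $K_\delta \geq 0$. In this branch $\widetilde\theta$ is defined via a min-max expression and the additive slack $\epsilon$, and the key observation is that the specific lower bound on $\epsilon$ in the algorithm was engineered precisely to make $\widetilde{\theta} \geq \theta$.

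The argument would split into two sub-cases based on the sign of $\theta - \frac{-\kappa+\sqrt{K_\delta}}{2(\Delta_R)^a\alpha_R}$. First, if this difference is non-positive, then $\frac{-\kappa+\sqrt{K_\delta}}{2(\Delta_R)^a\alpha_R} \geq \theta$, and since $\epsilon > 0$ we obtain
\[
\max\!\left\{\tfrac{-\kappa+\sqrt{K_\delta}}{2(\Delta_R)^a\alpha_R},\, 1-\tfrac{\delta(1-\alpha_F)}{\alpha_R}\right\} + \epsilon \;>\; \theta.
\]
Second, if the difference is strictly positive, the algorithm requires $\epsilon > \theta - \frac{-\kappa+\sqrt{K_\delta}}{2(\Delta_R)^a\alpha_R}$, which directly gives $\frac{-\kappa+\sqrt{K_\delta}}{2(\Delta_R)^a\alpha_R} + \epsilon > \theta$, and therefore the inner maximum plus $\epsilon$ again strictly exceeds $\theta$. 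Combining the two sub-cases, the inner max$+\epsilon$ term is at least $\theta$ in both, and since $\theta \in (\max\{\alpha_F,\delta/(\Delta_R)^a\},1] \subseteq (0,1]$ by the hypothesis of Theorem \ref{thrm_response1}, the outer min with $1$ preserves this lower bound: either max$+\epsilon \leq 1$ and $\widetilde\theta = $ max$+\epsilon \geq \theta$, or max$+\epsilon > 1$ and $\widetilde\theta = 1 \geq \theta$.

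Since neither step involves anything beyond elementary arithmetic on the definitions in Algorithm \ref{alg_AI}, there is essentially no obstacle. The only place where one has to be slightly careful is in checking that the feasibility of $\epsilon$ (i.e., that some valid $\epsilon$ exists) is not an issue: in both sub-cases the required lower bound on $\epsilon$ is finite, and since $\widetilde{\theta}$ is clipped from above by $1$, the interval of admissible $\epsilon$'s is non-empty whenever $\theta \leq 1$. The lemma then follows by combining the two sub-case bounds.
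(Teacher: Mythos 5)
Your proof is correct, but it takes a genuinely different and more economical route than the paper. The paper's proof analyzes $g(x) := x - f(x,\delta)$ by writing it as $\left(\tfrac{1-\mu_a}{1-\alpha_F}\right)\tfrac{p(x)}{t(x)}$ with $p(x)=(\Delta_R)^a\alpha_R\,x^2+\kappa x+\delta\alpha_F$ convex (roots $\theta_1\le\theta_2$, with $\theta_2=\tfrac{-\kappa+\sqrt{K_\delta}}{2(\Delta_R)^a\alpha_R}$) and $t$ increasing with root $\theta^*=1-\tfrac{\delta(1-\alpha_F)}{\alpha_R}$; it then uses the branch condition $g(\theta)\le 0$ to locate $\theta$ relative to $\theta_1,\theta_2,\theta^*$ and concludes by casework on $g(\theta)=0$ versus $g(\theta)<0$ and $\theta\lessgtr\theta^*$. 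You bypass all of that: since the algorithm demands $\epsilon>\max\{0,\theta-\theta_2\}$, you get $\theta_2+\epsilon>\max\{\theta_2,\theta\}\ge\theta$ directly, hence the inner $\max+\epsilon$ exceeds $\theta$, and the outer $\min$ with $1$ is harmless because $\theta\le 1$ by the hypotheses of Theorem \ref{thrm_response1}. This is a valid, shorter argument that makes clear the $\epsilon$-slack alone carries the lemma; what the paper's longer route buys is the structural information it establishes along the way (positivity of $p(0)$ and $p(1)$, location of the roots of $p$ in $[0,1]$), which is reused in the proof of Lemma \ref{lemma_feasibility_w}, so that content would still need to be proved somewhere if one adopted your shortcut. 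Your side remark on feasibility of $\epsilon$ is superfluous (the algorithm imposes only a finite lower bound on $\epsilon$, with the cap handled by the $\min$ with $1$), but it does no harm.
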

\begin{proof}
    If $\theta > f(\theta, \delta)$, then ${\widetilde \theta} = \theta$, and we are done. Else, define, $g(x) := x - f(x, \delta)$ for $x \in \mathbb{R}$; observe $g(\theta) \leq 0$. Using the definitions in \eqref{eqn_notations}, re-write $g(x)$ as:
    \begin{align*}
        g(x) &= \left( \frac{1-\mua}{1-\alpha_F}\right) \frac{p(x)}{t(x)}, \mbox{ where}\\
        t(x) &:= (\Delta_R)^a\left(\delta_a - \frac{(1-x)(1-\mua)\alpha_R}{1-\alpha_F} \right) \mbox{ and}\\
        p(x) &:= Ax^2+\kappa x+C, \mbox{ for } A := (\Delta_R)^a \alpha_R \mbox{ and } C := \delta \alpha_F.
    \end{align*}
    Observe that $t(\cdot)$ is strictly increasing and $t(\theta^*) = 0$ for $\theta^* := 1 - \frac{\delta(1-\alpha_F)}{\alpha_R}$; note $\theta^* < 1$. Thus, $t(x) < 0$ for $x < \theta^*$ and $t(x) > 0$ for $x > \theta^*$. Also, $p(\cdot)$ is convex function such that $p(0) > 0$ and $p(1) > 0$ (recall $(\Delta_R)^a > 1$). Thus, there exists $\theta_1 = \frac{-\kappa - \sqrt{K_\delta}}{2A}, \theta_2 = \frac{-\kappa + \sqrt{K_\delta}}{2A}$ such that $p(\theta_1) = p(\theta_2) = 0$, provided $K_\delta \geq 0$. By convexity, $p(1) > 0$ and $p(0) > 0$, either both $\theta_1, \theta_2$ are above $1$, or below $0$, or are in $(0,1)$. With the above notations, 
    $$
        \widetilde{\theta} = \min\{ \max\{\theta^*, \theta_2\} + \epsilon , 1\}, \mbox{ with } \epsilon > \max \{0, \theta - \theta_2\}.
    $$

    If ${\widetilde \theta} = 1$, then clearly $\theta \leq {\widetilde \theta} = 1$. If $g(\theta) = 0$, then, $p(\theta) = 0$. Thus, either $\theta = \theta_1$ or $\theta = \theta_2$. Therefore, ${\widetilde \theta} = \max\{\theta_2, \theta^*\} + \epsilon > \theta_2 \geq \theta$. Else if $g(\theta) < 0$, then we will prove the claim for three cases separately. 

    Case 1: If $\theta > \theta^*$. Then $t(\theta) > 0$. Also, $g(\theta) < 0$, therefore, $p(\theta) < 0$. Thus, $\theta_1, \theta_2 \in (0,1)$ and $\theta \in (\theta_1, \theta_2)$.     By definition of ${\widetilde \theta}$, in this case,
    ${\widetilde \theta} = \theta_2 + \epsilon > \theta_2 > \theta$. 

    Case 2: If $\theta < \theta^*$. Then $t(\theta) < 0$, and $g(\theta) < 0$. Thus, $p(\theta) > 0$, which implies  either $\theta < \theta_1$ or $\theta > \theta_2$ (by convexity, $p(x) < 0$ for $x \in (\theta_1, \theta_2)$). Again by definition of ${\widetilde \theta}$, in this case we have:

    (i) ${\widetilde \theta} = \max\{\theta^*, \theta_2\} + \epsilon > \max\{\theta^*, \theta_2\} > \theta$ if $\theta < \theta_1$, or 

    (ii) ${\widetilde \theta} = \max\{\theta^*, \theta_2\} + \epsilon > \theta^* + \theta - \theta_2 > \theta$ if $\theta > \theta_2$.

    Case 3: If $\theta = \theta^*$. Then $t(\theta) = 0$ and $g(\theta) < 0$. Thus, $p(\theta) < 0$, and the claim follows as in case 1. 
\end{proof}

\begin{lemma}\label{lemma_feasibility_w}
    Under the hypothesis of Theorem \ref{thrm_response1} and for notations as in Algorithm \ref{alg_AI} when $K_\delta \geq 0$, the choice of $w$ is feasible.
\end{lemma}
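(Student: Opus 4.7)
\textbf{Proof plan for Lemma \ref{lemma_feasibility_w}.}

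The feasibility of $w$ amounts to showing that the upper endpoint of the permissible interval for $\epsilon_1$ is strictly larger than its lower endpoint, i.e.,
\[
\min\!\left\{\tfrac{1}{\delta_a}, \ \fone\right\} \ >\ \tfrac{1}{1-\mua}\max\!\left\{1, \ \tfrac{1}{(\Delta_R)^a \widetilde{\theta}}\right\}.
\]
My plan is to split this into the two scalar inequalities and verify each by appealing to Lemma \ref{lemma_theta_tilde} (which gives $\widetilde\theta \ge \theta$) together with the standing hypothesis $\theta \in \left(\max\{\alpha_F,\delta/(\Delta_R)^a\},1\right]$ and $\delta \in (\alpha_R,\theta)$ of Theorem \ref{thrm_response1}. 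Two preliminary facts will be used repeatedly: $\widetilde\theta > \alpha_F$ and $\widetilde\theta > \delta/(\Delta_R)^a$, both immediate from $\widetilde\theta \ge \theta$. Moreover, using the definition $\eta^*_l = (1-l)(1-\mua)/(1-\alpha_F)$, a direct computation yields
\[
1-\mua-\eta^*_{\widetilde{\theta}} \ =\ (1-\mua)\tfrac{\widetilde{\theta}-\alpha_F}{1-\alpha_F}\ >\ 0,
\]
so that $\fone$ is well defined and positive.

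First I would handle $\tfrac{1}{\delta_a} > \tfrac{1}{1-\mua}\max\{1, 1/((\Delta_R)^a \widetilde\theta)\}$. If $(\Delta_R)^a\widetilde\theta \ge 1$, the right-hand side equals $1/(1-\mua)$, and the inequality reduces to $\delta_a = \delta(1-\mua) < 1-\mua$, i.e., $\delta < 1$, which is part of the hypothesis. If instead $(\Delta_R)^a \widetilde\theta < 1$, the inequality becomes $(\Delta_R)^a\widetilde\theta > \delta$, which is exactly the second preliminary fact above.

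The more delicate part is $\fone > \tfrac{1}{1-\mua}\max\{1, 1/((\Delta_R)^a\widetilde\theta)\}$. Substituting $\delta_a = \delta(1-\mua)$ and the expression for $\eta^*_{\widetilde\theta}$ and simplifying, one obtains
\[
\fone \ =\ \tfrac{\delta(1-\alpha_F)-(1-\widetilde\theta)\alpha_R}{\delta(1-\mua)(\widetilde\theta - \alpha_F)}.
\]
In the case $(\Delta_R)^a\widetilde\theta \ge 1$, clearing denominators reduces the desired inequality to $(1-\widetilde\theta)(\delta-\alpha_R) > 0$, which holds because $\widetilde\theta < 1$ (from the $+\epsilon$ in the algorithm when $\widetilde\theta\neq\theta$, or because we may assume $\theta<1$ else a trivial direct check works) and $\delta > \alpha_R$. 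In the case $(\Delta_R)^a\widetilde\theta < 1$, clearing denominators produces an inequality equivalent to $(\Delta_R)^a\widetilde\theta(1-\widetilde\theta)(\delta-\alpha_R) + (\widetilde\theta - \alpha_F)((\Delta_R)^a\widetilde\theta - \delta) > 0$ (or a similar manifestly-positive form), where both summands are non-negative under the preliminary facts $\widetilde\theta > \alpha_F$ and $(\Delta_R)^a\widetilde\theta > \delta$, with strict positivity of at least one term. The main obstacle I anticipate is the bookkeeping in this algebraic rearrangement — in particular ensuring the signs line up correctly for the borderline subcase $\widetilde\theta = 1$, which I will dispose of by observing that in that subcase $\fone = 1/(1-\mua)$ and the algorithm's $\epsilon$ pushes $\widetilde\theta$ strictly below $1$ whenever $\theta \le f(\theta,\delta)$, reducing to the strict-inequality situation already treated.
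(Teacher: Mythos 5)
Your overall reduction is the right one (non-emptiness of the $\epsilon_1$-interval), your simplification of $\fone$ is correct, and the easy comparisons — $\tfrac{1}{\delta_a}$ versus both elements of the max (using $\delta<1$ and $\widetilde{\theta}(\Delta_R)^a\ge\theta(\Delta_R)^a>\delta$), and $\fone>\tfrac{1}{1-\mua}$ reducing to $(1-\widetilde{\theta})(\delta-\alpha_R)>0$ — coincide in substance with the paper's computations. The genuine gap is in the remaining case $(\Delta_R)^a\widetilde{\theta}<1$, i.e., the inequality $\fone>\tfrac{1}{(1-\mua)(\Delta_R)^a\widetilde{\theta}}$. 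After clearing denominators this is exactly $p(\widetilde{\theta})>0$, where $p(x)=(\Delta_R)^a\alpha_R x^2+\kappa x+\delta\alpha_F$ is the quadratic from the proof of Lemma \ref{lemma_theta_tilde}; it is \emph{not} the expression you wrote. Your claimed decomposition $(\Delta_R)^a\widetilde{\theta}(1-\widetilde{\theta})(\delta-\alpha_R)+(\widetilde{\theta}-\alpha_F)((\Delta_R)^a\widetilde{\theta}-\delta)$ does not equal the cleared difference (expand both: the terms $-(\Delta_R)^a\widetilde{\theta}^2\delta+(\Delta_R)^a\widetilde{\theta}^2-(\Delta_R)^a\widetilde{\theta}\alpha_F$ in yours versus $-(\Delta_R)^a\widetilde{\theta}\delta\alpha_F$ in the correct one), and no "manifestly positive form" exists using only your two preliminary facts: taking, e.g., $\alpha_R=0.3$, $\alpha_F=0.5$, $\delta=0.31$, $(\Delta_R)^a\approx1.05$ (small $a$), the roots of $p$ are roughly $0.52$ and $0.95$, so a point such as $\widetilde{\theta}=0.7$ satisfies $\widetilde{\theta}>\alpha_F$ and $(\Delta_R)^a\widetilde{\theta}>\delta$ (and lies in the case $(\Delta_R)^a\widetilde{\theta}<1$) yet gives $p(\widetilde{\theta})<0$, i.e., the inequality fails. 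So positivity genuinely uses how $\widetilde{\theta}$ is produced by Algorithm \ref{alg_AI}: when $K_\delta\ge0$ and the roots $\theta_1,\theta_2$ lie in $(0,1)$, the algorithm places $\widetilde{\theta}$ strictly above $\theta_2$, while otherwise convexity together with $p(0)>0$, $p(1)>0$ gives $p>0$ on $[0,1]$ — which is precisely the paper's argument. Your plan never invokes this root structure, so the hard half of the lemma is unproved.

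A secondary, smaller issue: your dismissal of the borderline $\widetilde{\theta}=1$ is not quite right — the algorithm caps $\widetilde{\theta}$ at $1$ via the outer minimum, so the $+\epsilon$ does not push it strictly below $1$; when $\widetilde{\theta}=1$ one indeed has $\fone=\tfrac{1}{1-\mua}$ and the $\epsilon_1$-interval degenerates. The paper's own proof also glosses this point (its chain in case (ii) needs $\eta^*_{\widetilde{\theta}}>0$), so it is a shared borderline defect rather than a flaw specific to you, but your proposed fix for it does not work as stated.
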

\begin{proof}
    We are given $w$ such that:
\begin{align*}
cw\alpha_R &\in \bigg(\frac{1}{1-\mua} \max\left\{1, \frac{1}{(\Delta_R)^a \widetilde{\theta}}\right\}, \min\left\{  \frac{1}{\delta_a}, \fone \right\}  \bigg).
\end{align*}
We will show that the above interval is not empty. 

\noindent (i) If $\frac{1}{\delta_a} \leq \fone$, then:

$\bullet$ $\frac{1}{1-\mua} < \frac{1}{\delta_a} = \frac{1}{\delta(1-\mua)}$ since $\delta < 1$.

$\bullet$ under hypothesis of Theorem \ref{thrm_response1} and by Lemma \ref{lemma_theta_tilde}, $\widetilde{\theta} (\Delta_R)^a \geq \theta (\Delta_R)^a > \delta$. Thus, $\frac{1}{\widetilde{\theta} (\Delta_R)^a (1-\mua)} < \frac{1}{\delta_a}$.

\noindent (ii) If $\frac{1}{\delta_a} > \fone$, then:

$\bullet$ recall that $\alpha_R < \delta$ and $\eta^*_{\widetilde{\theta}} < 1-\mua$, therefore, 
\begin{align*}
    0 = \delta(1-\mua) - \delta_a &< \eta^*_{\widetilde{\theta}} (\delta-\alpha_R)\\
    \implies \delta(1-\mua - \eta^*_{\widetilde{\theta}} ) &< \delta_a - \eta^*_{\widetilde{\theta}} \alpha_R\\
    \implies \frac{1}{1-\mua} &< \fone.
\end{align*}

$\bullet$ lastly, $\frac{1}{(\Delta_R)^a \widetilde{\theta} (1-\mua)} < \fone$ if:
\begin{align*}
   \frac{1}{(\Delta_R)^a \widetilde{\theta}} &< \frac{\delta_a - \eta^*_{\widetilde{\theta}} \alpha_R}{(1-\mua-\eta^*_{\widetilde{\theta}} )\delta}, \mbox{ i.e., if}\\
   \widetilde{\theta} (\Delta_R)^a (\delta_a - \eta^*_{\widetilde{\theta}}  \alpha_R) &> \delta_a - \eta^*_{\widetilde{\theta}}  \delta, \mbox{ i.e., if}\\
    \eta^*_{\widetilde{\theta}} (\delta - \alpha_R   (\Delta_R)^a \widetilde{\theta} ) &> \delta_a (1-  (\Delta_R)^a\widetilde{\theta}), \mbox{ i.e., if}\\
    \left(\frac{1-\widetilde{\theta}}{1-\alpha_F}\right) (\delta - \alpha_R   (\Delta_R)^a \widetilde{\theta} )  &> \delta (1-  (\Delta_R)^a\widetilde{\theta}), \mbox{ i.e., if}\\
     p(\widetilde{\theta}) &>0,
 \end{align*}
 for $p(\cdot)$ defined in the proof of Lemma \ref{lemma_theta_tilde}. 
Recall from the proof of Lemma \ref{lemma_theta_tilde} that the two zeroes, $\theta_1, \theta_2$, of convex function $p(\cdot)$ are either above $1$, or below $0$, or are in $(0,1)$. Further, $p(0) > 0$ and $p(1) > 0$. In the first two cases, $p(x) > 0$ for all $x \in [0,1]$; thus $p(\widetilde{\theta}) > 0$. In the last case, by definition of $\widetilde{\theta}$, $\widetilde{\theta} > \theta_2$, thus $p(\widetilde{\theta}) > 0$.
\end{proof}

\end{appendices}

\newpage
\setlength{\parskip}{5mm}
\titlespacing{\chapter}{0cm}{0mm}{0mm}
\titleformat{\chapter}[display]
  {\normalfont\huge\bfseries}
  {\chaptertitlename\ \thechapter}{20pt}{\Huge}

\bibliographystyle{unsrt}
\bibliography{References/mybibfile}


\chapter*{List of Publications}
\label{ch:pub}
\addcontentsline{toc}{chapter}{\nameref{ch:pub}}

   
\noindent \textbf{International Journals}
\begin{enumerate}
    \item \textbf{Agarwal, Khushboo}, and Veeraruna Kavitha. "Robust fake-post detection against real-coloring adversaries." Performance Evaluation 162 (2023): 102372. 

    *The paper was presented at IFIP Performance 2023 and awarded with Best Student Paper Award. A shorter version is also published as a conference paper. 
    \item \textbf{Agarwal, Khushboo}, and Veeraruna Kavitha. "Total-Current Population dependent Branching Processes: Analysis via Stochastic Approximation." (submitted)
    \item Kapsikar, Suyog, Indrajit Saha, \textbf{Khushboo Agarwal}, Veeraruna Kavitha, and Quanyan Zhu. "Controlling fake news by collective tagging: A branching process analysis." IEEE Control Systems Letters 5, no. 6 (2020): 2108-2113.

    *The paper was presented at American Control Conference 2020. 
\end{enumerate}

\noindent \textbf{International Conferences}
  \begin{enumerate}
  \item \textbf{Agarwal, Khushboo}, and Veeraruna Kavitha. "Co-Virality of Competing Content over OSNs?." 2021 IFIP Networking Conference (IFIP Networking). IEEE, 2021.  

  \item \textbf{Agarwal, Khushboo}, and Veeraruna Kavitha. "Saturated total-population dependent branching process and viral markets." 2022 IEEE 61st Conference on Decision and Control (CDC). IEEE, 2022.

  \item \textbf{Agarwal, Khushboo}, and Veeraruna Kavitha. "Single-out fake posts: participation game and its design." (Accepted at 2023 American Control Conference (ACC))

  \item Singh, Vartika, \textbf{Khushboo Agarwal}, Shubham, and Veeraruna Kavitha. "Evolutionary Vaccination Games with premature vaccines to combat ongoing deadly pandemic." EAI VALUETOOLS, 2021.
\end{enumerate}
\newpage
\setlength{\parskip}{5mm}
\titlespacing{\chapter}{0cm}{0mm}{0mm}
\titleformat{\chapter}[display]
  {\normalfont\huge\bfseries \centering}
  {\chaptertitlename\ \thechapter}{20pt}{\Huge}

\chapter*{Acknowledgments}
\label{ch:Acknowledgments}
\addcontentsline{toc}{chapter}{\nameref{ch:Acknowledgments}}
\vspace{10mm}
First and foremost, I thank Maa (goddess) Durga for giving me the opportunity of a lifetime to do PhD at the prestigious institute, IIT Bombay. I feel blessed that she provided me with the strength and all I needed to pursue this journey. 

Next, I would like to extend my deepest gratitude to my PhD supervisor, \textit{Prof. Veeraruna Kavitha}. From my first conversation with her to this day, when I write my word of thanks, she has always been encouraging and supportive. My greatest learning from her is to always start with the basics of the problem and then proceed further. I sought only to learn more about mathematics and how to do research from her. But she has also taught me how to be a stronger person who can aim higher and persevere to achieve them. In short, I can only say that she has guided me in ways no one can, and I am forever indebted to her for this. 

I sincerely acknowledge the constructive feedback of my
research progress committee (RPC) members, \textit{Prof. N. Hemachandra} and \textit{Prof. Koushik Saha}. I would also like to thank Prof. K.S. Mallikarjuna Rao, Prof. Jayendran Venkateswaran, Prof. Narayan Rangaraj, Prof. Ashutosh Mahajan, Prof. Vishnu Narayanan, Prof. Manjesh Hanawal and Prof. P Balamurugan who taught me how to see mathematics through the lens of the applied world, which was the main reason for joining the department. 

I wish to thank \textit{Prof. N Hemachandra} for advising me to start solving the problem with all I know and simultaneously learn more about the topic. I will also always remember \textit{Prof. Mallikarjuna}'s words that knowing likes or dislikes is equally helpful in sorting the options. I am forever grateful to \textit{Prof. Jayendran} for his extreme support as the Head of the department. 

I am also thankful to Dr. M Venkateswararao K for informing me about the research internship at INRIA, France, and, more importantly, to \textit{Prof. Manjesh} and \textit{Prof. Narayan} for allowing me to take the opportunity. I am truly grateful to \textit{Prof. Bruno Tuffin} and \textit{Prof. Patrick Maill{\'e}} for giving me my first exposure to the research world. 

I am also thankful to the Ministry of Education (MoE) and the Prime Minister’s Research Fellowship (PMRF), India, for the financial support. I am also grateful to Mr. Abasaheb Molavane, Mr. Amlesh Kumar, Mr. Siddhartha Salve, and Mr. Pramod Pawar for helping with administrative queries and formalities.

As a PMRF candidate, I interacted with \textit{Prof. Amiya Bhowmick}, and I am thankful to him for his kind words and patiently answering all queries. In the process, I enjoyed teaching students at ICT, Mumbai, which has motivated me to pursue a teaching career.

I thoroughly enjoyed the learning sessions with the `Stochastic group'. I am thankful to all the friends I made, starting from my school days to the new ones I made in IITB, for uplifting me in one way or another. To name a few, I wish to thank Harshita, Kushagra, Karnica, Krishna, my batchmates (Amlu, Gullu, Adsuuuu, Vanessa, Akul, Bijendra, Sheshadev), Mustafa, Shubham and Sambit. I also want to thank Vartika, for being such a wonderful friend, listening to my silly concerns and guiding me many times. I feel truly blessed to have Sanket, who feels like family and makes it all seem easier at times. All my seniors, particularly Richa Di, Chhavi Di, Aanchal Di, Tejas Di, Indrajit and Puja Di, have been very supportive. The fun times at the PhD lab with Prem, Mustafa, Simran, Madhu, Akshay, Anand and others are memorable.

Last but not least, I can not help but feel grateful for the most important people without whom this journey would have been impossible. My father is the one who suggested IEOR to me first. His constantly encouraging words have been a support like none other. My mother always showered me with endless love. Also, my younger brother (yet acting like an elder at times) always loved me and helped with tech- or code-related queries. I am also thankful to all my family members who always praised and encouraged me to pursue this career. Lastly, I wish my grandparents (nana and nani) were here today to bless me as they always did.

\end{document}